\title{The wave maps equation and Brownian paths}
\author{Bjoern Bringmann, Jonas  L\"uhrmann, and Gigliola Staffilani}
\date{\today}
\address
{Bjoern Bringmann\\
School of Mathematics\\
Institute for Advanced Study, Princeton, NJ 08540 \& Department of Mathematics, Princeton University, Princeton, NJ 08544}
\email{bjoern@ias.edu}
\address
{Jonas L\"{u}hrmann\\
Department of Mathematics\\
Texas A\&M University, College Station, TX 77843}
\email{luhrmann@math.tamu.edu}
\address
{Gigliola Staffilani\\
Department of Mathematics\\
Massachusetts Institute of Technology, Cambridge, MA 02139}
\email{gigliola@math.mit.edu}
\begin{document}

\maketitle 

\begin{abstract}
We discuss the $(1+1)$-dimensional wave maps equation with values in a compact Riemannian manifold $\mathcal{M}$. Motivated by the Gibbs measure problem, we consider Brownian paths on the manifold $\mathcal{M}$ as initial data. Our main theorem is the probabilistic local well-posedness of the associated initial value problem. The analysis in this setting combines analytic, geometric, and probabilistic methods.
\end{abstract}

%%%%%%%%%%%%%%%%%%%%%%%%%%% TABLE OF CONTENTS %%%%%%%%%%%%%%%%%%%%%%%%%%%%%%%%%%%%%%%%%

\tableofcontents

%%%%%%%%%%%%%%%%%%%%%%%%% INTRO %%%%%%%%%%%%%%%%%%%%%%%%%%%
\section{Introduction}

\subsection{Motivation}\label{section:motivation}

The wave maps equation is a geometric generalization of the linear wave equation for scalar-valued fields to fields that take values in a Riemannian manifold. Formally, a wave map $\phi \colon \mathbb{R}^{1+1} \to \M$ from $(1+1)$-dimensional Minkowski space $\R^{1+1}$ into a Riemannian manifold $(\M, g)$ is a critical point of the Lagrangian action functional
\begin{equation*}
 \mathcal{L}(\phi) := \iint_{\mathbb{R}^{1+1}} \bigl( - |\partial_t \phi|_g^2 + |\partial_x \phi|_g^2 \bigr) \, dx \, dt.
\end{equation*}
Throughout this paper we work with the extrinsic formulation of the wave maps equation and we consider smooth, compact Riemannian target manifolds $(\M,g)$ without boundary. By Nash's embedding theorem, we may regard $\M$ as an isometrically embedded submanifold of some Euclidean space $\R^\dimA$. We write $\phi = (\phi^1, \ldots, \phi^\dimA) \colon \mathbb{R}^{1+1} \to \M \hookrightarrow \mathbb{R}^\dimA$ for the corresponding extrinsic representation of the wave map and we denote by $\Second \colon T\M \times T\M \rightarrow N\M$ the second fundamental form of the embedding $\M \hookrightarrow \mathbb{R}^\dimA$. Then the wave maps equation from $(1+1)$-dimensional Minkowski space to $\M$ takes the form
\begin{equation}\tag{WM}\label{intro:eq-WM}
 \partial_\mu \partial^\mu \phi^k = - \Second^k_{ij}(\phi) \partial_\mu \phi^i \partial^\mu \phi^j, \quad 1 \leq i,j, k \leq D, \quad 0 \leq \mu \leq 1.
\end{equation}
We use the standard conventions of raising or lowering indices with respect to the Minkowski metric with signature $(-+)$ on $\mathbb{R}^{1+1}$, and of summing over repeated upper and lower indices.

The equation (WM) is invariant under the scaling
\begin{equation} \label{intro:eq-scaling}
 \phi(t,x) \mapsto \phi(\lambda t, \lambda x), \quad \lambda > 0,
\end{equation}
and solutions formally conserve the Hamiltonian
\begin{equation} \label{intro:eq-energy}
    H[\phi,\phi_t] := \frac12 \int_{\mathbb{R}} \bigl(   |\partial_x \phi|^2 + |\partial_t \phi|^2\bigr) \, dx.
\end{equation}

Our interest lies in the Cauchy problem for~\eqref{intro:eq-WM} for initial data 
\begin{equation*}
 (\phi, \partial_t \phi)|_{t=0} = (\phi_0, \phi_1) 
\end{equation*} 
given by maps 
\begin{equation}
 \phi_0 \colon \mathbb{R} \to \M, \quad \phi_1 \colon \mathbb{R} \to \phi_0^\ast T\M, \quad \phi_1(x) \in T_{\phi_0(x)}\M.
\end{equation}

Motivated by the problem of constructing and proving the invariance of the Gibbs measure for the wave maps equation, we choose the initial data $\phi_0$ as a Brownian path on the manifold and the initial velocity $\phi_1$ as white noise on the  pullback bundle $\phi_0^\ast T\mathcal{M}$. A Brownian path $B\colon \R \rightarrow \M$ is the natural generalization of Euclidean Brownian motion $W\colon \R\rightarrow \R^\dimA$ from the vector-valued to the manifold-valued case. As we will describe below (Section \ref{section:intro-main} and Section \ref{section:Brownian}), the Brownian path $B\colon \R\rightarrow \M$ can be approximated by smooth paths $(B^\varepsilon)_{\varepsilon}\colon \R \rightarrow \M$, which are illustrated in Figure \ref{figure:Brownian}. In the literature, Brownian paths are more commonly referred to as Brownian motion on a manifold, but our terminology better distinguishes the argument of $B=B(x)$ from the time-coordinate in the wave maps equation \eqref{intro:eq-WM}. Brownian paths are natural geometric objects and have important applications to heat kernel estimates and index theorems. For a detailed introduction, we refer the reader to the textbook \cite{Hsu02}. The initial velocity $V\in B^\ast T\mathcal{M}$, which is described in more detail below, is chosen as the natural generalization of vector-valued white noise. 

The Gibbs measure of the wave maps equation is formally given by
\begin{equation}\label{intro:eq-gibbs}
\mathrm{d}\mu(\phi_0,\phi_1) = \text{``} \mathcal{Z}^{-1}
\exp\Big(- \frac{1}{2} \int_\R  |\partial_x \phi_0|_g^2 +|\phi_1|_g^2 \, \mathrm{d}x \Big) \, \mathrm{d}\phi_0 \mathrm{d}\phi_1 \text{"},
\end{equation}
where $\mathcal{Z}$ is a normalization constant. While the Gibbs measure in \eqref{intro:eq-gibbs} has not yet been constructed rigorously, we believe that the pair $(B,V)$ accurately describes its samples. This belief is based on the Wiener measure, which is the analogue of the Gibbs measure in the parabolic setting (cf. \cite{H16-2,BGHZ21}). The Wiener measure is formally given by 
\begin{equation*}
\mathrm{d}\nu(\phi_0) = \text{``} \mathcal{Z}^{-1}
\exp\Big(- \frac{1}{2} \int_\R  |\partial_x \phi_0|_g^2 \, \mathrm{d}x \Big) \, \mathrm{d}\phi_0 \text{"}
\end{equation*}
and has been constructed\footnote{To be precise, \cite{AD99} constructed the Wiener measure on the compact domain $[0,1]$ instead of $\R$. } rigorously by Andersson and Driver in \cite{AD99}. Furthermore, \cite{AD99} proves that the Wiener measure is absolutely continuous with respect to the law of Brownian paths, where the Radon-Nikodym derivative only depends on the scalar curvature of the manifold.

We now state an informal version of our main result.

\begin{figure}
     \centering
     \begin{subfigure}[b]{0.49\textwidth}
        \hspace{-6ex}
         \includegraphics[trim=400 0 100 0,clip,width=1.4\textwidth]{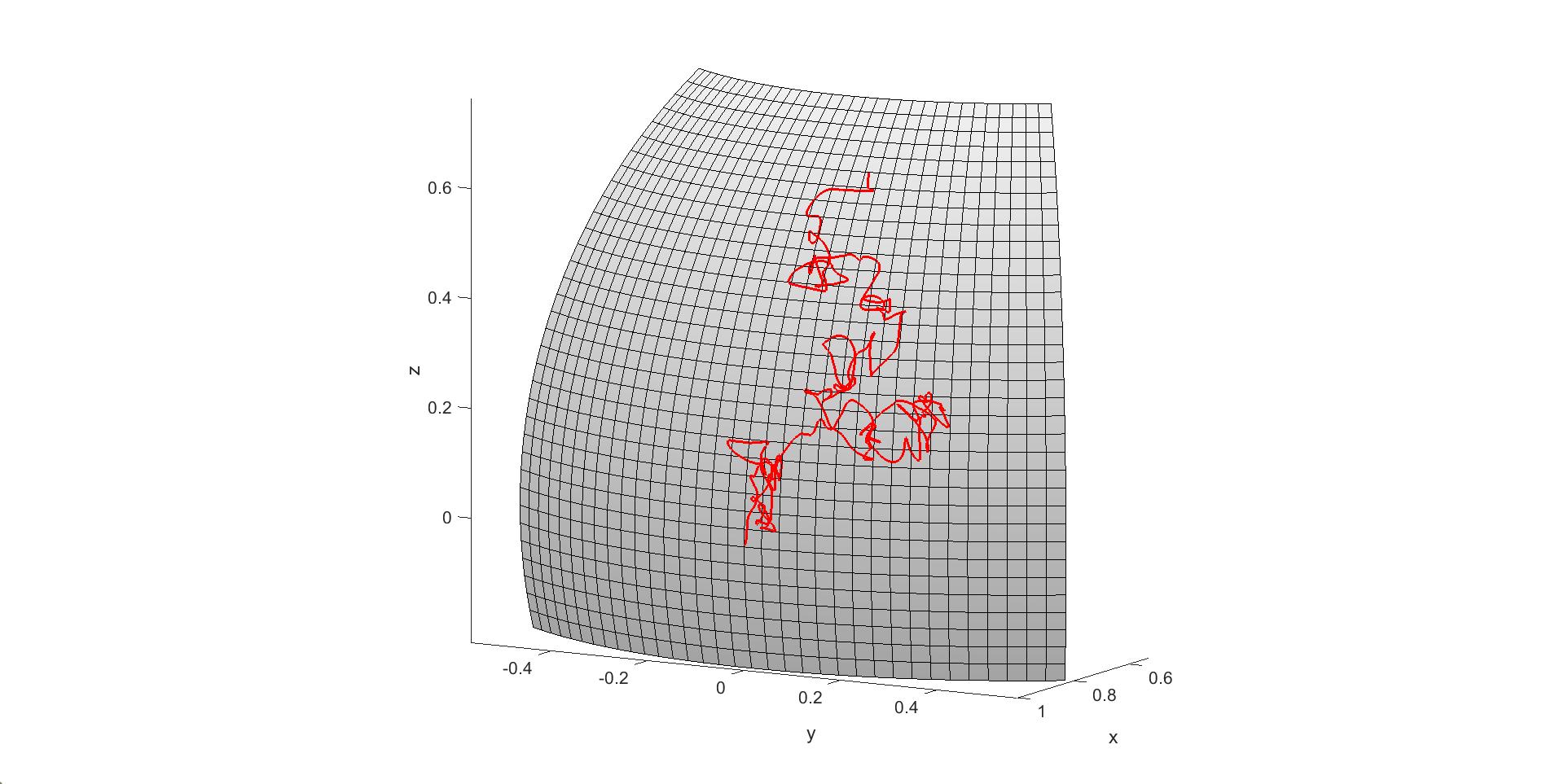}
         \caption{$\epsilon=10^{-3}$}
     \end{subfigure}
     \hfill
     \begin{subfigure}[b]{0.49\textwidth}
     \hspace{-6ex}
      \includegraphics[trim=400 0 100 0,clip,width=1.4\textwidth]{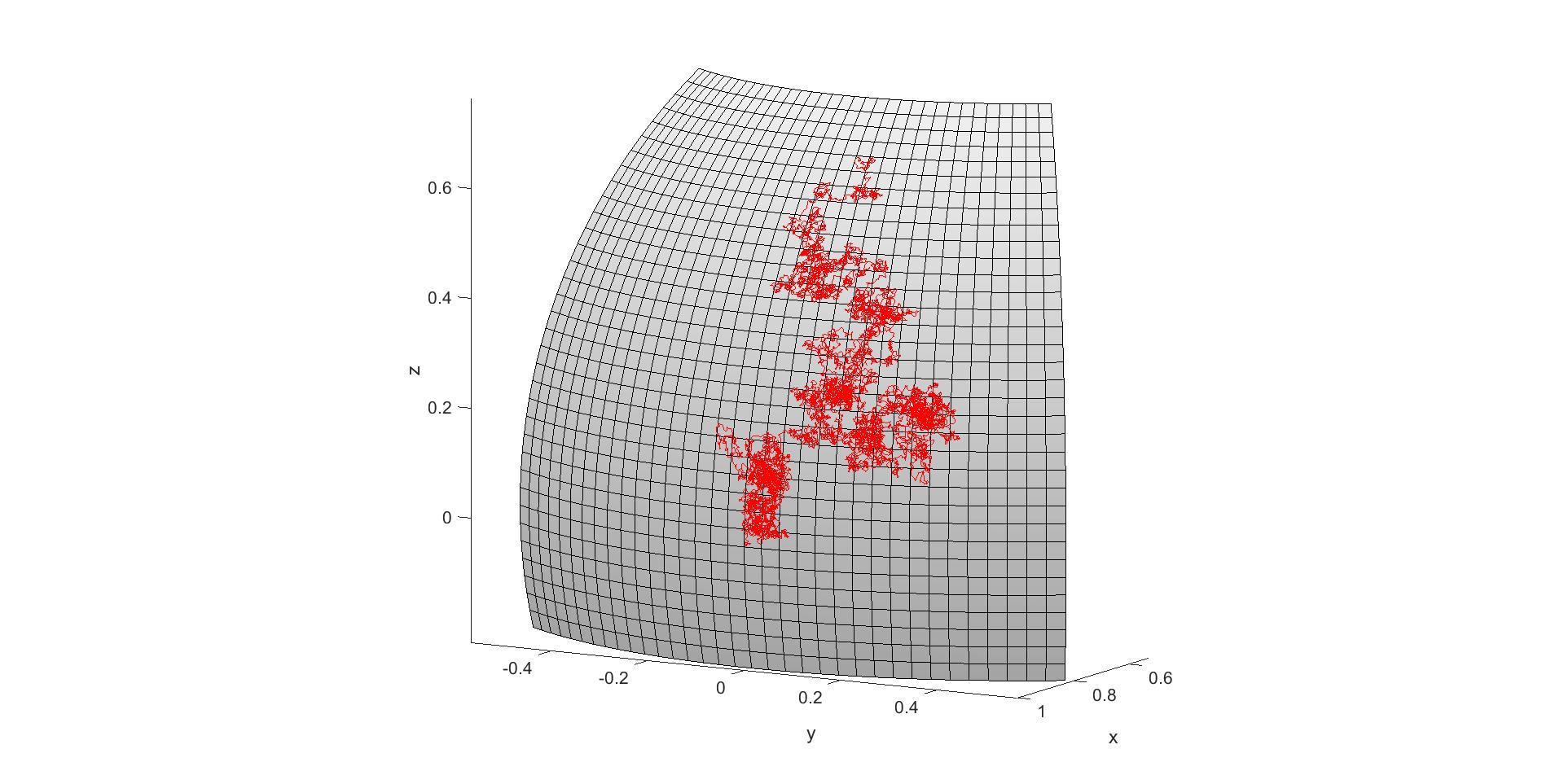}      \caption{$\epsilon=10^{-5}$}
     \end{subfigure}
     \caption{We display the approximations $B^\varepsilon$ of the Brownian path $B$ on $\mathbb{S}^2$ for two different values of $\varepsilon$. For $\varepsilon=10^{-3}$, the path still looks relatively smooth. For $\varepsilon=10^{-5}$, the path already exhibits heavy oscillations on small scales. In spite of these oscillations, the behavior of the two approximation agrees on large scales.}
     \label{figure:Brownian}
\end{figure}

\begin{theorem}[Informal version] \label{intro:thm}
 The wave maps equation \eqref{intro:eq-WM} with initial data $(\phi_0,\phi_1)=(B,V)$ is almost surely locally well-posed. 
\end{theorem}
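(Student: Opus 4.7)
The natural strategy is to regularize: let $(B^\varepsilon,V^\varepsilon)\to (B,V)$ be smooth approximations at the level described in Section \ref{section:intro-main}, so that the classical local theory applies to \eqref{intro:eq-WM} and produces smooth solutions $\phi^\varepsilon$ on some interval $[-T_\varepsilon,T_\varepsilon]$. The goal is then to show that, almost surely, $\phi^\varepsilon$ converges on a random but $\varepsilon$-independent time interval $[-T,T]$. To that end, I would split $\phi^\varepsilon = L^\varepsilon + \psi^\varepsilon$, where $L^\varepsilon$ denotes the free wave evolution of $(B^\varepsilon,V^\varepsilon)$ extended to $\R^\dimA$ and $\psi^\varepsilon$ is a smoother remainder vanishing at $t=0$. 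Stochastic calculus should give a.s.\ convergence $L^\varepsilon\to L$ in a sharp Besov-type space of regularity just below the scaling-critical index $s=1/2$, while $\psi^\varepsilon$ should live in a subcritical Sobolev space $H^s$ with $s>1/2$.

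\textbf{Exploiting the null structure.} In $1+1$ dimensions, working in null coordinates $u=t-x$ and $v=t+x$ is essential. On the one hand, the nonlinearity of \eqref{intro:eq-WM} is a null form,
\begin{equation*}
\partial_\mu\phi^i\,\partial^\mu\phi^j \;=\; -2\bigl(\partial_u\phi^i\,\partial_v\phi^j+\partial_v\phi^i\,\partial_u\phi^j\bigr).
\end{equation*}
On the other hand, $L^\varepsilon$ decomposes as $L^\varepsilon(t,x)=L_-^\varepsilon(u)+L_+^\varepsilon(v)$, so $\partial_u L^\varepsilon$ depends only on $u$ and $\partial_v L^\varepsilon$ only on $v$. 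Consequently, the a priori singular product $\partial_u L^\varepsilon\,\partial_v L^\varepsilon$ — the most dangerous contribution in \eqref{intro:eq-WM} after the splitting $\phi^\varepsilon=L^\varepsilon+\psi^\varepsilon$ — is a classical product of distributions of \emph{independent} variables and converges almost surely without any renormalization. This is the underlying probabilistic gain on which the whole argument rests.

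\textbf{Equation for $\psi^\varepsilon$ and contraction.} Substituting $\phi^\varepsilon = L^\varepsilon+\psi^\varepsilon$ into \eqref{intro:eq-WM} produces a forced wave equation for $\psi^\varepsilon$ whose right-hand side organizes into three pieces: a fully probabilistic source $-\Second(L^\varepsilon+\psi^\varepsilon)(\partial L^\varepsilon,\partial L^\varepsilon)$, where $\Second$ is extended smoothly to a tubular neighborhood of $\M$ so that it makes sense to evaluate it on the nearby but not manifold-valued $L^\varepsilon+\psi^\varepsilon$; mixed terms that are linear in $\partial\psi^\varepsilon$ with stochastic coefficients; and a fully smooth quadratic piece in $\partial\psi^\varepsilon$. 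A paracontrolled-type Taylor expansion $\Second(L^\varepsilon+\psi^\varepsilon)=\Second(L^\varepsilon)+D\Second(L^\varepsilon)\psi^\varepsilon+\dots$ reduces the whole problem to a finite list of explicit stochastic objects depending only on $(B,V)$. These are constructed once and for all with uniform $\varepsilon$-bounds, after which a deterministic contraction argument for $\psi^\varepsilon$ in $H^s$ with $s>1/2$ closes the local well-posedness on a common interval $[-T,T]$ and yields the limit $\phi$.

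\textbf{Main obstacle.} The core difficulty is the construction and estimation of the composite stochastic objects obtained by plugging the rough path $L^\varepsilon$ into $\Second$ and its differentials and then pairing them against the null-form products $\partial_u L^\varepsilon\,\partial_v L^\varepsilon$. This step entangles nonlinear composition with a manifold-valued Brownian path, products of distributions in different null variables, and the geometric orthogonality of $\Second$ to $T\M$. Identifying algebraic and geometric cancellations (so that $\Second\circ L$ inherits the regularity required for the contraction, despite $L$ leaving $\M$), together with the probabilistic estimates uniform in $\varepsilon$, is where analytic, geometric, and probabilistic methods must genuinely interact, and it is the hardest part of the proof.
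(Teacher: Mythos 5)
There is a genuine and decisive gap: the proposed decomposition $\phi^\varepsilon = L^\varepsilon + \psi^\varepsilon$ with a smoother remainder $\psi^\varepsilon$ is exactly Bourgain's trick (Da Prato--Debussche in the parabolic language), and the paper explicitly rules it out. The reason is the \emph{absence of nonlinear smoothing}. Writing the linear wave in null coordinates as $\phi_{\mathrm{lin}}(u,v)=\phi^+(u)+\phi^-(v)$, the low$\times$high part of the nonlinearity, say
\begin{equation*}
-P^{u,v}_{\lesssim 1}\bigl(\Second^k_{ij}(\phi)\,\partial_v\phi^j\bigr)(u,v)\;\partial_u\phi^{+,m}_M(u),
\end{equation*}
has a Duhamel integral of the form $\bigl(\int_u^v P^{u,v}_{\lesssim 1}(\cdots)(u,v')\,\mathrm{d}v'\bigr)\phi^{+,m}_M(u)$ plus errors; the leading term has \emph{the same} regularity as $\phi^+_M$, not more. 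So the first Picard iterate already contains a contribution at regularity $s<1/2$, and a fixed point for $\psi$ in any space of regularity $>1/2$ cannot close. The paper replaces your single remainder by the four-term ansatz $\sum_M A^+_M\phi^+_M + \sum_N A^-_N\phi^-_N + \sum_{M\sim_\delta N} B_{M,N}\phi^+_M\phi^-_N + \psi$, where the modulations $A^\pm,B$ absorb precisely these non-smoothing contributions and are themselves solved via a para-controlled fixed point for rough ODEs.

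Your observation that $\partial_u L\,\partial_v L$ is a product of functions of independent variables, and hence needs no renormalization, is correct in spirit (and the paper indeed needs no renormalization), but it overstates what that independence buys. The deterministic product estimate in $\C^{s-1}\cdot\C^{s-1}$ still fails for $s<1/2$; what makes the linear$\times$linear high$\times$high$\to$low interactions controllable is the probabilistic estimate
\begin{equation*}
\mathbb{E}\Bigl[\sup_{M\sim N}M^s\,\bigl\|P_M\phi^{\pm_1,i}\,\partial_x P_N\phi^{\pm_2,j}\bigr\|_{\C^{s-1}}\Bigr]\lesssim 1,
\end{equation*}
proved for the Brownian data (Lemma \ref{prep:lem-Weps_hhtolow}, Propositions \ref{prep:prop-brownian-path-approximation}--\ref{prep:lem-mixed_hhtolow}), not independence of the $u$ and $v$ variables alone. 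More fundamentally, the most dangerous resonance is not linear$\times$linear at all but the one between $\phi^\pm$ and the remainder $\psi$: this term is well-defined deterministically, but its estimate (cf.\ \eqref{intro:eq-remainder-resonance-3}) is unbounded in the high frequency $N$ and cannot be returned to $\psi$. No probabilistic cancellation exists for this interaction because $\psi$ is not independent of the data. It is precisely to absorb this term that the modulated left-moving wave $A^-_N\phi^-_N$ (and the bilinear piece $B_{M,N}\phi^+_M\phi^-_N$) are needed in the ansatz, rather than a paracontrolled Taylor expansion of $\Second$ alone.
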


The statement and proof of Theorem \ref{intro:thm} involve analytic, geometric, and probabilistic aspects, which is our motivation for proving this theorem. As mentioned above and further detailed below (see Section \ref{section:open}), Theorem \ref{intro:thm} can also be seen as partial progress towards proving the invariance of the Gibbs measure for the wave maps equation.

\subsection{Related works}\label{section:related}

The study of the $(1+1)$-dimensional wave maps evolution of Brownian paths on Riemannian manifolds features intriguing connections to several research areas in partial differential equations. 
In this subsection we describe some of these connections and we highlight related results on the study of the initial value problem for dispersive and hyperbolic equations with random initial data, on the deterministic well-posedness of (semi-linear) geometric wave equations, and on singular parabolic stochastic PDEs. 
In view of the rich and vast literature on these subjects, we do not attempt to be exhaustive.

\subsubsection{Random dispersive equations}\label{section:literature-dispersive} In recent years, there has been tremendous interest in random dispersive equations. In this introduction, we only discuss selected works in this field and refer the reader to the surveys \cite{BOP19, NAST}, the introduction of \cite{B20_2}, and the original works \cite{BOP15_2,B94,B96,B18_3,B20_2,BDNY22,BT08_1,BT08_2,CG19,DH19,DH21,DLM17,DNY19,DNY20,DNY21,GKO18-2,KLS20,KM19,KMV20,LM14,NORBS,OOT20,OOT21,OST21,ST21,T15}. 

The main motivation of this work is the Gibbs measure problem for Hamiltonian PDEs. At a formal level, we consider a symplectic manifold $(\mathcal{N},\omega)$ of dimension $\operatorname{dim}(\mathcal{N})=2n$, where $n\in \mathbb{N}\,\raisebox{0.5pt}{$\medcup$} \, \{ \infty\}$, and a Hamiltonian $H\colon \mathcal{N}\rightarrow \R$. Then, the Hamiltonian equation and Gibbs measure are formally given by 
\begin{equation}\label{intro:eq-Hamiltonian-eq}
\dot{\phi} = \nabla_\omega H(\phi) 
\end{equation}
and 
\begin{equation}\label{intro:eq-Gibbs-formal}
\mu = \mathcal{Z}^{-1} \exp(-H) \, \omega^n,
\end{equation}
where $\mathcal{Z}$ is a normalization constant and $\omega^n$ is the $n$-fold wedge product of $\omega$. 

\begin{gibbs}
Prove the existence of the Gibbs measure and its invariance under the Hamiltonian flow.
\end{gibbs}

In the following, we only discuss proofs of invariance, and refer to aspects regarding the existence of the Gibbs measure to \cite{ADC21,GJ87,GH21}. To fix ideas, we further restrict our discussion to the (renormalized) periodic defocusing nonlinear Schrödinger equation (NLS). In dimension $d=2$, it is given by\footnote{In dimension $d=1$, the renormalization in \eqref{intro:eq-NLS} is not necessary.}
\begin{equation}\label{intro:eq-NLS}
i \partial_t \phi + \Delta \phi = \lcol |\phi|^{p-1} \phi \rcol \qquad (t,x) \in \R \times \mathbb{T}^2. 
\end{equation}
Here, $\lcol \hspace{-0.05ex}|\phi|^{p-1} \phi  \hspace{-0.05ex} \rcol$ denotes the Wick-ordering of $ |\phi|^{p-1} \phi$. Inspired by Lebowitz-Rose-Speer~\cite{LRS88}, the seminal work of Bourgain \cite{B94} solved the Gibbs measure problem for the (NLS) in dimension $d=1$. In later work \cite{B96}, Bourgain also solved this problem for (NLS) with $(d,p)=(2,3)$, i.e., the cubic nonlinear Schrödinger equation in two dimensions. The main difficulties in \cite{B96} stem from the low regularity of the initial data. Let $\phi_0$ be a random sample from the corresponding Gibbs measure. In particular, it holds almost surely that  $\phi_0 \in H^s_x(\mathbb{T}^2)\backslash L_x^2(\mathbb{T}^2)$ for all $s<0$. Since the deterministic critical regularity of (NLS) with $(d,p)=(2,3)$ is given by $s=0$, the initial value problem with $\phi(0)=\phi_0$ cannot be solved using only deterministic arguments. In order to utilize probabilistic cancellations, Bourgain wrote the solution as 
\begin{equation}\label{intro:eq-Bourgain-trick}
\phi= e^{it\Delta} \phi_0 + \psi,
\end{equation}
where the nonlinear remainder $\psi$ solves the forced nonlinear Schrödinger equation
\begin{equation}\label{intro:eq-forced-NLS}
\begin{cases}
i \partial_t \psi + \Delta \psi = \lcol |e^{it\Delta} \phi_0 + \psi|^{2} \big( e^{it\Delta} \phi_0 + \psi \big) \rcol \qquad (t,x) \in \R \times \mathbb{T}^2, \\
\psi|_{t=0}=0.
\end{cases}
\end{equation}
In the dispersive PDE community, the decomposition \eqref{intro:eq-Bourgain-trick} is known as Bourgain's trick. In the parabolic SPDE community, a similar decomposition is often called Da Prato-Debussche trick  \cite{DD02}. Using a combination of dispersive and probabilistic estimates, Bourgain proved probabilistic nonlinear smoothing for
\eqref{intro:eq-forced-NLS}, which yields control over $\psi$ at the higher\footnote{In \cite{B96}, the contraction argument is actually performed at a regularity strictly between $0$ and $1/2$, but a minor variant yields the optimal regularity $1/2-$ for $\psi$ (cf. \cite{CLS21,DNY19}).} regularity $s=1/2-$.

The Gibbs measure problem for the two-dimensional (NLS) with higher-order nonlinearities, i.e., $p \geq 5$, was solved only recently by Deng-Nahmod-Yue~\cite{DNY19}. In this case, the deterministic threshold for local well-posedness is $s=1-2/(p-1)$, which is higher than the regularity of the nonlinear remainder in Bourgain's trick. The main contribution of \cite{DNY19} is a more detailed random expansion, which is written as
\begin{equation}\label{intro:eq-DNY}
\phi = e^{it\Delta} \phi_0 + \sum_N h^N(t) \big( e^{it\Delta} P_N \phi_0 \big) + z.
\end{equation}
Here,  $(h^N(t))_N$ is a sequence of random averaging operators, $(P_N)_N$ is the sequence of Littlewood-Paley projections, and $z$ is a nonlinear remainder. The term $h^N(t)\big( e^{it\Delta} P_N \phi_0  \big)$ incorporates certain low$\times\hdots\times$low$\times$high-interactions between the solution $\phi$ and the linear evolution of $P_N \phi_0$, which are the worst interactions in \eqref{intro:eq-NLS}. An essential feature of the argument in \cite{DNY19}, which was first observed in a different context by the first author~\cite{B18_2}, is that the random averaging operator $h^N(t)$ can be chosen as probabilistically independent from the high-frequency data $P_N \phi_0$. In comparison with Bourgain's trick \eqref{intro:eq-Bourgain-trick}, the advantage of the random expansion \eqref{intro:eq-DNY} is that the nonlinear remainder $z$ lives at regularity $1-$, which is above the deterministic threshold for local well-posedness. More recently, the random averaging operators were generalized to random tensors in \cite{DNY20}.\\ 

In dimension $d=3$, the Gibbs measure problem for the (NLS) is still open. However, the Gibbs measure problem has been solved for other three-dimensional dispersive equations in \cite{B20_2,BDNY22,DNY21,OOT20,OOT21}.  Since a more detailed discussion of this open problem is beyond the scope of this introduction, we refer the interested reader to \cite[Section 1.2.1]{DNY21}.

\subsubsection{The wave maps equation}

The wave maps equation is a prime example of a semi-linear geometric wave equation. It is the natural generalization of the linear wave equation on $(1+d)$-dimensional Minkowski space for scalar-valued fields to fields that map into a Riemannian manifold. Its nonlinearity arises from the geometric constraints imposed by the target manifold and it contains the null form $\partial_\mu \phi^i \partial^\mu \phi^j$. The latter has a favorable algebraic structure, which can be thought of as a cancellation property for the worst interactions of parallel waves.

For sufficiently regular initial data the local well-posedness of the wave maps equation in any space dimension $d \geq 1$ can be established using just energy estimates. 
At lower regularities the null structure of the wave maps nonlinearity plays a key role for the local existence theory.
The optimal sub-critical local well-posedness in $(H^s_x \times H^{s-1}_x)(\mathbb{R}^d)$, $s > \frac{d}{2}$, for $d \geq 2$ was obtained by Klainerman-Machedon~\cite{KM93, KM95, KM97} and by Klainerman-Selberg~\cite{KS97, KS02}. 
In one space dimension $d=1$ the analogous optimal sub-critical local well-posedness in $(H^s_x \times  H^{s-1}_x)(\mathbb{R}^d)$, $s > \frac12$, was established by  Machihara-Nakanishi-Tsugawa~\cite{MNT10}. The initial value problem for the wave maps equation is expected to be ill-posed in $(H^s_x \times H^{s-1}_x)(\mathbb{R}^d)$ for $s \leq \frac{d}{2}$, see Tao~\cite{Tao00} and references therein.

In one space dimension the global existence of finite energy solutions to the wave maps equation just follows from the local well-posedness in $(H^1_x \times L^2_x)(\mathbb{R})$ and from energy conservation.
In contrast, the global regularity question, i.e., the global existence of smooth solutions, becomes much more subtle in higher space dimensions $d \geq 2$, even for small data.
A key difficulty is that the wave maps nonlinearity is no longer perturbative at the critical regularity $(\dot{H}^{\frac{d}{2}}_x \times \dot{H}^{\frac{d}{2}-1}_x)(\mathbb{R}^d)$, more precisely this difficulty stems from certain low-high interactions. 
An influential idea of Tao~\cite{Tao01_1, Tao01_2} to overcome this issue is to exploit the gauge freedom of the wave maps problem, specifically the freedom in the choice of coordinates on the target manifold or rather the freedom in the choice of frames on the tangent space of the target manifold, to define a physical space gauge transform that recasts the wave maps nonlinearity into a perturbative form.
Using this insight, Tao established global regularity for wave maps into the unit sphere for smooth initial data that is small in the critical Sobolev space, first in dimensions $d \geq 5$ in~\cite{Tao01_1} and then for all dimensions $d \geq 2$ in~\cite{Tao01_2}. These works also incorporated a functional framework introduced in earlier global regularity results of Tataru~\cite{Tat98, Tat01} for smooth initial data that are small in the critical homogeneous Besov space. Other target manifolds were considered in~\cite{KR01, NSU03, SS02, K03, K04, Tat05}.

In the energy-critical case of $d=2$ space dimensions, the dynamics of wave maps with large energies is quite well-understood by now, while in the energy super-critical case $d \geq 3$ little is known about the long-time behavior for large initial data.
It turns out that the geometry of the target manifold is decisive for the long-time dynamics of wave maps with large energies in $d=2$ dimensions. Indeed, the blowup analysis of Struwe~\cite{Str03} in the equivariant case uncovered that singularity formation must be tied to the existence of a non-trivial harmonic map into the target manifold. Krieger-Schlag-Tataru~\cite{KST08}, Rodnianski-Sterbenz~\cite{RS10}, and Rapha{\"e}l-Rodnianski~\cite{RR12} later constructed examples of (equivariant) wave maps into the unit sphere that blow up in finite time by concentration of a non-trivial harmonic map. 
Finally, the threshold conjecture for energy-critical wave maps asserts that global regularity is expected for initial data with energy below the energy of any non-trivial finite energy harmonic map into the target manifold. 
This conjecture was proved independently by Krieger-Schlag~\cite{KS12} for the hyperbolic plane as the target, by Tao~\cite{Tao_large_WM} for all hyperbolic spaces, and by Sterbenz-Tataru~\cite{ST10_1, ST10_2} for all target manifolds that can be isometrically embedded into Euclidean space. 

\medskip 

\subsubsection{Singular parabolic SPDEs} \label{section:literature-parabolic}
While the subject of this paper is the wave maps equation \eqref{intro:eq-WM}, which is \emph{hyperbolic}, our methods are influenced by recent advances on singular \emph{parabolic} SPDEs. Since a complete discussion of the literature is beyond the scope of this introduction, we only provide a broad overview and start with the \emph{scalar-valued} setting. While the methods are more general, the reader may think of the parabolic $\Phi^4_3$-model, which is given by
\begin{equation}\label{intro:eq-phi43}
\partial_t \phi - \Delta \phi = - \lcol \phi^3 \rcol + \infty \cdot \phi + \xi \qquad (t,x)\in \R \times \mathbb{T}^3. 
\end{equation}
Here, the term ``$\infty\cdot \phi$" denotes a further renormalization, which goes beyond the Wick-ordering $\lcol \phi^3 \rcol$, and $\xi$ denotes space-time white noise. The local well-posedness of \eqref{intro:eq-phi43} was first proven by Hairer in his seminal work on the \emph{theory of regularity structures} \cite{H14}. Alternative approaches to singular parabolic SPDEs, such as the parabolic $\Phi^4_3$-model, are given by the para-controlled calculus of Gubinelli, Imkeller, and Perkowski \cite{GIP15}, the renormalization group approach of Kupiainen \cite{K16}, and an approach of Otto and Weber \cite{OW16}. Out of these four different approaches, the para-controlled calculus of \cite{GIP15} is closest to both the methods in this paper and the dispersive PDE literature. In fact, it served as an inspiration for the random averaging operators in \cite{DNY19}, which were previously discussed in Section \ref{section:literature-dispersive}. \\

We now leave the scalar-valued setting and consider \emph{geometric} equations. In \cite{BGHZ21,H16-2}, Bruned, Gabriel, Hairer, and Zambotti studied the geometric stochastic heat equation, which is given by 
\begin{equation}\label{intro:eq-stochastic-heat-approx}
\partial_t \phi_\varepsilon^k = \partial_x^2 \phi_\varepsilon^k + \Gamma^k_{ij}(\phi_\varepsilon) \partial_x \phi_\varepsilon^i \partial_x \phi_\varepsilon^j + h^k(\phi_\varepsilon) + \sigma^k_i(\phi_\varepsilon) \xi_\varepsilon^i + V_{\rho,\sigma}^k(\phi_\varepsilon) \qquad (t,x) \in (0,\infty)\times \mathbb{T}. 
\end{equation}
Here, $(\Gamma^k_{ij})$ are the Christoffel symbols, $h$ is a given vector field, and $(\sigma_i)$ are vector fields which are chosen depending on the Riemannian metric $g$. The stochastic forcing $\xi_\varepsilon^i$ is a smooth approximation of a space-time white noise $\xi^i$ obtained through convolution with $\rho_\varepsilon(t,x):= \varepsilon^{-3} \rho(t/\varepsilon^2,x/\varepsilon)$. Finally, $V^k_{\rho,\sigma}$ is a vector field which serves as a renormalization and can depend on the vector fields $\sigma_i$ and the convolution kernel $\rho$.  The main theorem \cite[Theorem 1.6]{BGHZ21} proves that the limit $\phi:= \lim_{\varepsilon\rightarrow 0} \phi_\varepsilon$ exists on a small time-interval. Furthermore, it proves that there exists a choice $V_{\rho,\sigma}=V^{\textup{canon}}_{\rho,\sigma}$ such that the limit $\phi$  does not depend on the choice of $\sigma$ or $\rho$. It is natural to call the corresponding limit the solution of 
\begin{equation}\label{intro:eq-stochastic-heat}
\partial_t \phi^k = \partial_x^2 \phi^k + \Gamma^k_{ij}(\phi) \partial_x \phi^i \partial_x \phi^j + h^k(\phi) + \sigma^k_i(\phi) \xi^i  \qquad (t,x) \in (0,\infty)\times \mathbb{T}. 
\end{equation}
The invariance of Brownian loops under the dynamics of \eqref{intro:eq-stochastic-heat} is still open and we refer the reader to \cite[Section 4.3]{BGHZ21} for a detailed discussion. 

In related works, Shen \cite{Shen21} and Chandra, Chevyrev, Hairer, Shen \cite{CCHS20,CCHS22} obtained similar results for the stochastic Yang mills equation.  The main result of this paper (Theorem \ref{intro:thm-rigorous}), which will be discussed momentarily, is a first step towards extending the results of  \cite{BGHZ21,CCHS20,CCHS22,Shen21} to hyperbolic equations.

\subsection{Main result and proof ideas}\label{section:intro-main}

In most earlier results on wave maps, the initial data is placed in the $L^2$-based Sobolev spaces $H_x^s \times H_x^{s-1}$. For our purposes, it is more convenient to work in the $L^\infty$-based Hölder spaces $\C_x^s \times \C_x^{s-1}$ (see Definition \ref{prep:def-spaces}). There are two reasons for this: First, the Brownian path $B$ and the white noise velocity $V$ have the same regularity in both Sobolev and Hölder spaces. Second, the $(1+1)$-dimensional linear wave equation is bounded on Hölder spaces, which is not the case for the $(1+d)$-dimensional linear wave equation in higher dimensions.

While the main focus of this paper concerns the wave maps equation with random initial data, our estimates also lead to the following theorem on deterministic well-posedness.

\begin{theorem}[Deterministic well-posedness and mild ill-posedness]\label{intro:thm-deterministic}
Let $(\M,g)$ be a compact Riemannian manifold and let $r\in \R$. 
\begin{enumerate}[label={(\roman*)},leftmargin=8mm]
    \item\label{intro:item-determininistic-well} (Well-posedness) If $r>1/2$, the wave maps equation \eqref{intro:eq-WM} is locally well-posed in $\C_x^r \times \C_x^{r-1}$. 
    \item\label{intro:item-determininistic-ill} (Mild ill-posedness) If $r\leq 1/2$, $\M = \mathbb{S}^{D-1} \subseteq \R^\dimA$, and $D\geq 2$, then the first Picard iterate of the wave maps equation is unbounded on $\C_x^r \times \C_x^{r-1}$. 
\end{enumerate}
\end{theorem}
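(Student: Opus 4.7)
For part (i), I would pass to null coordinates $u = x + t$, $v = x - t$, in which $\Box = 4\partial_u \partial_v$ and the wave maps equation reads
\begin{equation*}
\partial_u \partial_v \phi^k = \Second^k_{ij}(\phi)\, \partial_u \phi^i\, \partial_v \phi^j.
\end{equation*}
The linear evolution takes the form $\phi^{(0)}(t,x) = a(u) + b(v)$ with $a,b \in \C^r$ determined by the initial data (with $a-b$ encoding $\Phi$, an antiderivative of $\phi_1$), so that $R := \partial_u \phi$ and $L := \partial_v \phi$ are respectively right-movers and left-movers. I would set up a Banach fixed-point argument for $\phi$ in a space like $C([0,T]; \C^r_x) \cap C^1([0,T]; \C^{r-1}_x)$ together with auxiliary control on $R$ and $L$ reflecting this characteristic decomposition. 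The crucial nonlinear estimate is that the Duhamel integral $\int_0^t \Box^{-1}[\Second(\phi) R L](s, \cdot)\, ds$ maps back into $C_T \C^r_x$: integration along a $u$- or $v$-characteristic provides one extra degree of regularity in the transverse coordinate, and for $r > 1/2$ the inequality $2(r-1) + 1 > 0$ makes the paradifferential product of the right-mover $R$ and left-mover $L$ controllable. Smoothness of $\Second$ together with composition estimates on Hölder spaces handle the $\phi$-dependence. The closure of these estimates for small $T > 0$ yields local well-posedness.

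For part (ii), specialize to $\M = \mathbb{S}^{D-1}$, where $\Second^k_{ij}(\phi) = -\phi^k \delta_{ij}$ and the equation reduces to $\Box \phi^k = \phi^k(|\partial_x \phi|^2 - |\partial_t \phi|^2)$. In null coordinates, the null form evaluates to $|\partial_x \phi^{(0)}|^2 - |\partial_t \phi^{(0)}|^2 = 4\, \partial_u \phi^{(0)} \cdot \partial_v \phi^{(0)}$ (Euclidean inner product of right- and left-movers), and for simple test data such as $\phi_0 \equiv e_D$ and $\phi_1 = e_1 \phi_1^s$ this is $-\phi_1^s(x+t)\phi_1^s(x-t)$. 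The first Picard iterate of the wave maps equation equals
\begin{equation*}
\phi^{(1)} = \phi^{(0)} - \Box^{-1}\bigl[\phi^{(0)} \cdot (|\partial_x \phi^{(0)}|^2 - |\partial_t \phi^{(0)}|^2)\bigr].
\end{equation*}
I would construct a sequence $(\phi_0^{(K)}, \phi_1^{(K)})$ in $\C^r_x \times \C^{r-1}_x$ with uniformly bounded norms, but with $\phi^{(1),(K)}$ unbounded in $C([0,T]; \C^r_x)$. The natural candidate is the lacunary family $\phi_1^{(K)} = e_1 \sum_{k=1}^{K} 2^{k(1-r)} \cos(2^k x)$ (truncated suitably on $\R$), which satisfies $\|\phi_1^{(K)}\|_{\C^{r-1}_x} \lesssim 1$ uniformly in $K$. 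The diagonal contribution to the null form is then $-\tfrac{1}{2}\sum_k 2^{2k(1-r)}[\cos(2^{k+1} x) + \cos(2^{k+1} t)]$, which has $\C^{s}_x$-norm of size $\sup_k 2^{k(2-2r+s)}$; for $s$ near $r-1$ this diverges as $K \to \infty$ exactly when $r < 1$. After multiplying by $\phi^{(0)}$ and applying $\Box^{-1}$, one identifies surviving modes (including cross-scale interactions that evade the null cancellation) whose $\C^r_x$-norm contributions accumulate as $\sum_k 2^{k(1 - 2r)}$ in the relevant component, diverging precisely for $r \leq 1/2$.

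The hardest step will be in part (ii), where the null structure provides delicate cancellations. At a single-frequency level the resonant contributions of $\phi^{(0)}$ against $\cos(2N_k x)$ and against $\cos(2N_k t)$ cancel each other exactly. Any explicit counterexample must therefore exploit cross-scale interactions — pairings of distinct dyadic frequencies — and, crucially, make use of the vector-valued structure afforded by $D \geq 2$, to produce a divergent contribution that is not absorbed either by the null cancellation or by the free-wave correction enforcing the zero initial conditions of $\phi^{(1)} - \phi^{(0)}$. Verifying that a surviving piece of size $2^{k(1-2r)}$ genuinely persists at each scale, across the full vector of components, is the subtle part of the argument and fixes the threshold $r = 1/2$.
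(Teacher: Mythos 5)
Part (i) is essentially the same approach as the paper's: pass to null coordinates, set up a contraction in H\"older-based spaces adapted to the $u$/$v$ decomposition, and exploit $r > 1/2$ to close the bilinear estimate. (The paper makes the ``auxiliary control on $R$ and $L$'' precise by working in a Bourgain-type tensor space $\Cprod{\gamma_1}{\gamma_2}$ and using a time-localization lemma, but the mechanism is as you describe.)

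Part (ii) has a genuine gap. Your concrete candidate $\phi_1^{(K)} = e_1 \sum_k 2^{k(1-r)}\cos(2^k x)$ puts all of the velocity in a single tangent component, and you yourself note that the diagonal resonance is annihilated by the null structure. This is not a near-miss: with $\phi_0 = e_D$ constant and $\phi_1 = e_1\psi'$, the relevant contribution to the Picard iterate reduces (after the integration by parts that the null form forces) to terms like $\int_{x-t}^{x+t}\psi'\,\psi^2\,dy = \tfrac13 \psi^3\big|_{x-t}^{x+t}$, a total derivative that vanishes once $\psi$ is supported inside $(x-t,x+t)$, and to products of antiderivatives $\Phi$ of $\psi'$, which lie in $\C^r$ and therefore give bounded contributions for every $r>0$. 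Nothing at all accumulates, and no cross-scale dyadic pairing rescues this. The mechanism the paper uses is different: take $\psi = \psi^1 e_1 + \psi^2 e_2$ with $\psi^1$ carrying frequency $n$ and $\psi^2$ carrying the shifted frequency $n-1$ \emph{together with a fixed low-frequency probe} $\sin(y)$, so the cubic term
\begin{equation*}
\int_{x-t}^{x+t}(\psi^1)'(y)\,(\psi^2(y))^2\,dy
\end{equation*}
is no longer a total derivative and its trigonometric expansion has a DC component (from $\sin(y)\sin((n-1)y)\cos(ny)$) that contributes a fixed amount at every dyadic scale. Summing over well-separated scales produces the divergence; the endpoint $r=1/2$ is exactly where the per-scale contribution is $O(1)$ and the sum diverges linearly in the number of scales. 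Your ``hardest step'' paragraph correctly diagnoses that a genuinely vector-valued input with two transverse components is needed, but it leaves the actual construction unresolved, and the cross-scale-dyadic heuristic points in the wrong direction: the surviving resonance is a low-frequency output of nearly-equal high frequencies within a single dyadic block, not an inter-block pairing.
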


Since the scaling-critical regularity of \eqref{intro:eq-WM} in H\"{o}lder spaces is given by $r=0$, the mild ill-posedness for $r\leq 1/2$ does not stem from the scaling symmetry. Instead, it is a result of bad high$\times$high$\rightarrow$low-interactions in the nonlinearity. 
Due to the mild ill-posedness for $r\leq 1/2$, we cannot treat the wave maps equation with our random initial data $(B,V)$ using a deterministic contraction-mapping argument.

\begin{remark}
Theorem \ref{intro:thm-deterministic} only yields a rather mild form of ill-posedness. It would therefore be interesting to also prove stronger forms of ill-posedness such as the failure of continuous dependence on the initial data or even norm inflation. Norm inflation may however be difficult to establish in $\C_x^r \times \C_x^{r-1}$ for $0<r\leq 1/2$ since the geometric constraints $\phi(t,x)\in \M$ prevent arbitrary growth in the $L_x^\infty$-norm. Since the main focus of this article is on geometric and probabilistic aspects of the well-posedness theory, we leave the proof of stronger forms of ill-posedness as an open problem (and refer the reader to \cite{K19,F20a,FO20,Oh17,ST20} for more detailed discussions of ill-posedness).
\end{remark}

Before we state the rigorous version of our main result (Theorem \ref{intro:thm-rigorous}), we give a precise definition of the random data. To this end, we first let $W\colon \R \rightarrow \R^\dimA$ be a Euclidean Brownian motion in the ambient space and we fix a reference point $B_0 \in \M$. For each $p\in \M$, we let $P(p)\colon \R^\dimA \rightarrow T_p \M$ be the orthogonal projection onto the tangent space of $\M$ at $p$. Then, the Brownian path $B\colon \R \rightarrow \M$ is defined as the solution to the Stratonovich SDE
\begin{equation}\label{intro:eq-B}
\mathrm{d}B(x)= P(B(x)) \circ \mathrm{d}W(x), \qquad B(0)=B_0\in \M.
\end{equation}
In addition to the Brownian path $B$ itself, we also define smooth approximations $(B^\varepsilon)_{\varepsilon}\colon \R \rightarrow \M$. We implicitly restrict the parameter $\varepsilon$ to dyadic numbers, but do not further reflect this in our notation. We then first define the smooth approximations $(W^\varepsilon)_\varepsilon \colon \R \rightarrow \R^\dimA$ of the Euclidean Brownian motion $W$ by 
\begin{equation*}
W^\varepsilon := P_{\leq \varepsilon^{-1}} W,
\end{equation*}
where $P_{\leq \varepsilon^{-1}}$ is the Littlewood-Paley projection from Definition \ref{prep:def-LittlewoodPaley} below. Then, we define the smooth path $B^\varepsilon \colon \R \rightarrow \M$ as the solution to the classical ODE
\begin{equation}\label{intro:eq-Beps}
\partial_x B^\varepsilon(x) = P(B^\varepsilon(x)) \partial_x W^\varepsilon(x), \qquad B^\varepsilon(0)=B_0 \in \M. 
\end{equation}
In Corollary~\ref{prep:cor_Cloc_convergence_to_BV} below, it is shown that the smooth paths $(B^\varepsilon)_\varepsilon$ converge to the Brownian path $B$ in $\C_{\loc}^{s}$ for $s<1/2$. It remains to define the white noise velocity $V$ and its smooth approximations $(V^\varepsilon)_\varepsilon$. To this end, we let $\Wb\colon \R \rightarrow \R^\dimA$ be an independent copy of $W$ and define $\Wb^\varepsilon := P_{\leq \varepsilon^{-1}} \Wb$. Then, we explicitly define
\begin{equation}\label{intro:eq-Veps}
V^\varepsilon(x) := P(B^\varepsilon(x)) \partial_x \Wb^\varepsilon(x). 
\end{equation}
Due to the projection $P(B^\varepsilon(x))$, it holds that $V^\varepsilon \in (B^\varepsilon)^\ast T\M$, i.e., $V^\varepsilon(x)\in T_{B^\varepsilon(x)} \M$ for all $x\in \R$. In Corollary~\ref{prep:cor_Cloc_convergence_to_BV} below, it is shown that $(V^\varepsilon)_\varepsilon$ converges in $\C_{\loc}^{s-1}$ for $s<1/2$. Therefore, we can define the white noise velocity $V$ as 
\begin{equation}\label{intro:eq-V}
V := \lim_{\varepsilon \rightarrow 0} V_\epsilon. 
\end{equation}
Equipped with the Brownian path $B$, the white noise velocity $V$, and their smooth approximations, we can now state our main result.

\begin{theorem}[Probabilistic local well-posedness]\label{intro:thm-rigorous}
Let $B\colon \R \rightarrow \M$ be the Brownian path,  let $V\in B^\ast T\M$ be the white noise velocity, and let  $(B^\varepsilon)_{\varepsilon>0}$ and $(V^\varepsilon)_{\varepsilon>0}$ be their smooth approximations.
Then, for all $\tau>0$ and $R\geq 1$, there exists an event $\mathcal{E}(\tau,R)$ such that the following two properties hold:
\begin{enumerate}[label={(\roman*)},leftmargin=8mm]
    \item (``High''-probability) We have that  
    \begin{equation*}
        \mathbb{P}\big(  \mathcal{E}(\tau,R) \big) \geq 1 - C R \exp\big(-c \tau^{-c}\big),
    \end{equation*}
where $C=C(\M)\geq 1$ and $c>0$ are constants. 
    \item (LWP) On the event $\mathcal{E}(\tau,R)$, the smooth global solutions $\phi^{\varepsilon}$ of \eqref{intro:eq-WM} with initial data $\phi^{\varepsilon}[0]=(B^{\varepsilon},V^\varepsilon)$ converge in $(\C_t^0 \C_x^s \medcap \C_t^1 \C_x^{s-1})([-\tau,\tau]\times [-R,R] \rightarrow \M)$.
\end{enumerate}
\end{theorem}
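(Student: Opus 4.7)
The plan is to perform a Bourgain / Da Prato--Debussche-type decomposition $\phi^\varepsilon = L^\varepsilon + \psi^\varepsilon$, where $L^\varepsilon$ is the free wave evolution of the approximate initial data $(B^\varepsilon, V^\varepsilon)$ and $\psi^\varepsilon$ is a more regular nonlinear remainder. Since Theorem \ref{intro:thm-deterministic} places us just below the deterministic well-posedness threshold $r = 1/2$, probabilistic cancellations must be exploited to solve for $\psi^\varepsilon$ at some regularity $r > 1/2$, where Hölder spaces are Banach algebras and the nonlinearity becomes perturbative. Substituting into \eqref{intro:eq-WM} yields a forced wave equation
\begin{equation*}
    \partial_\mu \partial^\mu \psi^{\varepsilon, k} = -\Second^{k}_{ij}(L^\varepsilon + \psi^\varepsilon)\, \partial_\mu (L^\varepsilon + \psi^\varepsilon)^i\, \partial^\mu (L^\varepsilon + \psi^\varepsilon)^j
\end{equation*}
with zero initial data, whose source splits into a purely stochastic piece (depending on $L^\varepsilon$ only), mixed pieces (linear and bilinear in $\psi^\varepsilon$), and a fully deterministic piece in $\psi^\varepsilon$. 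On an event $\mathcal{E}(\tau, R)$ controlled by the norms of a few explicit stochastic objects, I would run a contraction mapping for $\psi^\varepsilon$ in $(\C_t^0 \C_x^r \cap \C_t^1 \C_x^{r-1})([-\tau, \tau] \times [-R, R])$ uniformly in $\varepsilon$, and then pass to the limit.

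\textbf{Stochastic inputs and null form cancellation.} By d'Alembert's formula in $1+1$ dimensions,
\begin{equation*}
    L^\varepsilon(t, x) = \tfrac{1}{2}\bigl(B^\varepsilon(x+t) + B^\varepsilon(x-t)\bigr) + \tfrac{1}{2} \int_{x-t}^{x+t} V^\varepsilon(y)\, dy,
\end{equation*}
so $L^\varepsilon$ factorizes as $F^\varepsilon(x+t) + G^\varepsilon(x-t)$ with $F^\varepsilon, G^\varepsilon$ functions of a single characteristic variable built from $W$ and $\Wb$. Consequently $\partial_+ L^\varepsilon$ depends only on $x+t$ and $\partial_- L^\varepsilon$ only on $x-t$. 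Combined with the factorization of the null form $\partial_\mu \phi^i \partial^\mu \phi^j = -\tfrac{1}{2}\bigl((\partial_+ \phi^i)(\partial_- \phi^j) + (\partial_- \phi^i)(\partial_+ \phi^j)\bigr)$, the leading quadratic stochastic term $\Second^{k}_{ij}(L^\varepsilon)\, \partial_\mu L^{\varepsilon, i} \partial^\mu L^{\varepsilon, j}$ decouples into products of two factors that live on distinct characteristic rays and involve independent Gaussian noises --- precisely the structure needed to defeat the resonant high$\times$high$\to$low interactions responsible for the ill-posedness in Theorem \ref{intro:thm-deterministic}. Using Gaussian hypercontractivity on the relevant Wiener chaoses, a Kolmogorov-type chaining argument in Hölder spaces, and (if needed) the subtraction of an explicit deterministic counterterm, this quadratic stochastic object is shown to converge in $\C_t^0 \C_x^{2r-1}$ for some $r$ slightly above $1/2$, with the claimed tail $\exp(-c \tau^{-c})$ following from Gaussian concentration after localization to $[-\tau, \tau] \times [-R, R]$ and summation over dyadic spatial scales.

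\textbf{Nonlinear contraction and convergence.} With the stochastic inputs in hand, the remaining analysis is essentially deterministic. Every term in the equation for $\psi^\varepsilon$ that contains a rough factor $L^\varepsilon$ or $\partial L^\varepsilon$ either (i) carries at least one factor of $\psi^\varepsilon$ or $\partial \psi^\varepsilon$ in $\C_x^r$ with $r > 1/2$ to supply the missing regularity, estimated by paraproduct and Moser-type bounds in Hölder spaces together with the boundedness of the $1+1$-dimensional linear wave propagator on $\C_x^s$, or (ii) is already reduced to the purely stochastic quadratic object handled in the previous step. Composition with the smooth functions $\Second^{k}_{ij}(\cdot)$ of the rough argument $L^\varepsilon + \psi^\varepsilon$ is controlled by Moser estimates valid for $r > 1/2$. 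A small-data contraction on $[-\tau, \tau]$, with $\tau$ determined by the norms of the stochastic objects on $\mathcal{E}(\tau, R)$, yields uniform-in-$\varepsilon$ bounds; combined with the convergence of $(B^\varepsilon, V^\varepsilon)$ from Corollary \ref{prep:cor_Cloc_convergence_to_BV} and of the quadratic stochastic object, continuity of the fixed point with respect to the data gives convergence of $\phi^\varepsilon$ in the stated norm.

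\textbf{Main obstacle.} The chief difficulty is that the linear profile $L^\varepsilon$ does not lie on $\M$ even though $(B^\varepsilon, V^\varepsilon)$ is tangent to $\M$ at time zero, so the Bourgain decomposition breaks the geometric constraint and the resulting drift must be absorbed into the equation for $\psi^\varepsilon$ without spoiling the probabilistic gains. Closely related is the fact that the resonant quadratic object couples the \emph{values} of $L^\varepsilon$ (through the nonlinear coefficient $\Second(L^\varepsilon)$) with its \emph{derivatives}, so one must verify that the independence structure underlying the null form cancellation survives this nonlinear coupling. The $1+1$-dimensional factorization of the linear flow together with the independence of $W$ and $\Wb$ are the two essential ingredients that make both issues tractable, and neither mechanism would be available in higher space dimensions or with a single driving noise.
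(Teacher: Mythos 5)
Your proposal rests on a single-layer Bourgain/Da Prato--Debussche decomposition $\phi^\varepsilon = L^\varepsilon + \psi^\varepsilon$ with $\psi^\varepsilon$ placed in $\C_x^r$, $r>1/2$, and a direct contraction at that regularity. This is precisely the strategy the paper rules out, and the gap is not a technicality: the wave maps equation in null coordinates exhibits \emph{no nonlinear smoothing}. Concretely, consider the low$\times$high interaction discussed in Section~\ref{section:intro-lh}. Writing $\phi^+_M = P_M \phi^+$ for the right-moving linear wave at frequency $M$, the Duhamel integral of $- P_{\lesssim 1}^{u,v}\bigl( \Second^k_{ij}(\phi)\partial_v \phi^j\bigr)\,\partial_u \phi^{+,m}_M$ has a leading term of the form $\bigl(\int_u^v \cdots\bigr)\,\phi^{+,m}_M(u)$, which lives at exactly the regularity $s<1/2$ of $\phi^+_M$, not any higher. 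This contribution therefore \emph{cannot} be absorbed into a remainder $\psi^\varepsilon\in\C^r_x$ with $r>1/2$, and no amount of stochastic cancellation helps because the low-frequency prefactor involves the full (unknown) solution $\phi$, not just the explicit Gaussian data. Similarly, the resonance between the initial data and the remainder (\eqref{intro:eq-remainder-resonance-1}--\eqref{intro:eq-remainder-resonance-3}) produces a term with $\Cprod{r-1}{r-1}$-norm growing like $N^{1-2s}$, hence unbounded as $N\to\infty$. Your contraction for $\psi^\varepsilon$ therefore does not close.

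The paper's resolution is structurally more elaborate: it replaces the single remainder by the Ansatz
\begin{equation*}
\phi^i = \sum_M A^{+,i}_{M,m}(u,v)\,\phi^{+,m}_M(u) + \sum_N A^{-,i}_{N,n}(u,v)\,\phi^{-,n}_N(v) + \sum_{M\sim_\delta N} B^i_{M,N,mn}(u,v)\,\phi^{+,m}_M(u)\,\phi^{-,n}_N(v) + \psi^i(u,v),
\end{equation*}
where the modulations $A^\pm_M$ and $B_{M,N}$ absorb the non-perturbative low$\times$high and resonance terms. These modulations solve their own nonlinear system (the modulation equations of Section~\ref{section:modulation}), which cannot be handled by classical ODE theory and requires the para-controlled approach of Gubinelli--Imkeller--Perkowski. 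Only the residual $\psi$ lands in $\Cprod{r}{r}$. Your intuition about the $1+1$ characteristic factorization, the independence of $W$ and $\Wb$, and the high$\times$high$\to$low probabilistic cancellation is correct and does reappear in the paper (Lemma~\ref{prep:lem-Weps_hhtolow}, Propositions~\ref{prep:prop-brownian-path-approximation}--\ref{prep:lem-mixed_hhtolow}, Lemma~\ref{bilinear:lemma-cubic}); but those ingredients control only the purely stochastic high$\times$high resonances, not the low$\times$high problem. Two further points: the ``geometric constraint'' worry you raise is a non-issue in this extrinsic framework, since the second fundamental form is replaced by a smooth compactly supported extension $\widetilde{\Second}$ on $\R^D$ and the constraint $\phi\in\M$ is recovered a posteriori from uniqueness and continuous dependence; and the paper obtains smallness not by shrinking the contraction time but by a scaling argument that rescales and recenters the data (the shifted wave map of Section~\ref{section:ansatz}).
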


We note that the local well-posedness statement in Theorem \ref{intro:thm-rigorous} is slightly non-standard since, in order for $\mathbb{P}\big(  \mathcal{E}(\tau,R) \big)$ to be close to one, $\tau$ needs to be small depending on $R$. Put differently, Theorem \ref{intro:thm-rigorous} does not yield local well-posedness in time, but rather yields local well-posedness in space-time. Since the random data $(B,V)$ does not decay in space and the wave maps equation exhibits finite speed of propagation, this version of local well-posedness is natural. \\

We now describe the main ideas in the proof of Theorem \ref{intro:thm-rigorous}. Throughout this informal discussion, we denote the regularity parameter for the Brownian path by $s<1/2$. Furthermore, we formally set $\varepsilon=0$ and omit smooth cut-off functions from our notation. The first step, which was already used in \cite{KT98,MNT10,Tao00}, is to switch from Cartesian to null-coordinates. We define
\begin{equation*}
u:= x-t \qquad \text{and} \qquad v:=x+t. 
\end{equation*}
Due to d'Alembert's formula, the linear evolution of the initial data $(B,V)$ is given by\footnote{For technical reasons, we will later write the linear evolution as $\phi_{\textup{lin}}=\theta (\phi^+(u)+\phi^{-}(v))$, where $\theta>0$ is a small parameter. Due to this, we will need to adjust the definitions of $\phi^+$ and $\phi^-$, see e.g. \eqref{ansatz:eq-shifted-linear}. In the introduction, we ignore this technicality.}
\begin{equation}\label{intro:eq-lin}
\phi_{\textup{lin}}(u,v) = \phi^+(u)+\phi^-(v), 
\end{equation}
where
\begin{equation}
    \phi^\pm(x) = \frac{1}{2} \Big( B(x) \mp \int_0^x \mathrm{d}y \, V(y) \Big). 
\end{equation}
We refer to $\phi^+$ and $\phi^-$ as the right and left-moving linear waves, respectively. The wave maps equation in null coordinates is given by
\begin{equation}\label{intro:eq-WM-NC}
\begin{cases}
\partial_u \partial_v \phi^k = - \Second^k_{ij}(\phi) \partial_u \phi^i \partial_v \phi^j \\
\phi|_{u=v} = B, \, (\partial_v - \partial_u ) \phi|_{u=v} = V. 
\end{cases}
\end{equation}

The most difficult aspects of our argument are linked to the absence of nonlinear smoothing for \eqref{intro:eq-WM-NC}. In particular, we cannot rely on Bourgain's trick. Instead, we require a more delicate Ansatz, which is related to but different from the random averaging operators in \eqref{intro:eq-DNY}. To motivate this Ansatz, we heuristically discuss low$\times$high and high$\times$high$\rightarrow$low-interactions.

\subsubsection{\protect{Low$\times$high-interactions}}\label{section:intro-lh} 

Our treatment of the low$\times$high-interactions is motivated by the gauge transform of Tao \cite{Tao01_1,Tao01_2}, which was already mentioned in Section \ref{section:related}. While we will also be multiplying the (linear) evolution with a low-frequency modulation, our construction of the modulation differs significantly from \cite{Tao01_1,Tao01_2}.

We first recall from \eqref{intro:eq-lin} that the linear evolution is given by  $\phi_{\textup{lin}}^i(u,v)=\phi^{+,i}(u)+\phi^{-,i}(v)$. We now focus on the right-moving component $\phi^{+,i}(u)$ and restrict to frequencies of size $\sim M$. The corresponding portion of the linear evolution is given by $\phi^{+,i}_M(u):= (P_M \phi^{+,i})(u)$. More generally, let us replace $\phi^{+,i}_M(u)$ by the modulated right-moving wave $A^{+,i}_{M,m}(u,v) \phi^{+,m}_M(u)$. For the moment, we assume that the modulation $A^{+,i}_{M,m}$ is supported on frequencies of size $\sim 1$ in both variables. Inserting this component into the nonlinearity yields
\begin{equation*}
    -\Second^{k}_{ij}(\phi(u,v))\partial_u \big( A_{M,m}^{+,i}(u,v)\phi^{+,m}_M(u) \big) \, \partial_v \phi^{j}(u,v).
\end{equation*}
Since we are currently discussing low$\times$high-interactions, we turn to the sub-term 
\begin{equation}\label{intro:eq-low-high}
- P_{\lesssim 1}^{u,v}\big( \Second^{k}_{ij}(\phi) \partial_v \phi^{j}\big)(u,v) \, \partial_u \big( A_{M,m}^{+,i}(u,v) \phi^{+,m}_M(u) \big),
\end{equation}
where the product $\Second^{k}_{ij}(\phi) \partial_v \phi^{j}$ enters at low frequencies in both variables. Eventually, we will solve the wave maps equation \eqref{intro:eq-WM-NC} using its Duhamel integral formulation. Using explicit calculations (see Proposition \ref{prep:prop-duhamel}), the Duhamel integral of \eqref{intro:eq-low-high} is of the form
\begin{equation}\label{intro:eq-low-high-Duh}
\begin{aligned}
&\Duh\Big[ - P_{\lesssim 1}^{u,v}\big( \Second^{k}_{ij}(\phi) \partial_v \phi^{j}\big)(u,v) \partial_u \big( A_{M,m}^{+,i}(u,v) \phi^{+,m}_M(u) \big) \Big] \\
=& - \Big( \int_u^v \dv^\prime  P_{\lesssim 1}^{u,v}\big( \Second^{k}_{ij}(\phi) \partial_v \phi^{j}\big)(u,v^\prime) A_{M,m}^{+,i}(u,v^\prime) \Big) \phi^{+,m}_M(u) 
+ \{ \textup{error terms}\}. 
\end{aligned}
\end{equation}
The leading term in \eqref{intro:eq-low-high-Duh} has the same regularity as $\phi^+_M(u)$,  which witnesses the absence of nonlinear smoothing. In particular, \eqref{intro:eq-low-high-Duh} cannot be absorbed into a smoother nonlinear remainder. However, the leading term in \eqref{intro:eq-low-high-Duh} is exactly of the same form as our starting point $A^{+,i}_{M,m}(u,v) \phi^{+,m}_M(u)$. Therefore, we can hope to absorb it into the $k$-th component $A^{+,k}_{M,m}(u,v) \phi^{+,m}_M(u)$. By pursuing this idea, one is quickly lead to the ordinary differential equation (ODE)  given by 
\begin{equation}\label{intro:eq-ODE}
\partial_v A^{+,k}_{M,m}(u,v) = -  P_{\lesssim 1}^{u,v}\big( \Second^{k}_{ij}(\phi) \partial_v \phi^{j}\big)(u,v) A_{M,m}^{+,i}(u,v), \qquad A^{+,k}_{M,m}(u,u)= \delta^k_m.
\end{equation}
The initial value $A^{+,k}_{M,m}(u,u) = \delta^k_m$ in \eqref{intro:eq-ODE} is due to the linear evolution, which has to be contained in $A^{+,k}_{M,m}(u,v) \phi^{+,m}_M(u)$. Provided that the pre-factor $\Second^{k}_{ij}(\phi) \partial_v \phi^{\,j}$ is well-defined, the  ODE \eqref{intro:eq-ODE} can easily be solved using the Picard-Lindelöf theorem. Unfortunately, \eqref{intro:eq-low-high} is an oversimplification of all low$\times$high-interactions. As a result, \eqref{intro:eq-ODE} does not present itself as a suitable model for the modulation equations (see Section \ref{section:modulation}). This is due to a further culprit hidden in $\Second^{k}_{ij}(\phi) \partial_v \phi^j$, which is given by
\begin{equation}\label{intro:eq-culprit}
P^{u,v}_{\lesssim 1} \big(\Second^{k}_{ij}(\phi) \big)(u,v) \, \partial_v \phi^{-,j}_N(v).
\end{equation}
Here, $\phi^-$ is the left-moving component of the initial data and we take $1\leq N \leq M$.\footnote{When $N>M$, the roles of $\phi^+$ and $\phi^-$ should be reversed.}
Restricting our attention only to the contribution of \eqref{intro:eq-culprit}, we are led to the ODE
\begin{equation}\label{intro:eq-rough-ODE}
\partial_v A^{+,k}_{M,m}(u,v) = -P^{u,v}_{\lesssim 1} \big(\Second^{k}_{ij}(\phi) \big)(u,v) A^{+,i}_{M,m}(u,v)\,  \partial_v \phi^{-,j}_N(v).
\end{equation}
It is evident from \eqref{intro:eq-rough-ODE} that we should no longer view the modulation $A^{+,k}_{M,m}$ as being supported on frequencies of size $\sim 1$ in the $v$-variable. Unfortunately, \eqref{intro:eq-rough-ODE} cannot be solved using classical theory for ODEs. The reason is that  $\phi^-$ only has regularity $s$, which suggests that $v\mapsto A^+_M(u,v)$ also only has regularity $s$. Since $s<1/2$, the regularity information is insufficient to even define the right-hand side in \eqref{intro:eq-rough-ODE}.  Instead of classical ODE methods, we utilize the para-controlled approach to rough ODEs by Gubinelli, Imkeller, and Perkowski \cite{GIP15}. Due to certain high$\times$high$\rightarrow$low-interactions, which will be described in Section \ref{section:intro-hh}, it is convenient to separate the cases $1\leq N \leq M^{1-\delta}$ and $M^{1-\delta}<N\leq M$. After reversing the roles of the $u$ and $v$-variables, we are lead to the following four terms in our Ansatz:
\begin{enumerate}
    \item The modulated right-moving wave $A_{M,m}^{+,i}(u,v) \phi^{+,m}_M(u)$.
    \item The modulated left-moving wave $A_{N,n}^{-,i}(u,v) \phi^{-,n}_N(v)$. 
    \item The bilinear term $B^i_{M,N,mn}(u,v) \phi^{+,m}_M(u) \phi^{-,n}_N(v)$, which contains left and right-moving components and only occurs when $\min(M,N)\geq \max(M,N)^{1-\delta}$.
    \item The nonlinear remainder $\psi^i(u,v)$, which lives at a higher regularity. 
\end{enumerate}

\subsubsection{\protect{High$\times$high$\rightarrow$low-interactions}}\label{section:intro-hh} 

As stated in Theorem \ref{intro:thm-deterministic}, the one-dimensional wave maps equation is (deterministically) ill-posed in $\C^r$ for all $r\leq 1/2$, which is a result of high$\times$high$\rightarrow$low-interactions. This is in sharp contrast to the deterministic theory for wave maps in high dimensions \cite{NSU03,SS02,Tao01_1}, where high$\times$high$\rightarrow$low-interactions are relatively harmless. In order to go beyond the deterministic theory, we need to rely on probabilistic cancellations for the Brownian path $B$ and the white noise velocity $V$. The main ingredient is the high$\times$high$\rightarrow$low-estimate 
\begin{equation}\label{intro:eq-hhl-BP}
\mathbb{E}\bigg[  \max_{\pm_1, \pm_2} \max_{1\leq i,j \leq \dimA} \sup_{M\sim N} M^s
\Big\| P_M \phi^{\pm_1,i}(x) \partial_x P_N \phi^{\pm_2,j}(x) \Big\|_{\C^{s-1}} \bigg] \lesssim 1,
\end{equation}
where $\pm_1,\pm_2 \in \{ +, - \}$.
We emphasize that since $s+(s-1)<0$, the left-hand side in \eqref{intro:eq-hhl-BP} cannot be bounded using only that $\phi^\pm \in \C^s$. In our analysis, we then encounter two different forms of frequency-resonances. \\

\emph{Frequency-resonances involving only the linear waves $\phi^\pm$:} These terms cannot even be defined using only deterministic estimates, but are relatively harmless once the probabilistic ingredient \eqref{intro:eq-hhl-BP} is taken into account. To illustrate this, let $K\sim M\sim N$ be comparable frequency scales and consider the cubic term
\begin{equation}\label{intro:eq-initial-resonance-1}
P_{\lesssim 1}^u \big(  \phi_K^{+,k}(u) \partial_u\phi^{+,m}_M(u) \big) \partial_v \phi^{-,n}_N(v).
\end{equation}
The term \eqref{intro:eq-initial-resonance-1} naturally occurs after a para-linearization of the second fundamental form $\Second(\phi)$ in  \eqref{intro:eq-WM-NC}. It has $u$-frequency $\sim 1$, $v$-frequency $\sim N$, and, due to \eqref{intro:eq-hhl-BP}, the amplitude 
\begin{equation}\label{intro:eq-initial-resonance-2}
\big\| P_{\lesssim 1}^u \big(  \phi_K^{+,k}(u) \partial_u \phi^{+,m}_M(u) \big) \partial_v \phi^{-,n}_N(v) \big\|_{L^\infty_{u,v}} \lesssim M^{-s} N^{1-s} \sim N^{1-2s}. 
\end{equation}
As our analysis will show (see Lemma \ref{bilinear:lemma-cubic} and Proposition \ref{bilinear:prop-paraless}), \eqref{intro:eq-initial-resonance-2} is sufficient to treat the cubic term \eqref{intro:eq-initial-resonance-1} as a smooth remainder. ~\\

\emph{Frequency-resonances involving the linear $\phi^\pm$ and the remainder $\psi$:} These terms can be defined using only deterministic ingredients. Despite being well-defined, however, the resulting estimates of the resonant term are worse than \eqref{intro:eq-initial-resonance-2}. 

Let $r\in (1/2,1)$ denote the regularity of the smooth remainder $\psi\in \Cprod{r}{r}$ (see Definition \ref{prep:def-spaces} below). Similar as in \eqref{intro:eq-initial-resonance-1}, we let $K\sim M\sim N$ be frequency scales and consider the resonant term
\begin{equation}\label{intro:eq-remainder-resonance-1}
P_{\lesssim 1}^u \big(  P_K^u \psi^k(u,v)\, \partial_u \phi^{+,m}_M(u) \big) \partial_v \phi^{-,n}_N(v). 
\end{equation}
In comparison with \eqref{intro:eq-initial-resonance-1}, we only replaced $\phi^{+,k}_K(u)$ by $P_K^u \psi^k(u,v)$. We make no restrictions on the $v$-frequencies of $\psi^k(u,v)$, and the worst case corresponds to $v$-frequencies of size $\sim 1$. Since $\psi \in \Cprod{r}{r}$ is arbitrary, we can only use direct (deterministic) estimates of \eqref{intro:eq-remainder-resonance-1}, which yield 
\begin{equation}\label{intro:eq-remainder-resonance-2}
    \big\| P_{\lesssim 1}^u \big( P_K^u \psi^k(u,v)\, \partial_u  \phi^{+,m}_M(u) \big) \partial_v \phi^{-,n}_N(v) \big\|_{L^\infty_{u,v}} \lesssim K^{-r} M^{1-s} N^{1-s} \sim N^{1-2s+1-r}. 
\end{equation}
Since $r<1$, the estimate is worse than \eqref{intro:eq-initial-resonance-2}. More importantly, it only holds that 
\begin{equation}\label{intro:eq-remainder-resonance-3}
\begin{aligned}
&\big\| P_{\lesssim 1}^u \big(  P_K^u \psi^k(u,v)\,  \partial_u \phi^{+,m}_M(u) \big) \partial_v \phi^{-,n}_N(v) \big\|_{\Cprod{r-1}{r-1}} \\
\lesssim&
 1^{r-1} N^{r-1} \big\| P_{\lesssim 1}^u \big(  P_K^u \psi^k(u,v)\,  \partial_u \phi^{+,m}_M(u) \big) \partial_v \phi^{-,n}_N(v) \big\|_{L^\infty_{u,v}} \\
 \lesssim& N^{1-2s}.
 \end{aligned}
\end{equation}
Since $s<1/2$, the right-hand side of \eqref{intro:eq-remainder-resonance-3} is unbounded in $N$. As a result, \eqref{intro:eq-remainder-resonance-1} cannot be absorbed back into the nonlinear remainder $\psi$.  Instead, \eqref{intro:eq-remainder-resonance-1} forms an additional contribution to the modulated left-moving wave $A^{-,i}_{N,n}(u,v) \phi^{-,n}_N(v)$. Unfortunately, the high$\times$high$\rightarrow$low-interactions between $\phi^\pm$ and $\psi$ do not only occur in \eqref{intro:eq-remainder-resonance-1}, but are part of many different terms in $\Second^k_{ij}(\phi) \partial_u \phi^i \partial_v \phi^j$. 

We remark that \eqref{intro:eq-remainder-resonance-2} can be improved if the condition $N\sim M$ is replaced by $N\leq M^{1-\delta}$. This is the reason for introducing the bilinear term $B^i_{M,N,mn}(u,v) \phi^{+,m}_M(u) \phi^{-,n}_N(v)$, which isolates the problematic case $N>M^{1-\delta}$.  

\subsubsection{Further remarks}
We now make further remarks and a few comparisons of this article with related works. 
\begin{enumerate}[label={(\roman*)},leftmargin=12mm]
     \item While the $(1+1)$-dimensional wave maps equation is completely integrable (see e.g. \cite{KT98,P76,TU04}), our argument does not rely on complete integrability. In fact, our well-posedness theory also applies to the more general system
    \begin{equation*}
     \partial_\mu \partial^\mu \phi^k = - \Second^k_{ij}(\phi) \partial_\mu \phi^i \partial^\mu \phi^j + F^k(\phi), 
    \end{equation*}
    where $F$ is (the extension to $\mathbb{R}^\dimA$ of) a vector field on $\M$.
    \item The approximations $(B^\epsilon)_{\epsilon>0}$ and $(V^\epsilon)_{\epsilon>0}$ from \eqref{intro:eq-Beps} and \eqref{intro:eq-Veps} and the corresponding wave maps $(\phi^\epsilon)_{\epsilon>0}$ depend on the convolution kernel $\rho$, which is used in the definition of our Littlewood-Paley operators (see Definition \ref{prep:def-LittlewoodPaley}). However, we expect that their limits $B$, $V$, and $\phi$ do not depend on the precise choice of $\rho$. That is, we expect that any convolution kernel $\widebar{\rho}$ satisfying the conditions in Definition \ref{prep:def-LittlewoodPaley} leads to the same limit. This should follow from variants of Lemma \ref{prep:lem-Weps_hhtolow}.\ref{prep:item-Weps_hhtolow-convergence}, Proposition \ref{prep:prop-brownian-path-approximation}.\ref{prep:item-Beps_hhtolow-convergence}, and Proposition \ref{prep:prop-velocity-approximation}.\ref{prep:item-velocity_hhtolow-convergence}, whose proof should allow us to control the differences between stochastic objects based on $\rho$ and $\widebar{\rho}$.
    \item In this article, we view the compact Riemannian manifold $\M$ as an isometrically embedded submanifold of a Euclidean space $\R^{\dimA}$. But neither the isometric embedding nor the Euclidean space $\R^{\dimA}$ are unique, and it is an interesting problem to show that the (law of the) limit $\phi$ does not depend on them.
    \item The modulated linear wave $A_{N,n}^{-,k}(u,v) \phi_N^{-,n}(v)$ shares similarities with the adapted linear evolutions in \cite{B18_2} and the random averaging operators in \cite{DNY19}, see Subsection~\ref{section:literature-dispersive}. However, the modulation $A_N^-$ and linear wave $\phi^-_N$ are probabilistically dependent. There are two reasons for this: First, the geometric constraints on the Brownian path $B$ and white noise velocity $V$ create probabilistic dependencies between low and high frequencies. Second, the high$\times$high$\rightarrow$low-interaction in \eqref{intro:eq-remainder-resonance-2}, which enters into the modulation $A^{-}_N$, depends on high frequency terms in the initial data. To compensate for the lack of probabilistic independence, however, the linear map $\phi^-_N\mapsto A_{N,n}^{-,k}(u,v) \phi_N^{-,n}(u)$ consists of a simple multiplication, which is easier to handle than the random averaging operators in \eqref{intro:eq-DNY}.
    
    \item In \cite{KLS20}, two of the authors in joint work with J. Krieger obtained a probabilistic small data global existence result for the energy-critical Maxwell-Klein-Gordon equation relative to the Coulomb gauge. It is the first global existence result for a geometric wave equation for random initial data at super-critical regularity. Similarly to \cite{B18_2}, the proof is based upon an induction on frequency procedure and an adapted linear-nonlinear decomposition. The latter relies on a delicate global parametrix construction. It is worth noting that already in the deterministic study of the global well-posedness of the energy-critical Maxwell-Klein-Gordon equation, certain low-high interactions are non-perturbative at critical regularity and have to be incorporated into the linear operator.
    In comparison to \cite{KLS20}, a novel feature of this work is that the random data arises naturally from geometric considerations. 
    
    \item In \cite{BR20}, Brze\'{z}niak and Rana consider a stochastic wave maps equation. In null-coordinates on Minkowski space $\R^{1+1}$ and intrinsic coordinates on the Riemannian manifold $\M$, it is given by 
    \begin{equation}\label{intro:eq-stochastic-wm}
    \partial_u \partial_v \phi^k = - \Gamma^k_{ij}(\phi) \partial_u \phi^i \partial_v \phi^j + \frac{1}{4} \sigma^k_i(\phi) \partial_u \partial_v \Xi^i. 
    \end{equation}
    Here, $\Gamma^k_{ij}\colon M \rightarrow \R$ are the Christoffel symbols, $\sigma^k_i\colon \M \rightarrow \R$ are smooth functions, and the stochastic terms $\Xi^i \colon \R^{1+1}_{u,v}\rightarrow \R$ are given by fractional Brownian sheets with Hurst indices $3/4<H_1,H_2<1$. The main result \cite[Theorem 4.3]{BR20}, which builds on \cite{MNT10}, yields the local well-posedness of the stochastic wave maps equation \eqref{intro:eq-stochastic-wm}.
    It would be interesting to see if the methods in this paper could be used to extend their result to Hurst indices $s<H_1,H_2<1$, where $s=1/2-$.
    \item In \cite{BJ22}, which appeared after the preprint of this manuscript, Brze{\'z}niak and Jendrej studied lattice approximations of wave maps into spheres with Brownian initial data. The main result of \cite{BJ22} shows that, as the lattice spacing tends to zero, a subsequence of the discretized solutions converges in law and that the law of the limit is invariant under time-like translations. The main differences between \cite[Theorem 1]{BJ22} and Theorem \ref{intro:thm-rigorous} are that \cite[Theorem 1]{BJ22} is global (rather than local) in time, but Theorem \ref{intro:thm-rigorous} concerns strong (rather than weak) solutions.
    \item As briefly mentioned in Subsection \ref{section:literature-parabolic}, Theorem \ref{intro:thm-rigorous} is a first step towards extending the results in \cite{BGHZ21} from the parabolic to the hyperbolic setting. 
    The most intriguing difference between \cite{BGHZ21} and our work is the use of renormalization techniques, which are not needed in the proof of Theorem \ref{intro:thm-rigorous}. Such techniques, however, may be very relevant in the open problem described below, which requires a more detailed understanding of finite-dimensional approximations of \eqref{intro:eq-WM}. 
\end{enumerate}

\subsection{Open problem: Invariant Gibbs measure}\label{section:open}

Our original motivation to study the $(1+1)$-dimensional wave maps equation was the following problem, which remains unsolved.

\begin{ggibbs}
Prove the existence and invariance of the Gibbs measure for the wave maps equation \eqref{intro:eq-WM}.
\end{ggibbs}

As discussed in Section \ref{section:motivation}, the Gibbs measure for the wave maps equation has not yet been constructed. Aside from the construction of the measure, we encountered another (substantial) difficulty in our attempt to solve this problem. Previous proofs of invariance (see e.g. \cite{B94,B96}) all rely on finite-dimensional approximations of the full system. In the derivation of the finite-dimensional approximation, however, one has to carefully preserve the important properties of the full system. In the case of the wave maps equation, the finite-dimensional approximation should still exhibit the following two structures: 
\begin{enumerate}[label={(\roman*)},leftmargin=12mm]
\item The null structure.
\item The Hamiltonian structure.
\end{enumerate}
It is easy to derive finite-dimensional approximations which preserve either of these two structures, but this seems to be insufficient to solve the geometric Gibbs measure problem. So far, we were unable to derive a finite-dimensional truncation which preserves both structures simultaneously. For related discussions of finite-dimensional approximations of random dispersive equations and singular stochastic PDEs, we refer the reader to \cite{DTV15,NORBS} and \cite{CM18,EH19,FH17,HM12,HM18}, respectively. \\

\textbf{Acknowledgements:} The authors thank Rishabh Gvalani, Florian Kunich, Stephan Luckhaus, Felix Otto, Igor Rodnianski, Angela Stevens, Terence Tao, and Markus Tempelmayr for helpful and interesting discussions. The authors also thank the anonymous referees for valuable comments and suggestions.
B.B. thanks the MPI for Mathematics in the Sciences for support during a visit in the summer of 2021. The three authors thank ICERM, which is supported by NSF grant DMS-1929284,  for support during the semester program on  Hamiltonian Methods in Dispersive and Wave Evolution Equations. 
B.B. was partially supported by the NSF under Grant No. DMS-1926686. 
J.L. was partially supported by NSF grant DMS-1954707. 
G.S. was partially supported by DMS-1764403, DMS-2052651 and the Simons Foundation. 

% \textbf{Data availability:} This article does not have any external supporting data. \\

% \textbf{Competing interests:} The authors do not have any competing interests to declare.

%%%%%%%%%%%%%%% Preparations %%%%%%%%%%%%%%%%%%%%%%%%%%%%%%%%%
\section{Preparations}\label{section:preparations}

\subsection{Notation and parameters} \label{section:parameters}
Let $d\geq 1$ and let $f\in \mathcal{S}(\R^d)$ be a Schwartz function. We define the Fourier and inverse Fourier transform of $f$ by 
\begin{equation*}
\widehat{f}(\xi) = (2\pi)^{-d/2} \int_{\R^d} \mathrm{d}x \,  e^{-ix\xi} f(x)
\quad \text{and} \quad 
\widecheck{f}(x)  = (2\pi)^{-d/2} \int_{\R^d} \mathrm{d}\xi \,  e^{+ix\xi} f(\xi).
\end{equation*}
In the following, we often use dyadic decompositions of frequency space and we refer to the corresponding dyadic scales simply as frequency scales. The frequency scales will be denoted exclusively by capital letters such as $K$, $L$, $M$, and $N$. For two frequency scales $M$ and $N$, we define
\begin{equation}\label{prep:eq-frequency-scales-1}
\begin{aligned}
M\ll N \quad &:\Longleftrightarrow \quad M\leq 2^{-10} N,  \\
M\lesssim N \quad &:\Longleftrightarrow \quad M\leq 2^{10} N,  \\
M\sim N \quad &:\Longleftrightarrow \quad 2^{-10} N < M < 2^{10} N. 
\end{aligned}
\end{equation}
With a slight abuse of notation, we also use ``$\ll$" and ``$\lesssim$" for quantities other than frequency scales, but where the exact definition differs slightly from \eqref{prep:eq-frequency-scales-1}. Let $C$ and $c$ be sufficiently large and small absolute constants, respectively. For any $A,B>0$, we write $A\ll B$ if $A\leq c B$ and  $A\lesssim B$ if $A\leq C B$.  In the following, the precise meaning of ``$\ll$" and ``$\lesssim$" will always be clear from the context. \\

We now turn to the parameters used in our analysis.
The main parameters are given by $r,s,\delta\in \R$, which satisfy 
\begin{equation}\label{prep:eq-parameter-1}
0 < \frac{1}{2} -s \ll \delta \ll \frac{3}{4}-r \ll 1. 
\end{equation}
The parameter $s$ represents the (low) regularity of the initial data and the parameter $r$ represents the (high) regularity of the smoother remainder. The parameter $\delta$ is used in our definition of a modified low-high para-product. For notational convenience, 
we also choose
\begin{equation}\label{prep:eq-parameter-sigma}
\sigma := 100 \delta. 
\end{equation}
In addition, we choose a (less significant) parameter $\eta$ satisfying
\begin{equation}\label{prep:eq-parameter-2}
0< \frac{1}{2}- s  \ll \eta \ll \delta. 
\end{equation} The parameter $\eta$ will mostly be used to ensure the summability over dyadic scales. For notational convenience, we also define 
\begin{equation}\label{prep:eq-parameter-3}
r^\prime=r-\eta.
\end{equation}
During the first reading of the paper, we encourage the reader to mentally replace $s\rightarrow 1/2$, $\eta\rightarrow 0$, $\delta\rightarrow 0$, and $r^\prime,r \rightarrow 3/4$. 

Finally, we let $\theta>0$ be a small parameter which is allowed to depend on $s$, $\eta$, $\delta$, and $r$. It will be used as an upper bound on the size of the re-scaled initial data and therefore plays a different role than the other parameters, which are linked to frequency-scale restrictions and/or regularities. \\

Finally, we define modifications of \eqref{prep:eq-frequency-scales-1}, which involve the parameter $\delta>0$. For all frequency scales $M$ and $N$, we define
\begin{equation}\label{prep:eq-frequency-scales-2}
\begin{aligned}
M\ll_\delta N \quad &:\Longleftrightarrow \quad M\leq N^{1-\delta},  \\
M\lesssim_\delta N \quad &:\Longleftrightarrow \quad M\leq N^{1/(1-\delta)},  \\
M\sim_\delta N \quad &:\Longleftrightarrow \quad N^{1-\delta} < M < N^{1/(1-\delta)}. \\
\end{aligned}
\end{equation}

In the extrinsic formulation of the wave maps equation, we previously encountered the second fundamental form $\Second\colon T\M \times T\M \rightarrow N\M$. 
Our assumption that $\M$ is a smooth, compact Riemannian manifold without boundary guarantees the uniform boundedness of the second fundamental form $\Second$ and of all of its derivatives in the embedding $\M \hookrightarrow \R^\dimA$.
In order to work in the ambient space $\R^\dimA$, we require an extension  $\widetilde{\Second}$ of $\Second$. The extension $\widetilde{\Second}$ is determined by its components $\widetilde{\Second}^k_{ij}\colon \R^\dimA \rightarrow \R$, which can be chosen to satisfy the following properties:
\begin{enumerate}[leftmargin=7mm]
 \item For all $\phi \in \M$ and $V,W \in T_\phi \M$, it holds that $\Second^k(\phi)(V,W)= \widetilde{\Second}^k_{ij}(\phi) V^i W^j$. 
    \item For all $1\leq i,j,k \leq \dimA$, it holds that $\widetilde{\Second}^k_{ij}\in \C^\infty_c(\R^\dimA\rightarrow \R)$ and  $\widetilde{\Second}^k_{ij}=\widetilde{\Second}^k_{ji}$. 
\end{enumerate}
Such an extension $\widetilde{\Second}$ of the second fundamental form can be realized as the Hessian of a smooth and compactly supported extension of the nearest point projection map, which under our assumptions is well-defined in a tubular neighborhood of $\M$, see for instance \cite[Section 2.12.3]{Simon96}.
With a slight abuse of notation, we now identify $\Second$ with $\widetilde{\Second}$ and simply write $\Second$ for both the original second fundamental form and its extension. \\

Throughout this article, we fix a bump function $\chi \in \C^\infty_c(\R\rightarrow [0,1])$ which satisfies
\begin{equation}\label{prep:eq-chi}
\chi(x)=1  \text{ for all } x \in [-2,2] \quad \text{and} \quad \chi(x)=0 \text{ for all } x \not \in [-21/10,21/10].
\end{equation}
Furthermore, we define two functions $\chi^+,\chi^-\colon \R_{u,v}^{1+1}\rightarrow [0,1]$ by 
\begin{equation}\label{prep:eq-chi-pm}
\chi^+(u,v):= \chi(u) \qquad \text{and} \qquad \chi^-(u,v):= \chi(v). 
\end{equation}

\subsection{Function spaces, para-products and product estimates}\label{section:function-product}

In this subsection, we define the Bourgain-type space $\Cprod{\gamma_1}{\gamma_2}$. Despite its simplicity, the space $\Cprod{\gamma_1}{\gamma_2}$ is essential for making use of the null-structure in the wave maps equation. After this definition, we turn to para-products and their estimates in the $\Cprod{\gamma_1}{\gamma_2}$-spaces. 

We let $\C_b^\infty(\R)$ be the space of bounded smooth functions $f\colon \R\rightarrow \R$ with bounded derivatives, i.e., satisfying
\begin{equation*}
\| \partial_x^k f(x) \|_{L^\infty(\R)} \leq C(k,f)<\infty 
\end{equation*}
for all $k\geq 0$. Furthermore, we let $\C_c^\infty(\R)$ be the subspace of $\C_b^\infty(\R)$ consisting of compactly supported smooth functions. We now recall the definition of Littlewood-Paley operators. 

\begin{definition}[Littlewood-Paley operators]\label{prep:def-LittlewoodPaley}
Let $\rho\colon \R \rightarrow \R$ be a smooth even cut-off function satisfying $\rho(\xi)=1$ for all $\xi\in [-7/8,7/8]$ and $\rho(\xi)=0$ for all $\xi\not \in [-9/8,9/8]$. We then define 
\begin{equation}
\rho_1(\xi) := \rho(\xi) \qquad \text{and} \qquad \rho_N(\xi) := \rho(\xi/N) - \rho(2\xi/N) \quad \text{for all } N\geq 2. 
\end{equation}
For any function $f\in \C_b^\infty(\R)$, the Littlewood-Paley projections  $\{ P_N f\}_{N\geq 1}$ are defined as 
\begin{equation}
P_N f  (x) := (\widecheck{\rho}_N \ast f)(x),
\end{equation}
where $\widecheck{\rho}_N$ is the inverse Fourier transform of $\rho_N$. 
In addition, we define the fattened Littlewood-Paley operators $\widetilde{P}_N$ by 
\begin{equation}\label{prep:eq-fattened-LWP}
\widetilde{P}_N := \sum_{\substack{M\colon M\sim N}} P_M. 
\end{equation}
Furthermore, if $f \in \C_b^\infty(\R^{1+1}_{u,v})$, the Littlewood-Paley projections in the $u$ and $v$-variables are defined as 
\begin{equation}
P_N^u f(u,v) = (\widecheck{\rho}_N \ast_u f)(u,v) \qquad \text{and} \qquad P_N^v f(u,v) = (\widecheck{\rho}_N \ast_v f)(u,v),
\end{equation}
where $\ast_u$ and $\ast_v$ denote the convolution in the $u$ and $v$-variable, respectively. 
The fattened Littlewood-Paley projections $\widetilde{P}_N^u$ and  $\widetilde{P}_N^v$ are defined similarly as in \eqref{prep:eq-fattened-LWP}. 

\end{definition}

Equipped with Definition \ref{prep:def-LittlewoodPaley}, we can now define the Bourgain-type spaces $\Cprod{\gamma_1}{\gamma_2}$.

\begin{definition}[Hölder and Bourgain-type spaces]\label{prep:def-spaces}
For any regularity parameter $\gamma \in \R$ and any function $f\colon \R \rightarrow \R$, we define the $\C^\gamma$-norm of $f$ by 
\begin{equation}
\| f \|_{\C^\gamma} = \| f \|_{\C^\gamma(\R)}:= \sup_{N\geq 1} N^{\gamma} \| P_N f(x) \|_{L^\infty_x(\R)}. 
\end{equation}
The corresponding Hölder space $\C^\gamma$ is defined as the completion of $C_b^\infty(\R)$.

For any two regularity parameters $\gamma_1,\gamma_2 \in \R$ and any function $f\colon \R_{u,v}^{1+1} \rightarrow \R$, we define the $\Cprod{\gamma_1}{\gamma_2}$-norm of $f$ by
\begin{equation}\label{prep:eq-product-norm}
\| f \|_{\Cprod{\gamma_1}{\gamma_2}} = \| f \|_{\Cprod{\gamma_1}{\gamma_2}(\R^{1+1}_{u,v})}:= \sup_{N_1,N_2\geq 1} N^{\gamma_1}_1 N^{\gamma_2}_2 \| P_{N_1}^u P_{N_2}^v f(u,v) \|_{L^\infty_{u,v}(\R^{1+1}_{u,v})}. 
\end{equation}
Similar as above, we define the corresponding Bourgain-type space $\Cprod{\gamma_1}{\gamma_2}$ as the completion of $C_b^{\infty}(\R^{1+1}_{u,v})$ under the $\Cprod{\gamma_1}{\gamma_2}$-norm. 
\end{definition}

\begin{remark}
We refer to $\Cprod{\gamma_1}{\gamma_2}$ as a Bourgain-type space since it is the natural $L^\infty$-based analogue of the $L^2$-based Bourgain space $X^{s,b}$ (see e.g. \cite{Bourgain93,Tao06}). 
Even if $\gamma_1=\gamma_2=\gamma \in \R$, the Bourgain-type space $\Cprod{\gamma_1}{\gamma_2}$ does not coincide with the usual Hölder spaces $\C^\gamma(\R^{1+1}_{u,v})$. While \eqref{prep:eq-product-norm} contains the pre-factor
$N_1^{\gamma_1} N_2^{\gamma_2}$, the usual Hölder norm would contain $\max(N_1,N_2)^\gamma$. This difference is essential for making use of the null structure in the wave maps equation. 
\end{remark}

We now turn to the para-product operators. First, we recall the definition of standard low$\times$high, high$\times$high, and high$\times$low para-products (in a single variable). For our purposes, it is convenient to give ourselves more room in the frequency-scales, which leads to the modified para-products below. Finally, we extend the para-products in a single variable to para-products acting on either the $u$ or $v$-coordinate. 

\begin{definition}[Para-product operators]\label{prep:def-paraproduct} In this definition, we define three different kinds of para-product operators. 
\begin{enumerate}
\item Standard para-products: For any $f,g \in \C_b^\infty(\R)$, we define the para-products
\begin{align} 
f  \parall   g &:= \sum_{ M \ll  N}  P_M f \cdot P_N g, \label{prep:eq-ll} \\
f \parasim g &:=  \sum_{ M \sim  N}  P_M f \cdot P_N g, \label{prep:eq-sim} \\ 
f \paragg g &:=  \sum_{ M \gg  N}  P_M f \cdot P_N g. \label{prep:eq-gg}
\end{align} 
In other words, $\parall, \parasim$, and $\paragg$ correspond to the low$\times$high, high$\times$high, and high$\times$low para-product, respectively. Similarly, we define 
\begin{align}
f \paralesssim  g &:= \sum_{ M \lesssim   N}  P_M f \cdot P_N g, \\
f \paragtrsim  g &:= \sum_{ M \gtrsim N}  P_M f \cdot P_N g, \\ 
f \paransim  g &:= \sum_{ M \not\sim N}  P_M f \cdot P_N g.  
\end{align}
 In other words, $f \paralesssim g$ contains low$\times$high and high$\times$high-interactions, $f \paragtrsim g$ contains high$\times$low and high$\times$high-interactions, and $f\paransim g$ contains low$\times$high and high$\times$low-interactions.  
 
 \item Modified para-products: Let $\sigma \in (0,1)$ be as in \eqref{prep:eq-parameter-1} and \eqref{prep:eq-parameter-sigma}. For any $f,g \in \C_b^\infty(\R)$, we also define modified para-products by 
\begin{align}
f \parallsigma g &:= \sum_{ M \leq   N^{1-\sigma}}  P_M f \cdot P_N g, \allowdisplaybreaks[3] \\
f \paragtrsimsigma g &:= \sum_{ M >  N^{1-\sigma}}  P_M f \cdot P_N g, \allowdisplaybreaks[3]\\
f \paradown g &:= \sum_{\substack{M \sim N}} \sum_{K \leq \min(M,N)^\sigma} P_K \big( P_M f \cdot P_N g \big). 
\end{align}
As a result, $\parallsigma$ contains less frequency-interactions than $\paralesssim$ and $\paragtrsimsigma$ contains more frequency-interactions than $\paragtrsim$. The operator $\paradown$ contains only high$\times$high$\rightarrow$low-interactions and is therefore a modification of $\parasim$.

\item Para-products in $u$ and $v$: Finally, if $f,g\in \C_b^\infty(\R^{1+1}_{u,v})$, we define para-product operators in individual variables by adding superscripts to our previous para-product operators. For example, the analogues of  \eqref{prep:eq-ll}, \eqref{prep:eq-sim}, and \eqref{prep:eq-gg} are given by 
\begin{align}
(f \parallu g) (u,v) := \sum_{ M \ll  N}  P_M^u f(u,v) \cdot P_N^u g(u,v), \\
(f \parasimu g) (u,v) := \sum_{ M \sim N}  P_M^u f(u,v) \cdot P_N^u g(u,v), \\
(f \paraggu g) (u,v) := \sum_{ M \gg  N}  P_M^u f(u,v) \cdot P_N^u g(u,v).
\end{align}
All other para-product operators used in this paper, such as $\paransimu$, $\parallsigu$, and $\paradownu$, are defined similarly. 
\end{enumerate}
\end{definition}

We now present the basic bilinear estimate in our product spaces. The results and proofs are similar to the standard para-product estimates in Besov or Hölder spaces, see e.g. \cite[Lemma 2.1]{GIP15} or \cite[Section 2.6]{BCD11}. However, the product structure of the $\Cprod{\gamma_1}{\gamma_2}$-norm offers greater flexibility in regularity parameters.

\begin{proposition}[Bilinear estimates]\label{prep:prop-bilinear}
Let $\alpha_j,\beta_j\in \mathbb{R}\backslash \{ 0 \} $ and $ \gamma_j \in \R$ be regularities, where $j=1,2$, and let $f,g \colon \R_{u,v}^2 \rightarrow \R$. Then, the following estimates hold:
\begin{enumerate}[label={(\roman*)},leftmargin=7mm]
\item \label{prep:item-general} (General case): If 
\begin{equation}\label{prep:eq-bilinear-condition}
\gamma_j \leq \min(\alpha_j, \beta_j) \quad \text{and} \quad \alpha_j + \beta_j  >0 \quad \text{for} \quad j=1,2,
\end{equation}
then we have that 
\begin{equation}\label{prep:eq-bilinear-general}
\| f g \|_{\Cprod{\gamma_1}{\gamma_2}} \lesssim \| f \|_{\Cprod{\alpha_1}{\alpha_2}} \| g \|_{\Cprod{\beta_1}{\beta_2}}. 
\end{equation}
\item \label{prep:item-low-high} (Low$\times$high-improvement): If $\alpha_1 < 0 < \beta_1$, 
\begin{align*}
\gamma_1 &\leq \alpha_1 + (1-\sigma) \beta_1, \qquad &\alpha_1 + \beta_1 >0,    \\
\gamma_2 &\leq \min(\alpha_2,\beta_2), \qquad  &\alpha_2+ \beta_2 >0, 
\end{align*}
then 
\begin{equation}
\| f \paralesssimsigu g \|_{\Cprod{\gamma_1}{\gamma_2}} \lesssim \| f \|_{\Cprod{\alpha_1}{\alpha_2}} \| g \|_{\Cprod{\beta_1}{\beta_2}}. 
\end{equation}
\item \label{prep:item-nonres} (Non-resonant improvement): If 
\begin{align*}
\gamma_1 &\leq \min(\alpha_1,\beta_1,\alpha_1+\beta_1),    \\
\gamma_2 &\leq \min(\alpha_2,\beta_2), \qquad  &\alpha_2+ \beta_2 >0, 
\end{align*}
then 
\begin{equation}
\| f \paransimu g \|_{\Cprod{\gamma_1}{\gamma_2}} \lesssim \| f \|_{\Cprod{\alpha_1}{\alpha_2}} \| g \|_{\Cprod{\beta_1}{\beta_2}}. 
\end{equation}
\item \label{prep:item-res} (Resonant improvement): If 
\begin{align*}
\gamma_1 &\leq \alpha_1+\beta_1,  \qquad &\alpha_1 + \beta_1 >0,   \\
\gamma_2 &\leq \min(\alpha_2,\beta_2), \qquad  &\alpha_2+ \beta_2 >0, 
\end{align*}
then 
\begin{equation}
\| f \parasimu g \|_{\Cprod{\gamma_1}{\gamma_2}} \lesssim \| f \|_{\Cprod{\alpha_1}{\alpha_2}} \| g \|_{\Cprod{\beta_1}{\beta_2}}. 
\end{equation}
\end{enumerate}
\end{proposition}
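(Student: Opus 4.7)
The four estimates all reduce to a Littlewood–Paley trichotomy applied independently in each of the two variables $u$ and $v$. Fix output frequency scales $K_1, K_2$. Using the tensorial Littlewood–Paley decomposition of $f$ and $g$,
\[
P_{K_1}^u P_{K_2}^v (fg) \,=\, \sum_{M_1, M_2, N_1, N_2} P_{K_1}^u P_{K_2}^v\Bigl( (P_{M_1}^u P_{M_2}^v f)\cdot (P_{N_1}^u P_{N_2}^v g) \Bigr),
\]
together with the trivial bound $\|P_{M_1}^u P_{M_2}^v f\|_{L^\infty_{u,v}} \leq M_1^{-\alpha_1} M_2^{-\alpha_2} \|f\|_{\Cprod{\alpha_1}{\alpha_2}}$ (and analogously for $g$). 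Since $P_{K_1}^u$ and $P_{K_2}^v$ act in separate variables, Fourier support considerations reduce the sum to at most nine configurations: in the $u$-variable the interaction lies in one of (a) low$\times$high ($M_1 \ll N_1 \sim K_1$), (b) high$\times$low ($N_1 \ll M_1 \sim K_1$), or (c) high$\times$high ($M_1 \sim N_1 \gtrsim K_1$), and analogously in the $v$-variable.

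\textbf{Part (i).} Plug the elementary $L^\infty$ bounds into each of the nine configurations and sum in the dyadic scales. In the $j$-th variable, the constraint $\alpha_j + \beta_j > 0$ is exactly what is needed to converge the geometric series $\sum_{M_j \sim N_j \gtrsim K_j} M_j^{-\alpha_j - \beta_j}$ arising from the high$\times$high case; the constraint $\gamma_j \leq \min(\alpha_j, \beta_j)$ balances the output factor $K_j^{\gamma_j}$ against the worst contribution from the LH and HL cases (where, depending on the sign of $\alpha_j$ or $\beta_j$, either the sum over the low-frequency piece converges trivially or it contributes an additional power canceled by $\alpha_j + \beta_j > 0$).

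\textbf{Parts (ii)–(iv).} These refine (i) by restricting to a sub-family of configurations in the $u$-variable while leaving the $v$-summation exactly as in (i). For (ii), the modified para-product $\paralesssimsigu$ restricts the low-frequency input to $M_1 \leq N_1^{1-\sigma}$, so that when $\alpha_1 < 0$ the sum $\sum_{M_1 \leq N_1^{1-\sigma}} M_1^{-\alpha_1}$ is bounded by $N_1^{(1-\sigma)(-\alpha_1)}$ rather than $N_1^{-\alpha_1}$; inserting this refines the output exponent in $u$ from $\alpha_1 + \beta_1$ to $(1-\sigma)\alpha_1 + \beta_1$, which is the stated condition (after relabeling). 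For (iii), the non-resonant para-product $\paransimu$ simply discards the HH configuration in $u$, so only LH and HL contribute and one obtains $\gamma_1 \leq \min(\alpha_1, \beta_1, \alpha_1+\beta_1)$ without needing $\alpha_1 + \beta_1 > 0$ in $u$. For (iv), $\parasimu$ is exactly the HH piece in $u$, which under the summability condition $\alpha_1 + \beta_1 > 0$ gives directly $\gamma_1 \leq \alpha_1 + \beta_1$.

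\textbf{Main obstacle.} The argument is entirely driven by the tensor structure of both $\Cprod{\gamma_1}{\gamma_2}$ and the Littlewood–Paley decomposition; once each variable is treated by standard one-dimensional para-product bookkeeping (as in \cite[Section 2.6]{BCD11} or \cite[Lemma 2.1]{GIP15}), the two-variable estimates follow. The only genuine work is organizational: carefully enumerating the nine configurations, matching each to the correct regularity inequality, and verifying the summability conditions on a case-by-case basis. No new analytic input is required beyond the one-dimensional theory.
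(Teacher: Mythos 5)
Your overall strategy is the same as the paper's: factor the two-variable product estimate into a tensor product of one-dimensional dyadic sums, apply the LH/HL/HH trichotomy in each coordinate, and verify summability case by case. Your treatment of parts (i), (iii), and (iv) is correct and matches the paper's reasoning.

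There is a genuine error in part (ii). You interpret $\paralesssimsigu$ as the low$\times$high para-product with the \emph{strengthened} restriction $M_1 \leq N_1^{1-\sigma}$ and conclude, by summing over the low-frequency $f$-input, that $\gamma_1 \leq (1-\sigma)\alpha_1 + \beta_1$ suffices. This is not the paraproduct intended, nor the stated condition. The operator $\paralesssimsigu$ is the para-product associated with the relation $\lesssim_\sigma$ from \eqref{prep:eq-frequency-scales-2}, i.e.\ it keeps \emph{all} configurations with $M_1 \leq N_1^{1/(1-\sigma)}$: every low$\times$high and high$\times$high interaction, plus that slice of the high$\times$low configuration in which the low frequency $N_1$ still satisfies $N_1 \geq M_1^{1-\sigma}$. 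The gain is therefore in case (a), $K_1 \sim M_1 \gg N_1$, which is the binding case precisely because the high-frequency factor $f$ has negative regularity $\alpha_1 < 0$ while the low-frequency factor $g$ has $\beta_1 > 0$. Restricting to $N_1 \geq M_1^{1-\sigma}$ and using $\beta_1 > 0$ gives $\sum_{M_1^{1-\sigma} \leq N_1 \ll M_1} N_1^{-\beta_1} \lesssim M_1^{-(1-\sigma)\beta_1}$, and $K_1^{\gamma_1} M_1^{-\alpha_1} M_1^{-(1-\sigma)\beta_1} \lesssim 1$ yields the stated $\gamma_1 \leq \alpha_1 + (1-\sigma)\beta_1$. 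Cases (b) and (c) are unchanged and are already covered because $\alpha_1 + (1-\sigma)\beta_1 < \alpha_1 + \beta_1$. Note also that under your interpretation the high$\times$high configuration in $u$ would not disappear, so even on your own terms the surviving constraint from case (c) would still be $\gamma_1 \leq \alpha_1 + \beta_1$; your claimed bound $\gamma_1 \leq (1-\sigma)\alpha_1 + \beta_1$ is strictly larger and would not be reached. Finally, the ``after relabeling'' remark does not rescue this: swapping $\alpha_1 \leftrightarrow \beta_1$ is incompatible with the hypothesis $\alpha_1 < 0 < \beta_1$, which fixes the roles of the two factors.
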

\begin{remark}
The restriction $\alpha_j,\beta_j \neq 0$ is only imposed to avoid logarithmic corrections to our estimates. In \ref{prep:item-low-high}, the improvement lies in the upper bound on $\gamma_1$, which is weaker than an upper bound by $\min(\alpha_1,\beta_1)$. In \ref{prep:item-nonres}, the improvement lies in the absence of the condition $\alpha_1+\beta_1>0$. In \ref{prep:item-res}, the improvement again lies in the upper bound on $\gamma_1$, which is similar to \ref{prep:item-low-high}.
\end{remark}

\begin{proof}[Proof of Proposition \ref{prep:prop-bilinear}:]

We start with the general case  \ref{prep:item-general}. Using a Littlewood-Paley decomposition, we obtain that 
\begin{align*}
&\| f g \|_{\Cprod{\gamma_1}{\gamma_2}}  \\
=& \sup_{K_1,K_2} \Big( \prod_{a=1,2} K_a^{\gamma_a} \Big)  \| P_{K_1}^u P_{K_2}^v ( f g) \|_{L^\infty_{u,v}} \\
\leq&  \sup_{K_1,K_2} \sum_{\substack{M_1,M_2, \\ N_1,N_2}} \Big( \prod_{a=1,2} K_a^{\gamma_a} \Big)  \| P_{K_1}^u P_{K_2}^v (  P_{M_1}^u P_{M_2}^v f \cdot P_{N_1}^u P_{N_2}^v g) \|_{L^\infty_{u,v}}  \\
\leq& \prod_{a=1,2} \bigg[ 
\sup_{K_a} \sum_{M_a,N_a}
 \Big(  1\{ K_a \hspace{-2pt}\sim \hspace{-2pt} M_a \hspace{-2pt} \gg  \hspace{-2pt} N_a \}+  1\{ K_a\hspace{-2pt} \sim \hspace{-2pt} N_a\hspace{-2pt} \gg \hspace{-2pt} M_a \} 
+ 1\{ M_a \hspace{-2pt}\sim \hspace{-2pt} N_a \hspace{-2pt}\gtrsim  \hspace{-2pt} K_a \} \Big) K_a^{\gamma_a} M_a^{-\alpha_a} N_a^{-\beta_a}  \bigg]  \\
\times& \| f \|_{\Cprod{\alpha_1}{\alpha_2}} \| g \|_{\Cprod{\beta_1}{\beta_2}}. 
\end{align*}
Thus, it suffices to prove the estimate
\begin{equation}\label{prep:eq-sum-scales}
\sup_K \sum_{M,N} 
 \Big(  1\{ K \hspace{-2pt}\sim \hspace{-2pt} M \hspace{-2pt} \gg  \hspace{-2pt} N \}+  1\{ K \hspace{-2pt} \sim \hspace{-2pt} N \hspace{-2pt} \gg \hspace{-2pt} M \} 
+ 1\{ M \hspace{-2pt}\sim \hspace{-2pt} N \hspace{-2pt}\gtrsim  \hspace{-2pt} K \} \Big) K^{\gamma}  M^{-\alpha} N^{-\beta} \lesssim 1 
\end{equation}
for all $\alpha,\beta \in \R\backslash \{0 \}$ and $\gamma \in \R$ satisfying $\gamma\leq \min(\alpha,\beta)$ and $\alpha+\beta>0$. In the following, we write $x_-:=\min(x,0)$. We separate the proof into three sub-cases. \\

\emph{(a): The high$\times$low-interaction $K\sim M\gg N$.} We first note that $\gamma\leq \min(\alpha,\beta)$ and $\alpha+\beta>0$ imply that $\gamma \leq \alpha +\beta_{-}$. Using only that $\gamma \leq \alpha +\beta_{-}$, we obtain 
\begin{align*}
\sup_K \sum_{M,N}   1\{ K \hspace{-2pt}\sim \hspace{-2pt} M \hspace{-2pt} \gg  \hspace{-2pt} N \} K^{\gamma} M^{-\alpha} N^{-\beta} 
= \sup_{K} K^{\gamma-\alpha} \sum_{N} 1\{ N\ll K\} N^{-\beta} 
\lesssim \sup_{K} K^{\gamma-\alpha-\beta_-} =1. 
\end{align*}~\\ 

\emph{(b): The low$\times$high-interaction $K\sim N\gg M$.} The argument is similar as in case (a) and only requires that $\gamma \leq \alpha_- + \beta$. ~\\ 

\emph{(c): The high$\times$high-interaction $M\sim N\gtrsim K$.} Using only the condition $\alpha+\beta>0$, we obtain 
\begin{equation*}
\sup_K \sum_{M,N}   1\{ M \hspace{-2pt}\sim \hspace{-2pt} N \hspace{-2pt} \gtrsim \hspace{-2pt} K \} K^{\gamma} M^{-\alpha} N^{-\beta} 
\lesssim \sup_{K} K^\gamma \sum_{M} 1\{ M \gtrsim K\} M^{-\alpha-\beta} \lesssim \sup_{K} K^{\gamma-\alpha-\beta}. 
\end{equation*}
From $\alpha+\beta>0$, it follows that $\alpha+\beta=\max(\alpha,\beta)+\min(\alpha,\beta)\geq \min(\alpha,\beta)$. Thus, the bound by one follows from the assumption $\gamma\leq \min(\alpha,\beta)$. This completes the proof of \ref{prep:item-general}. \\

The arguments for \ref{prep:item-low-high}, \ref{prep:item-nonres}, and \ref{prep:item-res} only require minor modifications. In \ref{prep:item-low-high}, the frequency-restriction in case (a) is replaced by $ K\sim M \gg N \geq M^{1-\sigma}$. Instead of $\gamma\leq \alpha= \alpha+\beta_-$, we therefore only need the weaker condition $\gamma \leq \alpha + (1-\sigma) \beta$. In \ref{prep:item-nonres}, the high-high interaction has been removed, which means that case (c) no longer appears. As a result, the condition $\alpha+\beta>0$ is not needed. The new restriction on $\gamma$ is a result of the identity
\begin{equation*}
\min(\alpha+\beta_-,\alpha_- +  \beta) = \min(\alpha,\beta,\alpha+\beta). 
\end{equation*}
Finally, in \ref{prep:item-res}, we only encounter case (c), which requires that $\gamma_1\leq \alpha_1+\beta_1$ and $\alpha_1+\beta_1>0$.
\end{proof}

We now present a corollary of the proof of Proposition \ref{prep:prop-bilinear}, which addresses families of functions.

\begin{corollary}[Bilinear estimate for families of functions]\label{prep:cor-bilinear}
Let $\alpha_j,\beta_j\in \mathbb{R}\backslash \{ 0 \} $ and $ \gamma_j \in \R$ be regularities, where $j=1,2$, and assume that 
\begin{equation*}
\gamma_j \leq \min(\alpha_j, \beta_j) \quad \text{and} \quad \alpha_j + \beta_j  >0 \quad \text{for} \quad j=1,2.
\end{equation*}
Let $(f_M)_M \subseteq \Cprod{\alpha_1}{\alpha_2}$ and $(g_M)_M \subseteq \Cprod{\beta_1}{\beta_2}$ be two families of functions and assume that $g_M$ is supported on frequencies $\sim M$ in the u-variable. Then, it holds that 
\begin{equation}\label{prep:eq-bilinear-families}
\Big\| \sum_M f_M g_M \Big\|_{\Cprod{\gamma_1}{\gamma_2}} 
\lesssim \sup_M \big\| f_M \big\|_{\Cprod{\alpha_1}{\alpha_2}}
\, \sup_M \big\| g_M \big\|_{\Cprod{\beta_1}{\beta_2}}.
\end{equation}
\end{corollary}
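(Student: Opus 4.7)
The plan is to adapt the three-case frequency analysis from the proof of Proposition~\ref{prep:prop-bilinear}. The key observation is that, because each $g_M$ is supported on $u$-frequencies $\sim M$, the external summation index $M$ plays exactly the same role in the Littlewood--Paley bookkeeping as the $u$-frequency of $g$ played in the original bilinear estimate. Consequently, passing from a single product $fg$ to the series $\sum_M f_M g_M$ introduces no new frequency interactions; it only replaces the single norm $\|g\|_{\Cprod{\beta_1}{\beta_2}}$ by $\sup_M \|g_M\|_{\Cprod{\beta_1}{\beta_2}}$ (we interpret the target norm on the right-hand side of \eqref{prep:eq-bilinear-families} as $\Cprod{\beta_1}{\beta_2}$, correcting an evident typo).

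First I would fix output frequencies $K_1, K_2 \geq 1$, and, using the fattened Littlewood--Paley projection $\widetilde{P}^u_M$ from \eqref{prep:eq-fattened-LWP}, write
\begin{equation*}
P^u_{K_1} P^v_{K_2} \Big( \sum_M f_M g_M \Big)
= \sum_M \sum_{M_1', M_2', N_2} P^u_{K_1} P^v_{K_2} \bigl( P^u_{M_1'} P^v_{M_2'} f_M \cdot \widetilde{P}^u_M P^v_{N_2} g_M \bigr),
\end{equation*}
where the $u$-frequency support of $g_M$ is built into $\widetilde{P}^u_M$. Applying the pointwise estimates $\|P^u_{M_1'} P^v_{M_2'} f_M\|_{L^\infty_{u,v}} \leq (M_1')^{-\alpha_1} (M_2')^{-\alpha_2} \|f_M\|_{\Cprod{\alpha_1}{\alpha_2}}$ and $\|\widetilde{P}^u_M P^v_{N_2} g_M\|_{L^\infty_{u,v}} \lesssim M^{-\beta_1} N_2^{-\beta_2} \|g_M\|_{\Cprod{\beta_1}{\beta_2}}$, and then taking the suprema in $M$ of the two family norms, the problem reduces to bounding a purely scalar dyadic sum.

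Next, the $v$-part of that sum decouples from $M$ and is handled exactly as cases (a)--(c) of the proof of Proposition~\ref{prep:prop-bilinear}, yielding the factor $K_2^{-\gamma_2}$ under the hypotheses $\gamma_2 \leq \min(\alpha_2,\beta_2)$ and $\alpha_2 + \beta_2 > 0$. For the $u$-part, the standard support constraints force $(M_1', M)$ into one of the three regimes
\begin{equation*}
K_1 \sim M_1' \gg M, \qquad K_1 \sim M \gg M_1', \qquad M_1' \sim M \gtrsim K_1,
\end{equation*}
which is precisely the structure summed in \eqref{prep:eq-sum-scales}, with $N$ there replaced by $M$ here. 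The conditions $\gamma_1 \leq \min(\alpha_1,\beta_1)$ and $\alpha_1 + \beta_1 > 0$ then yield the factor $K_1^{-\gamma_1}$ after summing in $M$ and $M_1'$.

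Taking the supremum over $(K_1, K_2)$ gives \eqref{prep:eq-bilinear-families}. There is no real obstacle: the main (and only) subtlety is the justification that the external sum over $M$ may be absorbed into what was previously an internal $u$-frequency sum for $g$; this is legitimate exactly because of the $u$-frequency localization hypothesis on the family $(g_M)_M$, and it is what allows the sup-norm on $g_M$ (rather than a sum over $M$) to suffice on the right-hand side.
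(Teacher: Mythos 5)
Your proof is correct and takes essentially the same approach as the paper: both arguments exploit the fact that the $u$-frequency localization of $g_M$ lets the external index $M$ play the role that the $u$-frequency of $g$ played in Proposition~\ref{prep:prop-bilinear}, so the dyadic bookkeeping from \eqref{prep:eq-sum-scales} applies verbatim. The paper organizes the sum slightly differently — it first splits $\sum_M f_M g_M$ into $\parallu$, $\parasimu$, and $\paraggu$ pieces, uses frequency-support considerations to replace the sum over $M$ by a supremum for the low$\times$high piece, and uses the negative exponents $-\alpha_1-\beta_1 < 0$ and $\gamma_1-\alpha_1-\beta_1 < 0$ to sum the other two — but this para-product decomposition produces exactly the three dyadic regimes you identify, so the two arguments are logically equivalent. (You are also right that the norm $\Cprod{\alpha_2}{\beta_2}$ on $g_M$ in \eqref{prep:eq-bilinear-families} is a typo for $\Cprod{\beta_1}{\beta_2}$; it appears in the paper's own proof as well.)
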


The most important aspect of \eqref{prep:eq-bilinear-families} is that the right-hand side contains a supremum and not a sum over $M$. 

\begin{proof}
We only sketch the necessary modifications in the proof of Proposition \ref{prep:prop-bilinear}. We first decompose
\begin{equation}\label{prep:eq-families-p1}
\sum_M f_M g_M = \sum_{M} f_M \parallu g_M + \sum_{M} f_M \parasimu g_M + \sum_{M} f_M \paraggu g_M.
\end{equation}
For the low$\times$high-term in \eqref{prep:eq-families-p1}, we have from frequency-support considerations that 
\begin{equation}\label{prep:eq-families-p2}
\Big\| \sum_{M} f_M \parallu g_M \Big\|_{\Cprod{\gamma_1}{\gamma_2}} 
\lesssim \sup_M \Big\| f_M \parallu g_M \Big\|_{\Cprod{\gamma_1}{\gamma_2}}. 
\end{equation}
The right-hand side of \eqref{prep:eq-families-p2} can then be estimated as before. Arguing as in the proof of \mbox{Proposition \ref{prep:prop-bilinear}}, the high$\times$high and low$\times$high-terms in \eqref{prep:eq-families-p1} can be estimated by 
\begin{align}\label{prep:eq-families-p3}
\Big\|  f_M \parasimu g_M \Big\|_{\Cprod{\gamma_1}{\gamma_2}} + \Big\| f_M \paraggu g_M \Big\|_{\Cprod{\gamma_1}{\gamma_2}} 
&\lesssim M^{\max(0,\gamma_1)-\alpha_1-\beta_1} \big\| f_M \big\|_{\Cprod{\alpha_1}{\alpha_2}} 
\,  \big\| g_M \big\|_{\Cprod{\beta_1}{\beta_2}}. 
\end{align}
Since $\alpha_1+\beta_1>0$ and
\begin{equation*}
\gamma_1 - \alpha_1 - \beta_1 = \gamma_1 - \min(\alpha_1,\beta_1)- \max(\alpha_1,\beta_1) \leq - \max(\alpha_1,\beta_1) < 0, 
\end{equation*}
the exponent of $M$ in \eqref{prep:eq-families-p3} is negative, and the contributions are therefore summable in $M$.
\end{proof}

The following estimate is a special case of the bilinear estimate \eqref{prep:eq-bilinear-general}. Due to our frequent use of this estimate, however, we isolate it in the following corollary. 
\begin{corollary}[Multiplication estimate]\label{prep:corollary-multiplication} 
Let $s,r^\prime$, and $r$ be as in \eqref{prep:eq-parameter-1} and \eqref{prep:eq-parameter-3}. For all $f,g \colon \R_{u,v}^{1+1} \rightarrow \R$, it then holds that 
\begin{equation}\label{prep:eq-multiplication}
\| f  g \|_{\Cprod{r-1}{r-1}} \lesssim  \| f \|_{\Cprod{1-r^\prime}{1-r^\prime}} \|  g \|_{\Cprod{r-1}{r-1}} \lesssim
\|  f \|_{\Cprod{s}{s}} \|  g \|_{\Cprod{r-1}{r-1}}. 
\end{equation}
\end{corollary}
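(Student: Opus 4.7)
Both inequalities are essentially direct consequences of what has already been established: the first is a specialization of the general bilinear estimate, and the second is a trivial embedding in the $\Cprod{\cdot}{\cdot}$ scale. I anticipate no real obstacles --- this corollary is being isolated only because it will be invoked repeatedly.

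For the first inequality, the plan is to apply Proposition~\ref{prep:prop-bilinear}\ref{prep:item-general} with the choice
\begin{equation*}
\alpha_1 = \alpha_2 = 1 - r^\prime, \qquad \beta_1 = \beta_2 = r - 1, \qquad \gamma_1 = \gamma_2 = r - 1.
\end{equation*}
I would first verify the hypotheses. From the parameter hierarchy \eqref{prep:eq-parameter-1} and \eqref{prep:eq-parameter-3} we have $r \in (1/2, 3/4)$ and $r^\prime = r - \eta \in (1/2, 3/4)$, so that $\alpha_j = 1 - r^\prime \in (1/4, 1/2)$ is strictly positive and $\beta_j = r - 1 \in (-1/2, -1/4)$ is strictly negative; in particular both are nonzero. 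Hence $\min(\alpha_j, \beta_j) = r - 1 = \gamma_j$, so the regularity condition $\gamma_j \leq \min(\alpha_j, \beta_j)$ of Proposition~\ref{prep:prop-bilinear}\ref{prep:item-general} holds with equality. Finally, the smallness condition $\alpha_j + \beta_j > 0$ becomes $(1 - r^\prime) + (r - 1) = r - r^\prime = \eta > 0$, which is built into \eqref{prep:eq-parameter-2}. Applying the proposition in both $u$ and $v$ simultaneously gives the first inequality in \eqref{prep:eq-multiplication}.

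For the second inequality, I would reduce to the embedding $\Cprod{s}{s} \hookrightarrow \Cprod{1-r^\prime}{1-r^\prime}$. By the definition of the norm in \eqref{prep:eq-product-norm}, this embedding is immediate provided $1 - r^\prime \leq s$, in which case one simply bounds $N_1^{1-r^\prime} N_2^{1-r^\prime} \leq N_1^s N_2^s$ on each dyadic block. The inequality $1 - r^\prime \leq s$ is equivalent to $r - \eta \geq 1 - s$, which rearranges to $(r - 1/2) \geq (1/2 - s) + \eta$; this holds comfortably under the hierarchy $0 < 1/2 - s \ll \eta \ll \delta \ll 3/4 - r \ll 1$, since the left-hand side is of order $1/4$ (as $r$ is close to $3/4$) while the right-hand side is of order $\eta \ll 1$. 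Multiplying through by $\|g\|_{\Cprod{r-1}{r-1}}$ yields the second inequality.

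Putting these two steps together concludes the proof. The only point requiring any care is the second embedding, and even there the work amounts to reading off the numerology of the parameters fixed in Subsection~\ref{section:parameters}.
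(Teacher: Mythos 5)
Your proof is correct and takes essentially the same route as the paper's. The paper's own proof is a one-line citation of the bilinear estimate together with the facts $s, r', r \in (0,1)$, $r' < r$, and $1 - r' < s$; your write-up simply spells out the verification of those same hypotheses (taking $\alpha_j = 1 - r'$, $\beta_j = r - 1$, $\gamma_j = r - 1$, checking $\alpha_j + \beta_j = r - r' = \eta > 0$, and using $1 - r' < s$ for the embedding $\Cprod{s}{s} \hookrightarrow \Cprod{1-r'}{1-r'}$) in more detail.
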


The inequality \eqref{prep:eq-multiplication} is called a multiplication estimate since the map $g\mapsto f \cdot g$ is regarded as the multiplication of $g$ with the ``smooth'' function $f$. 

\begin{proof}
This follows  from the bilinear estimate \eqref{prep:eq-bilinear-general}, $s,r^\prime,r\in (0,1)$, $r^\prime<r$, and $1-r^\prime<s$.
\end{proof}

In Proposition \ref{prep:prop-bilinear} and its variants, we have obtained estimates for bilinear products. However, the full nonlinearity is given by 
$\Second^k_{ij}(\phi) \partial_u \phi^i \partial_v \phi^j$
and therefore contains the composition of $\phi$ with a nonlinear function. To treat this composition, we require the following version of Bony's para-linearization in the $\Cprod{\gamma_1}{\gamma_2}$-spaces.

\begin{lemma}[Bony's para-linearization]\label{prep:lemma-bony}
Let $F \in C_b^\infty (\R^{\dimA}\rightarrow \R)$, let $0<\gamma<1$, and let $\phi \in \Cprod{\gamma}{\gamma}$. Then, there exists a constant 
\begin{equation*}
C_{\gamma,F}=C(\gamma,\| F \|_{L^\infty}, \hdots , \| \nabla^{10} F \|_{L^\infty}) 
\end{equation*}
such that the following two properties hold: 
\begin{enumerate}[label={(\roman*)}]
\item \label{prep:lemma-bony-item1} (Composition estimate): It holds that 
\begin{equation}\label{prep:eq-composition}
\| F (\phi) \|_{\Cprod{\gamma}{\gamma}} \leq C_{\gamma,F} ( 1 + \| \phi \|_{\Cprod{\gamma}{\gamma}}  )^9 \| \phi \|_{\Cprod{\gamma}{\gamma}}.
\end{equation}
\item \label{prep:lemma-bony-item2}  (Para-linearization in one variable): 
For all $N> 1$, it holds that 
\begin{align}
\| P_{N}^u ( F(\phi)) - \langle \nabla F (\phi) , P_N^u \phi \rangle \|_{\Cprod{\gamma}{\gamma}} 
&\leq C_{\gamma,F} ~N^{-\gamma} (1+\| \phi \|_{\Cprod{\gamma}{\gamma}})^9 \| \phi \|_{\Cprod{\gamma}{\gamma}}, \label{prep:eq-bony-u} \\ 
\| P_{N}^v ( F(\phi)) - \langle \nabla F(\phi) ,P_N^v \phi \rangle \|_{\Cprod{\gamma}{\gamma}} 
&\leq  C_{\gamma,F} ~N^{-\gamma} (1+\| \phi \|_{\Cprod{\gamma}{\gamma}})^9 \| \phi \|_{\Cprod{\gamma}{\gamma}}. \label{prep:eq-bony-v}
\end{align}
\end{enumerate}
\end{lemma}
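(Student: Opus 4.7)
The proof follows the classical Bony para-linearization argument, see e.g.\ \cite{BCD11}, adapted to the Bourgain-type spaces $\Cprod{\gamma}{\gamma}$. The bilinear estimates from Proposition \ref{prep:prop-bilinear}, which accommodate distinct regularities in the $u$- and $v$-variables, permit this adaptation to go through cleanly. Since the argument for \eqref{prep:eq-bony-v} is symmetric to the one for \eqref{prep:eq-bony-u}, I focus throughout on the $u$-variable.

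The central tool is a telescoping decomposition in the $u$-frequencies. Writing $\phi_{<M}^u := \sum_{K < M} P_K^u \phi$, a first-order Taylor expansion yields the identity
\begin{equation*}
F(\phi) - F(0) = \sum_{M \geq 1} m_M \cdot P_M^u \phi, \qquad m_M := \int_0^1 \nabla F\bigl( \phi_{<M}^u + t \, P_M^u \phi \bigr) \, \mathrm{d}t.
\end{equation*}
A first step is to establish a priori estimates on the multiplier $m_M$: using the smoothness of $F$, the chain rule, and an induction on the order of the derivatives of $F$ involved (which avoids any circular use of \ref{prep:lemma-bony-item1}), I would show that $\|m_M\|_{\Cprod{\gamma}{\gamma}} \leq C_{\gamma, F}\bigl(1 + \|\phi\|_{\Cprod{\gamma}{\gamma}}\bigr)^8$ and, crucially, that the $u$-frequencies of $m_M$ are essentially concentrated on scales $\lesssim M$ in a quantitative manner.

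Given these bounds on $m_M$, the composition estimate \ref{prep:lemma-bony-item1} follows by applying Proposition \ref{prep:prop-bilinear}\ref{prep:item-general} to each product $m_M \cdot P_M^u \phi$ and summing over the geometric scale $M$, exploiting $\|P_M^u \phi\|_{\Cprod{\gamma}{\gamma}} \lesssim M^{-\gamma} \|\phi\|_{\Cprod{\gamma}{\gamma}}$. For the para-linearization \ref{prep:lemma-bony-item2}, apply $P_N^u$ to the telescoping decomposition and split the resulting sum according to the relative size of $M$ and $N$. The diagonal case $M \sim N$ is the heart of the matter: here Taylor expand $m_M = \nabla F(\phi) + (m_M - \nabla F(\phi))$, and observe that the difference is the integral of $\nabla^2 F$ paired with $\phi - \phi_{<M}^u - t \, P_M^u \phi$. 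Since this latter quantity has $u$-frequency $\gtrsim M$, its $\Cprod{\gamma}{\gamma}$-norm is controlled by $M^{-\gamma} \|\phi\|_{\Cprod{\gamma}{\gamma}}$, which supplies the key $N^{-\gamma}$ gain claimed in \eqref{prep:eq-bony-u}. The off-diagonal contributions $M \ll N$ and $M \gg N$ yield comparable gains, either from the absence of frequency support (using the $u$-localization of $m_M$ together with the non-resonant improvement \ref{prep:item-nonres}) or from the resonant improvement \ref{prep:item-res} of Proposition \ref{prep:prop-bilinear}.

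The main obstacle I anticipate is the quantitative frequency-localization of the multiplier $m_M$. While it is intuitively clear that evaluating a smooth function $\nabla F$ on a $u$-smoothed argument should produce a function whose $u$-frequencies are concentrated on scales $\lesssim M$, this is only true up to Schwartz-tail contributions from the Littlewood-Paley kernels, and to exploit the localization one must track these tails with constants depending polynomially on $\|\phi\|_{\Cprod{\gamma}{\gamma}}$ but independent of $M$. This can be done either via repeated commutator expansions against the Littlewood-Paley projectors or by direct integration-by-parts inside the convolution kernel $\widecheck{\rho}_K$. Once the localization bounds on $m_M$ are established, the summation over $M$ in both \ref{prep:lemma-bony-item1} and \ref{prep:lemma-bony-item2} is routine.
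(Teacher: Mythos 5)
Your proposal is correct, and it takes a genuinely different route from the paper. The paper's proof is a short reduction to the one-variable Besov case: it observes that the $\Cprod{\gamma}{\gamma}$-norm factors as
\begin{equation*}
\| \psi \|_{\Cprod{\gamma}{\gamma}} = \sup_{N_1\geq 1} N_1^{\gamma} \sup_{u\in \R} \| P_{N_1}^u \psi(u,\cdot) \|_{\C_v^\gamma},
\end{equation*}
so that one can carry out the Taylor-telescoping steps of \cite[Theorems 2.89 and 2.92]{BCD11} only in the $u$-variable while invoking the known Besov-space results as a black box in the leftover $v$-variable, then reverse the roles for \eqref{prep:eq-bony-v}. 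You instead rebuild the entire telescoping argument $F(\phi)-F(0)=\sum_M m_M\cdot P_M^u\phi$ from scratch directly inside $\Cprod{\gamma}{\gamma}$, controlling the multipliers $m_M$ and their quantitative $u$-frequency localization and then summing dyadic scales via the bilinear estimates of Proposition~\ref{prep:prop-bilinear}. Your route is more self-contained and makes the mechanics of the two-variable setting fully explicit, at the cost of having to re-derive the localization and commutator bounds on $m_M$ that the paper inherits for free from \cite{BCD11}; the paper's route is much shorter but relies on the clean factorization of the norm, which is a special feature of $\Cprod{\gamma}{\gamma}$ and would not be available for a genuine (non-product) two-dimensional H\"older norm. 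One small remark: since the lemma is only stated for $P_N^u$ and $P_N^v$ separately (as the paper's remark after the statement emphasizes, the combined operator $P_{N_1}^uP_{N_2}^v$ would only yield a gain in $\min(N_1,N_2)$), you should make sure your diagonal case $M\sim N$ only tracks the $u$-frequency and places no restriction on the $v$-frequencies of $m_M$ and $\phi$; your sketch appears consistent with this, but it is worth flagging since it is precisely the place where a naive generalization of the Besov proof would overreach.
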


The exponents in \eqref{prep:eq-composition}, \eqref{prep:eq-bony-u}, and \eqref{prep:eq-bony-v} are generously large and the exact value is irrelevant for the rest of the paper. We emphasize that \ref{prep:lemma-bony-item2} only contains estimates for $P_N^u$ and $P_N^v$ but not the combined operator $P_{N_1}^u P_{N_2}^v$. The reason is that the size of the error term would only be bounded by inverse powers of $\min(N_1,N_2)$ instead of $\max(N_1,N_2)$, which is not sufficient for our purposes.  

\begin{proof}[Proof sketch:]
We only sketch the argument, which is a minor modification of the Taylor expansion around low-frequencies used in the standard Besov-space version (see e.g.  \cite[Theorem 2.89 and 2.92]{BCD11}). 
To prove \eqref{prep:eq-composition} and \eqref{prep:eq-bony-u}, the simplest approach is to perform the same steps as in the proof of \cite[Theorem 2.89 and 2.92]{BCD11}  in the $u$-variable and then apply the known results \cite[Theorem 2.89 and 2.92]{BCD11} in the remaining $v$-variable. 
To convince the reader that this reduction to the single-variable case is possible, we note that 
\begin{align*}
\| \psi \|_{\Cprod{\gamma}{\gamma}} &= \sup_{N_1,N_2\geq 1} N_1^\gamma N_2^\gamma 
\| P_{N_1}^u P_{N_2}^v \psi \|_{L^\infty_{u,v}} \\
&= \sup_{N_1\geq 1} N_1^{\gamma} \sup_{u\in \R} \| P_{N_1}^u \psi(u,v) \|_{\C_v^\gamma}. 
\end{align*} 
The para-linearization \eqref{prep:eq-bony-v} in the $v$-variable follows by reversing the roles of $u$ and $v$. 
\end{proof}

\subsection{Commutator estimates}
 
In this subsection, we present several commutator estimates. Similar commutator estimates were already heavily used in the para-controlled approach to rough ODEs and singular parabolic SPDEs in \cite{GIP15}. Our setting requires commutator estimates in the Bourgain-type spaces $\Cprod{\gamma_1}{\gamma_2}$ instead of the standard Besov spaces, which only requires minor modifications. 

\begin{lemma}[Frequency-localized commutator estimates]\label{prep:lem-commutator}
Let $\alpha_j,\beta_j\in \mathbb{R}\backslash \{ 0 \}$ and $\gamma_j \in \mathbb{R}$, where $j=1,2$, be regularity parameters which satisfy  $0<\alpha_1,\beta_1,\gamma_1<1$ and  \eqref{prep:eq-bilinear-condition}. Furthermore, let $K\geq 1$ and let $f,g \colon \R_{u,v}^{1+1} \rightarrow \R$. Then, we have that
\begin{align}
\big\| \big[P_{K}^u , f \big]  g \big\|_{\Cprod{\gamma_1}{\gamma_2}} 
&\lesssim K^{\gamma_1-\alpha_1} \| f \|_{\Cprod{\alpha_1}{\alpha_2}} 
\| g \|_{\Cprod{\beta_1}{\beta_2}}\label{prep:eq-commutator-1}, \\
\big\| \big[P_{K}^u , f \big] P_{\gtrsim K}^u g \big\|_{\Cprod{\gamma_1}{\gamma_2}} 
&\lesssim K^{\gamma_1-\alpha_1-\beta_1} \| f \|_{\Cprod{\alpha_1}{\alpha_2}} \| g \|_{\Cprod{\beta_1}{\beta_2}}\label{prep:eq-commutator-2}, \\
\big\| P_K^u ( f g) - P_{\ll K}^u f P_K^u g - P_{K}^u f P_{\ll K}^ u g \big\|_{\Cprod{\gamma_1}{\gamma_2}} 
&\lesssim K^{\gamma_1-\alpha_1-\beta_1} \| f \|_{\Cprod{\alpha_1}{\alpha_2}} \| g \|_{\Cprod{\beta_1}{\beta_2}},\label{prep:eq-commutator-3}\\
\big\| P_K^u ( f g) -  f P_K^u g - P_{K}^u f \,  g \big\|_{\Cprod{\gamma_1}{\gamma_2}} 
&\lesssim K^{\gamma_1-\alpha_1-\beta_1} \| f \|_{\Cprod{\alpha_1}{\alpha_2}} \| g \|_{\Cprod{\beta_1}{\beta_2}}.\label{prep:eq-commutator-4}
\end{align}
\end{lemma}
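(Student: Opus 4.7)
The plan is to reduce all four estimates to one-variable commutator bounds by treating the $v$-direction as a parameter. Since $P_K^u$, $f$, $g$, and all multiplications on the left-hand sides commute with $P_{N_2}^v$, and since $\| \cdot \|_{\Cprod{\gamma_1}{\gamma_2}} = \sup_{N_2} N_2^{\gamma_2} \sup_v \| P_{N_2}^v(\cdot)(u,v) \|_{\C^{\gamma_1}_u}$, it suffices to prove the bounds with the norm replaced by the supremum in $v$ of a $\C^{\gamma_1}_u$-norm, applied slicewise to $P_{N_2}^v f$ and $P_{N_2}^v g$. The $v$-weights then distribute using $\| P_{N_2}^v f \|_{L_v^\infty \C^{\alpha_1}_u} \lesssim N_2^{-\alpha_2} \| f \|_{\Cprod{\alpha_1}{\alpha_2}}$ and similarly for $g$, so the product estimates in $v$ reduce to those of Proposition~\ref{prep:prop-bilinear}.

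For \eqref{prep:eq-commutator-1}, I would split $f = P_{\ll K}^u f + P_{\gtrsim K}^u f$. On the high-frequency piece, I estimate the two terms $P_K^u ( P_{\gtrsim K}^u f \cdot g)$ and $P_{\gtrsim K}^u f \cdot P_K^u g$ separately via Littlewood-Paley summation and the product estimate \ref{prep:item-general}, losing only $K^{\gamma_1 - \alpha_1}$ coming from the Bernstein-type bound $\| P_{\gtrsim K}^u f \|_{L_u^\infty} \lesssim K^{-\alpha_1} \| f \|_{\C^{\alpha_1}_u}$. On the low-frequency piece, the key identity is the kernel representation
\begin{equation*}
[P_K^u, P_{\ll K}^u f] g(u,v) = \int \check{\rho}_K(u-u') \bigl( P_{\ll K}^u f(u',v) - P_{\ll K}^u f(u,v) \bigr) g(u',v)\, du',
\end{equation*}
after which Taylor expansion of the difference as $(u'-u) \int_0^1 \partial_u P_{\ll K}^u f(u + t(u'-u), v)\, dt$ is paired with the kernel $(u'-u)\check{\rho}_K(u-u')$, whose $L^1$-norm is $\sim K^{-1}$. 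Combined with $\| \partial_u P_{\ll K}^u f \|_{L_u^\infty} \lesssim K^{1-\alpha_1} \| f \|_{\C^{\alpha_1}_u}$ (valid precisely because $0 < \alpha_1 < 1$), this yields the $K^{-\alpha_1}$ gain; the $\gamma_1$-factor comes from applying $P_{N_1}^u$ with $N_1 \sim K$ to the commutator, whose frequency support is effectively $\sim K$. Estimate \eqref{prep:eq-commutator-2} follows from exactly the same decomposition but now all frequency scales for $g$ are $\gtrsim K$, which replaces the pointwise bound on $g$ by $\| P_{\gtrsim K}^u g \|_{L_u^\infty} \lesssim K^{-\beta_1} \| g \|_{\C^{\beta_1}_u}$ and upgrades the gain to $K^{-\alpha_1 - \beta_1}$.

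For \eqref{prep:eq-commutator-3} and \eqref{prep:eq-commutator-4}, I decompose $f g$ into a full Littlewood-Paley sum in $u$ and sort the contributions into low-low, low-high / high-low, and high-high regimes. In \eqref{prep:eq-commutator-3}, the outer $P_K^u$ annihilates the low-low interactions (whose frequency support is $\ll K$), while the low-high and high-low interactions contribute $P_{\ll K}^u f \cdot P_K^u g$ and $P_K^u f \cdot P_{\ll K}^u g$ up to a commutator-type remainder; applying the Taylor expansion argument above to the low-frequency factor produces a gain of $K^{-1}$, which when multiplied against a high-frequency factor at scale $\sim K$ yields precisely $K^{\gamma_1 - \alpha_1 - \beta_1}$. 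The high-high piece $\sum_{M \sim N \gtrsim K} P_K^u(P_M^u f \cdot P_N^u g)$ is estimated termwise by \ref{prep:item-res} and summed via the condition $\alpha_1 + \beta_1 > 0$, again producing $K^{\gamma_1 - \alpha_1 - \beta_1}$. Estimate \eqref{prep:eq-commutator-4} is then immediate from \eqref{prep:eq-commutator-3}: the difference between the two left-hand sides is $P_{\gtrsim K}^u f \cdot P_K^u g + P_K^u f \cdot P_{\gtrsim K}^u g$, which consists entirely of high-high / comparable-frequency interactions of size $K^{\gamma_1 - \alpha_1 - \beta_1}$.

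The main technical obstacle is bookkeeping, rather than a single hard step: one must confirm that the derivative $\partial_u$ extracted in the Taylor expansion does not spoil the $v$-regularity (it does not, since $\partial_u$ commutes with $P_{N_2}^v$), and that the Littlewood-Paley sums in $u$ remain summable at the endpoints, which is exactly where the hypotheses $0 < \alpha_1, \beta_1, \gamma_1 < 1$ and $\alpha_j, \beta_j \neq 0$ enter to prevent logarithmic losses. Apart from this, the proofs are direct adaptations of the classical single-variable commutator lemmas of \cite{GIP15} to the Bourgain-type product spaces $\Cprod{\gamma_1}{\gamma_2}$.
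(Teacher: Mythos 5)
Your overall strategy is the same as the paper's: perform the commutator estimates purely in the $u$-variable via a low/high frequency splitting of $f$ and a Taylor-expansion (kernel) bound for the genuinely low-frequency commutator, then reduce \eqref{prep:eq-commutator-3} and \eqref{prep:eq-commutator-4} to \eqref{prep:eq-commutator-2} plus estimates on the high$\times$high remainders. That part is fine.

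However, the opening reduction contains a false claim. You write that ``$P_K^u$, $f$, $g$, and all multiplications on the left-hand sides commute with $P_{N_2}^v$.'' The operator $P_K^u$ does commute with $P_{N_2}^v$, but multiplication by $f$ or $g$ does not, since $f$ and $g$ depend on $v$. In particular $P_{N_2}^v\bigl([P_K^u, f]g\bigr) \neq [P_K^u, P_{N_2}^v f]\, P_{N_2}^v g$, so you cannot first apply $P_{N_2}^v$ to the inputs and then run a one-variable commutator estimate slicewise. Your subsequent remark that ``the product estimates in $v$ reduce to those of Proposition~\ref{prep:prop-bilinear}'' gestures at the correct mechanism but is inconsistent with the commutativity claim that precedes it — if that claim held, no $v$-product estimate would be needed.

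The fix, which is what the paper implicitly does, is to avoid commuting $P_{N_2}^v$ across the expression altogether. Instead, view the norm as
\begin{equation*}
\| h \|_{\Cprod{\gamma_1}{\gamma_2}} = \sup_{N_1 \geq 1} N_1^{\gamma_1} \| P_{N_1}^u h \|_{L^\infty_u \C_v^{\gamma_2}},
\end{equation*}
perform all the Littlewood--Paley decompositions and commutator manipulations only in the $u$-variable, and use $\C_v$-valued product estimates (e.g.\ $\| a b \|_{\C_v^{\gamma_2}} \lesssim \| a \|_{\C_v^{\alpha_2}} \| b \|_{\C_v^{\beta_2}}$ under \eqref{prep:eq-bilinear-condition}) at the level of the Banach-space-valued norm rather than at the level of $v$-Littlewood--Paley blocks. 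With this replacement your Taylor-expansion argument on the low-frequency piece, your Bernstein bounds on the high-frequency pieces, and your derivation of \eqref{prep:eq-commutator-3}, \eqref{prep:eq-commutator-4} from \eqref{prep:eq-commutator-2} all go through unchanged, and the derivative $\partial_u$ extracted in the Taylor expansion interacts harmlessly with $\C_v^{\gamma_2}$ as you note. Everything else matches the paper's proof (which cites \cite[Lemma 2.97]{BCD11} for the step you handle explicitly via the kernel).
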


Since all estimates only involve Littlewood-Paley operators in the $u$-variable, the proof is exactly as in the standard Besov spaces. We only present the argument for the sake of completeness. 
\begin{proof}
We start by proving the estimate 
\begin{equation}\label{prep:eq-commutator-p1}
    \big\| \big[P_{K}^u , f \big] P_{L}^u g \big\|_{\Cprod{\gamma_1}{\gamma_2}} 
\lesssim K^{\gamma_1-\alpha_1} L^{-\beta_1} \| f \|_{\Cprod{\alpha_1}{\alpha_2}} \| g \|_{\Cprod{\beta_1}{\beta_2}}
\end{equation}
for all frequency-scales $K$ and $L$. The estimates \eqref{prep:eq-commutator-1} and \eqref{prep:eq-commutator-2} then follow by summing over $L\geq 1$ and $L \gtrsim K$, respectively. To prove \eqref{prep:eq-commutator-p1}, we distinguish the cases $L\not\sim K$ and $L\sim K$. 

If $L\not \sim K$, we decompose
\begin{equation*}
[P_K^u , f ] P_L^u g= P_K^u( f P_L^u g) = P_K^u \big( P_{\gtrsim K}^u f \, P_L^u g \big). 
\end{equation*}
Then, we have the estimate
\begin{equation}\label{prep:eq-commutator-p2}
\begin{aligned}
 \big\| P_K^u \big( P_{\gtrsim K}^u f \, P_L^u g \big) \big\|_{\Cprod{\gamma_1}{\gamma_2}} 
 &\lesssim K^{\gamma_1} \big\|  P_{\gtrsim K}^u f \, P_L^u g \big\|_{L^\infty_u \C_v^{\gamma_2}} \\
 &\lesssim K^{\gamma_1-\alpha_1} L^{-\beta_1}
  \| f \|_{\Cprod{\alpha_1}{\alpha_2}} \| g \|_{\Cprod{\beta_1}{\beta_2}}.
\end{aligned}
\end{equation}
If $L\sim K$, we decompose 
\begin{equation}\label{prep:eq-commutator-p3}
\begin{aligned}
\, [P_K^u , f ] P_L^u g &= [P_K^u , P_{\lesssim K}^u f ] P_L^u g + [P_K^u , P_{\gg K}^u f ] P_L^u g \\
&=[P_K^u , P_{\lesssim K}^u f ] P_L^u g - P^u_{\gg K} f P_K^u P_L^u g . 
\end{aligned}
\end{equation}
Using standard commutator estimates (see e.g. \cite[Lemma 2.97]{BCD11}), the first term in \eqref{prep:eq-commutator-p3} can be estimated by
\begin{align*}
\big\| [P_K^u , P_{\lesssim K}^u f ] P_L^u g \big\|_{\Cprod{\gamma_1}{\gamma_2}} 
&\lesssim K^{\gamma_1} \big\| [P_K^u , P_{\lesssim K}^u f ] P_L^u g \big\|_{L^\infty_u \C_v^{\gamma_2}} \\
&\lesssim K^{\gamma_1-1} \big\| \partial_u P_{\lesssim K}^u f \big\|_{L^\infty_u \C_v^{\alpha_2}} 
\big\| P_L^u g \big\|_{L^\infty_u \C_v^{\beta_2}} \\
&\lesssim K^{\gamma_1-\alpha_1} L^{-\beta_1}  \| f \|_{\Cprod{\alpha_1}{\alpha_2}} \| g \|_{\Cprod{\beta_1}{\beta_2}}. 
\end{align*}
For the second term in \eqref{prep:eq-commutator-p3}, we estimate
\begin{align*}
\big\| P^u_{\gg K} f P_K^u P_L^u g \big\|_{\Cprod{\gamma_1}{\gamma_2}} 
&\lesssim \big\| P^u_{\gg K} f  \big\|_{\Cprod{\gamma_1}{\gamma_2}} 
\big\| P_K^u P_L^u g \big\|_{L^\infty_u \C_v^{\gamma_2}} \\
&\lesssim K^{\gamma_1-\alpha_1} L^{-\beta_1}  \| f \|_{\Cprod{\alpha_1}{\alpha_2}} \| g \|_{\Cprod{\beta_1}{\beta_2}}. 
\end{align*}
This completes the proof of \eqref{prep:eq-commutator-p1}. It remains to prove \eqref{prep:eq-commutator-3} and \eqref{prep:eq-commutator-4}. To prove  \eqref{prep:eq-commutator-3}, we decompose
\begin{align}
&P_K^u ( f g) - P_{\ll K}^u f P_K^u g - P_{K}^u f P_{\ll K}^ u g \notag \\
=& P_K^u \big( P_{\ll K}^u f P_{\gtrsim K}^u g +  P_{\gtrsim K}^u f P_{\ll K}^u g + P_{\gtrsim K}^u f P_{\gtrsim K}^u g \big) - P_{\ll K}^u f P_K^u g - P_{K}^u f P_{\ll K}^ u g \notag  \\
=& [P_K^u , P_{\ll K}^u f] P_{\gtrsim K}^u g + 
[P_K^u , P_{\ll K}^u g] P_{\gtrsim K}^u f + P_K^u \big(  P_{\gtrsim K}^u f P_{\gtrsim K}^u g \big). \label{prep:eq-commutator-p4}
\end{align}
The first two summands in \eqref{prep:eq-commutator-p4} can be bounded using \eqref{prep:eq-commutator-2}. The third summand in \eqref{prep:eq-commutator-p4} can be bounded using \eqref{prep:eq-commutator-p2}. In order to obtain the last estimate \eqref{prep:eq-commutator-4} from \eqref{prep:eq-commutator-3}, it remains to prove 
\begin{equation}\label{prep:eq-commutator-p5}
\big\| P_{\gtrsim K}^u f P_K^u g \big\|_{\Cprod{\gamma_1}{\gamma_2}} + 
\big\| P_K^u f P_{\gtrsim K}^u g \big\|_{\Cprod{\gamma_1}{\gamma_2}} 
\lesssim K^{\gamma_1-\alpha_1-\beta_1} \| f \|_{\Cprod{\alpha_1}{\alpha_2}} \| g \|_{\Cprod{\beta_1}{\beta_2}}. 
\end{equation}
By symmetry, it suffices to treat the first summand in \eqref{prep:eq-commutator-p5}. Using frequency-support considerations, we have that 
\begin{equation}\label{prep:eq-commutator-p6}
\big\| P_{\gtrsim K}^u f P_K^u g \big\|_{\Cprod{\gamma_1}{\gamma_2}} 
= \big\| \sum_{L\gtrsim K} P_L^u f P_K^u g \big\|_{\Cprod{\gamma_1}{\gamma_2}}  
\lesssim \sup_{L\gtrsim K} L^{\gamma_1} \big\|  P_L^u f P_K^u g \big\|_{L_u^\infty \C_v^{\gamma_2}}. 
\end{equation}
Using $\gamma_1\leq \alpha_1$, the right-hand side of \eqref{prep:eq-commutator-p6} can be bounded as before.
\end{proof}

In the following, we state additional commutator estimates directly in terms of the para-product operators, which essentially follow from Lemma \ref{prep:lem-commutator}. The first corollary will be used in the PDE-analysis (Section \ref{section:prototypical}-\ref{section:duhamel}) and the second corollary will be used in the ODE-analysis (Section \ref{section:modulation}). 

In the following corollary, we prove a commutator estimate for the para-product operator $\parallsigu$.
\begin{corollary}[Commutators for PDE-analysis]\label{prep:cor-commutator-PDE}
Let $s,r^\prime$, and $r$ be as in \eqref{prep:eq-parameter-1} and \eqref{prep:eq-parameter-3}. Then, we have 
for all $f,g,h\in \C_b^\infty(\R^{1+1}_{u,v})$ that
\begin{equation}\label{prep:eq-commutator-PDE}
\big\| f \big( g \parallsigu h \big) - \big( fg \big) \parallsigu h \big\|_{\Cprod{r-1}{r-1}} 
\lesssim \| f \|_{\Cprod{s}{1-r^\prime}} \| g \|_{\Cprod{1-r^\prime}{r-1}} \| h \|_{\Cprod{s-1+\sigma}{1-r^\prime}}. 
\end{equation}
\end{corollary}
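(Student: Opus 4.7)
The approach is to reduce the expression to a sum of low-pass projection commutators, apply a Gubinelli–Imkeller–Perkowski style commutator estimate \cite{GIP15} adapted to the Bourgain-type spaces $\Cprod{\gamma_1}{\gamma_2}$, and sum in the outer frequency using the $\sigma$-margin built into $\parallsigu$. The gain is genuine: direct application of the bilinear estimate (Proposition~\ref{prep:prop-bilinear}\ref{prep:item-general}) to each of $f(g \parallsigu h)$ and $(fg)\parallsigu h$ only places them in $\Cprod{s-1+\sigma}{r-1}$ because of the negative $u$-regularity $s-1+\sigma$ of $h$, which is strictly worse than the target $\Cprod{r-1}{r-1}$ since $s + \sigma < r$. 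The cancellation in the difference is what recovers the missing $r - s - \sigma > 0$ worth of $u$-regularity.

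First, writing $S_L^u := \sum_{K \leq L} P_K^u$ for the $u$-low-pass projection, the definition of $\parallsigu$ gives $g \parallsigu h = \sum_{N \geq 1} S_{N^{1-\sigma}}^u g \cdot P_N^u h$, and subtracting the analogous identity applied to $(fg) \parallsigu h$ yields
\begin{equation*}
f\bigl(g \parallsigu h\bigr) - (fg) \parallsigu h \;=\; \sum_{N \geq 1} \bigl[ f,\, S_{N^{1-\sigma}}^u \bigr] g \cdot P_N^u h, \qquad \bigl[ f, S_L^u \bigr] g := f \cdot S_L^u g - S_L^u(fg).
\end{equation*}

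The core quantitative step is the low-pass commutator bound
\begin{equation*}
\bigl\| \bigl[f, S_L^u\bigr] g \bigr\|_{L^\infty_u(\C_v^{r-1})} \lesssim L^{-s - (1-r')} \|f\|_{\Cprod{s}{1-r'}} \|g\|_{\Cprod{1-r'}{r-1}}.
\end{equation*}
I would prove this by Littlewood–Paley decomposing $f = \sum_I P_I^u f$ and $g = \sum_J P_J^u g$ in the $u$-variable, and analyzing the three regimes $I \ll J$, $I \sim J$, and $I \gg J$ using the frequency-localized commutator bounds \eqref{prep:eq-commutator-3} and \eqref{prep:eq-commutator-4} of Lemma~\ref{prep:lem-commutator}. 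The cancellation between the two contributions to $[f, S_L^u] g$ is strongest when both $I, J \ll L$ and produces the $L^{-s - (1-r')}$ gain; the regimes $I \gtrsim L$ or $J \gtrsim L$ are controlled by direct bilinear estimates, with the high-frequency restriction on $f$ or $g$ producing the same power of $L$. The $v$-regularity $r - 1$ is extracted using the general bilinear estimate \ref{prep:item-general} of Proposition~\ref{prep:prop-bilinear} applied in the $v$-variable, since $(1-r') + (r-1) = \eta > 0$ and $\min(1-r', r-1) = r - 1$.

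Finally, pairing the commutator with $P_N^u h$ via Corollary~\ref{prep:cor-bilinear} (noting that $P_N^u h$ is $u$-frequency-localized at $\sim N$), together with the bound $\|P_N^u h\|_{L^\infty_u(\C_v^{1-r'})} \lesssim N^{-(s-1+\sigma)} \|h\|_{\Cprod{s-1+\sigma}{1-r'}}$ and the commutator estimate above with $L = N^{1-\sigma}$, produces a total power of $N$ equal to
\begin{equation*}
(r - 1) - (1-\sigma)\bigl(s + (1-r')\bigr) - (s - 1 + \sigma) \;=\; r + r' - 2s - 1 + \sigma(s - r'),
\end{equation*}
which is strictly negative under the parameter choices \eqref{prep:eq-parameter-1}, \eqref{prep:eq-parameter-sigma}, \eqref{prep:eq-parameter-3} (indeed it is $\approx -1/2$). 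Summing over dyadic $N \geq 1$ gives the claimed bound. The main obstacle is Step~2, where the sharp $L^{-s-(1-r')}$ gain requires an explicit identification of the GIP-type cancellation across the different frequency-interaction regimes between $f$, $g$, and the cutoff scale $L$; once that is in hand, the summation in $N$ is a bookkeeping exercise enabled precisely by the $\sigma$-margin introduced in the definition of $\parallsigu$.
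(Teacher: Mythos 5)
Your first step---rewriting $f(g \parallsigu h) - (fg) \parallsigu h = \sum_N [f, S_{N^{1-\sigma}}^u]g \cdot P_N^u h$---is exactly the decomposition the paper uses (with $[f, S_L^u] = [P_{>L}^u, f]$). The two errors come in the quantitative steps that follow.

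\emph{The commutator bound is too strong.} You claim $\|[f, S_L^u]g\|_{L^\infty_u(\C_v^{r-1})} \lesssim L^{-s-(1-r')}\|f\|_{\Cprod{s}{1-r'}}\|g\|_{\Cprod{1-r'}{r-1}}$, but the correct power is $L^{-s}$. Test it on $f(u) = L^{-s}e^{iLu}$ (so $\|f\|_{\C_u^s}\simeq 1$) and $g(u) = e^{iu}$ (so $\|g\|_{\C_u^{1-r'}}\simeq 1$): here $S_L^u g = g$, while $fg$ sits at $u$-frequency $L+1$, so $[f, S_L^u]g = \bigl(1 - \rho(1+\tfrac1L)\bigr)fg$, whose $L^\infty$ size is $\simeq L^{-s}$, not $L^{-s-(1-r')}$. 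The extra factor $L^{-(1-r')}$ is not there. Related to this, your intuition that "the cancellation is strongest when both $I,J \ll L$" is a misstatement: in that regime the commutator is \emph{identically zero}, and the nontrivial (and dominant) contributions come from $I \gtrsim L$ with $J$ low, where only a single $L^{-s}$ gain is available. What you actually prove by summing \eqref{prep:eq-commutator-1} over dyadic shells $K>L$ (with $\gamma_1=0$, $\alpha_1=s$) is $L^{-s}$.

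\emph{The pairing step misses the dominant frequency interaction.} You convert to $\Cprod{r-1}{r-1}$ by inserting a factor $N^{r-1}$, which amounts to assuming the product $[f, S_{N^{1-\sigma}}^u]g \cdot P_N^u h$ is $u$-frequency localized at $\sim N$. It is not. The commutator has content at $u$-frequencies $\gtrsim N$ (coming from $P_{\gtrsim N}^u f$ or $P_{\gtrsim N}^u g$), and this pairs with $P_N^u h$ in a high$\times$high$\to$low interaction that puts the product at $u$-frequency $\sim 1$. Because $r-1 < 0$, the $\Cprod{r-1}{r-1}$ norm gives \emph{more} weight to low $u$-frequencies, so this low-output piece dominates: its exponent is $(1-2s) - \sigma$, which is barely negative (the summability comes precisely from the $\sigma$-margin), not $\approx -1/2$ as you compute. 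Your invocation of Corollary~\ref{prep:cor-bilinear} also does not apply: its hypotheses require a uniform $\sup_M$ bound in a $\Cprod{\beta_1}{\beta_2}$ norm on $P_M^u h$ with $\beta_1 \geq \gamma_1 = r-1 > s-1+\sigma$, which fails. The paper sidesteps all of this by estimating the commutator in $\Cprod{1-r'}{r-1}$ and $P_M^u h$ in $\Cprod{r-1}{1-r'}$---spaces with opposite-sign $u$-exponents $1-r'$ and $r-1$---and then applying Proposition~\ref{prep:prop-bilinear} term-by-term, which automatically accounts for every $u$-frequency interaction, including the low-output one. If you rework your Step~2 to produce a $\Cprod{1-r'}{r-1}$ bound on $[P_{>L}^u, f]g$ (the $L^{1-r'-s}$ that the paper uses suffices, though it is not sharp) and then apply Proposition~\ref{prep:prop-bilinear} rather than Corollary~\ref{prep:cor-bilinear}, the argument closes.
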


\begin{proof}
We first rewrite the argument on the left-hand side of \eqref{prep:eq-commutator-PDE} as \begin{align*}
      f \big( g \parallsigu h \big) - \big( fg \big) \parallsigu h  
      &= \sum_{M} \Big( f \, P_{\leq M^{1-\sigma}}^u g \, P_M^u h - P_{\leq M^{1-\sigma}} \big( f g\big) P_{M}^u h \Big)\\
      &= \sum_{M}  \big[ f , P_{\leq M^{1-\sigma}}^u \big] \big( g \big) \, P_M^u h \\
      &= \sum_{M} \big[  P_{>M^{1-\sigma}}^u, f  \big] \big( g \big) \, P_M^u h . 
 \end{align*}
 Using the bilinear estimate (Proposition \ref{prep:prop-bilinear}) and \eqref{prep:eq-commutator-1}, we have that 
 \begin{align*}
     &\big\| \sum_M \big[  P_{>M^{1-\sigma}}^u, f  \big] \big( g \big) \, P_M^u h \big\|_{\Cprod{r-1}{r-1}} \\
     &\lesssim \sum_M \big\| \big[  P_{>M^{1-\sigma}}^u, f  \big] \big( g \big) \big\|_{\Cprod{1-r^\prime}{r-1}} \big\| P_M^u h \big\|_{\Cprod{r-1}{1-r^\prime}} \\
     &\lesssim \| f \|_{\Cprod{s}{1-r^\prime}} \| g \|_{\Cprod{1-r^\prime}{r-1}} \sum_M M^{(1-\sigma)(1-r^\prime-s)} \big\| P_M^u h \big\|_{\Cprod{r-1}{1-r^\prime}}. 
 \end{align*}
 Since 
 \begin{equation*}
    (1-\sigma) (1-r^\prime-s)+r-1= s-1 + 1-2s + \eta + \sigma (r^\prime+s-1) < s-1 + \sigma,
 \end{equation*}
 this yields the desired estimate.
\end{proof}

We now define three different commutator terms, which will be used in the ODE-analysis.

\begin{definition}[Commutator terms for ODE-analysis]\label{prep:def-commutator-ODE}
For all $f,g,h\in \C_b^\infty(\R^{1+1}_{u,v})$, we define 
\begin{align}
    \Com_{\parall}^v (f,g,h) 
    &:= f ( g \parallv h) - (fg) \parallv h - g (f\parasimv h), \\
    \Com_{\parall,\parasim}^v(f,g,h) 
    &:= (f \parallv g) \parasimv h - f ( g \parasimv h), \\
    \Com_{\parasim}^v (f,g,h) &:= (fg)\parasimv h - f ( g \parasimv h) - g (f \parasimv h). 
\end{align}
\end{definition}

Equipped with Definition \ref{prep:def-commutator-ODE}, we can now state the commutator estimates for the ODE-analysis.

\begin{lemma}[Commutators for ODE-analysis]\label{prep:lem-commutator-ODE}
For all $f,g,h\in \C_b^\infty(\R^{1+1}_{u,v})$, we have that
\begin{align}
\big\| \Com_{\parall}^v(f,g,h) \big\|_{\Cprod{s}{r-1}} 
&\lesssim \| f \|_{\Cprod{s}{s}} \| g\|_{\Cprod{s}{s}} \| h \|_{\Cprod{s}{r-1-s+2\eta}} , \\
\big\| \Com_{\parall,\parasim}^v(f,g,h) \big\|_{\Cprod{s}{r-1}} 
&\lesssim \| f \|_{\Cprod{s}{s}} \| g\|_{\Cprod{s}{s}} \| h \|_{\Cprod{s}{-1+2\eta}} , \\
\big\| \Com_{\parasim}^v(f,g,h) \big\|_{\Cprod{s}{r-1}} 
&\lesssim \| f \|_{\Cprod{s}{s}} \| g\|_{\Cprod{s}{s}} \| h \|_{\Cprod{s}{-1+2\eta}} .
\end{align}
\end{lemma}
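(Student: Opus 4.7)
All three estimates in Lemma~\ref{prep:lem-commutator-ODE} follow the classical three-way commutator paradigm of Gubinelli--Imkeller--Perkowski~\cite{GIP15}, now adapted to the Bourgain-type spaces $\Cprod{\gamma_1}{\gamma_2}$. Since the commutators in Definition~\ref{prep:def-commutator-ODE} involve only Littlewood--Paley operators in the $v$-variable, the product structure of the $\Cprod{\gamma_1}{\gamma_2}$-norm in~\eqref{prep:eq-product-norm} lets me freeze $u$ and reduce each bound to a single-variable statement in~$v$, with the $u$-regularity~$s$ propagated through the multiplications by Proposition~\ref{prep:prop-bilinear}\,\ref{prep:item-general} (applicable since $s>0$ and $2s>0$). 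The resulting $v$-commutator estimates then follow the template of Lemma~\ref{prep:lem-commutator} with the roles of $u$ and $v$ interchanged.

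\textbf{The estimates for $\Com^v_{\parasim}$ and $\Com^v_{\parall,\parasim}$.} For the symmetric three-way commutator I would regroup terms according to the frequency scale $N$ of the localization $P_N^v h$:
\begin{equation*}
\Com^v_{\parasim}(f,g,h) \;=\; \sum_{N}\Big(\widetilde{P}_N^v(fg)\,-\,f\cdot \widetilde{P}_N^v g\,-\,g\cdot \widetilde{P}_N^v f\Big)\cdot P_N^v h,
\end{equation*}
where $\widetilde{P}_N^v = \sum_{M\sim N} P_M^v$ is the fattened $v$-projection from~\eqref{prep:eq-fattened-LWP}. The bracketed factor is precisely the object controlled by~\eqref{prep:eq-commutator-4} applied in the $v$-variable, producing a gain of $N^{-2s}$ at $v$-regularity~$0$ uniformly in~$u$. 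Multiplying by $P_N^v h$ and summing in~$N$ then yields $v$-regularity~$r-1$ from the budget $2s+(-1+2\eta)$, which exceeds $r-1$ by a positive margin since $2s>1-2\eta$ by~\eqref{prep:eq-parameter-2} and $r<1$ by~\eqref{prep:eq-parameter-1}. For $\Com^v_{\parall,\parasim}$, a parallel rearrangement gives
\begin{equation*}
(f\parallv g)\parasimv h \;=\; \sum_N \widetilde{P}_N^v\bigl(P^v_{\ll N} f\cdot \widetilde{P}_N^v g\bigr)\cdot P_N^v h,
\end{equation*}
and the commutator estimate~\eqref{prep:eq-commutator-3} in~$v$ replaces the inner projection by $P^v_{\ll N}f\cdot \widetilde{P}_N^v g$ modulo an $N^{-2s}$-error; the surviving low-frequency main term then cancels exactly against the subtracted $f\cdot \widetilde{P}_N^v g\cdot P_N^v h$ coming from $f(g\parasimv h)$.

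\textbf{The most delicate case $\Com^v_{\parall}$.} Here the first two terms combine into a genuine paraproduct commutator,
\begin{equation*}
f(g\parallv h) - (fg)\parallv h \;=\; \sum_N \bigl[f, P^v_{\ll N}\bigr]\!(g)\cdot P_N^v h,
\end{equation*}
and I would then split $g = P^v_{\ll N}g + \widetilde{P}_N^v g + P^v_{\gg N}g$ inside the commutator. The $\widetilde{P}_N^v g$-piece produces at leading order precisely $g\cdot \widetilde{P}_N^v f\cdot P_N^v h$, which cancels the subtracted $g\cdot(f\parasimv h)$, while the contributions of $P^v_{\ll N}g$ and $P^v_{\gg N}g$ are controlled by~\eqref{prep:eq-commutator-1}--\eqref{prep:eq-commutator-2} in~$v$ with only a \emph{single} commutator gain of~$N^{-s}$. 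This single-derivative gain is consistent with---and explains---the weaker assumption $h\in \Cprod{s}{r-1-s+2\eta}$, whose $v$-regularity is $s$ units lower than in the other two bounds. The \textbf{main obstacle} I anticipate is the tightness of this last step: the gain $N^{-s}$ against the $v$-regularity deficit $N^{s-r+1-2\eta}$ in the summation over~$N$ leaves only the small margin from~\eqref{prep:eq-parameter-2}, so one must carefully track frequency-support constraints to confirm that the resonant piece cancelled by $g\cdot(f\parasimv h)$ captures \emph{all} non-summable contributions.
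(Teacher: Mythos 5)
Your overall strategy is aligned with what the paper's one-line proof suggests: reduce to $v$-variable commutators, appeal to Lemma~\ref{prep:lem-commutator}, and propagate the $u$-regularity via the bilinear estimate. The decomposition and count for $\Com^v_{\parasim}$ are correct, and the treatment of $\Com^v_{\parall,\parasim}$ (while the $N^{-2s}$-error stems more from the high-high piece $P^v_{\gtrsim N}f \cdot \widetilde{P}_N^v g$ than from \eqref{prep:eq-commutator-3} verbatim) is sound in spirit.

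For $\Com^v_{\parall}$, however, you have identified the wrong piece as the source of the cancellation. You claim that after splitting $g = P^v_{\ll N}g + \widetilde{P}_N^v g + P^v_{\gg N}g$ inside $\bigl[f,P^v_{\ll N}\bigr]g$, the $\widetilde{P}_N^v g$-piece produces the term that cancels against $g\cdot\widetilde{P}_N^v f$. But since $P^v_{\ll N}\widetilde{P}_N^v = 0$ one computes
\begin{equation*}
\bigl[f,P^v_{\ll N}\bigr]\bigl(\widetilde{P}_N^v g\bigr) = -P^v_{\ll N}\bigl(f\,\widetilde{P}_N^v g\bigr) \approx -P^v_{\ll N}\bigl(\widetilde{P}_N^v f\cdot\widetilde{P}_N^v g\bigr),
\end{equation*}
which is already a benign high-high resonance between $f$ and $g$ with the full $N^{-2s}$ gain; it needs no cancellation. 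The dangerous term sits in the $P^v_{\ll N}g$-piece: $\bigl[f,P^v_{\ll N}\bigr]\bigl(P^v_{\ll N}g\bigr) = P^v_{\gtrsim N}\bigl(f\,P^v_{\ll N}g\bigr)$, whose worst component is $\widetilde{P}_N^v f\cdot P^v_{\ll N}g$, gaining only $N^{-s}$. It is this term that cancels exactly against $P^v_{\ll N}g\cdot \widetilde{P}_N^v f$, i.e., the low-frequency-in-$g$ portion of the subtracted $g\,\widetilde{P}_N^v f$. If you follow your stated split and only ``explain'' the $\widetilde{P}_N^v g$-piece, the actual culprit is left untreated and the argument does not close.

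There is also a sign/magnitude error in your closing remark: $r-1-s+2\eta$ exceeds $-1+2\eta$ by $r-s>0$, so the $v$-regularity demanded of $h$ in the $\Com^v_{\parall}$ bound is \emph{higher} (a stronger assumption, reflecting a weaker commutator estimate), not ``$s$ units lower'' as you wrote. The qualitative intuition---a single-derivative gain of order $s$ from commuting $f$ past $P^v_{\ll N}$, versus the $2s$ gain available in the other two estimates---is correct, but the stated comparison is backwards in direction and off in size (the two $h$-regularities in the lemma differ by $r-s$, and the $\Com^v_{\parasim}$ bound in fact has slack in~$h$).
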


\begin{proof}
The three estimates can be derived from Lemma \ref{prep:lem-commutator} and we omit the standard details. Similar estimates can be found in \cite[Lemma 2.4]{GIP15} and \cite[Lemma 2.8]{GIP15}.
\end{proof}

We end this subsection with an estimate for para-products of frequency-localized functions. Strictly speaking, it is not a commutator estimate, but it has a similar flavor. This lemma will be used heavily in the analysis below, since it allows us to freely switch back and forth between para-products and Littlewood-Paley decompositions.

\begin{lemma}[Para-products of frequency-localized functions]\label{prep:lem-para-localized}
Let $\alpha_j,\beta_j\in \mathbb{R}\backslash \{ 0 \} $ and $ \gamma_j \in \R$ be regularities, where $j=1,2$, and let $f,g \in \C_b^\infty(\R_{u,v}^{1+1})$. Furthermore, let $M$ and $N$ be frequency scales.
\begin{itemize}
\item[(i):] If $\gamma_2\leq \min(\alpha_2,\beta_2)$ and $\alpha_2+\beta_2>0$, then it holds that
\begin{equation}\label{prep:eq-localized-1}
\big\| f \parallsigu \widetilde{P}_M^u g - P_{\leq M^{1-\sigma}}^u f \, \widetilde{P}_M^u g \big\|_{\Cprod{\gamma_1}{\gamma_2}} \lesssim M^{\gamma_1-(1-\sigma) \alpha_1 - \beta_1}
\big\| f \big\|_{\Cprod{\alpha_1}{\alpha_2}} \big\| g \big\|_{\Cprod{\beta_1}{\beta_2}}. 
\end{equation}
\item[(ii):] If $\gamma_1\leq \min(\alpha_1,\beta_1)$ and $\alpha_1+\beta_1>0$, then it holds that
\begin{equation}\label{prep:eq-localized-2}
\big\| f \parasimv P_N^v g - \widetilde{P}_N^v f \, P_N^v  g \big\|_{\Cprod{\gamma_1}{\gamma_2}} \lesssim N^{\gamma_2-\alpha_2-\beta_2} \big\| f \big\|_{\Cprod{\alpha_1}{\alpha_2}} \big\| g \big\|_{\Cprod{\beta_1}{\beta_2}}.
\end{equation}
\end{itemize}
\end{lemma}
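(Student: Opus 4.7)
My plan is to unfold the para-products on both sides of each identity into their Littlewood-Paley decompositions, exploit the frequency support of the localized factor to collapse the sums to a narrow dyadic window, and then estimate the explicit residual difference via the bilinear estimate from Proposition \ref{prep:prop-bilinear} applied in the variable transverse to the localization.

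For (i), I would expand
\[ f \parallsigu \widetilde{P}_M^u g = \sum_L P_{\leq L^{1-\sigma}}^u f \cdot P_L^u \widetilde{P}_M^u g. \]
A frequency-support check shows that $P_L^u \widetilde{P}_M^u g$ vanishes unless $L \sim M$, and that for $L$ strictly inside the $\sim$-window one has $P_L^u \widetilde{P}_M^u g = P_L^u g$; the finitely many boundary terms at the edge of the $\sim$-range are handled identically. Subtracting the analogous expansion of $P_{\leq M^{1-\sigma}}^u f \cdot \widetilde{P}_M^u g$ collapses the difference (modulo these harmless boundary pieces) to
\[ \sum_{L \sim M} \bigl(P_{\leq L^{1-\sigma}}^u f - P_{\leq M^{1-\sigma}}^u f\bigr)\, P_L^u g. \]
Because $L \sim M$, the first factor is a finite sum of $P_K^u f$ with $K$ in a bounded window around $M^{1-\sigma}$, and since $M^{1-\sigma} \ll M$ the product is $u$-frequency-localized strictly at scale $\sim M$. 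The $\Cprod{\gamma_1}{\gamma_2}$-norm then absorbs $M^{\gamma_1}$ from the $u$-localization, $M^{-(1-\sigma)\alpha_1}$ from the first factor, and $M^{-\beta_1}$ from $P_L^u g$, while the $\C_v^{\gamma_2}$-Hölder estimate of the product is supplied by Proposition \ref{prep:prop-bilinear}\ref{prep:item-general} applied in the single variable $v$, which is precisely where condition \eqref{prep:eq-bilinear-condition} for $j=2$ enters. Multiplying the exponents yields \eqref{prep:eq-localized-1}.

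For (ii) the plan is symmetric, but the algebra merits a short computation. Writing $f = \widetilde{P}_N^v f + (I - \widetilde{P}_N^v) f$, decomposing $\widetilde{P}_N^v f \cdot P_N^v g = \widetilde{P}_N^v f \parallv P_N^v g + \widetilde{P}_N^v f \parasimv P_N^v g + \widetilde{P}_N^v f \paraggv P_N^v g$, and canceling the resonant pieces, one obtains the identity
\[ f \parasimv P_N^v g - \widetilde{P}_N^v f \cdot P_N^v g = -\widetilde{P}_N^v f \parallv P_N^v g + (I - \widetilde{P}_N^v) f \parasimv P_N^v g - \widetilde{P}_N^v f \paraggv P_N^v g. \]
The crucial observation is that every term on the right is $v$-frequency localized strictly at scale $\sim N$: for $\parallv$ and $\paraggv$ the high-frequency factor dominates the output, while for the $\parasimv$ piece the fact that $(I - \widetilde{P}_N^v) f$ excludes exactly the $v$-frequencies of strict size $\sim N$ forces $M$ to sit at the boundary of the $\sim$-window, so that $|M - L| \sim N$ and no low-frequency output at scale $\ll N$ can arise from the apparent high$\times$high interaction. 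Consequently the $v$-part of the $\Cprod{\gamma_1}{\gamma_2}$-norm contributes $N^{\gamma_2}$, the two amplitudes give $N^{-\alpha_2 - \beta_2}$, and the $\C_u^{\gamma_1}$-part is controlled by Proposition \ref{prep:prop-bilinear}\ref{prep:item-general} applied in $u$, where the hypothesis \eqref{prep:eq-bilinear-condition} for $j=1$ is used. Collecting exponents gives \eqref{prep:eq-localized-2}.

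The main technical subtlety is the last point—ruling out low-$v$-frequency output from $(I - \widetilde{P}_N^v) f \parasimv P_N^v g$ in (ii)—and the routine bookkeeping of the strict versus fattened Littlewood-Paley supports at the edges of the $\sim$-relation. Once these localizations are verified, no ingredient beyond Proposition \ref{prep:prop-bilinear} is required, and all remaining sums run over only finitely many dyadic scales, contributing only constants.
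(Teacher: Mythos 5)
Your proof is correct, and for part (i) it is essentially identical to the paper's argument: both expand the para-products, observe that the difference is supported on $u$-frequencies $K \sim M^{1-\sigma}$ (for $f$) and $L\sim M$ (for $g$) so only a bounded number of dyadic scales survive, and estimate the remaining boundary pieces via a single-variable H\"older product estimate in the $v$-variable, which is where condition~\eqref{prep:eq-bilinear-condition} for $j=2$ is used.

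For part (ii) your algebraic route differs from the paper's, though both exploit the same underlying boundary-frequency mechanism. You split $f = \widetilde{P}_N^v f + (I-\widetilde{P}_N^v)f$, expand $\widetilde{P}_N^v f \cdot P_N^v g$ into $v$-paraproducts, and cancel the $\parasimv$ piece, arriving at three residual terms each $v$-frequency localized at $\sim N$ and each estimated directly. The paper instead first uses linearity to restrict $f$ to $v$-frequencies in $[2^{-20}N, 2^{20}N]$, then subtracts both the paraproduct and the projection-based expansions of the full product $P_K^v f \cdot P_N^v g$, producing the two differences $P_K^v f \parallv P_N^v g - P_{\ll N}^v P_K^v f \cdot P_N^v g$ and $P_{\gg N}^v P_K^v f \cdot P_N^v g - P_K^v f \paraggv P_N^v g$, each estimated exactly as in (i). The paper's route thus reuses the (i) machinery more mechanically, whereas yours requires one additional observation that you correctly supply: for the residual $(I-\widetilde{P}_N^v)f \parasimv P_N^v g$, the allowed dyadic window for the $f$-frequency $M$ is separated from the $g$-frequency window $L\in\{N/2,N,2N\}$ by a factor of at least $\sim 2^9$, so the output is bounded below by $|M-L| \gtrsim N$ and no low-frequency leakage occurs. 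Both decompositions yield the identical bound, so neither buys anything quantitative over the other.
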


\begin{proof}
 We start by proving \eqref{prep:eq-localized-1}. To this end, we decompose
 \begin{align*}
    f \parallsigu \widetilde{P}_M^u g - P_{\leq M^{1-\sigma}}^u f \, \widetilde{P}_M^u g 
    &= \sum_{\substack{K,L\colon \\ K \leq L^{1-\sigma}}} 
    P_K^{u} f \, P_L^u \widetilde{P}_M^u g 
    -\sum_{\substack{K,L\colon \\ K \leq M^{1-\sigma}}} 
    P_K^{u} f \, P_L^u \widetilde{P}_M^u g \\
    &= \sum_{\substack{K,L}} \big( 1 \{ K \leq L^{1-\sigma}\}- 1\{ K \leq M^{1-\sigma}\}\big) P_K^{u} f \, P_L^u \widetilde{P}_M^u g. 
 \end{align*}
 The dyadic sum is supported on frequency-scales $K\sim M^{1-\sigma}$ and $L\sim M$. For all such dyadic scales, it holds that 
 \begin{align*}
     \| P_K^{u} f \, P_L^u \widetilde{P}_M^u g \|_{\Cprod{\gamma_1}{\gamma_2}} 
     &\lesssim M^{\gamma_1} \| P_K^{u} f \, P_L^u \widetilde{P}_M^u g \big\|_{L^\infty_u \C_v^{\gamma_2}} \\
     &\lesssim M^{\gamma_1}\| P_K^{u} f\|_{L^\infty_u \C_v^{\alpha_2}} 
     \| P_L^u \widetilde{P}_M^u g \|_{L^\infty_u \C_v^{\beta_2}} \\
     &\lesssim M^{\gamma_1} K^{-\alpha_1} L^{-\beta_1}\big\| f \big\|_{\Cprod{\alpha_1}{\alpha_2}} \big\| g \big\|_{\Cprod{\beta_1}{\beta_2}} \\
     &\lesssim M^{\gamma_1-(1-\sigma)\alpha_1-\beta_1} \big\| f \big\|_{\Cprod{\alpha_1}{\alpha_2}} \big\| g \big\|_{\Cprod{\beta_1}{\beta_2}}. 
 \end{align*}
 This completes the proof of \eqref{prep:eq-localized-1} and we now turn to \eqref{prep:eq-localized-2}. Using linearity, we can replace $f$ in \eqref{prep:eq-localized-2} by $P_K^v f$, where $2^{-20}\leq K/N\leq 2^{20}$. That is, $f$ lives at frequencies comparable to $N$, but with a larger constant than implicit in $K\sim N$. A similar argument as for \eqref{prep:eq-localized-1} shows that 
 \begin{equation}\label{prep:eq-localized-p1}
\big\| P_K^v f \parallv P_N g - P_{\ll N}^v P_K^v f \, P_N^v g \big\|_{\Cprod{\gamma_1}{\gamma_2}} \lesssim N^{\gamma_2-\alpha_2-\beta_2} 
\big\| f \big\|_{\Cprod{\alpha_1}{\alpha_2}} 
\big\| g \big\|_{\Cprod{\beta_1}{\beta_2}}.
 \end{equation}
 Furthermore, it is easy to see that 
 \begin{equation}\label{prep:eq-localized-p2}
\big\| P_K^v f \paraggv P_N g \big\|_{\Cprod{\gamma_1}{\gamma_2}}
+ \big\| P_{\gg N}^v P_K^v f \, P_N^v g \big\|_{\Cprod{\gamma_1}{\gamma_2}}
\lesssim N^{\gamma_2-\alpha_2-\beta_2} 
\big\| f \big\|_{\Cprod{\alpha_1}{\alpha_2}} 
\big\| g \big\|_{\Cprod{\beta_1}{\beta_2}}.
 \end{equation}
 Together with the decomposition
 \begin{align*}
& P_K^v f \parasimv P_N^v g - \widetilde{P}_N^v P_K^v f \, P_N^v  g  \\
=& \big( P_K^v f \, P_N^v g - P_K^v f \parallv P_N^v g - P_K^v f \paraggv P_N^v g \big) 
- \big( P_K^v f  P_N^v g - P_{\ll N}^v P_K^v f \,  P_N^v g - P_{\gg N}^v P_K^v f \, P_N^v g \big) \\
=&  P_{\ll N}^v P_K^v f P_N^v g - P_K^v f \parallv P_N^v g +  P_{\gg N}^v P_K^v f \, P_N^v g - P_K^v f \paraggv P_N^v g, 
 \end{align*}
the estimates \eqref{prep:eq-localized-p1} and \eqref{prep:eq-localized-p2} yield \eqref{prep:eq-localized-2}.
\end{proof}

\subsection{Integrals and traces} 
In this subsection, our main goal is to understand the Duhamel integral of the linear wave equation in null-coordinates. As we will see in Proposition \ref{prep:prop-duhamel} below, the Duhamel integral consists of both integral and trace operators. Before addressing combinations of integral and trace operators, we start by considering them separately. \newline

As is clear from the wave equation \eqref{intro:eq-WM-NC}, the Duhamel integral acts on both $u$ and $v$-variables. Nevertheless, we start by analyzing integral operators in a single variable. 
\begin{definition}[Single-variable integral operator]
For all functions $f\in \C_c^\infty(\R)$, we define the integral
\begin{equation}
\I f (x) := \int_{0}^x \dy ~  f(y) . 
\end{equation}
\end{definition}

The following lemma proves that the integral $\I$ gains one derivative in Hölder spaces. 

\begin{lemma}[Single-variable integral estimate]\label{prep:lemma-single-integral}
For all $f\in C^\infty_b(\R)$,  $\widetilde{\chi} \in C_c^\infty(\R)$, and $\gamma\in (0,\infty)$, it holds that 
\begin{equation}
\big \| \I [ \widetilde{\chi} f ] \big\|_{\C^\gamma} \lesssim_{\widetilde{\chi},\gamma} \| f \|_{\C^{\gamma-1}}. 
\end{equation}
\end{lemma}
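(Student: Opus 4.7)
\medskip

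Setting $F := \I[\widetilde{\chi} f]$, I plan to bound $\|P_N F\|_{L^\infty}$ separately for $N \geq 2$ and $N = 1$, since integration gains a derivative only on frequencies bounded away from zero. Because $f \in C^\infty_b(\R)$ and $\widetilde{\chi} \in C^\infty_c(\R)$, the product $\widetilde{\chi} f$ lies in $C^\infty_c(\R)$ and $F$ is smooth and constant outside a compact set, so all manipulations below are classically justified.

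\smallskip

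For $N \geq 2$, the symbol $\rho_N$ vanishes in a neighborhood of $\xi = 0$, so I factor $\rho_N(\xi) = i\xi \, m_N(\xi)$ with $m_N(\xi) := \rho_N(\xi)/(i\xi)$ smooth and supported on $\{|\xi| \sim N\}$. A rescaling argument yields $\|\check m_N\|_{L^1} \lesssim N^{-1}$. Using $\partial_x F = \widetilde{\chi} f$ together with a fattened projection $\widetilde{P}_N$ equal to $1$ on $\operatorname{supp}\rho_N$, I obtain $P_N F = \check m_N \ast \widetilde{P}_N(\widetilde{\chi} f)$, and Young's inequality gives
\begin{equation*}
\|P_N F\|_{L^\infty} \,\lesssim\, N^{-1} \|\widetilde{P}_N(\widetilde{\chi} f)\|_{L^\infty} \,\lesssim\, N^{-\gamma} \|\widetilde{\chi} f\|_{\C^{\gamma-1}} \,\lesssim_{\widetilde{\chi}, \gamma}\, N^{-\gamma} \| f\|_{\C^{\gamma-1}}.
\end{equation*}
The last inequality is the scalar multiplication estimate $\|\widetilde{\chi} f\|_{\C^\alpha} \lesssim_{\widetilde{\chi}, \alpha} \| f\|_{\C^\alpha}$, valid for any $\alpha \in \R$ when $\widetilde{\chi} \in C_c^\infty$; this follows from a paraproduct decomposition $\widetilde{\chi} f = \widetilde{\chi}\parall f + \widetilde{\chi}\parasim f + \widetilde{\chi}\paragg f$, exploiting that $\|P_M \widetilde{\chi}\|_{L^\infty}$ decays faster than any polynomial in $M$.

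\smallskip

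For $N = 1$, Bernstein's inequality gives $\|P_1 F\|_{L^\infty} \lesssim \|F\|_{L^\infty}$, so it suffices to bound $\|F\|_{L^\infty}$. I decompose $f = \sum_M P_M f$ and estimate the contribution of each block to $F(x) = \sum_M \int_0^x \widetilde{\chi}(y)(P_M f)(y)\, dy$. For $M \geq 2$, the frequency-localized primitive $G_M$ defined via $\widehat{G_M}(\xi) := m_M(\xi)\, \widehat{\widetilde{P}_M f}(\xi)$ satisfies $G_M' = P_M f$ and $\|G_M\|_{L^\infty} \lesssim M^{-1}\|P_M f\|_{L^\infty} \lesssim M^{-\gamma}\|f\|_{\C^{\gamma-1}}$ by the same symbolic argument as above. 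Integration by parts then yields
\begin{equation*}
\left|\int_0^x \widetilde{\chi}(y)(P_M f)(y)\, dy\right| \,\leq\, \bigl( 2\|\widetilde{\chi}\|_{L^\infty} + \|\widetilde{\chi}'\|_{L^1} \bigr) \|G_M\|_{L^\infty} \,\lesssim_{\widetilde{\chi}}\, M^{-\gamma} \|f\|_{\C^{\gamma-1}}.
\end{equation*}
For $M = 1$, the direct estimate $|\int_0^x \widetilde{\chi}(y)(P_1 f)(y)\, dy| \leq \|\widetilde{\chi}\|_{L^1}\|P_1 f\|_{L^\infty} \lesssim_{\widetilde{\chi}} \|f\|_{\C^{\gamma-1}}$ suffices. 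Summing over $M$ converges because $\gamma > 0$, giving $\|F\|_{L^\infty} \lesssim_{\widetilde{\chi}, \gamma} \|f\|_{\C^{\gamma-1}}$.

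\smallskip

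The only subtle point is the low-frequency case when $\gamma < 1$: a naive pointwise bound of the form $|F(x)| \leq \|\widetilde{\chi} f\|_{L^1}$ is unavailable, since $f$ is then a genuine distribution in $\C^{\gamma-1}$ rather than an $L^\infty$ function. The scale-by-scale integration by parts above sidesteps this issue, with summability over scales requiring precisely the hypothesis $\gamma > 0$.
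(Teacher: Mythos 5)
Your proof is correct. There is, however, nothing in the paper to compare against: the authors do not prove Lemma \ref{prep:lemma-single-integral} themselves but refer to \cite[Lemma A.10]{GIP15}, so the lemma is taken as a black box. Your self-contained argument is the standard one for this kind of estimate and is complete. The high-frequency part ($N\geq 2$) — writing $\rho_N(\xi)=i\xi\, m_N(\xi)$, using $\partial_x F=\widetilde\chi f$, and gaining $N^{-1}$ from $\|\check m_N\|_{L^1}$ — is exactly the right mechanism, and your reduction $\|\widetilde\chi f\|_{\C^{\gamma-1}}\lesssim_{\widetilde\chi}\|f\|_{\C^{\gamma-1}}$ is valid for all real orders because $P_M\widetilde\chi$ decays superpolynomially in $M$. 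You also correctly isolate the genuine delicacy: for $\gamma<1$ the distribution $\widetilde\chi f$ need not be integrable, so $\|F\|_{L^\infty}$ cannot be bounded by $\|\widetilde\chi f\|_{L^1}$; the scale-by-scale integration by parts against a frequency-localized primitive $G_M$ is the correct fix, and summability over $M$ is precisely where $\gamma>0$ enters. No gaps.
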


\begin{remark}
The cut-off function $\widetilde{\chi}$, which will later be chosen as a fattened version of $\chi$ from \eqref{prep:eq-chi}, should be seen as a technical crutch. Since the wave maps equation exhibits finite speed of propagation, it is always possible to reduce to a compact set. 
\end{remark}

Of course, the gain of derivatives through integration is standard. For a proof of Lemma \ref{prep:lemma-single-integral} in the H\"{o}lder-spaces from Definition \ref{prep:def-spaces}, we refer to \cite[Lemma A.10]{GIP15}.

We now define the integral operators $\I_u$ and $\I_v$, which act on the $u$ and $v$-variables, respectively. 

\begin{definition}[Partial integral operators]
For all functions $f\in C^\infty_c(\R_{u,v}^{1+1})$, we define the partial integral operators by
\begin{align}
\I_u f(u,v) &:= \int_{0}^u \du^\prime f(u^\prime,v), \\
\I_v f(u,v) &:= \int_{0}^v \dv^\prime f(u,v^\prime). 
\end{align}
\end{definition}

The single-variable integral estimate (Lemma \ref{prep:lemma-single-integral}) directly yields estimates for the partial integral operators in our Bourgain-type space $\Cprod{\alpha}{\beta}$.

\begin{lemma}[Partial integral estimate]\label{prep:lemma-partial-integral}
For all $f\in C^\infty(\R_{u,v}^{1+1})$, $\widetilde{\chi}\in C^\infty_c(\R_{u,v}^{1+1})$, $\alpha \in (0,\infty)$, and $\beta\in \R$, it holds that
\begin{equation}\label{prep:eq-partial-integral-1}
\big\| \I_u \big[ \widetilde{\chi} f\big] \big\|_{\Cprod{\alpha}{\beta}} \lesssim_{\alpha,\beta,\widetilde{\chi}} \| f \|_{\Cprod{\alpha-1}{\beta}}.
\end{equation}
If instead $\alpha \in \R$ and $\beta \in (0,\infty)$, then
\begin{equation}\label{prep:eq-partial-integral-2}
\big\| \I_v \big[ \widetilde{\chi} f\big] \big\|_{\Cprod{\alpha}{\beta}} \lesssim_{\alpha,\beta,\widetilde{\chi}} \| f \|_{\Cprod{\alpha}{\beta-1}}. 
\end{equation}
\end{lemma}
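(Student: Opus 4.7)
The plan is to reduce the two-variable estimate to the single-variable result Lemma \ref{prep:lemma-single-integral} by exploiting that $\I_u$ acts only on the $u$-coordinate and that the Bourgain-type norm $\Cprod{\alpha}{\beta}$ factors nicely through a Hölder-in-$u$ norm for each fixed $v$-frequency scale.

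First I would recast the norm as
\begin{equation*}
 \| h \|_{\Cprod{\alpha}{\beta}} \, = \, \sup_{N_2 \geq 1} N_2^\beta \, \sup_{v \in \R} \, \| P_{N_2}^v h(\cdot,v) \|_{\C^\alpha_u},
\end{equation*}
which is immediate from Definition \ref{prep:def-spaces} after interchanging the inner $\sup_{N_1}$ with $\sup_v$ (a legitimate swap since the $L^\infty_{u,v}$-norm is itself a supremum over $v$). Next I would use that the Littlewood-Paley projection $P_{N_2}^v$ commutes with the integration operator $\I_u$, since they act on disjoint variables. For each fixed $v$ and $N_2$,
\begin{equation*}
 P_{N_2}^v \, \I_u[\widetilde{\chi} f](\cdot,v) \, = \, \I_u\bigl[ P_{N_2}^v(\widetilde{\chi} f) \bigr](\cdot,v).
\end{equation*}
Because $\widetilde{\chi}$ is compactly supported in $(u,v)$, the function $u \mapsto P_{N_2}^v(\widetilde{\chi} f)(u,v)$ has compact support in $u$ uniformly in $v$ and $N_2$, so I may freely insert a fattened one-dimensional cutoff $\widetilde{\chi}^\ast \in C_c^\infty(\R_u)$ that equals $1$ on the $u$-projection of $\mathrm{supp}(\widetilde{\chi})$ and invoke the single-variable estimate Lemma \ref{prep:lemma-single-integral} to conclude
\begin{equation*}
 \bigl\| \I_u\bigl[ \widetilde{\chi}^\ast \cdot P_{N_2}^v(\widetilde{\chi} f) \bigr](\cdot,v) \bigr\|_{\C^\alpha_u} \lesssim_{\alpha,\widetilde{\chi}} \bigl\| P_{N_2}^v(\widetilde{\chi} f)(\cdot,v) \bigr\|_{\C^{\alpha-1}_u}.
\end{equation*}

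Taking the supremum over $v$, multiplying by $N_2^\beta$, and taking the supremum over $N_2$, the reorganization of the norm from the first step yields
\begin{equation*}
 \| \I_u[\widetilde{\chi} f] \|_{\Cprod{\alpha}{\beta}} \, \lesssim_{\alpha,\widetilde{\chi}} \, \| \widetilde{\chi} f \|_{\Cprod{\alpha-1}{\beta}}.
\end{equation*}
It then remains to absorb the cutoff, i.e.\ to show $\| \widetilde{\chi} f \|_{\Cprod{\alpha-1}{\beta}} \lesssim_{\widetilde{\chi}} \| f \|_{\Cprod{\alpha-1}{\beta}}$. Since $\widetilde{\chi}$ is smooth and compactly supported it belongs to $\Cprod{\gamma_1}{\gamma_2}$ for any regularity pair, and a standard paraproduct decomposition---handling the three pieces $\widetilde{\chi} \parall f$, $\widetilde{\chi} \parasim f$, $\widetilde{\chi} \paragg f$ in each variable separately---gives the bound. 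The estimate \eqref{prep:eq-partial-integral-2} for $\I_v$ follows by the symmetric argument with the roles of $u$ and $v$ interchanged.

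The main---and ultimately minor---obstacle is the bookkeeping associated with the cutoff: Proposition \ref{prep:prop-bilinear} imposes positivity conditions $\alpha_j + \beta_j > 0$ on regularity sums, and here $\alpha-1$ and $\beta$ are allowed to be arbitrarily negative, so one cannot blindly appeal to the bilinear estimate at a single pair of exponents. The fix is to distribute the smoothness of $\widetilde{\chi}$ liberally when estimating each paraproduct piece, which is possible precisely because $\widetilde{\chi} \in C_c^\infty$ has finite $\Cprod{\gamma_1}{\gamma_2}$-norm for all $\gamma_1, \gamma_2$.
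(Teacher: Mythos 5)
Your proposal is correct and follows essentially the same route as the paper: the paper's proof is a one-line appeal to the commutativity of $\I_u$ with $P_N^v$ and the single-variable estimate (Lemma \ref{prep:lemma-single-integral}), which is precisely the reduction you spell out, including the reorganization of the $\Cprod{\alpha}{\beta}$-norm as $\sup_{N_2} N_2^\beta \sup_v \| P_{N_2}^v h(\cdot,v)\|_{\C^\alpha_u}$. The cutoff-absorption step $\|\widetilde{\chi} f\|_{\Cprod{\alpha-1}{\beta}} \lesssim \|f\|_{\Cprod{\alpha-1}{\beta}}$ that you flag is implicit in the paper's argument, and your fix---placing $\widetilde{\chi}$ at sufficiently high positive regularity so that the positivity conditions in Proposition \ref{prep:prop-bilinear} are met---is the standard resolution.
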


\begin{proof}
By symmetry in $u$ and $v$, it suffices to prove \eqref{prep:eq-partial-integral-1}. This estimate directly follows from the single-variable integral estimate from Lemma~\ref{prep:lemma-single-integral} and the commutativity of $\I_u$ and the Littlewood-Paley operators $P_N^v$. 
\end{proof}

This completes the analysis of integral operators and we now turn to trace operators. As mentioned above, the trace operators are examined due to their appearance in the Duhamel integral (see Proposition \ref{prep:prop-duhamel}). 

\begin{definition}[Trace operators]
For any $f\in C^\infty(\R_{u,v}^{1+1})$, we define the trace operator $\Tr$ by 
\begin{equation*}
\Tr f(x) := f(x,x).
\end{equation*}
Furthermore, we define the trace operators $\Tr_u$ and $\Tr_v$ by 
\begin{align}
\Tr_u f(u,v) := f(u,u) \qquad \text{and} \qquad \Tr_v f(u,v) := f(v,v). 
\end{align}
\end{definition}

We now analyze the mapping properties of $\Tr\colon \Cprod{\alpha}{\beta}\rightarrow \C_x^\gamma$. 

\begin{lemma}[Trace estimate]\label{prep:lemma-trace}
Let $\alpha,\beta \in \R\backslash \{0\}$, $\gamma\in \R$, and $f\in \Cprod{\alpha}{\beta}$.  Then, the estimate 
\begin{equation*}
    \| \Tr f \|_{\C_x^\gamma} \lesssim \| f \|_{\Cprod{\alpha}{\beta}}.
    \end{equation*}
    is satisfied under either of the following two conditions: 
\begin{enumerate}[label={(\roman*)}]
    \item\label{prep:item-trace-general} (General) It holds that $\gamma \leq \min(\alpha,\beta)$ and $\alpha+\beta>0$.
    \item\label{prep:item-trace-nonres} (Non-resonant) It holds that $\gamma \leq \min(\alpha,\beta,\alpha+\beta)$ and  $P_M^u P_N^v f = 0$  for  all $M\sim N$.
\end{enumerate}
\end{lemma}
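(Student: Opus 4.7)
\medskip

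\noindent\textbf{Proof plan for Lemma~\ref{prep:lemma-trace}.}

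The plan is to mimic the proof of the bilinear estimate (Proposition~\ref{prep:prop-bilinear}) by reducing the trace estimate to exactly the same dyadic sum \eqref{prep:eq-sum-scales}. First I would perform a Littlewood-Paley decomposition of $f$ in both coordinates, writing $f = \sum_{M,N} P_M^u P_N^v f$, so that
\begin{equation*}
 \Tr f (x) = \sum_{M,N} (P_M^u P_N^v f)(x,x).
\end{equation*}
The pointwise bound $\| \Tr (P_M^u P_N^v f) \|_{L^\infty_x} \leq \| P_M^u P_N^v f \|_{L^\infty_{u,v}} \leq M^{-\alpha} N^{-\beta} \| f \|_{\Cprod{\alpha}{\beta}}$ controls the size of each piece; the essential point is the frequency support of $\Tr (P_M^u P_N^v f)$.

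The key step is the following support observation: writing $(P_M^u P_N^v f)(u,v)$ via its Fourier representation, one sees that the Fourier transform of $x \mapsto (P_M^u P_N^v f)(x,x)$ is supported on frequencies $\zeta = \xi + \eta$ with $|\xi|$ in the support of $\rho_M$ and $|\eta|$ in the support of $\rho_N$. Consequently $P_K \Tr (P_M^u P_N^v f)$ vanishes unless one of the three frequency configurations
\begin{equation*}
 K \sim M \gg N, \qquad K \sim N \gg M, \qquad M \sim N \gtrsim K
\end{equation*}
holds, precisely the three configurations appearing in the proof of Proposition~\ref{prep:prop-bilinear}. Combining this support analysis with the pointwise bound gives
\begin{equation*}
\begin{aligned}
 \| \Tr f \|_{\C_x^\gamma} &= \sup_{K} K^\gamma \| P_K \Tr f \|_{L^\infty_x} \\
 &\lesssim \sup_{K} \sum_{M,N} \Bigl( \mathbf{1}\{K \sim M \gg N\} + \mathbf{1}\{K \sim N \gg M\} + \mathbf{1}\{M \sim N \gtrsim K\} \Bigr) K^\gamma M^{-\alpha} N^{-\beta} \, \|f\|_{\Cprod{\alpha}{\beta}}.
\end{aligned}
\end{equation*}

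To conclude case \ref{prep:item-trace-general}, I would invoke the estimate \eqref{prep:eq-sum-scales} verbatim: the hypotheses $\gamma \leq \min(\alpha,\beta)$ and $\alpha + \beta > 0$ are exactly what is needed to make this dyadic sum uniformly bounded, via the case-by-case analysis (a)--(c) carried out in the proof of Proposition~\ref{prep:prop-bilinear}. For case \ref{prep:item-trace-nonres}, the assumption $P_M^u P_N^v f = 0$ for all $M \sim N$ kills the high$\times$high contribution (c) entirely, so only the two non-resonant configurations (a) and (b) survive; as in the proof of Proposition~\ref{prep:prop-bilinear}\ref{prep:item-nonres}, these two contributions are summable precisely under the relaxed condition $\gamma \leq \min(\alpha,\beta,\alpha+\beta)$, since the assumption $\alpha + \beta > 0$ was only needed to handle the resonant case.

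I do not anticipate any genuine obstacle here; the only non-routine point is the Fourier-support analysis of $(P_M^u P_N^v f)(x,x)$, and once that is in place the lemma reduces to the same dyadic summation already performed for the multiplication estimate. The one minor nuisance is the assumption $\alpha,\beta \neq 0$, which guarantees absolute summability of each geometric series and avoids logarithmic losses, exactly as in Proposition~\ref{prep:prop-bilinear}.
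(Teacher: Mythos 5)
Your proposal is correct and matches the paper's argument: both reduce the trace estimate via the same Littlewood-Paley decomposition and Fourier-support observation to the dyadic sum \eqref{prep:eq-sum-scales}, handling case \ref{prep:item-trace-general} exactly as in the proof of Proposition~\ref{prep:prop-bilinear} and case \ref{prep:item-trace-nonres} by dropping the resonant configuration as in Proposition~\ref{prep:prop-bilinear}.\ref{prep:item-nonres}. The only cosmetic difference is that you spell out the Fourier-support justification, which the paper leaves implicit.
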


\begin{proof}
Using a Littlewood-Paley decomposition, we have that
\begin{align*}
&\| \Tr f(x) \|_{\C_x^\gamma} \\
=& \sup_{K} K^\gamma \| \big(P_K^x \Tr f \big)(x) \|_{L^\infty_x} \\
\leq& \sup_{K} K^\gamma \sum_{M,N}  \Big\| \Big(P_K^x \Tr\big( P_M^u P_N^v f \big)\Big)(x) \Big\|_{L^\infty_x} \\
\lesssim& \sup_K \sum_{M,N}  \Big[
 \Big(  1\{ K \hspace{-2pt}\sim \hspace{-2pt} M \hspace{-2pt} \gg  \hspace{-2pt} N \}+  1\{ K \hspace{-2pt} \sim \hspace{-2pt} N \hspace{-2pt} \gg \hspace{-2pt} M \} 
+ 1\{ M \hspace{-2pt}\sim \hspace{-2pt} N \hspace{-2pt}\gtrsim  \hspace{-2pt} K \} \Big) K^{\gamma}  \| P_M^u P_N^v f(u,v) \|_{L^\infty_u L^\infty_v}  \Big] \\
\lesssim& \sup_K \sum_{M,N}  \Big[
 \Big(  1\{ K \hspace{-2pt}\sim \hspace{-2pt} M \hspace{-2pt} \gg  \hspace{-2pt} N \}+  1\{ K \hspace{-2pt} \sim \hspace{-2pt} N \hspace{-2pt} \gg \hspace{-2pt} M \} 
+ 1\{ M \hspace{-2pt}\sim \hspace{-2pt} N \hspace{-2pt}\gtrsim  \hspace{-2pt} K \} \Big) K^{\gamma}  M^{-\alpha} N^{-\beta} \Big] \| f \|_{\Cprod{\alpha}{\beta}}. 
\end{align*}
This sum has been previously estimated in \eqref{prep:eq-sum-scales}, and hence this completes the proof in case \ref{prep:item-trace-general}. In case \ref{prep:item-trace-nonres}, which no longer requires $\alpha+\beta>0$, the estimate follows from the argument leading to Proposition \ref{prep:prop-bilinear}.\ref{prep:item-nonres}. 
\end{proof}

Using the trace estimate, we now prove that the $\Cprod{\alpha}{\beta}$-norms control the usual $\C_t^0 \C_x^\gamma$ and $\C_t^1 \C_x^{\gamma-1}$-norms in Cartesian coordinates.

\begin{corollary}[From null to Cartesian coordinates]\label{prep:cor-null-Cartesian}
Let $f\in C^\infty_b(\R^{1+1}_{t,x})$ and $\widetilde{f}\in C^\infty_b(\R^{1+1}_{u,v})$ satisfy \begin{equation}\label{prep:eq-f-ftil}
f(t,x)=\widetilde{f}(x-t,x+t).
\end{equation}
Furthermore, let $\alpha,\beta \in \R\backslash \{0\}$ and let $\gamma \in \R$. Then, the estimate
\begin{equation}\label{prep:eq-f-ftil-estimate}
\| f \|_{\C_t^0 \C_x^\gamma} + \| f \|_{\C_t^1 \C_x^{\gamma-1}} \lesssim \| \widetilde{f} \|_{\Cprod{\alpha}{\beta}}. 
\end{equation}
is satisfied under either of the following two conditions:
\begin{enumerate}[label={(\roman*)}]
    \item\label{prep:item-cnc-general} (General) It holds that $\gamma \leq \min(\alpha,\beta)$ and $\alpha+\beta>1$.
    \item\label{prep:item-cnc-nonres} (Non-resonant) It holds that $\gamma \leq \min(\alpha,\beta,\alpha+\beta)$ and  $P_M^u P_N^v \widetilde{f} = 0$  for  all $M\sim N$.
\end{enumerate}

\end{corollary}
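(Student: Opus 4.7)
The plan is to reduce both estimates in \eqref{prep:eq-f-ftil-estimate} to the trace estimate from Lemma~\ref{prep:lemma-trace}, using translation invariance of the Bourgain-type norm $\Cprod{\alpha}{\beta}$. For each fixed $t\in\R$, I introduce the translate
\begin{equation*}
\widetilde{f}_t(u,v) := \widetilde{f}(u-t,\,v+t),
\end{equation*}
so that the change of variables $u=x-t,\ v=x+t$ in \eqref{prep:eq-f-ftil} gives $f(t,x) = \widetilde{f}_t(x,x) = \Tr\widetilde{f}_t\,(x)$. Since the Littlewood--Paley projections $P_M^u$ and $P_N^v$ commute with translations in the corresponding variables, the $L^\infty_{u,v}$-norm appearing in Definition~\ref{prep:def-spaces} is translation-invariant and therefore $\|\widetilde{f}_t\|_{\Cprod{\alpha'}{\beta'}} = \|\widetilde{f}\|_{\Cprod{\alpha'}{\beta'}}$ for any admissible pair $(\alpha',\beta')$. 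In case~\ref{prep:item-cnc-nonres} this invariance also preserves the non-resonant property $P_M^u P_N^v \widetilde{f}_t = 0$ for $M\sim N$.

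For the $\C_t^0 \C_x^\gamma$-bound, applying Lemma~\ref{prep:lemma-trace} to $\widetilde{f}_t$ with the original parameters $(\alpha,\beta,\gamma)$ (using condition~\ref{prep:item-trace-general} in case~\ref{prep:item-cnc-general}, noting that $\alpha+\beta>1>0$, and condition~\ref{prep:item-trace-nonres} in case~\ref{prep:item-cnc-nonres}) yields
\begin{equation*}
\|f(t,\cdot)\|_{\C_x^\gamma} = \|\Tr\widetilde{f}_t\|_{\C_x^\gamma} \lesssim \|\widetilde{f}_t\|_{\Cprod{\alpha}{\beta}} = \|\widetilde{f}\|_{\Cprod{\alpha}{\beta}},
\end{equation*}
and taking the supremum over $t\in\R$ gives the first term of \eqref{prep:eq-f-ftil-estimate}.

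For the $\C_t^1 \C_x^{\gamma-1}$-bound, differentiating \eqref{prep:eq-f-ftil} gives
\begin{equation*}
\partial_t f(t,x) = -(\partial_u \widetilde{f})(x-t,x+t) + (\partial_v \widetilde{f})(x-t,x+t) = \Tr\bigl((-\partial_u + \partial_v)\widetilde{f}_t\bigr)(x).
\end{equation*}
Since $\partial_u$ (respectively $\partial_v$) commutes with the Littlewood--Paley projections and loses one derivative only in the corresponding variable (Bernstein), one has $\|\partial_u \widetilde{f}_t\|_{\Cprod{\alpha-1}{\beta}} + \|\partial_v \widetilde{f}_t\|_{\Cprod{\alpha}{\beta-1}} \lesssim \|\widetilde{f}\|_{\Cprod{\alpha}{\beta}}$. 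I then apply Lemma~\ref{prep:lemma-trace} with the shifted parameters $(\alpha-1,\beta)$ and $(\alpha,\beta-1)$ and target regularity $\gamma-1$; the hypotheses transfer correctly, since in case~\ref{prep:item-cnc-general} the strict inequality $\alpha+\beta>1$ gives both $(\alpha-1)+\beta>0$ and $\alpha+(\beta-1)>0$, while $\gamma\leq\min(\alpha,\beta)$ implies $\gamma-1\leq\min(\alpha-1,\beta)$ and $\gamma-1\leq\min(\alpha,\beta-1)$; and analogously for the non-resonant case~\ref{prep:item-cnc-nonres}, where the frequency-support condition is inherited by $\partial_u\widetilde{f}_t$ and $\partial_v\widetilde{f}_t$.

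The only technical obstacle is the exclusion $\alpha,\beta\in\R\setminus\{0\}$ in Lemma~\ref{prep:lemma-trace}, which after shifting forces $\alpha\neq 1$ and $\beta\neq 1$. Whenever one of these equalities holds, I replace the offending exponent by $\alpha-\varepsilon$ or $\beta-\varepsilon$ for arbitrarily small $\varepsilon>0$; the strict inequality in either hypothesis ($\alpha+\beta>1$ in~\ref{prep:item-cnc-general}, or a correspondingly strict version in~\ref{prep:item-cnc-nonres}) leaves enough margin, and the embedding $\Cprod{\alpha}{\beta}\hookrightarrow\Cprod{\alpha-\varepsilon}{\beta-\varepsilon}$ on bounded domains absorbs the perturbation. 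This completes the proof.
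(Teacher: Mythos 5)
Your proof is correct and follows essentially the same route as the paper: define a shift (the paper's $\Lambda_t$, your $\widetilde f_t$), exploit translation invariance of $\Cprod{\alpha}{\beta}$, and apply the trace estimate with parameters $(\alpha,\beta)$ for $f$ and the shifted $(\alpha-1,\beta)$, $(\alpha,\beta-1)$ for $\partial_t f$. The paper silently ignores the endpoint $\alpha=1$ or $\beta=1$ that you flag; be aware that your $\varepsilon$-perturbation needs the strict inequality $\gamma<\alpha$ (resp.\ $\gamma<\beta$) for room, which the hypotheses alone do not guarantee (e.g.\ $\gamma=\alpha=1$, $\beta\geq 1$), though in that corner case the logarithmic correction from the dyadic sum is still summable since $\beta>0$, so the trace bound does hold.
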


\begin{proof}
In order to utilize our trace estimate (Lemma \ref{prep:lemma-trace}), we define the shift operator $\Lambda_t$ by 
\begin{equation}\label{prep:eq-Lambda}
\big(\Lambda_t \widetilde{f})(u,v) = \widetilde{f}(u-t,v+t). 
\end{equation}
Due to the translation invariance of the Littlewood-Paley operators, $\Lambda_t$ preserves the $\Cprod{\alpha}{\beta}$-norm. From \eqref{prep:eq-f-ftil} and \eqref{prep:eq-Lambda}, it follows that 
\begin{equation*}
f(t,x) = \big(\Tr \Lambda_t \widetilde{f} \, \big)(x) \qquad \text{and} \qquad 
\partial_t f(t,x) = \big(\Tr \partial_v \Lambda_t \widetilde{f} \, \big)(x) - \big(\Tr \partial_u \Lambda_t \widetilde{f} \, \big)(x).
\end{equation*}
Using Lemma \ref{prep:lemma-trace}, we obtain under either of the two conditions \ref{prep:item-cnc-general} or \ref{prep:item-cnc-nonres} that 
\begin{align*}
\| f \|_{\C_t^0 \C_x^\gamma} + \| f \|_{\C_t^1 \C_x^{\gamma-1}} 
\lesssim& \sup_{t\in \R} \Big( \| \Tr \Lambda_t \widetilde{f} \|_{\C_x^\gamma}
    + \| \Tr \partial_u \Lambda_t \widetilde{f} \|_{\C_x^{\gamma-1}}
    + \| \Tr \partial_v \Lambda_t \widetilde{f} \|_{\C_x^{\gamma-1}} \Big) \\
\lesssim & \sup_{t\in \R} \Big( \| \Lambda_t \widetilde{f}  \|_{\Cprod{\alpha}{\beta}}
+ \| \partial_u \Lambda_t \widetilde{f}  \|_{\Cprod{\alpha-1}{\beta}}
+ \| \partial_v \Lambda_t \widetilde{f}  \|_{\Cprod{\alpha}{\beta-1}} \Big) \\
\lesssim& \| \widetilde{f} \|_{\Cprod{\alpha}{\beta}}.
\end{align*}
This completes the proof of \eqref{prep:eq-f-ftil-estimate}.
\end{proof}

Equipped with both integral and trace estimates, we now turn to the Duhamel integral.

\begin{proposition}[Duhamel integral]\label{prep:prop-duhamel}
Let $F\in \C^\infty_b(\R_{u,v}^{1+1})$ and let $\phi\in C^\infty_b(\R_{u,v}^{1+1})$ be a solution of the inhomogeneous linear wave equation 
\begin{equation}
\begin{cases}
\partial_u \partial_v \phi = F \qquad (u,v) \in \R^{1+1}, \\
\phi\big|_{u=v}=0, ~ (\partial_v - \partial_u ) \phi\big|_{u=v}=0. 
\end{cases}
\end{equation}
Then, the solution $\phi$ can be written as 
\begin{equation}\label{prep:eq-duhamel-1}
\phi(u,v) = \Duh[F] := - \int_u^v \dv^\prime \int_u^{v^\prime} \du^\prime F(u^\prime,v^\prime) = 
-\int_u^v \du^\prime \int_{u^\prime}^{v} \dv^\prime F(u^\prime,v^\prime). 
\end{equation}
Equivalently, the solution $\phi$ is also given by 
\begin{equation}\label{prep:eq-duhamel-2}
\phi = \big( \I_v - \Tr_u \I_v \big) \big( \I_u - \Tr_v \I_u\big) \big( F \big) = 
\big( \I_u - \Tr_v \I_u \big) \big( \I_v - \Tr_u \I_v\big) \big(  F \big). 
\end{equation}
In addition, for $\chi^+, \,  \chi^-$ defined in \eqref{prep:eq-chi-pm}, we have the estimate  
\begin{equation}\label{prep:eq-duhamel-3}
\| \chi^+ \chi^- \phi \|_{\Cprod{\gamma_1}{\gamma_2}} \lesssim_{\gamma_1,\gamma_2} \| F \|_{\Cprod{\gamma_1-1}{\gamma_2-1}}
\end{equation}
for all $\gamma_1,\gamma_2\in (0,1)$ satisfying $\gamma_1+\gamma_2>1$. 
\end{proposition}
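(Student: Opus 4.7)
The plan is in three parts: establish the two integral representations, reduce the estimate to compactly supported $F$, and then use a d'Alembert-style decomposition of $\Duh[F]$ into a bulk term and two boundary correction terms.

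For the double integral formula \eqref{prep:eq-duhamel-1}, I would set $\phi_0(u,v) := -\int_u^v dv' \int_u^{v'} du' \, F(u',v')$ and verify by direct differentiation using the Leibniz rule that $\partial_u \phi_0(u,v) = \int_u^v F(u,v')\, dv'$ and $\partial_v \phi_0(u,v) = -\int_u^v F(u',v)\, du'$. This shows both $\partial_u\partial_v \phi_0 = F$ and that $\phi_0$ together with $(\partial_v-\partial_u)\phi_0$ vanishes on the diagonal $u=v$. Uniqueness for the characteristic IVP (via the general solution $\phi^+(u)+\phi^-(v)$ for the homogeneous wave equation) yields $\phi=\phi_0$, and a Fubini-type switch of the order of integration gives the second double integral. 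For the operator representation \eqref{prep:eq-duhamel-2}, I would expand the composition $(\I_v-\Tr_u\I_v)(\I_u-\Tr_v\I_u)F$ into four explicit iterated integrals and identify their signed sum with the triangle integral; this is a bookkeeping computation on the regions in the $(u',v')$-plane.

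For the estimate \eqref{prep:eq-duhamel-3}, I would first reduce to the case where $F$ has compact support. Since the integrand in $\Duh[F](u,v)$ is only evaluated on the compact set $[\min(u,v),\max(u,v)]^2 \subseteq [-\tfrac{21}{10},\tfrac{21}{10}]^2$ whenever $(u,v)\in\operatorname{supp}(\chi^+\chi^-)$, one has the exact identity $\chi^+\chi^-\Duh[F]=\chi^+\chi^-\Duh[\widetilde{\chi}^+\widetilde{\chi}^- F]$ for fattened cutoffs $\widetilde{\chi}^\pm$ equal to $1$ on $[-\tfrac{21}{10},\tfrac{21}{10}]$. Multiplication by the smooth compactly supported function $\widetilde{\chi}^+\widetilde{\chi}^-$ is bounded on $\Cprod{\gamma_1-1}{\gamma_2-1}$ by Proposition~\ref{prep:prop-bilinear} (using $\gamma_j-1\in(-1,0)$), so from here on I may assume $F$ is compactly supported.

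With $F$ compactly supported I would use the decomposition
\begin{equation*}
\phi(u,v) = \I_u\I_v F(u,v) - \I\bigl[\Tr\I_v F\bigr](u) - \I\bigl[\Tr\I_u F\bigr](v),
\end{equation*}
derived by writing the general solution as $\phi(u,v)=\phi^+(u)+\phi^-(v)+\I_u\I_v F(u,v)$ and solving the two diagonal conditions for $(\phi^\pm)'$, which give $(\phi^+)'(x)=-\Tr\I_v F(x)$ and $(\phi^-)'(x)=-\Tr\I_u F(x)$. The bulk term $\chi^+\chi^-\I_u\I_v F$ is estimated by applying Lemma~\ref{prep:lemma-partial-integral} twice in succession: first to get $\I_v[\widetilde{\chi}^+\widetilde{\chi}^- F]\in\Cprod{\gamma_1-1}{\gamma_2}$, and then, after re-inserting a fresh cutoff, to get $\I_u[\widetilde{\chi}^+\widetilde{\chi}^- \I_v F]\in\Cprod{\gamma_1}{\gamma_2}$, with bound $\|F\|_{\Cprod{\gamma_1-1}{\gamma_2-1}}$. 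For the boundary term $\chi^+\chi^-\I[\Tr\I_v F](u)$, which depends only on $u$, the $\Cprod{\gamma_1}{\gamma_2}$-norm factors as $\|\chi\|_{\C^{\gamma_2}}\|\chi\cdot\I[\Tr\I_v F]\|_{\C^{\gamma_1}}$; Lemma~\ref{prep:lemma-trace} applied to $\I_v F\in\Cprod{\gamma_1-1}{\gamma_2}$ yields $\|\Tr\I_v F\|_{\C^{\gamma_1-1}}\lesssim\|F\|_{\Cprod{\gamma_1-1}{\gamma_2-1}}$, after which Lemma~\ref{prep:lemma-single-integral} provides a gain of one $u$-derivative to conclude. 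The second boundary term is treated symmetrically.

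The main technical point is the trace step. On $\I_v F\in\Cprod{\gamma_1-1}{\gamma_2}$ the minimum regularity is $\gamma_1-1<0$, and Lemma~\ref{prep:lemma-trace} requires the sum of the two regularities to be positive, which becomes exactly $\gamma_1+\gamma_2>1$. The symmetric term produces the same constraint, so the hypothesis in the proposition is precisely what is needed to make this decomposition work.
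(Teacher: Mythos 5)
Your proof is correct and takes essentially the same approach as the paper: reduce to compactly supported $F$ via the explicit integral representation, then estimate the pieces with the partial integral estimate (Lemma~\ref{prep:lemma-partial-integral}) and the trace estimate (Lemma~\ref{prep:lemma-trace}), with the constraint $\gamma_1+\gamma_2>1$ arising from the trace step. The only cosmetic difference is that you re-derive a three-term d'Alembert decomposition $\phi=\I_u\I_v F-\I[\Tr\I_v F](u)-\I[\Tr\I_u F](v)$ rather than working directly from the stated factorization \eqref{prep:eq-duhamel-2}, but the two are algebraically equivalent and draw on the identical lemmas.
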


\begin{proof}
The identities \eqref{prep:eq-duhamel-1} and \eqref{prep:eq-duhamel-2} follow directly from the fundamental theorem of calculus. In order to prove the estimate \eqref{prep:eq-duhamel-3}, we let $\widetilde{\chi} \in C^\infty_c(\R)$ be a fattened version of $\chi$ and define
\begin{equation*}
\widetilde{\chi}^+(u,v):= \widetilde{\chi}(u) \qquad \text{and} 
\qquad \widetilde{\chi}^-(u,v) := \widetilde{\chi}(v). 
\end{equation*}
Due to the explicit representation \eqref{prep:eq-duhamel-1}, it holds that
\begin{equation*}
    \chi^+ \chi^- \phi  = \chi^+ \chi^- \Duh[F]
    = \chi^+ \chi^- \Duh[ \widetilde{\chi}^+ \widetilde{\chi}^- F]. 
\end{equation*}
Using this observation, the estimate \eqref{prep:eq-duhamel-3} follows from the integral estimates (Lemma \ref{prep:lemma-partial-integral}) and the trace estimate (Lemma \ref{prep:lemma-trace}). The condition $\gamma_1+\gamma_2>1$ is needed in order to use the trace estimate for $I_u (\widetilde{\chi} F)$.
\end{proof}

We finish this subsection on integrals with a commutator estimate. To this end, we first make the following definition.

\begin{definition}[Integral commutator]\label{prep:def-integral-commutator}
For all $f,g\in \C_b^\infty(\R^{1+1}_{u,v})$, we define
\begin{equation}\label{prep:eq-integral-commutator}
\Com_{\chi,\parall, \I}^v(f,g) := \chi^+ \chi^- \Big( \I_v[f \parallv \partial_v g](u,v) - \I_v[f\parallv \partial_v g](u,u) - f \parallv g \Big).
\end{equation}
\end{definition}

\begin{lemma}[Integrals of low$\times$high-terms]\label{modulation:lem-integral-lh}
Let $\zeta_{N,n}\colon \R^{1+1}_{u,v}\rightarrow \R^\dimA$ be a family of functions, let $\phi^-\colon \R_v \rightarrow \R^\dimA$, and let $\phi^-_N:=P_N^v \phi^-$. Then, it holds that 
\begin{equation*}
\Big\| \sum_N \Com_{\chi,\parall,\I}^v(\zeta_{N,n}, \phi^{-,n}_N) \Big\|_{\Cprod{s}{r}} 
\lesssim \sup_N \big( \| \zeta_N \|_{\Cprod{s}{s}} \big) \| \phi^- \|_{\C_v^s}.
\end{equation*}
\end{lemma}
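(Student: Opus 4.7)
The plan is to integrate by parts in the $v$-integral hidden inside the commutator, which should cancel the $- f \parallv g$ term in the definition of $\Com_{\chi,\parall,\I}^v$ and leave two more manageable pieces. Since $\phi^{-,n}_N = P_N^v \phi^{-,n}$ is $v$-frequency-localized at scale $N$, the $v$-analog of Lemma~\ref{prep:lem-para-localized} allows us to replace $\zeta_{N,n} \parallv \partial_v \phi^{-,n}_N$ by $\zeta_{N,n}^{\mathrm{lo}} \,\partial_v \phi^{-,n}_N$ and similarly $\zeta_{N,n} \parallv \phi^{-,n}_N$ by $\zeta_{N,n}^{\mathrm{lo}} \, \phi^{-,n}_N$, up to para-product errors, where $\zeta_{N,n}^{\mathrm{lo}} := P_{\ll N}^v \zeta_{N,n}$.

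Since $\I_v[h](u,v)-\I_v[h](u,u)=\int_u^v h(u,v')\, dv'$, applying the fundamental theorem of calculus to $\int_u^v \zeta_{N,n}^{\mathrm{lo}}(u,v')\, \partial_{v'} \phi^{-,n}_N(v')\, dv'$ produces a boundary term at $v'=v$ equal to $\zeta_{N,n}^{\mathrm{lo}}(u,v)\phi^{-,n}_N(v)$, which cancels the $\zeta_{N,n}\parallv \phi^{-,n}_N$ contribution in $\Com_{\chi,\parall,\I}^v$, leaving two terms (after absorbing the errors): a \emph{trace term} $T_1 := \chi^+\chi^- \sum_N \zeta_{N,n}^{\mathrm{lo}}(u,u)\phi^{-,n}_N(u)$ and an \emph{interior term} $T_2 := \chi^+\chi^- \sum_N \int_u^v (\partial_{v'} \zeta_{N,n}^{\mathrm{lo}})(u,v')\phi^{-,n}_N(v')\, dv'$.

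For $T_1$, the function depends only on $u$, so its $\Cprod{s}{r}$-norm coincides with its $\C^s_u$-norm. The trace $\zeta_{N,n}^{\mathrm{lo}}(u,u)$ inherits $\C^s_u$-regularity from $\zeta_N \in \Cprod{s}{s}$ by Lemma~\ref{prep:lemma-trace} (using $s+s > 0$), and the restriction of $\phi^{-,n}_N$ to the diagonal is simply the one-variable Littlewood-Paley piece $P_N \phi^{-,n}$. The sum is then a one-dimensional low--high para-product to which Corollary~\ref{prep:cor-bilinear} (or a direct Littlewood--Paley argument using $s>0$) applies, producing the desired bound $\|T_1\|_{\C^s_u} \lesssim \sup_N\|\zeta_N\|_{\Cprod{s}{s}}\|\phi^-\|_{\C^s_v}$.

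For $T_2$, the partial integral estimate (Lemma~\ref{prep:lemma-partial-integral}) together with the trace estimate (Lemma~\ref{prep:lemma-trace}) reduces matters to bounding the integrand in $\Cprod{s}{r-1}$. Each summand $(\partial_v \zeta_{N,n}^{\mathrm{lo}})\phi^{-,n}_N$ has $v$-frequency $\sim N$ (low$\times$high in $v$), so it suffices to estimate one $N$ at a time. Bernstein gives $\|\partial_v \zeta_{N,n}^{\mathrm{lo}}\|_{\Cprod{s}{0}} \lesssim N^{1-s}\|\zeta_N\|_{\Cprod{s}{s}}$ and $\|\phi^{-,n}_N\|_{L^\infty_v}\lesssim N^{-s}\|\phi^-\|_{\C^s_v}$; combined with the factor $N^{r-1}$ in the $\Cprod{s}{r-1}$-norm, this yields $N^{r-2s}$, which is uniformly bounded (and in fact decaying) since $r - 2s < 3/4 - 1 < 0$. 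The main obstacle will be bookkeeping the frequency-cutoff mismatches between $\parallv$ and the exact low-frequency projection $P_{\ll N}^v$: these errors, along with the residual commutators generated when $\zeta_{N,n}\parallv\partial_v\phi^{-,n}_N$ is split, must be shown to be summable across dyadic scales $N$ by means of Lemma~\ref{prep:lem-para-localized} and Lemma~\ref{prep:lem-commutator}, which is routine but requires checking that the exponents all remain strictly negative under the parameter constraints \eqref{prep:eq-parameter-1}.
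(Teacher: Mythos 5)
Your plan is sound and would close the lemma, but it routes through an unnecessary detour. You first replace $\zeta_{N,n}\parallv(\cdot)$ by $P_{\ll N}^v\zeta_{N,n}\cdot(\cdot)$ (using a $v$-analog of Lemma~\ref{prep:lem-para-localized}) and only then integrate by parts, which forces you to control the para-product mismatch errors as a separate family of remainders. The paper avoids this entirely by integrating by parts \emph{inside} the para-product sum: since $\partial_{v}$ commutes with $P_M^v$, for each pair $M\ll L$ one has
\begin{equation*}
\int_u^v P_M^v \zeta_{N}(u,v')\,\partial_{v'}P_L^v\phi^-_N(v')\,dv'
= \bigl[P_M^v\zeta_N\,P_L^v\phi^-_N\bigr]_{v'=u}^{v'=v}
- \int_u^v \partial_{v'}P_M^v\zeta_N(u,v')\,P_L^v\phi^-_N(v')\,dv',
\end{equation*}
and summing over $M\ll L$ yields, with no error term at all,
\begin{equation*}
\Com_{\chi,\parall,\I}^v(\zeta_{N,n},\phi^{-,n}_N)
= -\chi^+\chi^-\,\Tr_u\bigl(\zeta_{N,n}\parallv\phi^{-,n}_N\bigr)
-\chi^+\chi^-\,\I_v\bigl[\partial_v\zeta_{N,n}\parallv\phi^{-,n}_N\bigr](u,x)\Big|_{x=u}^{v}.
\end{equation*}
This exact identity is what makes the proof short: the first piece is handled by the trace estimate (Lemma~\ref{prep:lemma-trace}) applied to the \emph{full sum} followed by Corollary~\ref{prep:cor-bilinear} in $\Cprod{s}{s}$, rather than tracing term by term and re-assembling as you propose (both work; the paper's order is a bit cleaner). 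The second piece is handled via Proposition~\ref{prep:prop-bilinear}.\ref{prep:item-nonres}, giving the bound $N^{1-s+\eta}N^{r-1-s}=N^{r-2s+\eta}$ per scale. Your stated exponent $N^{r-2s}$ overlooks a summability issue hidden in the naive Bernstein estimate $\|\partial_v\zeta^{\mathrm{lo}}_{N}\|_{\Cprod{s}{0}}\lesssim N^{1-s}$: taking $\gamma_2=0$ produces a logarithm when you resum the $v$-scales $\ll N$, which is exactly why the paper works in $\Cprod{s}{\eta}$ and picks up the extra $N^{\eta}$. This is a minor fix, not a gap, but worth noting since it is precisely the role the small parameter $\eta$ plays throughout the article.
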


A similar estimate is contained in \cite[Lemma B.2]{GIP15}.

\begin{proof}
We first rewrite the expression on the right-hand side of \eqref{prep:eq-integral-commutator}. Using the product formula for $\partial_v$, we obtain that
\begin{align}
&\, \I_v \big[ \zeta_{N,n} \parallv \partial_v \phi^{-,n}_N \big](u,v) 
- \I_v \big[\zeta_{N,n} \parallv \partial_v \phi^{-,n}_N \big](u,u) - 
\big( \zeta_{N,n} \parallv \phi^{-,n}_N \big)(u,v) \notag \\ 
=&\,  \I_v \big[  \zeta_{N,n} \parallv  \partial_v \phi^{-,n}_N \big](u,v^\prime) \Big|_{v^\prime=u}^v  -  \big( \zeta_{N,n} \parallv  \phi^{-,n}_N \big)(u,v) \notag \\
=&\, \I_v \big[ \partial_v \big(  \zeta_{N,n} \parallv  \phi^{-,n}_N \big) \big](u,v^\prime) \Big|_{v^\prime=u}^v 
- \I_v \big[ \partial_v  \zeta_{N,n} \parallv  \phi^{-,n}_N  \big](u,v^\prime) \Big|_{v^\prime=u}^v 
-  \big( \zeta_{N,n} \parallv  \phi^{-,n}_N \big)(u,v).
\label{modulation:eq-integral-lh-q1}
\end{align}
Due to the definition of the integral operator $\I_v$, the combined contribution of the first and third summand in \eqref{modulation:eq-integral-lh-q1} equals
\begin{align*}
&\, \big( \zeta_{N,n} \parallv  \phi^{-,n}_N \big)(u,v) 
 - \big( \zeta_{N,n} \parallv  \phi^{-,n}_N \big)(u,u)
- \big( \zeta_{N,n} \parallv  \phi^{-,n}_N \big)(u,v) \\ 
=&\,  - \big( \zeta_{N,n} \parallv  \phi^{-,n}_N \big)(u,u)
= - \Tr_u \big( \zeta_{N,n} \parallv \phi^{-,n}_N \big)(u). 
\end{align*}
Using the definition of $\Com_{\chi,\parall,\I}^v$, we therefore obtain that
\begin{equation}\label{modulation:eq-integral-lh-p1}
\begin{aligned}
&\Com_{\chi,\parall,\I}^v(\zeta_{N,n}, \phi^{-,n}_N)(u,v) \\
=&-\chi^+ \chi^- \Tr_u \big( \zeta_{N,n} \parallv \phi^{-,n}_N \big)(u) -
 \chi^+ \chi^- \Big( \I_v\big[ \partial_v \zeta_{N,n}\parallv \phi^{-,n}_N\big](u,v^\prime)\Big|_{v^\prime=u}^v\Big).  
\end{aligned}
\end{equation}
We start by estimating the first summand in \eqref{modulation:eq-integral-lh-p1}. Using Lemma \ref{prep:lemma-trace} and Corollary \ref{prep:cor-bilinear}, we obtain that  
\begin{align*}
\Big\| \sum_N \Tr_u \big( \zeta_{N,n} \parallv \phi^{-,n}_N \big)(u,v) \Big\|_{\Cprod{s}{r}} 
&=\Big\| \sum_N \Tr_u \big( \zeta_{N,n} \parallv \phi^{-,n}_N \big)(u) \Big\|_{\C_u^s} \\
&\leq \Big\| \sum_N \zeta_{N,n} \parallv \phi^{-,n}_N \Big\|_{\Cprod{s}{s}} \\
&\lesssim \sup_N \big( \| \zeta_N \|_{\Cprod{s}{s}} \big) \|\phi^{-}\|_{\C_v^s}.
\end{align*}
We now turn to the second summand in \eqref{modulation:eq-integral-lh-p1}. Using  Proposition \ref{prep:prop-bilinear}.\ref{prep:item-nonres}, we have that
\begin{align*}
\big\| \partial_v \zeta_{N,n} \parallv \phi^{-,n}_N \big\|_{\Cprod{s}{r-1}} 
&\lesssim  \big\| P_{\lesssim N}^v \partial_v \zeta_{N} \big\|_{\Cprod{s}{\eta}}
\big\| \phi^{-}_N \big\|_{\C_v^{r-1}} \\
&\lesssim N^{1-s+\eta} N^{r-1-s} \big\| \zeta_N \big\|_{\Cprod{s}{s}} \| \phi^- \|_{\C_v^s}.
\end{align*}
Since $r-2s\approx -1/4$, this contribution is summable in $N$. The desired bound then follows from Lemma \ref{prep:lemma-partial-integral}.
\end{proof}

\subsection{Proof of Theorem \ref{intro:thm-deterministic}.\ref{intro:item-determininistic-well}}
Before we can proceed with the proof of Theorem \ref{intro:thm-deterministic}.\ref{intro:item-determininistic-well}, we require the following time-localization lemma. This lemma will only be used in the deterministic theory, since in the random theory a scaling argument is more convenient.

\begin{lemma}[Time-localization]\label{prep:lem-time-localization}
Let $\gamma_1^\prime,\gamma_1,\gamma_2\in (-1/2,0)$ satisfy $\gamma_1^\prime<\gamma_1$, let $\widetilde{\chi} \in \C^\infty_c(\R)$, and let $\tau>0$. Then, it holds that
\begin{equation}
\Big\| \widetilde{\chi}\big( \tfrac{u-v}{\tau} \big) F \Big\|_{\Cprod{\gamma_1^\prime}{\gamma_2}} \lesssim \tau^{\gamma_1-\gamma_1^\prime} \big\| F \big\|_{\Cprod{\gamma_1}{\gamma_2}}.
\end{equation}
\end{lemma}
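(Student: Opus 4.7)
The plan is to prove the estimate pointwise in Littlewood-Paley pieces, namely to show
\begin{equation*}
K_1^{\gamma_1^\prime} K_2^{\gamma_2} \bigl\| P_{K_1}^u P_{K_2}^v \bigl[ \widetilde{\chi}(\tfrac{u-v}{\tau}) F \bigr] \bigr\|_{L^\infty_{u,v}} \lesssim \tau^{\gamma_1 - \gamma_1^\prime} \| F \|_{\Cprod{\gamma_1}{\gamma_2}}
\end{equation*}
for all dyadic $K_1, K_2 \geq 1$. By density I assume $F$ smooth and decompose $F = \sum_{L_1, L_2} P_{L_1}^u P_{L_2}^v F$. The structural input is that $\widetilde{\chi}((u-v)/\tau)$ has Fourier transform of the form $\tau \hat{\widetilde{\chi}}(\tau \cdot) \otimes \delta(\xi_u + \xi_v)$. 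Thus multiplication by $\widetilde{\chi}((u-v)/\tau)$ shifts frequencies along the line $\xi_u + \xi_v = \mathrm{const}$ by at most $O(\tau^{-1})$ modulo Schwartz tails. Consequently $P_{K_1}^u P_{K_2}^v [\widetilde{\chi}((u-v)/\tau) P_{L_1}^u P_{L_2}^v F]$ is effectively supported where $|L_j - K_j| \lesssim \tau^{-1}$; in particular if $K_j \gtrsim \tau^{-1}$ then $L_j \sim K_j$, while if $K_j \lesssim \tau^{-1}$ then $L_j \lesssim \tau^{-1}$.

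Two pointwise bounds are combined. The first is the trivial estimate $\lesssim L_1^{-\gamma_1} L_2^{-\gamma_2} \|F\|_{\Cprod{\gamma_1}{\gamma_2}}$, using $\|\widetilde{\chi}(\cdot/\tau)\|_{L^\infty} \lesssim 1$. The second comes from Bernstein's inequality: for each fixed $v$, $\|P_{K_1}^u h(\cdot, v)\|_{L^\infty_u} \lesssim K_1 \|h(\cdot, v)\|_{L^1_u}$, which combined with $\|\widetilde{\chi}((\cdot - v)/\tau)\|_{L^1_u} = \tau \|\widetilde{\chi}\|_{L^1}$ and the boundedness of $P_{K_2}^v$ on $L^\infty_v L^1_u$ gives
\begin{equation*}
\bigl\| P_{K_1}^u P_{K_2}^v [\widetilde{\chi}((u-v)/\tau) P_{L_1}^u P_{L_2}^v F] \bigr\|_{L^\infty_{u,v}} \lesssim K_1 \tau \, L_1^{-\gamma_1} L_2^{-\gamma_2} \|F\|_{\Cprod{\gamma_1}{\gamma_2}},
\end{equation*}
with the analogous $K_2 \tau$ bound by Bernstein in the $v$-variable.

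The argument is then a case analysis in $(K_1, K_2)$. In the regime $K_1, K_2 \gtrsim \tau^{-1}$, the trivial bound with $L_j \sim K_j$ produces the factor $K_1^{\gamma_1^\prime - \gamma_1} \lesssim \tau^{\gamma_1 - \gamma_1^\prime}$ (using $\gamma_1^\prime - \gamma_1 < 0$ and $K_1 \gtrsim \tau^{-1}$). When $K_1 \ll \tau^{-1}$ and $K_2 \gtrsim \tau^{-1}$, Bernstein in $u$ together with $\sum_{L_1 \lesssim \tau^{-1}} L_1^{-\gamma_1} \lesssim \tau^{\gamma_1}$ (valid because $-\gamma_1 > 0$) and $\sum_{L_2 \sim K_2} L_2^{-\gamma_2} \lesssim K_2^{-\gamma_2}$ yields $K_1^{1+\gamma_1^\prime} \tau^{1+\gamma_1}$ after multiplication by $K_1^{\gamma_1^\prime} K_2^{\gamma_2}$; since $1 + \gamma_1^\prime > 0$ (from $\gamma_1^\prime > -1/2$) and $K_1 \leq \tau^{-1}$ this is $\lesssim \tau^{\gamma_1 - \gamma_1^\prime}$. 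The symmetric regime $K_2 \ll \tau^{-1}$, $K_1 \gtrsim \tau^{-1}$ is treated by Bernstein in $v$. Finally, when both $K_1, K_2 \ll \tau^{-1}$ I apply Bernstein in the variable corresponding to $\min(K_1, K_2)$; assuming WLOG $K_1 \leq K_2$ and using $K_2^{\gamma_2} \leq K_1^{\gamma_2}$ (since $\gamma_2 < 0$) collapses the two frequency factors to $K_1^{1 + \gamma_1^\prime + \gamma_2} \tau^{1 + \gamma_1 + \gamma_2}$, and the hypothesis $1 + \gamma_1^\prime + \gamma_2 > 0$ (which is exactly where $\gamma_1^\prime, \gamma_2 > -1/2$ enters decisively) together with $K_1 \leq \tau^{-1}$ yields $\tau^{-1 - \gamma_1^\prime - \gamma_2} \cdot \tau^{1+\gamma_1+\gamma_2} = \tau^{\gamma_1 - \gamma_1^\prime}$ as desired.

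The main technical point is the last regime: a single Bernstein bound applied uniformly fails, and one must choose the direction of Bernstein according to $\min(K_1, K_2)$ and use the monotonicity $K \mapsto K^\gamma$ for $\gamma < 0$ to redistribute the frequency weights. The hypothesis $\gamma_1^\prime, \gamma_1, \gamma_2 \in (-1/2, 0)$ is used both through $-\gamma_j > 0$ (geometric-series summability of $\sum_{L \lesssim \tau^{-1}} L^{-\gamma_j}$) and through $1 + \gamma_1^\prime + \gamma_j > 0$ (so that each Bernstein-boosted factor $K^{1+\cdot}$ is controlled by $\tau^{-(1+\cdot)}$ at $K = \tau^{-1}$), which together produce the sharp exponent $\tau^{\gamma_1 - \gamma_1^\prime}$.
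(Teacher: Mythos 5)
Your proof is correct and takes a genuinely different route from the paper's. The paper reduces by duality to the $L^1$-based norm $\|G\|_{\bW_u^{\gamma_1}\bW_v^{\gamma_2}} = \sum_{K_1,K_2} K_1^{\gamma_1}K_2^{\gamma_2}\|P^u_{K_1}P^v_{K_2}G\|_{L^1_uL^1_v}$, flips the sign convention so that all exponents are in $(0,1/2)$, decomposes $\widetilde{\chi}_\tau = \sum_M P_M\widetilde{\chi}_\tau$, and proves a four-case dyadic sum estimate in which $M$ (the frequency of the piece of $\widetilde{\chi}_\tau$) is compared against $K_1$ and $K_2$. You instead work directly with the $L^\infty$-based $\Cprod{\gamma_1'}{\gamma_2}$-norm, which avoids the duality step (and the paper's footnote justifying the use of duality with $L^\infty$-type spaces), and you organize the case analysis around the comparison of $K_1, K_2$ with $\tau^{-1}$ rather than with $M$. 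The structural ingredients are the same in both: the $O(\tau^{-1})$ anti-diagonal frequency shift induced by $\widetilde{\chi}((u-v)/\tau)$; the pair of bounds from $\|\widetilde{\chi}(\cdot/\tau)\|_{L^\infty}\lesssim 1$ and $\|\widetilde{\chi}(\cdot/\tau)\|_{L^1}\sim\tau$ combined with one-variable Bernstein; the $\min(K_1,K_2)$ factor used to choose the Bernstein direction in the doubly-low regime; and the hypotheses $-\gamma_j>0$ (geometric summability) and $\gamma_j>-1/2$ (so $1+\gamma_1'+\gamma_2>0$). Your version is arguably slightly more elementary. Two minor points of rigor worth noting: the "effective support $|L_j-K_j|\lesssim\tau^{-1}$ modulo Schwartz tails" statement should, to be airtight, be replaced by the Littlewood-Paley decomposition $\widetilde{\chi}_\tau = \sum_M P_M\widetilde{\chi}_\tau$ with rapid-decay bounds on the $M\gg\tau^{-1}$ pieces (this is precisely what the paper writes out); and the boundary regime $K_j\sim\tau^{-1}$ needs a definite constant in the cut between cases, since there $L_j$ can fall on either side. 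Neither issue affects the correctness of the argument.
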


\begin{proof}[Proof of Lemma \ref{prep:lem-time-localization}]
 Throughout this proof only, we define
 \begin{equation}\label{prep:eq-Wspace}
 \big\| G \big\|_{\bW_u^{\gamma_1}\bW_v^{\gamma_2}} := \sum_{K_1,K_2} K_1^{\gamma_1} K_2^{\gamma_2} \| P^u_{K_1} P^v_{K_2} G \|_{L^1_u L^1_v}. 
 \end{equation}
 By duality\footnote{While the dual space of $L^\infty_x$ is not $L^1_x$, one can still characterize the $L^\infty_x$-norm as a supremum over integrals against $L^1_x$-normalized functions. This statement generalizes to our functions spaces and is sufficient for the duality argument used here.}, it suffices to prove for all $\gamma_1^\prime,\gamma_1,\gamma_2\in (0,1/2)$ satisfying $\gamma_1^\prime<\gamma_1$ that
 \begin{equation}\label{prep:eq-tlocal-1}
     \big\| \widetilde{\chi}\big( \tfrac{u-v}{\tau} \big) G \big\|_{\bW_u^{\gamma_1^\prime}\bW_v^{\gamma_2}} \lesssim  \tau^{\gamma_1-\gamma_1^\prime}
      \big\| G \big\|_{\bW_u^{\gamma_1}\bW_v^{\gamma_2}}.
 \end{equation}
 We now write $\widetilde{\chi}_\tau(\cdot):= \widetilde{\chi}(\cdot/\tau)$ and decompose
 $\widetilde{\chi}_\tau = \sum_M P_M \widetilde{\chi}_\tau$. 
 For the dyadic components, we have the pointwise estimate 
 \begin{equation*}
  |P_M \widetilde{\chi}_\tau|(y) \lesssim 1\{ M \lesssim \tau^{-1} \} M \tau \langle M y \rangle^{-10} + 1 \{ M \gg \tau^{-1} \} M^{-10} \langle  \tau^{-1} y \rangle^{-10}
 \end{equation*}
for all $y \in \R$. As a result, it follows that 
\begin{equation}\label{prep:eq-tlocal-2}
\| P_M \widetilde{\chi}_\tau \|_{L^1_y} \lesssim \tau^{\gamma_1-\gamma_1^\prime} M^{\gamma_1-\gamma_1^\prime-1} \qquad \text{and} \qquad
\| P_M \widetilde{\chi}_\tau \|_{L^\infty_y} \lesssim \tau^{\gamma_1-\gamma_1^\prime} M^{\gamma_1-\gamma_1^\prime}. 
\end{equation}
Furthermore, we decompose $G=\sum_{K_1,K_2} P^u_{K_1} P^v_{K_2} G$. By inserting both decompositions into the left-hand side of \eqref{prep:eq-tlocal-1}, we obtain that
\begin{equation}\label{prep:eq-tlocal-5}
\begin{aligned}
    &\big\| \widetilde{\chi}\big( \tfrac{u-v}{\tau} \big) G \big\|_{\bW_u^{\gamma_1^\prime}\bW_v^{\gamma_2}} \\
    \lesssim& 
    \sum_{M,K_1,K_2} \big\| P_M\widetilde{\chi}_\tau(u-v)  \, P^u_{K_1} P^v_{K_2} G \big\|_{\bW_u^{\gamma_1^\prime}\bW_v^{\gamma_2}} \\
    \lesssim& \sum_{M,K_1,K_2} \max(M,K_1)^{\gamma_1^\prime} \max(M,K_2)^{\gamma_2}
    \big\| P_M\widetilde{\chi}_\tau(u-v)  \, P^u_{K_1} P^v_{K_2} G \big\|_{L^1_u L^1_v}. 
\end{aligned}
\end{equation}
Using Hölder's inequality and \eqref{prep:eq-tlocal-2}, we have the estimate
\begin{equation}\label{prep:eq-tlocal-3}
\begin{aligned}
\big\| P_M\widetilde{\chi}_\tau(u-v)  \, P^u_{K_1} P^v_{K_2} G \big\|_{L^1_u L^1_v} 
&\lesssim 
\|P_M\widetilde{\chi}_\tau \|_{L^\infty_y} \big\| P^u_{K_1} P^v_{K_2} G \big\|_{L^1_u L^1_v} \\
&\lesssim \tau^{\gamma_1-\gamma_1^\prime} M^{\gamma_1-\gamma_1^\prime} \big\| P^u_{K_1} P^v_{K_2} G \big\|_{L^1_u L^1_v}. 
\end{aligned}
\end{equation}
Using Hölder's inequality, \eqref{prep:eq-tlocal-2}, and Bernstein's estimate, we also have that
\begin{equation}\label{prep:eq-tlocal-4}
\begin{aligned}
\big\| P_M\widetilde{\chi}_\tau(u-v)  \, P^u_{K_1} P^v_{K_2} G \big\|_{L^1_u L^1_v} 
&\lesssim 
\|P_M\widetilde{\chi}_\tau \|_{L^1_y} \min\Big(\big\| P^u_{K_1} P^v_{K_2} G \big\|_{L^1_u L^\infty_v}, \big\| P^u_{K_1} P^v_{K_2} G \big\|_{L^1_v L^\infty_u} \Big) \\
&\lesssim \tau^{\gamma_1-\gamma_1^\prime} M^{\gamma_1-\gamma_1^\prime-1} \min(K_1,K_2) \big\| P^u_{K_1} P^v_{K_2} G \big\|_{L^1_u L^1_v}. 
\end{aligned}
\end{equation}
After combining \eqref{prep:eq-tlocal-5}, \eqref{prep:eq-tlocal-3}, and \eqref{prep:eq-tlocal-4}, it remains to prove that
\begin{align*}
\sup_{K_1,K_2} \sum_M  M^{\gamma_1-\gamma_1^\prime-1} \max(M,K_1)^{\gamma_1^\prime} \max(M,K_2)^{\gamma_2} \min(M,K_1,K_2) K_1^{-\gamma_1} K_2^{-\gamma_2} \lesssim 1.
\end{align*}
This dyadic sum estimate easily follows by distinguishing the four cases $M\lesssim K_1,K_2$, $K_1\lesssim M \lesssim K_2$, $K_2 \lesssim M \lesssim K_1$, and $M\gtrsim K_1,K_2$.
\end{proof}

Equipped with the time-localization lemma, we now turn to the proof of deterministic well-posedness at regularities $r>1/2$. To be more precise, let us elaborate on the notion of well-posedness. Let $\Theta>0$ and let $0<\tau\ll_r \Theta^{-1/(r-1/2)}$. Then, we prove for all initial data $(\phi_0,\phi_1)\colon \R \rightarrow T \M$ satisfying
\begin{equation}\label{prep:eq-det-data}
    \| \phi_0 \|_{\C_x^r}, \| \phi_1 \|_{\C_x^{r-1}} \leq \Theta
\end{equation}
that there exists a (conditionally) unique solution $\phi$ of $\eqref{intro:eq-WM}$ in
\begin{equation*}
\big( \C_t^0 \C_x^r \times \C_t^1 \C_x^{r-1}\big)\big([-\tau,\tau] \times \R \rightarrow T \M\big). 
\end{equation*}
In addition, we prove the continuous dependence of $\phi$ on the initial datum $(\phi_0,\phi_1)$. 

\begin{proof}[Proof of  Theorem \ref{intro:thm-deterministic}.\ref{intro:item-determininistic-well}]
We start the argument with a few standard reductions. Throughout the proof, we can neglect the geometric constraint $\phi(t,x)\in \M$. It can be recovered a-posteriori from the well-posedness theory for smooth initial data and the continuous dependence on the initial data. Due to translation invariance and finite speed of propagation, it is possible to insert  space-time truncations to $|t|\lesssim \tau$ and $|x|\lesssim 1$ into the equation. Finally, Corollary \ref{prep:cor-null-Cartesian} and the condition $r>1/2$ allow us to argue entirely in null coordinates. In total, these reductions lead to the fixed-point problem 
\begin{equation}\label{prep:eq-WM-det-2}
\phi^k(u,v)= (\chi^+ \phi^{+,k})(u) + (\chi^- \phi^{-,k})(v) - \chi^+(u) \chi^-(v)
\Duh\Big[  \chi\big((v-u)/\tau\big) \Second^{k}_{ij}(\phi) 
\partial_u \phi^i \partial_v \phi^j \Big](u,v),
\end{equation}
where $\phi^+$ and $\phi^-$ are the right and left-moving linear waves, respectively.
We now solve \eqref{prep:eq-WM-det-2} using a contraction argument. In order to later gain a power of $\tau$, we introduce the parameter $r^\prime:= (1/2+r)/2$, which is between $1/2$ and $r$. Using this new parameter, we define the norm
\begin{equation*}
\| \phi\|_{\mathcal{S}} = \| \phi \|_{\Cprod{r}{r^\prime}} + \| \phi \|_{\Cprod{r^\prime}{r}}.
\end{equation*}
We recall from \eqref{prep:eq-det-data} that $\Theta>0$ denotes the size of the initial data. For a constant $C=C(r)$, which remains to be chosen, we  define the ball of radius $C\Theta$ by 
\begin{equation*}
    \mathcal{S}_{C\Theta} := \big\{ \phi \colon \| \phi \|_{\Sc} \leq C \Theta \}.
\end{equation*}
Finally, we define a map $\Gamma$, which encodes the right-hand side of \eqref{prep:eq-WM-det-2}, by 
\begin{equation}\label{prep:eq-WM-det-3}
    (\Gamma \phi)^k(u,v) := (\chi^+ \phi^{+,k})(u) + (\chi^- \phi^{-,k})(v) - \chi^+ \chi^-
\Duh\Big[  \chi\big((v-u)/\tau\big) \Second^{k}_{ij}(\phi) 
\partial_u \phi^i \partial_v \phi^j \Big](u,v). 
\end{equation}
In order to complete the proof, it remains to prove that $\Gamma$ is a contraction on $\Sc_{C\Theta}$ and that the resulting fixed-point depends continuously on $(\phi_0,\phi_1)$. Since the contraction property and continuous dependence follow from similar arguments, we only prove that  $\Gamma$ maps $\Sc_{C\Theta}$ back into itself. To this end, we let $\phi \in \Sc_{C\Theta}$ be arbitrary and prove that $\| \Gamma \phi\|_{\Sc}\leq C\Theta$. By symmetry, it suffices to prove that 
\begin{equation}\label{prep:eq-WM-det-1}
\|  \Gamma \phi \|_{\Cprod{r^\prime}{r}} \leq C\Theta/2. 
\end{equation} 
For the linear waves in \eqref{prep:eq-WM-det-3}, it follows directly from the definitions that
\begin{equation}\label{prep:eq-WM-det-4}
\| \chi^+(u) \phi^+(u) \|_{\Cprod{r^\prime}{r}} + \| \chi^-(v) \phi^-(v) \|_{\Cprod{r^\prime}{r}} \lesssim \| \phi_0 \|_{\C_x^r} + \|\phi_1 \|_{\C_x^{r-1}} \lesssim \Theta. 
\end{equation}
For the nonlinear term, it follows from Proposition \ref{prep:prop-duhamel} and Lemma \ref{prep:lem-time-localization} that
\begin{equation}\label{prep:eq-WM-det-5}
\begin{aligned}
&\Big\| \chi^+ \chi^- \Duh\Big[  \chi\big((v-u)/\tau\big) \Second^{k}_{ij}(\phi) 
\partial_u \phi^i \partial_v \phi^j \Big] \Big\|_{\Cprod{r^\prime}{r}} \\ 
\lesssim& \Big\| \chi\big((v-u)/\tau\big) \Second^{k}_{ij}(\phi) 
\partial_u \phi^i \partial_v \phi^j \Big\|_{\Cprod{r'-1}{r-1}} \\
\lesssim& \tau^{r-r^\prime} \big\| \Second^{k}_{ij}(\phi) 
\partial_u \phi^i \partial_v \phi^j \big\|_{\Cprod{r-1}{r-1}}. 
\end{aligned}
\end{equation}
Using the multiplication estimate (Corollary \ref{prep:corollary-multiplication}), the bilinear estimate (Proposition \ref{prep:prop-bilinear}), and the composition estimate (Lemma \ref{prep:lemma-bony}), we obtain that 
\begin{equation}\label{prep:eq-WM-det-6}
\big\| \Second^{k}_{ij}(\phi) 
\partial_u \phi^i \partial_v \phi^j \big\|_{\Cprod{r-1}{r-1}} 
\lesssim \big\| \Second^{k}_{ij}(\phi) \big\|_{\Cprod{r^\prime}{r^\prime}} 
\big\| \partial_u \phi^i \big\|_{\Cprod{r-1}{r^\prime}} \big\| \partial_v \phi^j \big\|_{\Cprod{r^\prime}{r-1}} \lesssim (1+C\Theta)^{10} (C\Theta)^2.  
\end{equation}
By combining \eqref{prep:eq-WM-det-4}, \eqref{prep:eq-WM-det-5}, and \eqref{prep:eq-WM-det-6}, it follows that 
\begin{equation*}
    \big\|  \Gamma \phi \big\|_{\Cprod{r^\prime}{r}} \lesssim_{r,r^\prime} \Theta + \tau^{r-r^\prime} (1+C\Theta)^{10} (C\Theta)^2. 
\end{equation*}
The desired estimate \eqref{prep:eq-WM-det-1} now follows by first choosing $C=C(r)\geq 1$ sufficiently large and then choosing $0<\tau\ll_r \Theta^{-1/(r-1/2)}$ sufficiently small. \\
\end{proof}

Before the end of this subsection, we state a lemma which essentially follows from the previous proof. 

\begin{lemma}[Finite speed of propagation and uniqueness in $\Cprod{r}{r}$]\label{prep:lem-deterministic-uniqueness}
Let $\chi\in \C^\infty(\R)$ satisfy \eqref{prep:eq-chi} and let $\widetilde{\chi}\in \C^\infty_c(\R)$ satisfy $\widetilde{\chi}(x)=1$ for all $x\in [-4,4]$. Furthermore, let $\phi^+,\phi^- \colon \R_x \rightarrow \R^\dimA$ and $\phi,\widetilde{\phi}\colon \R_{u,v}^{1+1}\rightarrow \R^\dimA$ satisfy the following conditions:
\begin{enumerate}[label={(\roman*)},leftmargin=10mm]
    \item The linear waves satisfy $\phi^+,\phi^- \in \C^r_x$. 
    \item The maps $\phi$ and $\widetilde{\phi}$ locally belong to $\Cprod{r}{r}$, i.e., $\widetilde{\chi}^+\widetilde{\chi}^- \phi \in \Cprod{r}{r}$  and
    $\widetilde{\chi}^+\widetilde{\chi}^- \widetilde{\phi} \in \Cprod{r}{r}$.
    \item The map $\phi$ solves the Duhamel integral problem
    \begin{equation*}
        \phi^k(u,v) = \phi^{+,k}(u) + \phi^{-,k}(v) - \Duh \Big[ \Second^k_{ij}(\phi) \partial_u \phi^i \partial_v \phi^j \Big].
    \end{equation*}
      \item The map $\widetilde{\phi}$ solves the localized Duhamel integral problem
    \begin{equation*}
        \widetilde{\phi}^k(u,v) = \phi^{+,k}(u) + \phi^{-,k}(v) - \chi^+ \chi^- \Duh \Big[ \Second^k_{ij}(\widetilde{\phi}) \partial_u \widetilde{\phi}^i \partial_v \widetilde{\phi}^j \Big].
    \end{equation*}
\end{enumerate}
Then, it holds that $\phi(u,v)=\widetilde{\phi}(u,v)$ for all $u,v\in [-2,2]$. 
\end{lemma}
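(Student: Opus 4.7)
The plan is to reduce the assertion to a causal Duhamel identity for $\psi := \phi - \widetilde\phi$ on $[-2,2]^2$, and then iterate the contraction argument from the proof of Theorem~\ref{intro:thm-deterministic}.\ref{intro:item-determininistic-well} along thin null-time strips.

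Since $\chi \equiv 1$ on $[-2,2]$, we have $\chi^+(u)\chi^-(v) = 1$ throughout $[-2,2]^2$, and by the explicit formula \eqref{prep:eq-duhamel-1} the value of $\Duh[F](u,v)$ at such $(u,v)$ depends only on $F$ over the triangle $\{u\leq u'\leq v'\leq v\}\subseteq [-2,2]^2$. Subtracting the two Duhamel identities yields, on $[-2,2]^2$,
\begin{equation*}
\psi^k(u,v) = -\Duh\bigl[\Second^k_{ij}(\phi)\partial_u\phi^i\partial_v\phi^j - \Second^k_{ij}(\widetilde\phi)\partial_u\widetilde\phi^i\partial_v\widetilde\phi^j\bigr](u,v).
\end{equation*}
Telescoping the nonlinear difference as $\psi$ times Lipschitz combinations of $\phi, \widetilde\phi$ and their derivatives, and invoking Bony's para-linearization (Lemma~\ref{prep:lemma-bony}), the multiplication estimate (Corollary~\ref{prep:corollary-multiplication}), and the bilinear estimate (Proposition~\ref{prep:prop-bilinear}), I obtain the trilinear difference bound
\begin{equation*}
\bigl\|\widetilde\chi^+\widetilde\chi^-\bigl(\Second(\phi)\partial_u\phi\partial_v\phi - \Second(\widetilde\phi)\partial_u\widetilde\phi\partial_v\widetilde\phi\bigr)\bigr\|_{\Cprod{r-1}{r-1}} \leq C(M)\,\|\widetilde\chi^+\widetilde\chi^-\psi\|_\Sc,
\end{equation*}
where $M := \max(\|\widetilde\chi^+\widetilde\chi^-\phi\|_{\Cprod{r}{r}},\|\widetilde\chi^+\widetilde\chi^-\widetilde\phi\|_{\Cprod{r}{r}})$, $r^\prime := (1/2+r)/2 \in (1/2,r)$, and $\|\cdot\|_\Sc := \|\cdot\|_{\Cprod{r}{r^\prime}}+\|\cdot\|_{\Cprod{r^\prime}{r}}$, exactly as in the proof of Theorem~\ref{intro:thm-deterministic}.\ref{intro:item-determininistic-well}.

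Fix $\tau \in (0,1]$ to be chosen small depending on $M$, and set $\Sigma_n := [-2,2]^2 \cap \{v - u \leq n\tau\}$. I will prove by induction on $n = 0, 1, \ldots, \lceil 4/\tau\rceil$ that $\psi \equiv 0$ on $\Sigma_n$; since $\Sigma_{\lceil 4/\tau \rceil} = [-2,2]^2$, this yields the claim. The base case $n = 0$ holds because $\phi(u,u) = \phi^+(u)+\phi^-(u) = \widetilde\phi(u,u)$, and direct differentiation of \eqref{prep:eq-duhamel-1} at $v = u$ shows $\partial_u \psi = \partial_v \psi = 0$ on the diagonal. For the inductive step, if $\psi \equiv 0$ on $\Sigma_n$ then on the open interior $\mathrm{int}(\Sigma_n)$ also $\partial_u\psi = \partial_v \psi \equiv 0$, so the nonlinear difference vanishes on $\Sigma_n$ in the distributional sense. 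Hence for $(u,v) \in \Sigma_{n+1}$, the Duhamel integrand is supported on the strip $\Sigma_{n+1} \setminus \mathrm{int}(\Sigma_n)$ of width $\leq \tau$ in $v - u$. Multiplying the identity by a smooth cutoff $\chi_\tau(u,v) := \chi((v-u-n\tau)/\tau)$ that equals $1$ on this strip and applying the time-localization Lemma~\ref{prep:lem-time-localization} (which takes $\Cprod{r-1}{r-1}$ into $\Cprod{r^\prime-1}{r-1}$ and $\Cprod{r-1}{r^\prime-1}$ at the cost of a factor $\tau^{r-r^\prime}$), the Duhamel estimate (Proposition~\ref{prep:prop-duhamel}), and the trilinear bound, yields
\begin{equation*}
\|\chi_\tau \psi\|_\Sc \leq C\,\tau^{r-r^\prime}\,(1+M)^{10} M^2\,\|\chi_\tau \psi\|_\Sc.
\end{equation*}
Choosing $\tau$ so that the coefficient is $\leq 1/2$ forces $\chi_\tau \psi \equiv 0$, whence $\psi \equiv 0$ on $\Sigma_{n+1}$.

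The main technical obstacle is closing the last display: the cutoff $\chi_\tau$ must be arranged so that (i) $\chi_\tau \psi$ is essentially supported on a strip of width $\sim \tau$, using the inductive hypothesis $\psi|_{\Sigma_n}\equiv 0$ to discard the part of $\mathrm{supp}(\chi_\tau)$ on which $\psi$ is already known to vanish; (ii) the integrand inside $\chi_\tau\Duh[\,\cdot\,]$ may be replaced by its localization to the same strip via the causality of $\Duh$ combined with the vanishing of $N(\phi)-N(\widetilde\phi)$ on $\Sigma_n$; and (iii) the resulting factor of $\psi$ in the trilinear bound is indeed of the form $\chi_\tau\psi$, which follows from the multiplicative structure of the difference together with the multiplication estimate. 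All remaining steps are direct adaptations of the local well-posedness argument of Theorem~\ref{intro:thm-deterministic}.\ref{intro:item-determininistic-well}.
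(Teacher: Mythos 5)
Your strategy — a discrete continuity (induction-on-strips) argument driven by the trilinear estimates of Theorem~\ref{intro:thm-deterministic}.\ref{intro:item-determininistic-well} and the time-localization Lemma~\ref{prep:lem-time-localization} — is exactly the approach the paper has in mind; the paper's own proof is a one-line sketch that defers to "a continuity argument" with the localized $\Cprod{r}{r}([-\tau,\tau]^2)$-norms, while you opt for explicit smooth cutoffs on thin null-time strips, which is an equally valid implementation of the same idea.

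That said, there is a genuine bug in the bookkeeping of your strips, and an obstacle you name but do not resolve. First, you define $\Sigma_n := [-2,2]^2 \cap \{v - u \leq n\tau\}$, so $\Sigma_0$ is the entire lower triangle $\{v \leq u\}\cap[-2,2]^2$, yet your base case only establishes $\psi = 0$ \emph{on the diagonal} $\{v = u\}$ (via $\Duh[F](u,u)=0$ and the vanishing of $\partial_u\Duh[F], \partial_v\Duh[F]$ at $v=u$). This does not give $\psi\equiv 0$ on all of $\Sigma_0$, so the induction never gets off the ground as stated. The fix is elementary — take $\Sigma_n := [-2,2]^2\cap\{|v-u|\leq n\tau\}$ so that $\Sigma_0$ is just the diagonal, and handle the first step $n=0\to n=1$ separately (there is nothing to subtract in the induction hypothesis; one simply uses that the triangle over which $\Duh$ integrates when $|v-u|\leq\tau$ is itself of width $\tau$, so Lemma~\ref{prep:lem-time-localization} applies directly) — but as written the base case is a gap. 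Second, your point (iii) is the actual crux: after expanding $N(\phi)-N(\widetilde\phi)$ into terms carrying $\psi$, $\partial_u\psi$, or $\partial_v\psi$, you must replace these by $\chi_\tau\psi$, $\partial_u(\chi_\tau\psi)$, etc.\ to close the bound $\|\chi_\tau\psi\|_\Sc \leq C\tau^{r-r'}(1+M)^{10}M^2\|\chi_\tau\psi\|_\Sc$, and the commutator terms produced when a derivative hits $\chi_\tau$ carry a dangerous factor of $\tau^{-1}$. They are tamed because the induction hypothesis forces their $\psi$-factor to vanish off a $\tau$-width strip, but this requires a separate estimate which you flag and leave open. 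This is exactly the reason the paper recommends the infimum-over-extensions norm $\| \cdot \|_{\Cprod{r}{r}([-\tau,\tau]^2)}$: it sidesteps the cutoff-derivative commutators altogether. So while your argument is the right one in spirit and matches the paper's intent, both the $\Sigma_0$ bookkeeping and the commutator bound need to be repaired before it is complete.
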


\begin{proof}
The lemma follows from the same estimates as in the proof of Theorem \ref{intro:thm-deterministic}.\ref{intro:item-determininistic-well} and a continuity argument. In the continuity argument, it is best to work with the usual localization of our norms, i.e., 
\begin{equation*}
\| \psi \|_{\Cprod{r}{r}([-\tau,\tau]^2)} := \inf \{ \| \zeta \|_{\Cprod{r}{r}} \colon  \zeta(u,v)=\psi(u,v) \, \text{ for all } u,v \in [-\tau,\tau]^2 \}. 
\end{equation*}
We leave the standard details to the reader.
\end{proof}

\subsection{Brownian paths} \label{section:Brownian}

In this subsection we introduce smooth approximations of the Brownian paths $B \colon \bbR \to \M$ and the white noise velocities $V \in B^\ast T\M$, and we establish quantitative approximation properties that will be needed for the proof of Theorem~\ref{intro:thm-rigorous}.

As explained in the introduction, we take an extrinsic approach to construct Brownian paths $B \colon \bbR \to \M$ on our submanifold $\M \hookrightarrow \mathbb{R}^\dimA$ embedded into the ambient Euclidean space $\mathbb{R}^\dimA$.
To this end we denote by $W \colon \bbR \to \M$ a $\dimA$-dimensional Euclidean Brownian motion defined on a probability space $(\Omega,\mathcal{F},\mathbb{P})$.
Moreover, we introduce for each $p \in \M$ the orthogonal projection $P(p) \colon \mathbb{R}^\dimA \to T_p \M$ from $\mathbb{R}^\dimA$ to the tangent space $T_p \M$, and we fix a reference point $B_0 \in \M$.
Following \cite[Chapter 3.2]{Hsu02} we then obtain a Brownian path $B \colon \bbR \to \M$ by solving the following Stratonovich stochastic differential equation on $\M$
\begin{equation} \label{prep:eq-def-B}
 d B(x) = P\bigl(B(x)\bigr) \circ d W(x), \quad B(0) = B_0 \in \M,
\end{equation}
which is driven by the $\dimA$-dimensional Euclidean Brownian motion $W(x)$\footnote{While it is customary to write $B_x$ or $W_x$ for Brownian motions indexed by the variable $x$, we use the notation $B(x)$, respectively $W(x)$, which we consider to be more in line with the notation in the rest of the paper.}.

\begin{remark}
 Intrinsically, Brownian motion on a Riemannian manifold can be defined as a diffusion process generated by half of the Laplace-Beltrami operator on the manifold. We refer to~\cite[Chapter 3]{Hsu02} for more background. For the extrinsic approach to obtain Brownian motion as a solution to a stochastic differential equation driven by an ambient Euclidean Brownian motion the Stratonovich formulation~\eqref{prep:eq-def-B} is key. Indeed, only the Stratonovich formulation preserves the classical chain rule, which is essential for proving that $B(x)\in \M$. In contrast, the solution to the It\^{o} stochastic differential equation 
 \begin{equation*}
    d X(x) = P(X(x)) d W(x), \quad X(0)=X_0 \in \M,
 \end{equation*}
 does not map into the manifold $\M$.
\end{remark}

 In order to define suitable smooth approximations of the Brownian path $B \colon \bbR \to \M$, we introduce a countable family of smooth functions $W^\varepsilon \colon \bbR \to \bbR^\dimA$, $\varepsilon>0$, that approximate the $\dimA$-dimensional ambient Brownian motion $W$. Specifically, we set 
 \begin{equation*}
  W^\varepsilon := P_{\leq \varepsilon^{-1}} W = K_\varepsilon \ast W, \quad \varepsilon := 2^{-k}, \quad k \in \mathbb{N},
 \end{equation*}
 where $K_\varepsilon(x) := \varepsilon^{-1} \widecheck{\rho}(\varepsilon^{-1} x)$. While we consider the variable $\varepsilon$ to be restricted to dyadic numbers, we do not make this more explicit in our notation.
 We obtain a corresponding family of smooth functions $B^\varepsilon \colon \bbR \to \M$, $\varepsilon > 0$, as solutions to the classical ordinary differential equations 
 \begin{equation} \label{prep:eq-def-Beps}
  \partial_x B^\varepsilon = P(B^\varepsilon) \partial_x W^\varepsilon, \quad B^\varepsilon(0) = B_0 \in \M.
 \end{equation}
 In view of the definition of the Brownian path $B \colon \mathbb{R} \to \M$ as a solution to the Stratonovich differential equation~\eqref{prep:eq-def-B}, the family of smooth functions $(B^\varepsilon)_{\varepsilon > 0}$ should provide good approximations of $B$ as $\varepsilon \to 0$. To quantify this we work with smooth local approximations.
 
For any $\tau > 0$ and $x_0 \in \bbR$, we first define the re-scaled and translated smooth approximation 
\begin{equation} \label{prep:equ-Bepstaux0_def}
 B^\varepsilon_{\tau, x_0}(x) := B^\varepsilon( \tau x+x_0 )
\end{equation}
and the associated re-scaled and translated approximation of the Euclidean Brownian motion
\begin{equation*}
 \begin{aligned}
  W^\varepsilon_{\tau, x_0}(x) := \tau^{-\frac12} \bigl( W^{\varepsilon}(\tau x+x_0) - W^\varepsilon(x_0) \bigr).
 \end{aligned}
\end{equation*}
From \eqref{prep:eq-def-Beps} it follows that $B^\varepsilon_{\tau, x_0} \colon \bbR \to \M$ satisfies the classical ordinary differential equation
\begin{equation*}
 \begin{aligned}
  \partial_x B^\varepsilon_{\tau, x_0} = \tau^{\frac12} P\bigl( B^\varepsilon_{\tau, x_0} \bigr) \partial_x W^\varepsilon_{\tau, x_0}, \quad B_{\tau, x_0}^\varepsilon(0) = B^\varepsilon_{\tau, x_0}(0).
 \end{aligned}
\end{equation*}
Next, we introduce localized versions of the smooth approximations $B^\varepsilon_{\tau,x_0}$. To this end, we let $\chi \in C_c^\infty(\bbR)$ be as in \eqref{prep:eq-chi}. Then we define $B^\varepsilon_{\tau, x_0,\loc} \colon \bbR \to \M$ as the solution to the classical ordinary differential equation 
\begin{equation} \label{prep:equ-Bepstaux0loc_def}
 \begin{aligned}
  \partial_x B^\varepsilon_{\tau, x_0, \loc} = \tau^{\frac12} P\bigl( B^\varepsilon_{\tau, x_0,\loc} \bigr) \partial_x \bigl( \chi(x) W^\varepsilon_{\tau, x_0} \bigr), \quad B_{\tau, x_0, \loc}^\varepsilon(0) = B^\varepsilon_{\tau, x_0}(0).
 \end{aligned}
\end{equation}

As a technical tool we will make use of the following moment bounds for weighted $L^\infty_x$ estimates of the real-valued Brownian motions $W^{j}_{\tau,x_0}$, $1 \leq j \leq \dimA$. 

\begin{lemma} \label{prep:lem-weighted_Linfty_BM}
 Let $\tau > 0$ and $x_0 \in \bbR$ be arbitrary.
 For any $\sigma > \frac12$ there exists a constant $C(\sigma) > 0$ such that for all $p \geq 2$ and for all $1 \leq j \leq D$ we have 
 \begin{equation} \label{prep:equ-weighted_Linfty_BM}
 \begin{aligned}
  \bigl\| \| \jap{x}^{-\sigma} W^j_{\tau, x_0} \|_{L^\infty_x} \bigr\|_{L^p_\omega(\Omega)} \leq C(\sigma) \sqrt{p}.
 \end{aligned}
 \end{equation}
\end{lemma}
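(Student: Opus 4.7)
The plan is to reduce the estimate to a dyadic decomposition in the spatial variable and then apply Doob's maximal inequality together with Gaussian moment bounds on each dyadic piece. The crucial observation is that $W^j_{\tau,x_0}(x) = \tau^{-1/2}\bigl( W^j(\tau x + x_0) - W^j(x_0) \bigr)$ is again, by Brownian scaling and shift-invariance, a standard real-valued two-sided Brownian motion with $W^j_{\tau,x_0}(0)=0$. In particular the distribution of $W^j_{\tau,x_0}$ does not depend on $\tau$ or $x_0$, so it suffices to prove the estimate for a single standard two-sided Brownian motion $\widetilde W \colon \mathbb{R} \to \mathbb{R}$ with $\widetilde W(0)=0$.

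First, I would decompose $\mathbb{R}$ into dyadic shells. Let $A_0 := [-1,1]$ and $A_k := \{ x : 2^{k-1} \le |x| \le 2^k \}$ for $k \ge 1$, so that on $A_k$ we have $\langle x\rangle \sim 2^k$. Then
\begin{equation*}
 \bigl\| \langle x \rangle^{-\sigma} \widetilde W \bigr\|_{L^\infty_x} \lesssim \sum_{k \ge 0} 2^{-k\sigma} \sup_{|x| \le 2^k} \bigl| \widetilde W(x) \bigr|.
\end{equation*}
Taking the $L^p_\omega$-norm and applying Minkowski's inequality reduces matters to the shell-wise estimate
\begin{equation*}
 \Bigl\| \sup_{|x| \le 2^k} |\widetilde W(x)| \Bigr\|_{L^p_\omega} \lesssim \sqrt{p}\, 2^{k/2}, \qquad k \ge 0, \ p \ge 2.
\end{equation*}

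Second, I would prove this shell-wise estimate by splitting $\widetilde W$ into its restrictions to $x \ge 0$ and $x \le 0$, both of which are standard one-sided Brownian motions. For each side, Doob's $L^p$ maximal inequality applied to the martingale $x \mapsto \widetilde W(x)$ on $[0, 2^k]$ gives
\begin{equation*}
 \Bigl\| \sup_{0 \le x \le 2^k} |\widetilde W(x)| \Bigr\|_{L^p_\omega} \le \frac{p}{p-1}\, \bigl\| \widetilde W(2^k) \bigr\|_{L^p_\omega} \lesssim \sqrt{p}\, 2^{k/2},
\end{equation*}
where the last step uses the Gaussian moment identity $\| \widetilde W(T)\|_{L^p_\omega} = \|Z\|_{L^p_\omega}\sqrt{T}$ for a standard normal $Z$, combined with the standard bound $\|Z\|_{L^p_\omega} \lesssim \sqrt{p}$. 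Combining the two sides yields the desired shell-wise bound.

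Third, I would assemble the pieces: inserting the shell estimate and summing over $k$ gives
\begin{equation*}
 \bigl\| \|\langle x\rangle^{-\sigma} \widetilde W \|_{L^\infty_x} \bigr\|_{L^p_\omega} \lesssim \sqrt{p} \sum_{k \ge 0} 2^{k(\tfrac12 - \sigma)} \le C(\sigma)\sqrt{p},
\end{equation*}
where the geometric series converges precisely because $\sigma > \tfrac12$, which is the only place the hypothesis on $\sigma$ is used. There is no real obstacle here: the argument is a clean dyadic decomposition plus Doob. The only mild care needed is the scaling reduction at the beginning — one must verify that $W^j_{\tau,x_0}$ really is a standard Brownian motion in $x$ (not just for fixed $x$, but as a process), which is immediate from Brownian scaling and the independent-increments property together with $W^j_{\tau,x_0}(0)=0$.
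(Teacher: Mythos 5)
Your proof is correct and follows essentially the same route as the paper: reduce to a single standard Brownian motion by scaling/translation invariance, perform a dyadic decomposition in $x$, bound the supremum over each dyadic region via Doob's $L^p$ maximal inequality together with the Gaussian moment bound $\|W(T)\|_{L^p_\omega} \lesssim \sqrt{pT}$, and sum the resulting geometric series using $\sigma > \tfrac12$. The only cosmetic differences are that the paper writes the dyadic scales as $K$ rather than $2^k$ and leaves the two-sided splitting implicit.
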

\begin{proof}
 By scaling and translation invariance of Brownian motion, the laws of $W^j_{\tau, x_0}$ and $W^j$ are the same under $\mathbb{P}$. Correspondingly, it suffices to prove~\eqref{prep:equ-weighted_Linfty_BM} for $W^j$. 
 Using a dyadic decomposition of the line we obtain 
 \begin{equation*}
 \begin{aligned}
  \bigl\| \| \jap{x}^{-\sigma} W^j \|_{L^\infty_x} \bigr\|_{L^p_\omega} &\leq \Bigl\| \sup_{|x|\leq 1} \, \jap{x}^{-\sigma} |W^j(x)| \Bigr\|_{L^p_\omega} + \sum_{K \geq 1} \Bigl\| \sup_{K \leq |x| \leq 2K} \, \jx^{-\sigma} |W^j(x)| \Bigr\|_{L^p_\omega} \\
  &\lesssim \Bigl\| \sup_{|x|\leq 1} \, |W^j(x)| \Bigr\|_{L^p_\omega} + \sum_{K \geq 1} K^{-\sigma} \Bigl\| \sup_{0 \leq |x| \leq 2K} \, |W^j(x)| \Bigr\|_{L^p_\omega}.
 \end{aligned}
 \end{equation*}
 By Doob's maximal inequality~\cite[Theorem 1.3.8]{KarShr91} we can bound the preceding line by
 \begin{equation*}
 \begin{aligned}
  p' \|W^j(1)\|_{L^p_\omega} + \sum_{K \geq 1} K^{-\sigma} p' \|W^j(2K)\|_{L^p_\omega},
 \end{aligned}
 \end{equation*}
 where $p' = \frac{p}{p-1} \leq 2$ denotes the conjugate exponent of $p \geq 2$. Invoking the well-known moment bounds for standard Euclidean Brownian motion
 \begin{equation*}
 \begin{aligned}
  \|W^j(x)\|_{L^p_\omega(\Omega)} \lesssim \sqrt{p} |x|^{\frac12},
 \end{aligned}
 \end{equation*}
 we conclude uniformly for all $p \geq 2$ 
 \begin{equation*}
 \begin{aligned}
  \bigl\| \| \jap{x}^{-\sigma} W^j \|_{L^\infty_x} \bigr\|_{L^p_\omega} &\lesssim p' \sqrt{p} \Bigl( 1 + \sum_{K \geq 1} K^{-\sigma} K^{\frac12} \Bigr) \lesssim_{\sigma} \sqrt{p},
 \end{aligned}
 \end{equation*}
 which finishes the proof of the lemma.
\end{proof}

Next, we establish high$\times$high$\rightarrow$low-bounds for Euclidean Brownian motions. 

\begin{lemma}[High$\times$high$\rightarrow$low-bounds for Euclidean Brownian motion] \label{prep:lem-Weps_hhtolow}
Let $\tau > 0$, $x_0 \in \bbR$, and $0 < \alpha < \frac12$ be arbitrary.
For any $\lambda > 0$ there exists an event $\widetilde{\mathcal{E}}_\lambda(\tau, x_0)  \subseteq \Omega$ satisfying
\begin{equation} \label{prep:eq-Weps_hhtolow_event}
 \mathbb{P}\big( \widetilde{\mathcal{E}}_\lambda(\tau, x_0) \big) \geq 1 - c^{-1} \exp(-c\lambda)
\end{equation}
and on which the following two estimates hold:
\begin{enumerate}[label={(\roman*)}]
    \item\label{prep:item-Weps_hhtolow} Uniform high$\times$high$\rightarrow$low-bounds: For all $1\leq j_1, j_2 \leq D$, it holds that 
    \begin{equation} \label{prep:eq-Weps_hhtolow_unif_bound}
         \sup_{\varepsilon>0} \sup_{M\sim N} M^{\alpha} \big\| \chi(\cdot)^2 P_M W^{\varepsilon,j_1}_{\tau, x_0} \, \partial_x P_N W^{\varepsilon,j_2}_{\tau, x_0} \big\|_{\C^{\alpha-1}} \leq \lambda. 
    \end{equation}
    \item\label{prep:item-Weps_hhtolow-convergence} Convergence of high$\times$high$\rightarrow$low-term: For all $1\leq j_1, j_2 \leq D$, it holds that 
    \begin{equation} \label{prep:eq-Weps_hhtolow_conv}
     \begin{aligned}
       &\lim_{\varepsilon \to 0} \, \sup_{M\sim N} M^{\alpha} \big\| \chi(\cdot)^2 \bigl( P_M W^{\varepsilon,j_1}_{\tau, x_0} \, \partial_x P_N W^{\varepsilon,j_2}_{\tau, x_0} - P_M W^{j_1}_{\tau, x_0} \, \partial_x P_N W^{j_2}_{\tau, x_0} \bigr) \big\|_{\C^{\alpha-1}} = 0.
     \end{aligned}
    \end{equation}
\end{enumerate}
\end{lemma}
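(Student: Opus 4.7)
The approach follows the standard workflow for second-order Wiener chaos estimates: a pointwise second moment bound, hypercontractivity, Kolmogorov or Sobolev upgrade to $L^\infty_x$, and a dyadic union bound. By the scaling and translation invariance of Brownian motion (already used in Lemma~\ref{prep:lem-weighted_Linfty_BM}), it suffices to treat the case $\tau = 1$, $x_0 = 0$, so write $G^\varepsilon_{M,N}(x) := P_M W^{\varepsilon,j_1}(x) \, \partial_x P_N W^{\varepsilon, j_2}(x)$. Since $P_M$ annihilates constants for $M \geq 1$, the process $P_M W^\varepsilon$ is stationary Gaussian with spectral density $|\rho_M(\xi) \rho_{\leq \varepsilon^{-1}}(\xi)|^2 / \xi^2$, so $G^\varepsilon_{M,N}$ lies in the inhomogeneous second Wiener chaos and a short Fourier symmetry argument gives
\[
\mathbb{E}[G^\varepsilon_{M,N}(x)] = \delta_{j_1 j_2} \int \rho_M(\xi) \rho_N(\xi) \rho_{\leq \varepsilon^{-1}}(\xi)^2 \, \frac{-i\xi}{\xi^2} \, \frac{d\xi}{2\pi} = 0,
\]
so no Wick renormalization is required.

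The core quantitative input is the pointwise variance bound
\[
\mathbb{E}\big[|P_K(\chi^2 G^\varepsilon_{M,N})(x)|^2\big] \lesssim \frac{K}{M}\, \jap{x}^{-L}, \qquad K \leq CM, \ M \sim N,
\]
uniformly in $\varepsilon > 0$ and for any fixed $L$. This follows by applying Isserlis' theorem to $\mathbb{E}[G^\varepsilon_{M,N}(y) G^\varepsilon_{M,N}(z)]$, obtaining (at most three) products of elementary kernels $\widecheck{(\rho_M^2 \rho_{\leq \varepsilon^{-1}}^2 / \xi^2)}(h)$ and $\widecheck{(\rho_N^2 \rho_{\leq \varepsilon^{-1}}^2)}(h)$, together with mixed kernels $\widecheck{(i \rho_M \rho_N \rho_{\leq \varepsilon^{-1}}^2 / \xi)}$ in the diagonal case $j_1 = j_2$. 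These are bounded in modulus by $M^{-1} \jap{Mh}^{-L}$ and $N \jap{Nh}^{-L}$, respectively; using $M \sim N$, their product enjoys the joint decay $\jap{M(y-z)}^{-L}$, and convolving twice against $\widecheck{\rho}_K$ together with the change of variables $y - z \mapsto K^{-1} t$ produces the gain $K/M$. The regime $K \gg M$ is treated separately using the Fourier localization of $G^\varepsilon_{M,N}$ in $\{|\xi| \lesssim M\}$ and the Schwartz decay of $\widehat{\chi^2}$, which renders the resulting contribution rapidly decaying in $K/M$.

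Gaussian hypercontractivity for second chaos then promotes the $L^2_\omega$ bound to $\|P_K(\chi^2 G^\varepsilon_{M,N})(x)\|_{L^p_\omega} \lesssim p \, (K/M)^{1/2} \jap{x}^{-L/2}$ uniformly in $x$, $\varepsilon$, and $p \geq 2$. A routine Sobolev or Kolmogorov argument, applied with the companion estimate for $\partial_x P_K(\chi^2 G^\varepsilon_{M,N})$ (which gains an extra factor of $K$), upgrades this to a weighted $L^\infty_x$ bound with only logarithmic losses in $K$. Multiplying by $M^\alpha K^{\alpha-1}$ and using the strict inequality $\alpha < 1/2$ makes the dyadic sums over $K \leq CM$ and $M \sim N$ absolutely summable with room to spare, so the $L^p_\omega$-norm of the supremum in~\eqref{prep:eq-Weps_hhtolow_unif_bound} is $\lesssim p$ uniformly in $\varepsilon > 0$. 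Chebyshev's inequality with $p \sim \lambda$ produces the tail bound defining the event $\widetilde{\mathcal{E}}_\lambda(\tau, x_0)$.

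For the convergence statement~\eqref{prep:eq-Weps_hhtolow_conv}, one uses that $P_M W^\varepsilon = P_M W$ for $M \ll \varepsilon^{-1}$, so the difference under consideration vanishes unless $\max(M, N) \gtrsim \varepsilon^{-1}$. Running the same argument at a slightly higher regularity $\alpha' \in (\alpha, 1/2)$ and interpolating with the trivial difference bound produces a prefactor $\varepsilon^{\alpha' - \alpha}$ which vanishes as $\varepsilon \to 0$; this convergence can be incorporated into the same event $\widetilde{\mathcal{E}}_\lambda(\tau, x_0)$ after enlarging the absolute constant. I expect the main technical obstacle to be the bookkeeping of the Isserlis contractions in the diagonal case $j_1 = j_2$, where the two additional mixed-kernel terms must be shown to enjoy the same $\jap{M(y-z)}^{-L}$ joint decay as the diagonal contraction; a secondary source of friction is balancing the logarithmic losses from the Sobolev upgrade against the strict gain $1/2 - \alpha > 0$ exploited in the dyadic summation and in the passage to $\varepsilon \to 0$.
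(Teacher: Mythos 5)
Your proposal is correct, and it takes a genuinely different route from the paper's proof. Indeed, the paper itself, in the remark immediately following the lemma, observes that it ``may be possible to prove [the lemma] by working directly in physical space and with the covariance of the Euclidean Brownian motion $W$'' -- this is precisely your approach. The paper's actual argument instead truncates $W^j$ to $[-\pi,\pi]$, expands it as a random Fourier series $\sum_{m} \tfrac{g_m^j}{\sqrt{2\pi}\, i m}\, e^{imx} + \cdots$, isolates a main second-chaos term plus smoother remainders, and bounds the $L^2_\omega$-norm of the main term via $L^2_\omega$-orthogonality of the $g_m^{j_1} g_n^{j_2}$ together with a discrete $\ell^2_{m,n}$ lattice count, yielding the same $(K/M)^{1/2}$ gain that your covariance computation extracts. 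The decisive cancellation is identical in both proofs: the resonant (mean) contribution vanishes because $1/\xi$ (respectively $1/m$) is odd against an even frequency cutoff. Both then use Gaussian hypercontractivity to pass from $L^2_\omega$ to $L^p_\omega$, gain summability from the strict inequality $\alpha < 1/2$, and conclude with Chebyshev. What each buys: your physical-space version works directly on $\mathbb{R}$ and avoids the detour through the torus, the fattened cutoff $\widetilde{\chi}$, the mismatched-support error terms of the paper's Step~2, and the separate low-frequency Step~1, at the cost of the Isserlis bookkeeping and pointwise kernel estimates you correctly flag as the main friction (in particular, for $j_1 = j_2$ you must show the two surviving contractions both enjoy the joint decay $\langle M(y-z)\rangle^{-L}$, which works out because the mixed kernel $\widecheck{(\rho_M \rho_N \rho_{\leq \varepsilon^{-1}}^2/\xi)}$ has $L^1$-normalized size $\lesssim 1$). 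The paper's Fourier-series version is noisier in bookkeeping but is chosen because it aligns with the standard toolkit of the random dispersive PDE literature; its Bernstein-plus-Minkowski upgrade $\|P_K f\|_{L^\infty_x} \lesssim K^{1/q}\|P_K f\|_{L^q_x}$ is your Kolmogorov/Sobolev step in a slightly different guise. The mechanism behind the convergence statement -- that $P_M W^\varepsilon = P_M W$ for $M\ll\varepsilon^{-1}$, so the difference is supported on high frequencies and one can run the argument at a slightly higher regularity -- is the same in both.
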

\begin{proof}
 We begin with the proof of the uniform high$\times$high$\rightarrow$low bounds \eqref{prep:eq-Weps_hhtolow_unif_bound}.
 Fix $1 \leq j_1, j_2 \leq D$. We consider the random variable 
 \begin{equation}
 \begin{aligned}
  X := \sup_{\varepsilon>0} \sup_{M\sim N} M^{\alpha} \big\| \chi(\cdot)^2 P_M W^{\varepsilon,j_1} \, \partial_x P_N W^{\varepsilon,j_2} \big\|_{\C^{\alpha-1}},
 \end{aligned}
 \end{equation}
 and define the event $\widetilde{\mathcal{E}}_\lambda(\tau, x_0) := \{ X \leq \lambda \}$.
 For all $p \geq 2$ we seek to establish the moment bounds $\|X\|_{L^p_\omega} \lesssim p$.
 By Chebyshev's inequality these moment bounds %~\eqref{prep:eq-XR_moment_bounds} 
 imply the tail estimate $\mathbb{P}(\widetilde{\mathcal{E}}_\lambda(\tau, x_0)^c) \leq c^{-1} e^{-c \lambda}$ for any $\lambda > 0$ for some absolute constant $c > 0$. 
 The main work now goes into establishing the moment bounds. We reduce their proof to a computation for random Fourier series representations of Brownian motions on unit-sized intervals. This reduction is achieved in several steps. 
 We have
 \begin{equation} \label{prep:eq-XR_moment_bounds_start}
 \begin{aligned}
  \|X\|_{L^p_\omega} \lesssim \sum_{M \sim N} M^{\alpha} \Bigl\| \sup_{\varepsilon>0} \, \big\| \chi(\cdot)^2 P_M W^{\varepsilon,j_1} \, \partial_x P_N W^{\varepsilon,j_2} \big\|_{\C^{\alpha-1}} \Bigr\|_{L^p_\omega}.
 \end{aligned}
 \end{equation}
 
 \medskip 
 \emph{Step 1: Estimating the low frequencies $M \sim N \lesssim 1$.} 
 We first dispense of the low frequency contributions to~\eqref{prep:eq-XR_moment_bounds_start}. Using the embedding $L^\infty \hookrightarrow \C^{\alpha-1}$ and H\"older's inequality, they can be estimated crudely by
 \begin{equation} \label{prep:eq-Weps_hhtolow_step1_1}
 \begin{aligned}
  &\sum_{M \sim N \lesssim 1} M^{\alpha} \Bigl\| \sup_{\varepsilon>0} \, \big\| \chi(\cdot)^2 P_M W^{\varepsilon,j_1} \, \partial_x P_N W^{\varepsilon,j_2} \big\|_{\C^{\alpha-1}} \Bigr\|_{L^p_\omega} \\
  &\lesssim \sum_{M \sim N \lesssim 1} M^{\alpha} \sum_{L, L' \sim M}  \Bigl\| \big\| \chi(\cdot)^2 P_M P_L W^{j_1} \, \partial_x P_N P_{L'} W^{j_2} \big\|_{L^\infty_x} \Bigr\|_{L^p_\omega} \\
  &\lesssim \sum_{M \sim N \lesssim 1} M^{\alpha} \sum_{L, L' \sim M}  \bigl\| \chi(\cdot) P_M P_L \jap{\cdot} \bigr\|_{L^\infty_x \to L^\infty_x} \bigl\| \| \jap{x}^{-1} W^{j_1} \|_{L^\infty_x} \bigr\|_{L^{2p}_\omega} \\
  &\qquad \qquad \qquad \qquad \quad  \times \bigl\| \chi(\cdot) \px P_N P_{L'} \jap{\cdot} \bigr\|_{L^\infty_x \to L^\infty_x} \bigl\| \| \jap{x}^{-1} W^{j_2} \|_{L^\infty_x} \bigr\|_{L^{2p}_\omega}. 
 \end{aligned}
 \end{equation}
 The following operator norm bounds hold uniformly for the low frequency configurations $M \sim N \lesssim 1$ and $L, L' \sim M$, 
 \begin{equation*}
 \begin{aligned}
  \bigl\| \chi(\cdot) P_M P_L \jap{\cdot} \bigr\|_{L^\infty_x \to L^\infty_x} + \bigl\| \chi(\cdot) \px P_N P_{L'} \jap{\cdot} \bigr\|_{L^\infty_x \to L^\infty_x} &\lesssim 1.
 \end{aligned}
 \end{equation*}
 Thus, by Lemma~\ref{prep:lem-weighted_Linfty_BM} the last line of~\eqref{prep:eq-Weps_hhtolow_step1_1} can be bounded by 
 \begin{equation*}
 \begin{aligned}
  &\sum_{M \sim N \lesssim 1} M^{\alpha} \sum_{L, L' \sim M} p \lesssim p,
 \end{aligned}
 \end{equation*}
 as desired.
 
 \medskip 
 \emph{Step 2: Reduction to a random Fourier series computation.}
 We begin to estimate the high-frequency contributions to~\eqref{prep:eq-XR_moment_bounds_start}.
 To this end we introduce a fattened version $\widetilde{\chi}(x)$ of the bump function $\chi(x)$, which is as in \eqref{prep:eq-chi}. For notational purposes, it is convenient to require that $\widetilde{\chi}(x)=1$ for all $x\in [-5/2,5/2]$ and that the support of $\widetilde{\chi}$ is contained in $[-3,3]\subseteq [-\pi,\pi]$.  Then we decompose into
 \begin{equation} \label{prep:eq-Weps_hhtolow_step2_2}
 \begin{aligned}
  &\sum_{M \sim N \gg 1} M^{\alpha} \Bigl\| \sup_{\varepsilon>0} \, \big\| \chi(\cdot)^2 P_M W^{\varepsilon,j_1} \, \partial_x P_N W^{\varepsilon,j_2} \big\|_{\C^{\alpha-1}} \Bigr\|_{L^p_\omega} \\
  &\lesssim \sum_{M \sim N \gg 1} \sum_{L, L' \sim M} M^{\alpha} \Bigl\| \big\| \chi(\cdot)^2 P_M P_L W^{j_1} \, \partial_x P_N P_{L'} W^{j_2} \big\|_{\C^{\alpha-1}} \Bigr\|_{L^p_\omega} \\
  &\lesssim \sum_{M \sim N \gg 1} \sum_{L, L' \sim M} M^{\alpha} \Bigl\| \big\| \chi(\cdot)^2 P_M P_L \bigl( \widetilde{\chi}(\cdot) W^{j_1} \bigr) \, \partial_x P_N P_{L'} \bigl( \widetilde{\chi}(\cdot) W^{j_2} \bigr) \big\|_{\C^{\alpha-1}} \Bigr\|_{L^p_\omega} \\
  &\quad + \sum_{M \sim N \gg 1} \sum_{L, L' \sim M} M^{\alpha} \Bigl\| \big\| \chi(\cdot)^2 P_M P_L \bigl( (1-\widetilde{\chi}(\cdot)) W^{j_1} \bigr) \, \partial_x P_N P_{L'} \bigl( \widetilde{\chi}(\cdot) W^{j_2} \bigr) \big\|_{\C^{\alpha-1}} \Bigr\|_{L^p_\omega} \\
  &\quad + \sum_{M \sim N \gg 1} \sum_{L, L' \sim M} M^{\alpha} \Bigl\| \big\| \chi(\cdot)^2 P_M P_L \bigl( \widetilde{\chi}(\cdot) W^{j_1} \bigr) \, \partial_x P_N P_{L'} \bigl( (1-\widetilde{\chi}(\cdot)) W^{j_2} \bigr) \big\|_{\C^{\alpha-1}} \Bigr\|_{L^p_\omega} \\
  &\quad + \sum_{M \sim N \gg 1} \sum_{L, L' \sim M} M^{\alpha} \Bigl\| \big\| \chi(\cdot)^2 P_M P_L \bigl( (1-\widetilde{\chi}(\cdot)) W^{j_1} \bigr) \, \partial_x P_N P_{L'} \bigl( (1-\widetilde{\chi}(\cdot)) W^{j_2} \bigr) \big\|_{\C^{\alpha-1}} \Bigr\|_{L^p_\omega}.
 \end{aligned}
 \end{equation}
 The first term on the right-hand side of~\eqref{prep:eq-Weps_hhtolow_step2_2} is the main term that will be estimated in the next step. All other terms have at least one input with a mismatched spatial support that allows for a simpler treatment. Indeed, we can bound the second term on the right-hand side of~\eqref{prep:eq-Weps_hhtolow_step2_2} using the embedding $L^\infty \hookrightarrow \C^{\alpha-1}$ by
 \begin{equation} \label{prep:eq-Weps_hhtolow_step2_3}
 \begin{aligned}
  &\sum_{M \sim N \gg 1} \sum_{L, L' \sim M} M^{\alpha} \Bigl\| \big\| \chi(\cdot)^2 P_M P_L \bigl( (1-\widetilde{\chi}(\cdot)) W^{j_1} \bigr) \, \partial_x P_N P_{L'} \bigl( \widetilde{\chi}(\cdot) W^{j_2} \bigr) \big\|_{\C^{\alpha-1}} \Bigr\|_{L^p_\omega} \\
  &\lesssim \sum_{M \sim N \gg 1} \sum_{L, L' \sim M} M^{\alpha}  \bigl\| \chi(\cdot)^2 P_M P_L \bigl( (1-\widetilde{\chi}(\cdot)) \jap{\cdot} ) \bigr\|_{L^\infty_x \to L^\infty_x} \bigl\| \| \jap{x}^{-1} W^{j_1} \|_{L^\infty_x} \bigr\|_{L^{2p}_\omega} \\
  &\qquad \qquad \qquad \qquad \qquad \quad \times \bigl\| \px P_N P_{L'} \bigl( \widetilde{\chi}(\cdot) \jap{\cdot} ) \bigr\|_{L^\infty_x \to L^\infty_x} \bigl\| \| \jap{x}^{-1} W^{j_2} \|_{L^\infty_x} \bigr\|_{L^{2p}_\omega}.
 \end{aligned}
 \end{equation}
 Due to the mismatched spatial supports of the cutoffs $\chi(\cdot)$ and $1-\widetilde{\chi}(\cdot)$, we have  for all $M \sim L \gg 1$ and $A\geq 1$ that,
 \begin{equation*}
 \begin{aligned}
  \bigl\| \chi(\cdot)^2 P_M P_L \bigl( (1-\widetilde{\chi}(\cdot)) \jap{\cdot} ) \bigr\|_{L^\infty_x \to L^\infty_x} \lesssim_{A} M^{-A}.  
 \end{aligned}
 \end{equation*}
 By Bernstein estimates we also have uniformly for all $N \sim L' \gg 1$  that
 \begin{equation*}
 \begin{aligned}
  \bigl\| \px P_N P_{L'} \bigl( \widetilde{\chi}(\cdot) \jap{\cdot} ) \bigr\|_{L^\infty_x \to L^\infty_x} \lesssim N.
 \end{aligned}
 \end{equation*}
 Hence, by Lemma~\ref{prep:lem-weighted_Linfty_BM} we can bound the last line of~\eqref{prep:eq-Weps_hhtolow_step2_3} as desired by
 \begin{equation*}
 \begin{aligned}
  \sum_{M \sim N \gg 1} \sum_{L, L' \sim M} M^{\alpha} \sum_{K \geq 1} K^{\alpha-1} M^{-2} N p \lesssim p.
 \end{aligned}
 \end{equation*}
 The last two terms on the right-hand side of~\eqref{prep:eq-Weps_hhtolow_step2_3} can be estimated analogously.

 \medskip  
 \emph{Step 3: The main random Fourier series computation.} 
 We now turn to estimating the more delicate first term on the right-hand side of~\eqref{prep:eq-Weps_hhtolow_step2_2}.
 To this end we pass to a representation of real-valued Brownian motion in terms of a random Fourier series. 
 Recall that for any countable orthonormal basis $\{\psi_a\}_{a \in \bbN}$ of $L^2_x(\bbR)$, there exists a family  of independent standard Gaussian random variables $\{h_a\}_{a \in \bbN}$ such that
 \begin{equation*}
 \begin{aligned}
  W(x) = \sum_{a=1}^\infty h_a \langle \mathds{1}_{[0,x]}, \psi_a \rangle,
 \end{aligned}
 \end{equation*}
 where $\langle f, g \rangle := \int_{\bbR} f(y) g(y) \, dy$ denotes the inner product on $L^2_x(\bbR)$, see for instance~\cite[Chapter 3.3]{Evans13} or~\cite[Chapter 1]{Nua06}.
 Since the family $\{ \frac{e^{imx}}{\sqrt{2\pi}} \}_{m \in \bbZ}$ forms an orthonormal basis of the (complex) vector space $L^2_x([-\pi,\pi])$, for $x \in [-\pi,\pi]$ the one-dimensional real-valued Brownian motions $W^j(x)$, $1 \leq j \leq \dimA$, can be written as
 \begin{equation*}
 \begin{aligned}
  W^j(x) = \sum_{m \in \bbZ} \frac{g_m^j}{\sqrt{2\pi}} \langle \mathds{1}_{[0,x]}(\cdot), e^{im\cdot} \rangle = \sum_{m \in \bbZ \backslash \{0\} } \frac{g_m^j}{\sqrt{2\pi} i m} e^{imx} + \biggl( \frac{g_0^j}{\sqrt{2\pi}} x -\sum_{m \in \bbZ \backslash \{0\} } \frac{g_m^j}{\sqrt{2\pi} i m} \biggr),  
 \end{aligned}
 \end{equation*}
 where $\{ g^j_m \}_{m\in\bbZ}$, $1 \leq j \leq D$, are families of independent standard complex-valued Gaussian random variables satisfying the constraints $g^j_m = \overline{g^j_{-m}}$. The latter constraints are necessary to ensure that $W^j(x)$ are real-valued Brownian motions. Correspondingly, we have for $x \in [-\pi,\pi]$ and $1 \leq j \leq D$ that
 \begin{equation*}
 \begin{aligned}
  \px \bigl( \widetilde{\chi}(x) W^j(x) \bigr) = \widetilde{\chi}(x) \biggl( \sum_{m \in \bbZ \backslash \{0\} } \frac{g_m^j}{\sqrt{2\pi} } e^{imx} + \frac{g_0^j}{\sqrt{2\pi}} \biggr) + (\partial_x \widetilde{\chi})(x) W^j(x).
 \end{aligned}
 \end{equation*}

 Then we further decompose into
 \begin{equation} \label{prep:eq-Wfreqproj_decomp}
 \begin{aligned}
  \bigl[P_M P_L \bigl( \widetilde{\chi} W^{j} \bigr)\bigr](x) &= \sum_{m \in \bbZ \backslash \{0\} } \frac{g_m^j}{\sqrt{2\pi} i m}  \rho_M(m) \rho_L(m) \widetilde{\chi}(x) e^{imx} + \mathcal{R}_{M,L}^{j}(x),  
 \end{aligned}
 \end{equation}
 where we introduce the remainder term
 \begin{equation*} 
 \begin{aligned}
  \mathcal{R}_{M,L}^{j}(x) := \sum_{m \in \bbZ \backslash \{0\} } \frac{g_m^j}{\sqrt{2\pi} i m} \varphi_{M,L;m}(x) + \biggl( \frac{g_0^j}{\sqrt{2\pi}} \bigl[P_M P_L \bigl( \widetilde{\chi}(y) y \bigr)\bigr](x) - \sum_{m \in \bbZ \backslash \{0\} } \frac{g_m^j}{\sqrt{2\pi} i m} \bigl[ P_M P_L \widetilde{\chi} \bigr](x) \biggr)
 \end{aligned}
 \end{equation*}
 with
 \begin{equation*}
 \begin{aligned}
  \varphi_{M,L;m}(x) := \bigl[ P_M P_L \bigl( \widetilde{\chi}(\cdot) e^{im\cdot} \bigr) \bigr](x) - \rho_M(m) \rho_L(m) \widetilde{\chi}(x) e^{imx}.
 \end{aligned}
 \end{equation*}
 All terms in $\mathcal{R}_{M,L}^{j}$ gain regularity compared with the first term on the right-hand side of~\eqref{prep:eq-Wfreqproj_decomp} in the sense that all terms in $\mathcal{R}_{M,L}^{j}$ come with an additional inverse power of $M$.  Indeed, we have uniformly for all $M \sim L \gg 1$, all $m \in \bbZ \backslash \{0\}$, and all $x \in \bbR$ that
 \begin{equation*}
 \begin{aligned}
  |\varphi_{M,L;m}(x)| \lesssim_{A,B} \jap{x}^{-A} M^{-1} \bigl( 1\{ |m| \lesssim M \} + 1\{|m| \gg M\} \jap{|m|}^{-B} \bigr).
 \end{aligned}
 \end{equation*}
 Moreover, by Bernstein estimates we have $\bigl\|P_M P_L \bigl( \widetilde{\chi}(y) y \bigr)\bigr\|_{L^\infty_x} + \|P_M P_L \widetilde{\chi}\|_{L^\infty_x} \lesssim_A M^{-A}$ uniformly for all $M \sim L \gg 1$.
 Similarly, we decompose 
 \begin{equation} \label{prep:eq-pxWfreqproj_decomp}
 \begin{aligned}
  \bigl[P_N P_{L'} \px \bigl( \widetilde{\chi} W^{j} \bigr)\bigr](x) = \sum_{n \in \bbZ \backslash \{0\} } \frac{g_n^j}{\sqrt{2\pi}}  \rho_N(n) \rho_{L'}(n) \widetilde{\chi}(x) e^{inx} + \widetilde{\mathcal{R}}_{N, L'}^{j}(x)    
 \end{aligned}
 \end{equation}
 with an analogous remainder term that gains regularity compared with the first term on the right-hand side of~\eqref{prep:eq-pxWfreqproj_decomp}.
 We thus arrive at the decomposition
 \begin{equation} \label{prep:eq-Weps_hhtolow_step3_1}
 \begin{aligned}
  &\chi(x)^2 \bigl[ P_M P_L \bigl( \widetilde{\chi} W^{j_1} \bigr) \bigr](x) \, \bigl[ \partial_x P_N P_{L'} \bigl( \widetilde{\chi} W^{j_2} \bigr) \bigr](x) \\ 
  &= \chi(x)^2 \biggl( \sum_{m \in \bbZ \backslash \{0\} } \frac{g_m^{j_1}}{\sqrt{2\pi} i m}  \rho_M(m) \rho_L(m) e^{imx} \biggr) \biggl( \sum_{n \in \bbZ \backslash \{0\} } \frac{g_n^{j_2}}{\sqrt{2\pi}}  \rho_N(n) \rho_{L'}(n) e^{i n x} \biggr) + \{ \text{better} \}, \\
  &= \sum_{m, n \in \bbZ \backslash \{0\} } \frac{g_m^{j_1} g_n^{j_2}}{2\pi i m}  \rho_M(m) \rho_L(m) \rho_N(n) \rho_{L'}(n) \chi(x)^2 e^{i (m+n) x} + \{ \text{better} \}, 
 \end{aligned}
 \end{equation}
 where all product terms with at least one input at better regularity are grouped into the quantity $\{\text{better}\}$. In what follows we only estimate the contribution of the delicate first term and leave the treatment of the contributions of all other more regular terms to the reader.

Let $q\gg 1$ satisfy $\alpha+\frac{1}{q}<\frac{1}{2}$. Using H\"older's inequality in the $\omega$-variable, it suffices to prove the moment estimate for $p\geq q$. Using Bernstein estimates and Minkowski's integral inequality, we obtain that
\begin{equation*}
 \begin{aligned}
  &\sum_{M \sim N \gg 1} \sum_{L, L' \sim M} M^{\alpha} \Bigl\| \big\| \chi(\cdot)^2 P_M P_L \bigl( \widetilde{\chi} W^{j_1} \bigr) \, \partial_x P_N P_{L'} \bigl( \widetilde{\chi} W^{j_2} \bigr) \big\|_{\C^{\alpha-1}} \Bigr\|_{L^p_\omega} \\
  &\lesssim \sum_{M \sim N \gg 1} \sum_{L, L' \sim M} \sum_{K \geq 1} M^{\alpha} K^{\alpha-1} K^{\frac1q} \biggl\| \Bigl\| P_K \bigl( \chi(\cdot)^2 P_M P_L \bigl( \widetilde{\chi} W^{j_1} \bigr) \, \partial_x P_N P_{L'} \bigl( \widetilde{\chi} W^{j_2} \bigr) \bigr) \Big\|_{L^p_\omega} \biggr\|_{L^q_x}.
 \end{aligned}
 \end{equation*} 
 Inserting the first term on the right-hand side of~\eqref{prep:eq-Weps_hhtolow_step3_1} and using Gaussian hypercontractivity yields 
 \begin{equation}\label{prep:eq-Weps-hypercontractivity}
 \begin{aligned}
  &\sum_{M \sim N \gg 1} \sum_{L, L' \sim M} \sum_{K \geq 1} M^{\alpha} K^{\alpha-1} K^{\frac1q} \\
  &\qquad \qquad \times \biggl\| \Bigl\| \sum_{m,n \in \bbZ \backslash \{0\} } \frac{g_m^{j_1} g_n^{j_2}}{2\pi i m}  \rho_M(m) \rho_L(m) \rho_N(n) \rho_{L'}(n) \bigl[ P_K \bigl( \chi(\cdot)^2 e^{i (m+n) \cdot} \bigr) \bigr](x) \Big\|_{L^p_\omega} \biggr\|_{L^q_x} \\
  &\lesssim p \sum_{M \sim N \gg 1} \sum_{L, L' \sim M} \sum_{K \geq 1} M^{\alpha} K^{\alpha-1} K^{\frac1q} \\
  &\quad \times \biggl\| \Bigl\| \sum_{m,n \in \bbZ \backslash \{0\} } \frac{g_m^{j_1} g_n^{j_2}}{2\pi i m}  \rho_M(m) \rho_L(m) \rho_N(n) \rho_{L'}(n) \bigl[ P_K \bigl( \chi(\cdot)^2 e^{i (m+n) \cdot} \bigr)  \bigr](x) \Big\|_{L^2_\omega} \biggr\|_{L^q_x}.
 \end{aligned}
 \end{equation}
 We now decompose the second-order Gaussian chaos in  \eqref{prep:eq-Weps-hypercontractivity} into a non-resonant and resonant component. To be precise, we decompose 
 \begin{align}
     &\sum_{m,n \in \bbZ \backslash \{0\} } \frac{g_m^{j_1} g_n^{j_2}}{2\pi i m}  \rho_M(m) \rho_L(m) \rho_N(n) \rho_{L'}(n) \bigl[ P_K \bigl( \chi(\cdot)^2 e^{i (m+n) \cdot} \bigr)  \bigr](x) \notag \\
     =& \sum_{m,n \in \bbZ \backslash \{0\} } \frac{g_m^{j_1} g_n^{j_2}-\delta_{m+n=0} \delta_{j_1=j_2}}{2\pi i m}\rho_M(m) \rho_L(m) \rho_N(n) \rho_{L'}(n) \bigl[ P_K \bigl( \chi(\cdot)^2 e^{i (m+n) \cdot} \bigr)  \bigr](x) \label{prep:eq-Weps-nonres}\\
     &\quad + \delta_{j_1=j_2} \sum_{m\in \bbZ \backslash \{ 0\}} \frac{1}{2\pi im}
     \rho_M(m) \rho_L(m) \rho_N(m) \rho_{L'}(m) P_K(\chi^2)(x). \label{prep:eq-Weps-res} 
 \end{align}
 We first address the resonant term \eqref{prep:eq-Weps-res}. At first sight, it may seem like this term is of order one, which would lead to a divergence in the sum over $M$. However, since $m\in\bbZ \backslash \{ 0\} \mapsto 1/m$ is odd and the cut-off function 
 $m\in\bbZ \backslash \{ 0\} \mapsto \rho_M(m) \rho_L(m) \rho_N(m) \rho_{L'}(m)$ is even, it holds that 
 \begin{equation*}
    \sum_{m\in \bbZ \backslash \{ 0\}} \frac{1}{2\pi im}
     \rho_M(m) \rho_L(m) \rho_N(m) \rho_{L'}(m) = 0.
 \end{equation*}
 In particular, the resonant term \eqref{prep:eq-Weps-res}  vanishes.
 It remains to treat the contribution of the non-resonant term \eqref{prep:eq-Weps-nonres}. To this end, we note that 
 \begin{equation}\label{prep:eq-Weps-PK-mode}
 \begin{aligned}
  \bigl| \bigl[ P_K \bigl( \chi(\cdot)^2 e^{i (m+n) \cdot} \bigr) \bigr](x) \bigr| \lesssim_{A,B} \jap{x}^{-A} \bigl( 1\{ |m+n| \lesssim K\} + 1\{|m+n| \gg K\} \jap{m+n}^{-B} \bigr)
 \end{aligned}
 \end{equation}
 for all $K\geq 1$, $m,n \in \bbZ$, and $x\in \R$. Using $L^2_\omega$-orthogonality and \eqref{prep:eq-Weps-PK-mode}, we obtain the bound 
 \begin{align}
&p \sum_{M \sim N \gg 1} \sum_{L, L' \sim M} \sum_{K \geq 1} M^{\alpha} K^{\alpha-1} K^{\frac1q}  \notag \\
  &\quad \times \biggl\| \Bigl\| \sum_{m,n \in \bbZ \backslash \{0\} } \frac{g_m^{j_1} g_n^{j_2}-\delta_{m+n=0} \delta_{j_1=j_2}}{2\pi i m}\rho_M(m) \rho_L(m) \rho_N(n) \rho_{L'}(n) \bigl[ P_K \bigl( \chi(\cdot)^2 e^{i (m+n) \cdot} \bigr)  \bigr](x) \Bigr\|_{L^2_\omega} \biggr\|_{L^q_x} \allowdisplaybreaks[3] \notag\\
  &\lesssim p  \sum_{M \sim N \gg 1} \sum_{L, L' \sim M} \sum_{K \geq 1} M^{\alpha-1} K^{\alpha-1} K^{\frac1q} 
   \times \biggl \| \Bigl\| 1\{ |m|\sim M\} 1\{ |n|\sim N\}  P_K \bigl( \chi(\cdot)^2 e^{i (m+n) \cdot} \bigr)  \Bigr\|_{\ell_{m,n}^2} \biggr\|_{L_x^q} \allowdisplaybreaks[3] \notag\\
  &\lesssim p \sum_{M \sim N \gg 1} \sum_{L, L' \sim M} \sum_{K \geq 1} M^{\alpha-1} K^{\alpha-1} K^{\frac1q} \notag \\
  &\quad \times \Bigl\| 1\{ |m|\sim M\} 1\{ |n|\sim N\} \bigl( 1\{ |m+n|\sim K \} + 1\{ |m+n| \gg K \} \langle m + n \rangle^{-2} \bigr) \Bigr\|_{\ell_{m,n}^2}.\label{prep:eq-Weps-suml2}
 \end{align}
 The $\ell_{m,n}^2$-factor can be estimated by 
 \begin{align*}
      &\Bigl\| 1\{ |m|\sim M\} 1\{ |n|\sim N\} \bigl( 1\{ |m+n|\sim K \} + 1\{ |m+n| \gg K \} \langle m + n \rangle^{-2} \bigr) \Bigr\|_{\ell_{m,n}^2} \\
      &\lesssim M^{\frac{1}{2}} K^{\frac{1}{2}} + M^{\frac{1}{2}} \lesssim M^{\frac{1}{2}} K^{\frac{1}{2}}. 
 \end{align*}
 Thus, \eqref{prep:eq-Weps-suml2} can be estimated by
 \begin{equation*}
     p  \sum_{M \sim N \gg 1} \sum_{L, L' \sim M} \sum_{K \geq 1} M^{\alpha-\frac{1}{2}} K^{\alpha-\frac{1}{2}} K^{\frac1q}  \lesssim p.
 \end{equation*}
 This finishes the proof of the uniform bounds \eqref{prep:eq-Weps_hhtolow_unif_bound}. 
 
 Finally, the convergence statement~\eqref{prep:eq-Weps_hhtolow_conv} can be deduced by a variant of the preceding arguments. Specifically, we can exploit that in the difference on the left-hand side of~\eqref{prep:eq-Weps_hhtolow_conv} at least one input must be at frequencies $\geq \varepsilon^{-1}$. This allows us to gain a small power of $\varepsilon$ at every step of the preceding argument and therefore yields convergence as $\varepsilon \to 0$. 
\end{proof}

\begin{remark}
It may be possible to prove Proposition \ref{prep:lem-Weps_hhtolow} by working directly in physical space and with the covariance of the Euclidean Brownian motion $W$. However, the argument using random Fourier series is closer to the literature on random dispersive equations. 
\end{remark}

The next proposition establishes the main properties of the smooth approximations of the Brownian paths.
As an auxiliary tool we need to introduce a canonical smooth and compactly supported extension $P_{ext} \colon \bbR^\dimA \to L(\bbR^D; \bbR^D)$ of the orthogonal projection $P$ introduced above, i.e., $P_{ext}(p)$ is the orthogonal projection to $T_p \M$ for all $p \in \M$.

\begin{proposition}[Smooth approximation of the Brownian paths] \label{prep:prop-brownian-path-approximation}
Let $x_0 \in \bbR$ be arbitrary and let $s < \alpha < \frac12$ with $0 < \frac12 - \alpha \ll 1$ sufficiently small.
There exists a small constant $0 < c(\M) \ll 1$ depending only on $\M$ such that for all $0 < \tau \leq c(\M)$, there exists an event $\mathcal{E}(\tau, x_0) \subseteq \Omega$ satisfying
\begin{equation*}
    \mathbb{P}\big( \mathcal{E}(\tau, x_0) \big) \geq 1 - C \exp(-c \tau^{-c})
\end{equation*}
and on which the following properties hold:
\begin{enumerate}[label={(\roman*)}]
    \item\label{prep:item-approx-1} Convergence: 
    There exists a path $B_{\tau, x_0, \loc} \colon \bbR \rightarrow \M$ such that $B^\varepsilon_{\tau, x_0, \loc} \to B_{\tau, x_0, \loc}$ in $\C^\alpha$.
    \item\label{prep:item-approx-2} Effect of the spatial cut-off: 
    For all $\varepsilon > 0$ and $x \in [-2,2]$, it holds that $B^\varepsilon_{\tau, x_0, \loc}(x) = B^\varepsilon_{\tau, x_0}(x)$ and $B_{\tau, x_0, \loc}(x) = B(\tau x + x_0)$.
    \item\label{prep:item-approx-3} Para-controlled structure: For all $\varepsilon>0$, there exists a vector-valued function $B^{\varepsilon,\#}_{\tau,x_0,\loc} \in \C^{2\alpha}$ such that 
    \begin{equation} %\label{prep:eq-BM-approx-parastructure}
        B^{\varepsilon}_{\tau,x_0,\loc} = \tau^{\frac12} P_{\ext}(B^{\varepsilon}_{\tau,x_0,\loc}) \parall ( \chi(\cdot) W^\varepsilon_{\tau, x_0} ) + B^{\varepsilon, \#}_{\tau,x_0,\loc}.
    \end{equation}
    \item\label{prep:item-approx-4} Uniform bounds: For all $\varepsilon>0$, it holds that 
    \begin{equation} \label{prep:eq-Beps_Calpha_bounds}
    \| B^{\varepsilon}_{\tau,x_0,\loc} - B^{\varepsilon}_{\tau, x_0, \loc}(0) \|_{\C^\alpha} \lesssim_{\M} \tau^{\frac12 - c} 
    \quad \text{and} \quad 
    \| B^{\varepsilon, \#}_{\tau,x_0,\loc} - B^{\varepsilon}_{\tau, x_0, \loc}(0) \|_{\C^{2\alpha}} \lesssim_{\M} \tau^{\frac12-c}.
    \end{equation}
    \item \label{prep:item-Beps_hhtolow} 
    Uniform high$\times$high$\rightarrow$low-bounds: For all $1 \leq j_1, j_2 \leq D$, it holds that 
    \begin{equation} \label{prep:eq-Beps_hhtolow_bound}
         \sup_{\varepsilon>0} \sup_{M\sim N} M^{s} \big\|  P_M \bigl( B^{\varepsilon,j_1}_{\tau, x_0, \loc} - B^{\varepsilon, j_1}_{\tau, x_0, \loc}(0) \bigr) \, \partial_x P_N \bigl( B^{\varepsilon,j_2}_{\tau, x_0,\loc} - B^{\varepsilon, j_2}_{\tau, x_0, \loc}(0) \bigr) \big\|_{\C^{s-1}} \lesssim_{\M} \tau^{1-c}.
    \end{equation}
    \item \label{prep:item-Beps_hhtolow-convergence} 
    Convergence of high$\times$high$\rightarrow$low-term: For all $1\leq j_1, j_2 \leq D$, it holds that 
    \begin{equation} \label{prep:eq-Beps_hhtolow_conv}
     \begin{aligned}
       &\lim_{\varepsilon \to 0} \, \sup_{M\sim N} M^{s} \big\| 
        P_M \bigl( B^{\varepsilon,j_1}_{\tau, x_0, \loc} - B^{\varepsilon, j_1}_{\tau, x_0, \loc}(0) \bigr) \, \partial_x P_N \bigl( B^{\varepsilon,j_2}_{\tau, x_0, \loc} - B^{\varepsilon, j_2}_{\tau, x_0, \loc}(0) \bigr) \\
       &\qquad \qquad \qquad \, \, - P_M \bigl( B^{j_1}_{\tau, x_0, \loc} - B^{j_1}_{\tau, x_0, \loc}(0) \bigr) \, \partial_x P_N \bigl( B^{j_2}_{\tau, x_0, \loc} - B^{j_2}_{\tau, x_0, \loc}(0) \bigr) \big\|_{\C^{s-1}} = 0.
     \end{aligned}
    \end{equation}
\end{enumerate}
\end{proposition}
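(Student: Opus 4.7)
The strategy is to view~\eqref{prep:equ-Bepstaux0loc_def} as a rough ODE driven by the H\"older path $\chi W^\varepsilon_{\tau,x_0}$ of regularity slightly below $\frac{1}{2}$, and to apply single-variable para-controlled calculus in the spirit of \cite{GIP15}. First I define the good event
\begin{equation*}
 \mathcal{E}(\tau,x_0) := \widetilde{\mathcal{E}}_{\tau^{-c}}(\tau,x_0) \, \cap \, \Bigl\{ \sup_{\varepsilon>0} \bigl\| \chi W^\varepsilon_{\tau,x_0} \bigr\|_{\C^\alpha} \leq \tau^{-c/2} \Bigr\},
\end{equation*}
where $\widetilde{\mathcal{E}}_{\tau^{-c}}(\tau,x_0)$ is the high$\times$high$\rightarrow$low event from Lemma~\ref{prep:lem-Weps_hhtolow}. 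Combining~\eqref{prep:eq-Weps_hhtolow_event} with standard Gaussian hypercontractivity and the weighted moment bounds in Lemma~\ref{prep:lem-weighted_Linfty_BM} (applied at the Kolmogorov--Chentsov level for the $\C^\alpha$-regularity of $\chi W^\varepsilon_{\tau,x_0}$ uniformly in $\varepsilon$) yields $\mathbb{P}(\mathcal{E}(\tau,x_0)) \geq 1 - C\exp(-c\tau^{-c})$. All subsequent estimates are established uniformly in $\varepsilon$ on this event.

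\textbf{Bounds and para-controlled structure.}
I define
\begin{equation*}
 B^{\varepsilon,\#}_{\tau,x_0,\loc} := B^\varepsilon_{\tau,x_0,\loc} - \tau^{\frac{1}{2}} P_{\ext}\bigl(B^\varepsilon_{\tau,x_0,\loc}\bigr) \parall \bigl(\chi W^\varepsilon_{\tau,x_0}\bigr),
\end{equation*}
which a priori lies only in $\C^\alpha$. Differentiating, substituting the ODE~\eqref{prep:equ-Bepstaux0loc_def}, and decomposing the product $P_{\ext}(B^\varepsilon_{\tau,x_0,\loc}) \, \partial_x(\chi W^\varepsilon_{\tau,x_0})$ via single-variable para-products produces three contributions: a low-high commutator between $\parall$ and $\partial_x$, a non-resonant (low-high plus high-low) piece, and the resonant piece $\tau^{\frac{1}{2}} P_{\ext}(B^\varepsilon_{\tau,x_0,\loc}) \parasim \partial_x (\chi W^\varepsilon_{\tau,x_0})$. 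The first two are bounded in $\C^{2\alpha-1}$ via the single-variable analogue of Proposition~\ref{prep:prop-bilinear}. For the resonance I apply Bony's para-linearization (Lemma~\ref{prep:lemma-bony}) to write $P_{\ext}(B^\varepsilon_{\tau,x_0,\loc}) \approx \nabla P_{\ext}(B^\varepsilon_{\tau,x_0,\loc}) \parall B^\varepsilon_{\tau,x_0,\loc}$ modulo a $\C^{2\alpha}$-remainder, substitute the ansatz for $B^\varepsilon_{\tau,x_0,\loc}$, and commute para-products by associativity-commutator estimates analogous to Lemma~\ref{prep:lem-commutator-ODE}. The leading contribution becomes
\begin{equation*}
 \tau \, \bigl[\nabla P_{\ext}(B^\varepsilon_{\tau,x_0,\loc}) \cdot P_{\ext}(B^\varepsilon_{\tau,x_0,\loc})\bigr] \cdot \bigl[ (\chi W^\varepsilon_{\tau,x_0}) \parasim \partial_x (\chi W^\varepsilon_{\tau,x_0}) \bigr],
\end{equation*}
and a dyadic summation of~\eqref{prep:eq-Weps_hhtolow_unif_bound} shows that the scalar bracket lies in $\C^{2\alpha-1}$ with norm $\lesssim \tau^{-c}$, so its product with the $\C^\alpha$ prefactor is well-defined in $\C^{2\alpha-1}$ (the sum $\alpha + (2\alpha-1) = 3\alpha - 1$ is positive). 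Integration via Lemma~\ref{prep:lemma-single-integral} then yields $B^{\varepsilon,\#}_{\tau,x_0,\loc} \in \C^{2\alpha}$ with the claimed smallness in~\eqref{prep:eq-Beps_Calpha_bounds}, closed via a standard bootstrap using the $\tau^{\frac{1}{2}}$ factors in the ODE.

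\textbf{Remaining items.}
Property~(ii) is immediate: since $\chi \equiv 1$ on $[-2,2]$, the paths $B^\varepsilon_{\tau,x_0,\loc}$ and $B^\varepsilon_{\tau,x_0}$ satisfy the same classical ODE there with the same initial condition, so they coincide by uniqueness, and passing to the limit gives the second identity in~(ii). For~(i) I apply the preceding machinery to differences $B^{\varepsilon_1}_{\tau,x_0,\loc} - B^{\varepsilon_2}_{\tau,x_0,\loc}$: Cauchyness in $\C^\alpha$ (with a slight loss of regularity) reduces to Cauchyness of $\chi W^\varepsilon_{\tau,x_0}$ in $\C^\alpha$ and of the enhanced object $(\chi W^\varepsilon) \parasim \partial_x(\chi W^\varepsilon)$, which is provided by~\eqref{prep:eq-Weps_hhtolow_conv}. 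For~(v) and~(vi), I substitute the para-controlled ansatz into both factors of $P_M(B^\varepsilon_{\tau,x_0,\loc} - B^\varepsilon_{\tau,x_0,\loc}(0)) \, \partial_x P_N(B^\varepsilon_{\tau,x_0,\loc} - B^\varepsilon_{\tau,x_0,\loc}(0))$ with $M \sim N$. The cross terms and the $B^{\varepsilon,\#} \cdot B^{\varepsilon,\#}$ term combine regularities $\alpha$ and $2\alpha-1$ to produce an $M^{3\alpha-1}$-type gain that is deterministically acceptable since $3\alpha - 1 > s$. The leading term reduces, after exchanging $P_M,P_N$ with the low-high para-product via frequency-support considerations, to
\begin{equation*}
 \tau \, \bigl[P_{\ext}(B^\varepsilon_{\tau,x_0,\loc})\bigr]^2 \, P_M(\chi W^\varepsilon_{\tau,x_0}) \, \partial_x P_N(\chi W^\varepsilon_{\tau,x_0}),
\end{equation*}
whose $M^s$-weighted $\C^{s-1}$-norm is bounded by the $\C^\alpha$-estimate for the prefactor multiplied by~\eqref{prep:eq-Weps_hhtolow_unif_bound}, with the harmless loss $M^{s-\alpha} \leq 1$ arising from lowering the regularity from $\alpha$ to $s$, and the two explicit $\tau^{\frac{1}{2}}$ factors combining to the desired $\tau^{1-c}$. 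The convergence~(vi) follows from the same expansion and~\eqref{prep:eq-Weps_hhtolow_conv}.

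\textbf{Main obstacle.}
The principal difficulty is organizing the para-controlled analysis of the ODE so that the ill-defined resonance $P_{\ext}(B^\varepsilon) \parasim \partial_x (\chi W^\varepsilon)$ is reduced to the scalar high$\times$high$\rightarrow$low object $(\chi W^\varepsilon) \parasim \partial_x (\chi W^\varepsilon)$ controlled by Lemma~\ref{prep:lem-Weps_hhtolow}, by chaining Bony's para-linearization for $P_{\ext}$ with the para-controlled ansatz for $B^\varepsilon_{\tau,x_0,\loc}$ and several associativity commutator estimates, while tracking the $\tau^{\frac{1}{2}-c}$ smallness uniformly in $\varepsilon$ through each step of the bootstrap.
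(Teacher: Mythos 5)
Your proof follows the same para-controlled strategy as the paper's, but you re-derive the rough-ODE calculus (para-controlled ansatz, commutator decomposition, resonance reduction via Bony) from first principles, whereas the paper simply invokes \cite[Theorem~3.3]{GIP15} as a black box to obtain items (i), (iii), and (iv) once the probabilistic input of Lemma~\ref{prep:lem-Weps_hhtolow} (with $\lambda=\tau^{-c}$) is in place. Your route is more self-contained but substantially longer; the paper's is terser and leans on the literature. For items (v) and (vi) your expansion and leading-term reduction match the paper closely. The paper is slightly more careful in bounding the low-frequency prefactor, writing out the estimate $\sum_{L,L'\ll M}\|P_L(P_{\ext}(B^\varepsilon_{\tau,x_0,\loc}))P_{L'}(P_{\ext}(B^\varepsilon_{\tau,x_0,\loc}))\|_{\C^{1-\alpha+\nu}}\lesssim M^{1-2\alpha+\nu}\|P_{\ext}(B^\varepsilon_{\tau,x_0,\loc})\|_{\C^\alpha}^2$ and choosing $\alpha$ and $\nu$ so that $s+1-3\alpha+\nu<0$, but your $M^{s-\alpha}$-loss bookkeeping arrives at the same conclusion.

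There is one genuine gap, in item (ii). You correctly observe that $B^\varepsilon_{\tau,x_0,\loc}(x)=B^\varepsilon_{\tau,x_0}(x)=B^\varepsilon(\tau x+x_0)$ for $x\in[-2,2]$ by classical ODE uniqueness, and then assert that ``passing to the limit gives the second identity in (ii).'' But passing to the limit only yields $B_{\tau,x_0,\loc}(x)=\lim_{\varepsilon\to0}B^\varepsilon(\tau x+x_0)$; identifying this limit with $B(\tau x+x_0)$, the solution of the Stratonovich SDE~\eqref{prep:eq-def-B}, is a Wong--Zakai type statement that your argument nowhere establishes. The paper supplies this by quoting \cite[Chapter~6, Theorem~7.2]{IkeWat89}, which gives $\lim_{\varepsilon\to0}\mathbb{E}\bigl[\sup_{|x|\le r}|B^\varepsilon(x)-B(x)|^2\bigr]=0$. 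Without this ingredient (or an equivalent identification, for instance showing that the $\C^\alpha$-limit produced by your rough-path analysis solves the Stratonovich SDE and invoking pathwise uniqueness for~\eqref{prep:eq-def-B}), the identity $B_{\tau,x_0,\loc}(x)=B(\tau x+x_0)$ in item (ii) is unproved.
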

\begin{proof}
 We begin by recording a Wong-Zakai type approximation result for the Brownian motions $B(x)$, $x \in \bbR$, on $\M$ defined as solutions to the Stratonovich differential equation~\eqref{prep:eq-def-B}. It follows from \cite[Chapter 6, Theorem 7.2]{IkeWat89} that the solutions $B^\varepsilon(x)$, $x \in \bbR$, to the classical ordinary differential equation~\eqref{prep:eq-def-Beps} approximate the Brownian paths $B(x)$, $x \in \bbR$, in the sense that for any $r > 0$ we have 
 \begin{equation} \label{prep:eq-WongZakai-Bapprox}
  \lim_{\varepsilon \to 0} \, \mathbb{E} \, \biggl[ \sup_{|x| \leq r} \bigl|B^\varepsilon(x) - B(x)\bigr|^2 \biggr] = 0.
 \end{equation}
 This identifies the Brownian path $B$ as the limit of the approximating sequence $(B^\varepsilon)_{\varepsilon > 0}$ in a weak sense. 
 
 We now describe how to obtain an event $\mathcal{E}(\tau, x_0) \subset \Omega$ on which the properties (i)--(vi) hold. 
 To this end we use the para-controlled framework from~\cite[Theorem 3.3]{GIP15} for the analysis of rough differential equations of the form
 \begin{equation} \label{prep:eq-RDE-GIP}
 \begin{aligned}
  \partial_x u = F(u) \xi, \quad u(0) = u_0,
 \end{aligned}
 \end{equation}
 where $u_0 \in \bbR^d$, $u \colon \bbR \to \bbR^d$ is a continuous vector-valued function, $\xi \colon \bbR \to \bbR^n$ is a vector-valued distribution with values in $\C^\beta$ for some $\beta \in (1/3, 1)$, and $F \colon \bbR^d \to \mathcal{L}(\bbR^n, \bbR^d)$ is a family of $C^3_b$ vector fields on $\bbR^d$. 
 Correspondingly, we view the family of solutions $B^{\varepsilon}_{\tau, x_0, \loc} \colon \bbR \to \M$, $\varepsilon > 0$, to the ordinary differential equations~\eqref{prep:equ-Bepstaux0loc_def} as $\bbR^\dimA$-valued solutions to the following rough differential equations
 \begin{equation} \label{prep:eq-Bepstaux0loc-for-GIP}
 \begin{aligned}
  \partial_x B^{\varepsilon}_{\tau, x_0,\loc} = \tau^{\frac12} P_{ext}\bigl( B^{\varepsilon}_{\tau, x_0, \loc} \bigr) \partial_x \bigl( \chi W^\varepsilon_{\tau,x_0} \bigr), \quad B^{\varepsilon}_{\tau, x_0,\loc}(0) = B^\varepsilon(x_0),
 \end{aligned}
 \end{equation}
 where $P_{ext} \colon \bbR^D \to L(\bbR^D; \bbR^D)$ is the canonical smooth and compactly supported extension of the orthogonal projection $P$ introduced above. Clearly, $P_{ext} \in C^3_b$.
 Observe that while we only consider the initial condition $B^{\varepsilon}_{\tau, x_0,\loc}(0) = B^\varepsilon( x_0) \in \M$, \eqref{prep:eq-Bepstaux0loc-for-GIP} is well-defined for any initial condition in $\bbR^\dimA$ and falls into the category of rough differential equations~\eqref{prep:eq-RDE-GIP} considered in~\cite[Theorem 3.3]{GIP15}. 
 
 In particular, by Lemma~\ref{prep:lem-Weps_hhtolow} (with the choice $\lambda = \tau^{-c}$) for any $0 < \frac12-\alpha \ll 1$, there exists an event $\mathcal{E}(\tau, x_0) \subset \Omega$ with $\mathbb{P}(\mathcal{E}(\tau, x_0)) \geq 1 - c^{-1} \exp(-c \tau^{-c})$ on which the high-high paraproducts $\bigl( \chi(\cdot) W^{\varepsilon}_{\tau,x_0} \bigr) \parasim \partial_x \bigl( \chi(\cdot) W^{\varepsilon}_{\tau, x_0} \bigr)$ converge\footnote{
 This conclusion can be obtained from a slight variant of the proof of Lemma~\ref{prep:lem-Weps_hhtolow}. We emphasize that the convergence~\eqref{prep:eq-Weps_hhtolow_conv} in the statement of Lemma~\ref{prep:lem-Weps_hhtolow} is stronger than what is needed to conclude the convergence of the high-high paraproducts in $\C^{2\alpha-1}$.}
 in $\C^{2\alpha-1}$ as $\varepsilon \to 0$, and on which we have for $1 \leq j_1, j_2 \leq D$,
 \begin{equation} \label{prep:eq-Wepstaux0_hhtolow}
 \begin{aligned}
  \sup_{\varepsilon > 0} \, \sup_{M \sim N} \, M^\alpha \bigl\| P_M \bigl( \chi(\cdot) W^{\varepsilon, j_1}_{\tau, x_0} \bigr) \partial_x P_N \bigl( \chi(\cdot) W^{\varepsilon, j_2}_{\tau, x_0} \bigr) \bigr\|_{\C^{\alpha-1}} \leq \tau^{-c}.
 \end{aligned}
 \end{equation}
 Additionally, by a much simpler variant of the proof of Lemma~\ref{prep:lem-Weps_hhtolow} we can achieve that on the event $\mathcal{E}(\tau, x_0)$, we have for all integers $1 \leq j \leq D$,
 \begin{equation} \label{prep:eq-xi-theta_bounds}
 \begin{aligned}
  \sup_{\varepsilon > 0} \, \bigl\| \chi(\cdot) W^{\varepsilon, j}_{\tau,x_0} \bigr\|_{\C^\alpha} + \sup_{\varepsilon > 0} \, \bigl\| \partial_x \bigl( \chi(\cdot) W^{\varepsilon, j}_{\tau,x_0} \bigr) \bigr\|_{\C^{\alpha-1}} \leq \tau^{-\frac{c}{2}}.
 \end{aligned}
 \end{equation}
 Then all assumptions of~\cite[Theorem 3.3]{GIP15} are satisfied and it follows that there exists $0 < c(\M) \ll 1$ such that for all $0 < \tau \leq c(\M)$, we obtain on the event $\mathcal{E}(\tau, x_0)$ the existence of a limit path $B_{\tau,x_0,\loc} \colon \bbR \to \M$ with $B_{\tau,x_0,\loc} \in \C^\alpha$ such that $B^\varepsilon_{\tau,x_0,\loc} \to B_{\tau,x_0,\loc}$ in $\C^\alpha$. 
 By uniqueness of solutions to classical ordinary differential equations, we have $B^{\varepsilon}_{\tau, x_0, \loc}(x) = B^\varepsilon_{\tau, x_0}(x)$ for all $x \in [-2,2]$. 
 Additionally, in view of \eqref{prep:eq-WongZakai-Bapprox} we may conclude that $B_{\tau, x_0, \loc}(x) = B(\tau x+x_0)$ for all $x \in [-2,2]$.
 
 Moreover, it follows\footnote{Specifically, in the notation of the proof of~\cite[Theorem 3.3]{GIP15} we have $C_F \simeq \tau^{\frac12} (\|P_{ext}\|_{C^2_b} + \|P_{ext}\|_{C^2_b}^2)$ and $C_\xi \lesssim \tau^{-c}$. To conclude the bounds~\eqref{prep:eq-Beps_Calpha_bounds} we have to impose $C_F C_\xi \ll 1$.}
 from the proof of \cite[Theorem 3.3]{GIP15} that for all $\varepsilon>0$, there exists a vector-valued function $B^{\varepsilon,\#}_{\tau, x_0,\loc} \in \C^{2\alpha}$ such that on $\mathcal{E}(\tau,x_0)$ it holds that
 \begin{equation} \label{prep:eq-BM-approx-parastructure}
  B^{\varepsilon}_{\tau, x_0, \loc} = \tau^{\frac12} P_{\ext}(B^{\varepsilon}_{\tau, x_0, \loc}) \parall \bigl( \chi(\cdot) W^\varepsilon_{\tau,x_0} \bigr) + B^{\varepsilon,\#}_{\tau,x_0,\loc}
 \end{equation}
 with
 \begin{equation} %\label{prep:eq-Beps_Calpha_bounds}
  \| B^{\varepsilon}_{\tau,x_0,\loc} - B^{\varepsilon}_{\tau, x_0, \loc}(0) \|_{\C^\alpha} \lesssim_{\M} \tau^{\frac12 - c} 
  \quad \text{and} \quad 
  \| B^{\varepsilon, \#}_{\tau,x_0,\loc} - B^{\varepsilon}_{\tau, x_0, \loc}(0) \|_{\C^{2\alpha}} \lesssim_{\M} \tau^{\frac12-c}.
 \end{equation}
 Note that since the target manifold $\M$ is compact, we have $|B^{\varepsilon}_{\tau, x_0, \loc}(x)| \leq C(\M)$ for all $x \in \bbR$ for some constant $C(\M) \geq 1$, whence the preceding bounds also imply 
 \begin{equation} \label{prep:equ-Beps_Calpha_crude_bound}
 \begin{aligned}
  \| B^{\varepsilon}_{\tau,x_0,\loc} \|_{\C^\alpha} \lesssim C(\M).
 \end{aligned}
 \end{equation}

 Finally, we turn to the proofs of the uniform high$\times$high$\rightarrow$low-bounds~\eqref{prep:eq-Beps_hhtolow_bound} and the convergence of the high$\times$high$\rightarrow$low-term~\eqref{prep:eq-Beps_hhtolow_conv}. We focus on the proof of \eqref{prep:eq-Beps_hhtolow_bound}. The derivation of~\eqref{prep:eq-Beps_hhtolow_conv} is based on a variant of the argument and is left to the reader.
 First, we record that by Bony's paralinearization in Besov spaces, see for instance \cite[Theorem 2.89 and 2.92]{BCD11}, we have the bound
 \begin{equation} \label{prep:equ-Pext_Beps_Calpha_bound}
 \begin{aligned}
  \| P_{ext}( B^{\varepsilon}_{\tau, x_0, \loc} ) \|_{\C^\alpha} \leq C(P_{ext}, \alpha) (1 + \|B^{\varepsilon}_{\tau, x_0, \loc}\|_{\C^\alpha}^{10}) \|B^{\varepsilon}_{\tau, x_0, \loc}\|_{\C^\alpha},
 \end{aligned}
 \end{equation}
 where $C(P_{ext},\alpha) > 0$ is a constant that depends on $\alpha$ and finitely many derivatives of $P_{ext}$. 
 In what follows we suppress the superscripts $1 \leq j_1, j_2 \leq D$. Inserting the para-controlled structure~\eqref{prep:eq-BM-approx-parastructure}, we find
 \begin{equation} \label{prep:eq-Beps_hhtolow_bound_decomp}
 \begin{aligned}
  &\sup_{\varepsilon>0} \sup_{M\sim N} M^{s} \big\|  P_M \bigl( B^{\varepsilon}_{\tau, x_0, \loc} - B^{\varepsilon}_{\tau, x_0, \loc}(0) \bigr) \, \partial_x P_N \bigl( B^{\varepsilon}_{\tau, x_0, \loc} - B^{\varepsilon}_{\tau, x_0, \loc}(0) \bigr) \big\|_{\C^{s-1}} \\
  &\leq \sup_{\varepsilon>0} \sup_{M\sim N} M^{s} \Big\|  P_M \Bigl( \tau^{\frac12} P_{\ext}(B^{\varepsilon}_{\tau, x_0, \loc}) \parall \bigl( \chi(\cdot) W^\varepsilon_{\tau, x_0} \bigr) \Bigr) \, \partial_x P_N \Bigl( \tau^{\frac12} P_{\ext}(B^{\varepsilon}_{\tau, x_0, \loc}) \parall \bigl( \chi(\cdot) W^\varepsilon_{\tau, x_0} \bigr) \Bigr) \Big\|_{\C^{s-1}} \\
  &\quad + \sup_{\varepsilon>0} \sup_{M\sim N} M^{s} \Big\|  P_M \Bigl( \tau^{\frac12} P_{\ext}(B^{\varepsilon}_{\tau, x_0, \loc}) \parall \bigl( \chi(\cdot) W^\varepsilon_{\tau, x_0} \bigr) \Bigr) \, \partial_x P_N \bigl( B^{\varepsilon, \#}_{\tau, x_0, \loc} - B^{\varepsilon}_{\tau, x_0, \loc}(0) \bigr) \Bigr\|_{\C^{s-1}} \\
  &\quad + \sup_{\varepsilon>0} \sup_{M\sim N} M^{s} \Big\|  P_M \bigl( B^{\varepsilon, \#}_{\tau, x_0, \loc} - B^{\varepsilon}_{\tau, x_0, \loc}(0) \bigr) \, \partial_x P_N \Bigl( \tau^{\frac12} P_{\ext}(B^{\varepsilon}_{\tau, x_0, \loc}) \parall \bigl( \chi(\cdot) W^\varepsilon_{\tau, x_0} \bigr) \Bigr) \Bigr\|_{\C^{s-1}} \\
  &\quad + \sup_{\varepsilon>0} \sup_{M\sim N} M^{s} \big\|  P_M \bigl( B^{\varepsilon, \#}_{\tau, x_0, \loc} - B^{\varepsilon}_{\tau, x_0, \loc}(0) \bigr) \, \partial_x P_N \bigl( B^{\varepsilon, \#}_{\tau, x_0, \loc} - B^{\varepsilon}_{\tau, x_0, \loc}(0) \bigr) \bigr\|_{\C^{s-1}}.
 \end{aligned}
 \end{equation}
 Then the first term on the right-hand side of~\eqref{prep:eq-Beps_hhtolow_bound_decomp} is the main contribution and all other terms on the right-hand side are easier to estimate due to the better $C^{2\alpha}$ regularity of $B^{\varepsilon,\#}_{\tau, x_0, \loc}$. For this reason we omit the details of their treatment. We also only consider the case when the derivative falls onto the high frequency input in the first term on the right-hand side of~\eqref{prep:eq-Beps_hhtolow_bound_decomp}.
 Recalling that $s < \alpha < \frac12$, we decompose this term as follows
 \begin{equation} \label{prep:eq-Beps_hhtolow_bound_decomp_1stterm}
 \begin{aligned}
  &\sup_{\varepsilon>0} \sup_{M\sim N} M^{s} \Big\|  P_M \Bigl( \tau^{\frac12} P_{\ext}(B^{\varepsilon}_{\tau, x_0, \loc}) \parall \bigl( \chi(\cdot) W^\varepsilon_{\tau, x_0} \bigr) \Bigr) \, P_N \Bigl( \tau^{\frac12} P_{\ext}(B^{\varepsilon}_{\tau, x_0, \loc}) \parall \px \bigl( \chi(\cdot) W^\varepsilon_{\tau, x_0} \bigr) \Bigr) \Big\|_{\C^{s-1}} \\
  &\lesssim \sup_{\varepsilon>0} \sup_{M\sim N} \sup_{M' \sim M} \sup_{N' \sim N} \sum_{L \ll M'} \sum_{L' \ll N'} M^s \tau  \Big\| P_M \Bigl( P_L \bigl( P_{\ext}(B^{\varepsilon}_{\tau, x_0, \loc}) \bigr) P_{M'} \bigl( \chi(\cdot) W^\varepsilon_{\tau, x_0} \bigr) \Bigr) \\
  &\qquad \qquad \qquad \qquad \qquad \qquad \qquad \qquad \qquad \times P_N \Bigl( P_{L'} \bigl( P_{\ext}(B^{\varepsilon}_{\tau, x_0, \loc}) \bigr) \partial_x P_{N'} \bigl( \chi(\cdot) W^\varepsilon_{\tau, x_0} \bigr) \Bigr) \Big\|_{\C^{\alpha-1}} \\
  &\lesssim \sup_{\varepsilon>0} \sup_{M\sim N} \sup_{M' \sim M} \sup_{N' \sim N} \sum_{L \ll M'} \sum_{L' \ll N'} M^s \tau  \Big\| P_L \bigl( P_{\ext}(B^{\varepsilon}_{\tau, x_0, \loc}) \bigr) P_M P_{M'} \bigl( \chi(\cdot) W^\varepsilon_{\tau, x_0} \bigr) \\
  &\qquad \qquad \qquad \qquad \qquad \qquad \qquad \qquad \qquad \times P_{L'} \bigl( P_{\ext}(B^{\varepsilon}_{\tau, x_0, \loc}) \bigr) \partial_x P_N P_{N'} \bigl( \chi(\cdot) W^\varepsilon_{\tau, x_0} \bigr) \Big\|_{\C^{\alpha-1}} \\
  &\quad + \sup_{\varepsilon>0} \sup_{M\sim N} \sup_{M' \sim M} \sup_{N' \sim N} \sum_{L \ll M'} \sum_{L' \ll N'} M^s \tau \\
  &\qquad \quad \times \bigg\| \biggl( P_M \Bigl( P_L \bigl( P_{\ext}(B^{\varepsilon}_{\tau, x_0, \loc}) \bigr) P_{M'} \bigl( \chi(\cdot) W^\varepsilon_{\tau, x_0} \bigr) \Bigr) - P_L \bigl( P_{\ext}(B^{\varepsilon}_{\tau, x_0, \loc}) \bigr) P_M P_{M'} \bigl( \chi(\cdot) W^\varepsilon_{\tau, x_0} \bigr) \biggr) \\
  &\qquad \qquad \qquad \qquad \qquad \qquad \qquad \qquad \qquad \qquad \times P_N \Bigl( P_{L'} \bigl( P_{\ext}(B^{\varepsilon}_{\tau, x_0, \loc}) \bigr) \partial_x P_{N'} \bigl( \chi(\cdot) W^\varepsilon_{\tau, x_0} \bigr) \Bigr) \biggr\|_{\C^{\alpha-1}} \\
  &\quad + \bigl\{ \text{similar} \bigr\},
 \end{aligned}
 \end{equation}
 where the quantity $\{\text{similar}\}$ is short-hand notation for two further terms like the second term on the right-hand side of \eqref{prep:eq-Beps_hhtolow_bound_decomp_1stterm} with at least one input featuring a commutator term with improved regularity.
 Then the estimate for the second term on the right-hand side of \eqref{prep:eq-Beps_hhtolow_bound_decomp_1stterm} is straightforward using an analogue of the commutator estimate \eqref{prep:eq-commutator-3} for the standard Besov spaces, see for instance \cite[Lemma 2.3]{GIP15}. We omit the details.
 In order to estimate the first term on the right-hand side of \eqref{prep:eq-Beps_hhtolow_bound_decomp_1stterm} we pick $0 < \nu \ll 1$ to be fixed sufficiently small further below.
 Using the standard para-product estimates in Besov spaces, see for instance~\cite[Lemma 2.1]{GIP15}, we find
 \begin{equation} \label{prep:eq-Beps_hhtolow_bound_decomp_1stterm_2}
 \begin{aligned}
  &\sup_{\varepsilon>0} \sup_{M\sim N} \sup_{M' \sim M} \sup_{N' \sim N} \sum_{L \ll M'} \sum_{L' \ll N'} M^s \tau  \Big\| P_L \bigl( P_{\ext}(B^{\varepsilon}_{\tau, x_0, \loc}) \bigr) P_M P_{M'} \bigl( \chi(\cdot) W^\varepsilon_{\tau, x_0} \bigr) \\
  &\qquad \qquad \qquad \qquad \qquad \qquad \qquad \qquad \qquad \times P_{L'} \bigl( P_{\ext}(B^{\varepsilon}_{\tau, x_0, \loc}) \bigr) \partial_x P_N P_{N'} \bigl( \chi(\cdot) W^\varepsilon_{\tau, x_0} \bigr) \Big\|_{\C^{\alpha-1}} \\
  &\lesssim \sup_{\varepsilon>0} \sup_{M\sim N} \sup_{M' \sim M} \sup_{N' \sim N} \sum_{L \ll M'} \sum_{L' \ll N'} M^{s-\alpha} \Big\| P_L \bigl( P_{\ext}(B^{\varepsilon}_{\tau, x_0, \loc}) \bigr) P_{L'} \bigl( P_{\ext}(B^{\varepsilon}_{\tau, x_0, \loc}) \bigr) \Bigr\|_{C^{1-\alpha+\nu}} \\
  &\qquad \qquad \qquad \qquad \qquad \qquad \qquad \qquad \qquad \times M^\alpha \tau \Bigl\| P_M P_{M'} \bigl( \chi(\cdot) W^\varepsilon_{\tau, x_0} \bigr) \partial_x P_N P_{N'} \bigl( \chi(\cdot) W^\varepsilon_{\tau, x_0} \bigr) \Big\|_{\C^{\alpha-1}}.
 \end{aligned}
\end{equation}
 For the $\C^{1-\alpha+\nu}$ norm of the product of the low frequency pieces we have
 \begin{equation*}
 \begin{aligned}
  &\sup_{M\sim N} \sup_{M' \sim M} \sup_{N' \sim N} \sum_{L \ll M'} \sum_{L' \ll N'} \Big\| P_L \bigl( P_{\ext}(B^{\varepsilon}_{\tau, x_0, \loc}) \bigr) P_{L'} \bigl( P_{\ext}(B^{\varepsilon}_{\tau, x_0, \loc}) \bigr) \Bigr\|_{C^{1-\alpha+\nu}} \\
  &\lesssim \sup_{M\sim N} \sup_{M' \sim M} \sup_{N' \sim N} \sum_{L \ll M'} \sum_{L' \ll N'} \sum_{K \lesssim \max\{L,L'\}} K^{1-\alpha+\nu} L^{-\alpha} (L')^{-\alpha} \bigl\| P_{\ext}(B^{\varepsilon}_{\tau, x_0, \loc}) \bigr\|_{\C^\alpha}^2 \\
  &\lesssim M^{1-2\alpha+\nu} \bigl\| P_{\ext}(B^{\varepsilon}_{\tau, x_0, \loc}) \bigr\|_{\C^\alpha}^2.
 \end{aligned}
 \end{equation*}
 Note that the estimate~\eqref{prep:eq-Wepstaux0_hhtolow} extends to the setting where each input carries an additional Littlewood-Paley projection to a comparable dyadic frequency region. Thus, using \eqref{prep:eq-Wepstaux0_hhtolow} and~\eqref{prep:equ-Pext_Beps_Calpha_bound} we can bound the right-hand side of~\eqref{prep:eq-Beps_hhtolow_bound_decomp_1stterm_2} by
 \begin{equation*}
 \begin{aligned}
  &\sup_{\varepsilon>0} \sup_{M\sim N} \sup_{M' \sim M} \sup_{N' \sim N} M^{s + 1 - 3\alpha +\nu} \bigl\| P_{\ext}(B^{\varepsilon}_{\tau, x_0, \loc}) \bigr\|_{\C^\alpha}^2 \\
  &\qquad \qquad \qquad \qquad \qquad \qquad \qquad \qquad \times M^\alpha \tau \Bigl\| P_M P_{M'} \bigl( \chi(\cdot) W^\varepsilon_{\tau, x_0} \bigr) \partial_x P_N P_{N'} \bigl( \chi(\cdot) W^\varepsilon_{\tau, x_0} \bigr) \Big\|_{\C^{\alpha-1}} \\
  &\lesssim C(P_{ext}, \alpha)^2 (1 + \|B^{\varepsilon}_{\tau, x_0, \loc}\|_{\C^\alpha}^{10})^2 \|B^{\varepsilon}_{\tau, x_0, \loc}\|_{\C^\alpha}^2 \tau^{1-c} \\
   &\lesssim_{\M} \tau^{1-c},
 \end{aligned}
 \end{equation*}
 which yields~\eqref{prep:eq-Beps_hhtolow_bound}. Here we used that for any $0 < \frac12 -s \ll 1$, we may choose $s < \alpha < \frac12$ sufficiently close to $\frac12$ and we may pick $0 < \nu \ll 1$ sufficiently small so that $s + 1 - 3\alpha +\nu < 0$.
 This finishes the proof of the proposition.  
\end{proof}

So far, we have constructed the Brownian path $B\colon \R \rightarrow \M$ and its smooth approximations. We now turn to initial velocities. To this end, we let $\Wb\colon \R\rightarrow \R^\dimA$ be an independent copy of the Euclidean Brownian motion $W$. Then, we define the integrated velocity $\Vc\colon \R \rightarrow \R^\dimA$ by the Stratonovich integral\footnote{Since $B$ and $\Wb$ are independent and hence have vanishing cross-variation, the It\^o and Stratonovich integrals are identical in this case.}
\begin{equation} \label{prep:eq-def-integrated-velocity}
 \Vc(x) := \int_0^x P(B(y)) \circ \mathrm{d}\Wb(y). 
\end{equation}
We define the velocity $V\colon \R\rightarrow \R^\dimA$ as the distributional derivative of $\Vc$. We now define smooth approximations of both $\Vc$ and $V$. To this end, we first define a smooth approximation of the Euclidean Brownian motion by 
\begin{equation*}
\Wb^\varepsilon := P_{\leq \varepsilon^{-1}} \Wb = K_\varepsilon \ast \Wb. 
\end{equation*}
Then, we define the velocity $V^\varepsilon\colon \R\rightarrow \R^\dimA$ and its integral $\Vc^\varepsilon$ by 
\begin{align}
V^\varepsilon(x) &:= P(B^\varepsilon(x)) \partial_x \Wb^\varepsilon(x), \\
\Vc^\varepsilon(x) &:= \int_0^x V^\varepsilon(y) \, \mathrm{d}y. 
\end{align}
We note that $V^\varepsilon \in (B^\varepsilon)^\ast T\M$, i.e., $V^\varepsilon(x) \in T_{B^\varepsilon(x)}\M$ for all $x\in \R$. 
Next, for any $\tau > 0$ and $x_0 \in \bbR$ we define the re-scaled and translated velocity, respectively integrated velocity, by
\begin{equation} \label{prep:equ-Vepstaux0_def}
 V^\varepsilon_{\tau, x_0}(x) := \tau V^\varepsilon( \tau x+x_0 ), \quad \Vc^\varepsilon_{\tau, x_0}(x) := \Vc^\varepsilon(\tau x+x_0) - \Vc^\varepsilon(x_0).
\end{equation}
Introducing the associated re-scaled and translated Euclidean Brownian motion
\begin{equation*}
 \Wb^\varepsilon_{\tau,x_0}(x) := \tau^{-\frac12} \bigl( \Wb^\varepsilon(\tau x+x_0) - \Wb^\varepsilon(x_0) \bigr),
\end{equation*}
it follows that $V^\varepsilon_{\tau,x_0}$ satisfies
\begin{equation*}
 \begin{aligned}
  V^\varepsilon_{\tau,x_0}(x) &= \tau^{\frac12} P(B^\varepsilon_{\tau,x_0}(x)) \partial_x \Wb^\varepsilon_{\tau,x_0}(x).
 \end{aligned}
\end{equation*}

Furthermore, we define the spatially localized versions $\Vc^{\varepsilon}_{\tau,x_0, \loc}$ and $V^\varepsilon_{\tau,x_0, \loc}$ by 
\begin{align}
V^{\varepsilon}_{\tau,x_0,\loc}(x) &:= \tau^{\frac12} P(B^\varepsilon_{\tau,x_0,\loc}(x)) \partial_x \bigl( \chi(x) \Wb^\varepsilon_{\tau,x_0}(x) \bigr), \label{prep:equ-Vepstaux0loc_def} \\
\Vc^{\varepsilon}_{\tau,x_0,\loc}(x) &:= \int_0^x V^{\varepsilon}_{\tau, x_0, \loc}(y) \, \mathrm{d}y, \label{prep:equ-Vc_epstaux0loc_def}
\end{align}
where the smooth cutoff $\chi(x)$ is the same as above. Note that $V^\varepsilon_{\tau, x_0} \in (B^\varepsilon_{\tau, x_0})^\ast T \M$ and $V^\varepsilon_{\tau, x_0, \loc} \in (B^\varepsilon_{\tau, x_0, \loc})^\ast T \M$. In particular, we obtain that $V^\varepsilon_{\tau, x_0,\loc}(x) \in T_{B^\varepsilon_{\tau, x_0}(x)} \M$ for $x \in [-2,2]$.

The main properties of the smooth approximations of the velocity and the integrated velocity are included in the following proposition. 

\begin{proposition}[Smooth approximation of the white noise velocities] \label{prep:prop-velocity-approximation}
Let $x_0 \in \bbR$ be arbitrary and let $s < \alpha < \frac12$ with $0 < \frac12 - \alpha \ll 1$ sufficiently small.
There exists a small constant $0 < c(\M) \ll 1$ depending only on $\M$ such that for all $0 < \tau \leq c(\M)$, there exists an event $\mathcal{E}(\tau, x_0) \subseteq \Omega$ satisfying
\begin{equation*}
    \mathbb{P}\big(\mathcal{E}(\tau,x_0)\big) \geq 1 - C \exp(-c \tau^{-c})
\end{equation*}
and on which the following properties hold:
\begin{enumerate}[label={(\roman*)}]
    \item\label{prep:item-velocity-approx-1} Convergence: 
    There exist $V_{\tau, x_0, \loc}, \Vc_{\tau,x_0,\loc} \colon \bbR \to \mathbb{R}^\dimA$ such that $\px \Vc_{\tau,x_0,\loc} = V_{\tau,x_0,\loc}$ and $(V^\varepsilon_{\tau,x_0,\loc}, \Vc^\varepsilon_{\tau,x_0,\loc}) \to (V_{\tau, x_0, \loc}, \Vc_{\tau,x_0,\loc})$ in $\C^{\alpha-1} \times \C^\alpha$. 
    
    \item\label{prep:item-velocity-approx-2} Effect of spatial cut-off: For all $\varepsilon>0$ and $x \in [-2,2]$, we have
     \begin{alignat}{3}
      V^{\varepsilon}_{\tau,x_0,\loc}(x) &= V^{\varepsilon}_{\tau,x_0}(x), \qquad &V_{\tau,x_0,\loc}(x) &= \tau V(\tau x+x_0), \label{equ:velocity_effect_spatial_cutoff}  \\
      \Vc^\varepsilon_{\tau,x_0,\loc}(x) &= \Vc^\varepsilon_{\tau,x_0}(x), \qquad &\Vc_{\tau,x_0,\loc}(x) &= \Vc(\tau x+x_0) - \Vc(x_0),
     \end{alignat}
     where \eqref{equ:velocity_effect_spatial_cutoff} holds in the sense of distributions.
     
    \item\label{prep:item-velocity-approx-3} Para-controlled structure: For all $\varepsilon>0$ there exist vector-valued functions $V^{\varepsilon,\#}_{\tau,x_0,\loc} \in \C^{2\alpha-1}$ and $\Vc^{\varepsilon,\#}_{\tau,x_0,\loc} \in \C^{2\alpha}$ such that 
    \begin{equation}\label{prep:eq-velocity-approx-parastructure}
    \begin{aligned}
        V^{\varepsilon}_{\tau,x_0,\loc} &= \tau^{\frac12} P_{\ext}(B^{\varepsilon}_{\tau,x_0,\loc}) \parall \partial_x \bigl( \chi(\cdot) \widebar{W}^\varepsilon_{\tau,x_0,\loc} \bigr) + V^{\varepsilon,\#}_{\tau,x_0,\loc}, \\
         \Vc^{\varepsilon}_{\tau,x_0,\loc} &= \tau^{\frac12} P_{\ext}(B^{\varepsilon}_{\tau,x_0,\loc}) \parall \bigl( \chi(\cdot) \widebar{W}^\varepsilon_{\tau,x_0,\loc} \bigr) + \Vc^{\varepsilon,\#}_{\tau,x_0,\loc}. 
    \end{aligned}
    \end{equation}
    \item\label{prep:item-velocity-approx-4} Uniform bounds: For all $\varepsilon>0$, it holds that 
    \begin{alignat}{3}
    \| V^{\varepsilon}_{\tau,x_0,\loc} \|_{\C^{\alpha-1}} &\lesssim_{\M} \tau^{\frac12-c}, 
    \qquad 
    &\| \Vc^{\varepsilon}_{\tau,x_0,\loc} \|_{\C^{\alpha}} &\lesssim_{\M} \tau^{\frac12-c}, 
    \label{prep:eq-velocity_Calpha_bounds-1} \\ 
    \| V^{\varepsilon, \#}_{\tau,x_0,\loc} \|_{\C^{2\alpha-1}} &\lesssim_{\M} \tau^{\frac12-c},
    \qquad
    &\| \Vc^{\varepsilon, \#}_{\tau,x_0,\loc} \|_{\C^{2\alpha}} &\lesssim_{\M} \tau^{\frac12-c}.
    \label{prep:eq-velocity_Calpha_bounds-2}
    \end{alignat}
    \item \label{prep:item-velocity_hhtolow} Uniform high$\times$high$\rightarrow$low-bounds: For all $1 \leq j_1, j_2 \leq D$ and all $\varepsilon > 0$, it holds that 
    \begin{equation} \label{prep:eq-velocity_hhtolow_bound}
     \begin{aligned}
         \sup_{\varepsilon>0} \sup_{M\sim N} M^{s} \big\|  P_M \Vc^{\varepsilon,j_1}_{\tau,x_0,\loc} \, P_N V^{\varepsilon,j_2}_{\tau,x_0,\loc} \big\|_{\C^{s-1}} &\lesssim_{\M} \tau^{\frac12-c}, \\
         \sup_{\varepsilon>0} \sup_{M\sim N} M^{s} \big\|  P_M \bigl( B^{\varepsilon,j_1}_{\tau,x_0,\loc} - B^{\varepsilon,j_1}_{\tau,x_0,\loc}(0) \bigr) \, P_N V^{\varepsilon,j_2}_{\tau,x_0,\loc} \big\|_{\C^{s-1}} &\lesssim_{\M} \tau^{\frac12-c}, \\
         \sup_{\varepsilon>0} \sup_{M\sim N} M^{s} \big\|  P_M \Vc^{\varepsilon,j_1}_{\tau,x_0,\loc} \, \partial_x P_N \bigl( B^{\varepsilon,j_2}_{\tau,x_0,\loc} - B^{\varepsilon,j_2}_{\tau,x_0,\loc}(0) \bigr) \big\|_{\C^{s-1}} &\lesssim_{\M} \tau^{\frac12-c}.
     \end{aligned}
    \end{equation}
    \item \label{prep:item-velocity_hhtolow-convergence} Convergence of high$\times$high$\rightarrow$low-term: For all $1\leq j_1, j_2 \leq D$ and all $\varepsilon > 0$, it holds that 
    \begin{equation} \label{prep:eq-velocity_hhtolow_conv}
     \begin{aligned}
       &\lim_{\varepsilon \to 0} \, \sup_{M\sim N} M^{s} \big\| P_M \Vc^{\varepsilon,j_1}_{\tau,x_0,\loc} \, P_N V^{\varepsilon,j_2}_{\tau,x_0,\loc} - P_M \Vc^{j_1}_{\tau,x_0,\loc} \, P_N V^{j_2}_{\tau,x_0,\loc} \big\|_{\C^{s-1}} = 0, \\
       &\lim_{\varepsilon \to 0} \, \sup_{M\sim N} M^{s} \big\| P_M \bigl( B^{\varepsilon,j_1}_{\tau,x_0,\loc} - B^{\varepsilon,j_1}_{\tau,x_0,\loc}(0) \bigr) \, P_N V^{\varepsilon,j_2}_{\tau,x_0,\loc} \\
       &\qquad \qquad \qquad \qquad \quad - P_M \bigl( B^{j_1}_{\tau,x_0,\loc} - B^{j_1}_{\tau,x_0,\loc}(0) \bigr) \, P_N V^{j_2}_{\tau,x_0,\loc} \big\|_{\C^{s-1}} = 0, \\
       &\lim_{\varepsilon \to 0} \, \sup_{M\sim N} M^{s} \big\| P_M \Vc^{\varepsilon,j_1}_{\tau,x_0,\loc} \, P_N \bigl( B^{\varepsilon,j_2}_{\tau,x_0,\loc} - B^{\varepsilon,j_2}_{\tau,x_0,\loc}(0) \bigr) \\
       &\qquad \qquad \qquad \qquad \quad - P_M \Vc^{j_1}_{\tau,x_0,\loc} \, P_N \bigl( B^{j_2}_{\tau,x_0,\loc} - B^{j_2}_{\tau,x_0,\loc}(0) \bigr) \big\|_{\C^{s-1}} = 0.       
     \end{aligned}
    \end{equation}
\end{enumerate}
\end{proposition}
\begin{proof}
 The assertions can be proved using similar arguments as in the proof of the preceding Proposition~\ref{prep:prop-brownian-path-approximation}. 
 We refer to \cite[Chapter 6, Theorem 7.1]{IkeWat89} for a Wong-Zakai type result for approximations of Stratonovich stochastic integrals. Most of the arguments are in fact simpler because the integrated velocities are defined via stochastic integration, while the Brownian paths are defined in terms of a stochastic differential equation. The details are left to the reader.
\end{proof}

\begin{corollary} \label{prep:cor_Cloc_convergence_to_BV}
 Let $B \colon \bbR \to \M$ be the Brownian path, let $V \in B^\ast T\M$ be the white noise velocity, and let $(B^\varepsilon)_{\varepsilon > 0}$ and $(V^\varepsilon)_{\varepsilon > 0}$ be their smooth approximations. 
 For any $R > 0$ we have almost surely that $(B^\varepsilon, V^\varepsilon)$ converges to $(B, V)$ in $C^s \times C^{s-1} ([-R,R] \to T\M)$ as $\varepsilon \to 0$.
\end{corollary}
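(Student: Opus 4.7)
The plan is to reduce the desired almost sure local convergence on $[-R,R]$ to the collection of high-probability local convergence statements from Propositions~\ref{prep:prop-brownian-path-approximation} and~\ref{prep:prop-velocity-approximation} via a covering argument together with a Borel--Cantelli step. First, fix some $s<\alpha<\tfrac12$ with $0<\tfrac12-\alpha\ll 1$ small enough that both propositions apply, and observe that because $\alpha>s$, local convergence of the approximations in the Hölder seminorm of order $\alpha$ automatically implies convergence in $\C^s\times\C^{s-1}([-R,R]\to T\M)$; the target manifold $\M$ enters only through the fact that the $V^\varepsilon$ are tangent to the manifold along $B^\varepsilon$, which passes to the limit once extrinsic convergence has been established.

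Next, I would choose the dyadic sequence $\tau_n:=c(\M)2^{-n}$, where $c(\M)$ is the constant from the propositions. For each $n$ let $\{x_{n,k}\}_{k=1}^{N_n}\subseteq [-R,R]$ be a set of center points, with $N_n\lesssim R/\tau_n$, such that the intervals $[x_{n,k}-\tau_n,x_{n,k}+\tau_n]$ cover $[-R,R]$. Define
\begin{equation*}
 \mathcal{E}_n \;:=\; \bigcap_{k=1}^{N_n} \bigl( \mathcal{E}^B(\tau_n,x_{n,k}) \cap \mathcal{E}^V(\tau_n,x_{n,k}) \bigr),
\end{equation*}
where $\mathcal{E}^B$ and $\mathcal{E}^V$ are the events provided by Propositions~\ref{prep:prop-brownian-path-approximation} and~\ref{prep:prop-velocity-approximation}, respectively. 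The union bound combined with the probability estimates in those propositions gives $\mathbb{P}(\mathcal{E}_n^c)\lesssim (R/\tau_n)\exp(-c\tau_n^{-c})$. Since the exponential decay dominates the polynomial growth in $1/\tau_n$, the sequence $\mathbb{P}(\mathcal{E}_n^c)$ is summable, and Borel--Cantelli yields an almost sure event $\Omega_R$ on which $\omega\in\mathcal{E}_n$ for all sufficiently large $n$; in particular there exists $n_0=n_0(\omega)$ with $\omega\in\mathcal{E}_{n_0}$.

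Now I would transfer the convergence on each $\mathcal{E}_{n_0}$ from the localized, rescaled objects back to $B^\varepsilon$ and $V^\varepsilon$ themselves. By item (i) of each proposition, for every $k$ we have $B^\varepsilon_{\tau_{n_0},x_{n_0,k},\loc}\to B_{\tau_{n_0},x_{n_0,k},\loc}$ in $\C^\alpha$ and $V^\varepsilon_{\tau_{n_0},x_{n_0,k},\loc}\to V_{\tau_{n_0},x_{n_0,k},\loc}$ in $\C^{\alpha-1}$ as $\varepsilon\to 0$. By item (ii), these localized objects coincide on $[-2,2]$ with the (rescaled) originals:
\begin{equation*}
 B^\varepsilon_{\tau_{n_0},x_{n_0,k},\loc}(x)=B^\varepsilon(\tau_{n_0}x+x_{n_0,k}),\qquad V^\varepsilon_{\tau_{n_0},x_{n_0,k},\loc}(x)=\tau_{n_0} V^\varepsilon(\tau_{n_0}x+x_{n_0,k}),
\end{equation*}
and analogously for the limits. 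Under the change of variables $y=\tau_{n_0} x+x_{n_0,k}$, the interval $[-2,2]$ covers $[x_{n_0,k}-2\tau_{n_0},x_{n_0,k}+2\tau_{n_0}]\supseteq [x_{n_0,k}-\tau_{n_0},x_{n_0,k}+\tau_{n_0}]$, and the global $\C^\alpha$ norm controls the local Hölder seminorm of order $\alpha$ on $[-2,2]$. Rescaling, we obtain convergence of $B^\varepsilon\to B$ in $C^\alpha$ and of $V^\varepsilon\to V$ in $C^{\alpha-1}$ on each subinterval $[x_{n_0,k}-2\tau_{n_0},x_{n_0,k}+2\tau_{n_0}]$. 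Patching over the finitely many $k$ covers $[-R,R]$ and, via the embedding $C^\alpha\hookrightarrow C^s$ on bounded intervals, yields the claimed convergence.

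The main conceptual point (and the only non-routine step) is the matching between the two notions of convergence: the propositions produce convergence in the global Littlewood--Paley--based $\C^\alpha$ norm for the localized auxiliary objects, whereas the corollary demands convergence in local Hölder spaces on a fixed interval $[-R,R]$ for the original $B^\varepsilon,V^\varepsilon$. Item (ii) of the two propositions is precisely what bridges these, and the dyadic covering + Borel--Cantelli scheme is needed because the probability estimates in the propositions are not uniform in the center point $x_0$ in a way that would directly give an a.s.\ statement on the whole interval $[-R,R]$ for a single fixed $\tau$.
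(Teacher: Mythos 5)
Your proposal is correct and follows essentially the same route as the paper's one-line proof, which simply asserts that the statement ``follows by a covering argument from the properties (i)--(ii)'' of Propositions~\ref{prep:prop-brownian-path-approximation} and~\ref{prep:prop-velocity-approximation}; you have supplied the missing details (covering at scale $\tau$, union bound over centers, passing from high-probability to almost-sure, and the change of variables transferring convergence of the localized objects to the originals). One small remark: Borel--Cantelli is slight overkill here. Since the convergence of $(B^\varepsilon,V^\varepsilon)$ on $[-R,R]$ is a fixed event and holds on each $\mathcal{E}_n$, it already holds on $\bigcup_n \mathcal{E}_n$, and $\mathbb{P}\bigl(\bigcup_n \mathcal{E}_n\bigr)=1$ follows immediately from $\mathbb{P}(\mathcal{E}_n)\to 1$ as $\tau_n\to 0$; you do not need the summability of $\mathbb{P}(\mathcal{E}_n^c)$ or the ``for all sufficiently large $n$'' conclusion, only ``for some $n$.'' This does not affect the correctness of the argument.
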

\begin{proof}
 The assertion follows by a covering argument from the properties (i)-(ii) of the smooth local approximations established in Proposition~\ref{prep:prop-brownian-path-approximation} and in Proposition~\ref{prep:prop-velocity-approximation}.
\end{proof}

We recall that the (shifted) linear waves corresponding to the initial data $(B^{\varepsilon}_{\tau,x_0,\loc}, V^{\varepsilon}_{\tau,x_0,\loc})$ are given by 
\begin{equation*}
 \phi^{\pm, \varepsilon}_{\tau,x_0,\loc} := \frac{1}{2} \Big( B^{\varepsilon}_{\tau,x_0,\loc} -  B^{\varepsilon}_{\tau,x_0,\loc}(0) \mp \Vc^{\varepsilon}_{\tau,x_0,\loc} \Big),
\end{equation*}
with analogous definitions of the linear waves $\phi^{\pm}_{\tau,x_0,\loc}$ for the initial data $(B_{\tau,x_0,\loc}, V_{\tau,x_0,\loc})$.
In the final proposition of this section, we prove a variant of the high$\times$high$\rightarrow$low-bound, which also allows for shifts in the argument.

\begin{proposition}[High$\times$high$\rightarrow$low-bound with shifts] \label{prep:lem-mixed_hhtolow}
Let $x_0 \in \bbR$ be arbitrary and let $s < \alpha < \frac12$ with $0 < \frac12 - \alpha \ll 1$ sufficiently small.
There exists a small constant $0 < c(\M) \ll 1$ depending only on $\M$ such that for all $0 < \tau \leq c(\M)$, there exists an event $\mathcal{E}(\tau, x_0) \subseteq \Omega$ satisfying
\begin{equation*}
    \mathbb{P}\big(\mathcal{E}(\tau,x_0)\big) \geq 1 - C \exp(-c \tau^{-c})
\end{equation*}
and on which the following properties hold:
\begin{enumerate}
    \item \label{prep:item-mixed_hhtolow} Uniform high$\times$high$\rightarrow$low-bounds: For all $1 \leq j_1, j_2 \leq D$, it holds that 
    \begin{equation} \label{prep:eq-mixed_hhtolow_bound}
    \begin{aligned}
         \sup_{\varepsilon>0} \sup_{t \in \bbR}  \sup_{M\sim N} M^{s} \big\|  P_M \phi^{+,\varepsilon,j_1}_{\tau, x_0, \loc}(x-t) \,  
         \partial_x P_N \phi^{-,\varepsilon,j_2}_{\tau, x_0, \loc}(x+t)
         \big\|_{\C^{s-1}} &\lesssim_{\M} \tau^{\frac12 - c}, \\
             \sup_{\varepsilon>0} \sup_{t \in \bbR}  \sup_{M\sim N} M^{s} \big\|  \partial_x P_M \phi^{+,\varepsilon,j_1}_{\tau, x_0, \loc}(x-t) \,  
          P_N \phi^{-,\varepsilon,j_2}_{\tau, x_0, \loc}(x+t)
         \big\|_{\C^{s-1}} &\lesssim_{\M} \tau^{\frac12 - c}. 
         \end{aligned}
    \end{equation}
    \item \label{prep:item-mixed_hhtolow-convergence} Convergence of high$\times$high$\rightarrow$low-term: For all $1\leq j_1, j_2 \leq D$, it holds that 
    \begin{equation} \label{prep:eq-mixed_hhtolow_conv}
    \begin{aligned}
       \lim_{\varepsilon \to 0} \, \sup_{t \in \bbR}  \sup_{M\sim N} M^{s} \big\|& 
        P_M \phi^{+,\varepsilon,j_1}_{\tau, x_0, \loc}(x-t) \,  \partial_x P_N \phi^{-,\varepsilon,j_2}_{\tau, x_0, \loc}(x+t) \\
        &-P_M \phi^{+,j_1}_{\tau, x_0, \loc}(x-t) \, \partial_x P_N \phi^{-, j_2}_{\tau, x_0, \loc}(x+t) \big\|_{\C^{s-1}} = 0. \\
          \lim_{\varepsilon \to 0} \, \sup_{t \in \bbR}  \sup_{M\sim N} M^{s} \big\|& 
        \partial_x P_M \phi^{+,\varepsilon,j_1}_{\tau, x_0, \loc}(x-t) \,   P_N \phi^{-,\varepsilon,j_2}_{\tau, x_0, \loc}(x+t) \\
        &- \partial_x P_M \phi^{+,j_1}_{\tau, x_0, \loc}(x-t) \,  P_N \phi^{-, j_2}_{\tau, x_0, \loc}(x+t) \big\|_{\C^{s-1}} = 0.
        \end{aligned}
    \end{equation}
\end{enumerate}
\end{proposition}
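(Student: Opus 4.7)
The plan is to reduce the shifted high-times-high-to-low estimate to the unshifted versions already established in Proposition~\ref{prep:prop-brownian-path-approximation} and Proposition~\ref{prep:prop-velocity-approximation}, while exploiting the cancellation built into the $\phi^+$ versus $\phi^-$ decomposition. I will first expand the linear waves via
\[
 \phi^{\pm,\varepsilon}_{\tau,x_0,\loc} = \tfrac12\bigl(B^\varepsilon_{\tau,x_0,\loc} - B^\varepsilon_{\tau,x_0,\loc}(0)\bigr) \mp \tfrac12\, \Vc^\varepsilon_{\tau,x_0,\loc},
\]
so that the product $P_M \phi^{+,\varepsilon}_{\tau,x_0,\loc}(x-t)\,\partial_x P_N \phi^{-,\varepsilon}_{\tau,x_0,\loc}(x+t)$ splits into four bilinear pieces of types $(B,B)$, $(B,\Vc)$, $(\Vc,B)$, and $(\Vc,\Vc)$. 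I will then insert the para-controlled structure from item (iii) of Proposition~\ref{prep:prop-brownian-path-approximation} and Proposition~\ref{prep:prop-velocity-approximation} into each factor, expressing it as $\tau^{1/2}\, P_{\ext}(B^\varepsilon_{\tau,x_0,\loc})$ paraproducted against a cut-off Euclidean Brownian motion $\chi \cdot W^\varepsilon_{\tau,x_0}$ or $\chi \cdot \bar{W}^\varepsilon_{\tau,x_0}$, plus a higher-regularity remainder at level $\C^{2\alpha}$.

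The subterms containing at least one $\C^{2\alpha}$-remainder will be handled deterministically: I view the tensor product of the two one-variable factors as a function of $(u,v)\in\mathbb{R}^{1+1}$, note that its $\Cprod{\alpha_1}{\alpha_2}$ norm is translation-invariant by the translation invariance of the Littlewood--Paley projections, and apply Lemma~\ref{prep:lemma-trace} to the shifted trace $(u,v) = (x-t,\,x+t)$ to obtain control in $\C^{s-1}_x$. The extra Hölder regularity carried by $B^{\varepsilon,\#}_{\tau,x_0,\loc}$ and $\Vc^{\varepsilon,\#}_{\tau,x_0,\loc}$ suffices to absorb the $M^s$ weight and produce the $\tau^{1/2-c}$ gain, in direct analogy with the corresponding deterministic step in the proof of item~\ref{prep:item-Beps_hhtolow} of Proposition~\ref{prep:prop-brownian-path-approximation}.

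The four ``main'' subterms that remain have noise kernels of the form $P_M(\chi Y_1)(x-t)\,\partial_x P_N(\chi Y_2)(x+t)$ with $Y_1, Y_2 \in \{W^\varepsilon_{\tau,x_0},\, \bar{W}^\varepsilon_{\tau,x_0}\}$, modulated by the product $P_{\ext}(B^\varepsilon_{\tau,x_0,\loc})(x-t)\,P_{\ext}(B^\varepsilon_{\tau,x_0,\loc})(x+t)$. For the cross-noise choices $(Y_1,Y_2)\in\{(W,\bar W),\,(\bar W,W)\}$ the two factors are independent, so the kernel is a mean-zero second-order Gaussian chaos in $(W,\bar W)$ to which I apply a direct adaptation of the random Fourier series computation from the proof of Lemma~\ref{prep:lem-Weps_hhtolow}: the shifts produce unimodular phases $e^{\mp imt}$ and $e^{\pm int}$ in the Gaussian coefficients, which leave the $L^2_\omega$ estimates and the Gaussian hypercontractivity step unchanged. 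For the same-noise choices $(W,W)$ and $(\bar W,\bar W)$, the crucial point is that they appear with opposite signs in the signed summation, so that after combining, the resonant contributions---which are \emph{equal} since $W$ and $\bar W$ are identically distributed---cancel, again leaving a mean-zero second-order chaos accessible by the same Fourier series argument. Uniformity in $t$ is obtained by discretizing $t$ on a dyadic grid and applying a union bound (absorbing the logarithmic loss into the Gaussian tail exponent), interpolating between grid points via trivial Bernstein-type bounds on $\partial_t$ of the shifted product, and observing that for $|t|$ beyond a fixed large constant the compact support of the factors $\chi Y_j$ forces the shifted Littlewood--Paley pieces to decay faster than any polynomial. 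The convergence statement~\eqref{prep:eq-mixed_hhtolow_conv} follows by rerunning the same decomposition and invoking the convergence parts of Proposition~\ref{prep:prop-brownian-path-approximation} and Proposition~\ref{prep:prop-velocity-approximation}.

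The main obstacle is the $(W,W)$ versus $(\bar W,\bar W)$ cancellation. If one tried to control $P_M(\chi W^\varepsilon)(x-t)\,\partial_x P_N(\chi W^\varepsilon)(x+t)$ in isolation, the odd-even symmetry in the frequency variable $m$ that killed the resonant term in the unshifted Lemma~\ref{prep:lem-Weps_hhtolow} would be destroyed by the shift-induced phase $e^{-2imt}$, and the diagonal chaos contraction would produce an $O(1)$ obstruction at each resonant frequency, incompatible with the required $M^{-s}$ decay. It is therefore essential that the proposition is stated for the specific combination $\phi^+(x-t)\,\partial_x\phi^-(x+t)$, whose sign structure encodes the needed cancellation between the $B$-$B$ and $\Vc$-$\Vc$ contributions; the analogous estimate for $\phi^+(x-t)\,\partial_x \phi^+(x+t)$ or $\phi^-(x-t)\,\partial_x\phi^-(x+t)$ would not follow from this argument.
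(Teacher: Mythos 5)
Your proposal is correct and rests on exactly the same observation as the paper's (extremely terse) proof: the diagonal/resonant contraction vanishes because the noise driving $\phi^+$ is, up to lower-order terms, proportional to $W-\widebar{W}$ while the noise driving $\phi^-$ is proportional to $W+\widebar{W}$, and these are independent by rotation invariance of the ambient Brownian motion. Your four-term expansion of $\phi^+ \partial_x \phi^-$ into $(B,B)$, $(B,\Vc)$, $(\Vc,B)$, $(\Vc,\Vc)$ pieces, with the observation that the $(W,W)$ and $(\widebar{W},\widebar{W})$ diagonal contractions are deterministically equal and carry opposite signs, is simply that same independence statement written out in coordinates. The paper records only the one-sentence version of this; you additionally spell out the discretization-plus-union-bound argument for uniformity in $t$ and the deterministic handling of the $\C^{2\alpha}$-remainder terms via translation invariance of $\Cprod{\alpha_1}{\alpha_2}$ and Lemma~\ref{prep:lemma-trace}, both of which are consistent with (and fill in details left implicit by) the paper. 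Your final paragraph explaining why the single-sign products $\phi^\pm(x-t)\partial_x\phi^\pm(x+t)$ would fail matches Remark~\ref{prep:rem-mixed} precisely.
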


\begin{remark}\label{prep:rem-mixed}
In Proposition \ref{prep:lem-mixed_hhtolow}, it is essential to consider the mixed term involving both $\phi^+$ and $\phi^-$. In other cases, the probabilistic resonance poses a severe problem.
\end{remark}

\begin{proof}
 The argument is similar to the proofs of Proposition~\ref{prep:prop-brownian-path-approximation} and Proposition~\ref{prep:prop-velocity-approximation}, but this time we get no resonant term (at top order). The reason is that $W+ \widebar{W}$ and $W-\widebar{W}$ are independent due to the rotation invariance of Brownian motion.
 To see this more directly, we check that the cross-covariance matrix vanishes, which suffices to conclude independence for Gaussian random variables.
 Indeed, we have for any $x_1, x_2 \in \bbR$ and any $1 \leq i,j \leq D$ that
 \begin{equation*}
 \begin{aligned}
  &\mathrm{cov}\bigl( (W^i+\widebar{W}^i)(x_1), (W^j-\widebar{W}^j)(x_2) \bigr) \\
  &= \bbE\Bigl[ \Bigl( (W^i+\widebar{W}^i)(x_1) - \bbE\bigl[ (W^i+\widebar{W}^i)(x_1) \bigr] \Bigr) \Bigl( (W^j-\widebar{W}^j)(x_2) - \bbE\bigl[ (W^j-\widebar{W}^j)(x_2) \bigr] \Bigr) \Bigr] \\
  &= \bbE\Bigl[ \Bigl( W^i(x_1) - \bbE\bigl[W^i(x_1)\bigr] \Bigr) \Bigl( W^j(x_2) - \bbE\bigl[W^j(x_2)\bigr] \Bigr) \Bigr] \\
  &\quad \quad - \bbE\Bigl[ \Bigl( \widebar{W}^i(x_1) - \bbE\bigl[\widebar{W}^i(x_1)\bigr] \Bigr) \Bigl( \widebar{W}^j(x_2) - \bbE\bigl[\widebar{W}^j(x_2)\bigr] \Bigr) \Bigr] \\
  &= 0,
 \end{aligned}
 \end{equation*}
 as desired.
\end{proof}

%%%%%%%%%%%%%%%%%%%%% Ansatz %%%%%%%%%%%%%%%%%%%%%%%%%%%%%%%%%
\section{Ansatz}\label{section:ansatz}

In the introduction (Section \ref{section:intro-main}), we gave an informal description of the Ansatz for $\phi$. In this section, we give a rigorous definition of the terms in our previous discussion.

\subsection{The shifted wave map}
In the proof of Theorem \ref{intro:thm-rigorous}, which is a local result, we  utilize some form of smallness. In principle, smallness can be created by localizing to times $|t|\leq \tau \ll 1$. However, due to the complexity of our Ansatz, exhibiting gains in $\tau$ in this fashion is tedious. Instead of localizing to a short time-interval, we use a scaling argument. To this end, we use the scale-invariance, translation-invariance, and finite speed of propagation of \eqref{intro:eq-WM}, which allows us to replace $B^\varepsilon$ and $V^\varepsilon$ by their re-scaled, translated, and localized counterparts (see, e.g., the definition of $B^\varepsilon_{\tau, x_0,\loc}$ in \eqref{prep:equ-Bepstaux0loc_def} and the definition of $V^\varepsilon_{\tau,x_0,\loc}$ in \eqref{prep:equ-Vepstaux0loc_def}). Unfortunately, this alone is not sufficient to yield small data, since the manifold constraint $B^\varepsilon(x)\in \M$ prohibits smallness in $L^\infty$. To resolve this issue, we need to subtract the initial position at $x=0$, and the resulting shifted wave map will be described in this subsection. \\

While our main result (Theorem \ref{intro:thm-rigorous}) is stated just for the random data $(B^\varepsilon,V^\varepsilon)$, and will be proven using the re-scaled, translated, and localized counterparts $B^\varepsilon_{\tau, x_0,\loc}$ and $V^\varepsilon_{\tau, x_0,\loc}$, our argument requires only the regularity and high$\times$high$\rightarrow$low-estimates from Section \ref{section:Brownian}. As a result, it is more convenient to work in a general framework. To this end, we let $\phi_0\colon \R \rightarrow \M$  and $\phi_1\in \phi_0^\ast T\M$. Then, we define the shifted initial data by 
\begin{equation}
\phi_0^\diamond(x):= \phi_0(x)- \phi_0(0) \qquad \text{and} \qquad \phi_1^\diamond(x)=\phi_1(x). 
\end{equation}
We remark that the shifted data satisfies the geometric constraint 
\begin{equation*}
    \phi_1^\diamond \in (\phi_0^\diamond)^\ast T \big( \M - \phi_0(0)\big), 
\end{equation*}
but this will not be used in the rest of the paper. We define the corresponding shifted linear waves $\phi^{\diamond,+}$ and $\phi^{\diamond,-}$ by 
\begin{equation}\label{ansatz:eq-shifted-linear}
\phi^{\diamond,\pm}(x) = \frac{1}{2\theta} \Big( \phi_0^{\diamond}(x) \mp \int_0^x \dy  \, \phi_1^\diamond(y) \Big).
\end{equation}
Here, the parameter $\theta>0$ is as in Section \ref{section:parameters}. The purposes of the $(1/\theta)$-factor in \eqref{ansatz:eq-shifted-linear} will be clear from \eqref{ansatz:eq-shifted-WM-nc} below. We also define the shifted second fundamental form by \begin{equation}\label{ansatz:eq-shifted-second}
\SecondC{k}{ij}(\zeta)= \SecondC{k}{ij}(\zeta;\phi_0(0)) := \Second^k_{ij}(\zeta+\phi_0(0)). 
\end{equation}
For most of this paper, the dependence of $\SecondC{k}{ij}$ on $\phi_0(0)$ is omitted from our notation. Finally, we define the shifted wave map $\phi^\diamond$ by 
\begin{equation}\label{ansatz:eq-shifted-WM}
\phi^\diamond(t,x) := \phi(t,x) - \phi_0(0).
\end{equation}
As a result of our definitions, we see that $\phi^\diamond$ satisfies the shifted wave maps equation
\begin{equation}\label{ansatz:eq-shifted-WM-equation}
\begin{cases}
 \partial_\mu \partial^\mu \phi^{\diamond,k} = - \SecondC{k}{ij}(\phi^\diamond) \partial_\mu \phi^{\diamond,i} \partial^\mu \phi^{\diamond,j} \\
 \phi^\diamond(0,x)= \phi^\diamond_0(x), ~ \partial_t \phi^\diamond(0,x)= \phi_1^\diamond(x). 
 \end{cases}
\end{equation}
In our analysis of \eqref{ansatz:eq-shifted-WM-equation}, we will exclusively work with its Duhamel integral formulation. Due to the mapping properties of the Duhamel integral on Hölder spaces, it is convenient to also insert our cut-off functions $\chi^+$ and $\chi^-$ from \eqref{prep:eq-chi-pm}. Due to finite speed of propagation, this does not alter the behavior of $\phi^{\diamond}$ on $|u|,|v|\leq 2$. In total, this leads to the fixed-point problem
\begin{equation}\label{ansatz:eq-shifted-WM-nc}
\phi^{\diamond,k} = \theta \phi^{\diamond,+,k} + \theta \phi^{\diamond,-,k}
- \chi^+ \chi^- \Duh\Big[ \SecondC{k}{ij}(\phi^\diamond) \partial_u \phi^{\diamond,i} \partial_v \phi^{\diamond,j} \Big]. 
\end{equation}
As stated above, the purpose of the $(1/\theta)$-factor in \eqref{ansatz:eq-shifted-linear} stems from \eqref{ansatz:eq-shifted-WM-nc}. It leads to the small $\theta$-factor in \eqref{ansatz:eq-shifted-WM-nc}, which allows us to consider small modulations in our Ansatz (see Section \ref{section:rigorous-ansatz}). \\

\emph{A word on notation:} Throughout Sections \ref{section:ansatz}-\ref{section:modulation}, we work exclusively with the shifted wave map $\phi^\diamond$ and the shifted linear waves $\phi^{\diamond,+}$ and $\phi^{\diamond,-}$. In order to simplify the notation (see e.g.  Section \ref{section:nonlinear}), we omit the superscript ``$\diamond$" in our notation for $\phi^\diamond$, $\phi^{\diamond,+}$, and $\phi^{\diamond,-}$. Therefore, we simply write $\phi$, $\phi^+$, and $\phi^-$. In Section \ref{section:lwp}, where both the original and shifted wave maps are present, we distinguish between $\phi$ and $\phi^\diamond$. Throughout the whole paper, we include the superscript ``$\diamond$" in $\Second^\diamond$. This is because we do not want to change the definition of a central object in mathematics.

\subsection{The enhanced initial data}
We let $\phi^+$ and $\phi^-$ be the linear waves from \eqref{ansatz:eq-shifted-linear}, where we recall that the superscript ``$\diamond$" is omitted from our notation.   As will be further detailed in Hypothesis \ref{hypothesis:smallness}, our analysis will be based on the following assumptions.
\begin{itemize}
    \item[(1)] Regularity condition: It holds that $\| \phi^+(x) \|_{\C_x^s(\R)}+ \| \phi^-(x) \|_{\C^s_x(\R)}\leq \theta$.
    \item[(2)] High$\times$high$\rightarrow$low-bounds: For $\pm_1,\pm_2 \in \{ +, - \}$ it holds that  
    \begin{align*}
    \max_{\pm_1,\pm_2} \max_{1\leq m,n \leq \dimA} \sup_{M\sim N} M^s 
    \| P_M\phi^{\pm_1,m}(x) \, \partial_x P_N \phi^{\pm_2,n}(x) \|_{\C^{s-1}_x(\R)} &\leq \theta^2, \\
     \max_{1\leq m,n \leq \dimA} \sup_{M\sim N} \sup_{t\in \R} M^s 
    \| P_M\phi^{+,m}(x-t) \, \partial_x P_N \phi^{-,n}(x+t) \|_{\C^{s-1}_x(\R)}
    &\leq \theta^2, \\ 
     \max_{1\leq m,n \leq \dimA} \sup_{M\sim N} \sup_{t\in \R} M^s 
    \| \, \partial_x P_M \phi^{+,m}(x-t)   P_N\phi^{-,n}(x+t) \|_{\C^{s-1}_x(\R)}
    &\leq \theta^2. 
    \end{align*}
\end{itemize}
As previously discussed in Remark \ref{prep:rem-mixed}, we emphasize that the term involving the supremum over $t\in \R$ only covers products of linear waves in different directions. Due to the bilinear nature of the high$\times$high$\rightarrow$low-bound, the two assumptions cannot be captured through a norm or metric on $\C_x^s \times \C_x^s$. As in the previous literature on random dispersive equations (see e.g. the survey \cite{BOP19}), we will interpret the assumptions through an enhanced data set.

\begin{definition}[Enhanced data and $\Ds$]\label{ansatz:definition-data} 
For any $\phi^{+},\phi^-\colon \R \rightarrow \R^{\dimA}$, $\pm_1,\pm_2 \in \{ +, - \}$, $1\leq m,n \leq \dimA$, and $M,N\geq 1$ satisfying $M\sim N$, we define $\Phi^{\pm_1,\pm_2,m,n}_{M,N}\colon \R \rightarrow \R$ by 
\begin{equation}
\Phi^{\pm_1,\pm_2,m,n}_{M,N}(x):= M^s P_M\phi^{\pm_1,m}(x) \, \partial_x P_N \phi^{\pm_2,n}(x).
\end{equation}
In the case $\pm_1 \neq \pm_2$, we also define the shifted $\Phi^{\pm_1,\pm_2,(s),m,n}_{M,N}\colon \R^{1+1}_{t,x} \rightarrow \R$ by 
\begin{equation}
\Phi^{\pm_1,\pm_2,(s),m,n}_{M,N}(x;t):= M^s  P_M\phi^{\pm_1,m}(x-t) \, \partial_x P_N \phi^{\pm_2,n}(x+t).
\end{equation}
We then define 
\begin{align*}
\| ( \phi^+, \phi^-) \|_{\Ds} := \max\bigg(& \| \phi^+ \|_{\C_x^s}+ \| \phi^- \|_{\C^s_x}, 
\max_{\pm_1,\pm_2} \max_{1\leq m,n \leq \dimA} \sup_{M\sim N} \sqrt{ \| \Phi^{\pm_1,\pm_2,m,n}_{M,N}(x) \|_{\C_x^{s-1}}}, \\
&\max_{\pm_1 \neq \pm_2} \max_{1\leq m,n \leq \dimA} \sup_{M\sim N} \sup_{t\in \R} \sqrt{ \| \Phi^{\pm_1,\pm_2,(s),m,n}_{M,N}(x;t) \|_{\C_x^{s-1}}}
\bigg). 
\end{align*}
\end{definition}

We note that the regularity condition and high$\times$high$\rightarrow$low-bound are equivalent to the $\Ds$-bound $\| (\phi^+,\phi^-)\|_{\Ds}\leq \theta$.

\begin{remark}
As mentioned above, $\| \cdot \|_{\Ds}$ is not  actually a norm. However, it can be viewed as the composition of the nonlinear function $\phi^\pm \mapsto (\phi^\pm, \Phi)$, an honest norm, and the nonlinear function $(a,b)\mapsto a+\sqrt{b}$. By including the square-root, we have preserved the  $1$-homogeneity in $\phi^\pm$ but destroyed the $1$-homogeneity in $\Phi$. While  $\| \cdot\|_{\Ds}$ can therefore also not be viewed as a honest norm on the enhanced data $(\phi^\pm,\Phi)$, we will now associate it with a metric on the enhanced data.
\end{remark}

In order to prove the convergence statement in Theorem \ref{intro:thm-rigorous}, it is convenient to introduce continuous functions on $\Ds$. To this end, we first define the distance
\begin{align} \label{ansatz:eq-Ds-distance} 
&\| (\phi_1^+, \phi_1^-); (\phi_2^+, \phi_2^-) \|_{\Ds}  \\
:=&\max\bigg( \| \phi^+_1-\phi^+_2 \|_{\C_x^s}+ \| \phi^-_1-\phi^-_2 \|_{\C^s_x}, \notag\\
&\max_{\pm_1,\pm_2} \max_{1\leq m,n \leq \dimA} \sup_{M\sim N} \sqrt{ \| \Phi^{\pm_1,\pm_2,m,n}_{1,M,N}(x) -\Phi^{\pm_1,\pm_2,m,n}_{2,M,N}(x) \|_{\C_x^{s-1}}},  \notag \\
& \max_{\pm_1 \neq \pm_2} \max_{1\leq m,n \leq \dimA} \sup_{M\sim N} \sup_{t\in \R} \sqrt{ \| \Phi^{\pm_1,\pm_2,(s),m,n}_{1,M,N}(x;t) - \Phi^{\pm_1,\pm_2,(s),m,n}_{2,M,N}(x;t) \|_{\C_x^{s-1}}} \bigg).\notag 
\end{align}
Since square-roots preserve metrics, one can view $\| \cdot ; \cdot\|_{\Ds}$ as a metric on the enhanced data $(\phi^\pm,\Phi)$. 

\begin{definition}[Lipschitz continuous dependence w.r.t. $\Ds$]\label{ansatz:def-cts-dependence}
Let $(\mathbb{X},d)$ be a metric space and consider a function $F\colon \C_b^\infty \times \C_b^\infty \rightarrow \mathbb{X}$. We say that $F$ is Lipschitz continuous w.r.t. $\Ds$ if there exists a constant $\operatorname{Lip}(F)\in[0,\infty)$ such that 
\begin{equation}
d\big(  F(\phi^+_1,\phi^-_1), F(\phi^+_2,\phi^-_2) \big) \leq \operatorname{Lip}(F)
\|(\phi^+_1,\phi^-_1); (\phi^+_2,\phi^-_2) \|_{\Ds}
\end{equation}
for all $(\phi^+_1,\phi^-_1),(\phi^+_2,\phi^-_2)\in \Ds$. In other words, $F$ is Lipschitz continuous as a function of $(\phi^+,\phi^-,\Phi)$.
\end{definition}

While $\| \cdot\|_{\Ds}$ encapsulates the high$\times$high$\rightarrow$low-bound, it does not explicitly contain bounds on the low$\times$high and high$\times$low-para-products. The reason is that the optimal bounds for both of them follow directly from the regularity condition $\phi^+,\phi^- \in \C_x^s$. For the rest of the paper, it is convenient to encapsulate estimates of all frequency interactions in a single lemma.

\begin{lemma}\label{ansatz:lemma-data}
Let $\pm_1,\pm_2 \in \{+,-\}$, let $1\leq m,n \leq \dimA$, and let $M$ and $N$ be any frequency scales. Then, it holds that 
\begin{equation}\label{ansatz:eq-data}
\begin{aligned}
&\| P_M\phi^{\pm_1,m}(x)  \, \partial_x P_N \phi^{\pm_2,n}(x) \|_{\C^{r-1}_x} \\
 \lesssim& \Big( 1\{ M\lesssim N\} M^{-s} N^{r-s} + 1\{ M \gg N\} M^{r-1-s} N^{1-s}  \Big) \| (\phi^+, \phi^-) \|_{\Ds}^2 \\ 
 \lesssim&   M^{-s} N^{r-s}  \| (\phi^+, \phi^-) \|_{\Ds}^2. 
\end{aligned}
\end{equation}
\end{lemma}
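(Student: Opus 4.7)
The proof plan is to separate into the three frequency regimes $M \ll N$, $M \gg N$, and $M \sim N$, since the first two are handled by pure deterministic bounds using only $\phi^{\pm} \in \C^s_x$, while the third genuinely requires the high$\times$high$\rightarrow$low component of the $\Ds$ norm.

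In the non-resonant regimes, the product $P_M \phi^{\pm_1,m} \, \partial_x P_N \phi^{\pm_2,n}$ has Fourier support localized at scale $\sim \max(M,N)$. Hence one would apply the elementary Bernstein-type bounds
\[
\|P_M \phi^{\pm_1,m}\|_{L^\infty_x} \lesssim M^{-s}\, \|(\phi^+,\phi^-)\|_{\Ds}, \qquad \|\partial_x P_N \phi^{\pm_2,n}\|_{L^\infty_x} \lesssim N^{1-s}\, \|(\phi^+,\phi^-)\|_{\Ds},
\]
multiply the product by the correct power of the output frequency, and read off the contributions $M^{-s} N^{r-s}$ when $M \ll N$ and $M^{r-1-s} N^{1-s}$ when $M \gg N$.

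The resonant case $M \sim N$ is the one where the output is no longer frequency-localized to $\sim M$ and where the bilinear $\Ds$-ingredient must be used. By definition of the enhanced data norm,
\[
\| P_M \phi^{\pm_1,m} \, \partial_x P_N \phi^{\pm_2,n} \|_{\C^{s-1}_x} \,=\, M^{-s}\, \| \Phi^{\pm_1,\pm_2,m,n}_{M,N} \|_{\C^{s-1}_x} \,\leq\, M^{-s}\, \|(\phi^+,\phi^-)\|_{\Ds}^2.
\]
Since the product is supported in frequencies $\lesssim M$, one can then trade $\C^{s-1}_x$ for $\C^{r-1}_x$ at the cost of a factor $M^{r-s}$ via Bernstein, which yields $M^{r-2s}\, \|(\phi^+,\phi^-)\|_{\Ds}^2$; under $M \sim N$ this is comparable to $M^{-s} N^{r-s}\, \|(\phi^+,\phi^-)\|_{\Ds}^2$, matching the first line of \eqref{ansatz:eq-data}.

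The final simplification to $M^{-s} N^{r-s}$ is immediate when $M \lesssim N$ and, when $M \gg N$, reduces to the observation $M^{r-1-s} N^{1-s} = M^{-s} N^{r-s} (N/M)^{1-r} \lesssim M^{-s} N^{r-s}$, using $r < 1$ and $M \gg N$. There is no real obstacle in this lemma: the only subtlety is that one must notice that the resonant case is not frequency-localized to $\sim M$ and therefore requires Bernstein to shift regularity, whereas the non-resonant cases are localized and handled by direct product estimates.
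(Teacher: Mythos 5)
Your proof is correct and follows essentially the same route as the paper's: the paper handles the resonant case $M \sim N$ exactly as you do (Bernstein shift from $\C^{s-1}$ to $\C^{r-1}$ using the $\Ds$ bound on $\Phi_{M,N}$ together with frequency support $\lesssim M$), and merges your two non-resonant cases $M \ll N$, $M \gg N$ into a single "$M \not\sim N$" computation with prefactor $\max(M,N)^{r-1}$ times the same two $L^\infty$ Bernstein bounds — a purely cosmetic difference. Your derivation of the second inequality via $(N/M)^{1-r} \lesssim 1$ is also equivalent to the paper's one-liner $M^{r-1} \lesssim N^{r-1}$ for $M \gg N$.
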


\begin{proof}
The second inequality in \eqref{ansatz:eq-data} directly follows from
\begin{equation*}
   1\{ M \gg N\} M^{r-1-s} N^{1-s} \lesssim N^{r-1} M^{-s} N^{1-s} = M^{-s} N^{r-s}.
\end{equation*}
Thus, it remains to prove the first inequality. To this end, we distinguish the two cases $M\sim N$ and $M \not \sim N$. If $M\sim N$, we have that 
\begin{align*}
\| P_M\phi^{\pm_1,m}(x)  \, \partial_x P_N \phi^{\pm_2,n}(x) \|_{\C^{r-1}_x} 
&\lesssim M^{r-s} \| P_M\phi^{\pm_1,m}(x)  \, \partial_x P_N \phi^{\pm_2,n}(x) \|_{\C^{s-1}_x} \\
&\lesssim M^{r-2s} \| ( \phi^+, \phi^-) \|_{\Ds}^2 \\
&\lesssim M^{-s} N^{r-s} \| ( \phi^+, \phi^-) \|_{\Ds}^2.
\end{align*}
If $M\not \sim N$, we have that 
\begin{align*}
\| P_M\phi^{\pm_1,m}(x)  \, \partial_x P_N \phi^{\pm_2,n}(x) \|_{\C^{r-1}_x} 
&\lesssim \max(M,N)^{r-1} \| P_M\phi^{\pm_1,m}(x)  \, \partial_x P_N \phi^{\pm_2,n}(x) \|_{L^\infty_x} \\
&\lesssim \max(M,N)^{r-1}
\| P_M\phi^{\pm_1,m}(x)  \|_{L^\infty_x} 
\| \partial_x P_N \phi^{\pm_2,n}(x) \|_{L^\infty_x} \\
&\lesssim \max(M,N)^{r-1} M^{-s} N^{1-s} \| ( \phi^+, \phi^-) \|_{\Ds}^2.
\end{align*}
By further distinguishing the cases $M\ll N$ and $M\gg N$, this yields the desired estimate. 
\end{proof}

\subsection{Rigorous definition of the Ansatz}\label{section:rigorous-ansatz}

In the introduction, we motivated the form of our Ansatz, which will be written as 
\begin{equation}\label{ansatz:eq-Ansatz}
\begin{aligned}
\phi^i(u,v) 
&= \sum_M A^{+,i}_{M,m}(u,v) \phi^{+,m}_M(u) 
+ \sum_N A^{-,i}_{N,n}(u,v) \phi^{-,n}_N(v) \\
&+ \sum_{\substack{M,N\colon \\ M\sim_\delta N}} B^{i}_{M,N,mn}(u,v) \phi^{+,m}_M(u) \phi^{-,n}_N(v) 
+ \psi^i(u,v). 
\end{aligned}
\end{equation}

Throughout the paper, we impose the following frequency-support conditions on $A^+$, $A^-$, and $B$.

\begin{condition}[Frequency-support conditions]\label{condition:frequency}
For all $1\leq i,m,n \leq \dimA$ and frequency-scales $M$ and $N$, we impose the following frequency-support conditions:
\begin{align}
P^u_{\gg M^{1-\sigma}} A^{+,i}_{M,m} &= 0, \label{ansatz:eq-condition-p} \\
P^v_{\gg N^{1-\sigma}} A^{-,i}_{N,n} &= 0, \label{ansatz:eq-condition-m} \\
P^u_{\gg M^{1-\sigma}} B^{i}_{M,N,mn} = P^v_{\gg N^{1-\sigma}} B^{i}_{M,N,mn} &= 0. \label{ansatz:eq-condition-pm}
\end{align}
\end{condition}

While $A^{+,i}_{M,m}$ has $u$-frequencies $\lesssim M^{1-\sigma}$, we impose no frequency restrictions in the $v$-variable. This is because high$\times$high-products  such as $\phi^{+,m}_M(u) P_M^v \zeta(v)$, where
 $\zeta \in \C^r_v$, only belong to $\Cprod{s}{r}$ but do not belong to $\Cprod{r}{r}$. 
 
 The equations for $A^+,A^-,B$, and $\psi$ will be given further below in Section \ref{section:ansatz-equations}. For now, we concentrate on the norms in which the four unknowns will be measured. While the modulation equations for $A^+$ and $A^-$ will eventually be solved using a para-controlled approach (see Section \ref{section:modulation}), the PDE-analysis in Section \ref{section:prototypical}-\ref{section:duhamel} only requires the following norms.

\begin{definition}[Modulation norms]\label{ansatz:def-modulation-norms}
Let $A^+=(A^{+,i}_{M,m})$, $A^-=(A^{-,j}_{N,n})$, and $B=(B^k_{M,N,mn})$. Then, we define 
\begin{align}
\big\| (A^+,A^-,B) \big\|_{\Mod} &:= \sup_M \| A_M^+ \|_{\Mod_M^+} + \sup_N \| A_N^- \|_{\Mod_N^-} + \max_{1\leq k,m,n \leq \dimA} \sup_{M\sim_\delta N} \| B^k_{M,N,mn} \|_{\Cprod{s}{s}},
\end{align}
where 
\begin{align}
\| A_M^+ \|_{\Mod_M^+}&= 
\max_{1\leq i,m \leq \dimA} \Big( \| P_{\lesssim M^{1-\delta}}^v A_{M,m}^{+,i} \|_{\Cprod{s}{s}}+ \| P_{\gg M^{1-\delta}}^v A^{+,i}_{M,m} \|_{\Cprod{s}{r}} \Big),  \label{Ansatz:eq-Mod+}\\
\| A_N^- \|_{\Mod_N^-}&= 
\max_{1\leq j,n \leq \dimA} \Big( \| P_{\lesssim N^{1-\delta}}^u A_{N,n}^{-,j} \|_{\Cprod{s}{s}}+ \| P_{\gg N^{1-\delta}}^u A^{-,j}_{N,n} \|_{\Cprod{r}{s}} \Big). \label{Ansatz:eq-Mod-}
\end{align}
\end{definition}

Using the definition of the modulation norms, we directly obtain the following estimates for $A^+$ and $A^-$. 
\begin{corollary}[$\delta$-gain for $A^\pm$]\label{ansatz:cor-delta-Apm}
For all dyadic scales $M,N\geq 1$, it holds that 
\begin{align}
\| \partial_v A_M^+  \|_{\Cprod{s}{r-1}} 
\lesssim M^{(1-\delta)(r-s)} \| A_M^+  \|_{\Mod_M^+}, \label{ansatz:eq-prep-Ap} \\
\| \partial_u A_N^-  \|_{\Cprod{r-1}{s}} 
\lesssim N^{(1-\delta)(r-s)} \| A_N^-  \|_{\Mod_N^-} . \label{ansatz:eq-prep-Am}
\end{align}
\end{corollary}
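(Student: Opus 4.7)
The plan is to prove the two estimates by decomposing $A^+_M$ (resp. $A^-_N$) into low and high $v$-frequency (resp. $u$-frequency) components according to the threshold $M^{1-\delta}$ (resp. $N^{1-\delta}$) used to define $\|\cdot\|_{\Mod^+_M}$ in~\eqref{Ansatz:eq-Mod+} (resp.~\eqref{Ansatz:eq-Mod-}). By the symmetry between the $u$- and $v$-variables, it suffices to treat the estimate for $A^+_M$. I would write
\begin{equation*}
 \partial_v A^{+,i}_{M,m} = \partial_v P_{\lesssim M^{1-\delta}}^v A^{+,i}_{M,m} + \partial_v P_{\gg M^{1-\delta}}^v A^{+,i}_{M,m}
\end{equation*}
and estimate each summand in $\Cprod{s}{r-1}$ directly from Definition~\ref{prep:def-spaces}.

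For the low $v$-frequency part, I would test the $\Cprod{s}{r-1}$ norm against arbitrary frequency scales $K_1, K_2$, where by construction $K_2\lesssim M^{1-\delta}$. Using Bernstein in $v$ and the $\Cprod{s}{s}$-control of $P^v_{\lesssim M^{1-\delta}} A^{+,i}_{M,m}$, one obtains
\begin{equation*}
 K_1^s K_2^{r-1} \bigl\| P_{K_1}^u P_{K_2}^v \partial_v P_{\lesssim M^{1-\delta}}^v A_{M,m}^{+,i} \bigr\|_{L^\infty_{u,v}}
 \lesssim K_2^{r-s} \bigl\| P_{\lesssim M^{1-\delta}}^v A_{M,m}^{+,i} \bigr\|_{\Cprod{s}{s}},
\end{equation*}
and since $r-s>0$ the supremum over $K_2 \lesssim M^{1-\delta}$ contributes exactly the factor $M^{(1-\delta)(r-s)}$. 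For the high $v$-frequency part, the $\Cprod{s}{r}$-bound gives
\begin{equation*}
 K_1^s K_2^{r-1} \bigl\| P_{K_1}^u P_{K_2}^v \partial_v P_{\gg M^{1-\delta}}^v A_{M,m}^{+,i} \bigr\|_{L^\infty_{u,v}}
 \lesssim \bigl\| P_{\gg M^{1-\delta}}^v A_{M,m}^{+,i} \bigr\|_{\Cprod{s}{r}},
\end{equation*}
which is already bounded by $\|A^+_M\|_{\Mod^+_M}$ without any power of $M$, hence better than what we need.

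Adding the two contributions yields~\eqref{ansatz:eq-prep-Ap}, and swapping the roles of $u$ and $v$ throughout the argument (and exchanging the regularities, working with~\eqref{Ansatz:eq-Mod-} in place of~\eqref{Ansatz:eq-Mod+}) yields~\eqref{ansatz:eq-prep-Am}. There is no real obstacle here: the statement is essentially a bookkeeping computation expressing that the worst regularity mismatch happens precisely at the threshold scale $M^{1-\delta}$, at which point the low-frequency control in $\Cprod{s}{s}$ pays the cost $M^{(1-\delta)(r-s)}$ of upgrading from $s$ to $r$ in the $v$-variable, while the high-frequency part is already of the better $\Cprod{s}{r}$ type.
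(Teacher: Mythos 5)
Your proof is correct and follows essentially the same route as the paper: decompose $A^+_M$ at the $v$-frequency threshold $M^{1-\delta}$ built into $\|\cdot\|_{\Mod^+_M}$, pay $K_2^{r-s}\lesssim M^{(1-\delta)(r-s)}$ for the low part via Bernstein in $v$ while the high part is absorbed directly by the $\Cprod{s}{r}$ control, and argue by $u\leftrightarrow v$ symmetry for $A^-_N$. The only cosmetic difference is that you spell out the testing against arbitrary $(K_1,K_2)$, whereas the paper records the same two-term estimate more tersely.
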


We refer to the $(1-\delta)$-factor in the exponents as a $\delta$-gain. On a microscopic level, it is a direct consequence of the modified frequency-cutoffs in \eqref{Ansatz:eq-Mod+} and \eqref{Ansatz:eq-Mod-}. On a macroscopic level, the $\delta$-gain is made possible by including the bilinear term $ B^{i}_{M,N,mn} \phi^{+,m}_M \phi^{-,n}_N$ in our Ansatz. 

\begin{proof}
It suffices to prove \eqref{ansatz:eq-prep-Ap}, since the argument for \eqref{ansatz:eq-prep-Am} is similar. For all $M$, we have that 
\begin{align*}
\| \partial_v A_M^+  \|_{\Cprod{s}{r-1}} 
&\leq \| \partial_v P_{\lesssim M^{1-\delta}}^v A_M^+  \|_{\Cprod{s}{r-1}} 
+\|  \partial_v P_{\gg M^{1-\delta}}^v A_M^+  \|_{\Cprod{s}{r-1}} \\
&\lesssim M^{(1-\delta)(r-s)} \|  P_{\lesssim M^{1-\delta}}^v A_M^+  \|_{\Cprod{s}{s}} 
+\|  P_{\gg M^{1-\delta}}^v A_M^+  \|_{\Cprod{s}{r}} \\
&\lesssim M^{(1-\delta)(r-s)} \| A_M^+  \|_{\Mod_M^+}. 
\end{align*}
\end{proof}

 In Section \ref{section:prototypical}-\ref{section:modulation}, we will prove numerous estimates for the components in our Ansatz. To this end, it is helpful to collect all assumptions on the initial data and components in a single hypothesis, which can then be referenced easily. This hypothesis will ultimately form the basis of a contraction mapping argument. 
 
 \begin{hypothesis}[Smallness]\label{hypothesis:smallness}
Let $\theta$ be the smallness parameter introduced in Section \ref{section:parameters}. Then, we assume that 
\begin{equation*}
  \max\bigg( \| (\phi^+,\phi^-) \|_{\Ds}, \tfrac{1}{5}   \sup_{M} \| A_M^+ \|_{\Mod_M^+},  \tfrac{1}{5} \sup_{N} \| A_N^- \|_{\Mod_N^-}, \tfrac{1}{5}
  \sup_{M\sim_\delta N} \| B_{M,N} \|_{\Cprod{s}{s}}, \| \psi \|_{\Cprod{r}{r}} \bigg)
  \leq \theta. 
\end{equation*}
\end{hypothesis}
The $\frac{1}{5}$-factors in Hypothesis \ref{hypothesis:smallness} are included because of the $\theta \delta^a_m$ and $\theta \delta^a_n$ terms in \eqref{ansatz:eq-modulation-p} and \eqref{ansatz:eq-modulation-m} below.
The following notation, which describes the different components in our Ansatz, is used to organize the case-analysis in the null-form estimates (Section \ref{section:bilinear}).

 \begin{definition}[Types]
Let $\zeta\colon \R^{1+1}_{u,v}\rightarrow \R^{\dimA}$ be one of the four functions  in our Ansatz.  Then, we define $\Type\zeta \in \{ \tp, \tm , \tpm, \ts\}$ by 
\begin{equation}
\Type \zeta := 
\begin{cases}
\begin{tabular}{ll}
$\tp$  &if $\zeta= A_M^{+} \phi^{+}_M$ for some $M$,\\ 
$\tm $  &if $\zeta= A_N^{-} \phi^{-}_N$ for some $N$,\\ 
$\tpm$ &if $\zeta=B_{M,N} \phi^+_{M} \phi^{-}_{N}$ for some $M\sim_\delta N$, \\ 
$\ts$ 		&if $\zeta=\psi$. 
\end{tabular}
\end{cases}
\end{equation}
\end{definition} 

After proving several multi-linear estimates below, we will eventually need to sum over the dyadic frequency scales. This summation will be possible since all relevant estimates will exhibit a small gain in the frequency-scale exponents. To track this gain with minimal notational effort, we make the following definition. 

\begin{definition}[Gains]
Let $\eta$ be as in \eqref{prep:eq-parameter-2} and let $\zeta\colon \R^{1+1}_{u,v}\rightarrow \R^{\dimA}$ be one of the four functions  in our Ansatz. Then, we define the gain corresponding to $\zeta$ as 
\begin{equation}\label{gain0}
\Gain(\zeta) := 
\begin{cases}
\begin{tabular}{ll}
$M^{-\eta} $  &if $\zeta= A_M^{+} \phi^{+}_M$,\\ 
$N^{-\eta} $  &if $\zeta= A_N^{-} \phi^{-}_N$,\\ 
$(MN)^{-\eta}$ &if $\zeta=B_{M,N} \phi^+_{M} \phi^{-}_{N}$, \\ 
$1$ 		&if $\zeta=\psi$. 
\end{tabular}
\end{cases}
\end{equation}
\end{definition}

We now prove that the Ansatz \eqref{ansatz:eq-Ansatz} and  Hypothesis \ref{hypothesis:smallness} are sufficient to control the $\C_t^0 \C_x^s$ and $\C_t^1 \C_x^{s-1}$-norms of $\phi$, which will be used in the proof of Theorem \ref{intro:thm-rigorous}. 

\begin{lemma}[Regularity in space-time]\label{ansatz:lem-ctcx}
Let Hypothesis \ref{hypothesis:smallness} be satisfied. Then, it holds that 
\begin{equation}\label{ansatz:eq-ctcx-1}
\| \phi \|_{\C_t^0 \C_x^s(\R\times \R)} 
+ \| \phi \|_{\C_t^1 \C_x^{s-1}(\R\times \R)}
\lesssim \theta. 
\end{equation}
Furthermore, $\phi$ depends Lipschitz continuously on the linear waves $\phi^+$ and $\phi^-$, the modulations $A^+$, $A^-$, and $B$, and the nonlinear remainder $\psi$ with respect to the natural norm on $\Ds \times \Mod\times \Cprod{r}{r}$. 
\end{lemma}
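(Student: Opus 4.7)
The plan is to bound each of the three types of summands in the Ansatz \eqref{ansatz:eq-Ansatz} in an appropriate Bourgain-type space $\Cprod{\alpha_1}{\alpha_2}$ with $\alpha_1, \alpha_2 \geq s$, so that Corollary~\ref{prep:cor-null-Cartesian} (with $\gamma = s$) converts the bound into the desired $\C_t^0 \C_x^s + \C_t^1 \C_x^{s-1}$-estimate. For each piece I will verify either the general condition $\alpha_1 + \alpha_2 > 1$ or the non-resonant condition that $P_{N_1}^u P_{N_2}^v$ vanishes for all $N_1 \sim N_2$. The nonlinear remainder $\psi \in \Cprod{r}{r}$ with norm $\leq \theta$ is handled by Corollary~\ref{prep:cor-null-Cartesian}.\ref{prep:item-cnc-general} since $r + r > 1$. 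For the modulated right-moving wave $\sum_M A^{+,i}_{M,m} \phi^{+,m}_M$ I split $A^+_M = P^v_{\lesssim M^{1-\delta}} A^+_M + P^v_{\gg M^{1-\delta}} A^+_M$ according to the two components of $\Mod^+_M$; the high-$v$-frequency part lies in $\Cprod{s}{r}$ with norm $\lesssim \theta^2$ by Proposition~\ref{prep:prop-bilinear} together with the frequency decoupling of Corollary~\ref{prep:cor-bilinear} (each summand has $u$-frequency $\sim M$), and is again handled by condition \ref{prep:item-cnc-general} since $s + r > 1$. The symmetric splitting applies to $\sum_N A^-_N \phi^-_N$, with the high-$u$-frequency part landing in $\Cprod{r}{s}$.

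The first genuinely delicate piece is the low-$v$-frequency part $\sum_M P^v_{\lesssim M^{1-\delta}} A^+_M \cdot \phi^+_M$. It lies only in $\Cprod{s}{s}$ with norm $\lesssim \theta^2$, and since $2s < 1$ the general condition fails, forcing recourse to the non-resonant condition \ref{prep:item-cnc-nonres}. The key observation is that each summand has $u$-frequency $\sim M$ (inherited from $\phi^+_M$, since $A^+_M$ has $u$-frequency $\lesssim M^{1-\sigma} \ll M$) and $v$-frequency $\lesssim M^{1-\delta}$; the Bourgain diagonal $N_1 \sim N_2$ therefore forces $M \sim N_1 \sim N_2 \lesssim M^{1-\delta}$, which is impossible once $M \geq M_0(\delta)$ for some $\delta$-dependent threshold. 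Splitting the sum at this threshold, the tail $M > M_0$ is non-resonant and handled by Corollary~\ref{prep:cor-null-Cartesian}.\ref{prep:item-cnc-nonres}, while the finite head $M \leq M_0$ consists of functions with frequencies uniformly bounded by $M_0$ and hence sits in every $\Cprod{\alpha_1}{\alpha_2}$, where condition \ref{prep:item-cnc-general} applies trivially. The symmetric low-$u$-frequency part of $\sum_N A^-_N \phi^-_N$ is treated identically, with the roles of $u$ and $v$ exchanged.

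The last and main obstacle is the bilinear term $\sum_{M \sim_\delta N} B^i_{M,N,mn} \phi^{+,m}_M \phi^{-,n}_N$, whose summands have $u$-frequency $\sim M$ and $v$-frequency $\sim N$. The subsum over pairs with $M \not\sim N$ (still allowed under the wider relation $M \sim_\delta N$) is non-resonant and fits Corollary~\ref{prep:cor-null-Cartesian}.\ref{prep:item-cnc-nonres} in $\Cprod{s}{s}$ with norm $\lesssim \theta^3$. The truly resonant contribution from $M \sim N$ satisfies neither condition of the corollary, since both frequencies are comparable to $M$ and $2s < 1$; however, only $O(1)$ values of $N$ remain per $M$, and the hard part will be to bound this subsum directly in Cartesian coordinates. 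Writing each summand as a product of the three factors $B_{M,N}(x-t, x+t)$, $\phi^+_M(x-t)$, and $\phi^-_N(x+t)$ and invoking the standard product rule in $\C^s_x$, I will use $\|B_{M,N}(x-t, x+t)\|_{\C^s_x} \lesssim \|B_{M,N}\|_{\Cprod{s}{s}} \lesssim \theta$ via Lemma~\ref{prep:lemma-trace} (applied after the norm-preserving shift of the proof of Corollary~\ref{prep:cor-null-Cartesian}) together with $\|\phi^\pm_M\|_{L^\infty} \lesssim M^{-s} \theta$ to extract a per-summand estimate of $\lesssim M^{-s} \theta^3$, which is summable in $M$ since $s > 0$. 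The companion $\C_t^1 \C_x^{s-1}$ bound follows by distributing $\partial_t = -\partial_u + \partial_v$ across the three factors and applying analogous product rules. Finally, the Lipschitz continuous dependence on $(\phi^\pm, A^\pm, B, \psi) \in \Ds \times \Mod \times \Cprod{r}{r}$ is obtained by applying all of these arguments to differences, since each piece of the Ansatz depends multilinearly on its inputs.
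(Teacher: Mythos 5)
Your treatment of the first, second, and fourth summands in the Ansatz (the modulated linear waves and $\psi$) is sound, and indeed more careful than the paper's brief appeal to Corollary~\ref{prep:cor-null-Cartesian}: the observation that the low-$v$-frequency part $\sum_M P^v_{\lesssim M^{1-\delta}}A^+_M \cdot \phi^+_M$ lands in $\Cprod{s}{s}$ with $2s<1$, hence must be rescued by the non-resonant condition~\ref{prep:item-cnc-nonres} using the frequency mismatch $M \not\sim M^{1-\delta}$, is a correct and necessary clarification. Your splitting of the bilinear term into $M\not\sim N$ and $M\sim N$, and your $\C_t^0\C_x^s$-estimate of the resonant subsum via the trace of $B_{M,N}$ together with $\|\phi^\pm_M\|_{L^\infty}\lesssim M^{-s}\theta$, are also fine.

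The gap is in the final sentence about the $\C_t^1\C_x^{s-1}$-bound: for the resonant contribution $M\sim N$, ``analogous product rules'' do not close. When $\partial_t$ lands on $\phi^+_M$, one must bound
$\| B_{M,N}(x-t,x+t)\, \partial_u\phi^{+,m}_M(x-t)\,\phi^{-,n}_N(x+t)\|_{\C_x^{s-1}}$,
and every deterministic pairing fails: since $s-1<0$ you need the complementary factor in $\C^\alpha$ with $\alpha>1-s>s$, and $\|B_{M,N}\phi^-_N\|_{\C_x^{1-s+\eta}}\lesssim \max(M,N)^{1-s+\eta}N^{-s}\theta^2$ combined with $\|\partial_u\phi^+_M\|_{\C^{s-1}}\lesssim\theta$ only yields $\sim M^{1-2s+\eta}\theta^3$ per summand, which does not decay in $M$ and cannot be summed. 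Similarly the raw $L^\infty$ bound gives $M^{1-s}N^{-s}\theta^3\approx M^{1-2s}\theta^3$, again non-decaying, because the product $\partial_u\phi^+_M(x-t)\phi^-_N(x+t)$ can concentrate at low $x$-frequency when $M\sim N$. What rescues the argument in the paper is precisely the shifted high$\times$high$\rightarrow$low-bound encoded in the third entry of the $\Ds$-norm, namely $\sup_{t\in\R}\|\partial_x P_M\phi^{+,m}(x-t)\, P_N\phi^{-,n}(x+t)\|_{\C_x^{s-1}}\leq M^{-s}\theta^2$, which supplies the missing $M^{-s}$ gain via a probabilistic cancellation rather than any product rule. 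This is exactly the ingredient flagged in the remark following the lemma as the unique point in the paper where the supremum over $t$ in $\Ds$ is needed, and it is absent from your proposal; without it the estimate for the resonant part of the bilinear term diverges.
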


We remark that the proof of Lemma \ref{ansatz:lem-ctcx} is the only part of our argument which requires the supremum over $t\in \R$ in the $\Ds$-norm. 

\begin{proof}
 We only prove the estimate \eqref{ansatz:eq-ctcx-1}, since the Lipschitz-continuous dependence follows from a minor variant of the argument. The desired estimate for the first, second, and fourth summand in \eqref{ansatz:eq-Ansatz} follows directly from Corollary \ref{prep:cor-null-Cartesian}. As a result, it remains to prove that 
 \begin{align}
\| B^{k}_{M,N,mn}(x-t,x+t) \phi^{+,m}_M(x-t) \phi^{-,n}_N(x+t) \|_{\C_t^0 \C_x^s(\R\times \R)} &\lesssim M^{-\eta} \theta^3, \label{ansatz:eq-ctcx-2}  \\
\| B^{k}_{M,N,mn}(x-t,x+t) \phi^{+,m}_M(x-t) \phi^{-,n}_N(x+t) \|_{\C_t^1 \C_x^{s-1}(\R\times \R)} &\lesssim M^{-\eta} \theta^3.\label{ansatz:eq-ctcx-3} 
 \end{align}
In order to prove the first inequality \eqref{ansatz:eq-ctcx-2}, we estimate 
 \begin{align*}
    &\| B^{k}_{M,N,mn}(x-t,x+t) \phi^{+,m}_M(x-t) \phi^{-,n}_N(x+t) \|_{\C_t^0 \C_x^s(\R\times \R)} \\
    \lesssim& \max(M,N)^s \| B^{k}_{M,N,mn}(u,v) \phi^{+,m}_M(u) \phi^{-,n}_N(v) \|_{L^\infty_u L^\infty_v} \\
    \lesssim& \max(M,N)^s (MN)^{-s} \theta^3.  
 \end{align*}
 Since $M\sim_\delta N$, this contribution is acceptable. We now turn to the second inequality \eqref{ansatz:eq-ctcx-3} and decompose
 \begin{align}
 &\partial_t \Big(  B^{k}_{M,N,mn}(x-t,x+t) \, \phi^{+,m}_M(x-t) \, \phi^{-,n}_N(x+t) \Big) 
 \notag \\
 =&  (\partial_v - \partial_u ) \big(B^{k}_{M,N,mn}\big)(x-t,x+t) \, \phi^{+,m}_M(x-t) \, \phi^{-,n}_N(x+t)\label{ansatz:eq-ctcx-4}  \\
 +& B^{k}_{M,N,mn}(x-t,x+t) \, \partial_u\phi^{+,m}_M(x-t) \, \phi^{-,n}_N(x+t) \label{ansatz:eq-ctcx-5} \\
 +& B^{k}_{M,N,mn}(x-t,x+t)\,  \phi^{+,m}_M(x-t) \, \partial_v\phi^{-,n}_N(x+t). \label{ansatz:eq-ctcx-6} 
 \end{align}
 For the first summand \eqref{ansatz:eq-ctcx-4}, we have that 
 \begin{align*}
&\| (\partial_v - \partial_u ) \big(B^{k}_{M,N,mn}\big)(x-t,x+t) \, \phi^{+,m}_M(x-t) \, \phi^{-,n}_N(x+t) \|_{\C_t^0 \C_x^{s-1}} \\
\lesssim&\| (\partial_v - \partial_u ) \big(B^{k}_{M,N,mn}\big)(u,v) \, \phi^{+,m}_M(u) \, \phi^{-,n}_N(v) \|_{L^\infty_u L^\infty_v} \lesssim \max(M,N)^{1-s} (MN)^{-s}  \theta^3. 
 \end{align*}
 Since $M\sim_\delta N$ and $1-3s\approx -1/2$, this contribution is acceptable. For the second summand, we obtain from the definition of the $\Ds$-norm that
 \begin{align*}
     &\| B^{k}_{M,N,mn}(x-t,x+t) \, \partial_u\phi^{+,m}_M(x-t) \, \phi^{-,n}_N(x+t) 
     \|_{\C_t^0 \C_x^{s-1}} \\
     \lesssim& \| B^{k}_{M,N,mn}(x-t,x+t) \|_{\C_t^0 \C_x^{1-s+\eta}} 
     \| \partial_u\phi^{+,m}_M(x-t) \, \phi^{-,n}_N(x+t)  \|_{\C_t^0 \C_x^{s-1}} \\ 
     \lesssim& \max(M,N)^{1-2s+\eta} M^{-s} \theta^3. 
 \end{align*}
  As in our estimate of \eqref{ansatz:eq-ctcx-4}, this contribution is acceptable. Since the estimate for the third summand \eqref{ansatz:eq-ctcx-6} is similar, this completes the proof.
\end{proof}

 \subsection{The modulation and forced wave maps equations}\label{section:ansatz-equations}
We now state the equations for the unknowns $A^+,A^-,B$, and $\psi$. The equations are split into a set of modulation equations for $A^+,A^-$, and $B$ and a forced wave maps equation for $\psi$. Before we can define the modulation equations, we need to define the auxiliary matrices $F^+,F^-,$ and $\G$, which appear as driving forces in the modulation equations.
 
\begin{definition}[The forcing terms $F^+$, $F^-$, and $\G$]\label{ansatz:def-F}
Let $1\leq a,m,n \leq \dimA$ and let $M$ and $N$ be dyadic frequency scales. Then, we define $F^+$ by 
\begin{align}
F_{M,m}^{+,a} 
&= \sum_{N\leq M^{1-\delta}} A_{M,m}^{+,i} \SecondC{a}{ij}(\phi) \partial_v \big( A_{N,n}^{-,j} \phi^{-,n}_N \big)  \label{nonlinear:eq-Fp-1} \allowdisplaybreaks[4] \\
&+ \sum_{\substack{\Type \zeta =\\ \tp, \tpm, \ts}} A_{M,m}^{+,i} \SecondC{a}{ij}(\phi) \partial_v \zeta^j \label{nonlinear:eq-Fp-2} \allowdisplaybreaks[4] \\
&+ \sum_{N\sim_\delta M} \big( \SecondC{a}{ij}(\phi) A_{M,m}^{+,i} A_{N,n}^{-,j} \big) \parasimv \partial_v \phi^{-,n}_N \label{nonlinear:eq-Fp-3} \allowdisplaybreaks[4]\\
&+ \sum_{N\sim_\delta M} \SecondC{a}{ij}( \phi) B_{M,N,mn}^i \big( \phi_{N}^{-,n} \paradownv \partial_v \psi^j \big). \label{nonlinear:eq-Fp-4} \allowdisplaybreaks[4]
\end{align}
Similarly, we define $F^-$ by 
\begin{align}
F_{N,n}^{-,a} 
&= \sum_{M\leq N^{1-\delta}} A_{N,n}^{-,j} \SecondC{a}{ij}(\phi) \partial_u \big( A_{M,m}^{+,i} \phi^{+,m}_M \big)  \label{nonlinear:eq-Fm-1} \allowdisplaybreaks[4] \\
&+ \sum_{\substack{\Type \zeta =\\ \tm, \tpm, \ts}} A_{N,n}^{-,j} \SecondC{a}{ij}(\phi) \partial_u \zeta^i \label{nonlinear:eq-Fm-2} \allowdisplaybreaks[4] \\
&+ \sum_{M\sim_\delta N} \big( \SecondC{a}{ij}(\phi) A_{M,m}^{+,i} A_{N,n}^{-,j} \big) \parasimu \partial_u \phi^{+,m}_M \label{nonlinear:eq-Fm-3} \allowdisplaybreaks[4]\\
&+ \sum_{M\sim_\delta N} \SecondC{a}{ij}( \phi) B_{M,N,mn}^j \big( \phi_{M}^{+,m} \paradownu \partial_u \psi^i \big) \label{nonlinear:eq-Fm-4}.
\end{align}
Finally, we define $\G$ by 
\begin{equation}\label{ansatz:eq-G}
G^a_{M,N,mn} = - P_{\leq M^{1-\sigma}}^u P_{\leq N^{1-\sigma}}^v \Big( \Second^a_{ij}(\phi) A^{+,i}_{M,m} A^{-,j}_{N,n} \Big). 
\end{equation}
\end{definition}
Instead of $\G$, we could have denoted the right-hand side of \eqref{ansatz:eq-G} by $F^{+,-}$. Since the modulation equation involving $\G$ is quite different from the modulation equations involving $F^+$ and $F^-$ (see Definition \ref{ansatz:def-modulation-eqs}), we choose our notation to reflect this difference. 

For notational convenience, we also define 
\begin{equation}\label{ansatz:eq-G-chi}
G^a_{\chi,M,N,mn} = - P_{\leq M^{1-\sigma}}^u P_{\leq N^{1-\sigma}}^v \Big( \chi^+ \chi^- \Second^a_{ij}(\phi) A^{+,i}_{M,m} A^{-,j}_{N,n} \Big). 
\end{equation}

While the precise definitions of $F^+$, $F^-$, and $G$ are quite complicated, their form will be motivated in Section \ref{section:modulation}. For now, they are simply convenient notation for stating the modulation equations. 

\begin{definition}[Modulation equations]\label{ansatz:def-modulation-eqs}
The modulation equations for $A^+$, $A^-$, and $B$ are given by
\begin{align}
A^{+,a}_{M,m}(u,v) &= 
\theta \delta^a_m 
- P^{u}_{\leq M^{1-\sigma}} \chi^+ \chi^- \int_u^v \dv^\prime P_{\leq M^{1-\sigma}}^u F_{M,m}^{+,a}(u,v^\prime) \label{ansatz:eq-modulation-p},\\
A^{-,a}_{N,n}(u,v) &= 
\theta \delta^a_n 
+ P^{v}_{\leq N^{1-\sigma}}  \chi^+ \chi^- \int_u^v \du^\prime P_{\leq N^{1-\sigma}}^v F_{N,n}^{-,a}(u^\prime,v) \label{ansatz:eq-modulation-m}, \\
B^{b}_{M,N,mn}(u,v) 
&=  1\{ M\sim_\delta N \} \, G^b_{\chi,M,N,mn}(u,v) \label{ansatz:eq-modulation-pm}, 
\end{align}
for all $1\leq a,b,m,n \leq \dimA$ and frequency scales $M$ and $N$. 
\end{definition}

As mentioned before,  we will solve the modulation equations using the para-controlled approach of \cite{GIP15}. However, the para-controlled structure of the modulations will only be used in \mbox{Section \ref{section:modulation}}. In the PDE-analysis, which spans the majority of the paper, we will only utilize the following well-posedness result for the modulation equations.

\begin{proposition}[Local well-posedness of the modulation equations]\label{ansatz:prop-modulation}
Let $\phi^+,\phi^- \colon  \R \rightarrow \R^{\dimA}$ and $\psi \colon \R_{u,v}^{1+1}\rightarrow \R^\dimA$ satisfy
\begin{equation}
\| ( \phi^+, \phi^-) \|_{\Ds}\leq \theta \quad \text{and} \quad \| \psi \|_{\Cprod{r}{r}} \leq \theta. 
\end{equation}
Then, there exists a solution $(A^+,A^-,B)$ of the system of modulation equations \eqref{ansatz:eq-modulation-p}, \eqref{ansatz:eq-modulation-m}, and \eqref{ansatz:eq-modulation-pm} satisfying
\begin{equation}
\| A_M^+ - \theta \operatorname{Id} \|_{\Mod_M^+}, \| A_N^- - \theta \operatorname{Id} \|_{\Mod_N^-}, \| B_{M,N}\|_{\Cprod{s}{s}} \leq \theta 
\end{equation}
for all frequency-scales $M$ and $N$. Furthermore, the modulations $(A^+,A^-,B)$ depend Lipschitz continuously on $(\phi_0(0),\phi^+,\phi^-,\psi)$ in $\M \times \Ds \times \Cprod{r}{r}$ (in the sense of Definition \ref{ansatz:def-cts-dependence}). 
\end{proposition}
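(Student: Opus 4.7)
The plan is to set up a Picard-type contraction argument for the coupled system \eqref{ansatz:eq-modulation-p}--\eqref{ansatz:eq-modulation-pm} in the closed ball of radius $\theta$ in $\Mod$ centered at $(\theta \operatorname{Id}, \theta \operatorname{Id}, 0)$. A first observation is that the $B$-equation \eqref{ansatz:eq-modulation-pm} is explicit: once $(A^+, A^-)$ are specified, $B^b_{M,N,mn}$ is given directly by $G^b_{\chi,M,N,mn}$ from \eqref{ansatz:eq-G-chi}. Since $M \sim_\delta N$ forces $B_{M,N}$ to be measured only in $\Cprod{s}{s}$, the bound $\|B_{M,N}\|_{\Cprod{s}{s}} \lesssim \theta^2$ follows from Bony's para-linearization (Lemma \ref{prep:lemma-bony}) applied to $\Second^\diamond_{ij}(\phi)$, together with the bilinear estimate (Proposition \ref{prep:prop-bilinear}) and Hypothesis \ref{hypothesis:smallness}. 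Thus the substantive content is the coupled analysis of $A^+$ and $A^-$.

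For the basic $\Cprod{s}{s}$ bound on the low-$v$-frequency piece $P^v_{\lesssim M^{1-\delta}} A^+_M$ (and, symmetrically, the low-$u$-frequency piece of $A^-_N$), I would exploit the fact that every summand of $F^+_{M,m}$ contains either a derivative of a linear wave, a derivative of $\psi$, or a resonant para-product involving at least one linear wave. The low-high contribution \eqref{nonlinear:eq-Fp-1} is handled by the dedicated Lemma \ref{modulation:lem-integral-lh}, which is tailored to precisely this situation. Contributions \eqref{nonlinear:eq-Fp-2}--\eqref{nonlinear:eq-Fp-4} are controlled by the partial-integral estimate (Lemma \ref{prep:lemma-partial-integral}) composed with Proposition \ref{prep:prop-bilinear} and the multiplication estimate (Corollary \ref{prep:corollary-multiplication}). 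Each of these estimates is multi-linear in the data $(\phi^+, \phi^-, \psi)$ and the unknowns $(A^+, A^-, B)$, so Hypothesis \ref{hypothesis:smallness} yields $\|P^v_{\lesssim M^{1-\delta}} A^+_M - \theta \operatorname{Id}\|_{\Cprod{s}{s}} \lesssim \theta^2$, which stays inside the ball for $\theta$ small.

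The main obstacle, and the reason one cannot proceed by a naive fixed-point argument in $\Cprod{s}{s}$, is the improved $\Cprod{s}{r}$ bound for the high-$v$-frequency piece $P^v_{\gg M^{1-\delta}} A^+_M$. Since $\partial_v \phi^-_N \in \C^{s-1}_v$ with $s < 1/2$, the product $A^+_{M,m} \parallv \partial_v \phi^-_N$ appearing implicitly in \eqref{nonlinear:eq-Fp-1} cannot be estimated at regularity $r - 1 > s - 1$ by direct para-product bounds. The resolution, following \cite{GIP15}, is to impose a para-controlled Ansatz
\begin{equation*}
  A^{+,a}_{M,m}(u,v) = \widetilde{A}^{+,a}_{M,m,j,n}(u,v) \parallv \phi^{-,n}(v) \, + \, A^{+,a,\#}_{M,m}(u,v),
\end{equation*}
with an analogous Ansatz for $A^-$, and to derive coupled equations for the ``modulation of the modulation'' $\widetilde{A}^{+}$ and the smoother remainder $A^{+,\#}_M$, which is to be estimated in $\Cprod{s}{r}$. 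The commutator tools of Lemma \ref{prep:lem-commutator-ODE} and Corollary \ref{prep:cor-commutator-PDE} would be invoked systematically to rewrite resonances of the form $(\widetilde{A}^+ \parallv \phi^-) \parasimv \partial_v \phi^-_N$ as $\widetilde{A}^+ \cdot (\phi^- \parasimv \partial_v \phi^-)$ plus commutator errors, so that the genuinely rough interactions factor through the resonant data $\phi^- \parasimv \partial_v \phi^-$, which is controlled by the $\Ds$-norm via Definition \ref{ansatz:definition-data}. This is where the high$\times$high$\rightarrow$low ingredient of $\Ds$ enters the ODE-analysis and, in my view, is the technical heart of the argument.

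Contraction and Lipschitz continuous dependence on $(\phi_0(0), \phi^+, \phi^-, \psi)$ then follow by re-running the same multi-linear estimates on differences, where every difference gains an extra factor of $\theta$ from the remaining copy of the data or the modulation. The dependence on $\phi_0(0)$ enters only through $\Second^\diamond_{ij}(\cdot \, ; \phi_0(0))$ as defined in \eqref{ansatz:eq-shifted-second}, and smoothness of $\Second$ in the ambient space $\R^\dimA$ together with Lemma \ref{prep:lemma-bony} yields the required Lipschitz dependence at no additional cost.
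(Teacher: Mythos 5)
Your proposal matches the paper's approach in all essential respects: a para-controlled Ansatz for $A^\pm$ in the spirit of Gubinelli--Imkeller--Perkowski, a joint contraction mapping argument, and crucially the commutator re-expansion (Lemma \ref{prep:lem-commutator-ODE}) that routes the genuinely rough resonance $A^+_M \parasimv \partial_v \phi^-_N$ through the $\Ds$-controlled bilinear data $\phi^-_K \parasimv \partial_v \phi^-_N$; this is exactly what the paper does in Definition \ref{modulation:def-para} and Lemma \ref{modulation:lem-resonant}, so you have correctly identified the technical heart of the argument.

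Two details to tighten if you wrote this out. First, your para-controlled coefficient $\widetilde{A}^{+}_{M}$ must be indexed by \emph{both} $M$ and the $v$-frequency scale $N$, paired with the single dyadic piece $\phi^{-,n}_N$ rather than the full $\phi^{-,n}$, exactly as in \eqref{modulation:eq-ansatz-p}: the $N$-dependence is forced because the driving force \eqref{nonlinear:eq-Fp-1} carries an $A^{-,j}_{N,n}$ factor that the coefficient must absorb, and the coupled system \eqref{modulation:eq-Xc2} transports this $N$ through the $L$-summation. Second, the $B$-equation \eqref{ansatz:eq-modulation-pm} is not literally explicit given $(A^+,A^-)$, since $G_{\chi,M,N,mn}$ involves $\Second^{\diamond}_{ij}(\phi)$ and $\phi$ itself depends on $B$ through the Ansatz \eqref{ansatz:eq-phi}; the paper correctly treats $\Gamma_B$ as a component of the joint fixed point rather than eliminating it up front. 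Neither point is a conceptual gap, but both would surface immediately when carrying out the contraction estimates.
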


To emphasize the dependence on $\psi$, we denote the modulations from Proposition \ref{ansatz:prop-modulation} by 
\begin{equation*}
A^+[\psi]=( A^{+,i}_{M,m}[\psi]), \quad A^-[\psi]=( A^{-,j}_{N,n}[\psi]), \quad \text{and} 
\quad B[\psi]=( B^{k}_{M,N,mn}[\psi]).
\end{equation*}

Similarly, we write 
\begin{equation}\label{ansatz:eq-phi}
\begin{aligned}
\phi^i[\psi](u,v) 
&= \sum_M A^{+,i}_{M,m}[\psi](u,v) \phi^{+,m}_M(u) 
+ \sum_N A^{-,i}_{N,n}[\psi](u,v) \phi^{-,n}_N(v) \\
&+ \sum_{\substack{M,N\colon \\ M\sim_\delta N}} B^{i}_{M,N,mn}[\psi](u,v) \phi^{+,m}_M(u) \phi^{-,n}_N(v) 
+ \psi^i(u,v). 
\end{aligned}
\end{equation}
While $A^+[\psi]$, $A^-[\psi]$, $B[\psi]$, and $\phi[\psi]$ also depend on the initial position $\phi_0(0)$ and the linear waves $\phi^+$ and $\phi^-$, this dependence has been neglected from our notation. It now remains to state the (forced) wave maps equation for the nonlinear remainder $\psi$, which is determined by the wave maps equation \eqref{ansatz:eq-shifted-WM-nc} and our Ansatz.

\begin{definition}[Forced wave maps equation for $\psi$]\label{ansatz:def-forced-wm}
The forced wave maps equation for the nonlinear remainder $\psi$ is given by 
\begin{equation}\label{ansatz:eq-forced-wm}
\psi^k = \theta \phi^{+,k} + \theta \phi^{-,k} - \chi^+ \chi^- \Duh\Big[ \SecondC{k}{ij}(\phi[\psi]) \partial_u \phi^i[\psi] \partial_v \phi^j[\psi] \Big] - (\phi^k[\psi]-\psi^k). 
\end{equation}
\end{definition}

Due to the modulation equations for $A^+,A^-$, and $B$, there are significant cancellations between the Duhamel integral and the difference $\phi^k[\psi]-\psi^k$ in \eqref{ansatz:eq-forced-wm}. The main result on the forced wave maps equation \eqref{ansatz:eq-forced-wm}, which utilizes these cancellations, is the following proposition. 

\begin{proposition}[Local well-posedness of the forced wave maps equation]\label{ansatz:prop-forced-wm}
Let $\phi^+,\phi^-\colon \R\rightarrow \R^\dimA$ satisfy 
$\big\| (\phi^+,\phi^-) \big\|_{\Ds}\leq \theta$.
Then, there exists a solution $\psi \in \Cprod{r}{r}$ of \eqref{ansatz:eq-forced-wm} satisfying 
$\| \psi \|_{\Cprod{r}{r}} \leq \theta$.
Furthermore, $\psi$ depends Lipschitz continuously on $(\phi_0(0),\phi^+,\phi^-)$ in $\M\times \Ds$ (in the sense of Definition \ref{ansatz:def-cts-dependence}).
\end{proposition}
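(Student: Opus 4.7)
The plan is to solve the fixed-point equation \eqref{ansatz:eq-forced-wm} by a contraction mapping argument on the closed ball $\mathcal{B}_\theta := \{ \psi \in \Cprod{r}{r} : \|\psi\|_{\Cprod{r}{r}} \leq \theta \}$. Given $\psi \in \mathcal{B}_\theta$, Proposition \ref{ansatz:prop-modulation} provides modulations $A^\pm[\psi]$ and $B[\psi]$ with $\|A_M^\pm[\psi] - \theta \operatorname{Id}\|_{\Mod_M^\pm}, \|B_{M,N}[\psi]\|_{\Cprod{s}{s}} \leq \theta$, depending Lipschitz continuously on $\psi$. The map
\begin{equation*}
\Gamma(\psi)^k := \theta \phi^{+,k} + \theta \phi^{-,k} - \chi^+ \chi^- \Duh\bigl[ \SecondC{k}{ij}(\phi[\psi]) \partial_u \phi^i[\psi] \partial_v \phi^j[\psi] \bigr] - \bigl( \phi^k[\psi] - \psi^k \bigr)
\end{equation*}
is then well-defined on $\mathcal{B}_\theta$. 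Writing $A^\pm[\psi] = \theta \operatorname{Id} + \widetilde{A}^\pm[\psi]$, the non-Duhamel part of $\Gamma(\psi)$ reduces to
\begin{equation*}
-\sum_M \widetilde{A}^{+,k}_{M,m}[\psi] \phi^{+,m}_M - \sum_N \widetilde{A}^{-,k}_{N,n}[\psi] \phi^{-,n}_N - \sum_{M \sim_\delta N} B^k_{M,N,mn}[\psi] \phi^{+,m}_M \phi^{-,n}_N,
\end{equation*}
so the rough linear waves cancel exactly and only quantities controlled by the modulation norms remain.

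The first step is to decompose $\SecondC{k}{ij}(\phi[\psi]) \partial_u \phi^i[\psi] \partial_v \phi^j[\psi]$ using the Ansatz \eqref{ansatz:eq-phi}, the paralinearization of Lemma \ref{prep:lemma-bony} applied to $\SecondC{k}{ij}(\phi[\psi])$, and the bilinear decompositions of Proposition \ref{prep:prop-bilinear}. One organizes the resulting multilinear expressions by the Types of the three factors, using the Gain function to produce the $M^{-\eta}$ and $(MN)^{-\eta}$ decay needed for summation in the dyadic scales. The key claim, to be proved in the subsequent sections, is that after subtracting off precisely the low-high contributions reconstructing $\widetilde{A}^\pm[\psi]$ (via the ODE structure of \eqref{ansatz:eq-modulation-p}--\eqref{ansatz:eq-modulation-m}) and the high-high-to-low contributions reconstructing $B[\psi]$ (via \eqref{ansatz:eq-modulation-pm} and the definition of $G$ in \eqref{ansatz:eq-G}), the remaining nonlinearity lies in $\Cprod{r-1}{r-1}$ with norm $\lesssim \theta^2$. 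The Duhamel estimate \eqref{prep:eq-duhamel-3} then returns the Duhamel contribution to $\Cprod{r}{r}$ with norm $\lesssim \theta^2$, and combined with the $\Mod$-bound on $\widetilde{A}^\pm[\psi]$ and $B[\psi]$ yields $\|\Gamma(\psi)\|_{\Cprod{r}{r}} \leq \theta$ for $\theta$ small enough. The contraction estimate $\|\Gamma(\psi_1) - \Gamma(\psi_2)\|_{\Cprod{r}{r}} \leq \tfrac{1}{2} \|\psi_1 - \psi_2\|_{\Cprod{r}{r}}$ follows from a repetition of the same multilinear analysis for differences, using the Lipschitz dependence of the modulations on $\psi$.

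The main obstacle will be the high$\times$high$\to$low interactions between $\psi$ and a linear wave $\phi^\pm$, discussed around \eqref{intro:eq-remainder-resonance-1}--\eqref{intro:eq-remainder-resonance-3}, for which the naive deterministic estimate in $\Cprod{r-1}{r-1}$ diverges in the frequency-scale. These terms are neither smoother nor covered by the probabilistic $\Ds$-bound (since $\psi$ is deterministic), and they are precisely the reason the bilinear modulation $B$ was introduced in the Ansatz and the driving term \eqref{ansatz:eq-G} for $B$ takes the form it does. Verifying that the low-high-cutoff $P^{u,v}_{\leq \min(M,N)^{1-\sigma}}$ built into the definition of $B$, together with the separation of the cases $M \sim_\delta N$ and $M \ll_\delta N$ in the Ansatz, precisely absorbs these resonances into $B[\psi]$ is the technical heart of the argument; the corresponding cancellation for the low-high interactions relies on the $\delta$-gain of Corollary \ref{ansatz:cor-delta-Apm}. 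Once these two reabsorptions are established and combined with the $\Ds$-based control of the purely linear resonances, the contraction property follows and the Banach fixed-point theorem yields $\psi \in \mathcal{B}_\theta$; the Lipschitz dependence on $(\phi_0(0), \phi^+, \phi^-)$ is inherited from the contraction estimate applied to pairs of data.
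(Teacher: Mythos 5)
Your overall strategy is the same as the paper's: set up a contraction on the $\theta$-ball in $\Cprod{r}{r}$, feed $\psi$ into Proposition \ref{ansatz:prop-modulation} to produce the modulations, and show that the modulation equations cause the bad parts of the Duhamel integral to cancel against $\phi[\psi]-\psi$, leaving a remainder of size $O(\theta^2)$. The key inputs (Theorem \ref{nonlinear:thm} decomposing the nonlinearity, Proposition \ref{duhamel:prop-approx} expressing the localized Duhamel integral of the structured forcing as exactly the modulated linear waves plus a $\Cprod{r}{r}$-error) are correctly identified as the technical workhorse, and the Lipschitz dependence by iterating the contraction bound on pairs of data is the right conclusion.

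One step in your final paragraph is phrased in a way that would not survive being made precise, and it is exactly the subtlety the modulation equations were designed to handle. You write that the Duhamel of the remainder is $\lesssim\theta^2$ in $\Cprod{r}{r}$ and that, ``combined with the $\Mod$-bound on $\widetilde{A}^\pm[\psi]$ and $B[\psi]$,'' this yields $\|\Gamma(\psi)\|_{\Cprod{r}{r}}\leq\theta$. But the $\Mod$-bound does \emph{not} place the modulation terms $\widetilde{A}^{+}_{M}\phi^{+}_{M}$ and $B_{M,N}\phi^+_M\phi^-_N$ in $\Cprod{r}{r}$: each $\widetilde{A}^+_M\phi^+_M$ carries $u$-frequency $\sim M$ with amplitude $\sim M^{-s}$, so its $\Cprod{r}{r}$-norm diverges like $M^{r-s}$, and the dyadic sum is unbounded. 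What saves the argument is not a direct bound but a cancellation: the Duhamel integral of the structured forcing terms $F^\pm\scalebox{0.8}{$\parallel$}\,\partial\phi^\pm$ and $G\,\partial\phi^+\partial\phi^-$ reproduces (via the modulation equations and Lemma \ref{duhamel:lem-low-high}, Lemma \ref{duhamel:lem-high-high}) \emph{exactly} the modulation terms $\widetilde{A}^\pm\phi^\pm$ and $B\phi^+\phi^-$ up to a $\Cprod{r}{r}$-error of size $\theta^3$. Only this difference -- not the individual pieces -- lands in $\Cprod{r}{r}$. Your use of the word ``reconstructing'' suggests you intuit this, but the stated logic (``Duhamel contribution plus $\Mod$-bounds'') would not close, and in a written-out proof you need to carry this cancellation explicitly, as is done in the paper's reduction of \eqref{ansatz:eq-forced-wm} to $\psi=\Rc$ via Proposition \ref{duhamel:prop-approx}.
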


The local well-posedness of the forced wave maps equation will be proved in Section \ref{section:lwp} below.

%%%%%%%%%%%%%%%%%%%%%% Key estimates %%%%%%%%%%%%%%%%%%%%%%%%%

\section{Prototypical estimates}\label{section:prototypical}

In this section, we prove several prototypical estimates. In contrast to the estimates in Section \ref{section:preparations}, the following prototypical estimates are specific to our Ansatz.  They will then be used repeatedly in our analysis of the null-form  $\partial_u \phi^i \partial_v \phi^j$ (in Section \ref{section:bilinear}) or the full nonlinearity $\Second^k_{ij}(\phi) \partial_u \phi^i \partial_v \phi^j$ (in Section \ref{section:nonlinear}). In Section \ref{section:favorable}, we estimate terms containing derivatives on the (low-frequency) modulations $A^+,A^-$, or $B$. In the following, such derivatives are sometimes called ``favorable".   In Section \ref{section:prot-bil-tril}, we prove prototypical bilinear and trilinear estimates. In Section \ref{section:unfortunate}, we isolate an unfortunate resonance between the initial data $\phi^\pm$ and the nonlinear remainder $\psi$. In Section \ref{section:generalizations}, we show that the types $\tp$ and $\tpm$ can sometimes be treated in a unified fashion. 

\subsection{Favorable derivatives}\label{section:favorable}

We now estimate favorable terms, i.e., terms with derivatives on the (low-frequency) modulations $A^+$, $A^-$, and $B$. With the following lemma, the favorable terms can essentially be treated as smooth remainders.

\begin{lemma}[Favorable derivatives]\label{key:lemma-favorable}  ~
\begin{itemize}[leftmargin=13mm]
\item[$\tp$:] \label{key:item-favorable-p} For all $M\geq 1$, it holds that 
\begin{align}
\big\| \partial_u A^+_M \, \phi^+_M \big\|_{\Cprod{r-1}{s}} &\lesssim M^{r-2s} \| A^+_M \|_{\Cprod{s}{s}} \| \phi^+\|_{\C_u^s}, \label{key:eq-favorable-p1}\\
\big\|  \partial_u A^+_M \, \phi^+_M \big\|_{\Cprod{r-1}{r}} &\lesssim
 M^{2r-3s} \| A_M^+ \|_{\Mod_M^+} \| \phi^+\|_{\C_u^s}. \label{key:eq-favorable-p2}
\end{align}
\item[$\tpm$:]  For all $M_1,M_2$ satisfying $M_1\sim_\delta M_2$, it holds that  
\begin{equation}\label{key:eq-favorable-pm-1}
\begin{aligned}
&\big\| \partial_u B_{M_1,M_2} \, \phi^+_{M_1} \, \phi^-_{M_2}  \|_{\Cprod{r-1}{r}} 
+\big\| \partial_v B_{M_1,M_2} \, \phi^+_{M_1} \, \phi^-_{M_2}  \|_{\Cprod{r}{r-1}} \\
\lesssim& \max(M_1,M_2)^{2r-3s} \|B_{M_1,M_2} \|_{\Cprod{s}{s}} \| \phi^+\|_{\C_u^s} \| \phi^- \|_{\C_v^s}. 
\end{aligned}
\end{equation}
In addition, we have the modified estimate
\begin{equation}\label{key:eq-favorable-pm-2}
\begin{aligned}
&\big\| \partial_u B_{M_1,M_2} \, \phi^+_{M_1} \, \phi^-_{M_2}  \|_{\Cprod{r-1}{1-r^\prime}} 
+\big\| \partial_v B_{M_1,M_2} \, \phi^+_{M_1} \, \phi^-_{M_2}  \|_{\Cprod{1-r^\prime}{r-1}} \\
\lesssim& \max(M_1,M_2)^{1-3s} \|B_{M_1,M_2} \|_{\Cprod{s}{s}} \| \phi^+\|_{\C^s_u} \| \phi^- \|_{\C^s_v}. 
\end{aligned}
\end{equation}
\end{itemize}
\end{lemma}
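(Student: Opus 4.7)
The plan is to prove all four estimates by direct Littlewood--Paley computation, relying on the frequency-support conditions from Condition~\ref{condition:frequency} rather than the abstract bilinear estimates of Proposition~\ref{prep:prop-bilinear}. The key observation is that these conditions force every differentiated modulation factor to live at a strictly lower dyadic scale than its companion linear wave: $\partial_u A^+_M$ is supported at $u$-frequencies $\lesssim M^{1-\sigma}$ while $\phi^+_M$ sits at $u$-frequencies $\sim M$, and analogously in both variables for the $\tpm$-estimates. Consequently each product is a pure low$\times$high interaction whose Littlewood--Paley piece $P^u_K P^v_L$ is non-trivial only for $K \sim M$ (and, in the $\tpm$ case, $L \sim M_2$), and each estimate reduces to a one-line Bernstein computation on that single dyadic block.

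For the first $\tp$-estimate this yields an exponent of $r - 1 + (1-\sigma)(1-s) - s = r - 2s - \sigma(1-s)$, which is $\leq r-2s$ as claimed. For the second $\tp$-estimate the target $v$-regularity is raised from $s$ to $r$ but only the weaker $\Mod_M^+$-norm is available, so I would split $A^+_M$ according to the $v$-frequency cutoff at $M^{1-\delta}$ that is built into the definition of $\Mod_M^+$. The low $v$-frequency piece picks up an additional Bernstein factor $M^{(1-\delta)(r-s)}$ and is therefore bounded by $M^{2r-3s-\delta(r-s)} \leq M^{2r-3s}$; the high $v$-frequency piece is controlled in $\Cprod{s}{r}$ from the outset, and the computation from the first estimate already delivers $M^{r-2s} \leq M^{2r-3s}$ since $r > s$.

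The $\tpm$-estimates are proved by the same template in both variables. Applying Condition~\ref{condition:frequency} in $u$ and $v$ simultaneously, a single-block Bernstein computation yields an exponent of the schematic form $M_1^{r-2s-\sigma(1-s)} M_2^{r-s}$ for $\partial_u B$, together with the symmetric bound $M_1^{r-s} M_2^{r-2s-\sigma(1-s)}$ for $\partial_v B$. The worst case of the $M_1 \sim_\delta M_2$ relation pushes the smaller of the two scales down to $\max(M_1,M_2)^{1-\delta}$, which combined with the negativity of the exponent $r-2s$ produces a loss of order $\delta\,|r-2s|$ in the target exponent. This loss is absorbed by the $\sigma(1-s)$-gain coming from the frequency cutoff on $B$, thanks to the parameter choice $\sigma = 100\delta$ from \eqref{prep:eq-parameter-sigma}. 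The modified estimate \eqref{key:eq-favorable-pm-2} is handled identically except for replacing $r$ by $1-r'$, which introduces an additional $\eta$-loss that is again dominated by the $\sigma(1-s)$-gain since $\eta \ll \delta \ll \sigma$.

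The only mildly subtle aspect of the proof is the arithmetic of these competing exponents, which is dictated entirely by the parameter hierarchy $0 < \tfrac12 - s \ll \eta \ll \delta \ll \tfrac34 - r$ from \eqref{prep:eq-parameter-1}--\eqref{prep:eq-parameter-3}. No para-product splitting, commutator estimate, or composition result is needed here; the lemma is a direct consequence of the frequency localization of the modulations built into Condition~\ref{condition:frequency}.
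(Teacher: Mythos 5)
Your proposal is correct and follows essentially the same path as the paper: both exploit the frequency-support conditions (Condition~\ref{condition:frequency}) to localize the products to a single $u$-dyadic block $\sim M$ (and a single $v$-dyadic block $\sim M_2$ in the $\tpm$ case), take out the Bernstein factors $M^{r-1}$, etc., and run the exponent arithmetic. The only cosmetic difference is that the paper cites Corollary~\ref{ansatz:cor-delta-Apm} as a black box to bound $\| A_M^+ \|_{\Cprod{s}{r}}$ in terms of $\| A_M^+\|_{\Mod_M^+}$, whereas you re-derive the underlying $v$-frequency split at $M^{1-\delta}$; these are the same argument.
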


\begin{proof}
We treat the two cases separately. 

\emph{$\tp$-estimate:} Since $A_M^+$ is supported on $u$-frequencies $\lesssim M^{1-\sigma}$, it holds that 
\begin{align*}
\big\| \partial_u A^+_M \,  \phi^+_M \big\|_{\Cprod{r-1}{s}} 
&\lesssim M^{r-1} \big\|  \partial_u A^+_M \,  \phi^+_M \big\|_{L^\infty_u \C_v^{s}} \\
&\lesssim M^{r-1} \Big( \sum_{M_1\lesssim M^{1-\sigma}} \| \partial_u P_{M_1}^u A_M^+ \|_{L^\infty_u \C_v^{s}} \Big) \| \phi^+_M \|_{L^\infty_u} \\
&\lesssim M^{r-1-s} \Big(  \sum_{M_1\lesssim M^{1-\sigma}} M_1^{1-s} \Big) \| A_M^+ \|_{\Cprod{s}{s}} \|\phi^+_M \|_{\C^s_u} \\
&\lesssim M^{r-2s} \| A_M^+ \|_{\Cprod{s}{s}} \|\phi^+_M \|_{\C^s_u} . 
\end{align*}
This completes the proof of \eqref{key:eq-favorable-p1}. The proof of \eqref{key:eq-favorable-p2} is similar and the only additional ingredient is Corollary \eqref{ansatz:cor-delta-Apm}, which yields 
\begin{equation*}
 \| A_M^+  \|_{\Cprod{s}{r}} \lesssim  M^{(1-\delta)(r-s)} \| A_M^+  \|_{\Mod_M^+} 
 \lesssim M^{r-s} \| A_M^+  \|_{\Mod_M^+}.   
\end{equation*}
\emph{$\tpm$-estimate:} We only prove \eqref{key:eq-favorable-pm-1}, since the proof of \eqref{key:eq-favorable-pm-2} is essentially identical. Due to the symmetry in $u$ and $v$, it suffices to bound the first summand. Since $B_{M_1,M_2}$ is supported on $u$-frequencies $\lesssim M_1^{1-\sigma}$ and $v$-frequencies $\lesssim M_2^{1-\sigma}$,  we have that 
\begin{align*}
\big\|  \partial_u B_{M_1,M_2} \, \phi^+_{M_1} \, \phi^-_{M_2}  \|_{\Cprod{r-1}{r}} 
&\lesssim M_1^{r-1} M_2^{r} \big\|  \partial_u B_{M_1,M_2} \, \phi^+_{M_1} \, \phi^-_{M_2}  \|_{L^\infty_{u,v}} \\
&\lesssim M_1^{r-1} M_2^{r} \| \partial_u B_{M_1,M_2} \|_{L^\infty_{u,v}} \| \phi^+_{M_1}\|_{\C^0}  \| \phi^-_{M_2}\|_{\C^0} \\
&\lesssim M_1^{r-1+(1-\sigma)(1-s)-s} M_2^{r-s} \|B_{M_1,M_2} \|_{\Cprod{s}{s}} \| \phi^+\|_{\C^s} \| \phi^- \|_{\C^s}.
\end{align*}
Since $M_1\sim_\delta M_2$ and $\sigma=100\delta$, this is acceptable. 
\end{proof}

As an application of Lemma \ref{key:lemma-favorable} and our commutator estimates, we obtain the following corollary.

\begin{corollary}\label{key:cor-parall-mod}
For all $M\geq 1$ and $\zeta \in \Cprod{1-r^\prime}{r-1}$, it holds that 
\begin{equation}\label{key:eq-parall-mod}
\begin{aligned}
&\Big\| \zeta \parallsigu \partial_u \big( A_{M,m}^{+,i} \phi^{+,m}_M \big) -  \big( A_{M,m}^{+,i} \zeta \big) \parallsigu \partial_u \phi^{+,m}_M \Big\|_{\Cprod{r-1}{r-1}} \\
\lesssim& M^{r-2s+\sigma} \| P_{\leq M}^u \zeta \|_{\Cprod{s}{r-1}} \| A_{M,m}^+ \|_{\Cprod{s}{s}} \| \phi^+ \|_{\C_u^s}. 
\end{aligned}
\end{equation}
\end{corollary}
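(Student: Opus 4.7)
The plan is to apply the product rule
\begin{equation*}
 \partial_u\bigl(A^{+,i}_{M,m}\phi^{+,m}_M\bigr) = \bigl(\partial_u A^{+,i}_{M,m}\bigr)\phi^{+,m}_M + A^{+,i}_{M,m}\,\partial_u\phi^{+,m}_M,
\end{equation*}
splitting the left-hand side of \eqref{key:eq-parall-mod} into a favorable piece
\begin{equation*}
X_1 := \zeta\parallsigu\bigl[(\partial_u A^{+,i}_{M,m})\phi^{+,m}_M\bigr]
\end{equation*}
and a paraproduct commutator
\begin{equation*}
X_2 := \zeta\parallsigu\bigl[A^{+,i}_{M,m}\partial_u\phi^{+,m}_M\bigr] - \bigl(A^{+,i}_{M,m}\zeta\bigr)\parallsigu\partial_u\phi^{+,m}_M.
\end{equation*}
Since $A^{+,i}_{M,m}$ is supported on $u$-frequencies $\lesssim M^{1-\sigma}$ by Condition~\ref{condition:frequency}, and $\partial_u\phi^{+,m}_M$ on $u$-frequencies $\sim M$, only the frequencies of $\zeta$ of size $\lesssim M$ contribute to either term, which justifies the presence of $P^u_{\leq M}\zeta$ (rather than $\zeta$) on the right-hand side of \eqref{key:eq-parall-mod}.

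For $X_1$, Lemma~\ref{key:lemma-favorable}~\eqref{key:eq-favorable-p1} gives $\|(\partial_u A^{+,i}_{M,m})\phi^{+,m}_M\|_{\Cprod{r-1}{s}}\lesssim M^{r-2s}\|A^+_{M,m}\|_{\Cprod{s}{s}}\|\phi^+\|_{\C_u^s}$, and combining with the general bilinear estimate (Proposition~\ref{prep:prop-bilinear}~\ref{prep:item-general}) applied with exponents $\alpha_1=s$, $\alpha_2=r-1$, $\beta_1=r-1$, $\beta_2=s$, $\gamma_1=\gamma_2=r-1$ (all admissibility conditions follow from \eqref{prep:eq-parameter-1}) produces the desired bound $\|X_1\|_{\Cprod{r-1}{r-1}}\lesssim M^{r-2s}\|P^u_{\leq M}\zeta\|_{\Cprod{s}{r-1}}\|A^+_{M,m}\|_{\Cprod{s}{s}}\|\phi^+\|_{\C_u^s}$.

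For $X_2$, both outer factors $A^{+,i}_{M,m}\partial_u\phi^{+,m}_M$ and $\partial_u\phi^{+,m}_M$ live at $u$-frequency $\sim M$, so Lemma~\ref{prep:lem-para-localized}~\eqref{prep:eq-localized-1} converts both paraproducts into genuine products of $P^u_{\leq M^{1-\sigma}}(\cdot)$ with the outer factor, modulo error terms whose $\Cprod{r-1}{r-1}$-norm is already of the acceptable order $M^{r-2s+\sigma s}$ times the norms in the claim. The main contributions combine into the commutator
\begin{equation*}
\bigl[A^{+,i}_{M,m},\, P^u_{\leq M^{1-\sigma}}\bigr]\zeta\cdot \partial_u\phi^{+,m}_M = \biggl(\sum_{L>M^{1-\sigma}}[P^u_L, A^{+,i}_{M,m}]\zeta\biggr)\cdot \partial_u\phi^{+,m}_M.
\end{equation*}
Since the $u$-frequencies of $A^{+,i}_{M,m}$ are $\leq M^{1-\sigma}\ll L$, each summand satisfies $[P^u_L, A^{+,i}_{M,m}]\zeta = [P^u_L, A^{+,i}_{M,m}]\widetilde{P}^u_L\zeta$, and the refined commutator bound \eqref{prep:eq-commutator-2} of Lemma~\ref{prep:lem-commutator}, applied with $\alpha_1=\beta_1=s$, $\alpha_2=s$, $\beta_2=r-1$, $\gamma_1=s$, $\gamma_2=r-1$, yields $L^{-s}\|A^+_{M,m}\|_{\Cprod{s}{s}}\|P^u_{\leq M}\zeta\|_{\Cprod{s}{r-1}}$ for each term. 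Summing over $M^{1-\sigma}<L\lesssim M$ produces a gain of $M^{-(1-\sigma)s}$, and a final bilinear estimate against $\partial_u\phi^{+,m}_M$ (which contributes $M^{r-s}$) produces the total $M^{r-2s+\sigma s}\leq M^{r-2s+\sigma}$.

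The main obstacle is the analysis of $X_2$. A naive application of Corollary~\ref{prep:cor-commutator-PDE} to the related expression $A^{+,i}_{M,m}(\zeta\parallsigu\partial_u\phi^{+,m}_M)-(A^{+,i}_{M,m}\zeta)\parallsigu\partial_u\phi^{+,m}_M$ produces only the bound $M^\sigma$, which is worse than required by a factor $M^{2s-r}\approx M^{1/4}$. The finer commutator analysis outlined above is essential precisely because it exploits the localization of $A^{+,i}_{M,m}$ to $u$-frequencies $\leq M^{1-\sigma}$---information which Corollary~\ref{prep:cor-commutator-PDE} does not use.
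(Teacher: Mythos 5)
Your proposal is correct and follows essentially the same approach as the paper's proof: frequency localization of $\zeta$ to $P_{\leq M}^u\zeta$, conversion of the $\parallsigu$ paraproducts to plain products via Lemma~\ref{prep:lem-para-localized}, then the product-rule split into a favorable-derivative piece handled by Lemma~\ref{key:lemma-favorable} and a commutator piece handled by Lemma~\ref{prep:lem-commutator}. The only cosmetic difference is the order of operations — you apply the product rule to the paraproduct first and then convert to products, whereas the paper converts both paraproducts to products at once before splitting — and you work out the commutator bound via an explicit dyadic sum rather than a single application of \eqref{prep:eq-commutator-1} after lowering the norm to $\Cprod{\eta}{r-1}$; both routes yield an exponent safely below $r-2s+\sigma$.
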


\begin{proof}
Since $A^{+,i}_{M,m}$ is supported on $u$-frequencies $\lesssim M^{1-\sigma}$, we have that 
\begin{equation*}
    P_{>M}^u \zeta \parallsigu \partial_u \big( A_{M,m}^{+,i} \phi^{+,m}_M \big) =  \big( A_{M,m}^{+,i} P_{>M}^u\zeta \big) \parallsigu \partial_u \phi^{+,m}_M =0.
\end{equation*}
Thus, we can replace $\zeta$ by $P_{\leq M}^u\zeta$, which we now omit from our notation.
Using Lemma \ref{prep:lem-para-localized}, we obtain that 
\begin{align*}
&\big\|  \zeta \parallsigu \partial_u \big( A_{M,m}^{+,i} \phi^{+,m}_M \big) - P_{\leq M^{1-\sigma}}^u  \zeta\, \partial_u \big( A_{M,m}^{+,i} \phi^{+,m}_M \big) \big\|_{\Cprod{r-1}{r-1}} \\
+& \big\| \big( A_{M,m}^{+,i} \zeta \big) \parallsigu \partial_u \phi^{+,m}_M
-P_{\leq M^{1-\sigma}}^u \big( A_{M,m}^{+,i} \zeta \big) \, \partial_u \phi^{+,m}_M \big\|_{\Cprod{r-1}{r-1}} \\
\lesssim& M^{r-2s+\sigma} \| \zeta \|_{\Cprod{s}{r-1}} \| A_{M,m}^+ \|_{\Cprod{s}{s}} \| \phi^+ \|_{\C_u^s},
\end{align*}
which is an acceptable contribution to \eqref{key:eq-parall-mod}. Thus, it remains to prove that 
\begin{align*}
&\big\| P_{\leq M^{1-\sigma}}^u  \zeta\, \partial_u \big( A_{M,m}^{+,i} \phi^{+,m}_M \big) 
- P_{\leq M^{1-\sigma}}^u \big( A_{M,m}^{+,i} \zeta \big) \, \partial_u \phi^{+,m}_M
\big\|_{\Cprod{r-1}{r-1}} \\
\lesssim& M^{r-2s+\sigma} \| \zeta \|_{\Cprod{s}{r-1}} \| A_{M,m}^+ \|_{\Cprod{s}{s}} \| \phi^+ \|_{\C_u^s}.
\end{align*}
To this end, we decompose 
\begin{equation}\label{key:eq-parall-mod-p1}
\begin{aligned}
& P_{\leq M^{1-\sigma}}^u  \zeta\, \partial_u \big( A_{M,m}^{+,i} \phi^{+,m}_M \big) 
- P_{\leq M^{1-\sigma}}^u \big( A_{M,m}^{+,i} \zeta \big) \, \partial_u \phi^{+,m}_M \\
=& P_{\leq M^{1-\sigma}}^u  \zeta\, \partial_u  A_{M,m}^{+,i} \, \phi^{+,m}_M  +
[ A_{M,m}^{+,i}, P_{\leq M^{1-\sigma}}^u   ] \big( \zeta \big) \, \partial_u \phi^{+,m}_M. 
\end{aligned}
\end{equation}
The first summand in \eqref{key:eq-parall-mod-p1} can be estimated using the bilinear estimate (Proposition \ref{prep:prop-bilinear}) and Lemma \ref{key:lemma-favorable}. Using frequency-support considerations and the commutator estimate (Lemma \ref{prep:lem-commutator}), the second summand in \eqref{key:eq-parall-mod-p1} can be estimated by
\begin{align*}
\big\| [ A_{M,m}^{+,i}, P_{\leq M^{1-\sigma}}^u   ] \big( \zeta \big) \, \partial_u  \phi^{+,m}_M \big\|_{\Cprod{r-1}{r-1}} 
&\lesssim M^{r-1-\eta} \big\| [ A_{M,m}^{+,i}, P_{\leq M^{1-\sigma}}^u   ] \big( \zeta \big) \, \partial_u  \phi^{+,m}_M \big\|_{\Cprod{\eta}{r-1}} \\
&\lesssim M^{r-1-\eta} \big\| [ A_{M,m}^{+,i}, P_{\leq M^{1-\sigma}}^u   ] \big( \zeta \big)  \big\|_{\Cprod{\eta}{r-1}} \big\| \partial_u  \phi^{+,m}_M \big\|_{\C_u^{\eta}} \\
&\lesssim M^{r-1-\eta} M^{(1-\sigma)(\eta-s)} M^{\eta-s}  \| \zeta \|_{\Cprod{s}{r-1}} \| A_{M,m}^+ \|_{\Cprod{s}{s}} \| \phi^+ \|_{\C_u^s}. 
\end{align*}
Since $\eta\ll \sigma$, this is an acceptable contribution to \eqref{key:eq-parall-mod}.
\end{proof}

\subsection{Prototypical bilinear and trilinear estimates}\label{section:prot-bil-tril}

The next lemma essentially addresses the null form estimates in the $\tp$-$\ts$ and $\tm$-$\ts$ cases (see Section \ref{section:bilinear}). However, the same estimates can also be used to treat certain error terms and therefore are placed in this central location.  

\begin{lemma}[Prototypical bilinear estimates]\label{key:lemma-bilinear}
Let $1\leq i \leq \dimA$ and let $M$ and $N$  be dyadic scales. Furthermore, let  $\zeta\in \C_b^\infty(\R^{1+1}_{u,v})$. 
\begin{itemize}[leftmargin=20mm]
\item[$\tp$-$\ts$:]  It holds that 
\begin{equation}\label{key:eq-basic-bilinear-p1}
\begin{aligned}
\| \partial_u \big( A^{+,i}_{M,m} \phi^{+,m}_M\big) \, \zeta \|_{\Cprod{r-1}{r-1}} 
\lesssim M^{r-s} \| A^+_M\|_{\Cprod{s}{s}} \| \phi^+_M \|_{\C^s_u} \| \zeta \|_{\Cprod{r}{r-1}}. 
\end{aligned}
\end{equation}
By further restricting to the low$\times$high-interactions in the $u$-variable, we have the improved estimate 
\begin{equation}\label{key:eq-basic-bilinear-p2}
\begin{aligned}
\| \partial_u \big( A^{+,i}_{M,m} \phi^{+,m}_M \big) \paralesssimsigu \zeta  \|_{\Cprod{r-1}{r-1}} 
&\lesssim M^{1-r-s+\eta} \| A^+_M\|_{\Cprod{s}{s}} \| \phi^+_M \|_{\C^s_u} \| \zeta \|_{\Cprod{r}{r-1}} \\
&\lesssim M^{-\eta}\| A^+_M\|_{\Cprod{s}{s}} \| \phi^+_M \|_{\C^s_u} \| \zeta \|_{\Cprod{r}{r-1}}. 
\end{aligned}
\end{equation}
\item[$\tm$-$\ts$:] It holds that
\begin{equation}
\| \partial_u \big( A^{-,i}_{N,n} \phi^{-,n}_N \big) \zeta \|_{\Cprod{r-1}{r-1}} 
\lesssim N^{-\eta} \| A^-_N \|_{\Mod_N^-} \| \phi^-_N \|_{\C^s_v} \| \zeta \|_{\Cprod{1-r^\prime}{r-1}}.
\end{equation}
\end{itemize}
\end{lemma}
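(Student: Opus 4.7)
The plan is to apply the Leibniz rule $\partial_u(A \phi) = (\partial_u A) \phi + A (\partial_u \phi)$ in each case and estimate the two resulting pieces separately, using the favorable-derivative bound of Lemma \ref{key:lemma-favorable} for the first piece and frequency-localization considerations for the second. For the $\tp$-$\ts$ estimate \eqref{key:eq-basic-bilinear-p1}, the favorable piece $(\partial_u A^{+,i}_{M,m}) \phi^{+,m}_M$ is controlled directly by Lemma \ref{key:lemma-favorable}, which yields a $\Cprod{r-1}{s}$-bound of order $M^{r-2s}$; pairing with $\zeta \in \Cprod{r}{r-1}$ via Proposition \ref{prep:prop-bilinear} is then more than sufficient. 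The main piece $A^{+,i}_{M,m} \partial_u \phi^{+,m}_M$ is concentrated at $u$-frequency $\sim M$, since $A^+_M$ has $u$-frequency $\lesssim M^{1-\sigma} \ll M$ while $\partial_u \phi^+_M$ has $u$-frequency $\sim M$. The pointwise bound
\begin{equation*}
\|A^+_M \, \partial_u \phi^+_M\|_{L^\infty_u \C_v^s} \lesssim M^{1-s} \|A^+_M\|_{\Cprod{s}{s}} \|\phi^+\|_{\C_u^s},
\end{equation*}
combined with the $u$-frequency localization, gives $\|A^+_M \partial_u \phi^+_M\|_{\Cprod{r-1}{s}} \lesssim M^{r-s} \|A^+_M\|_{\Cprod{s}{s}} \|\phi^+\|_{\C_u^s}$; one more application of Proposition \ref{prep:prop-bilinear} with intermediate regularities $(\alpha_1, \alpha_2) = (r-1, s)$, admissible because $s + r - 1 > 0$, then yields \eqref{key:eq-basic-bilinear-p1}.

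For the low$\times$high-improvement \eqref{key:eq-basic-bilinear-p2}, I exploit the frequency-support restriction in $\paralesssimsigu$: since $\partial_u(A^+_M \phi^+_M)$ is concentrated at $u$-frequency $\sim M$, the $u$-frequency $L$ of $\zeta$ must satisfy $L \gtrsim M^{1/(1-\sigma)}$ for the paraproduct to be nonzero. Writing
\begin{equation*}
\partial_u(A^+_M \phi^+_M) \paralesssimsigu \zeta = \sum_{L \gtrsim M^{1/(1-\sigma)}} \partial_u(A^+_M \phi^+_M) \cdot P_L^u \zeta,
\end{equation*}
each summand has $u$-frequency $\sim L$, so its $\Cprod{r-1}{r-1}$-norm carries the weight $L^{r-1}$. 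Combining the pointwise bound from the previous paragraph with $\|P_L^u \zeta\|_{L^\infty_u \C_v^{r-1}} \lesssim L^{-r} \|\zeta\|_{\Cprod{r}{r-1}}$ produces a contribution $\lesssim M^{1-s} L^{-1}$ per summand; taking the supremum over $L \gtrsim M^{1/(1-\sigma)}$ yields $M^{1-s-1/(1-\sigma)} \leq M^{1-r-s+\eta}$, as claimed. The second inequality in \eqref{key:eq-basic-bilinear-p2} reduces to $2\eta \leq r + s - 1$, which is guaranteed by \eqref{prep:eq-parameter-1}.

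For the $\tm$-$\ts$ estimate, the key structural observation is that $\phi^{-,n}_N$ depends only on $v$, so the Leibniz rule collapses to $\partial_u(A^-_N \phi^-_N) = (\partial_u A^-_N) \phi^-_N$ -- the ``main'' term vanishes identically. The $A^-$-analogue of Corollary \ref{ansatz:cor-delta-Apm} provides $\|\partial_u A^-_N\|_{\Cprod{r-1}{s}} \lesssim N^{(1-\delta)(r-s)} \|A^-_N\|_{\Mod_N^-}$, where the factor $(1-\delta)$ encodes the crucial $\delta$-gain built into the modulation norm. Since $\phi^-_N$ has $v$-frequency $\sim N$ while $A^-_N$ has $v$-frequency $\lesssim N^{1-\sigma}$, the product concentrates at $v$-frequency $\sim N$; together with the non-resonant improvement of Proposition \ref{prep:prop-bilinear}.\ref{prep:item-nonres} in the $u$-variable (applicable because $\phi^-_N$ is constant in $u$, so the $u$-high$\times$high interaction is trivial), this yields the intermediate bound $\|(\partial_u A^-_N) \phi^-_N\|_{\Cprod{r-1}{1-r^\prime}} \lesssim N^{(1-\delta)(r-s) + (1-r^\prime) - s} \|A^-_N\|_{\Mod_N^-} \|\phi^-\|_{\C_v^s}$. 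Finally, multiplying by $\zeta$ via Proposition \ref{prep:prop-bilinear} with $(\alpha_1, \alpha_2, \beta_1, \beta_2) = (r-1, 1-r^\prime, 1-r^\prime, r-1)$ -- admissible because $\alpha_j + \beta_j = r - r^\prime = \eta > 0$ and $r + r^\prime < 2$ -- produces a net exponent $N^{1 - 2s + \eta - \delta(r-s)}$. The main (though routine) bookkeeping task, which I view as the principal obstacle, is to verify that this exponent is at most $-\eta$; this follows from the parameter hierarchy $\tfrac{1}{2} - s \ll \eta \ll \delta$, which ensures both $1 - 2s \ll \delta$ and $\eta \ll \delta(r-s)$ and therefore absorbs all non-gain contributions.
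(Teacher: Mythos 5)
For the first inequality \eqref{key:eq-basic-bilinear-p1} and for the $\tm$-$\ts$ case your argument is correct, though it routes through a different decomposition than the paper's. For \eqref{key:eq-basic-bilinear-p1} the paper does not split via the Leibniz rule: it bounds $\|\partial_u(A^+_M \phi^+_M)\|_{\Cprod{r-1}{s}} \lesssim M^{r-s}$ directly from the $u$-frequency localization of the product and feeds this into Proposition \ref{prep:prop-bilinear} paired with $\|\zeta\|_{\Cprod{1-r^\prime}{r-1}}$, which gives a slightly sharper form; your Leibniz split reaches the same intermediate bound, so both work. For the $\tm$-$\ts$ case the paper splits $\partial_u(A^-_N \phi^-_N)\zeta$ into $\paransimv$- and $\parasimv$-pieces, applying Proposition \ref{prep:prop-bilinear}.\ref{prep:item-nonres} and \ref{prep:item-res} with Corollary \ref{ansatz:cor-delta-Apm}, whereas you bound $(\partial_u A^-_N)\phi^-_N$ in $\Cprod{r-1}{1-r^\prime}$ using the fact that its $v$-frequency is concentrated at $\sim N$ and then close with a single application of Proposition \ref{prep:prop-bilinear}. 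That works; your mention of the non-resonant improvement in the $u$-variable is unnecessary (the general estimate already applies since $\alpha_1+\beta_1 = r-r^\prime = \eta > 0$), but it does no harm, and your parameter bookkeeping at the end is right.

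The improved estimate \eqref{key:eq-basic-bilinear-p2} is where your argument has a real gap. You assert that the paraproduct $\paralesssimsigu$ forces the $u$-frequency $L$ of $\zeta$ to satisfy $L \gtrsim M^{1/(1-\sigma)}$. This gets the modified relation backwards: by \eqref{prep:eq-frequency-scales-2}, $M \lesssim_\sigma L$ means $M \leq L^{1/(1-\sigma)}$, equivalently $L \geq M^{1-\sigma}$; you have written the condition for $M \ll_\sigma L$ instead. Consequently your sum omits the entire range $M^{1-\sigma} \lesssim L \lesssim M^{1/(1-\sigma)}$, which contains both the constrained high$\times$low regime $L < M$ and the high$\times$high regime $L \sim M$ — and the latter is precisely the dominant contribution. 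For $L \sim M$ the product lives at output $u$-frequencies $K_1 \lesssim M$, and since $r-1 < 0$ the supremum of $K_1^{r-1}$ times the pointwise bound $M^{1-s} L^{-r}$ gives $M^{1-s-r}$, which is exactly the claimed $M^{1-r-s+\eta}$ up to the $\eta$-slack. Your exponent $M^{1-s-1/(1-\sigma)}$, of order $M^{-1/2}$ rather than $M^{-1/4}$, appears better than the true answer only because the worst terms were dropped. The paper avoids this by applying Proposition \ref{prep:prop-bilinear}.\ref{prep:item-low-high} with $(\alpha_1,\beta_1)=(-r^\prime,r)$ together with $\|\partial_u(A^+_M\phi^+_M)\|_{\Cprod{-r^\prime}{s}} \lesssim M^{1-s-r^\prime}$; that proposition already carries all the $u$-frequency interactions admitted by $\paralesssimsigu$. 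If you prefer to argue by hand, you must include all $L \gtrsim M^{1-\sigma}$ and track the output $u$-frequency as $\sim \max(L,M)$ rather than $\sim L$.
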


\begin{proof}[Proof of Lemma \ref{key:lemma-bilinear}:] 
We treat the two cases separately. \newline

\emph{$\tp$-$\ts$-estimate:} Using the  bilinear estimate (Proposition \ref{prep:prop-bilinear}), we obtain that
\begin{equation}\label{key:eq-bilinear-proof1}
\begin{aligned}
\| \partial_u \big( A^{+,i}_{M,m} \phi^{+,m}_M \big)   \zeta \|_{\Cprod{r-1}{r-1}} 
&\lesssim \|  \partial_u \big( A^{+,i}_{M,m} \phi^{+,m}_M \big) \|_{\Cprod{r-1}{s}} \|  \zeta \|_{\Cprod{1-r^\prime}{r-1}} \\
&\lesssim M^{r-s}  \| A_M^+ \|_{\Cprod{s}{s}} \| \phi^+ \|_{\C^s_u} \| \zeta \|_{\Cprod{1-r^\prime}{r-1}}. 
\end{aligned}
\end{equation}
This proves the first estimate \eqref{key:eq-basic-bilinear-p1}. Using the low$\times$high-improvement of the bilinear estimate, i.e., Proposition \ref{prep:prop-bilinear}.\ref{prep:item-low-high}, we obtain that 
\begin{align*}
\| \partial_u \big( A^{+,i}_{M,m} \phi^{+,m}_M \big) \paralesssimsigu \zeta  \|_{\Cprod{r-1}{r-1}}  
&\lesssim \|  \partial_u \big( A^{+,i}_{M,m} \phi^{+,m}_M \big) \|_{\Cprod{-r^\prime}{s}} \| \zeta \|_{\Cprod{r}{r-1}} \\
&\lesssim M^{1-s-r^\prime}\| A^+_M\|_{\Cprod{s}{s}} \| \phi^+_M \|_{\C^s_u} \| \zeta \|_{\Cprod{r}{r-1}}.
\end{align*}

\emph{$\tm$-$\ts$-estimate:} We decompose 
\begin{align*}
 \partial_u \big( A^{-,i}_{N,n} \phi^{-,n}_N \big)  \zeta  
 = \big( \partial_u A^{-,i}_{N,n} \phi^{-,n}_N \big) \paransimv \zeta  + \big( \partial_u A^{-,i}_{N,n} \phi^{-,n}_N \big) \parasimv \zeta.
\end{align*}
 Using the non-resonant improvement in the bilinear estimate, i.e., Proposition \ref{prep:prop-bilinear}.\ref{prep:item-nonres}, we have that 
\begin{align*}
\big\| \big( \partial_u A^{-,i}_{N,n} \phi^{-,n}_N \big) \paransimv \zeta  \big\|_{\Cprod{r-1}{r-1}} 
&\lesssim \|  \partial_u A^{-,i}_{N,n} \phi^{-,n}_N  \|_{\Cprod{r-1}{\eta}} \| \zeta \|_{\Cprod{1-r^\prime}{r-1}}  \\
&\lesssim \|  \partial_u A^{-,i}_{N,n}\|_{\Cprod{r-1}{s}} \| \phi^{-,n}_N \|_{\C_v^{\eta}}  \| \zeta \|_{\Cprod{1-r^\prime}{r-1}}  \\
&\lesssim M^{r-2s+\eta} \| A_M^- \|_{\Mod_M^-} \| \phi^- \|_{\C^s_v} \| \zeta \|_{\Cprod{1-r^\prime}{r-1}} . 
\end{align*}
Since $r-2s\approx -1/4$, this term is acceptable. Using the resonant improvement of the bilinear estimate, i.e., Proposition \ref{prep:prop-bilinear}.\ref{prep:item-res}, and Corollary \ref{ansatz:cor-delta-Apm}, it holds that 
\begin{align*}
\| \big( \partial_u A^{-,i}_{N,n} \phi^{-,n}_N \big) \parasimv \zeta \|_{\Cprod{r-1}{r-1}} 
&\lesssim 
\| \partial_u A^{-,i}_{N,n} \phi^{-,n}_N \|_{\Cprod{r-1}{-r^\prime}} \| \zeta \|_{\Cprod{1-r^\prime}{r-1}} \\
&\lesssim N^{-s-r^\prime} \| A^{-,i}_{N,n} \|_{\Cprod{r}{s}} \| \phi^{-,n}_N \|_{\C_v^s}  \| \zeta \|_{\Cprod{1-r^\prime}{r-1}}  \\
&\lesssim N^{(1-\delta)(r-s)-s-r^\prime}  \| A^-_N \|_{\Mod_N^-} \| \phi^-_N \|_{\C^s_v} \| \zeta \|_{\Cprod{1-r^\prime}{r-1}}. 
\end{align*}
Since 
\begin{equation*}
(1-\delta)(r-s)-s-r^\prime = 1 -2s + \eta - \delta (r-s) \leq - \delta/10,
\end{equation*}
this is acceptable. 

\end{proof}

We now present a prototypical trilinear estimate in the initial data $\phi^\pm$.  

\begin{lemma}[Prototypical trilinear estimate]\label{bilinear:lemma-cubic}
Let $K,M$, and $N$ be dyadic scales and let $1\leq k,m,n \leq \dimA$. Then, it holds that 
\begin{equation}\label{bilinear:lemma-cubic-1}
\begin{aligned}
\| \phi_K^{+,k}(u) \partial_u \phi^{+,m}_M(u)   \partial_v \phi^{+,n}_N(v) \|_{\Cprod{r-1}{r-1}} \lesssim K^{-s} M^{r-s} N^{r-s} \cdot \| (\phi^+, \phi^-) \|_{\Ds}^3. 
\end{aligned}
\end{equation}
In particular, the condition $K\geq \max(M,N)^{1-\sigma}$ implies the stronger estimate  
\begin{equation}\label{bilinear:lemma-cubic-2}
\begin{aligned}
\| \phi_K^{+,k}(u) \partial_u \phi^{+,m}_M(u)   \partial_v \phi^{+,n}_N(v) \|_{\Cprod{r-1}{r-1}} \lesssim (KMN)^{-\eta}\cdot \| (\phi^+, \phi^-) \|_{\Ds}^3. 
\end{aligned}
\end{equation}
A similar estimate also holds with $\phi^+_K(u)$ replaced by $\phi_K^-(v)$. 
\end{lemma}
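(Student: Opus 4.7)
\textbf{Proof plan for Lemma \ref{bilinear:lemma-cubic}.} The key observation is that the product splits as a tensor product in $(u,v)$: the first two factors $\phi_K^{+,k}(u)\,\partial_u\phi^{+,m}_M(u)$ depend only on $u$, while $\partial_v\phi^{+,n}_N(v)$ depends only on $v$. Since Littlewood--Paley projections in $u$ and $v$ act on disjoint variables, the $\Cprod{r-1}{r-1}$-norm factorizes exactly:
\begin{equation*}
\| f(u)\, g(v) \|_{\Cprod{r-1}{r-1}} = \| f \|_{\C_u^{r-1}} \cdot \| g \|_{\C_v^{r-1}}.
\end{equation*}
For the $v$-factor I would use the crude Bernstein-type bound
\begin{equation*}
\| \partial_v \phi^{+,n}_N \|_{\C_v^{r-1}} \lesssim N^{r-s}\, \| \phi^{+} \|_{\C^s} \lesssim N^{r-s}\, \| (\phi^+,\phi^-) \|_{\Ds},
\end{equation*}
which only uses the regularity component of the enhanced data. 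For the $u$-factor, I would invoke the probabilistic high$\times$high$\to$low bound encoded in $\|(\phi^+,\phi^-)\|_{\Ds}$ via Lemma~\ref{ansatz:lemma-data} (with $\pm_1=\pm_2=+$):
\begin{equation*}
\| \phi_K^{+,k}\, \partial_u \phi_M^{+,m} \|_{\C_u^{r-1}} \lesssim K^{-s} N^{r-s}\big|_{N\mapsto M} \cdot \| (\phi^+,\phi^-) \|_{\Ds}^2 = K^{-s} M^{r-s} \| (\phi^+,\phi^-) \|_{\Ds}^2.
\end{equation*}
Multiplying the two factors immediately yields \eqref{bilinear:lemma-cubic-1}.

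For the improved estimate \eqref{bilinear:lemma-cubic-2}, I would assume without loss of generality that $M\geq N$, so that the hypothesis $K\geq \max(M,N)^{1-\sigma}$ reads $K\geq M^{1-\sigma}$. The plan is to trade a power of $K^{-s}$ for a power of $M^{-(1-\sigma)s}$ by writing $K^{-s}=K^{-\eta}\,K^{-(s-\eta)}$ and using $K^{-(s-\eta)}\leq M^{-(1-\sigma)(s-\eta)}$. Substituting back gives
\begin{equation*}
K^{-s} M^{r-s} N^{r-s} \leq K^{-\eta} M^{r-2s+\eta+\sigma(s-\eta)} N^{r-s},
\end{equation*}
and combining with $N^{r-s}\leq M^{r-s}$ yields $K^{-\eta} M^{2r-3s+\eta+\sigma(s-\eta)}$. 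Using the parameter relations $0<\tfrac12-s\ll\eta\ll\delta$, $\sigma=100\delta$, $\delta\ll\tfrac34-r$, one computes $2r-3s=-2(\tfrac34-r)+3(\tfrac12-s)<0$ with room to absorb the corrections $\eta+\sigma(s-\eta)$, so the exponent is bounded above by $-\epsilon'$ for some universal $\epsilon'>0$ (depending only on the fixed parameters $s,r,\eta,\delta$). Finally, $M^{-\epsilon'}\leq M^{-\eta} N^{-\eta}$ using $M\geq N\geq 1$ and $\epsilon'\geq 2\eta$, yielding $(KMN)^{-\eta}$ as required.

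The statement for $\phi^+_K(u)$ replaced by $\phi^-_K(v)$ is handled by the symmetric tensor decomposition into $\partial_u\phi^{+,m}_M(u)$ times $\phi_K^{-,k}(v)\,\partial_v\phi^{+,n}_N(v)$; the $u$-factor is estimated by the trivial bound $M^{r-s}\|\phi^+\|_{\C^s}$ and the $v$-factor by Lemma~\ref{ansatz:lemma-data} with $\pm_1=-, \pm_2=+$, which delivers the same power structure. This is essentially a bookkeeping lemma: no conceptual obstacle is expected, and the only care required is in verifying that the parameter hierarchy \eqref{prep:eq-parameter-1}--\eqref{prep:eq-parameter-3} ensures strict negativity of $2r-3s+\eta+\sigma(s-\eta)$, which is the one step where it is tempting to lose a power.
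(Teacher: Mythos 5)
Your proposal is correct and takes essentially the same approach as the paper: factorize the $\Cprod{r-1}{r-1}$-norm across the tensor-product structure in $(u,v)$, apply Lemma~\ref{ansatz:lemma-data} to the $u$-factor $\phi_K^{+,k}\partial_u\phi^{+,m}_M$, bound the $v$-factor by Bernstein, and deduce the second estimate from the parameter hierarchy. The paper leaves the last step as a one-line appeal to \eqref{prep:eq-parameter-1}--\eqref{prep:eq-parameter-2}, so your explicit bookkeeping of the exponent $2r-3s+\eta+\sigma(s-\eta)$ is simply filling in that remark, not a different argument.
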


As will be clear from the proof, Lemma \ref{bilinear:lemma-cubic} easily follows from the definition of $\Ds$. Despite its simplicity, the estimate is essential for our analysis. Most importantly, it shows that our Ansatz does not need to include an explicit cubic term in $\phi^\pm$. 

\begin{proof}
Using Lemma \ref{ansatz:lemma-data}, it holds that 
\begin{align*}
\| \phi_K^{+,k} \partial_u \phi^{+,m}_M   \partial_v \phi^{+,n}_N \|_{\Cprod{r-1}{r-1}} 
=  \| \phi_K^{+,k} \partial_u \phi^{+,m}_M \|_{\C_u^{r-1}} \| \partial_v \phi^{+,n}_N \|_{\C_v^{r-1}} 
\lesssim K^{-s} M^{r-s} N^{r-s} \| (\phi^+, \phi^-) \|_{\Ds}^3. 
\end{align*}
This implies the first estimate. If $K\geq \max(M,N)^{1-\sigma}$, the desired second estimate follows directly from our parameter conditions \eqref{prep:eq-parameter-1} and \eqref{prep:eq-parameter-2}. 
\end{proof}

\subsection{Unfortunate resonances}\label{section:unfortunate}

In this subsection, we describe and estimate frequency-resonances between the initial data $\phi^\pm$ and a smoother function $\zeta$. These frequency-resonances between $\phi^\pm$ and $\zeta$ are quite different from the frequency-resonances between $\phi^\pm$ and itself. Using only the regularity assumption $\phi^\pm \in \C^s_x$, the resonant product $\phi^{\pm,i} \parasim \partial_x\phi^{\pm,j}$ is ill-defined. For our geometric random data, however, the resonant portion has been controlled using probabilistic arguments (see Section \ref{section:Brownian}). In fact, Proposition~\ref{prep:prop-brownian-path-approximation} and Proposition~\ref{prep:prop-velocity-approximation} yield significant decay of the high$\times$high-para-product in the high frequency scale. In contrast, the resonant portion $\phi^{-,k} \parasimv \partial_v \zeta^n$ is well-defined using only the regularity assumptions $\phi^- \in \C_v^s$ and $\zeta \in \Cprod{s}{r}$. However, it exhibits worse decay in the high frequency scale than $\phi^{\pm,i} \parasim \partial_x\phi^{\pm,j}$. The resulting contribution to the nonlinearity is a $\delta$-power away from $\Cprod{r-1}{r-1}$ (see Proposition \ref{nonlinear:prop-unfortunate}) and therefore has to be absorbed in our Ansatz. 
While this does not create any serious (conceptual) difficulties,  it feels unfortunate to barely miss  $\Cprod{r-1}{r-1}$, which is why we call this term the unfortunate resonance.

\begin{lemma}[Unfortunate resonances]\label{key:lemma-unfortunate}~
 Let $1\leq k,n \leq \dimA$, let $K$ be a dyadic scale, and let $\zeta \in \Cprod{s}{r}$. The unfortunate resonance satisfies
\begin{equation}\label{key:eq-unfortunate-v1}
\big\| \phi_K^{-,k} \paradownv \partial_v \zeta^n \big \|_{\Cprod{s}{r-1}} \lesssim K^{1-s-r} \| \phi^- \|_{\C^s_v} \| \zeta \|_{\Cprod{s}{r}}. 
\end{equation}
After eliminating the unfortunate resonances, the product obeys the better estimate 
\begin{equation}\label{key:eq-unfortunate-v2}
    \big\| \phi_K^{-,k} \partial_v \zeta^n - \phi_K^{-,k} \paradownv \partial_v \zeta^n \big \|_{\Cprod{s}{r-1}} \lesssim K^{-(r-s)-\sigma/6} \| \phi^- \|_{\C^s_v} \| \zeta \|_{\Cprod{s}{r}}. 
\end{equation}
In total, the product satisfies the combined estimate
\begin{equation}\label{key:eq-unfortunate-v3}
    \big\| \phi_K^{-,k}  \partial_v \zeta^n  \big \|_{\Cprod{s}{r-1}} \lesssim K^{1-s-r} \| \phi^- \|_{\C^s_v} \| \zeta \|_{\Cprod{s}{r}}.
\end{equation}
\end{lemma}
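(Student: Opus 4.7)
My plan is to establish \eqref{key:eq-unfortunate-v1} first by a direct $L^\infty$-computation that exploits the high-high-low structure of $\paradownv$, then to establish \eqref{key:eq-unfortunate-v2} by a careful decomposition of $\phi_K^{-,k}\partial_v\zeta^n$ into para-products in $v$, and finally to deduce \eqref{key:eq-unfortunate-v3} from the first two via the triangle inequality. The common structural observation is that $\phi_K^{-,k}$ depends only on $v$, so $P_{M_1}^u$ commutes with multiplication by $\phi_K^{-,k}$, and all estimates reduce to pairing $\|\phi_K^{-,k}\|_{L^\infty_v}\lesssim K^{-s}\|\phi^-\|_{\C_v^s}$ with the pointwise bound $\|P_{M_1}^u P_{N}^v \partial_v\zeta^n\|_{L^\infty}\lesssim M_1^{-s}N^{1-r}\|\zeta\|_{\Cprod{s}{r}}$ coming directly from the definition of the $\Cprod{s}{r}$-norm.

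For \eqref{key:eq-unfortunate-v1}, Definition \ref{prep:def-paraproduct} together with the $v$-frequency support of $\phi_K^{-,k}$ forces both factors to sit at $v$-frequency $\sim K$ and the output at $v$-frequency $\lesssim K^\sigma$. The pairing above gives
\[
M_1^s M_2^{r-1}\bigl\|P_{M_1}^u P_{M_2}^v(\phi_K^{-,k}\paradownv\partial_v\zeta^n)\bigr\|_{L^\infty} \lesssim \mathbf{1}\{M_2\lesssim K^\sigma\}\, M_2^{r-1} K^{1-s-r}\|\phi^-\|_{\C_v^s}\|\zeta\|_{\Cprod{s}{r}},
\]
and since $r-1<0$ the supremum over $M_2\geq 1$ is attained at $M_2=1$, yielding \eqref{key:eq-unfortunate-v1}. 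For \eqref{key:eq-unfortunate-v2}, I decompose the difference as $\phi_K^{-,k}\parallv\partial_v\zeta^n + \phi_K^{-,k}\paraggv\partial_v\zeta^n + (\phi_K^{-,k}\parasimv\partial_v\zeta^n - \phi_K^{-,k}\paradownv\partial_v\zeta^n)$. The low-high piece has output $v$-frequency $\gg K$ and contributes $K^{-s}$; the high-low piece has output $v$-frequency $\sim K$, where the sum $\sum_{N\ll K}N^{1-r}\lesssim K^{1-r}$ converges because $1-r>0$, again giving $K^{-s}$. The delicate piece is the high-high minus low-output part, which sits at output $v$-frequency $M_2\in(K^\sigma,K]$; the same pairing yields $M_2^{r-1}K^{1-s-r}$, and maximizing in $M_2$ at the lower endpoint $M_2\sim K^\sigma$ (since $r-1<0$) produces $K^{1-s-r-\sigma(1-r)}$. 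I will then verify that all three contributions are $\leq K^{-(r-s)-\sigma/6}$ by using the parameter hierarchy $1/2-s\ll\delta\ll 3/4-r\ll 1$: the first two require $2s-r\geq \sigma/6$, while the third reduces to $1-2s\leq \sigma(5/6-r)$, and both inequalities follow with room from the hierarchy since $1-r>1/4$ and $1-2s\ll \delta\ll \sigma$.

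Finally, \eqref{key:eq-unfortunate-v3} follows from the triangle inequality and the two previous estimates upon noting $-(r-s)-\sigma/6<1-s-r$, which is equivalent to $2s<1+\sigma/6$ and holds thanks to $s<1/2$. The main obstacle will be bookkeeping the exponents on the critical high-high minus low-output piece of \eqref{key:eq-unfortunate-v2}: the bound barely misses by a power $\sigma(1-r)$, and the precise factor $\sigma/6$ demanded in the statement has to be extracted from the interplay between $\sigma=100\delta$, the negative $v$-regularity $r-1$, and the geometric sum over $M_2\in(K^\sigma,K]$.
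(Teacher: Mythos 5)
Your proof is correct and follows essentially the same route as the paper's: for \eqref{key:eq-unfortunate-v1} you exploit the $v$-frequency localization of $\zeta$ to $\sim K$ and the negativity of $r-1$; for \eqref{key:eq-unfortunate-v2} you split off the non-resonant-in-$v$ piece (which the paper handles in one stroke via $\paransimv$ and Proposition \ref{prep:prop-bilinear}.\ref{prep:item-nonres}, rather than treating $\parallv$ and $\paraggv$ separately as you do) and then, for the remaining $\parasimv-\paradownv$ piece, gain $K^{\sigma(r-1)}$ from the output $v$-frequency being $\gtrsim K^\sigma$; and \eqref{key:eq-unfortunate-v3} is the triangle inequality. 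The only cosmetic difference is that you carry out the dyadic bookkeeping by hand where the paper invokes its prepackaged bilinear estimates, which costs the paper a harmless extra factor $K^\eta$ on the non-resonant piece.
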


\begin{remark}\label{key:remark-unfortunate}
The gain in \eqref{key:eq-unfortunate-v2} is a result of the $v$-frequency of $\phi_K^{-,k} \partial_v \zeta^n - \phi_K^{-,k} \paradownv \partial_v \zeta^n$, which is bounded below by $K^\sigma$. After adjusting the norm on the left-hand side in \eqref{key:eq-unfortunate-v1} or \eqref{key:eq-unfortunate-v2}, a similar gain is possible by inserting frequency-projections in the $u$-variable. For instance, it holds that
\begin{equation*}
 \big\| P_{\geq K^\sigma}^u \big( \phi_K^{-,k}  \partial_v \zeta^n \big) \big \|_{\Cprod{1-r^\prime}{r-1}} \lesssim K^{\sigma (1-r^\prime-s)} \big\|  \phi_K^{-,k}  \partial_v \zeta^n  \big \|_{\Cprod{s}{r-1}}
 \lesssim K^{-(r-s)-\sigma/6} \| \phi^- \|_{\C^s_v} \| \zeta \|_{\Cprod{s}{r}}. 
 \end{equation*}
\end{remark}

\begin{proof}
We first prove \eqref{key:eq-unfortunate-v1}, which is the easier estimate. It holds that
\begin{align*}
   \big\| \phi_K^{-,k} \paradownv \partial_v \zeta^n \big \|_{\Cprod{s}{r-1}} 
    &\lesssim \| \phi_K^{-,k} \paradownv \partial_v  \widetilde{P}_K^v \zeta^n \big \|_{\Cprod{s}{r-1}} \\
    &\lesssim \| \phi_K^{-,k} \|_{L_v^\infty} \| \partial_v  \widetilde{P}_K^v \zeta^n \big \|_{L_v^\infty \C_u^s} \\
    &\lesssim K^{1-s-r} \| \phi^- \|_{\C_v^s} \| \zeta \|_{\Cprod{s}{r}}. 
\end{align*}
We now turn to the non-resonant estimate \eqref{key:eq-unfortunate-v2}. To this end, we decompose 
\begin{align}
 \phi_K^{-,k} \partial_v \zeta^n - \phi_K^{-,k} \paradownv \partial_v \zeta^n  
=& \phi_K^{-,k} \paransimv \partial_v \zeta^n  \label{key:eq-unfortunate-p1}\\
+& \phi_K^{-,k} \parasimv \partial_v \zeta^n  -  \phi_K^{-,k} \paradownv \partial_v \zeta^n. \label{key:eq-unfortunate-p2}
\end{align}
The contribution of \eqref{key:eq-unfortunate-p1} can be estimated using Proposition \ref{prep:prop-bilinear}.\ref{prep:item-nonres}, which yields 
\begin{align*}
  \Big\| \phi_K^{-,k} \paransimv \partial_v \zeta^n \Big\|_{\Cprod{s}{r-1}} 
  &\lesssim \| \phi^{-,k}_K \|_{\C_v^\eta} \| \partial_v \zeta \|_{\Cprod{s}{r-1}} \\
&\lesssim K^{-(s-\eta)}  \| \phi^- \|_{\C_v^s}\| \zeta \|_{\Cprod{s}{r}}.
\end{align*}
Since $-s\approx -1/2$ and $-(r-s)\approx-1/4$, this term is acceptable. The contribution of \eqref{key:eq-unfortunate-p2} is estimated by 
\begin{align*}
  \Big\| \phi_K^{-,k} \parasimv \partial_v \zeta^n  -  \phi_K^{-,k} \paradownv \partial_v \zeta^n \Big\|_{\Cprod{s}{r-1}} 
  &\lesssim K^{\sigma (r-1)} \big\| \phi_K^{-,k} \parasimv \partial_v \zeta^n\big\|_{\Cprod{s}{0}} \\
  &\lesssim K^{\sigma (r-1)} \big\| \phi_K^{-,k} \big\|_{L_v^\infty}
  \| \widetilde{P}_K^v \partial_v \zeta^n \big\|_{L_v^\infty \C_u^s} \\
  &\lesssim K^{\sigma (r-1)-s-(r-1)} \| \phi^- \|_{\C_v^s} \| \zeta \|_{\Cprod{s}{r-1}}. 
\end{align*}
Since
\begin{equation*}
\sigma (r-1) -s - (r-1) = - (r-s) + 1-2s - \sigma (1-r) \leq -(r-s) - \sigma/6,
\end{equation*}
this term is acceptable. The combined estimate \eqref{key:eq-unfortunate-v3} follows directly from \eqref{key:eq-unfortunate-v1}, \eqref{key:eq-unfortunate-v2}, and the triangle inequality. 
\end{proof}

\subsection{\texorpdfstring{Generalizations of $\tp$ and $\tpm$}{Generalizations of our terms}}\label{section:generalizations}
In several (but not all) estimates below, it is possible to treat the $\tp$ and $\tpm$-terms  simultaneously, which makes the case analysis more efficient. This essentially means that only the right-moving component of the wave is relevant. The imaginative reader may think of the next lemma as a violation of common rules for pedestrians, since it allows us to look in only one direction before safely crossing a street.

\begin{lemma}[``Looking in only one direction"]\label{key:lemmma-direction}~
Let $1\leq i \leq \dimA$, $M,M_1,M_2$ be dyadic scales satisfying $M=M_1\sim_\delta M_2$, and let
\begin{equation*}
\zeta^{+,i}(u,v)= A_{M,m}^{+,i}(u,v) \phi^{+,m}_M(u) \qquad \text{or} \qquad \zeta^{+,i}(u,v) = B_{M_1,M_2,m_1m_2}^i(u,v) \phi_{M_1}^{+,m_1}(u) \phi_{M_2}^{-,m_2}(v). 
\end{equation*}
Then, it holds that 
\begin{equation*}
\zeta^{+,i}= E_{M,m}^{+,i}(u,v) \phi_M^{+,m}(u),
\end{equation*}
where $E_M^{+,i}$ satisfies either the estimates
\begin{equation}\label{key:eq-direction-11}
\begin{aligned}
\| E_{M}^{+,i} \|_{\Cprod{s}{s}} &\lesssim \| A_M^+ \|_{\Cprod{s}{s}}, \\
\| E_{M}^{+,i} \|_{\Cprod{s}{r}} &\lesssim M^{(1-\delta)(r-s)} \| A_M^+ \|_{\Mod_N^+},
\end{aligned}
\end{equation}
or the estimates
\begin{equation}\label{key:eq-direction-12}
\begin{aligned}
    \| E_{M}^{+,i} \|_{\Cprod{s}{s}} &\lesssim \| B_{M_1,M_2}\|_{\Cprod{s}{s}} \| \phi^-_{M_1}\|_{\C_v^s}, \\
    \| E_{M}^{+,i} \|_{\Cprod{s}{r}} &\lesssim M^{(1+2\delta)(r-s)}
    \| B_{M_1,M_2}\|_{\Cprod{s}{s}} \| \phi^-_{M_1}\|_{\C_v^s}.
\end{aligned}
\end{equation}
\end{lemma}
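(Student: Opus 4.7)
The plan is to deal with the two cases of the lemma separately, both of which are essentially direct consequences of what has already been set up. In the first case, where $\zeta^{+,i} = A_{M,m}^{+,i} \phi_M^{+,m}$, the natural choice is simply $E_{M,m}^{+,i} := A_{M,m}^{+,i}$. The first bound in \eqref{key:eq-direction-11} is then immediate from the definition of $\Mod_M^+$. For the second bound in \eqref{key:eq-direction-11}, I would split $A_{M,m}^{+,i} = P_{\lesssim M^{1-\delta}}^v A_{M,m}^{+,i} + P_{\gg M^{1-\delta}}^v A_{M,m}^{+,i}$, upgrade the low-$v$-frequency piece from $\Cprod{s}{s}$ to $\Cprod{s}{r}$ at the price of $M^{(1-\delta)(r-s)}$, and leave the high-$v$-frequency piece alone (it is already in $\Cprod{s}{r}$ by definition of $\Mod_M^+$). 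This is exactly the argument behind Corollary~\ref{ansatz:cor-delta-Apm}.

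In the second case, where $\zeta^{+,i} = B_{M_1,M_2,m_1 m_2}^i \phi_{M_1}^{+,m_1} \phi_{M_2}^{-,m_2}$ with $M = M_1 \sim_\delta M_2$, the only sensible choice is to absorb the left-moving factor into the modulation, that is, to set
\begin{equation*}
E_{M,m_1}^{+,i}(u,v) := B_{M_1, M_2, m_1 m_2}^i(u,v) \, \phi_{M_2}^{-,m_2}(v), \qquad m := m_1.
\end{equation*}
The key observation is that $\phi_{M_2}^{-,m_2}(v)$ is independent of $u$ and localized at $v$-frequency $\sim M_2$, so that for any $\alpha_1 \in \R$ one has $\| \phi_{M_2}^{-,m_2}\|_{\Cprod{\alpha_1}{s}} \lesssim \| \phi^-\|_{\C_v^s}$. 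The $\Cprod{s}{s}$-bound in \eqref{key:eq-direction-12} is then a direct application of the general bilinear estimate from Proposition~\ref{prep:prop-bilinear}.\ref{prep:item-general} with $(\alpha_1,\alpha_2) = (s,s)$ for $B_{M_1,M_2}$ and $(\beta_1,\beta_2)$ with $\beta_1$ large, $\beta_2 = s$ for $\phi^-_{M_2}$.

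For the $\Cprod{s}{r}$-bound in \eqref{key:eq-direction-12}, the trick is a frequency-support consideration in the $v$-variable: $B_{M_1,M_2}^i$ is supported on $v$-frequencies $\lesssim M_2^{1-\sigma}$ by the frequency-support condition \eqref{ansatz:eq-condition-pm}, while $\phi_{M_2}^{-,m_2}(v)$ is supported on $v$-frequencies $\sim M_2$. Since $\sigma = 100\delta$ ensures $M_2^{1-\sigma} \ll M_2$, the product $E_{M,m_1}^{+,i}$ is supported on $v$-frequencies $\sim M_2$, which lets us trade regularity for a frequency factor
\begin{equation*}
\| E_M^{+,i}\|_{\Cprod{s}{r}} \lesssim M_2^{r-s} \| E_M^{+,i}\|_{\Cprod{s}{s}}.
\end{equation*}
The already-established $\Cprod{s}{s}$ bound together with the relation $M_2 \leq M_1^{1/(1-\delta)} \leq M^{1+2\delta}$ (for $\delta$ sufficiently small), which follows from $M_1 \sim_\delta M_2$, then gives the claimed factor $M^{(1+2\delta)(r-s)}$. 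There is no serious obstacle in either case; the content of the lemma is really just the bookkeeping observation that the right-moving factor $\phi_M^{+,m}$ can be pulled out of both kinds of terms in the Ansatz in a uniform manner, with the left-moving wave in the $\tpm$-term playing a passive role that can be absorbed into the effective modulation.
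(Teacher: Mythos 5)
Your proof is correct and follows essentially the same route as the paper's: the paper also sets $E_{M,m}^{+,i}=A_{M,m}^{+,i}$ in the $\tp$ case (invoking Corollary~\ref{ansatz:cor-delta-Apm} directly for the second line rather than re-deriving the low/high $v$-frequency split) and $E_{M,m}^{+,i}=B_{M_1,M_2,m_1m_2}^i\phi_{M_2}^{-,m_2}$ in the $\tpm$ case, using the bilinear estimate for the $\Cprod{s}{s}$-bound and the $v$-frequency-support condition on $B$ to upgrade to $\Cprod{s}{r}$ at the cost of $M_2^{r-s}\leq M^{(1+2\delta)(r-s)}$.
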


\begin{proof}
We treat the $\tp$ and $\tpm$ cases separately. In the $\tp$ case, we set $E_{M,m}^{+,i}:= A^{+,i}_{M,m}$. Then, the first line in \eqref{key:eq-direction-11} is trivial and the second line in \eqref{key:eq-direction-11} follows from Corollary \ref{ansatz:cor-delta-Apm}. 

We now turn to the $\tpm$-case. Here, we define $E_{M,m}^{+,i}=B_{M_1,M_2,m_1m_2}^i \phi^{-,m_2}_{M_2}$, where $M_1=M$ and $m_1=m$. The first line in \eqref{key:eq-direction-12} then follows directly from the bilinear estimate (Proposition \ref{prep:prop-bilinear}). Due to the frequency-support conditions on $B$, we have that 
\begin{equation*}
\big\| E_{M,m}^{+,i}\big\|_{\Cprod{s}{r}}  \lesssim M_2^{r-s}  \big\| E_{M,m}^{+,i}\big\|_{\Cprod{s}{s}}.
\end{equation*}
Since $M_2 \leq M_1^{1/(1-\delta)}=M^{1/(1-\delta)}$, this yields the second line in \eqref{key:eq-direction-12}. 
\end{proof}

\section{Bilinear analysis}\label{section:bilinear}

In this section, we analyze two different bilinear expressions. In Section \ref{section:nullform}, we analyze the null-form $\partial_u \phi^i \partial_v \phi^j$, which is the most important component of the nonlinearity. In Section \ref{section:product}, we analyze the product $\phi^k \partial_u \phi^i$. Together with Bony's para-linearization (Lemma \ref{prep:lemma-bony}), this will yield sufficient information on $\SecondC{k}{ij}(\phi) \partial_u \phi^i$. 

\subsection{\protect{\texorpdfstring{The null-form $\partial_u \phi^i \partial_v \phi^j$}{The null-form}}}\label{section:nullform}

In this section, we analyze the null-form $\partial_u \phi^i \partial_v \phi^j$. The main goal is to identify the terms in $\partial_u \phi^i \partial_v \phi^j$ which belong to $\Cprod{r-1}{r-1}$, since they can be treated as smooth remainders. The rough terms which cannot be placed in $\Cprod{r-1}{r-1}$ will later be absorbed in our Ansatz, i.e, the terms $A_M^+ \phi_M^+$, $A_M^- \phi^-_M$, and $B_{M_1,M_2} \phi^{+}_{M_1} \phi^{-}_{M_2}$. Unfortunately, the argument requires a tedious case-by-case analysis, which is displayed in Figure \ref{figure:cases}. The available estimates are illustrated through the graphical symbols \raisebox{2pt}{\scalebox{0.6}{\crossmark}}, $\paralesssimsigu$, $\scalebox{1.25}{$\lnot$}\paradownv$, and 
\raisebox{2pt}{\scalebox{0.7}{\checkmark}}, which will now be explained. 

The main term $\tp$-$\tm$ cannot be controlled in $\Cprod{r-1}{r-1}$. While certain favorable sub-terms can be removed, it is more efficient to postpone a detailed analysis until the pre-factor $\SecondC{k}{ij}(\phi)$ is included. The graphical symbol \raisebox{2pt}{\scalebox{0.6}{\crossmark}} illustrates that this term is not controlled here. The cases $\tp$-$\tp$, $\tp$-$\tpm$, and $\tp$-$\ts$ can be controlled as long as the second argument is restricted to high $u$-frequencies. Thus, we can treat the $\paralesssimsigu$-portion of this term, which is also used as a graphical symbol. In the $\tpm$-$\ts$-term, the only problem lies in the unfortunate resonance between $\phi^-$ and $\partial_v \psi$. Since the unfortunate resonance is given by $\paradownv$, we use $\scalebox{1.25}{$\lnot$}\paradownv$ to illustrate this case. The remaining cases (on or above the diagonal in Figure \ref{figure:cases}) can be controlled in $\Cprod{r-1}{r-1}$. These terms will not need to be considered further and therefore deserve the symbol \raisebox{2pt}{\scalebox{0.7}{\checkmark}}. 

Finally, we note that all cases below the diagonal in Figure \ref{figure:cases} have been left empty. The results in these cases can be obtained from the symmetry in the $u$ and $v$-variables and do not need to be considered separately.

\begin{figure}[t]
\begin{tabular}{P{1cm}|P{1cm}|P{1cm}|P{1cm}|P{1cm}|}
\scalebox{1}{
\begin{tabular}{l}
\hspace{8pt} $\partial_v $ \\[-12pt]
$\partial_u $ \hspace{14pt}
\end{tabular}}	& $\tm$ & $\tp$ & $\tpm$ & $\ts$  \\[3pt] \hline 
$\tp$  &   	 \crossmark	 &	\scalebox{1.2}{$\paralesssimsigu$} &	\scalebox{1.2}{$\paralesssimsigu$}      &		\scalebox{1.2}{$\paralesssimsigu$}    \\[3pt] \hline 
$\tm$		& 	 &	\checkmark	& \checkmark	    &	\checkmark   \\[3pt] \hline 
$\tpm$	& 	 &		& \checkmark	    &	\scalebox{1.5}{$\lnot$}\scalebox{1.25}{$\paradownv$}  \\[3pt] \hline  
$\ts$	& 	 &		&	    &	 \checkmark	   \\[3pt] \hline 
\end{tabular}
\caption{Overview of cases in the null-form estimate.}\label{figure:cases}
\end{figure}

\begin{proposition}[\raisebox{1pt}{\scalebox{0.6}{\crossmark}}-estimate]\label{bilinear:prop-cross}
Let Hypothesis \ref{hypothesis:smallness} be satisfied, let $1\leq i,j \leq \dimA$, and let $M,N$ be frequency scales. Then, it holds that 
\begin{equation}\label{bilinear:eq-bilinear-pm-1}
\begin{aligned}
\Big\| \partial_u \big( A_{M,m}^{+,i} \phi^{+,m}_M \big)  \partial_v \big( A_{N,n}^{-,j} \phi^{-,n}_N\big) \Big\|_{\Cprod{r-1}{r-1}} 
\lesssim M^{r-s} N^{r-s} \theta^4.
\end{aligned}
\end{equation}
If $M\sim_\delta N$, we also have that  
\begin{equation}\label{bilinear:eq-bilinear-pm-2}
\begin{aligned}
&\Big\| \partial_u \big( A_{M,m}^{+,i} \phi^{+,m}_M \big)  \partial_v \big( A_{N,n}^{-,j} \phi^{-,n}_N \big)
- A_{M,m}^{+,i} \,   A_{N,n}^{-,j}\,\partial_u \phi^{+,m}_M \,  \partial_v \phi^{-,n}_N
\Big\|_{\Cprod{r-1}{r-1}} 
\lesssim (MN)^{-\eta} \theta^4. 
\end{aligned}
\end{equation}
\end{proposition}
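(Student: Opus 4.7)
The plan is to expand via the Leibniz rule
\[
\partial_u(A^{+,i}_{M,m} \phi^{+,m}_M) = (\partial_u A^{+,i}_{M,m}) \phi^{+,m}_M + A^{+,i}_{M,m} \partial_u \phi^{+,m}_M,
\]
and symmetrically for $\partial_v(A^{-,j}_{N,n} \phi^{-,n}_N)$, producing four contributions to the product on the left-hand side of \eqref{bilinear:eq-bilinear-pm-1}. Three of them carry at least one derivative on a modulation (``favorable'' derivatives), while the fourth, $A^{+,i}_{M,m} A^{-,j}_{N,n} \partial_u \phi^{+,m}_M \partial_v \phi^{-,n}_N$, is precisely the ``main'' term subtracted in \eqref{bilinear:eq-bilinear-pm-2}. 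For \eqref{bilinear:eq-bilinear-pm-1} all four contributions must be bounded, whereas for \eqref{bilinear:eq-bilinear-pm-2} only the three favorable contributions remain.

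For the main term I would exploit its tensor structure: since $\phi^{+,m}_M$ depends only on $u$ and $\phi^{-,n}_N$ only on $v$,
\[
 \| \partial_u \phi^{+,m}_M \, \partial_v \phi^{-,n}_N \|_{\Cprod{r-1}{r-1}} = \| \partial_u \phi^{+,m}_M \|_{\C_u^{r-1}} \| \partial_v \phi^{-,n}_N \|_{\C_v^{r-1}} \lesssim M^{r-s} N^{r-s} \theta^2.
\]
Multiplying by $A^{+,i}_{M,m} A^{-,j}_{N,n}$, whose $\Cprod{s}{s}$-norm is $\lesssim \theta^2$ by Proposition \ref{prep:prop-bilinear}.\ref{prep:item-general} (applicable since $2s > 1$), and invoking the multiplication estimate in Corollary \ref{prep:corollary-multiplication}, produces the bound $M^{r-s} N^{r-s} \theta^4$ claimed in \eqref{bilinear:eq-bilinear-pm-1}.

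For the three favorable terms I would invoke Lemma \ref{key:lemma-favorable} on each differentiated modulation, obtaining $\| (\partial_u A^{+,i}_{M,m}) \phi^{+,m}_M \|_{\Cprod{r-1}{s}} \lesssim M^{r-2s} \theta^2$ and (by the $u\leftrightarrow v$, $+\leftrightarrow -$ symmetry of the lemma) $\|(\partial_v A^{-,j}_{N,n}) \phi^{-,n}_N\|_{\Cprod{s}{r-1}} \lesssim N^{r-2s} \theta^2$. The undifferentiated companion factor is handled by a direct bilinear estimate; for instance $\| A^{-,j}_{N,n} \partial_v \phi^{-,n}_N \|_{\Cprod{s}{r-1}} \lesssim N^{r-s} \theta^2$ via Proposition \ref{prep:prop-bilinear}.\ref{prep:item-general}. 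Combining, every favorable contribution is bounded by $M^{r-2s} N^{r-s} \theta^4$, its mirror $M^{r-s} N^{r-2s} \theta^4$, or the doubly favorable $(MN)^{r-2s} \theta^4$. Since $r-2s < 0$, each is dominated by $M^{r-s} N^{r-s} \theta^4$, completing \eqref{bilinear:eq-bilinear-pm-1}.

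The delicate step is \eqref{bilinear:eq-bilinear-pm-2}, which demands an absolute $(MN)^{-\eta}$ decay rather than a positive power of $M^{r-s} N^{r-s}$. Under $M \sim_\delta N$, writing $\log M = \alpha \log N$ with $1-\delta < \alpha < 1/(1-\delta)$, the exponent of the typical favorable bound $M^{r-2s} N^{r-s}$ becomes
\[
 \alpha(r-2s) + (r-s) = (2r-3s) + (\alpha-1)(r-2s).
\]
The identity $2r - 3s = -2(3/4 - r) + 3(1/2 - s)$, together with the parameter hierarchy $0 < 1/2 - s \ll \eta \ll \delta \ll 3/4 - r$ from \eqref{prep:eq-parameter-1}--\eqref{prep:eq-parameter-2}, forces $2r - 3s \leq -K\delta$ for any prescribed constant $K$. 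Since $|(\alpha-1)(r-2s)| \leq 2\delta \cdot (1/4 + o(1)) \leq \delta/2$, the exponent remains $\leq -K\delta/2$, which comfortably dominates $-2\eta$. This yields $\lesssim (MN)^{-\eta} \theta^4$ as required. The main obstacle is therefore to track this exponent arithmetic carefully under the $\sim_\delta$ slop; conceptually, the gain is fueled precisely by the gap $\delta \ll 3/4 - r$ that the parameter setup in Section \ref{section:parameters} is designed to provide.
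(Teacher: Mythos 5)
Your proposal is correct and follows essentially the same route as the paper: a Leibniz expansion that isolates the main term $A^+A^-\partial_u\phi^+_M\partial_v\phi^-_N$, Lemma \ref{key:lemma-favorable} on the favorable pieces with a derivative on a modulation, the bilinear/multiplication estimates from Section \ref{section:function-product}, and the exponent arithmetic under $M\sim_\delta N$ driven by $3s-2r\gg\delta\gg\eta$. (The paper proves \eqref{bilinear:eq-bilinear-pm-1} by a single application of Proposition \ref{prep:prop-bilinear} with exponents $(r-1,s)$ and $(s,r-1)$ rather than expanding, and groups the three favorable terms by inclusion--exclusion so it can recycle Lemma \ref{key:lemma-bilinear}, but these are cosmetic differences.) One small slip worth fixing: the condition for Proposition \ref{prep:prop-bilinear}.\ref{prep:item-general} to give $\|A^+A^-\|_{\Cprod{s}{s}}\lesssim\|A^+\|_{\Cprod{s}{s}}\|A^-\|_{\Cprod{s}{s}}$ is $\alpha_j+\beta_j=2s>0$, not $2s>1$ (which is in fact false here since $s<1/2$); the application is still valid.
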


\begin{remark}
Since $r$ is bigger than $s$, the first estimate \eqref{bilinear:eq-bilinear-pm-1} does not provide a uniform upper bound. Nevertheless, it illustrates the gap to $\Cprod{r-1}{r-1}$. The second estimate \eqref{bilinear:eq-bilinear-pm-2} is the first step in our analysis of the main term. A more detailed analysis, however, is postponed until Section \ref{section:main-term}, since it is more efficient to directly treat 
\begin{equation*}
  \SecondC{k}{ij}(\phi) A_{M,m}^{+,i} \,   A_{N,n}^{-,j}\,\partial_u \phi^{+,m}_M \,  \partial_v \phi^{-,n}_N . 
\end{equation*}
\end{remark}

\begin{proof}
We first prove \eqref{bilinear:eq-bilinear-pm-1}. Using the bilinear estimate (Proposition \ref{prep:prop-bilinear}) and frequency-support considerations, we have that 
\begin{align*}
\big\| \partial_u \big( A_{M,m}^{+,i} \phi^{+,m}_M \big)  \partial_v \big( A_{N,n}^{-,j} \phi^{-,n}_N \big) \big\|_{\Cprod{r-1}{r-1}} 
\lesssim& \big\|  \partial_u \big( A_{M,m}^{+,i} \phi^{+,m}_M \big)  \big\|_{\Cprod{r-1}{s}} \big\| \partial_v \big( A_{N,n}^{-,j} \phi^{-,n}_N \big) \big\|_{\Cprod{s}{r-1}} \\
\lesssim & M^{r-s} N^{r-s}  \big\|   A_{M,m}^{+,i} \phi^{+,m}_M \big\|_{\Cprod{s}{s}} \big \|  A_{N,n}^{-,j} \phi^{-,n}_N \big\|_{\Cprod{s}{s}} \\
\lesssim & M^{r-s} N^{r-s} \| A_M^+ \|_{\Cprod{s}{s}} \| \phi^+ \|_{\C^s} \| A_N^{-} \|_{\Cprod{s}{s}} \| \phi^-\|_{\C^s}. 
\end{align*}
We now turn to \eqref{bilinear:eq-bilinear-pm-2}. To this end, we decompose
\begin{align}
&\partial_u \big( A_{M,m}^{+,i} \phi^{+,m}_M \big)  \partial_v \big( A_{N,n}^{-,j} \phi^{-,n}_N \big) 
- A_{M,m}^{+,i} \,   A_{N,n}^{-,j}\,\partial_u \phi^{+,m}_M \,  \partial_v \phi^{-,n}_N   \notag \\
=& \partial_u \big( A_{M,m}^{+,i} \phi^{+,m}_M \big) \, \partial_v A_{N,n}^{-,j} \, \phi^{-,n}_N  \label{bilinear:eq-bilinear-pm-p1} \\
+&  \partial_u A_{M,m}^{+,i} \, \phi^{+,m}_M  \, \partial_v \big( A_{N,n}^{-,j} \, \phi^{-,n}_N \big) \label{bilinear:eq-bilinear-pm-p2} \\ 
-&  \partial_u A_{M,m}^{+,i} \, \phi^{+,m}_M  \, \partial_v A_{N,n}^{-,j} \, \phi^{-,n}_N  \label{bilinear:eq-bilinear-pm-p3}.  
\end{align}
Using the prototypical bilinear estimate  (Lemma \ref{key:lemma-bilinear}) and Lemma \ref{key:lemma-favorable}, the first summand \eqref{bilinear:eq-bilinear-pm-p1} is estimated by 
\begin{align*}
     \big\| \partial_u \big( A_{M,m}^{+,i} \phi^{+,m}_M \big) \, \partial_v A_{N,n}^{-,j} \, \phi^{-,n}_N \big\|_{\Cprod{r-1}{r-1}} 
     \lesssim& M^{r-s} \| A_M^+ \|_{\Cprod{s}{s}} \| \phi^+ \|_{\C_u^s} \big\|  \partial_v A_{N,n}^{-,j} \, \phi^{-,n}_N \big\|_{\Cprod{1-r^\prime}{r-1}} \\
     \lesssim& M^{r-s} N^{r-2s} \theta^4.
\end{align*}
Since $M\sim_\delta N$ and $3s-2r\gg \delta$, this term is acceptable. By symmetry, the second summand  \eqref{bilinear:eq-bilinear-pm-p2}  obeys the same estimate. Finally, the third summand \eqref{bilinear:eq-bilinear-pm-p3} can be estimated easily  using the bilinear estimate (Proposition \ref{prep:prop-bilinear}) and  Lemma \ref{key:lemma-favorable}.
\end{proof}

We now turn to the $\paralesssimsigu$-cases in Figure \ref{figure:cases}. 

\begin{proposition}[$\paralesssimsigu$-estimates] \label{bilinear:prop-paraless}
Let Hypothesis \ref{hypothesis:smallness} be satisfied, let $1\leq i,j \leq \dimA$, and let $M$ be a frequency scale. Furthermore, let $\zeta\colon \R_{u,v}^{1+1} \rightarrow \R^{\dimA}$ be of type $\tp$,$\tpm$, or $\ts$. Then, with   $\Gain(\zeta)$ as in \eqref{gain0}, it holds that 
\begin{equation}\label{nonlinear:eq-low-high-1}
\Big\| \partial_u \big( A_{M,m}^{+,i}  \phi^{+,m}_M\big) \paralesssimsigu \partial_v \zeta^j \Big\|_{\Cprod{r-1}{r-1}} 
\lesssim M^{-\eta}   \Gain(\zeta)  \theta^3. 
\end{equation}
Without the low$\times$high-para-product, we only have the weaker estimate 
\begin{equation}\label{nonlinear:eq-low-high-2}
\Big\| \partial_u \big( A_{M,m}^{+,i}  \phi^{+,m}_M\big)  \partial_v \zeta^j \Big\|_{\Cprod{r-1}{r-1}} 
\lesssim M^{r-s}  \Gain(\zeta) \theta^3. 
\end{equation}
\end{proposition}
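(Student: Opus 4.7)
The two estimates have rather different difficulty levels. The weaker estimate \eqref{nonlinear:eq-low-high-2} follows directly from the general bilinear estimate in Proposition~\ref{prep:prop-bilinear}.\ref{prep:item-general}: we split $\|\partial_u(A^{+,i}_{M,m} \phi^{+,m}_M) \partial_v \zeta^j\|_{\Cprod{r-1}{r-1}} \lesssim \|\partial_u(A^{+,i}_{M,m} \phi^{+,m}_M)\|_{\Cprod{r-1}{s}} \|\partial_v \zeta^j\|_{\Cprod{s}{r-1}}$, bound the first factor by $M^{r-s}\theta^2$ as in \eqref{key:eq-bilinear-proof1}, and bound the second factor by $\theta$ for type $\ts$, or—using Lemma~\ref{key:lemmma-direction} to write $\zeta^j = E^{+,j}_{M',m'}\phi^{+,m'}_{M'}$ and Corollary~\ref{ansatz:cor-delta-Apm}—by $\theta^2 \Gain(\zeta)^{-1}$-type quantities for types $\tp$ and $\tpm$; the $M'$-factors that appear are absorbed into $\Gain(\zeta)$.

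The real work lies in \eqref{nonlinear:eq-low-high-1}. For $\zeta = \psi$ (type $\ts$), the bound is an immediate consequence of the low$\times$high improvement \eqref{key:eq-basic-bilinear-p2} in Lemma~\ref{key:lemma-bilinear} applied with $\zeta \leftarrow \partial_v\psi$, together with the trivial $\|\partial_v\psi\|_{\Cprod{r}{r-1}} \leq \|\psi\|_{\Cprod{r}{r}} \leq \theta$ and $\Gain(\psi)=1$. For $\zeta$ of type $\tp$ or $\tpm$ the pattern of the proof of Lemma~\ref{key:lemma-bilinear} does not close directly, since $\|\partial_v\zeta\|_{\Cprod{r}{r-1}}$ acquires a large power of the internal frequency $M'$. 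The remedy is to invoke Lemma~\ref{key:lemmma-direction} once more to write $\zeta^j = E^{+,j}_{M',m'}\phi^{+,m'}_{M'}$, so that $\partial_v\zeta^j = \partial_v E^{+,j}_{M',m'}\cdot\phi^{+,m'}_{M'}(u)$ is localized to $u$-frequency $\sim M'$, and then to apply the low$\times$high bilinear estimate in Proposition~\ref{prep:prop-bilinear}.\ref{prep:item-low-high} at the endpoint exponents
\begin{equation*}
\alpha_1 = s - 1 - \eta, \qquad \beta_1 = \frac{r - s + \eta}{1-\sigma},
\end{equation*}
which saturate $\alpha_1 + (1-\sigma)\beta_1 = r-1 = \gamma_1$. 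The $u$-frequency support at $\sim M$ of the first factor gives $\|\partial_u(A^{+,i}_{M,m}\phi^{+,m}_M)\|_{\Cprod{\alpha_1}{s}} \lesssim M^{-\eta}\theta^2$; the $u$-frequency support at $\sim M'$ of the second factor, combined with the bound $\|E^+_{M'}\|_{\Cprod{s}{r}} \lesssim M'^{(1+2\delta)(r-s)}\theta^2$ from Lemma~\ref{key:lemmma-direction}, gives
\begin{equation*}
\|\partial_v\zeta\|_{\Cprod{\beta_1}{r-1}} \lesssim M'^{\beta_1 - s + (1+2\delta)(r-s)} \theta^2 = M'^{2r - 3s + O(\delta)} \theta^2.
\end{equation*}

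The main obstacle is purely arithmetic: one must verify that the exponent $2r - 3s + O(\delta)$ is bounded above by $-\eta$ (and by $-2\eta$ in the $\tpm$-case, so that the relation $M_1' \sim_\delta M_2'$ converts the $M_1'^{-2\eta}$ gain into the $(M_1'M_2')^{-\eta}$ gain demanded by $\Gain(\zeta)$). This is where the parameter hierarchy $\frac12 - s \ll \eta \ll \delta \ll \frac34 - r$ is used: writing $r = \frac34 - c_1$, $s = \frac12 - c_2$ with $c_2 \ll \eta \ll \delta \ll c_1$, one has $2r - 3s = -2c_1 + 3c_2$, so $2r - 3s + O(\delta) \leq -2c_1 + O(\delta) \ll -\delta \ll -\eta$. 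Combining the $M^{-\eta}$ gain from the first factor with this $M'^{-\eta}$ gain from the second yields the claimed estimate, with the extra powers of $\theta$ beyond $\theta^3$ absorbed into the bound since $\theta \ll 1$.
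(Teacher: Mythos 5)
Your treatment of the $\ts$ case via Lemma~\ref{key:lemma-bilinear} and your idea of funneling the $\tp$ and $\tpm$ cases through Lemma~\ref{key:lemmma-direction} both match the paper. The gap lies in how you then bound the resulting product $\partial_u\big(A^{+,i}_{M,m}\phi^{+,m}_M\big)\paralesssimsigu\partial_v\big(E^{+,j}_{M'}\phi^{+,m'}_{M'}\big)$.

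Your proposed application of Proposition~\ref{prep:prop-bilinear}.\ref{prep:item-low-high} with $\alpha_1=s-1-\eta$ and $\beta_1=(r-s+\eta)/(1-\sigma)$ is invalid: that item requires $\alpha_1+\beta_1>0$, but with your choice $\alpha_1+\beta_1 \approx r-1 < 0$. This is not a technicality. The para-product $\paralesssimsigu$ only constrains the $u$-frequency of the first full output to be $\lesssim_\sigma$ that of the second, so it admits the case $M\sim M'$, and there the individual constituents $\partial_u\phi^{+,m}_M$ and $\phi^{+,m'}_{M'}$ produce a genuine high$\times$high$\rightarrow$low interaction in $u$. The hypothesis $\alpha_1+\beta_1>0$ is exactly what controls that output; dropping it makes the bound diverge by a factor of roughly $M^{1-r}\approx M^{1/4}$ at the low output frequency.

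This missing power is not recoverable by sharper arithmetic: it is the probabilistic content of the proposition. The paper avoids the issue by a different decomposition. After the product rule, the main term is factored as $A^{+,i}_{M,m}\cdot\partial_v E^{+,j}_{N}\cdot\big(\partial_u\phi^{+,m}_M\,\phi^{+,n}_N\big)$, with the last factor a purely $u$-dependent scalar estimated in $\C_u^{r-1}$ via Lemma~\ref{ansatz:lemma-data}:
\begin{equation*}
\big\|\partial_u\phi^{+,m}_M\,\phi^{+,n}_N\big\|_{\C_u^{r-1}}\lesssim M^{r-s}N^{-s}\,\|(\phi^+,\phi^-)\|_{\Ds}^2.
\end{equation*}
It is here that the $\Ds$-norm enters, and the $M\sim N$ regime of Lemma~\ref{ansatz:lemma-data} is exactly where the high$\times$high$\rightarrow$low bound on the initial data is invoked. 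Your argument never touches $\Ds$ at all, and consequently your estimate of the second factor (e.g.\ $\|\partial_v\zeta\|_{\Cprod{s}{r-1}}\lesssim N^{(1-\delta)(r-s)}\theta^2$ for $\tp$) is too weak by a factor of about $N^s$: the $N^{-s}$ decay of $\|\phi^+_N\|_{L^\infty_u}$ is cancelled by the $N^s$ weight in the $\Cprod{s}{\cdot}$-norm once you keep $\phi^+_N$ bundled inside $\partial_v\zeta$, and the paper's peeling-off of the scalar factor is precisely what recovers that gain through the enhanced-data estimate. For the same reason your treatment of \eqref{nonlinear:eq-low-high-2} for $\tp$ and $\tpm$ does not close either: there the naive $\Cprod{r-1}{s}\times\Cprod{s}{r-1}$ bilinear split gives a bound $M^{r-s}N^{(1-\delta)(r-s)}\theta^4$ with a positive $N$-exponent, whereas the stated gain $\Gain(\zeta)$ requires the $N^{r-2s+4\delta}$ factor (negative exponent) that only the $\Ds$-route delivers.
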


\begin{proof}
The case $\tp$-$\ts$ has already been treated in Lemma \ref{key:lemma-bilinear}. Thus, it remains to treat the cases $\tp$-$\tp$ and $\tp$-$\tpm$, which can be done simultaneously. Using Lemma \ref{key:lemmma-direction}, we can write 
\begin{equation}
\zeta^j(u,v) = E_{N,n}^{+,j}(u,v) \phi_N^{+,n}(u),
\end{equation}
where $E_N^+$ satisfies either \eqref{key:eq-direction-11} or \eqref{key:eq-direction-12}. In both cases, we have that 
\begin{equation}
\| E_N^+ \|_{\Cprod{s}{r}}  \lesssim N^{(1+2\delta)(r-s)} \theta. 
\end{equation}
The main step in this argument is to prove
\begin{equation}\label{bilinear:eq-paraless-p3}
\big\| \partial_u \big( A_{M,m}^{+,i}  \phi^{+,m}_M\big)  \partial_v \big(  E_{N,n}^{+,j} \phi_N^{+,n}\big) \big\|_{\Cprod{r-1}{r-1}} \lesssim M^{r-s} N^{r-2s+4\delta} \Gain(\zeta) \theta^4.
\end{equation}
Once \eqref{bilinear:eq-paraless-p3} has been proven, the first estimate \eqref{nonlinear:eq-low-high-1} follows by considering $N\gtrsim M^{1-\sigma}$ and the second estimate \eqref{nonlinear:eq-low-high-2} follows by considering all $N\geq 1$. The additional Littlewood-Paley operators, which enter through the para-product $\scalebox{0.8}{\paralesssimsigu}$, can be handled using Lemma \ref{prep:lem-commutator} and Lemma \ref{prep:lem-para-localized}. 

Using the product rule, we decompose 
\begin{align}
 \partial_u \big( A_{M,m}^{+,i}  \phi^{+,m}_M\big)  \partial_v \big(  E_{N,n}^{+,j} \phi_N^{+,n}\big) 
 =& A_{M,m}^{+,i}  \partial_u \phi^{+,m}_M \partial_v E_{N,n}^{+,j}  \phi_N^{+,n} \label{bilinear:eq-paraless-p1} \\
 +& \partial_u A_{M,m}^{+,i}   \phi^{+,m}_M \partial_v E_{N,n}^{+,j}  \phi_N^{+,n}\label{bilinear:eq-paraless-p2}.
\end{align}
We start by estimating the first summand \eqref{bilinear:eq-paraless-p1}, which is the main term. Using the multiplication estimate (Corollary \ref{prep:corollary-multiplication}), the bilinear estimate (\ref{prep:prop-bilinear}), and Hypothesis \ref{hypothesis:smallness}, we have that 
\begin{align*}
\big\| A_{M,m}^{+,i}  \partial_u \phi^{+,m}_M \partial_v E_{N,n}^{+,j}  \phi_N^{+,n}  \big\|_{\Cprod{r-1}{r-1}} 
\lesssim&   \big\| A_{M,m}^{+,i}\big\|_{\Cprod{s}{s}}
\big\| \partial_u \phi^{+,m}_M \partial_v E_{N,n}^{+,j}  \phi_N^{+,n}  \big\|_{\Cprod{r-1}{r-1}} \\
\lesssim& \big\| A_{M,m}^{+,i}\big\|_{\Cprod{s}{s}}
\big\| \partial_v E_{N,n}^{+,j} \big\|_{\Cprod{s}{r-1}}
\big\| \partial_u \phi^{+,m}_M  \phi_N^{+,n}  \big\|_{\C_u^{r-1}} \\
\lesssim& M^{r-s} N^{-s} \| A_M^+ \|_{\Cprod{s}{s}}  \| E_N^+ \|_{\Cprod{s}{r}} \| ( \phi^+, \phi^- ) \|_{\Ds}^2 \\
\lesssim& M^{r-s} N^{(1+2\delta)(r-s)-s}  \theta^4. 
\end{align*}
This is an acceptable contribution to \eqref{bilinear:eq-paraless-p3}. For the second summand \eqref{bilinear:eq-paraless-p2}, we have that
\begin{align*}
&\big\|  \partial_u A_{M,m}^{+,i}   \phi^{+,m}_M \partial_v E_{N,n}^{+,j}  \phi_N^{+,n} \big\|_{\Cprod{r-1}{r-1}} \\
\lesssim&  \big\| \partial_u A_{M,m}^{+,i}\big\|_{\Cprod{\eta}{s}} 
\big\|  \phi^{+,m}_M \big\|_{\C_u^\eta} \big\| \partial_v E_{N,n}^{+,j} \big\|_{\Cprod{\eta}{r-1}} \big\| \phi_N^{+,n}\big\|_{\C_u^\eta} \\
\lesssim& M^{1-s+\eta} M^{-s+\eta} N^{(1+2\delta)(r-s)} N^{-s+\eta}  \| A_M^+ \|_{\Cprod{s}{s}} \| \phi^+ \|_{\C_u^s} \| E_{N,n}^{+,j} \|_{\Cprod{s}{r}} \| \phi^+ \|_{\C_u^s} \\
\lesssim& M^{1-2s+2\eta} N^{r-2s+4\delta} \theta^4.
\end{align*}
This is an acceptable contribution to \eqref{bilinear:eq-paraless-p3}. 
\end{proof}

We now turn to the $\scalebox{1.25}{$\lnot$}\paradownv$-case in Figure \ref{figure:cases}.

\begin{proposition}[$\scalebox{1.25}{$\lnot$}\paradownv$-estimate] \label{bilinear:prop-tpm-ts}
Let Hypothesis \ref{hypothesis:smallness} be satisfied, let $1\leq i,j \leq \dimA$, and let $M_1,M_2$ be frequency scales satisfying $M_1\sim_\delta M_2$. Furthermore, let $\zeta \in \Cprod{s}{r}$. Then, it holds that 
\begin{equation}\label{bilinear:eq-fortunate}
\begin{aligned}
&\Big\| \partial_u \big( B_{M_1,M_2,m_1m_2}^i \phi^{+,m_1}_{M_1} \phi^{-,m_2}_{M_2} \big) \partial_v \zeta^j 
- \Big(  B_{M_1,M_2,m_1m_2}^i \big( \phi^{-,m_2}_{M_2} \paradownv \partial_v \zeta^j \big) \Big) \parallsigu\partial_u \phi^{+,m_2}_{M_2} \Big\|_{\Cprod{r-1}{r-1}} \\
&\lesssim (M_1 M_2)^{-\eta} \theta^3  \| \zeta \|_{\Cprod{s}{r}}. 
\end{aligned}
\end{equation}
Furthermore, the unfortunate resonance satisfies  
\begin{equation}\label{bilinear:eq-unfortunate}
\Big\| B_{M_1,M_2,m_1m_2}^i \big( \phi^{-,m_2}_{M_2} \paradownv \partial_v \zeta^j \big) \Big\|_{\Cprod{s}{r-1}}
\lesssim M_2^{1-s-r} \theta^2 \| \zeta \|_{\Cprod{s}{r}}. 
\end{equation}
\end{proposition}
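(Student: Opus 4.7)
The two estimates have very different characters and will be handled in turn. The unfortunate-resonance bound~\eqref{bilinear:eq-unfortunate} is nearly immediate: Lemma~\ref{key:lemma-unfortunate}~\eqref{key:eq-unfortunate-v1} yields $\|\phi^{-,m_2}_{M_2}\paradownv\partial_v\zeta^j\|_{\Cprod{s}{r-1}}\lesssim M_2^{1-s-r}\|\phi^-\|_{\C^s_v}\|\zeta\|_{\Cprod{s}{r}}$, and since $B := B^i_{M_1,M_2,m_1 m_2} \in \Cprod{s}{s}$ with norm $\lesssim\theta$ by Hypothesis~\ref{hypothesis:smallness}, Proposition~\ref{prep:prop-bilinear}.\ref{prep:item-general} (applied with $\alpha_j=s$, $\beta_1=s$, $\beta_2=r-1$, $\gamma_1=s$, $\gamma_2=r-1$) concludes.

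For \eqref{bilinear:eq-fortunate}, I would start with the product rule
\[
\partial_u \bigl( B\, \phi^{+,m_1}_{M_1}\phi^{-,m_2}_{M_2} \bigr)\partial_v\zeta^j
=\underbrace{\partial_u B\,\phi^{+,m_1}_{M_1}\phi^{-,m_2}_{M_2}\,\partial_v\zeta^j}_{\text{favorable}}
+ B\,\phi^{-,m_2}_{M_2}\partial_v\zeta^j\cdot \partial_u\phi^{+,m_1}_{M_1}.
\]
The favorable piece is treated by combining the improved favorable-derivative bound~\eqref{key:eq-favorable-pm-2} from Lemma~\ref{key:lemma-favorable} (which places $\partial_u B\,\phi^{+}_{M_1}\phi^{-}_{M_2}$ in $\Cprod{r-1}{1-r'}$ with a factor $\max(M_1,M_2)^{1-3s}$) with the bilinear estimate against $\partial_v\zeta\in\Cprod{s}{r-1}$. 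Since $1-3s\approx-1/2$, this pair is absorbed into $(M_1M_2)^{-\eta}$ with plenty of room.

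The core of the argument concerns the second piece. I would decompose
\[
\phi^{-,m_2}_{M_2}\,\partial_v\zeta^j
= h_{\mathrm{res}} + h_{\mathrm{nr}}, \qquad h_{\mathrm{res}}:=\phi^{-,m_2}_{M_2}\paradownv\partial_v\zeta^j,
\]
and invoke Lemma~\ref{key:lemma-unfortunate}~\eqref{key:eq-unfortunate-v2} to bound $\|h_{\mathrm{nr}}\|_{\Cprod{s}{r-1}}\lesssim M_2^{-(r-s)-\sigma/6}\theta\|\zeta\|_{\Cprod{s}{r}}$. The non-resonant contribution $B\, h_{\mathrm{nr}}\,\partial_u\phi^{+,m_1}_{M_1}$ is controlled in $\Cprod{r-1}{r-1}$ by Proposition~\ref{prep:prop-bilinear}.\ref{prep:item-general}, using that $\partial_u\phi^{+,m_1}_{M_1}$ depends only on $u$ with $\|\partial_u\phi^{+,m_1}_{M_1}\|_{\Cprod{r-1}{1-r'}}\lesssim M_1^{r-s}\theta$; combining with the relation $M_1\sim_\delta M_2$, one gains $M_2^{-\sigma/6+O(\delta)}$, which suffices since $\delta\ll\sigma$.

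The remaining piece is precisely the difference
\[
B\, h_{\mathrm{res}}\,\partial_u\phi^{+,m_1}_{M_1} - (B\,h_{\mathrm{res}})\parallsigu\partial_u\phi^{+,m_1}_{M_1}
= (B\,h_{\mathrm{res}}) \paragtrsimsigu \partial_u\phi^{+,m_1}_{M_1} = \sum_{K>M_1^{1-\sigma}} P^u_K(B h_{\mathrm{res}})\,\partial_u\phi^{+,m_1}_{M_1}.
\]
From \eqref{bilinear:eq-unfortunate} we have $\|B h_{\mathrm{res}}\|_{\Cprod{s}{r-1}}\lesssim M_2^{1-s-r}\theta^2\|\zeta\|_{\Cprod{s}{r}}$, hence $\|P^u_K(Bh_{\mathrm{res}})\|_{L^\infty_u\C^{r-1}_v}\lesssim K^{-s}M_2^{1-s-r}\theta^2\|\zeta\|_{\Cprod{s}{r}}$. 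Estimating $\Cprod{r-1}{r-1}$ of the summand using $\|\partial_u\phi^{+,m_1}_{M_1}\|_{L^\infty_u}\lesssim M_1^{1-s}\theta$ and tracking the $u$-frequency of the product, I would split the sum into the three regimes $M_1^{1-\sigma}<K\ll M_1$ (where the $u$-frequency is $\sim M_1$), $K\sim M_1$ (the high$\times$high case, with $u$-frequency $\lesssim M_1$), and $K\gg M_1$ (where the $u$-frequency is $\sim K$ and $K^{r-1-s}$ is summable). Every regime yields the same power $M_1^{r-s+s\sigma-s}M_2^{1-s-r}$, which by $M_1\sim_\delta M_2$ simplifies to $M_2^{1-3s+O(\delta)}\approx M_2^{-1/2}$. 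The main technical obstacle is precisely this dyadic case analysis: one must verify that the barely-positive exponent $1-2s$ does not spoil the summation when amplified by the $\sigma$-factor coming from the modified low-high paraproduct, which succeeds because $1-3s\approx -1/2$ dominates all error terms of order $\delta$ or $s\sigma$.
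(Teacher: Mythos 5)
Your decomposition into (A) the favorable piece $\partial_u B\,\phi^{+,m_1}_{M_1}\phi^{-,m_2}_{M_2}\partial_v\zeta^j$, (B) the non-resonant piece $B\,h_{\mathrm{nr}}\,\partial_u\phi^{+,m_1}_{M_1}$, and (C) the complementary paraproduct $(B h_{\mathrm{res}})\paragtrsimsigu\partial_u\phi^{+,m_1}_{M_1}$ is the same as the paper's, and the treatments of (A), (B), and \eqref{bilinear:eq-unfortunate} are essentially identical (the paper uses Corollary~\ref{prep:corollary-multiplication}, Lemma~\ref{key:lemma-favorable}~\eqref{key:eq-favorable-pm-2}, and Lemma~\ref{key:lemma-unfortunate}, just as you do). For piece (C) the paper applies the abstract low$\times$high improvement, Proposition~\ref{prep:prop-bilinear}.\ref{prep:item-low-high}, with the clever pairing $\|B h_{\mathrm{res}}\|_{\Cprod{s}{r-1}}\|\partial_u\phi^+_{M_1}\|_{\Cprod{-s+\eta}{s}}$, obtaining $M_1^{1-2s+\eta}M_2^{1-s-r}$, whereas you carry out an explicit dyadic decomposition in the $u$-frequency of $Bh_{\mathrm{res}}$; both routes are valid.

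There is, however, a computational slip in your treatment of (C). In the regime $K\sim M_1$, the product $P^u_K(Bh_{\mathrm{res}})\,\partial_u\phi^{+,m_1}_{M_1}$ can land at any output $u$-frequency $K'_1\lesssim M_1$, including $K'_1\sim 1$, so the weight $(K'_1)^{r-1}$ buys nothing. The bound for that regime is therefore $K^{-s}\,M_1^{1-s}\,M_2^{1-s-r}\sim M_1^{1-2s}M_2^{1-s-r}$, \emph{not} the power $M_1^{r-2s+\sigma s}M_2^{1-s-r}$ you obtain in the regime $M_1^{1-\sigma}<K\ll M_1$. Since $r+\sigma s<1$, the $K\sim M_1$ regime is in fact the worst one; the resulting exponent after $M_1\sim_\delta M_2$ is $\approx M_2^{2-3s-r}\approx M_2^{-1/4}$, matching the paper, rather than the $\approx M_2^{-1/2}$ you claim. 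This does not affect the conclusion, since $2-3s-r+O(\delta)\ll-\eta$, but the assertion that ``every regime yields the same power'' is false as stated.
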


\begin{proof}
 The estimate of the unfortunate resonance \eqref{bilinear:eq-unfortunate} follows directly from the multiplication estimate (Corollary \ref{prep:corollary-multiplication}) and Lemma \ref{key:lemma-unfortunate}. Thus, it remains to prove \eqref{bilinear:eq-fortunate}. To this end, we decompose 
 \begin{align}
 &\partial_u \big( B_{M_1,M_2,m_1m_2}^i \phi^{+,m_1}_{M_1} \phi^{-,m_2}_{M_2} \big) \partial_v \zeta^j 
- \Big(  B_{M_1,M_2,m_1m_2}^i \big( \phi^{-,m_2}_{M_2} \paradownv \partial_v \zeta^j \big) \Big) \parallsigu\partial_u \phi^{+,m_2}_{M_2} \notag \\
=& B_{M_1,M_2,m_1m_2}^i \Big( \phi_{M_2}^{-,m_2} \partial_v \zeta^j - \phi_{M_2}^{-,m_2} \paradownv \partial_v \zeta^j \Big) \partial_u \phi^{+,m_1}_{M_1} \label{bilinear:eq-tpm-ts-p1} \\ 
+&  \Big( B_{M_1,M_2,m_1m_2}^i \big( \phi_{M_2}^{-,m_2} \paradownv \partial_v \zeta^j  \big) \Big) \paragtrsimsigu \partial_u \phi^{+,m_1}_{M_1}  \label{bilinear:eq-tpm-ts-p2}\\
 +& \partial_u B_{M_1,M_2,m_1m_2}^i \phi^{+,m_1}_{M_1} \phi_{M_2}^{-,m_2}  \partial_v \zeta^j.  \label{bilinear:eq-tpm-ts-p3}
 \end{align}
 We estimate the three terms \eqref{bilinear:eq-tpm-ts-p1}, \eqref{bilinear:eq-tpm-ts-p2}, and \eqref{bilinear:eq-tpm-ts-p3} separately. Using Lemma \ref{key:lemma-unfortunate}, the first summand \eqref{bilinear:eq-tpm-ts-p1} is estimated by 
 \begin{align*}
    &\Big\| B_{M_1,M_2,m_1m_2}^i \Big( \phi_{M_2}^{-,m_2} \partial_v \zeta^j - \phi_{M_2}^{-,m_2} \paradownv \partial_v \zeta^j \Big) \partial_u \phi^{+,m_1}_{M_1} \Big\|_{\Cprod{r-1}{r-1}} \\
    \lesssim&\| B_{M_1,M_2,m_1m_2}^i \|_{\Cprod{s}{s}} \| \phi_{M_2}^{-,m_2} \partial_v \zeta^j - \phi_{M_2}^{-,m_2} \paradownv \partial_v \zeta^j  \|_{\Cprod{s}{r-1}}  \| \partial_u \phi^{+,m_1}_{M_1} \|_{\Cprod{r-1}{s}} \\
    \lesssim& M_1^{r-s} M_2^{-(r-s)-\sigma/6} \theta^3 \| \zeta \|_{\Cprod{s}{r}}. 
 \end{align*}
 Since $M_1\sim_\delta M_2$ and $\sigma=100\delta$, this term is acceptable. We now turn to the second summand \eqref{bilinear:eq-tpm-ts-p2}. To this end, we first note that the tuple $(\alpha_1,\beta_1)=(s,-s+\eta)$ satisfies 
 \begin{equation*}
     r-1 \leq \alpha_1 + (1-\sigma)\beta_1 \qquad \text{and} \qquad \alpha_1 + \beta_1 >0. 
 \end{equation*}
 Using Proposition \ref{prep:prop-bilinear}.\ref{prep:item-low-high} and Lemma \ref{key:lemma-unfortunate}, it follows that 
 \begin{align*}
    &\Big\|    \Big( B_{M_1,M_2,m_1m_2}^i \big( \phi_{M_2}^{-,m_2} \paradownv \partial_v \zeta^j  \big) \Big) \paragtrsimsigu \partial_u \phi^{+,m_1}_{M_1} \Big\|_{\Cprod{r-1}{r-1}} \\
    \lesssim&  \big\|   B_{M_1,M_2,m_1m_2}^i \big( \phi_{M_2}^{-,m_2} \paradownv \partial_v \zeta^j  \big) \big\|_{\Cprod{s}{r-1}} \big\| \partial_u \phi^{+,m_1}_{M_1} \big\|_{\Cprod{-s+\eta}{s}} \\
    \lesssim& M_1^{1-2s+\eta} M_2^{1-s-r} \| B \|_{\Cprod{s}{s}} \| \phi^- \|_{\C_v^s} \| \zeta \|_{\Cprod{s}{r}} \| \phi^+ \|_{\C_u^s} \\
    \lesssim& M_1^{1-2s+\eta} M_2^{1-s-r} \theta^3  \| \zeta \|_{\Cprod{s}{r}}. 
 \end{align*}
 Since $M_1\sim_\delta M_2$ and $1-s-r \approx -1/4$, this is acceptable. It remains to estimate the third summand \eqref{bilinear:eq-tpm-ts-p3}. Using the bilinear estimate (Proposition \ref{prep:prop-bilinear}) and Lemma \ref{key:lemma-favorable}, we have that 
 \begin{align*}
     &\big\| \partial_u B_{M_1,M_2,m_1m_2}^i \phi^{+,m_1}_{M_1} \phi_{M_2}^{-,m_2}  \partial_v \zeta^j \big\|_{\Cprod{r-1}{r-1}} \\
     &\lesssim \big\| \partial_u B_{M_1,M_2,m_1m_2}^i \phi^{+,m_1}_{M_1} \phi_{M_2}^{-,m_2} \big\|_{\Cprod{r-1}{1-r^\prime}}  \big\| \partial_v \zeta^j  \big\|_{\Cprod{r-1}{s}} \\
     &\lesssim M_{\textup{max}}^{1-3s} \| B_{M_1,M_2}\|_{\Cprod{s}{s}} \| \phi^+ \|_{\C_u^s} \| \phi^{-} \|_{\C_v^s} \| \zeta \|_{\Cprod{s}{r}} \\
     &\lesssim M_{\textup{max}}^{1-3s} \theta^3  \| \zeta \|_{\Cprod{s}{r}}. 
 \end{align*}
 Since $1-3s\approx -1/2$, this term is acceptable. 
\end{proof}

We now turn to the final proposition in our case analysis, which addresses the \raisebox{1pt}{\scalebox{0.7}{\checkmark}}-terms.

\begin{proposition}[\raisebox{1pt}{\scalebox{0.7}{\checkmark}}-estimates]\label{bilinear:prop-check}
Let Hypothesis \ref{hypothesis:smallness} be satisfied and let $1\leq i,j \leq \dimA$. Furthermore, let $\zeta_1,\zeta_2 \colon \R^{1+1}_{u,v} \rightarrow \R^{\dimA}$ satisfy 
\begin{equation*}
\Type (\zeta_1,\zeta_2) \in \big\{ \text{\tm-\tp, \tm-\tpm, \tm-\ts, \tpm-\tpm, \ts-\ts} \big\}. 
\end{equation*}
Then, it holds that 
\begin{equation}
\| \partial_u \zeta_1^i \partial_v \zeta_2^j \|_{\Cprod{r-1}{r-1}} \lesssim  \Gain(\zeta_1) 
\Gain(\zeta_2) \theta^2. 
\end{equation}
\end{proposition}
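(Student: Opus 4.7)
The strategy in every case is to identify at least one ``favorable'' derivative (one that falls on a smooth modulation $A^\pm$, $B$, or on the nonlinear remainder $\psi$) and then reduce to the prototypical bilinear estimates. The $\ts$-$\ts$ case is essentially immediate: by Proposition \ref{prep:prop-bilinear}\ref{prep:item-general} applied to $\partial_u \psi \in \Cprod{r-1}{r}$ and $\partial_v \psi \in \Cprod{r}{r-1}$ we obtain $\|\partial_u \psi \, \partial_v \psi\|_{\Cprod{r-1}{r-1}} \lesssim \|\psi\|_{\Cprod{r}{r}}^2 \lesssim \theta^2$, which matches $\Gain(\psi)^2=1$. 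Similarly, the $\tm$-$\ts$ case is a direct application of Lemma \ref{key:lemma-bilinear} with $\zeta = \partial_v \psi$, using the embedding $\Cprod{r}{r-1} \hookrightarrow \Cprod{1-r^\prime}{r-1}$.

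The key reduction for the $\tm$-$\tp$ and $\tm$-$\tpm$ cases is that the $\tm$-$\ts$ estimate of Lemma \ref{key:lemma-bilinear} does not actually require $\zeta = \partial_v \psi$, but only $\zeta \in \Cprod{1-r^\prime}{r-1}$. Thus, I would reduce $\tm$-$\tp$ to verifying
\begin{equation*}
    \bigl\| \partial_v ( A^{+,j}_{N,n} \phi^{+,n}_N ) \bigr\|_{\Cprod{1-r^\prime}{r-1}} \lesssim N^{-\eta} \theta^2,
\end{equation*}
which I would establish by observing that $\partial_v \phi^{+,n}_N = 0$ (so the derivative is automatically favorable), using Corollary \ref{ansatz:cor-delta-Apm} to bound $\partial_v A_N^+$ in $\Cprod{s}{r-1}$ up to the loss $N^{(1-\delta)(r-s)}$, combining this with the $L^\infty$-gain $\|\phi_N^+\|_{L^\infty_u} \lesssim N^{-s}\theta$, and exploiting the $u$-frequency localization at $\sim N$ to trade regularity from $s$ down to $1-r^\prime$. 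The net exponent $1-r^\prime-s + (1-\delta)(r-s)-s = 1-2s+\eta-\delta(r-s)$ is strictly negative because the parameter hierarchy $1/2 - s \ll \eta \ll \delta \ll 3/4-r$ ensures $1-2s+\eta \ll \delta(r-s)$, which yields the required $N^{-\eta}$ decay.

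For $\tm$-$\tpm$, I would expand $\partial_v \zeta_2$ via the product rule (noting $\partial_v \phi^+_{N_1} = 0$) into the favorable piece $\partial_v B_{N_1,N_2} \phi^+_{N_1} \phi^-_{N_2}$ and the non-favorable piece $B_{N_1,N_2}\phi^+_{N_1}\partial_v \phi^-_{N_2}$. The first is controlled in $\Cprod{1-r^\prime}{r-1}$ by Lemma \ref{key:lemma-favorable} \eqref{key:eq-favorable-pm-2} with exponent $\max(N_1,N_2)^{1-3s}$, after which Lemma \ref{key:lemma-bilinear} $\tm$-$\ts$ closes the bound. The second piece must be handled jointly with $\phi^-_M$ from $\partial_u \zeta_1$: since both $\phi^-_M$ and $\partial_v \phi^-_{N_2}$ involve the left-moving initial data in the $v$-variable, I would split according to whether $M \sim N_2$ (where the high$\times$high$\rightarrow$low bound encoded in $\|(\phi^+,\phi^-)\|_{\Ds}$ applies and yields $\|\phi^-_M \partial_v \phi^-_{N_2}\|_{\C_v^{s-1}} \lesssim M^{-s}\theta^2$) or $M \not\sim N_2$ (where the deterministic Lemma \ref{ansatz:lemma-data} suffices). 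The $\tpm$-$\tpm$ case follows the same pattern after expanding both $\partial_u \zeta_1$ and $\partial_v \zeta_2$ by the product rule: the favorable subterms reduce to Lemma \ref{key:lemma-favorable}, and any remaining resonant products of two left-moving or two right-moving initial data are absorbed by the $\Ds$-bound.

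The main obstacle I anticipate is the $\tm$-$\tpm$ and $\tpm$-$\tpm$ book-keeping: the product rule generates several terms each containing up to three factors from the initial data and one or two favorable derivatives, and obtaining the required gains $\Gain(\zeta_1)\Gain(\zeta_2) = M^{-\eta}(N_1N_2)^{-\eta}$ or $(M_1M_2)^{-\eta}(N_1N_2)^{-\eta}$ requires simultaneously exploiting (i) the $\delta$-gains from Corollary \ref{ansatz:cor-delta-Apm}, (ii) the $L^\infty$-decay of $\phi^\pm_K$, and (iii) the high$\times$high$\rightarrow$low bounds in $\|\cdot\|_{\Ds}$ whenever two initial data factors are resonant. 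The parameter hierarchy $1/2-s \ll \eta \ll \delta \ll 3/4-r$ gives exactly enough room in each exponent, but verifying this for each of the finite collection of terms is tedious.
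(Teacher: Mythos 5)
Your proposal is correct and takes essentially the same approach as the paper: expand via the product rule, reduce the favorable derivatives to Lemmas \ref{key:lemma-favorable} and \ref{key:lemma-bilinear}, and invoke Lemma \ref{ansatz:lemma-data} (equivalently the $\Ds$-bound) for the resonant pairings $\phi^\pm_M \cdot \partial \phi^\pm_N$. The only substantive deviation is in the $\tm$-$\tp$ case: the paper separates the modulations from the waves and applies Proposition \ref{prep:prop-bilinear} twice directly, whereas you route through the $\tm$-$\ts$ estimate of Lemma \ref{key:lemma-bilinear} applied to $\zeta = \partial_v(A^+_N \phi^+_N)$; both are valid since that lemma holds for arbitrary $\zeta \in \Cprod{1-r^\prime}{r-1}$. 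One small arithmetic slip: your intermediate expression $1-r^\prime - s + (1-\delta)(r-s) - s$ double-counts the gain of $N^{-s}$ (once from the $L^\infty$-bound of $\phi^+_N$ and once from the regularity trade); it should read $1-r^\prime + (1-\delta)(r-s) - s$, which then correctly simplifies to your stated final exponent $1+\eta-2s-\delta(r-s)$.
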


\begin{proof}
We treat the five different cases in this proposition separately. \newline

\emph{\tm-\tp~estimate:} In this case, we have that 
\begin{align*}
\zeta^i_1(u,v)=  A_{M,m}^{-,i}(u,v) \phi^{-,m}_{M}(v) \quad
\text{and} \quad \zeta^j_2(u,v) &= A_{N,n}^{+,j}(u,v)  \phi^{+,n}_{N}(u). 
\end{align*}
for some $M,N\geq 1$. Using the bilinear estimate (Proposition \ref{prep:prop-bilinear}) twice and Corollary \ref{ansatz:cor-delta-Apm}, we obtain that
\begin{align*} 
\| \partial_u   A_{M,m}^{-,i} \, \phi^{-,m}_{M} \, \partial_v  A_{N,n}^{+,j} \, \phi_N^{+,n} \|_{\Cprod{r-1}{r-1}}
&\lesssim \| \phi^{-,m}_{M}  \, \phi_N^{+,n} \|_{\Cprod{1-r^\prime}{1-r^\prime}}  \| \partial_u   A_{M,m}^{-,i} \, \partial_v  A_{N,n}^{+,j} \|_{\Cprod{r-1}{r-1}}  \\
&\lesssim \| \phi^{-,m}_{M}   \, \phi_N^{+,n} \|_{\Cprod{1-r^\prime}{1-r^\prime}}  \| \partial_u   A_{M,m}^{-,i} \|_{\Cprod{r-1}{s}}  \| \partial_v  A_{N,n}^{+,j} \|_{\Cprod{s}{r-1}} \\
&\lesssim (MN)^{1-r^\prime-s+(1-\delta) (r-s)}  \| \phi^+ \|_{\C^s} \| \phi^- \|_{\C^s} \| A_M^- \|_{\Mod_M^-} \| A_N^+ \|_{\Mod_N^+} \\
&\lesssim (MN)^{1-2s+\eta-\delta (r-s)} \theta^4. 
\end{align*}
Since 
\begin{equation*}
1-2s +\eta - \delta (r-s) \leq 1-2s +\eta - \delta/8 \leq - \delta/10, 
\end{equation*}
this contribution is acceptable.\footnote{This is the shortest argument which yields an acceptable contribution. By using frequency-support considerations for $A_M^-$ and $A_N^+$, we can obtain better decay in $M$ and $N$, and thus this term is less serious than our argument suggests.}  \newline

\emph{\tm-\tpm~estimate:} In this case, we have that 
\begin{align*}
\zeta^i_1(u,v)=  A_{M,m}^{-,i}(u,v) \phi^{-,m}_{M}(v) \quad
\text{and} \quad \zeta^j_2(u,v) &= B_{N_1,N_2,n_1n_2}^{j}(u,v) \phi^{+,n_1}_{N_1}(u) \phi^{-,n_2}_{N_2}(v)
\end{align*}
for some $M,N_1,N_2\geq 1$ satisfying $N_1\sim_\delta N_2$. Using the product rule, we obtain 
\begin{align}
 \partial_u \Big( A_{M,m}^{-,i} \phi^{-,m}_{M} \Big) \partial_v \Big(  B_{N_1,N_2,n_1n_2}^{j} \phi^{+,n_1}_{N_1} \phi^{-,n_2}_{N_2}\Big)   
=&  \partial_u  A_{M,m}^{-,i} \phi^{-,m}_{M}   B_{N_1,N_2,n_1n_2}^{j} \phi^{+,n_1}_{N_1} \partial_v \phi^{-,n_2}_{N_2} \label{nonlinear:eq-check-proof5} \\ 
+&  \partial_u  A_{M,m}^{-,i} \phi^{-,m}_{M}   \partial_v B_{N_1,N_2,n_1n_2}^{j} \phi^{+,n_1}_{N_1}  \phi^{-,n_2}_{N_2} \label{nonlinear:eq-check-proof6} .
\end{align}
We start with the first summand \eqref{nonlinear:eq-check-proof5}, which is the main term. Using the multiplication estimate (Corollary \ref{prep:corollary-multiplication}), the bilinear estimate (Proposition \ref{prep:prop-bilinear}), and the properties of the linear waves (Corollary \ref{ansatz:lemma-data}), we obtain that 
\begin{align*}
 &\| \partial_u  A_{M,m}^{-,i} \phi^{-,m}_{M}   B_{N_1,N_2,n_1n_2}^{j} \phi^{+,n_1}_{N_1} \partial_v \phi^{-,n_2}_{N_2} \|_{\Cprod{r-1}{r-1}} \\
\lesssim & \| B_{N_1,N_2,n_1n_2}^{j} \|_{\Cprod{s}{s}}  \| \partial_u  A_{M,m}^{-,i} \phi^{-,m}_{M}    \phi^{+,n_1}_{N_1} \partial_v \phi^{-,n_2}_{N_2} \|_{\Cprod{r-1}{r-1}} \\
\lesssim &  \| B_{N_1,N_2,n_1n_2}^{j} \|_{\Cprod{s}{s}}  \| \partial_u  A_{M,m}^{-,i}     \phi^{+,n_1}_{N_1}  \|_{\Cprod{\eta}{s}}  \| \phi^{-,m}_{M}  \partial_v \phi^{-,n_2}_{N_2} \|_{\Cprod{\eta}{r-1}} \\
\lesssim & M^{1-s+\eta} N_1^{-s+\eta} \big( 1\{ M \lesssim N_2 \} N_2^{r-s} M^{-s} + 1\{ M \gg N_2 \} M^{r-1-s} N_2^{1-s} \big) \\
\times & \| A_M^+ \|_{\Cprod{s}{s}}  \| B_{N_1,N_2} \|_{\Cprod{s}{s}} \| \phi^+ \|_{\C^s} \| (\phi^+,\phi^-) \|_{\Ds}^2 \\
\lesssim&  M^{1-s+\eta} N_1^{-s+\eta} \big( 1\{ M \lesssim N_2 \} N_2^{r-s} M^{-s} + 1\{ M \gg N_2 \} M^{r-1-s} N_2^{1-s} \big) \, \theta^5. 
\end{align*}
Using $N_1 \sim_\delta N_2$, the pre-factor can be estimated by 
\begin{align*}
&M^{1-s+\eta} N_1^{-s+\eta} \big( 1\{ M \lesssim N_2 \} N_2^{r-s} M^{-s} + 1\{ M \gg N_2 \} M^{r-1-s} N_2^{1-s} \big) \\
&\lesssim 1\{ M \lesssim N_2 \} M^{1-2s+\eta} N_2^{r-2s+\eta+\delta} +  1\{ M \gg N_2 \}  M^{r-2s+\eta} N_2^{1-2s+\eta+\delta} \\
&\lesssim \max(M,N_1,N_2)^{r+1-4s+4\delta}. 
\end{align*} 
Since $r+1-4s\approx -1/4$, this completes our estimate of \eqref{nonlinear:eq-check-proof5}. The estimate for the minor term \eqref{nonlinear:eq-check-proof6} follows directly from the $\tm$-case in Lemma \ref{key:lemma-bilinear} and the $\tpm$-case in Lemma \ref{key:lemma-favorable}. \newline

\emph{\tm-\ts~estimate:} This estimate directly follows from the $\tm$-$\ts$-case in Lemma \ref{key:lemma-bilinear}. \newline

\emph{\tpm-\tpm~estimate:} We remark that this term obeys better estimates than the $\tpm$-$\ts$-term (see Proposition \ref{bilinear:prop-tpm-ts}). The reason is that the structure of $\tpm$ allows for better bounds on certain high$\times$high$\rightarrow$low-interactions. 

In this case, we have that 
\begin{align*}
\zeta^i_1(u,v) = B_{M_1,M_2,m_1m_2}^{i}(u,v) \phi^{+,m_1}_{M_1}(u) \phi^{-,m_2}_{M_2}(v) \quad
\text{and} \quad \zeta^j_2(u,v) &= B_{N_1,N_2,n_1n_2}^{j}(u,v) \phi^{+,n_1}_{N_1}(u) \phi^{-,n_2}_{N_2}(v)
\end{align*}
for some $M_1,M_2,N_1,N_2\geq 1$ satisfying $M_1 \sim_\delta M_2$ and $N_1 \sim_\delta N_2$. To simplify the notation, we set $M_{\textup{max}}:= \max(M_1,M_2)$ and $N_{\textup{max}}:=\max(N_1,N_2)$. 
By symmetry, we can assume that $M_{\textup{max}}\leq N_{\textup{max}}$. Using the product rule, we have that 
\begin{align}
&\partial_u \Big( B_{M_1,M_2,m_1m_2}^{i}  \phi^{+,m_1}_{M_1}  \phi^{-,m_2}_{M_2} \Big)    \partial_v \Big( B_{N_1,N_2,n_1n_2}^{j}   \phi^{+,n_1}_{N_1}   \phi^{-,n_2}_{N_2} \Big) \notag \\
=&  B_{M_1,M_2,m_1m_2}^{i}   \partial_u \phi^{+,m_1}_{M_1}  \phi^{-,m_2}_{M_2}   B_{N_1,N_2,n_1n_2}^{j}  \phi^{+,n_1}_{N_1} \partial_v \phi^{-,n_2}_{N_2} \label{nonlinear:eq-check-proof1} \\
 +&   B_{M_1,M_2,m_1m_2}^{i}   \partial_u  \phi^{+,m_1}_{M_1} \phi^{-,m_2}_{M_2} \,    \partial_v B_{N_1,N_2,n_1n_2}^{j}   \phi^{+,n_1}_{N_1}   \phi^{-,n_2}_{N_2} \label{nonlinear:eq-check-proof2} \\
 +&  \partial_u B_{M_1,M_2,m_1m_2}^{i}     \phi^{+,m_1}_{M_1} \phi^{-,m_2}_{M_2} \,     B_{N_1,N_2,n_1n_2}^{j}   \phi^{+,n_1}_{N_1}  \partial_v \phi^{-,n_2}_{N_2} \label{nonlinear:eq-check-proof3} \\
  +& \partial_u B_{M_1,M_2,m_1m_2}^{i}   \phi^{+,m_1}_{M_1}    \phi^{-,m_2}_{M_2}   \partial_v B_{N_1,N_2,n_1n_2}^{j}    \phi^{+,n_1}_{N_1}  \phi^{-,n_2}_{N_2}. \label{nonlinear:eq-check-proof4} 
\end{align}
We now estimate the four terms \eqref{nonlinear:eq-check-proof1}-\eqref{nonlinear:eq-check-proof4} separately. We begin with the main term \eqref{nonlinear:eq-check-proof1}. Using the multiplication estimate (Corollary \ref{prep:corollary-multiplication}) and Lemma \ref{ansatz:lemma-data}, we obtain that 
\begin{align*}
 &\| B_{M_1,M_2,m_1m_2}^{i}   \partial_u \phi^{+,m_1}_{M_1}   \phi^{-,m_2}_{M_2}  B_{N_1,N_2,n_1n_2}^{j}  \phi^{+,n_1}_{N_1}  \partial_v \phi^{-,n_2}_{N_2} \|_{\Cprod{r-1}{r-1}} \\
 &\lesssim \| B_{M_1,M_2,m_1m_2}^{i} \|_{\Cprod{s}{s}}  \| B_{N_1,N_2,n_1n_2}^{j} \|_{\Cprod{s}{s}} 
 \|  \partial_u \phi^{+,m_1}_{M_1}   \phi^{-,m_2}_{M_2}    \phi^{+,n_1}_{N_1}  \partial_v \phi^{-,n_2}_{N_2}  \|_{\Cprod{r-1}{r-1}}  \\
 &\lesssim M_1^{r-s} N_1^{-s} M_2^{-s} N_2^{r-s}    \| B_{M_1,M_2} \|_{\Cprod{s}{s}}  \| B_{N_1,N_2}  \|_{\Cprod{s}{s}}  \| (\phi^+, \phi^- ) \|_{\Ds}^4  \\
 &\lesssim M_{\textup{max}}^{r-2s+\delta}   N_{\textup{max}}^{r-2s+\delta} 
\| B_{M_1,M_2} \|_{\Cprod{s}{s}}  \| B_{N_1,N_2}  \|_{\Cprod{s}{s}}  \| (\phi^+, \phi^- ) \|_{\Ds}^4 \\
&\lesssim  M_{\textup{max}}^{r-2s+\delta}   N_{\textup{max}}^{r-2s+\delta} \theta^6. 
\end{align*}
Since $r-2s\approx -1/4$, this contribution is acceptable.  
We now turn to the second summand \eqref{nonlinear:eq-check-proof2}, which is relatively easy. Using the bilinear estimate (Proposition \ref{prep:prop-bilinear}) and Lemma \ref{ansatz:lemma-data}, we have that 
\begin{align*}
&\|  B_{M_1,M_2,m_1m_2}^{i}   \partial_u  \phi^{+,m_1}_{M_1} \phi^{-,m_2}_{M_2} \,    \partial_v B_{N_1,N_2,n_1n_2}^{j}   \phi^{+,n_1}_{N_1}   \phi^{-,n_2}_{N_2} \|_{\Cprod{r-1}{r-1}} \\
&\lesssim \|  B_{M_1,M_2,m_1m_2}^{i} \|_{\Cprod{s}{s}} \|   \partial_u  \phi^{+,m_1}_{M_1} \phi^{+,n_1}_{N_1} \|_{\C_u^{r-1}} \| \partial_v B_{N_1,N_2,n_1n_2}^{j}  \phi^{-,m_2}_{M_2}  \phi^{-,n_2}_{N_2} \|_{\Cprod{s}{r-1}} \\
&\lesssim M_1^{r-s} N_1^{-s} N_2^{1-s} M_2^{-s} N_2^{-s} \theta^6 \\
&= M_1^{r-s} M_2^{-s} N_1^{-s} N_2^{1-2s} \theta^6.
\end{align*}
Since $r-2s\approx -1/4$ and $1-3s\approx -1/2$, this contribution is acceptable.
The third summand \eqref{nonlinear:eq-check-proof3} can be treated similarly. The fourth summand  \eqref{nonlinear:eq-check-proof4} can easily be controlled through the bilinear estimate (Proposition \ref{prep:prop-bilinear}) and Lemma \ref{key:lemma-favorable}. \newline

\emph{\ts-\ts~estimate:} This estimate directly follows from the bilinear estimate (Proposition \ref{prep:prop-bilinear}). 
\end{proof}

%Old labels: Resonant estimate
%\label{nonlinear:eq-tpm-ts}
%\label{nonlinear:eq-res}
%\label{nonlinear:eq-res-estimate}

\subsection{\protect{\texorpdfstring{The product $\phi^k \partial_u \phi^i$}{The product}}}\label{section:product}

The following lemma controls $\phi^k \scalebox{0.9}{\paragtrsimsigu}\partial_u \big( A_{M,m}^{+,i} \phi^{+,m}_M\big)$ in $\Cprod{r-1}{1-r^\prime}$. Together with Bony's paralinearization (Lemma \ref{prep:lemma-bony}), it will be used in Section \ref{section:nonlinear} to control $\SecondC{a}{ij}(\phi) \partial_u\big( A_{M,m}^{+,i} \phi^{+,m}_M\big)$.

\begin{lemma}\label{bilinear:lemma-product}
Let Hypothesis \ref{hypothesis:smallness} be satisfied, let $1\leq i,k \leq \dimA$, and let $K,M$ be frequency scales satisfying $K\gtrsim M^{1-\sigma}$. Let $\HH_M^+\colon \R_{u,v}^{1+1}\rightarrow \R^{D\times D}$ satisfy $P_{\gg M^{1-\sigma}}^u H_M^+ = 0$. Furthermore, let $\zeta\colon \R_{u,v}^{1+1} \rightarrow \R^{\dimA}$ be of one of the four types $\tm$, $\tp$, $\tpm$, or $\ts$. Then, it holds that 
\begin{equation}\label{bilinear:eq-product}
\begin{aligned}
&\Big\| P_{K}^u \big( \zeta^k \big) \partial_u \big( H_{M,m}^{+,i} \phi_M^{+,m} \big) \Big\|_{\Cprod{r-1}{1-r^\prime}} \\
\lesssim&  \Big( 1\{ K \lesssim M\}  M^{1-s-r} + 1\{ K \gg M\} K^{2(r-1+\eta)} \Big) \Gain(\zeta) \theta^2 \big\| \HH_M^+ \big\|_{\Cprod{s}{s}} \\
\lesssim& K^{-\eta} M^{1-s-r+\eta}\Gain(\zeta) \theta^2 \big\| \HH_M^+ \big\|_{\Cprod{s}{s}}.
\end{aligned}
\end{equation}
In the individual cases, we have the following more detailed estimates, which are stated only for the (difficult) regime $M^{1-\sigma}\leq K \lesssim M$. \newline
\begin{itemize}[leftmargin=13mm]
    \item[$\tm$:] If $\zeta=A_{L,\ell}^{-} \phi^{-,\ell}_{L}$ for some $L\geq 1$, we have that 
    \begin{align*}
        \Big\| P_{K}^u \big( A_{L,\ell}^{-,k} \phi_{L}^{-,\ell} \big) \partial_u \big( H_{M,m}^{+,i} \phi^{+,m}_M \big) \Big\|_{\Cprod{r-1}{1-r^\prime}} 
        &\lesssim \big( L^{-\eta} M^{-(r-s)-\delta/8} + L^{1-r^\prime-s} M^{1-s-r} \big) \theta^3 \big\| \HH_M^+ \big\|_{\Cprod{s}{s}} \\
        &\lesssim L^{-\eta} M^{1-s-r} \theta^3 \big\| \HH_M^+ \big\|_{\Cprod{s}{s}}. 
    \end{align*}
    \item[$\tp$:] If $\zeta=A_{L,\ell}^{+,k} \phi^{+,\ell}_L$ for some $L\geq 1$, then
   \begin{align*}
    &\Big\| P_{K}^u \big( A_{L,\ell}^{+,k} \phi_{L}^{+,\ell} \big) \partial_u \big( H_{M,m}^{+,i} \phi^{+,m}_M \big) \Big\|_{\Cprod{r-1}{s}} 
        \lesssim L^{-\eta} M^{r-2s+\sigma} \theta^3 \big\| \HH_M^+ \big\|_{\Cprod{s}{s}}.
    \end{align*}
    \item[$\tpm$:] If $\zeta=B_{L_1,L_2,\ell_1\ell_2} \phi^{+,\ell_1}_{L_1} \phi^{-,\ell_2}_{L_2}$ with $L_1 \sim_\delta L_2$,  then 
    \begin{align*}
    &\Big\| P_{K}^u \big( B_{L_1,L_2,\ell_1\ell_2}^k \phi^{+,\ell_1}_{L_1} \phi^{-,\ell_2}_{L_2} \big) \partial_u \big( \HH_{M,m}^{+,i}\phi^{+,m}_M \big) \Big\|_{\Cprod{r-1}{s}} 
        \lesssim (L_1 L_2)^{-\eta} M^{r-2s+\sigma} \theta^4 \big\| \HH_M^+ \big\|_{\Cprod{s}{s}}.
    \end{align*}
    \item[$\ts$:] If $\zeta=\psi$, then 
    \begin{align*}
      &\Big\|   P_{K}^u \psi^k \partial_u \big( \HH_{M,m}^{+,i} \phi^{+,m}_M \big) 
        -  \HH_{M,m}^{+,i} \big( P_{K}^u \psi^k \paradownu \partial_u \phi_{M}^{+,m} \big) \Big\|_{\Cprod{r-1}{s}} 
        \lesssim  M^{-(r-s)-\sigma/6} \theta^2 \big\| \HH_M^+ \big\|_{\Cprod{s}{s}}. 
    \end{align*}
    Furthermore, the resonant part satisfies 
    \begin{align*}
       & \Big\|  \HH_{M,m}^{+,i}  \big( P_{K}^u \psi^k \paradownu \partial_u \phi_{M}^{+,m} \big) \Big\|_{\Cprod{r-1}{s}} 
       \lesssim  M^{1-s-r} \theta^2 \big\| \HH_M^+ \big\|_{\Cprod{s}{s}}. 
    \end{align*}
\end{itemize}
\end{lemma}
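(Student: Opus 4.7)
The plan is to first reduce the general bound to the delicate regime $M^{1-\sigma}\leq K\lesssim M$ and then to carry out the type-by-type analysis displayed in the statement. The case $K\gg M$ is routine: since $H^+_{M,m}$ is supported on $u$-frequencies $\lesssim M^{1-\sigma}$ and $\phi^{+,m}_M$ on $u$-frequencies $\sim M$, the second factor $\partial_u(H^{+,i}_{M,m}\phi^{+,m}_M)$ lives at $u$-frequencies $\lesssim M\ll K$, so $P^u_K$ forces the product to resemble a high$\times$low interaction in the $u$-variable. A direct application of the bilinear estimate (Proposition~\ref{prep:prop-bilinear}) together with the trivial bound $\|P^u_K\zeta\|_{\Cprod{s}{s}}\lesssim K^{r-s+\eta}\Gain(\zeta)\theta$ (separating the four types) yields the factor $K^{2(r-1+\eta)}$. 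The case $K\lesssim M^{1-\sigma}$ is entirely absorbed into the $M^{1-s-r}$ branch by routine considerations. Thus the heart of the proof is the window $M^{1-\sigma}\leq K\lesssim M$.

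In this window the strategy is to put $\partial_u(H^+_{M,m}\phi^{+,m}_M)$ into $\Cprod{r-1}{s}$ (at cost $M^{r-s}\|H_M^+\|_{\Cprod{s}{s}}\theta$ via Lemma~\ref{key:lemma-favorable} and the bilinear estimate) and then estimate the relevant Hölder norm of $P_K^u\zeta$ using the frequency-support of the different components of the Ansatz. For the $\tm$-case $\zeta=A_L^{-,k}\phi^{-,\ell}_L$, only $A_L^{-,k}$ carries $u$-frequency, so one splits $K\leq L^{1-\delta}$ (use $\|A_L^-\|_{\Cprod{s}{s}}$) versus $K>L^{1-\delta}$ (use $\|A_L^-\|_{\Cprod{r}{s}}$, which comes with the free factor $K^{-(r-s)}$ from the frequency localization). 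Combined with Corollary~\ref{ansatz:cor-delta-Apm}, the two sub-cases produce the $L^{-\eta}M^{-(r-s)-\delta/8}$ and $L^{1-r'-s}M^{1-s-r}$ summands respectively. For $\tp$ and $\tpm$ the crucial observation is Lemma~\ref{key:lemmma-direction}: both terms can be rewritten as $E^{+,k}_{L,\ell}\phi^{+,\ell}_L$ with $E^+_L$ controlled in $\Cprod{s}{s}$ and in $\Cprod{s}{r}$. Then $P_K^u(E^{+,k}_{L,\ell}\phi^{+,\ell}_L)\partial_u(H_{M,m}^{+,i}\phi^{+,m}_M)$ is further decomposed via the product rule; the main piece, in which both $u$-derivatives act on the linear waves $\phi^+_L\partial_u\phi^+_M$, is estimated through the $\Ds$ bound of Lemma~\ref{ansatz:lemma-data} (which is precisely the high$\times$high$\rightarrow$low cancellation that buys us the missing regularity when $L\sim M\sim K$), while the remaining pieces containing a derivative on $A^+_M$ or on $E^+_L$ are handled by Lemma~\ref{key:lemma-favorable} and the $\delta$-gain of Corollary~\ref{ansatz:cor-delta-Apm}.

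The $\ts$-case is the most subtle and is the one I expect to be the main obstacle, because the interaction of the generic remainder $\psi$ with $\partial_u\phi^+_M$ at comparable $u$-frequencies is exactly the probabilistic resonance discussed in Section~\ref{section:unfortunate}, but now appearing in the $u$-variable. I would begin by isolating the resonant contribution $H^{+,i}_{M,m}\bigl(P_K^u\psi^k\paradownu\partial_u\phi^{+,m}_M\bigr)$. For the non-resonant remainder, one first commutes $H^+_{M,m}$ past the para-product using Corollary~\ref{key:cor-parall-mod} (transposed to the $u$-variable) and then invokes Lemma~\ref{key:lemma-unfortunate} (also in the $u$-variable, see Remark~\ref{key:remark-unfortunate}), which produces the claimed gain $M^{-(r-s)-\sigma/6}$. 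For the resonant piece itself, the multiplication estimate (Corollary~\ref{prep:corollary-multiplication}) combined with the single-variable version of Lemma~\ref{key:lemma-unfortunate} gives the $M^{1-s-r}$ bound; note that this bound is non-summable in $M$, which is the structural reason that this resonance has to be absorbed into the modulation $A^+_M$ later rather than into $\psi$.

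Finally, the combined bound in the statement follows by summing the four type-specific estimates, using $\Gain(\zeta)$ and the inequality $K^{2(r-1+\eta)}\leq K^{-\eta}$ (for $K\gg M^{1-\sigma}\gg 1$ with our parameter choices) together with $M^{1-s-r}\leq M^{1-s-r+\eta}$. The last reduction in \eqref{bilinear:eq-product} is then an elementary case split between $K\lesssim M$ and $K\gg M$ using $M^{1-\sigma}\lesssim K$.
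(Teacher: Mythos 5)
Your plan identifies the right ingredients — the high$\times$high$\to$low cancellation encoded in $\Ds$, Lemma~\ref{key:lemmma-direction} to merge $\tp$ and $\tpm$, the favorable-derivative bound (Lemma~\ref{key:lemma-favorable}), Lemma~\ref{key:lemma-unfortunate} for the $\ts$ case, and the routine treatment of $K\gg M$ together with the final recombination — so the overall shape is right. There are, however, two places where the proposal as written would not close.

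For the $\tm$ case you split on $K\lessgtr L^{1-\delta}$, while the crucial splitting in the paper is $K\ll M$ versus $K\sim M$. This matters: your plan is to put $\partial_u\bigl(H_{M,m}^{+,i}\phi^{+,m}_M\bigr)$ in $\Cprod{r-1}{s}$ and then estimate $P^u_K\zeta$ by a bilinear estimate, but in the resonant regime $K\sim M$ the $u$-regularity sum violates the condition $\alpha_1+\beta_1>0$ of Proposition~\ref{prep:prop-bilinear}, so that estimate is simply not available. The paper handles this by invoking the non-resonant improvement (Proposition~\ref{prep:prop-bilinear}.\ref{prep:item-nonres}) when $M^{1-\sigma}\leq K\ll M$, and by switching to a direct $L^\infty_u$ estimate of the fixed-output-frequency product (followed by the $\Mod_L^-$ decomposition of $P^u_K A^-_L$) when $K\sim M$. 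Your split on $K$ versus $L^{1-\delta}$ cuts across both of these sub-cases, so to carry it out you would still need to separate the $u$-resonant from the $u$-non-resonant interaction inside each branch.

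For the $\ts$ case, your appeal to Corollary~\ref{key:cor-parall-mod} is misplaced; that corollary concerns commutation with the $\parallsigu$-paraproduct applied to modulated waves, not $\paradownu$. The argument here is much simpler: apply the product rule $\partial_u(H\phi^+_M)=H\,\partial_u\phi^+_M+\partial_uH\,\phi^+_M$, pull the pointwise factor $H$ out of the non-resonant combination via the multiplication estimate (Corollary~\ref{prep:corollary-multiplication}), apply Lemma~\ref{key:lemma-unfortunate} in the $u$-variable, and bound the $\partial_uH$ piece with Lemma~\ref{key:lemma-favorable}. Relatedly, in the $\tp$/$\tpm$ case the decomposition after Lemma~\ref{key:lemmma-direction} is not obtained by the product rule alone: one must move $P^u_K$ past $E^{+,k}_{L,\ell}$, which creates a commutator term $[P^u_K,E^{+,k}_{L,\ell}]\phi^{+,\ell}_L$ treated by Lemma~\ref{prep:lem-commutator}; your phrase ``derivative on $A^+_M$ or on $E^+_L$'' does not capture this commutator, which is needed precisely because $E^+_L$ itself carries $u$-frequencies near $K\sim L\sim M$.
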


\begin{remark}
The matrix $\HH_M^+$ will be chosen either as the modulation $A_{M}^+$ or the identity $\operatorname{Id}$. By introducing the auxiliary variable $\HH_M^+$, we are able to treat both cases simultaneously. 
\end{remark}

\begin{proof}
We split the argument into several steps. In the first three steps, we treat the case $\tm$, the two cases $\tp$ and $\tpm$ simultaneously, and the case $\ts$. In the final step, we combine the estimates to prove
\eqref{bilinear:eq-product}. \newline

\emph{Step 1: The case $\tm$.} 
We first note that 
\begin{equation*}
P_K^u \big( A_{L,\ell}^{-,k} \phi_{L}^{-,\ell} \big) 
= \big(P_K^uA_{L,\ell}^{-,k}\big) \, \phi_{L}^{-,\ell}
\end{equation*}

We further distinguish the cases $M^{1-\sigma}\leq K \ll M$ and $K\sim M$. 

\emph{Step 1.(a): $M^{1-\sigma}\leq K \ll M$}. We use Proposition \ref{prep:prop-bilinear}.\ref{prep:item-nonres}, which yields
\begin{align*}
    &\Big\|  P_{K}^u  A_{L,\ell}^{-,k}   \, \phi_{L}^{-,\ell} \partial_u \big( H_{M,m}^{+,i} \phi^{+,m}_M \big) \Big\|_{\Cprod{r-1}{1-r^\prime}} \\
    \lesssim&
    \Big\|  P_{K}^u  A_{L,\ell}^{-,k}   \, \phi_{L}^{-,\ell} \Big\|_{\Cprod{\eta}{1-r^\prime}} 
    \Big\| \partial_u \big( H_{M,m}^{+,i} \phi^{+,m}_M \big) \Big\|_{\Cprod{r-1}{s}} \\
    \lesssim& K^{-s+\eta} L^{1-r^\prime-s} M^{r-s} \| A_L^- \|_{\Cprod{s}{s}} \| ( \phi^+, \phi^- ) \|_{\Ds} \| \HH_M^+\|_{\Cprod{s}{s}} \| ( \phi^+, \phi^- ) \|_{\Ds} \\
    \lesssim& K^{-s+\eta} L^{1-r^\prime-s} M^{r-s} \theta^3 \| \HH_M^+\|_{\Cprod{s}{s}}.
\end{align*}
Since $K\gtrsim M^{1-\sigma}$, we have that 
\begin{align*}
    K^{-s+\eta} L^{1-r^\prime-s} M^{r-s} \lesssim L^{-\eta} M^{r-2s+\sigma} \lesssim L^{-\eta} M^{-(r-s)-\delta/8},
\end{align*}
where we used that $3s-2r\gg \sigma\geq \delta$.

\emph{Step 1.(b): $K \sim M$.} We estimate 
\begin{align*}
    \big\| P_K^u A_{L,\ell}^{-,k}  \, \phi_{L}^{-,\ell} \partial_u \big( H_{M,m}^{+,i} \phi^{+,m}_M \big) \big\|_{\Cprod{r-1}{1-r^\prime}} 
    &\lesssim  \|  P_{K}^u A_L^- \|_{L^\infty_u \C_v^s} \| \phi_L^- \|_{\C_v^{1-r^\prime}} \big\|  \partial_u \big( H_{M,m}^{+,i} \phi^{+,m}_M \big) \big\|_{L^\infty_u \C_v^s} \\
    &\lesssim  L^{1-r^\prime-s} M^{1-s}  \theta^2 \| P^u_{K}  A_L^- \|_{L^\infty_u \C_v^s} 
    \| \HH_M^+ \|_{\Cprod{s}{s}} . 
\end{align*}
For the $A_L^-$-factor, the condition $K\sim M$ yields that 
\begin{align*}
\| P_K^u  A_L^- \|_{L^\infty_u \C_v^s} 
&\leq \| P_K^u P_{\lesssim L^{1-\delta}}^u A_L^- \|_{L^\infty_u \C_v^s} 
+\| P_K^u P_{\gg L^{1-\delta}}^u A_L^- \|_{L^\infty_u \C_v^s} \\
&\lesssim   1\{ M\lesssim L^{1-\delta}\} M^{-s} \| P_{\lesssim L^{1-\delta}}^u A_L^- \|_{\Cprod{s}{s}}  
+ M^{-r} \| P_{\gg L^{1-\delta}}^u A_L^- \|_{\Cprod{r}{s}} \\
&\lesssim \big(  1\{ M\lesssim L^{1-\delta}\} M^{-s}+ M^{-r} \big) \| A_L^- \|_{\Mod_L^-} \\
&\lesssim \big(  1\{ M\lesssim L^{1-\delta}\} M^{-s}+ M^{-r} \big) \theta. 
\end{align*}
The total pre-factor can then be estimated by 
\begin{align*}
&L^{1-r^\prime-s} M^{1-s} \big( 1\{ M \lesssim L^{1-\delta}\} M^{-s} +M^{-r} \big) \\
=& 1\{ M\lesssim L^{1-\delta}\} L^{1-r^\prime-s} M^{1-2s} + L^{1-r^\prime-s} M^{1-s-r} \\
\lesssim & L^{-\eta} M^{-(1-\delta)^{-1} (r^\prime+s-1) + 1-2s} + L^{1-r^\prime-s} M^{1-s-r}. 
\end{align*}
Since
\begin{align*}
    -(1-\delta)^{-1} (r^\prime+s-1) + 1-2s
    &\leq -\big( 1 + \tfrac{3\delta}{4}\big) (r^\prime+s-1) +1-2s \\
    &=-(r-s) - \tfrac{3\delta}{4} (r^\prime+s-1) + \eta + 2 (1-2s) \\
    &\leq -(r-s) - \delta/8,
\end{align*}
this yields the desired estimate. \newline

\emph{Step 2: The cases $\tp$ and $\tpm$.}  Using Lemma \ref{key:lemmma-direction}, we can assume that 
\begin{equation*}
\zeta^k(u,v) = E_{L,\ell}^{+,k}(u,v) \phi_L^{+,\ell}(u)
\end{equation*}
for some $L\geq 1$, where $E_L^+$ satisfies \eqref{key:eq-direction-11} or \eqref{key:eq-direction-12}. In particular, $\| E_L^+ \|_{\Cprod{s}{s}} \lesssim \theta$. Due to the frequency-support conditions on $E_{L,\ell}^{+,k}$, it follows that $K\sim L$. Then, we decompose 
\begin{align}
     P^u_K  \Big( E_{L,\ell}^{+,k} \phi_L^{+,\ell}  \Big) \partial_u \big( H_{M,m}^{+,i} \phi_M^{+,m} \big) 
     =&   E_{L,\ell}^{+,k} \,  P^u_K\phi_L^{+,\ell}  H_{M,m}^{+,i} \partial_u \phi_M^{+,m} \label{bilinear:eq-product-p1}\\
     +& \big[ P^u_K,  E_{L,\ell}^{+,k} \big] \Big(  \phi_L^{+,\ell} \Big) \,  H_{M,m}^{+,i} \, \partial_u \phi_M^{+,m} \label{bilinear:eq-product-p2} \\
     +& P^u_K  \Big(  E_{L,\ell}^{+,k} \phi_L^{+,\ell}  \Big)  \partial_u H_{M,m}^{+,i} \,  \phi_M^{+,m}. \label{bilinear:eq-product-p3}
\end{align}
We start by treating \eqref{bilinear:eq-product-p1}, which is the main term. To this end, we estimate
\begin{align*}
 \Big\|  E_{L,\ell}^{+,k} \,  P^u_K\phi_L^{+,\ell}  H_{M,m}^{+,i} \partial_u \phi_M^{+,m} \Big\|_{\Cprod{r-1}{s}} 
 &\lesssim \|  E_{L,\ell}^{+,k} \|_{\Cprod{s}{s}} \| \HH_{M,m}^{+,i} \|_{\Cprod{s}{s}}
 \|  P^u_K\phi_L^{+,\ell}  \partial_u \phi_M^{+,m} \|_{\C_u^{r-1}} \\
 &\lesssim M^{r-s} L^{-s} \|  E_{L}^{+} \|_{\Cprod{s}{s}} \| \HH_{M}^{+} \|_{\Cprod{s}{s}} \| ( \phi^+, \phi^- ) \|_{\Ds}^2\\
 &\lesssim M^{r-s} L^{-s} \theta^3 \| \HH_{M}^{+} \|_{\Cprod{s}{s}}. 
\end{align*}
Since $M^{1-\sigma}\leq K\lesssim L$, we have that
\begin{align*}
M^{r-s} L^{-s} \lesssim M^{r-2s+\sigma} L^{-4\eta},
\end{align*}
which is acceptable. We now turn to the commutator term \eqref{bilinear:eq-product-p2}.
Using Lemma \ref{prep:lem-commutator}, we obtain that 
\begin{align*}
&\Big\| \big[ P^u_K,  E_{L,\ell}^{+,k} \big] \Big(  \phi_L^{+,\ell} \Big) \,  H_{M,m}^{+,i} \, \partial_u \phi_M^{+,m} \Big\|_{\Cprod{r-1}{s}} \\
&\lesssim \Big\| \big[ P^u_K,  E_{L,\ell}^{+,k} \big] \Big(  \phi_L^{+,\ell} \Big) \Big\|_{\Cprod{1-r^\prime}{s}} \Big\| H_{M,m}^{+,i} \, \partial_u \phi_M^{+,m} \Big\|_{\Cprod{r-1}{s}} \\
&\lesssim K^{1-r^\prime-2s+4\eta} M^{r-s} \| E_L^+ \|_{\Cprod{s}{s}} \| \phi^+_L \|_{\C_u^{s-4\eta}} \| \HH_M^+ \|_{\Cprod{s}{s}} \| \phi^+_M \|_{\C_u^s}  \\
&\lesssim L^{-4\eta}K^{1-r^\prime-2s+4\eta} M^{r-s} \theta^3 \| \HH_M^+ \|_{\Cprod{s}{s}}.  
\end{align*}
Since $K\geq M^{1-\sigma}$, we have that 
\begin{equation*}
L^{-4\eta}K^{1-r^\prime-2s+4\eta} M^{r-s} \lesssim L^{-4\eta} M^{r-s-(1-\sigma) (1-r-2s+5\eta)} \lesssim L^{-4\eta} M^{1-3s+2\sigma}. 
\end{equation*}
Since $1-3s\approx -1/2$ and $r-2s\approx -1/4$, this contribution is acceptable. We now turn to the last term \eqref{bilinear:eq-product-p3}, which contains a favorable derivative. Using\footnote{While $\HH_M^+$ is not necessarily $A_M^+$, the same argument applies.} Lemma \ref{key:lemma-favorable}, it can be estimated by 
\begin{align*}
&\Big\| P^u_K  \Big(  E_{L,\ell}^{+,k} \phi_L^{+,\ell}  \Big)  \partial_u H_{M,m}^{+,i} \,  \phi_M^{+,m} \Big\|_{\Cprod{r-1}{s}} \\
&\lesssim \Big\| P^u_K  \Big(  E_{L,\ell}^{+,k} \phi_L^{+,\ell}  \Big) \Big\|_{\Cprod{1-r^\prime}{s}} 
\Big\| \partial_u H_{M,m}^{+,i} \,  \phi_M^{+,m} \Big\|_{\Cprod{r-1}{s}} \\
&\lesssim L^{1-r^\prime-s} M^{r-2s} \| E_L^+ \|_{\Cprod{s}{s}} \| \HH_M^+ \|_{\Cprod{s}{s}} \| ( \phi^+, \phi^- ) \|_{\Ds}^2 \\
&\lesssim  L^{1-r^\prime-s} M^{r-2s} \theta^3 \| \HH_M^+ \|_{\Cprod{s}{s}}. 
\end{align*}
Since $1-r^\prime-s\approx -1/4$, this term is clearly acceptable.\newline

\emph{Step 3: The case $\ts$.} We begin with the estimate of the non-resonant portion. To this end, we decompose 
\begin{align}
 &P_{K}^u \psi^k \partial_u \big( \HH_{M,m}^{+,i} \phi^{+,m}_M \big) 
-  \HH_{M,m}^{+,i} \big( P_{K}^u \psi^k \paradownu \partial_u \phi_{M}^{+,m} \big) \notag  \\
=&  \HH_{M,m}^{+,i} \big(  P_{K}^u \psi^k \partial_u \phi_{M}^{+,m}  - P_{K}^u \psi^k \paradownu \partial_u \phi_{M}^{+,m} \big)  
+ P_{K}^u \psi^k \partial_u \HH_{M,m}^{+,i} \, \phi^{+,m}_M . \label{bilinear:eq-product-p4}
\end{align}
The first summand in \eqref{bilinear:eq-product-p4} can be estimated using the multiplication estimate (Corollary \ref{prep:corollary-multiplication}) and Lemma \ref{key:lemma-unfortunate}. The second summand in \eqref{bilinear:eq-product-p4} can be estimated using the bilinear estimate (Proposition \ref{prep:prop-bilinear}) and Lemma \ref{key:lemma-favorable}. 

The estimate of the resonant term $ \HH_{M,m}^{+,i}  \big( P_{K}^u \psi^k \paradownu \partial_u \phi_{M}^{+,m} \big)$ directly follows from the multiplication estimate (Corollary \ref{prep:corollary-multiplication}) and the inequality \eqref{key:eq-unfortunate-v1} in Lemma \ref{key:lemma-unfortunate}. \newline

\emph{Step 4: The conclusion.} If $K\lesssim M$, the combined estimate \eqref{bilinear:eq-product} follows directly from the individual estimates and  $\max(-(r-s),r-2s+\sigma)<1-s-r$. In the remaining case $K\gg M$, we have the easier estimate
\begin{align*}
\big\| P_{K}^u \big( \zeta^k \big) \partial_u \big( H_{M,m}^{+,i} \phi_M^{+,m} \big) \big\|_{\Cprod{r-1}{1-r^\prime}} 
&\lesssim K^{r-1} \big\| P_{K}^u \big( \zeta^k \big) \partial_u \big( H_{M,m}^{+,i} \phi_M^{+,m} \big) \big\|_{L^\infty_u\C_v^{1-r^\prime}} \\
&\lesssim K^{r-1} \big\| P_{K}^u  \zeta^k \big\|_{L^\infty_u \C_v^{1-r^\prime}}
\big\| \partial_u \big( H_{M,m}^{+,i} \phi_M^{+,m} \big)\big\|_{L^\infty_u \C_v^{1-r^\prime}} \\
&\lesssim K^{r-1-s+\eta} M^{r-s} \| \zeta\|_{\Cprod{s-\eta}{1-r^\prime}} \| \HH_M^+ \|_{\Cprod{s}{s}} \| ( \phi^+, \phi^- ) \|_{\Ds} \\
&\lesssim K^{2r-2+1-2s+\eta} \Gain(\zeta) \theta^2 \| \HH_M^+ \|_{\Cprod{s}{s}}.
\end{align*}
Since $1-2s\leq \eta$, this yields the desired estimate.
\end{proof}

\begin{corollary}\label{bilinear:cor-product}
Let Hypothesis \ref{hypothesis:smallness} be satisfied, let $1\leq a,b,m \leq \dimA$ and let $K,L,M$ be frequency-scales satisfying $K\sim M$. Furthermore, let $\zeta_1\colon \R_{u,v}^{1+1}\rightarrow \R^\dimA$ and let $\zeta_2$ be of type $\tm$, $\tp$, $\tpm$, or $\ts$. Then, it holds that 
\begin{align*}
\big\| P_L^v \big( \zeta_1^a P_K^u \zeta_2^b \big) \partial_u \phi^{+,m}_M \big\|_{\Cprod{r-1}{1-r^\prime}} 
\lesssim& \big( M^{-(r-s)-\delta/8} + L^{1-s-r+2\eta} M^{1-s-r} \big) \| \zeta_1 \|_{\Cprod{s}{s}} \Gain(\zeta_2) \theta^2. 
\end{align*}
\end{corollary}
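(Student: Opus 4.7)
The plan is to reduce the corollary directly to the individual-case estimates of Lemma~\ref{bilinear:lemma-product}. The starting observation is that $\partial_u \phi^{+,m}_M$ depends only on $u$, so it commutes with the projection $P_L^v$, yielding
\begin{equation*}
 P_L^v\bigl(\zeta_1^a P_K^u \zeta_2^b\bigr) \partial_u \phi^{+,m}_M = P_L^v\bigl(\zeta_1^a \cdot P_K^u \zeta_2^b \, \partial_u \phi^{+,m}_M\bigr).
\end{equation*}
I would then use the frequency-trading bound
\begin{equation*}
 \|P_L^v h\|_{\Cprod{r-1}{1-r^\prime}} \lesssim L^{(1-r^\prime)-s}\,\|h\|_{\Cprod{r-1}{s}},
\end{equation*}
which follows from $P_L^v h$ being $v$-localized at scale $L$ together with $1-r^\prime-s<0$. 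Note that $L^{(1-r^\prime)-s}=L^{1-s-r+\eta}\leq L^{1-s-r+2\eta}$, matching the exponent in the second piece of the claim. Applying the multiplication estimate (Proposition~\ref{prep:prop-bilinear}.(i) with $(\alpha_1,\alpha_2)=(s,s)$) to extract $\zeta_1$ then reduces matters to bounding
\begin{equation*}
 \bigl\|P_K^u\zeta_2^b\,\partial_u\phi^{+,m}_M\bigr\|_{\Cprod{r-1}{\gamma_2}}
\end{equation*}
for $\gamma_2\in\{1-r^\prime, s\}$, which is precisely what Lemma~\ref{bilinear:lemma-product} controls once one sets $\HH^{+,i}_{M,m}=\delta^i_m$ (so that $\|\HH^+_M\|_{\Cprod{s}{s}}\lesssim 1$).

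For $\Type(\zeta_2)\in\{\tp,\tpm\}$, Lemma~\ref{bilinear:lemma-product} supplies the $\Cprod{r-1}{s}$-bound $\lesssim\Gain(\zeta_2)M^{r-2s+\sigma}\theta^2$, and the parameter condition $\delta\ll 3/4-r$ ensures $r-2s+\sigma\leq -(r-s)-\delta/8$, producing the $\Gain(\zeta_2)M^{-(r-s)-\delta/8}$-piece of the claim. For $\Type(\zeta_2)=\ts$, both the non-resonant and the unfortunate-resonance contributions are controlled in $\Cprod{r-1}{s}$ by $M^{-(r-s)-\sigma/6}+M^{1-s-r}$; the first term fits $M^{-(r-s)-\delta/8}$ since $\sigma/6>\delta/8$, while the second combines with the $L^{(1-r^\prime)-s}$ factor to yield the $L^{1-s-r+2\eta}M^{1-s-r}$-piece.

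The main obstacle is the $\tm$-case, for which Lemma~\ref{bilinear:lemma-product} only provides the $\Cprod{r-1}{1-r^\prime}$-bound $L_0^{-\eta}M^{-(r-s)-\delta/8}+L_0^{1-r^\prime-s}M^{1-s-r}$, where $L_0$ denotes the modulation scale of $A^-_{L_0}\phi^-_{L_0}$. The first summand directly contributes $\Gain(\zeta_2)M^{-(r-s)-\delta/8}$. For the second, I would derive the analogous $\Cprod{r-1}{s}$-bound by adapting the sub-case analysis in the $\tm$-part of the proof of Lemma~\ref{bilinear:lemma-product}: in the regime $K\sim M$, splitting $A^-_{L_0}=P^u_{\lesssim L_0^{1-\delta}}A^-_{L_0}+P^u_{\gg L_0^{1-\delta}}A^-_{L_0}$ places the two pieces in $\Cprod{s}{s}$ and $\Cprod{r}{s}$ respectively, and pairing with $\phi^{-,\ell}_{L_0}\in\C^s_v$ together with $\partial_u\phi^+_M\in\C^{r-1}_u$ yields the desired $\Cprod{r-1}{s}$-control. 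Combining this with the $L^{(1-r^\prime)-s}$ trade-off, together with the observation $L_0^{1-r^\prime-s}\leq L_0^{-\eta}=\Gain(\zeta_2)$ (which holds because $1-r-s\leq -2\eta$), then produces the $\Gain(\zeta_2)L^{1-s-r+2\eta}M^{1-s-r}$-piece and completes the proof.
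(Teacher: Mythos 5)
Your handling of the $\tp$, $\tpm$, and $\ts$ cases is exactly the paper's Case~2: commute $\partial_u\phi^{+,m}_M$ inside, trade $L^{1-r^\prime-s}$ to pass from $\Cprod{r-1}{1-r^\prime}$ to $\Cprod{r-1}{s}$, peel off $\zeta_1$ via the $\Cprod{s}{s}$-multiplication estimate, and invoke the $\Cprod{r-1}{s}$-bounds of Lemma~\ref{bilinear:lemma-product}. That part is fine.

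The gap is in the $\tm$-case, and it is not merely a matter of ``deriving the analogous $\Cprod{r-1}{s}$-bound.'' Once you have traded $L^{1-r^\prime-s}$ to land in $\Cprod{r-1}{s}$, you must show that
\begin{equation*}
 \big\|P_K^u(A^-_{L_0}\phi^-_{L_0})\,\partial_u\phi^{+,m}_M\big\|_{\Cprod{r-1}{s}}
 \lesssim M^{1-s-r}\,L_0^{-\eta}\,\theta^2 ,
\end{equation*}
where $L_0^{-\eta}$ is precisely $\Gain(\zeta_2)$. But this factor does not appear in such a bound. The product is $v$-localized at scale $L_0$, so the $\Cprod{r-1}{s}$-norm contributes $L_0^{s}$; if you pair $\phi^-_{L_0}$ in $\C^s_v$ (as your sketch does), that contributes $L_0^{-s}$, and the two cancel. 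Neither $A^-_{L_0}$ (measured in $\Cprod{s}{s}$ or $\Cprod{r}{s}$) nor $\partial_u\phi^+_M$ carries any $L_0$-decay. So the $L_0^{-\eta}$ simply isn't there. Your final sentence invokes the identity $L_0^{1-r^\prime-s}\leq L_0^{-\eta}$, but that $L_0^{1-r^\prime-s}$ factor belongs to the paper's $\Cprod{r-1}{1-r^\prime}$-bound (where $\phi^-_{L_0}$ is paired in $\C_v^{1-r^\prime}$); you have already ``spent'' the $1-r^\prime$ exponent on your initial trade $L^{(1-r^\prime)-s}$, and you cannot use it twice.

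The paper's actual argument for the $\tm$-case does not trade at the start. It splits $\zeta_1 = P_{\ll L}^v\zeta_1 + P_{\gtrsim L}^v\zeta_1$. For the $P_{\ll L}^v\zeta_1$-piece, frequency support forces the factor $P_K^u\zeta_2\,\partial_u\phi^+_M$ to carry $v$-frequency $\sim L$, hence $L\sim L_0$, and one applies the $\tm$-estimate of Lemma~\ref{bilinear:lemma-product} in $\Cprod{r-1}{1-r^\prime}$ directly; the $L_0$-decay and the $L$-power of the conclusion are then one and the same quantity. For the $P_{\gtrsim L}^v\zeta_1$-piece, the $L$-gain comes from $\zeta_1$ measured in $\Cprod{1-r^\prime}{1-r^\prime}$, while $\Gain(\zeta_2)$ comes from the combined estimate \eqref{bilinear:eq-product}. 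Your proposal omits both of these structural devices and as written cannot recover the $\Gain(\zeta_2)$ factor for $\tm$-type $\zeta_2$.
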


\begin{proof}
When applying Lemma \ref{bilinear:lemma-product} in the following argument, we always take $\HH_M^+=\operatorname{Id}$. We distinguish two different cases according to the type of $\zeta_2$. \newline

\emph{Case 1: $\zeta_2$ is of type $\tm$.} We then decompose $\zeta_1^a= P_{\ll L}^v \zeta_1^a+P_{\gtrsim L}^v \zeta_1^a$. To estimate the contribution of the low-frequency term, we note that
\begin{align*}
\big\| P_L^v \big( P_{\ll L}^v \zeta_1^a \, P_K^u \zeta_2^b \big) \partial_u \phi^{+,m}_M \big\|_{\Cprod{r-1}{1-r^\prime}} 
&=\big\| P_L^v \big( P_{\ll L}^v \zeta_1^a \,  P_K^u \widetilde{P}_L^v \zeta_2^b  \partial_u \phi^{+,m}_M  \big) \big\|_{\Cprod{r-1}{1-r^\prime}} \\
&\leq \big\|P_{\ll L}^v \zeta_1^a \,  P_K^u \widetilde{P}_L^v \zeta_2^b \, \partial_u \phi^{+,m}_M \big\|_{\Cprod{r-1}{1-r^\prime}} \\
&\leq \big\| \zeta_1^a  \big\|_{\Cprod{s}{s}} 
\big\|   P_K^u \widetilde{P}_L^v \zeta_2^b \, \partial_u \phi^{+,m}_M \big\|_{\Cprod{r-1}{1-r^\prime}} \\
&= \big\| \zeta_1^a  \big\|_{\Cprod{s}{s}} 
\big\|   \widetilde{P}_L^v \big( P_K^u  \zeta_2^b \, \partial_u \phi^{+,m}_M  \big) \big\|_{\Cprod{r-1}{1-r^\prime}}. 
\end{align*}
The desired estimate then follows from the $\tm$-estimate in Lemma \ref{bilinear:lemma-product}. For the high-frequency term in $\zeta_1$, we estimate 
\begin{align*}
\big\| P_L^v \big( P_{\gtrsim L}^v \zeta_1^a \, P_K^u \zeta_2^b \big) \, \partial_u \phi^{+,m}_M \big\|_{\Cprod{r-1}{1-r^\prime}} 
&=\big\| P_L^v \big( P_{\gtrsim L}^v \zeta_1^a \,  P_K^u  \zeta_2^b \,  \partial_u \phi^{+,m}_M  \big) \big\|_{\Cprod{r-1}{1-r^\prime}} \\
&\leq \big\|  P_{\gtrsim L}^v \zeta_1^a \,  P_K^u \zeta_2^b  \, \partial_u \phi^{+,m}_M   \big\|_{\Cprod{r-1}{1-r^\prime}} \\
&\lesssim 
\big\|  P_{\gtrsim L}^v \zeta_1^a\big\|_{\Cprod{1-r^\prime}{1-r^\prime}}
\big\| P_K^u \zeta_2^b  \, \partial_u \phi^{+,m}_M   \big\|_{\Cprod{r-1}{1-r^\prime}} \\
&\lesssim L^{1-r^\prime-s} \big\| \zeta^a_1 \big\|_{\Cprod{s}{s}}
\big\| P_K^u \zeta_2^b  \, \partial_u \phi^{+,m}_M   \big\|_{\Cprod{r-1}{1-r^\prime}}. 
\end{align*}
The desired estimate then follows from \eqref{bilinear:eq-product}. \newline

\emph{Case 2: $\zeta_2$ is of type $\tp$, $\tpm$, or $\ts$.} In this case, we estimate 
\begin{equation*}
\big\| P_L^v \big( \zeta_1^a P_K^u \zeta_2^b \big) \partial_u \phi^{+,m}_M \big\|_{\Cprod{r-1}{1-r^\prime}} 
\lesssim  L^{1-r^\prime-s}  \big\|  \zeta_1^a \big\|_{\Cprod{s}{s}} \big\| P_K^u \zeta_2^b \, \partial_u \phi^{+,m}_M \big\|_{\Cprod{r-1}{s}}. 
\end{equation*}
Thus, the desired estimate follows from the $\tp$, $\tpm$, or $\ts$-estimate in Lemma \ref{bilinear:lemma-product}. 
\end{proof}
%%%%%%%%%%%%%%%%%%%%%%%%%%%%%%%%% Nonlinear estimates %%%%%%%%%%%%%%%%%%%%%%%%%%%%%%

\section{The full nonlinearity}\label{section:nonlinear}

In the previous section, we analyzed both the null form $\partial_u \phi^i \partial_v \phi^j$ and the product $\SecondC{a}{ij}(\phi) \partial_u \phi^i$. In this section, we treat the full $\SecondC{k}{ij}(\phi) \partial_u \phi^i \partial_v \phi^j$. For the most part, the desired estimates follow directly by combining estimates from Section \ref{section:bilinear}. The primary exception is the term
\begin{equation}\label{nonlinear:eq-main-term}
\SecondC{a}{ij}(\phi) \partial_u \big( A^{+,i}_{M,m}\phi^{+,m}_M \big)
\partial_v \big( A^{-,j}_{N,n} \phi^{-,n}_N \big),
\end{equation}
which was only briefly considered in Proposition \ref{bilinear:prop-cross}. As was mentioned previously, the reason for not considering \eqref{nonlinear:eq-main-term} before is that the case-analysis is more efficient when $\SecondC{a}{ij}(\phi)$ is already included. The main result of this section is the following theorem.

\begin{theorem}[Nonlinearity]\label{nonlinear:thm}
Let Hypothesis \ref{hypothesis:smallness} be satisfied and let $1\leq a \leq D$. Then, it holds that 
\begin{align*}
\Second_{ij}^a(\phi) \, \partial_u \phi^i \, \partial_v \phi^j 
= - \hspace{-1ex}\sum_{M\sim_\delta N} \hspace{-1ex} G_{M,N,mn}^a \partial_u \phi^{+,m}_M \partial_v \phi^{-,n}_N 
+ \sum_M F^{+,a}_{M,m} \parallsigu \partial_u \phi^{+,m}_M + \sum_N F_{N,n}^{-,a} \parallsigv  \partial_v \phi^{-,n}_N  + \ENL^{a}, 
\end{align*}
where  $G_{M,N,mn}^a$, $F^{+,a}_{M,m}$, and $F_{N,n}^{-,a}$ are as in Definition \ref{ansatz:def-F} and the remainder term $\ENL$ satisfies
\begin{equation*}
\big\| \ENL^{a} \big\|_{\Cprod{r-1}{r-1}} \lesssim \theta^2. 
\end{equation*}
\end{theorem}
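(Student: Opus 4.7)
The plan is to substitute the Ansatz \eqref{ansatz:eq-Ansatz} for both $\phi^i$ and $\phi^j$ into the bilinear form $\partial_u\phi^i\partial_v\phi^j$, generating the $4\times 4=16$ type-pair contributions organized by Figure \ref{figure:cases}, and then graft on the factor $\Second^{\diamond,a}_{ij}(\phi)$ using Bony's paralinearization (Lemma \ref{prep:lemma-bony}) together with the multiplication estimate (Corollary \ref{prep:corollary-multiplication}). Under Hypothesis \ref{hypothesis:smallness}, the composition estimate \eqref{prep:eq-composition} gives $\Second^{\diamond,a}_{ij}(\phi)\in \Cprod{s}{s}$ with norm $\lesssim 1$, so multiplying $\Second^{\diamond,a}_{ij}(\phi)$ against any null-form contribution that already lives in $\Cprod{r-1}{r-1}$ keeps it in $\Cprod{r-1}{r-1}$ and thus admissible for $\ENL^a$.

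The checkmark cases in Figure \ref{figure:cases} and their $(u,v)$-reflections are handled directly by Proposition \ref{bilinear:prop-check}, which already produces $\Cprod{r-1}{r-1}$-bounds with a summable dyadic gain, so they feed straight into $\ENL^a$. For the $\paralesssimsigu$-cases $(\tp,\tp)$, $(\tp,\tpm)$, $(\tp,\ts)$ of Proposition \ref{bilinear:prop-paraless}, the complementary $\paragtrsimsigu$-portion sits in $\Cprod{r-1}{r-1}$ and is absorbed into $\ENL^a$, while the $\paralesssimsigu$-portion $\partial_u(A^{+,i}_{M,m}\phi^{+,m}_M)\paralesssimsigu \partial_v \zeta^j$ must be reorganized: Corollary \ref{prep:cor-commutator-PDE} moves $\Second^{\diamond,a}_{ij}(\phi)$ past the $\parallsigu$-paraproduct, and Corollary \ref{key:cor-parall-mod} moves $A^{+,i}_{M,m}$ past $\partial_u$, producing (modulo $\Cprod{r-1}{r-1}$-errors)
\[
\bigl(A^{+,i}_{M,m}\Second^{\diamond,a}_{ij}(\phi)\partial_v \zeta^j\bigr) \parallsigu \partial_u \phi^{+,m}_M,
\]
which, once summed over $\Type\zeta\in\{\tp,\tpm,\ts\}$, is exactly the second line \eqref{nonlinear:eq-Fp-2} of $F^{+,a}_{M,m}$. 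The $(u,v)$-reflection recovers the second line of $F^{-,a}_{N,n}$.

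The most delicate case is the cross term $(\tp,\tm)$. When $M\sim_\delta N$, Proposition \ref{bilinear:prop-cross} reduces the null form to $A^{+,i}_{M,m}A^{-,j}_{N,n}\partial_u\phi^{+,m}_M\partial_v\phi^{-,n}_N$ modulo $\Cprod{r-1}{r-1}$. Multiplying by $\Second^{\diamond,a}_{ij}(\phi)$ and using the decomposition
\[
\Second^{\diamond,a}_{ij}(\phi) A^{+,i}_{M,m} A^{-,j}_{N,n} = - G^a_{M,N,mn} + P^u_{>M^{1-\sigma}}(\cdots) + P^u_{\leq M^{1-\sigma}}P^v_{>N^{1-\sigma}}(\cdots),
\]
the $-G^a_{M,N,mn}$ piece is exactly the asserted main output term. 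The $P^u_{>M^{1-\sigma}}$ piece paired with $\partial_u\phi^{+,m}_M$ (which has $u$-frequency $\sim M$) yields high-high $u$-resonances; invoking Lemma \ref{prep:lem-para-localized} identifies this with the $\parasimu$-term in line \eqref{nonlinear:eq-Fm-3} of $F^{-,a}_{N,n}$, pairing with $\partial_v\phi^{-,n}_N$ through $\parallsigv$. The symmetric $P^v_{>N^{1-\sigma}}$-piece becomes the $\parasimv$-term \eqref{nonlinear:eq-Fp-3} in $F^{+,a}_{M,m}$. In the off-diagonal regime $M\ll_\delta N$ of $(\tp,\tm)$, the three factors $A^{-,j}_{N,n}$, $\Second^{\diamond,a}_{ij}(\phi)$, $\partial_u(A^{+,i}_{M,m}\phi^{+,m}_M)$ are grouped via Corollary \ref{prep:cor-commutator-PDE} into exactly line \eqref{nonlinear:eq-Fm-1} of $F^{-,a}_{N,n}$ paired through $\parallsigv$ with $\partial_v\phi^{-,n}_N$; the regime $M\gg_\delta N$ symmetrically feeds line \eqref{nonlinear:eq-Fp-1} of $F^{+,a}_{M,m}$.

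Lastly, for the unfortunate $(\tpm,\ts)$ cases, Proposition \ref{bilinear:prop-tpm-ts} isolates the non-absorbable residue as $(B^i_{M_1,M_2,m_1m_2}(\phi^{-,m_2}_{M_2}\paradownv \partial_v \psi^j))\parallsigu \partial_u \phi^{+,m_1}_{M_1}$; inserting $\Second^{\diamond,a}_{ij}(\phi)$ through Corollary \ref{prep:cor-commutator-PDE} produces exactly line \eqref{nonlinear:eq-Fp-4} of $F^{+,a}_{M_1,m_1}$, with the $(u,v)$-reflection yielding line \eqref{nonlinear:eq-Fm-4} of $F^{-,a}_{N_2,n_2}$. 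The main obstacle is the bookkeeping: ensuring that after grafting on $\Second^{\diamond,a}_{ij}(\phi)$ and shifting paraproducts and frequency projections around, every rough contribution lands in precisely one of $G$, $F^+\parallsigu\partial_u\phi^+$, or $F^-\parallsigv\partial_v\phi^-$, with all frequency-support conditions \eqref{ansatz:eq-condition-p}--\eqref{ansatz:eq-condition-pm} and paraproduct thresholds consistent. This consistency is delivered by the commutator estimates of Lemma \ref{prep:lem-commutator} and Corollary \ref{prep:cor-commutator-PDE}, and by the para-product/frequency-projection reconciliation of Lemma \ref{prep:lem-para-localized}; all leftover mismatches fall in $\Cprod{r-1}{r-1}$ and are swept into $\ENL^a$.
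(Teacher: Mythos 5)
Your plan to substitute the Ansatz into the bare null form, then graft on $\Second^{\diamond,a}_{ij}(\phi)$ via Bony's paralinearization and the commutator estimate, is structurally close to the paper for the $\paralesssimsigu$-cases, the $\scalebox{1.25}{$\lnot$}\paradownv$-case, and the checkmark-cases. However, there is a genuine gap in the hardest case, $\tp$-$\tm$ with $M\sim_\delta N$, and the gap is exactly the one the paper warns about (in the remark after Proposition \ref{bilinear:prop-cross}) when it postpones this term until $\Second^\diamond$ is already attached.

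Your three-way frequency decomposition of $\Second^{\diamond,a}_{ij}(\phi)A^{+,i}_{M,m}A^{-,j}_{N,n}$ into $-G^a_{M,N,mn}$, a $P^u_{>M^{1-\sigma}}$-piece, and a $P^u_{\leq M^{1-\sigma}}P^v_{>N^{1-\sigma}}$-piece is a valid identity, but the bookkeeping of where the $P^u_{>M^{1-\sigma}}$-piece goes is wrong. You route it entirely into line \eqref{nonlinear:eq-Fm-3} of $F^{-,a}_{N,n}$ paired through $\parallsigv\partial_v\phi^{-,n}_N$. But the theorem's output $F^{-,a}_{N,n}\parallsigv\partial_v\phi^{-,n}_N$ explicitly discards the $v$-high portion of $F^{-,a}_{N,n}$: only the part of $(\Second^\diamond A^+A^-)\parasimu\partial_u\phi^+_M$ with $v$-frequency $\leq N^{1-\sigma}$ survives the low$\times$high paraproduct. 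Your $P^u_{>M^{1-\sigma}}$-piece carries no restriction on $v$-frequency, so the doubly-resonant contribution $P^u_{>M^{1-\sigma}}P^v_{>N^{1-\sigma}}(\Second^\diamond A^+A^-)\,\partial_u\phi^{+,m}_M\partial_v\phi^{-,n}_N$ escapes all three of $G$, $F^+\parallsigu\partial_u\phi^+$, and $F^-\parallsigv\partial_v\phi^-$, and you never estimate it. This double resonance is precisely the $\zeta\parasimuv(\partial_u\phi^+_M\partial_v\phi^-_N)$ term that Lemma \ref{nonlinear:lemma-gpf} isolates and that Step 3 of Proposition \ref{nonlinear:prop-main-sim} then bounds directly in $\Cprod{r-1}{r-1}$. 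That estimate is not a soft commutator consequence: after a further Leibniz-type expansion \eqref{nonlinear:eq-main-p3}--\eqref{nonlinear:eq-main-p7}, it relies on Bony's paralinearization and, critically, on the $\delta$-gain built into the $\Mod^-_N$-norm, i.e.\ that $P^u_{\gg N^{1-\delta}}A^-_N$ lies in $\Cprod{r}{s}$ rather than merely $\Cprod{s}{s}$. Without that structural input from the Ansatz, the doubly-resonant term is not summable and the claimed error bound $\|\ENL^a\|_{\Cprod{r-1}{r-1}}\lesssim\theta^2$ would fail. Your proposal needs this additional step before it can be considered complete.
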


\subsection{The main term}\label{section:main-term}

We start by analyzing the main term \eqref{nonlinear:eq-main-term}. By symmetry in the $u$ and $v$-variables, we can assume that $N\leq M$. We further split the analysis in the two cases $N\leq M^{1-\delta}$ and $M^{1-\delta}<N\leq M$. 

\subsubsection{The case $N\leq M^{1-\delta}$}
This case is relatively simple, as it will only contribute to the modulated version of $\phi^{+,m}_M$. 
\begin{proposition}[Main term for $N\leq M^{1-\delta}$]\label{nonlinear:prop-main-parall}
Let Hypothesis \ref{hypothesis:smallness} be satisfied, let $1\leq a \leq \dimA$, and  let $M$ and $N$ be frequency scales satisfying $N\leq M^{1-\delta}$. Then, it holds that 
\begin{equation}\label{nonlinear:eq-main-ll}
\begin{aligned}
&\Big\| \SecondC{a}{ij}(\phi) 
\partial_u \big( A_{M,m}^{+,i} \phi^{+,m}_M \big) 
\partial_v \big(A_{N,n}^{-,j}\phi^{-,n}_N \big) 
- \Big( \SecondC{a}{ij}(\phi)  A_{M,m}^{+,i} \partial_v \big(A_{N,n}^{-,j}\phi^{-,n}_N \big) \Big) \parallsigu  \partial_u \big(  \phi^{+,m}_M \big)  \Big\|_{\Cprod{r-1}{r-1}} \\
&\lesssim (MN)^{-\eta} \theta^4. 
\end{aligned}
\end{equation}
\end{proposition}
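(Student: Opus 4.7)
The plan is to recast the analysis in terms of paraproducts in the $u$-variable and reduce to an application of Corollary \ref{key:cor-parall-mod}. Introduce the auxiliary quantity
\begin{equation*}
 \zeta^{aij}(u,v) := \SecondC{a}{ij}(\phi)(u,v)\,\partial_v\bigl(A_{N,n}^{-,j}\phi_N^{-,n}\bigr)(u,v),
\end{equation*}
which represents the ``passive'' factor multiplying $\partial_u(A_{M,m}^{+,i}\phi_M^{+,m})$. Combining Bony's composition estimate (Lemma \ref{prep:lemma-bony}), the multiplication estimate (Corollary \ref{prep:corollary-multiplication}), the bilinear estimates (Proposition \ref{prep:prop-bilinear}), and the favorable-derivative bound (Lemma \ref{key:lemma-favorable}) applied to $\partial_v(A_N^-\phi_N^-)=(\partial_v A_N^-)\phi_N^-+A_N^-\partial_v\phi_N^-$, one obtains the key auxiliary estimate $\|\zeta^{aij}\|_{\Cprod{s}{r-1}}\lesssim N^{r-s}\theta^2$.

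Using the identity $f\cdot g = f\parallsigu g + f\paragtrsimsigu g$ for the product with $\partial_u(A_M^+\phi_M^+)$, the difference to be estimated in \eqref{nonlinear:eq-main-ll} decomposes as $\mathcal{E}_1 + \mathcal{E}_2$, where
\begin{equation*}
 \mathcal{E}_1 = \zeta^{aij}\parallsigu \partial_u\bigl(A_{M,m}^{+,i}\phi_M^{+,m}\bigr) - \bigl(A_{M,m}^{+,i}\zeta^{aij}\bigr)\parallsigu \partial_u\phi_M^{+,m}, \qquad \mathcal{E}_2 = \zeta^{aij}\paragtrsimsigu \partial_u\bigl(A_{M,m}^{+,i}\phi_M^{+,m}\bigr).
\end{equation*}
The term $\mathcal{E}_1$ is precisely the setup of Corollary \ref{key:cor-parall-mod}, which together with the auxiliary estimate on $\zeta^{aij}$ gives $\|\mathcal{E}_1\|_{\Cprod{r-1}{r-1}}\lesssim M^{r-2s+\sigma}N^{r-s}\theta^4$. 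For $\mathcal{E}_2$, the decisive observation is that $\partial_u(A_M^+\phi_M^+)$ is localized at $u$-frequencies $\sim M$, since $A_M^+$ is restricted to $u$-frequencies $\lesssim M^{1-\sigma}$ by Condition \ref{condition:frequency}, so the paraproduct $\paragtrsimsigu$ effectively forces the $u$-frequency of $\zeta^{aij}$ to exceed $M^{1-\sigma}$. One then estimates $\mathcal{E}_2$ by the bilinear estimate (Proposition \ref{prep:prop-bilinear}) in $\Cprod{1-r^\prime}{r-1}\times\Cprod{r-1}{s}\to\Cprod{r-1}{r-1}$, combined with the interpolation gain
\begin{equation*}
 \|P^u_{>M^{1-\sigma}}\zeta^{aij}\|_{\Cprod{1-r^\prime}{r-1}} \lesssim M^{(1-\sigma)(1-r^\prime-s)}\|\zeta^{aij}\|_{\Cprod{s}{r-1}}
\end{equation*}
whose exponent is negative thanks to $1-r^\prime<s$, together with the routine bound $\|\partial_u(A_M^+\phi_M^+)\|_{\Cprod{r-1}{s}}\lesssim M^{r-s}\theta^2$.

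Finally, in both contributions the constraint $N\leq M^{1-\delta}$ converts the problematic $N^{r-s}$ factor into $M^{(1-\delta)(r-s)}$, and the $(MN)^{-\eta}$ decay is extracted using the parameter hierarchy $1/2-s \ll \eta \ll \delta \ll 3/4-r$ and $\sigma = 100\delta$. The main obstacle is precisely this final exponent-counting step: both $\mathcal{E}_1$ and $\mathcal{E}_2$ are only marginally controlled, because $\mathcal{E}_1$ carries an $M^\sigma$ commutator loss while $\mathcal{E}_2$ provides only a modest gain from restricting to high $u$-frequencies. The net decay $(MN)^{-\eta}$ therefore emerges only after using $N\leq M^{1-\delta}$ sharply, so that the $M^{-\delta(r-s)}$ savings absorb both the $M^\sigma$ loss from Corollary \ref{key:cor-parall-mod} and the positive residue $1-2s+\eta$ that appears when combining exponents. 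Tracking these cancellations correctly through the tight parameter hierarchy is the principal difficulty of the proof.
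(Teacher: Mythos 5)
Your decomposition into $\mathcal{E}_1$ and $\mathcal{E}_2$, together with the treatment of $\mathcal{E}_1$ via Corollary~\ref{key:cor-parall-mod} and Lemma~\ref{prep:lem-para-localized}, matches the paper's Step~0 reduction. The gap is in your estimate of $\mathcal{E}_2$, which is quantitatively insufficient: the generic bilinear estimate you propose does not close, and it is not merely a ``tight margin'' issue.

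Following your bound, the exponent of $M$ in $\|\mathcal{E}_2\|_{\Cprod{r-1}{r-1}}$ (after absorbing $N^{r-s}\leq M^{(1-\delta)(r-s)}$) is
\begin{equation*}
 (1-\sigma)(1-r^\prime-s)+(r-s)+(1-\delta)(r-s).
\end{equation*}
Substituting the heuristic values $s\to 1/2$, $r,r^\prime\to 3/4$, $\sigma,\delta,\eta\to 0$ gives $(1)(-1/4)+1/4+1/4=1/4$, a \emph{positive} exponent of order one. Your narrative attributes a ``positive residue $1-2s+\eta$'' that the $M^{-\delta(r-s)}$ saving supposedly absorbs, but the actual surplus is the additional factor $M^{r-s}$ coming from $\|\partial_u(A_M^+\phi_M^+)\|_{\Cprod{r-1}{s}}\lesssim M^{r-s}\theta^2$, which is $\approx M^{1/4}$ and cannot be compensated by the $\delta$-gain. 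In other words, treating $\zeta^{aij}=\SecondC{a}{ij}(\phi)\,\partial_v(A_N^-\phi_N^-)$ as a black-box element of $\Cprod{s}{r-1}$ loses too much.

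The paper avoids this by further decomposing $P_{>M^{1-\sigma}}^u\bigl(\SecondC{a}{ij}(\phi)\,\partial_v(A_N^-\phi_N^-)\bigr)$ via Lemma~\ref{prep:lem-commutator} into a piece where the high $u$-frequency sits on $\SecondC{a}{ij}(\phi)$, a piece where it sits on $A_N^-$, and a commutator error, and then exploits structural information unavailable to the black-box bilinear estimate. For the first piece, one Bony-paralinearizes $P_K^u(\SecondC{a}{ij}(\phi))\approx\partial_\ell\SecondC{a}{ij}(\phi)\,P_K^u\phi^\ell$ and applies Lemma~\ref{bilinear:lemma-product}, which uses the Ansatz structure of $\phi$ to obtain $\|P_K^u(\SecondC{a}{ij}(\phi))\,\partial_u(A_M^+\phi_M^+)\|_{\Cprod{r-1}{1-r^\prime}}\lesssim M^{1-s-r+\eta}\theta^2$; the exponent $1-s-r\approx -1/4$ is what makes the sum close once $N\leq M^{1-\delta}$ is used. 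For the second piece, the crucial input is the improved regularity $\|P^u_{\gg N^{1-\delta}}A_N^-\|_{\Cprod{r}{s}}\lesssim\theta$ recorded in the $\Mod_N^-$ norm (combined with $N^{1-\delta}\ll N\leq M^{1-\delta}\ll M$), giving the gain $K^{-r}$ rather than $K^{-s}$ on the relevant dyadic block. Neither of these inputs is invoked in your proposal, and without them the estimate for $\mathcal{E}_2$ does not hold.
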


\begin{proof} We start the proof with two steps which reduce \eqref{nonlinear:eq-main-ll} to a frequency-localized version which does not contain para-product operators. Using Corollary \ref{key:cor-parall-mod}, it holds that 
\begin{align*}
&\Big\| \Big( \SecondC{a}{ij}(\phi)  \partial_v \big(A_{N,n}^{-,j}\phi^{-,n}_N \big) \Big) \parallsigu  \partial_u \big(  A_{M,m}^{+,i} \phi^{+,m}_M \big)  
- \Big( \SecondC{a}{ij}(\phi)  A_{M,m}^{+,i} \partial_v \big(A_{N,n}^{-,j}\phi^{-,n}_N \big) \Big) \parallsigu  \partial_u \big(  \phi^{+,m}_M \big)  \Big\|_{\Cprod{r-1}{r-1}} \\
&\lesssim M^{r-2s+\sigma} 
\big\| \SecondC{a}{ij}(\phi)  \partial_v \big(A_{N,n}^{-,j}\phi^{-,n}_N \big) \big\|_{\Cprod{s}{r-1}} 
\big\| A_{M,m}^{+,i} \big\|_{\Cprod{s}{s}} 
\big\| \phi^{+,m}_M \big\|_{\C_u^s} \\
&\lesssim M^{r-2s+\sigma} N^{r-s} \theta^4.
\end{align*}
Since $N\leq M$ and $3s-2r\gg \sigma$,  this is an acceptable contribution to \eqref{nonlinear:eq-main-ll}. Using Lemma \ref{prep:lem-para-localized}, it holds that 
\begin{align*}
&\Big\| \scalebox{0.95}{$\Big( \SecondC{a}{ij}(\phi) \partial_v \big(A_{N,n}^{-,j}\phi^{-,n}_N \big) \Big) \parallsigu  \partial_u \big( A_{M,m}^{+,i} \phi^{+,m}_M \big)  - 
P_{\leq M^{1-\sigma}}^u \Big( \SecondC{a}{ij}(\phi) \partial_v \big(A_{N,n}^{-,j}\phi^{-,n}_N \big) \Big) \partial_u \big( A_{M,m}^{+,i} \phi^{+,m}_M \big)$} \Big\|_{\Cprod{r-1}{r-1}} \\
&\lesssim M^{-(1-\sigma) s} 
\Big\| \SecondC{a}{ij}(\phi) \partial_v \big(A_{N,n}^{-,j}\phi^{-,n}_N \big) \Big\|_{\Cprod{s}{r-1}} \Big\| \partial_u \big( A_{M,m}^{+,i} \phi^{+,m}_M \big)\Big\|_{\Cprod{r-1}{s}} \\
&\lesssim M^{-(1-\sigma) s} N^{r-s} M^{r-s} \theta^4 \\
&\lesssim (MN)^{-\eta} \theta^4,
\end{align*}
where we used $3s-2r \gg \sigma$. 
In order to prove \eqref{nonlinear:eq-main-ll}, it therefore remains to prove that  
\begin{equation*}
\Big\|   P_{>M^{1-\sigma}}^u \Big( \SecondC{a}{ij}(\phi) \partial_v \big(A_{N,n}^{-,j}\phi^{-,n}_N \big) \Big)  \partial_u \big( A_{M,m}^{+,i} \phi^{+,m}_M \big)  \Big\|_{\Cprod{r-1}{r-1}} 
\lesssim (MN)^{-\eta} \theta^4. 
\end{equation*}
Using Lemma \ref{prep:lem-commutator}, it holds that
\begin{align}
P_{>M^{1-\sigma}}^u \Big( \SecondC{a}{ij}(\phi) \partial_v \big(A_{N,n}^{-,j}\phi^{-,n}_N \big) \Big) 
=& P_{>M^{1-\sigma}}^u \Big( \SecondC{a}{ij}(\phi)  \Big) \,  \partial_v \big(A_{N,n}^{-,j}\phi^{-,n}_N \big) \label{nonlinear:eq-main-ll-p1} \\
+& \SecondC{a}{ij}(\phi) \, \partial_v P_{>M^{1-\sigma}}^u \big(A_{N,n}^{-,j}\phi^{-,n}_N \big) \label{nonlinear:eq-main-ll-p2}\\
+& \mathcal{E}^{a}_{M,N,i}, \label{nonlinear:eq-main-ll-p3}
\end{align}
where the error term satisfies 
\begin{equation*}
\big\| \mathcal{E}^{a}_{M,N,i} \big\|_{\Cprod{1-r^\prime}{r-1}} \lesssim M^{(1-\sigma)(1-r^\prime-2s)} N^{r-s} \theta^2. 
\end{equation*}
We now estimate the contributions of \eqref{nonlinear:eq-main-ll-p1}, \eqref{nonlinear:eq-main-ll-p2}, and \eqref{nonlinear:eq-main-ll-p3} separately. \newline

\emph{Step 1: Contribution of \eqref{nonlinear:eq-main-ll-p1}.}
Using the bilinear estimate (Proposition \ref{prep:prop-bilinear}) and the product estimate (Lemma \ref{bilinear:lemma-product}), it holds that 
\begin{align*}
&\Big\| P_{>M^{1-\sigma}}^u \Big( \SecondC{a}{ij}(\phi)  \Big) \, \partial_u \big( A_{M,m}^{+,i} \phi^{+,m}_M \big)\,  \partial_v \big(A_{N,n}^{-,j}\phi^{-,n}_N \big) \Big\|_{\Cprod{r-1}{r-1}} \\
\lesssim& \Big\| P_{>M^{1-\sigma}}^u \Big( \SecondC{a}{ij}(\phi)  \Big) \, \partial_u \big( A_{M,m}^{+,i} \phi^{+,m}_M \big) \Big\|_{\Cprod{r-1}{1-r^\prime}} \big\|  \partial_v \big(A_{N,n}^{-,j}\phi^{-,n}_N \big)  \big\|_{\Cprod{1-r^\prime}{r-1}} \\
\lesssim& M^{1-s-r+\eta} N^{r-s} \theta^4. 
\end{align*}
Since $N\leq M^{1-\delta}$, we have that 
\begin{equation}\label{nonlinear:eq-main-ll-p4}
M^{1-s-r+\eta} N^{r-s}  \lesssim M^{1-s-r+\eta+(1-\delta)(r-s)} = M^{1-2s+\eta-\delta (r-s)} \lesssim M^{-\delta/6},
\end{equation}
which is acceptable. 

\emph{Step 2: Contribution of \eqref{nonlinear:eq-main-ll-p2}.} We first note that
\begin{equation*}
P_{>M^{1-\sigma}}^u \big(A_{N,n}^{-,j}\phi^{-,n}_N \big) = \big(P_{>M^{1-\sigma}}^u A_{N,n}^{-,j}\big)\, \phi^{-,n}_N . 
\end{equation*}
Then, we recall that  $N^{1-\delta} \ll N\leq M^{1-\delta} \ll M $. Using the definition of $\Mod_N^-$, it follows that 
\begin{equation*}
\big\|  A_{N,n}^{-,j} \big\|_{\Cprod{s}{s}}
+\big\| \widetilde{P}_M^u A_{N,n}^{-,j} \big\|_{\Cprod{r}{s}} \lesssim \theta. 
\end{equation*}
Using the multiplication estimate (Corollary \ref{prep:corollary-multiplication}), we have that 
\begin{align*}
  &\big\| \SecondC{a}{ij}(\phi) \, \partial_u \big( A_{M,m}^{+,i} \phi^{+,m}_M \big)\, \partial_v \big( P_{>M^{1-\sigma}}^u A_{N,n}^{-,j} \, \phi^{-,n}_N \big) \big\|_{\Cprod{r-1}{r-1}} \\
  \lesssim& \big\|  \partial_u \big( A_{M,m}^{+,i} \phi^{+,m}_M \big)\, \partial_v \big(P_{>M^{1-\sigma}}^u A_{N,n}^{-,j}\,\phi^{-,n}_N \big) \big\|_{\Cprod{r-1}{r-1}}.
\end{align*}
We now further separate the resonant and non-resonant terms in the $u$-variable. Using the bilinear estimate (Proposition \ref{prep:prop-bilinear}.\ref{prep:item-res}), the resonant term can be estimated by
\begin{align*}
&\big\|  \partial_u \big( A_{M,m}^{+,i} \phi^{+,m}_M \big) \parasimu \partial_v \big(P_{>M^{1-\sigma}}^u A_{N,n}^{-,j}\,\phi^{-,n}_N \big) \big\|_{\Cprod{r-1}{r-1}} \\
\lesssim& \big\| \partial_u \big( A_{M,m}^{+,i} \phi^{+,m}_M \big) \big\|_{\Cprod{-r^\prime}{s}}
\big\| \partial_v \big( \widetilde{P}_M^u A_{N,n}^{-,j}\,\phi^{-,n}_N \big) \big\|_{\Cprod{r}{s}} \\
\lesssim& M^{1-s-r^\prime} N^{r-s} \big\| A_M^+ \big\|_{\Cprod{s}{s}} \| \phi^+ \|_{\C_u^s} \| \widetilde{P}_M^u A_N^- \|_{\Cprod{r}{s}} \| \phi^- \|_{\C_v^s} \\
\lesssim& M^{1-s-r^\prime} N^{r-s} \theta^4.
\end{align*}
Using \eqref{nonlinear:eq-main-ll-p4}, this term is acceptable.
Using Proposition \ref{prep:prop-bilinear}.\ref{prep:item-nonres}, the non-resonant term can be estimated by 
\begin{align*}
&\big\|  \partial_u \big( A_{M,m}^{+,i} \phi^{+,m}_M \big) \paransimu \partial_v \big(P_{>M^{1-\sigma}}^u A_{N,n}^{-,j}\,\phi^{-,n}_N \big) \big\|_{\Cprod{r-1}{r-1}} \\
\lesssim& \big\|  \partial_u \big( A_{M,m}^{+,i} \phi^{+,m}_M \big) \|_{\Cprod{r-1}{s}} 
\big\| \partial_v \big(P_{>M^{1-\sigma}}^u A_{N,n}^{-,j}\,\phi^{-,n}_N \big) \big\|_{\Cprod{\eta}{r-1}} \\
\lesssim& M^{r-s} N^{r-s} \big\| A_{M,m}^{+,i} \big\|_{\Cprod{s}{s}} \big\| \phi^{+,m}_M \big\|_{\C_u^s} \big\| P_{>M^{1-\sigma}}^u A_{N,n}^{-,j} \big\|_{\Cprod{\eta}{s}} 
\big\| \phi^{-,n}_N \big\|_{\C_v^s} \\
\lesssim& M^{r-s} M^{(1-\sigma)(\eta-s)} N^{r-s} \theta^4.
\end{align*}
Since $N\leq M$ and $3s-2r\gg  \sigma$, this is acceptable.

\emph{Step 3: Contribution of \eqref{nonlinear:eq-main-ll-p3}.}
Using the bilinear estimate (Proposition \ref{prep:prop-bilinear}), we have that 
\begin{align*}
\big\| \mathcal{E}^{a}_{M,N,i} \partial_u \big( A_{M,m}^{+,i} \phi^{+,m}_M \big)  \big\|_{\Cprod{r-1}{r-1}} 
&\lesssim \big\|  \mathcal{E}^{a}_{M,N,i}\big\|_{\Cprod{1-r^\prime}{r-1}}
\big\| \partial_u \big( A_{M,m}^{+,i} \phi^{+,m}_M \big)  \big\|_{\Cprod{r-1}{1-r^\prime}} \\
&\lesssim  M^{(1-\sigma)(1-r^\prime-2s)} M^{r-s} N^{r-s} \theta^4. 
\end{align*}
Since $N\leq M$ and 
\begin{equation*}
    (1-\sigma)(1-r^\prime-2s)+r-s+r-s\approx -1/4, 
\end{equation*}
this term is acceptable. 

\end{proof}

\subsubsection{The case $M\sim_\delta N$} This case is slighly harder, since it contributes to the bilinear term containing $\phi^{+,m}_M \phi^{-,n}_N$, the modulated version of $\phi^{+,m}_M$, and the modulated version of $\phi^{-,n}_N$.

\begin{proposition}[Main term for $M\sim_\delta N$]\label{nonlinear:prop-main-sim}
Let Hypothesis \ref{hypothesis:smallness} be satisfied, let $1\leq i,j \leq \dimA$, and let $M,N$ be two frequency-scales satisfying $M\sim_\delta N$. For all $1\leq a,m,n \leq \dimA$, define 
\begin{equation}\label{nonlinear:eq-main-zeta}
    \zeta^a_{M,N,mn} := \SecondC{a}{ij}(\phi) A_{M,m}^{+,i} A_{N,n}^{-,j}. 
\end{equation}
Then, it holds that 
\begin{align}
 \SecondC{a}{ij}(\phi) \partial_u \big( A_{M,m}^{+,i} \phi^{+,m}_M \big)
 \partial_v\big( A_{N,n}^{-,j}\phi^{-,n}_N \big) 
 &=-G^a_{M,N,mn} \big(  \partial_u \phi^{+,m}_M \, \partial_v \phi_{N}^{-,n} \big) \label{nonlinear:eq-main-1}\\
&+ \big(\zeta^a_{M,N,mn} \parasimv \partial_v \phi_{N}^{-,n} \big) \parallsigu \partial_u \phi^{+,m}_M \label{nonlinear:eq-main-2}\\
&+\big(\zeta^a_{M,N,mn} \parasimu \partial_u \phi^{+,m}_M  \big) \parallsigv \partial_v \phi_{N}^{-,n} \label{nonlinear:eq-main-3}\\
&+\mathcal{E}^{a}_{M,N},\label{nonlinear:eq-main-4}
\end{align}
where $G^a_{M,N,mn}$ is as in Definition \ref{ansatz:def-F} and the error term $\mathcal{E}^{a}_{M,N}$ satisfies 
\begin{equation*}
\big\| \mathcal{E}^{a}_{M,N} \big\|_{\Cprod{r-1}{r-1}} \lesssim (MN)^{-\eta} \theta^4. 
\end{equation*}
\end{proposition}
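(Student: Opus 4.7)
The plan is to reduce the quintilinear left-hand side to the three announced leading terms by two successive decompositions, and then to show the accumulated errors are admissible in $\Cprod{r-1}{r-1}$ with the required $(MN)^{-\eta}$ decay.

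First I would apply the Leibniz rule to $\partial_u(A_{M,m}^{+,i} \phi^{+,m}_M)\,\partial_v(A_{N,n}^{-,j} \phi^{-,n}_N)$, isolating the ``no-derivatives-on-$A^\pm$'' term $A_{M,m}^{+,i} A_{N,n}^{-,j} \partial_u \phi^{+,m}_M \partial_v \phi^{-,n}_N$ and collecting the three remaining ``favorable-derivative'' terms (where at least one of $\partial_u A^+_M$, $\partial_v A^-_N$ appears). Multiplying by $\SecondC{a}{ij}(\phi)$ and applying the multiplication estimate (Corollary \ref{prep:corollary-multiplication}) to absorb the $\Second^\diamond$-factor, these favorable-derivative terms are exactly the ones already estimated in Proposition \ref{bilinear:prop-cross} (cf.~\eqref{bilinear:eq-bilinear-pm-2}) and in Lemma \ref{key:lemma-favorable}, so they contribute to $\mathcal{E}^a_{M,N}$ with the required bound.

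Second, after pulling $\SecondC{a}{ij}(\phi)$ back onto the two $A$-factors and recalling $\zeta^a_{M,N,mn}=\SecondC{a}{ij}(\phi) A_{M,m}^{+,i} A_{N,n}^{-,j}$, the remaining task is to decompose $\zeta \cdot \partial_u \phi^{+,m}_M\, \partial_v \phi^{-,n}_N$. I would use the two-variable partition
\[
\mathrm{Id} \;=\; P^u_{\leq M^{1-\sigma}} P^v_{\leq N^{1-\sigma}} \;+\; P^u_{\leq M^{1-\sigma}} P^v_{> N^{1-\sigma}} \;+\; P^u_{> M^{1-\sigma}} .
\]
By definition \eqref{ansatz:eq-G} of $G^a_{M,N,mn}$, the first piece is precisely $-G^a_{M,N,mn} \partial_u \phi^{+,m}_M\, \partial_v \phi^{-,n}_N$, which gives \eqref{nonlinear:eq-main-1}. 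The two remaining pieces will reproduce the para-product terms \eqref{nonlinear:eq-main-2} and \eqref{nonlinear:eq-main-3} up to admissible errors; by the $u\leftrightarrow v$ symmetry it suffices to treat $P^u_{> M^{1-\sigma}} \zeta \cdot \partial_u \phi^{+,m}_M \partial_v \phi^{-,n}_N$ and identify it with $(\zeta \parasimu \partial_u \phi^{+,m}_M) \parallsigv \partial_v \phi^{-,n}_N$.

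The key identification in the third step goes via Lemma \ref{prep:lem-para-localized} applied twice. First, since $\partial_u \phi^{+,m}_M$ is frequency-localized at $\sim M$ in $u$, that lemma collapses $\zeta \parasimu \partial_u \phi^{+,m}_M$ to $\widetilde{P}^u_{\sim M} \zeta \cdot \partial_u \phi^{+,m}_M$ plus an error. Second, since $\partial_v \phi^{-,n}_N$ is frequency-localized at $\sim N$ in $v$, the outer $\parallsigv$ collapses to multiplication by $P^v_{\leq N^{1-\sigma}}$ plus an error. Writing $P^u_{>M^{1-\sigma}} = \widetilde{P}^u_{\sim M} + P^u_{>M^{1-\sigma},\,\not\sim M}$, the discrepancy between $P^u_{>M^{1-\sigma}} \zeta \cdot \partial_u \phi^{+,m}_M \partial_v \phi^{-,n}_N$ and the target then reduces to (i) a piece non-resonant in $u$ between $P^u_{>M^{1-\sigma},\,\not\sim M}\zeta$ and $\partial_u \phi^{+,m}_M$, and (ii) a piece non-resonant in $v$ of the form $P^v_{>N^{1-\sigma}}\bigl(\widetilde{P}^u_{\sim M}\zeta \cdot \partial_u \phi^{+,m}_M\bigr)\cdot \partial_v \phi^{-,n}_N$. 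Both are controlled using Proposition \ref{prep:prop-bilinear}.\ref{prep:item-nonres}, the bound $\|\zeta\|_{\Cprod{s}{s}}\lesssim \theta^2$ coming from the composition estimate Lemma \ref{prep:lemma-bony} applied to $\Second^\diamond$ together with the bilinear estimate for $A_M^+ A_N^-$, and the $\Ds$-bounds on $\phi^\pm$.

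The main obstacle will be the bookkeeping in the third step: every bilinear estimate trades regularity at the rate $r-2s$, every para-product collapse introduces a loss $\max(M,N)^{C\sigma}$, and the resulting dyadic exponents must be converted, via $M \sim_\delta N$ together with $\sigma = 100\delta \gg \eta$, into a net gain of the form $(MN)^{-\eta}$. This is the same conversion mechanism already exploited throughout Section \ref{section:bilinear} (compare the proof of Proposition \ref{bilinear:prop-paraless} and of Proposition \ref{nonlinear:prop-main-parall}), and no new analytic ingredients should be required beyond careful case-checking against the frequency configurations of $\zeta$, $\partial_u \phi_M^+$, and $\partial_v \phi_N^-$.
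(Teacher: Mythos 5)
Your Steps 1 and 2 track the paper's proof closely: remove the favorable-derivative terms via Proposition~\ref{bilinear:prop-cross}, reduce to $\zeta^a_{M,N,mn}\,\partial_u\phi^{+,m}_M\,\partial_v\phi^{-,n}_N$, and split by frequency region of $\zeta$. The three-piece partition of unity you write down is, after sorting, a regrouping of the seven regions in the paper's Lemma~\ref{nonlinear:lemma-gpf}. But your Step 3 contains a genuine gap: the piece you label ``(ii),'' namely $P^v_{>N^{1-\sigma}}\bigl(\widetilde{P}^u_{\sim M}\zeta\cdot\partial_u\phi^{+,m}_M\bigr)\cdot\partial_v\phi^{-,n}_N$, is \emph{not} non-resonant in $v$. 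It contains the double-resonance configuration in which $\zeta$ has $u$-frequency $\sim M$ \emph{and} $v$-frequency $\sim N$ simultaneously, and this cannot be estimated in $\Cprod{r-1}{r-1}$ using only $\|\zeta\|_{\Cprod{s}{s}}\lesssim\theta^2$ and the $\Ds$-bounds. Indeed, placing the doubly-resonant piece $P^u_{\sim M}P^v_{\sim N}\zeta\cdot\partial_u\phi^+_M\cdot\partial_v\phi^-_N$ into the bilinear estimate forces $\alpha_j+\beta_j>0$ in \emph{both} variables; the best one can do with $L^\infty$-type inputs gives a factor $M^{1+2\eta-2s}N^{1+2\eta-2s}$, which diverges since $s<1/2$. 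Proposition~\ref{prep:prop-bilinear}.\ref{prep:item-nonres} is of no help here precisely because neither variable is non-resonant.

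This double-resonance piece is exactly what the paper isolates as the fourth term $\zeta\parasimuv(\cdots)$ in Lemma~\ref{nonlinear:lemma-gpf} and then devotes the longest portion of the proof of Proposition~\ref{nonlinear:prop-main-sim} — Step 3, with sub-steps 3.(a)–(d) — to estimating. The ingredients there go well beyond a generic $\Cprod{s}{s}$ bound on $\zeta$: one first decomposes $P_K^u P_L^v\zeta$ via the commutator identity~\eqref{prep:eq-commutator-4} to localize the frequency projections onto $\Second^\diamond(\phi)$, $A^+_M$, and $A^-_N$ separately; one then uses Bony's paralinearization (Lemma~\ref{prep:lemma-bony}) to replace $P_K^u(\Second^\diamond(\phi))$ by $\partial\Second^\diamond(\phi)\,P_K^u\phi$; one invokes the product estimate (Lemma~\ref{bilinear:lemma-product} and Corollary~\ref{bilinear:cor-product}), which exploits the type-by-type structure of $\phi$ from the Ansatz; and in Step 3.(d) one uses the frequency-support condition $P^u_{\gg N^{1-\delta}}A^-_N$ to extract the needed decay. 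None of this machinery is invoked in your proposal, and without it the double-resonance contribution in your error term (ii) cannot be closed.

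Your case (i), $P^u_{>M^{1-\sigma},\,\not\sim M}\zeta$, is also not uniformly safe for the same reason: even though the $u$-interaction is non-resonant there, the $v$-frequency of $\zeta$ is still unconstrained, and the $L\sim N$ sub-case produces a $v$-resonance. This piece can be salvaged by a single non-resonant improvement (in $u$) together with a good $L$-sum (as the paper does for region~\eqref{nonlinear:eq-gpf-region-6}), so it is not the main issue — but it deserves more care than your one-line dismissal. The substantive obstruction remains the simultaneous $K\sim M$, $L\sim N$ configuration, and the missing ingredient is the structure of $\zeta$ itself rather than any sharper abstract bilinear estimate.
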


Before starting with the proof of Proposition \ref{nonlinear:prop-main-sim}, we require the following auxiliary lemma. In Lemma \ref{nonlinear:lemma-gpf}, the function $\zeta^a_{M,N,mn}$ from \eqref{nonlinear:eq-main-zeta} is replaced by a general function $\zeta \in \Cprod{s}{s}$. In the spirit of Definition \ref{prep:def-paraproduct}, we define the para-product operators $\parallsiguv$ and $\parasimuv$ by 
\begin{align*}
f \parallsiguv g &:= 
\sum_{\substack{M_1,N_1 \colon \\ M_1 \leq N_1^{1-\sigma}}} 
 \sum_{\substack{M_2,N_2 \colon \\ M_2 \leq N_2^{1-\sigma}}}
 P_{M_1}^u P_{M_2}^v f \, P_{N_1}^u P_{N_2}^v g, \\
 f \parasimuv g &:= 
\sum_{\substack{M_1,N_1 \colon \\ M_1 \sim N_1}} 
 \sum_{\substack{M_2,N_2 \colon\\ M_2 \sim N_2}}
 P_{M_1}^u P_{M_2}^v f \, P_{N_1}^u P_{N_2}^v g. 
\end{align*}

%GPF stands for general pre-factor
\begin{lemma}\label{nonlinear:lemma-gpf}
Let Hypothesis \ref{hypothesis:smallness} be satisfied. Let $1\leq m,n\leq D$, let $M,N$ be frequency-scales satisfying $M\sim_\delta N$, and let $\zeta \in \Cprod{s}{s}$. Then, we can write 
\begin{align}
\zeta \partial_u \phi^{+,m}_M \, \partial_v \phi_{N}^{-,n} 
&= \zeta \parallsiguv \big(  \partial_u \phi^{+,m}_M \, \partial_v \phi_{N}^{-,n} \big) \label{nonlinear:eq-gpf-1}\\
&+ \big( \zeta \parasimv \partial_v \phi_{N}^{-,n} \big) \parallsigu \partial_u \phi^{+,m}_M \label{nonlinear:eq-gpf-2}\\
&+\big( \zeta \parasimu \partial_u \phi^{+,m}_M  \big) \parallsigv \partial_v \phi_{N}^{-,n} \label{nonlinear:eq-gpf-3}\\
&+\zeta \parasimuv \big(  \partial_u \phi^{+,m}_M \, \partial_v \phi_{N}^{-,n}  \big) \label{nonlinear:eq-gpf-4}\\
&+\mathcal{E}^{m,n}_{M,N},\label{nonlinear:eq-gpf-5}
\end{align}
where the error term $\mathcal{E}^{m,n}_{M,N}$ satisfies 
\begin{equation*}
\big\| \mathcal{E}^{m,n}_{M,N} \big\|_{\Cprod{r-1}{r-1}} \lesssim (MN)^{-\eta} \theta^2 \| \zeta \|_{\Cprod{s}{s}}. 
\end{equation*}
\end{lemma}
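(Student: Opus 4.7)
The strategy is a Littlewood--Paley decomposition of $\zeta$ in both the $u$ and $v$ variables together with a case analysis based on where $\zeta$'s frequencies lie relative to $M$ (in $u$) and $N$ (in $v$). Writing $\zeta = \sum_{L_1, L_2} P^u_{L_1} P^v_{L_2} \zeta$, in each variable one distinguishes a \emph{low} range ($L \leq M^{1-\sigma}$, resp.\ $L \leq N^{1-\sigma}$), a \emph{resonant} range ($L \sim M$, resp.\ $L \sim N$), and the complementary \emph{intermediate/high} range. Since $\partial_u \phi^{+,m}_M$ is supported at $u$-frequency $\sim M$ and $\partial_v \phi^{-,n}_N$ at $v$-frequency $\sim N$, the four ``low-or-resonant in $u$, low-or-resonant in $v$'' combinations of the $\zeta$-decomposition correspond precisely to the four right-hand side terms \eqref{nonlinear:eq-gpf-1}--\eqref{nonlinear:eq-gpf-4}, while all remaining configurations contribute to the error $\mathcal{E}^{m,n}_{M,N}$.

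To identify the four main pieces with the expressions on the right-hand side, one passes between $\parallsigu, \parasimu, \parallsigv, \parasimv$ (and their two-variable analogues $\parallsiguv, \parasimuv$) and the corresponding fattened Littlewood--Paley projections using the frequency-localization identity in Lemma~\ref{prep:lem-para-localized}; the commutator errors produced are of the same form and size as the error pieces analyzed below and are absorbed into $\mathcal{E}^{m,n}_{M,N}$.

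For the error estimation, each error piece $P^u_{L_1} P^v_{L_2} \zeta \cdot \partial_u \phi^{+,m}_M \cdot \partial_v \phi^{-,n}_N$ has its $u$-frequency localized to $\sim \max(L_1, M)$ and its $v$-frequency to $\sim \max(L_2, N)$. Combining the Bernstein-type bound $\|P^u_{L_1} P^v_{L_2} \zeta\|_{L^\infty_{u,v}} \lesssim L_1^{-s} L_2^{-s} \|\zeta\|_{\Cprod{s}{s}}$ with the data bounds $\|\partial_u \phi^{+,m}_M\|_{L^\infty_u} \lesssim M^{1-s}\theta$ and $\|\partial_v \phi^{-,n}_N\|_{\C^{r-1}_v} \lesssim N^{r-s}\theta$ (and their variants with the roles of $L^\infty$ and $\C^{r-1}$ exchanged, chosen optimally according to the relative sizes of the $L_j$ and $M, N$) yields for each error piece a $\Cprod{r-1}{r-1}$ bound of the form $L_1^{b_1} L_2^{b_2} M^{a_1} N^{a_2} \|\zeta\|_{\Cprod{s}{s}} \theta^2$, with $b_j < 0$ whenever $L_j$ lies in the high range so that the $L_j$-sum is geometric, and with a logarithmic sum in the intermediate range offset by a Bernstein gain of $M^{-\sigma s}$ (resp.\ $N^{-\sigma s}$) in the corresponding variable. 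After summation and use of the constraint $M \sim_\delta N$, the total exponent in $M$ can be shown to lie below $-2\eta$: this relies on the key inequality $2r - 3s < 0$ (which follows from $1/2 - s \ll 3/4 - r$) together with the parameter hierarchy $\eta \ll \delta \ll 3/4 - r$ and $\sigma = 100\delta$, giving the desired gain $(MN)^{-\eta}$.

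The main obstacle is bookkeeping rather than any single analytic estimate: roughly a dozen configurations of $(L_1, L_2)$ must be enumerated, and in each one must verify that the combination of the Bernstein gain from $\zeta$ and the polynomial growth of $\partial_u \phi^{+,m}_M, \partial_v \phi^{-,n}_N$ produces a strictly negative net exponent after summation. All required analytic inputs are already provided in Sections~\ref{section:preparations} and~\ref{section:prototypical} (bilinear estimates, multiplication estimate, commutator estimates, and the frequency-localization lemma), so no new technical ingredient is needed---the argument is essentially a sharpened trilinear version of the null-form analysis carried out in Section~\ref{section:nullform}.
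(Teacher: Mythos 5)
Your proposal is essentially the paper's argument: Littlewood--Paley decompose $\zeta$ in both variables, partition the frequency pairs $(L_1,L_2)$ into low / resonant / intermediate-or-high ranges relative to $(M,N)$, identify the four low-or-resonant combinations with \eqref{nonlinear:eq-gpf-1}--\eqref{nonlinear:eq-gpf-4} via Lemma~\ref{prep:lem-para-localized}, and estimate the remaining blocks directly as the error. The paper packages these error estimates as applications of Proposition~\ref{prep:prop-bilinear} (in particular the non-resonant improvement \ref{prep:item-nonres}) rather than raw $L^\infty$ and H\"{o}lder bounds, but the content is the same, and your key inequality $2r-3s<0$ is exactly the paper's condition $3s-2r\gg\sigma$.

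One small inaccuracy worth fixing: an error block with $L_1\sim M$ does \emph{not} have $u$-frequency localized to $\sim\max(L_1,M)=M$. The product of two factors each at $u$-frequency $\sim M$ can land anywhere $\lesssim M$ (the high$\times$high$\to$low cascade), and because $r-1<0$ the supremum over output $u$-frequencies in the $\Cprod{r-1}{r-1}$ norm is attained near frequency $1$, contributing a factor of $1$ rather than the $M^{r-1}$ your claimed localization would grant. The correct exponent in that region works out to $\approx M^{1-2s}N^{r-2s+\sigma s}$, which is still summable under $M\sim_\delta N$ and the parameter hierarchy, so the plan closes --- but as written the localization claim makes that block look substantially easier than it actually is, and carrying the wrong factor through would give a spuriously strong bound.
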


\begin{proof}[Proof of Lemma \ref{nonlinear:lemma-gpf}:]
Using a Littlewood-Paley decomposition, we write 
\begin{equation*}
\zeta = \sum_{K,L} P_K^u P_L^v \zeta. 
\end{equation*}
We now split all possible frequency-scales $K$ and $L$ into seven regions as follows: 
\begin{align}
2^{\mathbb{N}_0} \times 2^{\mathbb{N}_0} 
=& \big\{ (K,L) \colon K \leq M^{1-\sigma},\, L \leq N^{1-\sigma} \big\} \label{nonlinear:eq-gpf-region-1} \\
&\dot\medcup \, \big\{ (K,L) \colon K \leq M^{1-\sigma},\, L \sim N \big\} \label{nonlinear:eq-gpf-region-2} \\
&\dot\medcup \, \big\{ (K,L) \colon K \sim M,\, L \leq N^{1-\sigma} \big\} \label{nonlinear:eq-gpf-region-3} \\
&\dot\medcup \, \big\{ (K,L) \colon K \sim M,\, L \sim N \big\} \label{nonlinear:eq-gpf-region-4} \\
&\dot\medcup \, \big\{ (K,L) \colon K \sim M,\, N^{1-\sigma} < L \not\sim N \big\} \label{nonlinear:eq-gpf-region-5} \\
&\dot\medcup \, \big\{ (K,L) \colon  M^{1-\sigma} < K \not \sim M,\, L \sim N \big\} \label{nonlinear:eq-gpf-region-6} \\
&\dot\medcup \, \big\{ (K,L) \colon K > M^{1-\sigma} \text{ or } L>N^{1-\sigma},\, 
K\not \sim M, \, L \not \sim N\big\} \label{nonlinear:eq-gpf-region-7}. 
\end{align}
The contributions of the regions 
\eqref{nonlinear:eq-gpf-region-1}-\eqref{nonlinear:eq-gpf-region-4} will lead to the main terms 
\eqref{nonlinear:eq-gpf-1}-\eqref{nonlinear:eq-gpf-4}. The remaining contributions of 
\eqref{nonlinear:eq-gpf-region-5}, 
\eqref{nonlinear:eq-gpf-region-6}, 
and \eqref{nonlinear:eq-gpf-region-7} will contribute exclusively towards the error term  $\mathcal{E}^{m,n}_{M,N}$. We now treat the individual contributions separately. \newline

\emph{Contribution of \eqref{nonlinear:eq-gpf-region-1}:} 
The contribution of this frequency region is given by 
\begin{equation*}
\sum_{K\leq M^{1-\sigma}} \sum_{L \leq N^{1-\sigma}} P_K^u P_L^v \zeta \, \partial_u \phi^{+,m}_M \partial_v \phi^{-,n}_N = \big( P_{\leq M^{1-\sigma}}^u P_{\leq N^{1-\sigma}}^v \zeta\big) \, \partial_u \phi^{+,m}_M \partial_v \phi^{-,n}_N. 
\end{equation*}
Using a variant of Lemma \ref{prep:lem-para-localized}, it holds that
\begin{align*}
&\big\| \big( P_{\leq M^{1-\sigma}}^u P_{\leq N^{1-\sigma}}^v \zeta\big) \, \partial_u \phi^{+,m}_M \partial_v \phi^{-,n}_N- \zeta \parallsiguv \big(  \partial_u \phi^{+,m}_M \, \partial_v \phi_{N}^{-,n} \big) \big\|_{\Cprod{r-1}{r-1}} \\
\lesssim& \min(M,N)^{-(1-\sigma)s} M^{r-s} N^{r-s} \| \zeta \|_{\Cprod{s}{s}} \theta^2. 
\end{align*}
Since $M\sim_\sigma N$ and $3s-2r\gg \sigma$, this is acceptable. \\

\emph{Contributions of \eqref{nonlinear:eq-gpf-region-2}, \eqref{nonlinear:eq-gpf-region-3}, and \eqref{nonlinear:eq-gpf-region-4}:} The argument is similar as for the contribution of \eqref{nonlinear:eq-gpf-region-1} and only relies on Lemma \ref{prep:lem-para-localized}.\\

\emph{Contribution of \eqref{nonlinear:eq-gpf-region-5}:} Using Proposition \ref{prep:prop-bilinear}.\ref{prep:item-nonres}, we have that 
\begin{align*}
\sum_{\substack{K,L \colon \\ K \sim M, \, \\N^{1-\sigma} < L \not \sim N}} 
\big\| P_K^u P_L^v \zeta \, \partial_u \phi^{+,m}_M \, \partial_v \phi_{N}^{-,n} \big\|_{\Cprod{r-1}{r-1}} 
\lesssim& \sum_{\substack{K,L \colon \\ K \sim M, \, \\N^{1-\sigma} < L \not \sim N}} 
\big\| P_K^u P_L^v \zeta \big\|_{\Cprod{\eta}{\eta}}
\big\| \partial_u \phi^{+,m}_M \, \partial_v \phi_{N}^{-,n} \big\|_{\Cprod{\eta}{r-1}} \\
\lesssim& \sum_{\substack{K,L \colon \\ K \sim M, \, \\N^{1-\sigma} < L \not \sim N}} 
K^{\eta-s} L^{\eta-s} M^{1+\eta-s} N^{r-s} \theta^2 \| \zeta \|_{\Cprod{s}{s}} \\
\lesssim& M^{1+2\eta-2s} N^{r-s-(1-\sigma)(\eta-s)} \theta^2 \| \zeta \|_{\Cprod{s}{s}}. 
\end{align*}
Since $M\sim_\delta N$ and 
\begin{equation*}
    1+2\eta-2s + r-s-(1-\sigma)(\eta-s) \approx -1/4,
\end{equation*}
this is acceptable. \newline

\emph{Contribution of \eqref{nonlinear:eq-gpf-region-6}:} Using the symmetry in the $u$ and $v$-variables, this follows from the same argument as for \eqref{nonlinear:eq-gpf-region-5}. \newline

\emph{Contribution of \eqref{nonlinear:eq-gpf-region-7}:}
By symmetry, it suffices to estimate the contribution for $K>M^{1-\sigma}$. Using Proposition \ref{prep:prop-bilinear}.\ref{prep:item-nonres}, it holds 
that 
\begin{align*}
\sum_{\substack{K,L \colon \\ M^{1-\sigma} < K \not \sim M, \, \\L \not \sim N}} 
\hspace{-3pt} \big\| P_K^u P_L^v \zeta \, \partial_u \phi^{+,m}_M \, \partial_v \phi_{N}^{-,n} \big\|_{\Cprod{r-1}{r-1}} 
\lesssim& \hspace{-3pt} \sum_{\substack{K,L \colon \\ M^{1-\sigma} < K \not \sim M, \, \\L \not \sim N}}  \hspace{-6pt}
\big\| P_K^u P_L^v \zeta \big\|_{\Cprod{\eta}{1-r^\prime}}
\big\| \partial_u \phi^{+,m}_M \, \partial_v \phi_{N}^{-,n} \big\|_{\Cprod{r-1}{r-1}} \\
\lesssim& \sum_{\substack{K,L \colon \\ M^{1-\sigma} < K \not \sim M, \, \\L \not \sim N}} 
K^{\eta-s} L^{1-r^\prime-s} M^{r-s} N^{r-s} \theta^2 \| \zeta \|_{\Cprod{s}{s}} \\
\lesssim& M^{(1-\sigma) (\eta-s)+r-s} N^{r-s} \theta^2 \| \zeta \|_{\Cprod{s}{s}}. 
\end{align*}
Since $M\sim_\delta N$ and $3s-2r \gg \sigma $, this term is acceptable.
\end{proof}

Equipped with Lemma \ref{nonlinear:lemma-gpf}, we can now prove Proposition \ref{nonlinear:prop-main-sim}.

\begin{proof}[Proof of Proposition \ref{nonlinear:prop-main-sim}:]
We separate the proof into several steps.\newline

\emph{Step 1: Removing terms with favorable derivatives.}
Using the multiplication estimate (Corollary \ref{prep:corollary-multiplication}) and Proposition \ref{bilinear:prop-cross}, it holds that 
\begin{align*}
&\big\| \SecondC{a}{ij}(\phi) \partial_u \big( A_{M,m}^{+,i} \phi^{+,m}_M \big)
 \partial_v\big( A_{N,n}^{-,j}\phi^{-,n}_N \big) - \SecondC{a}{ij}(\phi)  A_{M,m}^{+,i}  A_{N,n}^{-,j} \partial_u  \phi^{+,m}_M \partial_v \phi^{-,n}_N \|_{\Cprod{r-1}{r-1}} \\
 \lesssim& \big\|  \SecondC{a}{ij}(\phi) \big\|_{\Cprod{s}{s}} 
 \big\| \partial_u \big( A_{M,m}^{+,i} \phi^{+,m}_M \big) \partial_v\big( A_{N,n}^{-,j}\phi^{-,n}_N \big)  
 - A_{M,m}^{+,i}  A_{N,n}^{-,j} \partial_u  \phi^{+,m}_M \partial_v \phi^{-,n}_N \big\|_{\Cprod{r-1}{r-1}}\\
 \lesssim& (MN)^{-\eta} \theta^4. 
\end{align*}
Thus, we have reduced the main term to 
\begin{equation*}
    \SecondC{a}{ij}(\phi)  A_{M,m}^{+,i}  A_{N,n}^{-,j} \partial_u  \phi^{+,m}_M \partial_v \phi^{-,n}_N 
    = \zeta^a_{M,N,mn} \partial_u\phi^{+,m}_M \partial_v \phi^{-,n}_N, 
\end{equation*}
where $\zeta^a_{M,N,mn}$ is as in \eqref{nonlinear:eq-main-zeta}. \newline

\emph{Step 2: Applying Lemma \ref{nonlinear:lemma-gpf}.} Using Lemma \ref{nonlinear:lemma-gpf}, it holds that 
\begin{align}
     \SecondC{a}{ij}(\phi) \partial_u \big( A_{M,m}^{+,i} \phi^{+,m}_M \big)
 \partial_v\big( A_{N,n}^{-,j}\phi^{-,n}_N \big) 
 &= \zeta^a_{M,N,mn} \parallsiguv \big(  \partial_u \phi^{+,m}_M \, \partial_v \phi_{N}^{-,n} \big) \label{nonlinear:eq-main-q1}\\
&+ \big(\zeta^a_{M,N,mn} \parasimv \partial_v \phi_{N}^{-,n} \big) \parallsigu \partial_u \phi^{+,m}_M \label{nonlinear:eq-main-q2}\\
&+\big(\zeta^a_{M,N,mn} \parasimu \partial_u \phi^{+,m}_M  \big) \parallsigv \partial_v \phi_{N}^{-,n} \label{nonlinear:eq-main-q3}\\
&+ \zeta^a_{M,N,mn} \parasimuv \big(  \partial_u \phi^{+,m}_M \, \partial_v \phi_{N}^{-,n} \big) \label{nonlinear:eq-main-q4}\\
&+\widetilde{\mathcal{E}}^{a}_{M,N}, \notag
\end{align}
where the error term satisfies
\begin{equation*}
\big\| \widetilde{\mathcal{E}}^{a}_{M,N} \big\|_{\Cprod{r-1}{r-1}} \lesssim (MN)^{-\eta} \theta^2 \| \zeta^a_{M,N,mn}\|_{\Cprod{s}{s}} \lesssim (MN)^{-\eta} \theta^4. 
\end{equation*}
Using Lemma \ref{prep:lem-para-localized} and Definition \ref{ansatz:def-F}, the first summand \eqref{nonlinear:eq-main-q1} leads to \eqref{nonlinear:eq-main-1} and an acceptable error term. The second and third summands \eqref{nonlinear:eq-main-q2} and \eqref{nonlinear:eq-main-q3} coincide with \eqref{nonlinear:eq-main-2} and \eqref{nonlinear:eq-main-3}, respectively. As a result, it only remains to prove that 
\begin{equation}\label{nonlinear:eq-main-p2}
\big\|  \zeta^a_{M,N,mn} \parasimuv \big(  \partial_u \phi^{+,m}_M \, \partial_v \phi_{N}^{-,n} \big) \big\|_{\Cprod{r-1}{r-1}} \lesssim (MN)^{-\eta} \theta^4. 
\end{equation}
The double-resonance estimate \eqref{nonlinear:eq-main-p2} requires much more detailed information than only $ \zeta^a_{M,N,mn}\in \Cprod{s}{s}$, which explains why this term was not estimated in Lemma \ref{nonlinear:lemma-gpf}. \newline

\emph{Step 3: The double-resonance estimate \eqref{nonlinear:eq-main-p2}.} Using a variant of Lemma \ref{prep:lem-para-localized}, it suffices to estimate
\begin{align*}
\sum_{\substack{K,L \colon \\ K\sim M, L \sim N}} P_K^u P_L^v \zeta^a_{M,N,mn} \, 
 \partial_u \phi^{+,m}_M \, \partial_v \phi_{N}^{-,n}. 
 \end{align*}
 In the following, we fix frequency scales $K\sim M$ and $L \sim N$. Using \eqref{prep:eq-commutator-4} in Lemma \ref{prep:lem-commutator}, we have that 
 \begin{align}
P_K^u P_L^v \zeta^a_{M,N,mn}
&= P_K^u P_L^v \big( \SecondC{a}{ij} (\phi) \big) \, A_{M,m}^{+,i} A_{N,n}^{-,j} \label{nonlinear:eq-main-p3} \\
&+ P_K^u \big( \SecondC{a}{ij} (\phi) \big)  \,  P_L^v \big( A_{M,m}^{+,i} A_{N,n}^{-,j}  \big) \label{nonlinear:eq-main-p4} \\
&+ P_L^v \big( \SecondC{a}{ij} (\phi) \big) \,  P_K^u \big( A_{M,m}^{+,i} A_{N,n}^{-,j}  \big) \label{nonlinear:eq-main-p5} \\
&+  \SecondC{a}{ij} (\phi)  \, P_K^u P_L^v \big( A_{M,m}^{+,i} A_{N,n}^{-,j}  \big) \label{nonlinear:eq-main-p6} \\
&+ \widetilde{\zeta}^{a}_{M,N,mn} \label{nonlinear:eq-main-p7},
 \end{align}
 where $\widetilde{\zeta}^{a}_{M,N,mn}$ satisfies
 \begin{equation*}
   \big\|  \widetilde{\zeta}^{a}_{M,N,mn} \big\|_{\Cprod{s}{s}} \lesssim \min(K,L)^{-s}
  \| \SecondC{a}{ij} (\phi) \|_{\Cprod{s}{s}} \| A_{M,m}^{+,i} \|_{\Cprod{s}{s}} \| A_{N,n}^{-,j} \|_{\Cprod{s}{s}} \\
  \lesssim \min(M,N)^{-s} \theta^{2}. 
 \end{equation*}
 The contribution of the error term $\widetilde{\zeta}^{a}_{M,N,mn}$ can be estimated by 
 \begin{align*}
     \big\|  \widetilde{\zeta}^{a}_{M,N,mn}  \partial_u \phi^{+,m}_M \, \partial_v \phi_{N}^{-,n} \big\|_{\Cprod{r-1}{r-1}} 
     &\lesssim \big\| \widetilde{\zeta}^{a}_{M,N,mn}   \big\|_{\Cprod{s}{s}}  \big\| \partial_u \phi^{+,m}_M \, \partial_v \phi_{N}^{-,n} \big\|_{\Cprod{r-1}{r-1}} \\
     &\lesssim \min(M,N)^{-s} M^{r-s} N^{r-s} \theta^4.
 \end{align*}
 Since $M\sim_\delta N$ and $3s-2r\gg \delta$, this contribution is acceptable. Thus, it remains to treat the contributions of \eqref{nonlinear:eq-main-p3}, \eqref{nonlinear:eq-main-p4}, \eqref{nonlinear:eq-main-p5}, and \eqref{nonlinear:eq-main-p6}, which will be handled separately. \newline
 
 \emph{Step 3.(a): Contribution of \eqref{nonlinear:eq-main-p3}.}
Due to the multiplication estimate (Corollary \ref{prep:corollary-multiplication}), it suffices to prove that 
\begin{equation}\label{nonlinear:eq-main-p10}
\big\| P_K^u P_L^v \big( \SecondC{a}{ij}(\phi) \big) \, \partial_u \phi^{+,m}_M \, \partial_v \phi^{-,n}_N \big\|_{\Cprod{r-1}{r-1}} \lesssim (MN)^{-\eta} \theta^2.
\end{equation}
Using the symmetry in the $u$ and $v$-variables, we can assume that $M\geq N$. We now break the symmetry of the term in \eqref{nonlinear:eq-main-p10} by decomposing
\begin{equation}\label{nonlinear:eq-main-p11}
P_K^u P_L^v \big( \SecondC{a}{ij}(\phi) \big) 
= P_L^v \big( \partial_k \SecondC{a}{ij}(\phi) P_K^u \phi^k \big) +  P_L^v \big( P_K^u \SecondC{a}{ij}(\phi) - \partial_k \SecondC{a}{ij}(\phi) P_K^u \phi^k \big).
\end{equation}
Using Bony's paralinearization (Lemma \ref{prep:lemma-bony}), the second summand in \eqref{nonlinear:eq-main-p11} is controlled by
\begin{align*}
 &\big\|  P_L^v \big( P_K^u \SecondC{a}{ij}(\phi) - \partial_k \SecondC{a}{ij}(\phi) P_K^u \phi^k \big) \big\|_{\Cprod{s}{1-r^\prime}} \\
 \lesssim& L^{1-r^\prime-s}
 \big\|  P_L^v \big( P_K^u \SecondC{a}{ij}(\phi) - \partial_k \SecondC{a}{ij}(\phi) P_K^u \phi^k \big) \big\|_{\Cprod{s}{s}}\\
 \lesssim& L^{1-r^\prime-s} K^{-s}.
\end{align*}
By using the bilinear estimate (Proposition \ref{prep:prop-bilinear}) and Proposition \ref{bilinear:prop-cross}, this yields an acceptable contribution to \eqref{nonlinear:eq-main-p10}. Thus, we have further reduced \eqref{nonlinear:eq-main-p10} to the estimate
\begin{equation}\label{nonlinear:eq-main-p12}
\big\| P_L^v \big( \partial_k\SecondC{a}{ij}(\phi)  P_K^u \phi^k \big) \, \partial_u \phi^{+,m}_M \, \partial_v \phi^{-,n}_N \big\|_{\Cprod{r-1}{r-1}} \lesssim (MN)^{-\eta} \theta^2. 
\end{equation}
Using the bilinear estimate (Proposition \ref{prep:prop-bilinear}), it holds that 
\begin{align*}
&\big\| P_L^v \big( \partial_k\SecondC{a}{ij}(\phi)  P_K^u \phi^k \big) \, \partial_u \phi^{+,m}_M \, \partial_v \phi^{-,n}_N \big\|_{\Cprod{r-1}{r-1}} \\
\lesssim& \big\| P_L^v \big( \partial_k\SecondC{a}{ij}(\phi)  P_K^u \phi^k \big) \, \partial_u \phi^{+,m}_M \big\|_{\Cprod{r-1}{1-r^\prime}} 
\big\| \partial_v \phi^{-,n}_N \big\|_{\Cprod{s}{r-1}} \\
\lesssim& N^{r-s}   \big\| P_L^v \big( \partial_k\SecondC{a}{ij}(\phi)  P_K^u \phi^k \big) \, \partial_u \phi^{+,m}_M  \big\|_{\Cprod{r-1}{1-r^\prime}} \, \theta.
\end{align*}
Using $K\sim M$, $L\sim N$, and Corollary \ref{bilinear:cor-product}, we have that 
\begin{align*}
&N^{r-s}  \big\| P_L^v \big( \partial_k\SecondC{a}{ij}(\phi)  P_K^u \phi^k \big) \, \partial_u \phi^{+,m}_M  \big\|_{\Cprod{r-1}{1-r^\prime}} \, \theta  \\
\lesssim& N^{r-s} \big( M^{-(r-s)-\delta/8} + N^{1-s-r+2\eta} M^{1-s-r}) \theta^3 \\
\lesssim&  \Big( \big(NM^{-1}\big)^{r-s} M^{-\delta/8} + N^{1-2s+2\eta} M^{1-s-r} \Big) \theta^3. 
\end{align*}
Using our assumption $M\geq N$, this term is acceptable.

\emph{Step 3.(b): Contribution of \eqref{nonlinear:eq-main-p4}.} Using the multiplication estimate (Corollary \ref{prep:corollary-multiplication}) and the bilinear estimate (Proposition \ref{prep:prop-bilinear}), we have that 
\begin{equation}\label{nonlinear:eq-main-p8}
\begin{aligned}
&\big\| P_K^u \big( \SecondC{a}{ij} (\phi) \big)  \,  P_L^v \big( A_{M,m}^{+,i} A_{N,n}^{-,j}  \big) 
\, \partial_u \phi^{+,m}_M \, \partial_v \phi_{N}^{-,n}  \big\|_{\Cprod{r-1}{r-1}} \\
&\lesssim \big\| P_L^v \big( A_{M,m}^{+,i} A_{N,n}^{-,j}  \big) \big\|_{\Cprod{1-r^\prime}{1-r^\prime}} 
\big\| P_K^u \big( \SecondC{a}{ij} (\phi) \big) \partial_u \phi^{+,m}_M \, \partial_v \phi_{N}^{-,n}  \big\|_{\Cprod{r-1}{r-1}} \\
&\lesssim L^{1-r^\prime-s} \big\| A_{M,m}^{+,i}  \big\|_{\Cprod{s}{s}}  \big\| A_{N,n}^{-,j}\big\|_{\Cprod{s}{s}} 
\big\| P_K^u \big( \SecondC{a}{ij} (\phi) \big) \partial_u \phi^{+,m}_M \big\|_{\Cprod{r-1}{1-r^\prime}} \| \phi^{-,n}_N \|_{\C_v^{r-1}} \\
&\lesssim L^{1-r^\prime-s} N^{r-s}  \big\| P_K^u \big( \SecondC{a}{ij} (\phi) \big) \partial_u \phi^{+,m}_M \big\|_{\Cprod{r-1}{1-r^\prime}} \theta^3. 
\end{aligned}
\end{equation}
Using Lemma \ref{bilinear:lemma-product} with $H=\operatorname{Id}$ and Bony's paralinearization,  it holds that 
\begin{equation}\label{nonlinear:eq-main-p9}
\big\| P_K^u \big( \SecondC{a}{ij} (\phi) \big) \partial_u \phi^{+,m}_M \big\|_{\Cprod{r-1}{1-r^\prime}}  \lesssim M^{1-s-r} \theta. 
\end{equation}
By combining \eqref{nonlinear:eq-main-p8} and \eqref{nonlinear:eq-main-p9}, it follows that
\begin{equation*}
    \big\| P_K^u \big( \SecondC{a}{ij} (\phi) \big)  \,  P_L^v \big( A_{M,m}^{+,i} A_{N,n}^{-,j}  \big) 
\, \partial_u \phi^{+,m}_M \, \partial_v \phi_{N}^{-,n}  \big\|_{\Cprod{r-1}{r-1}}  
\lesssim L^{1-r^\prime-s} N^{r-s} M^{1-s-r} \theta^4.
\end{equation*}
Since $L\sim N$, $M\sim_\delta N$, and 
\begin{equation*}
1-r^\prime-s+r-s+1-s-r= 2-3s-r+\eta \approx -1/4,
\end{equation*}
this is acceptable.\newline

\emph{Step 3.(c): Contribution of \eqref{nonlinear:eq-main-p5}.}
By symmetry in the $u$ and $v$-variable, this follows from the same argument as in Step 3.(b). \newline

\emph{Step 3.(d): Contribution of \eqref{nonlinear:eq-main-p6}.} By symmetry, we may assume that $M\geq N$. Since $A^{+,i}_{M,m}$ only has $u$-frequencies bounded by $M^{1-\sigma}$, we can replace $A_{N,n}^{-,j}$ by $P_{\gg N^{1-\delta}}^u A_{N,n}^{-,j}$. It follows that 
\begin{align*}
&\big\| \SecondC{a}{ij} (\phi)  \, P_K^u P_L^v \big( A_{M,m}^{+,i} \, P_{\gg N^{1-\delta}}^u A_{N,n}^{-,j}  \big) 
\, \partial_u \phi^{+,m}_M \, \partial_v \phi_{N}^{-,n} \big\|_{\Cprod{r-1}{r-1}} \\
\lesssim& \big\| \SecondC{a}{ij} (\phi)  \big\|_{L^\infty_{u,v}} 
\big\|  P_K^u P_L^v \big( A_{M,m}^{+,i} \, P_{\gg N^{1-\delta}}^u A_{N,n}^{-,j} \big) \big\|_{L^\infty_{u,v}} 
\big\| \partial_u \phi^{+,m}_M \, \partial_v \phi_{N}^{-,n}  \big\|_{L^\infty_{u,v}} \\
\lesssim& K^{-s} L^{-s} M^{1-s} N^{1-s}  N^{-(1-\delta) (r-s)} 
\| A_{M}^+ \|_{\Cprod{s}{s}} \| P_{\gg N^{1-\delta}}^u A_{N}^- \|_{\Cprod{r}{s}}
\| \phi^+ \|_{\C_u^s} \| \phi^- \|_{\C_v^s} \\
\lesssim& M^{1-2s} N^{1-2s}  N^{-(1-\delta) (r-s)} \theta^4. 
\end{align*}
Since $M\sim_\delta N$ and $2(1-2s)-(1-\delta)(r-s)\approx -1/4$, this term is acceptable.

\end{proof}
\subsection{The para-controlled and resonance terms}

We now control the terms $\SecondC{k}{ij}(\phi) \partial_u \zeta_1^i \partial_v \zeta_2^j$, where the tuple $(\zeta_1,\zeta_2)$ falls into the \scalebox{0.9}{$\paralesssimsigu$} and $\scalebox{1.25}{$\lnot$}\scalebox{0.9}{$\paradownv$}$-cases in Figure \ref{figure:cases}. 

\begin{proposition}[The para-controlled term]\label{nonlinear:prop-para}
Let Hypothesis \ref{hypothesis:smallness} be satisfied, let $1\leq k\leq \dimA$, and let $M$ be a frequency scale. Furthermore, let $\zeta$ be of type $\tp$, $\tpm$, or $\ts$.  Then, it holds that 
\begin{equation}\label{nonlinear:eq-para-1}
\begin{aligned}
\Big\| \SecondC{k}{ij}(\phi) \partial_u \big(A^{+,i}_{M,m}  \phi^{+,m}_M \big) \partial_v \zeta^j - \big( \SecondC{k}{ij}(\phi) A^{+,i}_{M,m} \partial_v \zeta^j \big) \paralesssimsigu  \partial_u \big(  \phi^{+,m}_M\big) \Big\|_{\Cprod{r-1}{r-1}} 
\lesssim M^{-\eta} \Gain(\zeta) \theta^3. 
\end{aligned}
\end{equation}
We also have the frequency-localized variant 
\begin{equation}\label{nonlinear:eq-para-2}
\Big\| P_K^u \big( \SecondC{k}{ij}(\phi)\partial_v \zeta^j\big)  
\partial_u \big( A^{+,i}_{M,m} \phi^{+,m}_M\big) \Big\|_{\Cprod{r-1}{r-1}} \\
\lesssim K^{-\eta} \Gain(\zeta) \theta^3. 
\end{equation}
for all frequency-scales $K$ satisfying $K\gtrsim M^{1-\sigma}$. 
\end{proposition}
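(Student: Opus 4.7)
\emph{Plan of proof.} The starting observation is an algebraic identity. Expanding the product rule and using the splitting $X \cdot Y = X \paralesssimsigu Y + X \paragtrsimsigu Y$ with $X = \SecondC{k}{ij}(\phi) A_{M,m}^{+,i} \partial_v \zeta^j$ and $Y = \partial_u \phi_M^{+,m}$, the term $X \paralesssimsigu Y$ cancels the subtracted piece on the left-hand side of \eqref{nonlinear:eq-para-1}, and one is left with the identity
\begin{equation*}
\text{LHS of }\eqref{nonlinear:eq-para-1} = \underbrace{\SecondC{k}{ij}(\phi) (\partial_u A_{M,m}^{+,i}) \, \phi_M^{+,m} \, \partial_v \zeta^j}_{\text{favorable term}} + \underbrace{\bigl(\SecondC{k}{ij}(\phi) A_{M,m}^{+,i} \partial_v \zeta^j\bigr) \paragtrsimsigu \partial_u \phi_M^{+,m}}_{\text{high-frequency remainder}}.
\end{equation*}
The favorable term is routine: the multiplication estimate (Corollary \ref{prep:corollary-multiplication}) absorbs $\SecondC{k}{ij}(\phi)$ via Bony's composition estimate (Lemma \ref{prep:lemma-bony}), Lemma \ref{key:lemma-favorable}, estimate \eqref{key:eq-favorable-p1}, bounds $\partial_u A_M^+ \phi_M^+$ in $\Cprod{r-1}{s}$ by $M^{r-2s}\theta^2$, and pairing with a $\Cprod{1-r^\prime}{r-1}$-bound on $\partial_v \zeta^j$ (case-by-case via Corollary \ref{ansatz:cor-delta-Apm}, Lemma \ref{key:lemma-favorable}, and the frequency-support conditions) extracts the $\Gain(\zeta)$ factor. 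Since $r-2s \ll -\eta$ and $M\gtrsim 1$, this is safely $\lesssim M^{-\eta}\Gain(\zeta)\theta^3$.

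\emph{Reducing the high-frequency remainder to the frequency-localized variant.} Since $\partial_u \phi_M^{+,m}$ is supported at $u$-frequency $\sim M$, the operator $\paragtrsimsigu$ localizes the first factor to $u$-frequencies $\gtrsim M^{1-\sigma}$. Decomposing dyadically and applying Lemma \ref{prep:lem-para-localized} to pass from the paraproduct to a bare Littlewood--Paley projection (at the cost of an acceptable error), the remainder is bounded by a sum over $K\gtrsim M^{1-\sigma}$ of expressions of the form $\|P_K^u\bigl(\SecondC{k}{ij}(\phi) A_{M,m}^{+,i} \partial_v \zeta^j\bigr)\, \partial_u \phi_M^{+,m}\|_{\Cprod{r-1}{r-1}}$. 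Since $A_{M,m}^{+,i}$ is supported at $u$-frequencies $\leq M^{1-\sigma} \lesssim K$, the commutator estimates of Lemma \ref{prep:lem-commutator} allow us to pull $A_{M,m}^{+,i}$ outside the projection $P_K^u$ up to a commutator remainder whose $\Cprod{r-1}{1-r'}$-bound is compensated by the favorable frequency gap. The multiplication estimate (Corollary \ref{prep:corollary-multiplication}) then absorbs $A_{M,m}^{+,i}$, and the estimate reduces to the claimed frequency-localized bound \eqref{nonlinear:eq-para-2}.

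\emph{Proof of the frequency-localized variant.} This mirrors the strategy of Lemma \ref{bilinear:lemma-product} in spirit, but with the roles interchanged. The key step is Bony's paralinearization (Lemma \ref{prep:lemma-bony}) to decompose $P_K^u\bigl(\SecondC{k}{ij}(\phi)\bigr) = \partial_\ell \SecondC{k}{ij}(\phi) P_K^u \phi^\ell + \{\text{smoother remainder}\}$, and a further splitting of $\phi^\ell$ into its four components $\tp,\tm,\tpm,\ts$. After commuting $P_K^u$ past $\partial_v \zeta^j$ (which costs a controllable commutator term since $\partial_v \zeta^j$ need not have high $u$-frequency, but is acceptable via the bilinear estimate at regularity $\Cprod{1-r^\prime}{r-1}$), each resulting trilinear term $P_K^u \phi^\ell \cdot \partial_v \zeta^j \cdot \partial_u(A_M^+ \phi_M^+)$ is handled using the product estimate Lemma \ref{bilinear:lemma-product} (with $\HH_M^+ = A_M^+$): the cases $\Type\zeta = \tp$ and $\tpm$ reduce via Lemma \ref{key:lemmma-direction} to a uniform $\tp$-template, and the case $\Type\zeta = \ts$ yields an unfortunate-resonance remainder whose $\Cprod{s}{r-1}$-bound from Lemma \ref{key:lemma-unfortunate} is subcritical after pairing against $\partial_v \zeta^j \in \Cprod{s}{r-1}$.

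\emph{Main obstacle.} The hardest case is $\Type\zeta = \ts$ (i.e.\ $\zeta=\psi$), for two reasons. First, $\psi$ carries no probabilistic structure and only lives at regularity $r < \tfrac{3}{4}$, so the resonant interaction $\phi_M^+ \paradownu \partial_u \psi$ (see Remark \ref{key:remark-unfortunate}) is precisely the boundary case where the elementary multiplication loses by $M^{1-s-r}$. Extracting the $K^{-\eta}$ gain from the constraint $K\gtrsim M^{1-\sigma}$ in this case requires one to exploit the $\sigma$-margin carefully by inserting a $P_{\geq K^\sigma}^u$-cut (as in Remark \ref{key:remark-unfortunate}) and trading regularity between the $u$- and $v$-variables, which is the source of the small room in the parameters \eqref{prep:eq-parameter-1}. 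Second, the commutator analysis used to pull $A_{M,m}^{+,i}$ out of $P_K^u$ must be executed so as to preserve the $K^{-\eta}$ gain rather than only the $M^{-\eta}$ gain, since only the former sums over $K \gtrsim M^{1-\sigma}$ in the reduction step.
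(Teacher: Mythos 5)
Your plan is mostly aligned with the paper's proof: both start by cancelling the $\paralesssimsigu$-part algebraically, reduce to the frequency-localized variant \eqref{nonlinear:eq-para-2}, and prove that variant via Bony's paralinearization combined with the product estimate Lemma \ref{bilinear:lemma-product}. Your opening identity (product rule plus the paraproduct splitting $X\cdot Y = X\paralesssimsigu Y + X\paragtrsimsigu Y$) is correct, and your favorable-derivative term is handled exactly as in the paper's machinery, so that part is sound.

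The genuine gap is in your handling of \eqref{nonlinear:eq-para-2}, specifically in the low$\times$high interaction: the case where $\partial_v\zeta^j$ carries the high $u$-frequency $\sim K$ while $\SecondC{k}{ij}(\phi)$ sits at low $u$-frequency. You sweep this case into the commutator $[P_K^u,\partial_v\zeta^j]\bigl(\SecondC{k}{ij}(\phi)\bigr)$ and assert it is ``acceptable via the bilinear estimate at regularity $\Cprod{1-r'}{r-1}$''. That does not work. For $\zeta=A_L^+\phi_L^+$ with $L\sim K$ and $M^{1-\sigma}\lesssim K\lesssim M$, the commutator is \emph{not} small --- it is essentially the whole low$\times$high product --- and the generic commutator estimate \eqref{prep:eq-commutator-1} paired with $\partial_u(A_M^+\phi_M^+)\in\Cprod{r-1}{s}$ of norm $M^{r-s}\theta^2$ gives something like $K^{1-2s+\eta-\delta(r-s)}M^{r-s}\theta^4$; summing $K\gtrsim M^{1-\sigma}$ yields an exponent $\approx -\delta/8+(r-s)\approx 1/4>0$, which diverges. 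What is missing is the high$\times$high$\rightarrow$low gain $L^{-s}$ hidden in the product $\phi_L^{+}\,\partial_u\phi_M^{+}$ (Lemma \ref{ansatz:lemma-data}). The paper isolates this cleanly by the three-term split \eqref{prep:eq-commutator-4} of $P_K^u\bigl(\Second^\diamond(\phi)\partial_v\zeta^j\bigr)$ into low$\times$high $+$ high$\times$low $+$ error, removes $P_{\ll K}^u(\Second^\diamond(\phi))$ by the multiplication estimate, and then invokes Proposition \ref{bilinear:prop-paraless} on $\partial_u(A_M^+\phi_M^+)\,\partial_v P_K^u\zeta^j$ --- which is precisely the null-form estimate that packages the needed cancellation. (For $K\gg M$ it is easy via Proposition \ref{prep:prop-bilinear}\ref{prep:item-nonres}.) Your plan would need to replace the generic commutator bound by this argument; it is fixable, but it is not a routine bilinear estimate as stated. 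The remainder of your outline --- the Bony split of $P_K^u(\Second^\diamond(\phi))$ into $\partial_\ell\Second^\diamond(\phi)P_K^u\phi^\ell$ plus a smoother remainder, the splitting of $\phi^\ell$ by type, Lemma \ref{bilinear:lemma-product} with $\HH_M^+=A_M^+$ (the paper uses $\HH_M^+=\operatorname{Id}$ and absorbs $A_M^+$ by the multiplication estimate, a cosmetic difference), and the identification of the $\ts$-case unfortunate resonance as the margin-eating term --- all mirror the paper.
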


\begin{proof}
Due to Lemma \ref{prep:lem-para-localized}, Corollary \ref{key:cor-parall-mod}, and elementary estimates, it suffices to prove \eqref{nonlinear:eq-para-2}. To this end, we split we split $P_{K}^u \big(\SecondC{k}{ij}(\phi) \partial_v \zeta^j\big)$ into a low$\times$high, a high$\times$low, and an error term. To be precise, we write 
\begin{equation}\label{nonlinear:eq-para-p1}
    P_{K}^u \big( \SecondC{k}{ij}(\phi) \partial_v \zeta^j \big) 
    =: P_{\ll K}^u \big( \SecondC{k}{ij}(\phi) \big) \partial_v P_K^u \zeta^j 
    + P_{K}^u \big( \SecondC{k}{ij}(\phi) \big) \partial_v P_{\ll K}^u \zeta^j 
    + \mathcal{E}_{K,i}^{k},
\end{equation}
which serves as a definition of the error term $\mathcal{E}^k_{K,i}$. We now split the argument into three steps corresponding to the three summands in \eqref{nonlinear:eq-para-p1}. \newline

\emph{Step 1: The contribution of the low$\times$high-term.} Using the multiplication estimate (Corollary \ref{prep:corollary-multiplication}) and the composition estimate (Lemma \ref{prep:lemma-bony}), we have that 
\begin{align*}
&\big\| P_{\ll K}^u \big( \SecondC{k}{ij}(\phi) \big) 
\partial_u \big( A^{+,i}_{M,m} \phi^{+,m}_M\big) \partial_v P_K^u \zeta^j  \big\|_{\Cprod{r-1}{r-1}} \\
&\lesssim \big\| \SecondC{k}{ij}(\phi) \big\|_{\Cprod{s}{s}} 
\big\|  \partial_u \big( A^{+,i}_{M,m} \phi^{+,m}_M\big) \partial_v P_K^u \zeta^j \big\|_{\Cprod{r-1}{r-1}} \\
&\lesssim (1+\| \phi\|_{\Cprod{s}{s}})^{10}
\big\|  \partial_u \big( A^{+,i}_{M,m} \phi^{+,m}_M\big) \partial_v P_K^u \zeta^j \big\|_{\Cprod{r-1}{r-1}} \\
&\lesssim 
\big\|  \partial_u \big( A^{+,i}_{M,m} \phi^{+,m}_M\big) \partial_v P_K^u \zeta^j \big\|_{\Cprod{r-1}{r-1}}. 
\end{align*}
It remains to prove that 
\begin{equation*}
    \big\|  \partial_u \big( A^{+,i}_{M,m} \phi^{+,m}_M\big) \partial_v P_K^u \zeta^j \big\|_{\Cprod{r-1}{r-1}} \\
    \lesssim K^{-\eta} \Gain(\zeta) \theta^3. 
\end{equation*}
If $M^{1-\sigma} \lesssim K \lesssim M$, this directly follows from Proposition \ref{bilinear:prop-paraless}. If $K\gg M$, this follows easily from Proposition \ref{prep:prop-bilinear}.\ref{prep:item-nonres}. Indeed, 
\begin{align}
    \big\|  \partial_u \big( A^{+,i}_{M,m} \phi^{+,m}_M\big) \partial_v P_K^u \zeta^j \big\|_{\Cprod{r-1}{r-1}} 
    &\lesssim \big\|  \partial_u \big( A^{+,i}_{M,m} \phi^{+,m}_M\big) \big\|_{\Cprod{\eta}{s}} \big\| \partial_v P_K^u \zeta^j \big\|_{\Cprod{r-1}{r-1}} \notag \\
    &\lesssim M^{1-s+\eta} K^{r-1-(1-r^\prime)} \| A_M^+ \|_{\Cprod{s}{s}} \|\phi^+ \|_{\C_u^s} \| P_K^u \zeta \|_{\Cprod{1-r^\prime}{r-1}} \notag \\
    &\lesssim K^{2r-1-s+2\eta} \theta^2 \| P_K^u \zeta\|_{\Cprod{1-r^\prime}{r-1}}. \label{nonlinear:eq-para-p9}
\end{align}
By inserting the types $\tp$, $\tpm$, or $\ts$, it is easy to see that
\begin{equation*}
     \| P_K^u \zeta\|_{\Cprod{1-r^\prime}{r-1}} \lesssim K^{2\delta} \Gain(\zeta) \theta.
\end{equation*}
After inserting this back into \eqref{nonlinear:eq-para-p9} and using $3/4-r\gg \delta$, this completes the first step. \newline

\emph{Step 2: The contribution of the high$\times$low-term.} Using the standard bilinear estimate (Proposition \ref{prep:prop-bilinear}), we have that 
\begin{align}
   &\big\|  P_{K}^u \big( \SecondC{k}{ij}(\phi) \big) \partial_u \big(A_{M,m}^{+,i} \phi^{+,m}_M \big) \partial_v P_{\ll K}^u \zeta^j  \big\|_{\Cprod{r-1}{r-1}} \notag \\
   &\lesssim  \big\| P_{K}^u \big( \SecondC{k}{ij}(\phi) \big) \partial_u \big(A_{M,m}^{+,i} \phi^{+,m}_M \big) \big\|_{\Cprod{r-1}{1-r^\prime}} 
   \big \| \partial_v P_{\ll K}^u \zeta^j \big\|_{\Cprod{1-r^\prime}{r-1}}.\label{nonlinear:eq-para-p2}
\end{align}
Similar as in Step 1, it is easy to see for $\zeta$ of type $\tp$, $\tpm$, or $\ts$ that
\begin{equation}\label{nonlinear:eq-para-p3}
    \big \| \partial_v P_{\ll K}^u \zeta^j \big\|_{\Cprod{1-r^\prime}{r-1}} \lesssim K^{2\delta} \Gain(\zeta) \theta. 
\end{equation}
The $2\delta$-loss in \eqref{nonlinear:eq-para-p3} will easily be absorbed through our estimate of the first factor in \eqref{nonlinear:eq-para-p3}, which we now present. Inspired by Bony's paralinearization formula, we decompose
\begin{equation}\label{nonlinear:eq-para-p4}
     P_{K}^u \big( \SecondC{k}{ij}(\phi) \big) 
     = \Big( \big(\partial_\ell \SecondC{k}{ij} \big)(\phi) P_K^u \phi^\ell \Big) +  
     \Big( P_{K}^u \big( \SecondC{k}{ij}(\phi) \big) - \big(\partial_\ell \SecondC{k}{ij} \big)(\phi) P_K^u \phi^\ell \Big). 
\end{equation}
The contribution of the first term in \eqref{nonlinear:eq-para-p4} is estimated using the multiplication estimate (Corollary \ref{prep:corollary-multiplication}) and the product estimate (Lemma \ref{bilinear:lemma-product}), which yield 
\begin{equation}\label{nonlinear:eq-para-p5}
\begin{aligned}
    &\big\| \big(\partial_\ell \SecondC{k}{ij} \big)(\phi) P_K^u \phi^\ell \partial_u \big(A_{M,m}^{+,i} \phi^{+,m}_M \big) \big\|_{\Cprod{r-1}{1-r^\prime}} \\
    &\lesssim \big\| \big(\partial_\ell \SecondC{k}{ij} \big)(\phi) \big\|_{\Cprod{s}{s}} 
    \big\| P_K^u \phi^\ell \partial_u \big(A_{M,m}^{+,i} \phi^{+,m}_M \big) \big\|_{\Cprod{r-1}{1-r^\prime}} \\
    &\lesssim \big( 1\{ K \lesssim M \} M^{1-s-r} + 1\{ K \gg M \} K^{2(r-1+\eta)} \big) \theta^3. 
\end{aligned}
\end{equation}
Since $1-s-r \approx -1/4$ and $2(r-1+\eta)\approx -1/2$, the gain in \eqref{nonlinear:eq-para-p5} easily beats the $2\delta$-loss in \eqref{nonlinear:eq-para-p3}. The contribution of the second term in \eqref{nonlinear:eq-para-p4} is estimated using Bony's paralinearization estimate (Lemma \ref{prep:lemma-bony}), which yields 
\begin{equation}\label{nonlinear:eq-para-p6}
\begin{aligned}
&\big\| \Big( P_{K}^u \big( \SecondC{k}{ij}(\phi) \big) - \big(\partial_\ell \SecondC{k}{ij} \big)(\phi) P_K^u \phi^\ell \Big) \partial_u \big(A_{M,m}^{+,i} \phi^{+,m}_M \big) \big\|_{\Cprod{r-1}{1-r^\prime}} \\
&\lesssim \big\|  P_{K}^u \big( \SecondC{k}{ij}(\phi) \big) - \big(\partial_\ell \SecondC{k}{ij} \big)(\phi) P_K^u \phi^\ell \big\|_{\Cprod{s}{s}} \| \partial_u \big(A_{M,m}^{+,i} \phi^{+,m}_M \big)  \|_{\Cprod{r-1}{s}} \\
&\lesssim K^{-s} M^{r-s} \theta^3. 
\end{aligned}
\end{equation}
Since $K\gtrsim M^{1-\sigma}$ and $r-2s\approx -1/4$, the gain in \eqref{nonlinear:eq-para-p6} clearly beats the $2\delta$-loss in \eqref{nonlinear:eq-para-p3}. \newline

\emph{Step 3: The error term.} 
Using the commutator estimate (Lemma \ref{prep:lem-commutator}), we have that 
\begin{equation}\label{nonlinear:eq-para-p7}
\big\| \mathcal{E}_{K,i}^{k} \big\|_{\Cprod{\eta}{r-1}} \lesssim K^{r-1-s+\eta+2\delta} 
\big\| \SecondC{k}{ij}(\phi) \big\|_{\Cprod{s}{s}} \big\| \partial_v \zeta^j \big\|_{\Cprod{1-r-2\delta}{r-1}}. 
\end{equation}
Similar as in Step 1 or 2, it is easy to see for $\zeta$ of type $\tp$, $\tpm$, or $\ts$ that 
\begin{equation}\label{nonlinear:eq-para-p8}
\big\| \partial_v \zeta^j \big\|_{\Cprod{1-r-2\delta}{r-1}} \lesssim \Gain(\zeta) \theta. 
\end{equation}
Together with the bilinear estimate (Proposition \ref{prep:prop-bilinear}), we obtain from \eqref{nonlinear:eq-para-p7} and \eqref{nonlinear:eq-para-p8} that
\begin{align*}
    \big\| \mathcal{E}_{K,i}^{k} \partial_u \big( A_{M,m}^{+,i} \phi^{+,m}_M \big)\big\|_{\Cprod{r-1}{r-1}}  
    &\lesssim \big\| \mathcal{E}_{K,i}^{k} \big\|_{\Cprod{\eta}{r-1}} 
    \big\| \partial_u \big( A_{M,m}^{+,i} \phi^{+,m}_M \big) \big\|_{\Cprod{\eta}{s}} \\
    &\lesssim K^{r-1-s+\eta+2\delta} M^{1-s+\eta} \Gain(\zeta) \theta^3. 
\end{align*}
Since $K\gtrsim M^{1-\sigma}$, $r-1-s+\eta+2\delta \approx -3/4$, and $1-s+\eta\approx 1/2$, this term is acceptable. 
\end{proof}

We now extend the estimate of the  $\tpm$-$\ts$-case from the null form to the full nonlinearity. As described in Section \ref{section:unfortunate}, the only problematic term is the unfortunate resonance.

\begin{proposition}[Resonant term]\label{nonlinear:prop-unfortunate}
Let Hypothesis \ref{hypothesis:smallness} be satisfied, let $1\leq k \leq \dimA$, and let $M_1$ and $M_2$ be frequency scales satisfying $M_1 \sim_\delta M_2$. Then, it holds that
\begin{align}
&\Big\|  \SecondC{k}{ij}(\phi) \partial_u \big( B_{M_1,M_2,m_1m_2}^i \phi^{+,m_1}_{M_1} \phi^{-,m_2}_{M_2} \big) \partial_v \psi^j   \notag \\
&- \Big( \SecondC{k}{ij}(\phi) B_{M_1,M_2,m_1m_2}^i \big( \phi^{-,m_2}_{M_2}  \paradownv \partial_v \psi^j \big) \Big) \paralesssimsigu \partial_u \phi^{+,m_1}_{M_1} \Big\|_{\Cprod{r-1}{r-1}} \lesssim (M_1 M_2)^{-\eta} \theta^4.\label{nonlinear:eq-unfortunate-1}
\end{align}
Furthermore, the resonant part satisfies 
\begin{equation}\label{nonlinear:eq-unfortunate-2}
   \Big\| \SecondC{k}{ij}(\phi) B_{M_1,M_2,m_1m_2}^i \big( \phi^{-,m_2}_{M_2}  \paradownv \partial_v \psi^j \big) \Big\|_{\Cprod{s}{r-1}} \lesssim M_2^{1-r-s+2\eta} \theta^3. 
\end{equation}
\end{proposition}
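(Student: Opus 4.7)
The plan is to reduce Proposition \ref{nonlinear:prop-unfortunate} to Proposition \ref{bilinear:prop-tpm-ts}, which already performs the analogous decomposition at the level of the bare null form without the prefactor $\SecondC{k}{ij}(\phi)$. First I would dispatch the easier resonant estimate \eqref{nonlinear:eq-unfortunate-2}: two applications of the multiplication estimate (Corollary \ref{prep:corollary-multiplication}) peel off $\SecondC{k}{ij}(\phi)$ and $B^i_{M_1,M_2,m_1m_2}$. The former has $\|\SecondC{k}{ij}(\phi)\|_{\Cprod{s}{s}}\lesssim 1$ by the composition estimate (Lemma \ref{prep:lemma-bony}) together with Hypothesis \ref{hypothesis:smallness}, and the latter contributes a factor of $\theta$. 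What remains is the unfortunate resonance $\phi^{-,m_2}_{M_2}\paradownv\partial_v\psi^j$, which is controlled by inequality \eqref{key:eq-unfortunate-v1} of Lemma \ref{key:lemma-unfortunate}. This delivers the claimed $M_2^{1-s-r}$-decay with enough slack to absorb the harmless $2\eta$ in the exponent.

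For the main decomposition \eqref{nonlinear:eq-unfortunate-1} I would rewrite the full expression as $\SecondC{k}{ij}(\phi)\cdot\big[\partial_u(B\phi^{+,m_1}_{M_1}\phi^{-,m_2}_{M_2})\,\partial_v\psi^j\big]$ and apply Proposition \ref{bilinear:prop-tpm-ts} to the bracketed null form, taking $\zeta=\psi$ and using the embedding $\Cprod{r}{r}\hookrightarrow\Cprod{s}{r}$. This identifies the single rough contribution
\begin{equation*}
\bigl(B^i_{M_1,M_2,m_1m_2}\bigl(\phi^{-,m_2}_{M_2}\paradownv\partial_v\psi^j\bigr)\bigr)\parallsigu\partial_u\phi^{+,m_1}_{M_1}
\end{equation*}
modulo a $\Cprod{r-1}{r-1}$-error of size $(M_1M_2)^{-\eta}\theta^3$; multiplication by $\SecondC{k}{ij}(\phi)$ preserves this error thanks to Corollary \ref{prep:corollary-multiplication}, upgrading the smallness to $\theta^4$.

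It then remains to swap $\SecondC{k}{ij}(\phi)$ past the low$\times$high para-product and to replace $\parallsigu$ by $\paralesssimsigu$. The commutation step is exactly the content of Corollary \ref{prep:cor-commutator-PDE}, applied with $f=\SecondC{k}{ij}(\phi)$, $g=B^i_{M_1,M_2,m_1m_2}(\phi^{-,m_2}_{M_2}\paradownv\partial_v\psi^j)$, and $h=\partial_u\phi^{+,m_1}_{M_1}$. The frequency localization of $h$ at scale $M_1$ produces a factor $M_1^{\sigma}$ in $\|h\|_{\Cprod{s-1+\sigma}{1-r^\prime}}$, which must be absorbed by the $M_2^{1-r-s}$-gain on $g$ already established in \eqref{nonlinear:eq-unfortunate-2}, using $M_1\sim_\delta M_2$ and $\sigma=100\delta\ll \tfrac{3}{4}-r$. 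The passage from $\parallsigu$ to $\paralesssimsigu$ only removes a high$\times$high contribution at $u$-scale $\sim M_1$, which Lemma \ref{prep:lem-para-localized} handles directly, exactly as in the reduction used in the proof of Proposition \ref{nonlinear:prop-para}.

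The main obstacle is the arithmetic of the commutator step. Unlike the non-resonant cases treated earlier, the unfortunate resonance offers only a polynomial gain $M_2^{1-r-s}\approx M_2^{-1/4}$, rather than the generous $M_2^{-\eta}$ available in genuinely off-diagonal situations, so the allowed loss on $h$ is tight. The careful scheduling of the parameters in Section \ref{section:parameters}, particularly $0<\tfrac{3}{4}-r$ with $\sigma\ll \tfrac{3}{4}-r$, is tailored precisely to keep this balance favorable and to deliver the clean power $(M_1M_2)^{-\eta}$ on the right-hand side of \eqref{nonlinear:eq-unfortunate-1}.
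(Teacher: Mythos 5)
Your proposal is correct and takes essentially the same route as the paper: both estimates reduce to the bare null-form analysis of Proposition \ref{bilinear:prop-tpm-ts}, with \eqref{nonlinear:eq-unfortunate-2} following from the multiplication estimate together with \eqref{bilinear:eq-unfortunate}, and \eqref{nonlinear:eq-unfortunate-1} decomposed into ``$\SecondC{k}{ij}(\phi)$ times the bare null-form difference'' (handled by Corollary \ref{prep:corollary-multiplication} and Proposition \ref{bilinear:prop-tpm-ts}) plus the commutator $[\SecondC{k}{ij}(\phi),\,\cdot\parallsigu\partial_u\phi^{+,m_1}_{M_1}]$ (handled by Corollary \ref{prep:cor-commutator-PDE} combined with \eqref{bilinear:eq-unfortunate}). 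The final arithmetic is exactly the point you identify: the commutator costs a factor $M_1^{\sigma}$ (via $\|\partial_u\phi^{+}_{M_1}\|_{\C_u^{s-1+\sigma}}$), which must be swallowed by the gain $M_2^{1-r-s}\approx M_2^{-1/4}$ using $M_1\sim_\delta M_2$; the needed smallness condition is really $\sigma\ll r+s-1$ rather than $\sigma\ll\tfrac{3}{4}-r$ as you wrote, but since $r+s-1\approx\tfrac14$ while $\sigma=100\delta$ is tiny under \eqref{prep:eq-parameter-1}, this is harmless and the bound closes.
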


\begin{proof}
We first prove \eqref{nonlinear:eq-unfortunate-1}. To this end, we decompose \begin{align}
 &\SecondC{k}{ij}(\phi) \partial_u \big( B_{M_1,M_2,m_1m_2}^i \phi^{+,m_1}_{M_1}\phi^{-,m_2}_{M_2}\big) \partial_v \psi^j \\
 &- \Big( \SecondC{k}{ij}(\phi) B_{M_1,M_2,m_1m_2}^i \big( \phi^{-,m_2}_{M_2} \paradownv \partial_v \psi^j \big) \Big) \paralesssimsigu \partial_u \phi^{+,m_1}_{M_1}\notag \\
 =& \bigg[\SecondC{k}{ij}(\phi)
 \bigg( \partial_u \big( B_{M_1,M_2,m_1m_2}^i \phi^{+,m_1}_{M_1}\phi^{-,m_2}_{M_2}\big) \partial_v \psi^j \label{nonlinear:eq-resonant-p1} \\
 &- \Big(  B_{M_1,M_2,m_1m_2}^i \big( \phi^{-,m_2}_{M_2} \paradownv \partial_v \psi^j \big) \Big) \paralesssimsigu \partial_u \phi^{+,m_1}_{M_1}\bigg) \bigg] \notag \\
 +& \bigg[ \SecondC{k}{ij}(\phi) \bigg(\Big(  B_{M_1,M_2,m_1m_2}^i \big( \phi^{-,m_2}_{M_2} \paradownv \partial_v \psi^j \big) \Big) \paralesssimsigu \partial_u \phi^{+,m_1}_{M_1}\bigg)  \label{nonlinear:eq-resonant-p2}  \\
 &- \Big( \SecondC{k}{ij}(\phi) B_{M_1,M_2,m_1m_2}^i \big( \phi^{-,m_2}_{M_2} \paradownv \partial_v \psi^j \big) \Big) \paralesssimsigu \partial_u \phi^{+,m_1}\bigg]. \notag
 \end{align}
 We estimate the two terms \eqref{nonlinear:eq-resonant-p1} and \eqref{nonlinear:eq-resonant-p2} separately. The first term \eqref{nonlinear:eq-resonant-p1} can be controlled using the multiplication estimate (Corollary \ref{prep:corollary-multiplication}) and Proposition \ref{bilinear:prop-tpm-ts}. Using Corollary \ref{prep:cor-commutator-PDE} and Proposition \ref{bilinear:prop-tpm-ts}, we have that 
 \begin{align*}
\bigg\| & \SecondC{k}{ij}(\phi) \bigg(\Big(  B_{M_1,M_2,m_1m_2}^i \big( \phi^{-,m_2}_{M_2}  \paradownv \partial_v \psi^j \big) \Big) \paralesssimsigu \partial_u \phi^{+,m_1}_{M_1} \bigg)  \\
&- \Big( \SecondC{k}{ij}(\phi) B_{M_1,M_2,m_1m_2}^i \big( \phi^{-,m_2}_{M_2}  \paradownv \partial_v \psi^j \big) \Big) \paralesssimsigu \partial_u \phi^{+,m_1}_{M_1} \Big\|_{\Cprod{r-1}{r-1}} \\
\lesssim& \big\| \SecondC{k}{ij}(\phi) \big\|_{\Cprod{s}{s}} 
\big\|  B_{M_1,M_2,m_1m_2}^i \big( \phi^{-,m_2}_{M_2}  \paradownv \partial_v \psi^j \big) \big\|_{\Cprod{s}{r-1}} 
\big\| \partial_u \phi^{+,m_1}_{M_1}\big\|_{\C_u^{s-1+2\delta}} \\
\lesssim& M_1^{2\delta} M_2^{1-s-r} \theta^4. 
 \end{align*}
 Since $1-s-r\approx -1/4$, this term is acceptable. The remaining estimate \eqref{nonlinear:eq-unfortunate-2} for the resonant term follows directly from the multiplication estimate (Corollary \ref{prep:corollary-multiplication}) and Proposition \ref{bilinear:prop-tpm-ts}.

\end{proof}

\subsection{The remainder terms}

It remains to extend the \raisebox{1pt}{\scalebox{0.7}{\checkmark}}-estimates from the null form to the full nonlinearity.

\begin{proposition} \label{nonlinear:prop-check}
Let Hypothesis \ref{hypothesis:smallness} be satisfied, let $1\leq k\leq \dimA$, and let 
$\zeta_1,\zeta_2\colon \R_{u,v}^{1+1}\rightarrow \R^\dimA$ satisfy
\begin{align*}
\Type (\zeta_1,\zeta_2) \in \big\{ \text{\tm-\tp, \tm-\tpm, \tm-\ts, \tpm-\tpm, \ts-\ts} \big\}. 
\end{align*}
Then, it holds that 
\begin{equation*}
\big\| \SecondC{k}{ij}(\phi) \partial_u \zeta^{i}_1 \partial_v \zeta^{j}_2 \big\|_{\Cprod{r-1}{r-1}} \lesssim \Gain(\zeta_1)\Gain(\zeta_2) \theta^2. 
\end{equation*}
\end{proposition}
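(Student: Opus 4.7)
The plan is to reduce this result directly to the null-form estimate of Proposition \ref{bilinear:prop-check} via the multiplication estimate, since in the \raisebox{1pt}{\scalebox{0.7}{\checkmark}}-cases the null form $\partial_u \zeta_1^i \, \partial_v \zeta_2^j$ already lives in $\Cprod{r-1}{r-1}$ and multiplication by the coefficient $\SecondC{k}{ij}(\phi)$ costs nothing at this regularity. In particular, no case-by-case analysis like the one carried out for the $\tp$-$\tm$, $\paralesssimsigu$, or $\scalebox{1.25}{$\lnot$}\paradownv$ cases is needed here.

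First, I would invoke the multiplication estimate (Corollary \ref{prep:corollary-multiplication}) in the form
\begin{equation*}
\big\| \SecondC{k}{ij}(\phi)\, \partial_u \zeta_1^i \, \partial_v \zeta_2^j \big\|_{\Cprod{r-1}{r-1}}
\lesssim \big\| \SecondC{k}{ij}(\phi) \big\|_{\Cprod{s}{s}} \, \big\| \partial_u \zeta_1^i \, \partial_v \zeta_2^j \big\|_{\Cprod{r-1}{r-1}}.
\end{equation*}
For the first factor, the composition estimate of Bony's paralinearization (Lemma \ref{prep:lemma-bony}) together with Hypothesis \ref{hypothesis:smallness} and the bound $\|\phi\|_{\Cprod{s}{s}} \lesssim \theta$ coming from the Ansatz yields
\begin{equation*}
\big\| \SecondC{k}{ij}(\phi) \big\|_{\Cprod{s}{s}} \lesssim C_{s,\Second^\diamond}(1+\|\phi\|_{\Cprod{s}{s}})^{10} \lesssim 1,
\end{equation*}
since $\SecondC{k}{ij} \in C_b^\infty(\R^\dimA)$. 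The only mild point to verify when bounding $\|\phi\|_{\Cprod{s}{s}}$ is that the sum over dyadic scales of the contributions $A_M^\pm \phi^\pm_M$ and $B_{M,N}\phi^+_M \phi^-_N$ is controlled; this follows directly from Hypothesis \ref{hypothesis:smallness} combined with the frequency-support Condition \ref{condition:frequency} and Corollary \ref{ansatz:cor-delta-Apm}, and is in any case already implicit in the repeated use of $\|\SecondC{k}{ij}(\phi)\|_{\Cprod{s}{s}} \lesssim 1$ throughout Section \ref{section:nonlinear}.

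Second, for the null-form factor I would simply quote Proposition \ref{bilinear:prop-check}, which provides precisely
\begin{equation*}
\big\| \partial_u \zeta_1^i \, \partial_v \zeta_2^j \big\|_{\Cprod{r-1}{r-1}} \lesssim \Gain(\zeta_1)\,\Gain(\zeta_2)\, \theta^2
\end{equation*}
for every tuple $(\Type \zeta_1, \Type \zeta_2)$ in the allowed list. Multiplying the two bounds gives the claimed inequality.

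There is essentially no obstacle; the work was done in Propositions \ref{bilinear:prop-check} and \ref{prep:lemma-bony}, and this statement is just the packaging that allows us to treat the \raisebox{1pt}{\scalebox{0.7}{\checkmark}}-terms as contributions to the smooth remainder $\ENL^a$ in Theorem \ref{nonlinear:thm}. The only thing to be slightly careful about is that the multiplication estimate \eqref{prep:eq-multiplication} requires the $\Cprod{s}{s}$-factor to sit on the left, so one should apply Corollary \ref{prep:corollary-multiplication} with $f = \SecondC{k}{ij}(\phi)$ and $g = \partial_u \zeta_1^i\, \partial_v \zeta_2^j$, and not the other way around.
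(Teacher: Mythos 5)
Your argument matches the paper's proof exactly: both apply the multiplication estimate (Corollary \ref{prep:corollary-multiplication}) to pull off the coefficient $\SecondC{k}{ij}(\phi)$, bound it via the composition estimate (Lemma \ref{prep:lemma-bony}) together with $\|\phi\|_{\Cprod{s}{s}} \lesssim 1$, and then conclude by quoting Proposition \ref{bilinear:prop-check} for the null form. The proposal is correct and takes the same approach.
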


\begin{proof}
 Using the multiplication estimate (Corollary \ref{prep:corollary-multiplication}) and the composition estimate (Lemma \ref{prep:lemma-bony}), we obtain that 
 \begin{align*}
     \big\| \SecondC{k}{ij}(\phi) \partial_u \zeta^{i}_1 \partial_v \zeta^{j}_2 \big\|_{\Cprod{r-1}{r-1}} 
     &\lesssim \big\| \SecondC{k}{ij}(\phi) \big\|_{\Cprod{s}{s}} 
     \big\|\partial_u \zeta^{i}_1 \partial_v \zeta^{j}_2 \big\|_{\Cprod{r-1}{r-1}} \\
     &\lesssim (1+\| \phi\|_{\Cprod{s}{s}})^{10}  \big\|\partial_u \zeta^{i}_1 \partial_v \zeta^{j}_2 \big\|_{\Cprod{r-1}{r-1}} \\
     &\lesssim \big\|\partial_u \zeta^{i}_1 \partial_v \zeta^{j}_2 \big\|_{\Cprod{r-1}{r-1}}. 
 \end{align*}
 The desired estimate now follows directly from Proposition \ref{bilinear:prop-check}.
\end{proof}

\subsection{Proof of Theorem \ref{nonlinear:thm}}

By combining the earlier propositions, we now prove the main theorem of this section. 

\begin{proof}[Proof of Theorem \ref{nonlinear:thm}]
It only remains to combine our previous estimates. To this end, we decompose
\begin{equation*}
\SecondC{a}{ij}(\phi) \partial_u \phi^i \partial_v \phi^j 
= \SecondC{a}{ij}(\phi) \sum_{\substack{ \Type \zeta_1 = \\ \tp, \tm, \tpm,\ts}} \sum_{\substack{ \Type \zeta_2 = \\ \tm, \tp, \tpm,\ts}} \partial_u \zeta^i_1 \partial_v \zeta^j_2.  
\end{equation*}
Before continuing with the proof, we encourage the reader to review Figure \ref{figure:cases}, which gives an overview of the relevant cases. \newline

\emph{Case $\tp$-$\tm$:} In this case, $\zeta_1^i=A^{+,i}_{M,m}\phi^{+,m}_M$ and $\zeta^j_2=A^{-,j}_{N,n} \phi^{-,n}_N$ for some $M,N\geq 1$. If $N\leq M^{1-\delta}$, we use Proposition \ref{nonlinear:prop-main-parall}, which contributes \eqref{nonlinear:eq-Fp-1} in $F^+$. If $M\sim_\delta N$, we use Proposition \ref{nonlinear:prop-main-sim}, which contributes $-G_{M,N,mn}^a \partial_u \phi^{+,m}_M \partial_v \phi^{-,n}_N$, \eqref{nonlinear:eq-Fp-3} in $F^+$, and \eqref{nonlinear:eq-Fm-3} in $F^-$. If $M\leq N^{1-\delta}$, which use the symmetry in the $u$ and $v$-variables and Proposition \ref{nonlinear:prop-main-parall}, which contributes \eqref{nonlinear:eq-Fm-1} in $F^-$. \newline

\emph{The cases $\tp$-$\tp$, $\tp$-$\tpm$, $\tp$-$\ts$, $\tm$-$\tm$, $\tpm$-$\tm$, and $\ts$-$\tm$:} Using the symmetry in the $u$ and $v$-variables, all six cases are addressed by Proposition \ref{nonlinear:prop-para}. The corresponding contributions to $F^\pm$ are \eqref{nonlinear:eq-Fp-2} and \eqref{nonlinear:eq-Fm-2}. \newline

\emph{The cases $\tpm$-$\ts$ and $\ts$-$\tpm$:} Using the symmetry in the $u$ and $v$-variables, both cases are addressed by Proposition \ref{nonlinear:prop-unfortunate}. The corresponding contributions to $F^\pm$ are \eqref{nonlinear:eq-Fp-4} and \eqref{nonlinear:eq-Fm-4}. \newline

\emph{The remaining cases:} Using the symmetry in the $u$ and $v$-variables, all remaining cases are addressed by Proposition \ref{nonlinear:prop-check}. The corresponding contribution only enters into the remainder. 
\end{proof}

%%%%%%%%%%%%%%%%%%%%%%%%%%%%%%%%%%%%%%%%%%%%%%%% Duhamel integral approximation %%%%%%%%%%%%%%%%%%%%%%%%%%%%%%%%

\section{Duhamel integral approximation}\label{section:duhamel}

In Section \ref{section:nonlinear}, we obtained a detailed description of the nonlinearity $\SecondC{a}{ij}(\phi) \partial_u \phi^i \partial_v \phi^j$. In order to close the fixed point arguments for $A^+$, $A^-$, and $\psi$, however, we need a description of the localized Duhamel integral
\begin{equation}
\chi^+ \chi^- \Duh[ \SecondC{a}{ij}(\phi) \partial_u \phi^i \partial_v \phi^j].
\end{equation}
 From Proposition \ref{prep:prop-duhamel}, we already know that the Duhamel integral maps the remainder $\ENL$ in Theorem \ref{nonlinear:thm} into $\Cprod{r}{r}$, which is an acceptable error. It therefore remains to analyze the Duhamel integral of the three structured components in Theorem \ref{nonlinear:thm}. The main result of this section is contained in the following proposition.

\begin{proposition}[Duhamel integral approximation]\label{duhamel:prop-approx}
Let Hypothesis \ref{hypothesis:smallness} be satisfied and let $1\leq a\leq \dimA$. Then, it holds that 
\begin{align}
- \chi^+ \chi^- \Duh\big[  \Second_{ij}^a(\phi) \partial_u \phi^i \partial_v \phi^v \big]  
&= \sum_{M\sim_\delta N} G_{\chi,M,N,mn}^a \partial_u \phi^{+,m}_M \partial_v \phi^{-,n}_N \label{duhamel:eq-main1} \\
&- \sum_M P_{\leq M^{1-\sigma}}^u \Big( \chi^+ \chi^-  \int_u^v \dv^\prime P_{\leq M^{1-\sigma}}^u F_{M,m}^{+,a}(u,v^\prime) \Big) \phi^{+,m}_M(u) \label{duhamel:eq-main2}  \\
&+\sum_N  P_{\leq N^{1-\sigma}}^v \Big( \chi^+ \chi^-  \int_u^v \du^\prime P_{\leq N^{1-\sigma}}^{v} F_{N,n}^{-,a}(u^\prime,v) \Big) \phi^{-,n}_N(v) \label{duhamel:eq-main3} \\
&+ \RINT^a, \label{duhamel:eq-R}
\end{align}
where the remainder satisfies
\begin{equation*}
\big\| \RINT^a \big\|_{\Cprod{r}{r}} \lesssim \theta^3. 
\end{equation*}
\end{proposition}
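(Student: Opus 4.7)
The plan is to apply $-\chi^+\chi^-\Duh$ to each of the four summands in the nonlinearity decomposition of Theorem \ref{nonlinear:thm}: the error $\ENL^a$, the main cross term $-\sum_{M\sim_\delta N} G^a_{M,N,mn}\,\partial_u\phi^{+,m}_M\,\partial_v\phi^{-,n}_N$, and the two para-product pieces built from $F^{+}$ and $F^{-}$. The contribution of $\ENL^a$ is immediate: by Proposition \ref{prep:prop-duhamel}, $\|\chi^+\chi^-\Duh[\ENL^a]\|_{\Cprod{r}{r}} \lesssim \|\ENL^a\|_{\Cprod{r-1}{r-1}} \lesssim \theta^2$, and this is absorbed into $\RINT^a$. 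All other steps proceed by choosing the appropriate form of the Duhamel integral from Proposition \ref{prep:prop-duhamel} and integrating by parts in one or both null coordinates, with the boundary terms producing the claimed structured pieces and the interior terms producing remainders in $\Cprod{r}{r}$.

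For the $F^+$ contribution, I first use Lemma \ref{prep:lem-para-localized} together with the support of $\partial_u\phi^{+,m}_M$ at $u$-frequency $\sim M$ to replace $F^{+,a}_{M,m}\parallsigu\partial_u\phi^{+,m}_M$ by $\bigl(P^u_{\leq M^{1-\sigma}} F^{+,a}_{M,m}\bigr)\,\partial_u\phi^{+,m}_M$ modulo a $\Cprod{r-1}{r-1}$-error of size $\theta^3$ with an $M^{-\eta}$-gain. I then apply the form $\Duh = -\int_u^v dv'\int_u^{v'} du'(\cdot)$ and integrate by parts in $u'$ so that the derivative is transferred off $\phi^{+,m}_M(u')$. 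The boundary contribution at $u'=u$ yields exactly $\phi^{+,m}_M(u)\cdot\chi^+\chi^-\int_u^v dv' P^u_{\leq M^{1-\sigma}} F^{+,a}_{M,m}(u,v')$, and inserting the outer projection $P^u_{\leq M^{1-\sigma}}$ (via the same commutation as in Corollary \ref{key:cor-parall-mod}, using that the product is already at $u$-frequency $\sim M$) produces \eqref{duhamel:eq-main2}. The boundary contribution at $u'=v'$ is a trace against $\phi^{+,m}_M$ along the diagonal; it is a function of $v$ only and is handled by Lemma \ref{modulation:lem-integral-lh}, which is designed precisely for this combination of an integral and a low-high paraproduct. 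The remaining interior term carries a favorable derivative $\partial_{u'}$ on the low-frequency object $P^u_{\leq M^{1-\sigma}} F^{+,a}_{M,m}$, loses only a factor $M^{\sigma(1-s)}$, and becomes a $\Cprod{r-1}{r-1}$-remainder that passes through Proposition \ref{prep:prop-duhamel}. The $F^{-}$ contribution is handled identically by $u \leftrightarrow v$ symmetry, producing \eqref{duhamel:eq-main3}.

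The main cross term requires a double integration by parts. Applying \eqref{prep:eq-duhamel-1}, I IBP first in $u'$ to move $\partial_{u'}$ off $\phi^{+,m}_M(u')$, then IBP in $v'$ to move $\partial_{v'}$ off $\phi^{-,n}_N(v')$. The iterated ``double-boundary'' contribution extracted at $(u',v')=(u,v)$ is exactly $G^a_{M,N,mn}(u,v)\phi^{+,m}_M(u)\phi^{-,n}_N(v)$, and multiplying by $\chi^+\chi^-$ together with the outer projections built into the definition \eqref{ansatz:eq-G-chi} of $G^a_{\chi,M,N,mn}$ (with the discrepancy estimated by commutator bounds from Lemma \ref{prep:lem-commutator} and Lemma \ref{prep:lem-para-localized}) yields the desired term \eqref{duhamel:eq-main1}. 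The mixed boundary contributions where one of $u'=v'$ or the outer $v'=v$ is retained involve traces of $\phi^{\pm}$ along the diagonal; these are estimated using Lemma \ref{prep:lemma-trace} in its non-resonant form \ref{prep:item-trace-nonres}, which is available because $G_{M,N}$ enforces $u$-frequency $\leq M^{1-\sigma}$ and $v$-frequency $\leq N^{1-\sigma}$ with $M\sim_\delta N$. The genuinely interior terms carry favorable derivatives on $G^a_{M,N,mn}$ and are controlled through Lemma \ref{key:lemma-favorable}, the high$\times$high$\rightarrow$low-information encoded in the $\Ds$-norm (Lemma \ref{ansatz:lemma-data}), and Proposition \ref{prep:prop-duhamel}, producing $\Cprod{r}{r}$-remainders of size $\theta^3$. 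Summability over the dyadic scales $M,N$ uses the $\eta$- and $\delta$-gains present throughout Sections \ref{section:prototypical}--\ref{section:nonlinear}.

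The main obstacle is the bookkeeping for the cross term: each IBP generates a diagonal trace that must be matched against the non-resonant clause of the trace estimate, and the requirement $M\sim_\delta N$ is what makes this possible; simultaneously, swapping the outer $\chi^+\chi^-$-factor with the low-frequency Littlewood--Paley projections baked into $G^a_{\chi,M,N,mn}$ creates commutator errors whose $\Cprod{r-1}{r-1}$-norms must gain a small power of $\min(M,N)$ in order to close the dyadic summation. Verifying these gains against the modulation and initial-data norms is the most technically delicate part of the argument, but it reduces mechanically to combinations of the commutator estimates in Lemma \ref{prep:lem-commutator} with the favorable-derivative estimates in Lemma \ref{key:lemma-favorable}.
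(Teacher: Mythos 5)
Your overall structure is the same as the paper's: start from the decomposition in Theorem~\ref{nonlinear:thm}, apply the localized Duhamel integral, and then integrate by parts in the null coordinates to extract boundary terms matching \eqref{duhamel:eq-main1}--\eqref{duhamel:eq-main3} while pushing the interior and trace contributions into a $\Cprod{r}{r}$-remainder. This is precisely what the paper encodes in Lemma~\ref{duhamel:lem-structured-I}, Lemma~\ref{duhamel:lem-structured-II}, Lemma~\ref{duhamel:lem-low-high}, and Lemma~\ref{duhamel:lem-high-high}, so the strategy is correct. However, there are two places where the particular tools you cite would not carry the argument.

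First, and most importantly, you treat the bound on the trace term $\Tr\bigl(P^u_{\leq M^{1-\sigma}} F^{+,a}_{M,m}\,\phi^{+,m}_M\bigr)$ as a routine consequence of integration and the trace lemma, referring to Lemma~\ref{modulation:lem-integral-lh}. That lemma addresses the commutator $\Com^v_{\chi,\parall,\I}(\zeta_{N,n},\phi^{-,n}_N)$ in the ODE analysis and does not apply here; what the paper actually needs is the two-part bound of Lemma~\ref{duhamel:lem-F}, namely $\|P^u_{\leq M^{1-\sigma}} F^{+,a}_{M,m}\|_{\Cprod{s}{r-1}}\lesssim M^{r-s+2\eta}\theta^2$ and $\|\Tr(F^{+,a}_{M,m}\parallsigu\phi^{+,m}_M)\|_{\C_x^{r-1}}\lesssim M^{-\eta}\theta^3$. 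The second of these is where a substantial amount of the technical work lives: $F^+_M$ is defined in \eqref{nonlinear:eq-Fp-1}--\eqref{nonlinear:eq-Fp-4} as four structurally different terms (a low$\times$high piece, a full-range piece, a resonant $\parasimv$-piece, and an unfortunate-resonance $\paradownv$-piece), and each of them needs its own estimate because a naive bound on $F^+_M$ in $\Cprod{s}{r-1}$ only gives $M^{r-s}$, which is insufficient to win by $M^{-\eta}$ after the loss of $M^{r-1-s}$ from $\|\phi^+_M\|_{\C_u^{r-1}}$. Your proposal does not confront this; as written, the bookkeeping for the $F^+$ trace would not close.

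Second, for the high$\times$high$\rightarrow$low term you propose to estimate the diagonal traces of $G^a_{M,N,mn}\phi^{+,m}_M\phi^{-,n}_N$ via the non-resonant clause \ref{prep:item-trace-nonres} of the trace estimate. But condition \ref{prep:item-trace-nonres} requires $P^u_{M'}P^v_{N'}f=0$ whenever $M'\sim N'$, and $G^a_{M,N,mn}\phi^{+,m}_M\phi^{-,n}_N$ has $u$-frequency concentrated near $M$ and $v$-frequency near $N$. The constraint $M\sim_\delta N$ includes all dyadically comparable pairs $M\sim N$, for which the non-resonant hypothesis simply fails. The paper instead bounds these traces directly in $L^\infty$ — e.g., $\|\chi^+\chi^-\Tr_u(G\phi^+_M\phi^-_N)\|_{\Cprod{r}{r}}\lesssim\max(M,N)^r\,\|G\phi^+_M\phi^-_N\|_{L^\infty_{u,v}}\lesssim\max(M,N)^r M^{-s}N^{-s}\theta^3$ — and the negative exponent $r-2s+O(\delta)$ gives summability. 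The high$\times$high$\rightarrow$low input from $\Ds$ that you invoke is not what makes this term work; the frequency-support conditions on $G$ and the direct $L^\infty$-bound do.

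Outside these two points, the integration-by-parts bookkeeping and the handling of the commutator between $\chi^+\chi^-$ and the outer Littlewood–Paley projections (which the paper does via Lemma~\ref{prep:lem-commutator} inside Lemma~\ref{duhamel:lem-high-high}) match the argument in the text.
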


The additional frequency projections outside the integrals in \eqref{duhamel:eq-main2} and \eqref{duhamel:eq-main3} may seem superfluous. However, they are necessary due to the $u$-dependence of $\int_u^v \mathrm{d}v^\prime (\hdots)$, $v$-dependence of $\int_u^v \mathrm{d}u^\prime (\hdots)$, and the cut-off functions $\chi^+$ and $\chi^-$.

\subsection{\protect{\texorpdfstring{The low$\times$high-term}{The low x high-term}}}

In this subsection, we analyze the contribution of $F_{M,m}^{+,a}\parallsigu \partial_u \phi^{+,m}_M$ to the Duhamel integral. The desired estimate is included in the next lemma, which is the main estimate in this subsection.

\begin{lemma}[Duhamel integral approximation for the low$\times$high-term]\label{duhamel:lem-low-high}
Let Hypothesis \ref{hypothesis:smallness} be satisfied, let $1\leq a\leq \dimA$, and let $M$ be a frequency-scale. Then, it holds that 
\begin{equation}
\begin{aligned}
&\Big\| \chi^+ \chi^- \Duh\big[ F_{M,m}^{+,a} \parallsigu \partial_u \phi^{+,m}_M \big] - P_{\leq M^{1-\sigma}}^u \Big(  \chi^+ \chi^- \int_u^v \dv^\prime P_{\leq M^{1-\sigma}}^u F_{M,m}^{+,a}(u,v^\prime) \Big) \phi^{+,m}_M\Big\|_{\Cprod{r}{r}}\\ &\lesssim M^{-\eta} \theta^3. 
\end{aligned}
\end{equation}
\end{lemma}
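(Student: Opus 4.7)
The plan is to reduce the paraproduct to an ordinary product, write out the Duhamel integral explicitly, integrate by parts in $u'$ against $\partial_{u'}\phi^{+,m}_M$, and then treat the three resulting error pieces (a high-frequency projection error, a diagonal trace term, and a term with a favorable derivative on $F^{+,a}_{M,m}$) separately.

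First, since $\partial_u \phi^{+,m}_M$ is supported at $u$-frequencies $\sim M$, Lemma \ref{prep:lem-para-localized}(i) yields
\begin{equation*}
F^{+,a}_{M,m} \parallsigu \partial_u \phi^{+,m}_M = P^u_{\leq M^{1-\sigma}} F^{+,a}_{M,m} \cdot \partial_u \phi^{+,m}_M + R_1
\end{equation*}
with $\|R_1\|_{\Cprod{r-1}{r-1}} \lesssim M^{-\eta}\theta^3$, using the bounds for $F^{+,a}_{M,m}$ available from the bilinear and trilinear estimates of Section \ref{section:nonlinear}. Proposition \ref{prep:prop-duhamel} then sends $\chi^+\chi^-\Duh[R_1]$ into $\Cprod{r}{r}$ with the desired size. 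Writing $\widetilde F := P^u_{\leq M^{1-\sigma}} F^{+,a}_{M,m}$ and invoking the explicit form \eqref{prep:eq-duhamel-1}, integration by parts in $u'$ produces
\begin{equation*}
\int_u^{v'} \du^\prime \widetilde F(u', v') \partial_{u'}\phi^{+,m}_M(u') = \widetilde F(v', v') \phi^{+,m}_M(v') - \widetilde F(u, v') \phi^{+,m}_M(u) - \int_u^{v'} \du^\prime \partial_{u'} \widetilde F(u', v') \phi^{+,m}_M(u'),
\end{equation*}
so that $\chi^+\chi^-\Duh[\widetilde F \partial_u \phi^{+,m}_M]$ equals the sum of a main piece $\chi^+(u)\chi^-(v)\phi^{+,m}_M(u)\int_u^v \dv^\prime \widetilde F(u,v')$, a diagonal trace term $E_b := -\chi^+\chi^-\int_u^v \dv^\prime \widetilde F(v',v')\phi^{+,m}_M(v')$, and a favorable term $E_c := \chi^+\chi^-\Duh[\partial_u \widetilde F \cdot \phi^{+,m}_M]$.

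The main piece matches the target statement up to inserting the outer $P^u_{\leq M^{1-\sigma}}$ projection, and the corresponding error
\begin{equation*}
E_a := P^u_{>M^{1-\sigma}}\Bigl(\chi^+\chi^- \int_u^v \dv^\prime \widetilde F(u,v')\Bigr)\, \phi^{+,m}_M(u)
\end{equation*}
is small because the high-frequency $u$-content of $\int_u^v \dv^\prime \widetilde F(u,v')$ is driven only by the diagonal trace $\widetilde F(u,u)$ arising from the lower limit $u$; this trace picks up at most the $v$-frequency scale already present in $F^{+,a}_{M,m}$, which is controlled by the $\Cprod{s}{r-1}$-type bounds from Section \ref{section:nonlinear} and the $M^{-s}$ factor carried by $\phi^{+,m}_M$, giving the $M^{-\eta}\theta^3$ gain in $\Cprod{r}{r}$. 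The trace term $E_b$ can be rewritten as $-\chi^+(u)\chi^-(v)\bigl(H(v)-H(u)\bigr)$ with $H(x):=\int_0^x \dv^\prime \widetilde F(v',v')\phi^{+,m}_M(v')$, so that Lemma \ref{prep:lemma-single-integral} reduces the $\Cprod{r}{r}$-bound to a $\C^{r-1}$-estimate for $\widetilde F(v',v')\phi^{+,m}_M(v')$, which in turn follows from the trace estimate (Lemma \ref{prep:lemma-trace}) combined with the $\C^s$ bound for $\phi^{+,m}_M$. Finally, $E_c$ is the Duhamel integral of a product where the derivative has fallen on the low-frequency factor $\widetilde F$ (costing at most $M^{1-\sigma}$) while $\phi^{+,m}_M$ supplies the compensating $M^{-s}$ factor, so that Proposition \ref{prep:prop-duhamel} together with a favorable-derivative estimate in the spirit of Lemma \ref{key:lemma-favorable} yields the required bound.

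The main obstacle in executing this plan is the bookkeeping of the $\Cprod{\cdot}{\cdot}$-regularity of $F^{+,a}_{M,m}$: Definition \ref{ansatz:def-F} expresses it as a sum of four qualitatively different contributions (low-high, high-high, paracontrolled, and unfortunate-resonance pieces), and to extract an $M^{-\eta}$ gain from each of $E_a$, $E_b$, $E_c$ one has to select the Hölder exponents carefully, exploiting in particular that the outer $P^u_{\leq M^{1-\sigma}}$ projection in the target expression pairs cleanly with the $u$-frequency cutoff already built into $F^{+,a}_{M,m}$ by the modulation equation.
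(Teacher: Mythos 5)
Your decomposition is the right one and in fact reproduces the paper's Lemma \ref{duhamel:lem-structured-I}: your reduction via Lemma \ref{prep:lem-para-localized}(i), the integration by parts in $u'$, and the identification of the main piece plus three error terms $E_a$, $E_b$, $E_c$ correspond exactly to the paper's decomposition into \eqref{duhamel:eq-low-high-p3}, \eqref{duhamel:eq-low-high-p1}, \eqref{duhamel:eq-low-high-p2}. You also correctly identify that the crux is bounding $F^{+,a}_{M,m}$ in the right norms, and that $E_c$ is handled by the favorable derivative plus the Duhamel estimate.

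However, there is a genuine gap in your treatment of $E_b$. You claim the $\C^{r-1}$-estimate for $\Tr\bigl(\widetilde{F}(v',v')\phi^{+,m}_M(v')\bigr)$ ``follows from the trace estimate (Lemma \ref{prep:lemma-trace}) combined with the $\C^s$ bound for $\phi^{+,m}_M$.'' This is not enough. The trace estimate requires putting $\widetilde{F}\phi^+_M$ in some $\Cprod{\alpha}{\beta}$ with $\alpha+\beta>0$, and if you only use $\phi^+_M \in \C^s_u$ together with a generic regularity bound on $\widetilde{F}$ (say $\widetilde{F}\in\Cprod{s}{r-1}$, which is the natural bound since $F^{+}_M$ contains a $\partial_v$), the required conversion of the $v$-regularity from $r-1$ to something positive costs roughly $M^{2(1-r)}$, which destroys the $M^{-\eta}$ gain. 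In the paper this is precisely the content of Lemma \ref{duhamel:lem-F}, whose estimate \eqref{duhamel:eq-F-2}, $\bigl\|\Tr\bigl(F^{+,a}_{M,m}\parallsigu\phi^{+,m}_M\bigr)\bigr\|_{\C^{r-1}_x}\lesssim M^{-\eta}\theta^3$, is proved by going into the four summands of Definition \ref{ansatz:def-F} one at a time and exploiting their specific structure (the frequency restriction $N\leq M^{1-\delta}$ in \eqref{nonlinear:eq-Fp-1}, the commutator gain in \eqref{nonlinear:eq-Fp-2}, the resonance structure in \eqref{nonlinear:eq-Fp-3}, the smallness of the unfortunate resonance in \eqref{nonlinear:eq-Fp-4}). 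You flag this difficulty at the end as a ``bookkeeping'' issue, but it is the main analytic input here, not a post-processing step; the same Lemma \ref{duhamel:lem-F} (specifically \eqref{duhamel:eq-F-1}, $\|P^u_{\leq M^{1-\sigma}}F^{+,a}_{M,m}\|_{\Cprod{s}{r-1}}\lesssim M^{r-s+2\eta}\theta^2$) is also what powers the estimates of $E_a$ and $E_c$, and you invoke it only vaguely (``bounds from Section \ref{section:nonlinear}'') without stating or proving it.
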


Before we can prove Lemma \ref{duhamel:lem-low-high}, we prove two auxiliary lemmas.

\begin{lemma}[Structured representation of Duhamel integral I]\label{duhamel:lem-structured-I}
Let $1\leq m \leq \dimA$, let $M\geq 1$, let $\zeta \in \Cprod{s}{r-1}$. Then, it holds that 
\begin{equation}\label{duhamel:eq-structured-I}
\Duh\big[ \zeta \partial_u \phi^{+,m}_M \big] 
= \Big( \int_u^v \dv^\prime \zeta(u,v^\prime) \Big) \phi^{+,m}_M 
- \I \big[ \Tr\big( \zeta \phi_M^{+,m} \big) \big]\Big|_{u}^v 
- \Duh\big[ \partial_u \zeta \phi_M^{+,m} \big]. 
\end{equation}
\end{lemma}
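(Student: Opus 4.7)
My proof plan for Lemma \ref{duhamel:lem-structured-I} is essentially an integration by parts in the inner integral of $\Duh$, with an approximation argument used only at the end to justify the identity at low regularity. Write $f(u) := \phi^{+,m}_M(u)$, which is smooth in $u$ and independent of $v$. First I would treat the case when $\zeta$ is smooth and compactly supported, where the explicit representation
\begin{equation*}
\Duh[\zeta \, \partial_u \phi^{+,m}_M](u,v) = -\int_u^v \dv^\prime \int_u^{v^\prime} \du^\prime \, \zeta(u^\prime, v^\prime) f^\prime(u^\prime)
\end{equation*}
from Proposition \ref{prep:prop-duhamel} is valid pointwise. Integrating by parts in $u^\prime$ in the inner integral produces three contributions: the boundary term at $u^\prime = v^\prime$, which equals $\zeta(v^\prime, v^\prime) f(v^\prime) = \Tr(\zeta f)(v^\prime)$; the boundary term at $u^\prime = u$, which equals $\zeta(u, v^\prime) f(u)$; and the remaining interior term involving $\partial_{u^\prime} \zeta(u^\prime, v^\prime) f(u^\prime)$.

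Substituting back and performing the outer $v^\prime$ integral, the two boundary pieces yield precisely
\begin{equation*}
\Big( \int_u^v \dv^\prime \zeta(u, v^\prime) \Big) \phi^{+,m}_M(u) \quad \text{and} \quad - \int_u^v \dv^\prime \, \Tr(\zeta f)(v^\prime) = - \I\bigl[ \Tr(\zeta \phi^{+,m}_M) \bigr]\Big|_u^v ,
\end{equation*}
while the interior piece recombines as $-\Duh[\partial_u \zeta \cdot \phi^{+,m}_M]$ by the very definition of $\Duh$. Matching signs gives the asserted identity on smooth inputs.

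Finally, to extend the identity to general $\zeta \in \Cprod{s}{r-1}$, I would approximate $\zeta$ by $\zeta_n \in C_c^\infty(\R^{1+1}_{u,v})$ converging to $\zeta$ in $\Cprod{s}{r-1}$ and pass to the limit in each of the four terms of \eqref{duhamel:eq-structured-I}. The Duhamel terms are controlled through Proposition \ref{prep:prop-duhamel} after inserting the cut-offs $\chi^\pm$ (which are implicit in the stated identity, since $\Duh$ is only needed against localized inputs in our application). The linear map $\zeta \mapsto \int_u^v \zeta(u, v^\prime) \, \dv^\prime$ is continuous from $\Cprod{s}{r-1}$ to $\Cprod{s}{r}$ by Lemma \ref{prep:lemma-partial-integral}, and multiplication by the smooth, frequency-localized function $\phi^{+,m}_M$ is bounded.

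The only step that requires genuine care is the trace term $\I[\Tr(\zeta \phi^{+,m}_M)]$: we must make sense of $\Tr(\zeta f)$ for $\zeta \in \Cprod{s}{r-1}$. Since $\phi^{+,m}_M$ is smooth with frequency support at scale $M$, the product $\zeta \cdot \phi^{+,m}_M$ still lies in $\Cprod{s}{r-1}$ (with an $M$-dependent constant, which is harmless here). Because $s + (r-1) > 0$ (indeed $s + r - 1 \approx 1/4$ by the parameter choices in \eqref{prep:eq-parameter-1}), the general case \ref{prep:item-trace-general} of the trace estimate Lemma \ref{prep:lemma-trace} applies and yields $\Tr(\zeta f) \in \C^{r-1}_x$, after which Lemma \ref{prep:lemma-single-integral} gives $\I[\Tr(\zeta f)] \in \C^r_x$. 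This continuity is what ultimately justifies the limit $\zeta_n \to \zeta$, and it is the only non-trivial point of the argument; the rest is a straightforward fundamental-theorem-of-calculus computation.
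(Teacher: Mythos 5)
Your proof is correct and follows essentially the same approach as the paper: use the explicit representation of $\Duh$ from Proposition \ref{prep:prop-duhamel}, integrate by parts in the $u'$-variable, and recognize the resulting boundary and interior terms as the three pieces on the right-hand side of \eqref{duhamel:eq-structured-I}. The paper simply carries out the integration by parts directly and leaves the regularity justification implicit; your extra care about approximating $\zeta$ by smooth functions and verifying that the trace term is well-defined (via the parameter condition $s + (r-1) > 0$ and Lemma \ref{prep:lemma-trace}) is a more complete presentation of the same computation, not a different route.
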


\begin{proof}
Using Proposition \ref{prep:prop-duhamel} and integration by parts, it holds that 
\begin{align*}
&\Duh\big[\zeta \partial_u \phi^{+,m}_{M} \big] \\
&= - \int_u^v \dv^\prime \int_u^{v^\prime} \du^\prime \zeta(u^\prime,v^\prime) \, \partial_u \phi^{+,m}_M(u^\prime) \\
&= - \int_u^v \dv^\prime  \Big( \zeta(u^\prime,v^\prime) \phi^{+,m}_M(u^\prime)\big|_{u^\prime=u}^{v^\prime} \Big) 
+ \int_u^v \dv^\prime \int_u^{v^\prime} \du^\prime \partial_u\zeta(u^\prime,v^\prime) \,  \phi^{+,m}_M(u^\prime) \\
&= \Big(  \int_u^v \dv^\prime \zeta(u,v^\prime) \Big) \phi^{+,m}_M(u) 
-\int_u^v \dv^\prime \zeta(v^\prime,v^\prime) \phi^{+,m}_M(v^\prime) 
+ \int_u^v \dv^\prime \int_u^{v^\prime} \du^\prime \partial_u\zeta(u^\prime,v^\prime) \,  \phi^{+,m}_M(u^\prime).
\end{align*}
Using the definitions of the integral operator $\I$ and trace operator $\Tr$, this yields \eqref{duhamel:eq-structured-I}.
\end{proof}

In the proof of Proposition \ref{duhamel:prop-approx}, the first summand in \eqref{duhamel:eq-structured-I} will be responsible for the non-perturbative term \eqref{duhamel:eq-main2}. In order to estimate the second and third summand in \eqref{duhamel:eq-structured-I}, we prove the following estimate for $F_{M,m}^{+,a}$. 

\begin{lemma}[Bounds on $F_{M,m}^{+,a}$]\label{duhamel:lem-F}
Let Hypothesis \ref{hypothesis:smallness} be satisfied, let $1\leq a,m \leq \dimA$, and let $M$ be a frequency scale. Then, it holds that 
\begin{align}
\big\| P_{\leq M^{1-\sigma}}^u F_{M,m}^{+,a} \big\|_{\Cprod{s}{r-1}}
&\lesssim M^{r-s+2\eta} \theta^2, \label{duhamel:eq-F-1} \\
\big\| \Tr \big( F_{M,m}^{+,a} \parallsigu \phi^{+,m}_M \big) \big\|_{\C_x^{r-1}} 
&\lesssim M^{-\eta} \theta^3. \label{duhamel:eq-F-2}
\end{align}
\end{lemma}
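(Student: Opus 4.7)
\smallskip
\noindent\textbf{Proof proposal for Lemma \ref{duhamel:lem-F}.}

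The two bounds are proved separately. The estimate \eqref{duhamel:eq-F-1} is essentially a direct consequence of the structural bounds on each of the four pieces \eqref{nonlinear:eq-Fp-1}--\eqref{nonlinear:eq-Fp-4} that define $F_{M,m}^{+,a}$, whereas \eqref{duhamel:eq-F-2} requires a delicate trace analysis that exploits the $u$-frequency localization of $P_{\leq M^{1-\sigma}}^u F_{M,m}^{+,a}$ against the $x$-frequency localization of $\phi^{+,m}_M$.

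\smallskip
\noindent\emph{Step 1: Proof of \eqref{duhamel:eq-F-1}.} I would split $F_{M,m}^{+,a}$ into the four summands from Definition \ref{ansatz:def-F} and bound each in $\Cprod{s}{r-1}$, then apply $P_{\leq M^{1-\sigma}}^u$. For \eqref{nonlinear:eq-Fp-1}, the multiplication estimate (Corollary \ref{prep:corollary-multiplication}) combined with the composition estimate (Lemma \ref{prep:lemma-bony}) peels off the factors $A_M^+$ and $\SecondC{a}{ij}(\phi)$, reducing matters to $\|\partial_v(A_N^- \phi^-_N)\|_{\Cprod{s}{r-1}} \lesssim N^{(1-\delta)(r-s)} \theta^2$ via Corollary \ref{ansatz:cor-delta-Apm}; summing over $N \leq M^{1-\delta}$ yields $M^{(1-\delta)(r-s)} \theta^2$, which is absorbed into $M^{r-s+2\eta} \theta^2$. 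For \eqref{nonlinear:eq-Fp-2}, I would further split the sum over $\zeta \in \{\tp,\tpm,\ts\}$ by type. The $\ts$-contribution is direct from $\|\partial_v \psi\|_{\Cprod{s}{r-1}} \lesssim \theta$; the $\tp$- and $\tpm$-contributions are summed using the family bound of Corollary \ref{prep:cor-bilinear}, which leverages the $u$-frequency localization of $\phi^+_{M'}$ and $\phi^+_{M'_1}$ across distinct scales, combined with the bounds on $\partial_v A^\pm$ from Corollary \ref{ansatz:cor-delta-Apm} and on $\partial_v B$ from Lemma \ref{key:lemma-favorable}. The two remaining pieces \eqref{nonlinear:eq-Fp-3} and \eqref{nonlinear:eq-Fp-4} are $O(1)$ sums over $N \sim_\delta M$: the first uses the resonant bilinear estimate Proposition \ref{prep:prop-bilinear}.\ref{prep:item-res}, while the second uses the unfortunate resonance bound \eqref{key:eq-unfortunate-v1} to control $\|\phi^{-,n}_N \paradownv \partial_v \psi^j\|_{\Cprod{s}{r-1}} \lesssim N^{1-s-r} \theta^2$, yielding an $M^{1-s-r}\theta^3$ contribution that is absorbed into $M^{r-s+2\eta}\theta^2$.

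\smallskip
\noindent\emph{Step 2: Proof of \eqref{duhamel:eq-F-2}.} I would first apply Lemma \ref{prep:lem-para-localized}(i) to rewrite
\[
F_{M,m}^{+,a} \parallsigu \phi^{+,m}_M = \bigl(P_{\leq M^{1-\sigma}}^u F_{M,m}^{+,a}\bigr)(u,v) \, \phi^{+,m}_M(u) + \mathcal{E},
\]
where the remainder $\mathcal{E}$ is at $u$-frequency $\sim M$ and carries a small gain of $M^{-c(1-\sigma)s}$, which renders its trace contribution harmless via Lemma \ref{prep:lemma-trace}. On the diagonal $u=v=x$, the main term becomes $G_M(x) \phi^{+,m}_M(x)$ with $G_M(x) := \bigl(P_{\leq M^{1-\sigma}}^u F_{M,m}^{+,a}\bigr)(x,x)$. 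The key observation is that $\phi^{+,m}_M(x)$ is localized at $x$-frequency $\sim M$, so in the Hölder-type product decomposition $G_M \phi^{+,m}_M = G_M \parall \phi^{+,m}_M + G_M \parasim \phi^{+,m}_M + G_M \paragg \phi^{+,m}_M$, only the low-high piece places the product at $x$-frequency $\sim M$ and automatically benefits from a gain $M^{-s}$ coming from $\phi^{+,m}_M \in \C_x^s$. The high-high and high-low pieces require a non-resonant analysis at the level of $F$, which I would carry out by splitting $F$ via a Littlewood-Paley decomposition in $v$ and using the trace estimate (Lemma \ref{prep:lemma-trace}.\ref{prep:item-trace-nonres}) in its non-resonant form on each dyadic piece.

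\smallskip
\noindent\emph{Main obstacle.} The central difficulty lies in \eqref{duhamel:eq-F-2}: a naive application of the trace estimate to the main-term $G_M \phi^{+,m}_M$ using only Part~(i) of the present lemma yields a bound of the form $M^{r-s+2\eta}\theta^3$, which is far from the required $M^{-\eta}\theta^3$. Bridging this gap requires a term-by-term analysis that exploits, for each piece of $F_{M,m}^{+,a}$, the joint localization of the $A_M^+$ factor at $u$-frequency $\leq M^{1-\sigma}$ and of $\phi^{+,m}_M$ at $x$-frequency $\sim M$; the required gain $M^{-\eta}$ emerges from combining the frequency-localization exponent $(1-\sigma)(r-s)$ with the factor $M^{-s}$ extracted from $\phi^{+,m}_M$ in the dominant low-high interaction, under the parameter constraints $1-2s \ll \eta \ll \delta \ll \sigma$. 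I expect the $\ts$-piece \eqref{nonlinear:eq-Fp-4} to be the most delicate, since the unfortunate resonance with $\partial_v \psi$ produces a trace contribution that only barely closes and has to be controlled using \eqref{key:eq-unfortunate-v1} and the additional $M^{-(r-s)-\sigma/6}$ gain from the non-resonant complement.
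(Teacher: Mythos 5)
Your Step 1 broadly matches the paper's treatment of \eqref{duhamel:eq-F-1}, though the paper's handling of the $\tp$- and $\tpm$-contributions to \eqref{nonlinear:eq-Fp-2} is a bit more delicate than what you describe: since $\partial_v\zeta^j$ can have $u$-frequencies $\gg M^{1-\sigma}$, the paper has to split $\partial_v\zeta^j$ by $u$-frequency and control a commutator $[P^u_{\leq M^{1-\sigma}},A^{+}_M\Second^\diamond(\phi)]$ acting on the high $u$-frequency part, rather than relying directly on the family bilinear estimate.

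Your Step 2, however, takes a genuinely different route from the paper for \eqref{duhamel:eq-F-2}. You propose to trace first (landing on the line) and then run an on-diagonal paraproduct decomposition of $G_M\,\phi^{+,m}_M$. The paper never traces $F$ alone: for each piece of $F$ it applies the two-variable trace estimate (Lemma \ref{prep:lemma-trace}) to the full product $(\cdots)\parallsigu\phi^{+,m}_M$, reducing to a $\Cprod{r-1}{1-r'}$- or $\Cprod{\eta}{\eta}$-norm, and then the bilinear estimate produces the decisive gain by measuring $\phi^{+,m}_M$ at regularity $r-1$ or $1-r'$ — i.e.\ the factor $\|\phi^{+,m}_M\|_{\C_u^{r-1}}=M^{r-1-s}\theta$ (or $M^{1-r'-s}\theta$) does all the work, and there are no one-variable paraproducts to manage. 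Your "main obstacle" paragraph correctly identifies that the naive trace bound loses by $M^{r-s}$, but you have not shown how the high-high and high-low on-diagonal pieces coming from the high $v$-frequency content of $F$ (most visibly from $\partial_v\psi$ in \eqref{modulation:eq-yc-11}-type terms, which place $\Tr G_M$ at $x$-frequency up to $\sim M$) close; these are precisely the interactions the paper's approach sidesteps, because the $v$-regularity index $r-1$ and the $u$-regularity index $1-r'$ of $\phi^+_M$ in the bilinear estimate together satisfy $(r-1)+(1-r')>0$, so the two-variable trace estimate applies directly. Your sketch of a non-resonant Littlewood–Paley analysis of $\Tr G_M$ is not obviously wrong, but it is a real computation that the paper does not need to perform, and as presented your Step 2 is incomplete at exactly the point you yourself flag as delicate.
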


\begin{proof}
We first recall that $F_{M,m}^{+,a}$ consists of four different summands, which are given by \eqref{nonlinear:eq-Fp-1}, \eqref{nonlinear:eq-Fp-2}, \eqref{nonlinear:eq-Fp-3}, and \eqref{nonlinear:eq-Fp-4}. We now prove the two estimates \eqref{duhamel:eq-F-1} and \eqref{duhamel:eq-F-2} simultaneously, but distinguish between the four summands in $F_{M,m}^{+,a}$. \newline

\emph{Case 1: Estimate for \eqref{nonlinear:eq-Fp-1}.} We first control the contribution to \eqref{duhamel:eq-F-1}. Using the bilinear estimate, we have that 
\begin{align*}
\big\| A_{M,m}^{+,i} \Second_{ij}^a(\phi) \partial_v \big( A_{N,n}^{-,j} \phi^{-,n}_N \big) \big\|_{\Cprod{s}{s}} 
&\lesssim \big\| A_{M,m}^{+,i} \big\|_{\Cprod{s}{s}}\big\| \Second_{ij}^a(\phi) \big\|_{\Cprod{s}{s}} \big\| \partial_v \big( A_{N,n}^{-,j} \phi^{-,n}_N \big) \|_{\Cprod{s}{r-1}}\\
&\lesssim N^{r-s} \theta^3.
\end{align*}
After summing over $N\lesssim M^{1-\delta}$, this yields an acceptable contribution to \eqref{duhamel:eq-F-1}. We now turn to \eqref{duhamel:eq-F-2}. Using the trace estimate (Lemma \ref{prep:lemma-trace}), we have that 
\begin{align*}
&\Big\| \Tr \Big( \Big( A^{+,i}_{M,m} \SecondC{a}{ij}(\phi) \partial_v \big( A_{N,n}^{-,j} \phi^{-,n}_N \big) \Big) \parallsigu \phi^{+,m}_M \Big) \Big\|_{\C_x^{r-1}} \\
\lesssim& \Big\|  \Big( A^{+,i}_{M,m} \SecondC{a}{ij}(\phi) \partial_v \big( A_{N,n}^{-,j} \phi^{-,n}_N \big) \Big) \parallsigu \phi^{+,m}_M \Big\|_{\Cprod{r-1}{1-r^\prime}} \\
\lesssim& \big\| A^{+,i}_{M,m} \SecondC{a}{ij}(\phi) \big\|_{\Cprod{s}{s}} \big\|  \partial_v \big( A_{N,n}^{-,j} \phi^{-,n}_N \big) \big\|_{\Cprod{s}{1-r^\prime}} \big\|  \phi^{+,m}_M  \big\|_{\C_u^{r-1}} \\
\lesssim& N^{2-r^\prime-s} M^{r-1-s} \theta^4. 
\end{align*}
Since $N\leq M^{1-\delta}$, we have that 
\begin{align*}
    N^{2-r^\prime-s} M^{r-1-s} \lesssim M^{(1-\delta)(2-r^\prime-s)+r-1-s} \lesssim M^{1-2s+\eta-\delta/6}.
\end{align*}
Since $\delta \gg \eta \gg 1-2s$, this is acceptable.

\emph{Case 2: Estimate for \eqref{nonlinear:eq-Fp-2}.} We first decompose
\begin{align}
&P_{\leq M^{1-\sigma}}^u \big( A_{M,m}^{+,i} \SecondC{a}{ij}(\phi)  \partial_v \zeta^j \big) \notag \\
=&P_{\leq M^{1-\sigma}}^u \big( A_{M,m}^{+,i} \SecondC{a}{ij}(\phi) P_{\lesssim M^{1-\sigma}}^u \partial_v \zeta^j \big)  
+P_{\leq M^{1-\sigma}}^u \big( A_{M,m}^{+,i} \SecondC{a}{ij}(\phi) P_{\gg M^{1-\sigma}}^u \partial_v \zeta^j \big)   \notag \\
=&P_{\leq M^{1-\sigma}}^u \big( A_{M,m}^{+,i} \SecondC{a}{ij}(\phi) P_{\lesssim M^{1-\sigma}}^u \partial_v \zeta^j \big)  
+P_{\lesssim M^{1-\sigma}}^u \big( [ P_{\leq M^{1-\sigma}}^u , A_{M,m}^{+,i} \SecondC{a}{ij}(\phi) ] P_{\gg M^{1-\sigma}}^u \partial_v \zeta^j \big) 
\label{duhamel:eq-F-p3}
\end{align}
We estimate the contributions of the first and second summand in  \eqref{duhamel:eq-F-p3} separately. \newline

We start with the first summand in \eqref{duhamel:eq-F-p3}. In order to prove \eqref{duhamel:eq-F-1} and \eqref{duhamel:eq-F-2} simultaneously, we let $\gamma \in \{ 1-r^\prime,s \}$. Then, it holds that 
\begin{equation}\label{duhamel:eq-F-p4}
\begin{aligned}
&\big\| P_{\leq M^{1-\sigma}}^u \big( A_{M,m}^{+,i} \SecondC{a}{ij}(\phi) P_{\lesssim M^{1-\sigma}}^u \partial_v \zeta^j \big) \big\|_{\Cprod{\gamma}{r-1}} \\
\lesssim& \big\|  A_{M,m}^{+,i} \SecondC{a}{ij}(\phi) P_{\lesssim M^{1-\sigma}}^u \partial_v \zeta^j \big\|_{\Cprod{\gamma}{r-1}} \\
\lesssim& \big\| A_{M,m}^{+,i} \big\|_{\Cprod{s}{s}} 
\big\| \SecondC{a}{ij}(\phi) \big\|_{\Cprod{s}{s}} 
\big\| P_{\lesssim M^{1-\sigma}}^u \partial_v \zeta \big\|_{\Cprod{\gamma}{r-1}}.
\end{aligned}
\end{equation}
By inserting the types $\tp$, $\tpm$, and $\ts$, it is easy to see that 
\begin{equation*}
\big\| P_{\lesssim M^{1-\sigma}}^u \partial_v \zeta \big\|_{\Cprod{1-r^\prime}{r-1}}
\lesssim M^{\delta} \Gain(\zeta) \theta \quad \text{and} \quad 
\big\| P_{\lesssim M^{1-\sigma}}^u \partial_v \zeta \big\|_{\Cprod{s}{r-1}}
\lesssim M^{r-s+2\eta} \Gain(\zeta) \theta.
\end{equation*}
Inserting this into \eqref{duhamel:eq-F-p4}, we obtain the two inequalities
\begin{align}
\big\| P_{\leq M^{1-\sigma}}^u \big( A_{M,m}^{+,i} \SecondC{a}{ij}(\phi) P_{\lesssim M^{1-\sigma}}^u \partial_v \zeta^j \big) \big\|_{\Cprod{1-r^\prime}{r-1}} 
&\lesssim M^{\delta} \Gain(\zeta) \theta^2, \label{duhamel:eq-F-p5} \\
\big\| P_{\leq M^{1-\sigma}}^u \big( A_{M,m}^{+,i} \SecondC{a}{ij}(\phi) P_{\lesssim M^{1-\sigma}}^u \partial_v \zeta^j \big) \big\|_{\Cprod{s}{r-1}} 
&\lesssim M^{r-s+2\delta} \Gain(\zeta) \theta^2 .\label{duhamel:eq-F-p6}
\end{align}
Due to \eqref{duhamel:eq-F-p6}, we see that the contribution to \eqref{duhamel:eq-F-2} is acceptable. Using the trace estimate (Lemma \ref{prep:lemma-trace}) and \eqref{duhamel:eq-F-p5}, it also holds that
\begin{equation}\label{duhamel:eq-F-p7}
\begin{aligned}
&\Big\| \Tr \Big( P_{\leq M^{1-\sigma}}^u \big( A_{M,m}^{+,i} \SecondC{a}{ij}(\phi) P_{\lesssim M^{1-\sigma}}^u \partial_v \zeta^j \big) \phi^{+,m}_M \Big)(x) \Big\|_{\C_x^{r-1}} \\
\lesssim& \big\| P_{\leq M^{1-\sigma}}^u \big( A_{M,m}^{+,i} \SecondC{a}{ij}(\phi) P_{\lesssim M^{1-\sigma}}^u \partial_v \zeta^j \big) \phi^{+,m}_M \big\|_{\Cprod{1-r^\prime}{r-1}} \\
\lesssim& \big\|  P_{\leq M^{1-\sigma}}^u \big( A_{M,m}^{+,i} \SecondC{a}{ij}(\phi) P_{\lesssim M^{1-\sigma}}^u \partial_v \zeta^j \big) \big\|_{\Cprod{1-r^\prime}{r-1}} 
\big\| \phi^{+,m}_M \big\|_{\C_u^{1-r^\prime}} \\
\lesssim& M^{\delta+1-r^\prime-s} \theta^2.
\end{aligned}
\end{equation}
Since $\delta+1-r^\prime-s\approx -1/4$, this is acceptable.\newline

We now turn to the second summand in \eqref{duhamel:eq-F-p3}. Using the commutator estimate (Lemma \ref{prep:lem-commutator}), we have that 
\begin{align*}
&\Big\| P_{\lesssim M^{1-\sigma}}^u \big( [ P_{\leq M^{1-\sigma}}^u , A_{M,m}^{+,i} \SecondC{a}{ij}(\phi) ] P_{\gg M^{1-\sigma}}^u \partial_v \zeta^j \big) \Big\|_{\Cprod{s}{r-1}} \\
\lesssim& M^{(1-\sigma)(s-\eta)} \Big\|  [ P_{\leq M^{1-\sigma}}^u , A_{M,m}^{+,i} \SecondC{a}{ij}(\phi) ] P_{\gg M^{1-\sigma}}^u \partial_v \zeta^j  \Big\|_{\Cprod{\eta}{r-1}} \\
\lesssim& M^{(1-\sigma)(s-\eta)} M^{(1-\sigma)(\eta-s)} M^{-2(1-\sigma)\eta} \big\| A_{M,m}^{+,i} \big\|_{\Cprod{s}{s}} 
\big\| \SecondC{a}{ij}(\phi) \big\|_{\Cprod{s}{s}} 
\big\| \partial_v \zeta^j  \big\|_{\Cprod{3\eta}{s}} \\
\lesssim& M^{-\eta} \theta \big\| \partial_v \zeta^j  \big\|_{\Cprod{3\eta}{s}} .
\end{align*}
By inserting the types $\tp$, $\tpm$, and $\ts$ into $\zeta$, it is easy to see that 
\begin{equation*}
    \big\| \partial_v \zeta^j  \big\|_{\Cprod{3\eta}{s}} \lesssim \Gain(\zeta) \theta.
\end{equation*}
Thus, this yields an acceptable contribution to \eqref{duhamel:eq-F-1}. The contribution to \eqref{duhamel:eq-F-2} can be controlled using the trace estimate (Lemma \ref{prep:lemma-trace}) similar as in \eqref{duhamel:eq-F-p7}.

\emph{Case 3: Estimate for \eqref{nonlinear:eq-Fp-3}.} To prove \eqref{duhamel:eq-F-1}, we use Proposition \ref{prep:prop-bilinear}.\ref{prep:item-res}, which yields 
\begin{equation}\label{duhamel:eq-F-p2}
\begin{aligned}
&\big\| \big( \SecondC{a}{ij}(\phi) A_{M,m}^{+,i} A_{N,n}^{-,j} \big) \parasimv \partial_v \phi_N^{-,n} \big\|_{\Cprod{s}{\eta}} \\
\lesssim& \big\|  \big( \SecondC{a}{ij}(\phi) A_{M,m}^{+,i} A_{N,n}^{-,j}  \big) \big\|_{\Cprod{s}{s}} \big\| \partial_v \phi_N^{-,n} \big\|_{\Cprod{s}{\eta-s}} \\
\lesssim& N^{1-2s+\eta} 
\big\| \SecondC{a}{ij}(\phi) \big\|_{\Cprod{s}{s}} 
\big\| A_{M,m}^{+,i} \big\|_{\Cprod{s}{s}} 
\big\| A_{N,n}^{-,j}  \big\|_{\Cprod{s}{s}} 
\big\| \phi^{-,n}_N \big\|_{\C_v^s} \\
\lesssim& N^{1-2s+\eta} \theta^{3}.
\end{aligned}
\end{equation}
Since $N\lesssim M^{1-\delta}$ and $1-2s+\eta \ll r-s$, this yields an acceptable contribution to \eqref{duhamel:eq-F-1}. For \eqref{duhamel:eq-F-2}, we note that 
\begin{align*}
&\Big\| \Tr \Big(  \Big( \big( \SecondC{a}{ij}(\phi) A_{M,m}^{+,i} A_{N,n}^{-,j} \big) \parasimv \partial_v \phi_N^{-,n} \Big) \parallsigu \phi^{+,m}_M \Big)(x) \Big\|_{\C_x^{r-1}} \\
\lesssim& \Big\|  \Big( \big( \SecondC{a}{ij}(\phi) A_{M,m}^{+,i} A_{N,n}^{-,j} \big) \parasimv \partial_v \phi_N^{-,n} \Big) \parallsigu \phi^{+,m}_M \Big\|_{\Cprod{\eta}{\eta}} \\
\lesssim& M^{\eta-s} \Big\| \big( \SecondC{a}{ij}(\phi) A_{M,m}^{+,i} A_{N,n}^{-,j} \big) \parasimv \partial_v \phi_N^{-,n} \Big\|_{\Cprod{\eta}{\eta}} \big\| \phi^{+,m}_M \big\|_{\C_u^s}.
\end{align*}
By reusing \eqref{duhamel:eq-F-p2}, this yields an acceptable contribution to \eqref{duhamel:eq-F-2}. \newline

\emph{Case 4: Estimate for \eqref{nonlinear:eq-Fp-4}.} Using the bilinear estimate (Proposition \ref{prep:prop-bilinear}) and Lemma \ref{key:lemma-unfortunate}, we have that 
\begin{align*}
&\big\| \SecondC{a}{ij}(\phi) B_{M,N,mn}^i ( \phi_{N}^{-,n} \paradownv \partial_v \psi^j ) \big\|_{\Cprod{s}{s}} \\
\lesssim& \big\| \SecondC{a}{ij}(\phi) \big\|_{\Cprod{s}{s}} 
\big\| B_{M,N,mn}^i \big\|_{\Cprod{s}{s}} 
\big\| \phi_{N}^{-,n} \paradownv \partial_v \psi^j \big\|_{\Cprod{s}{s}} \\
\lesssim& N^{\sigma (s+1-r)}\big\| \SecondC{a}{ij}(\phi) \big\|_{\Cprod{s}{s}} 
\big\| B_{M,N,mn}^i \big\|_{\Cprod{s}{s}} 
\big\| \phi_{N}^{-,n} \paradownv \partial_v \psi^j \big\|_{\Cprod{s}{r-1}} \\
\lesssim& N^{\sigma (s+1-r)} N^{1-s-r} \theta^3.
\end{align*}
Since $\sigma (s+1-r)+1-s-r\approx -1/4$, this yields an acceptable contribution to \eqref{duhamel:eq-F-1}. The contribution to \eqref{duhamel:eq-F-2} can then be estimated as in Case 3. 
\end{proof}

Equipped with Lemma \ref{duhamel:lem-structured-I} and Lemma \ref{duhamel:lem-F}, we are now ready to prove Lemma \ref{duhamel:lem-low-high}.

\begin{proof}[Proof of Lemma \ref{duhamel:lem-low-high}]
Using Lemma \ref{duhamel:lem-structured-I}, it holds that 
\begin{align}
& \chi^+ \chi^- \Duh\big[ F_{M,m}^{+,a} \parallsigu \partial_u \phi^{+,m}_M \big] - P_{\leq M^{1-\sigma}}^u \Big( \chi^+ \chi^- \int_u^v \dv^\prime P_{\leq M^{1-\sigma}}^u F_{M,m}^{+,a}(u,v^\prime) \Big) \phi^{+,m}_M \notag \\
=& - \chi^+ \chi^- \I \big[ \Tr \big( P_{\leq M^{1-\sigma}}^u F_{M,m}^{+,a} \, \phi^{+,m}_M \big) \big]\Big|_{x=u}^v \label{duhamel:eq-low-high-p1}\\
+& \chi^+ \chi^- \Duh\big[ \partial_u \big( P_{\leq M^{1-\sigma}}^u F_{M,m}^{+,a}\big) \, \phi^{+,m}_M \big] \label{duhamel:eq-low-high-p2} \\
+& P_{> M^{1-\sigma}}^u \Big( \chi^+ \chi^- \int_u^v \dv^\prime P_{\leq M^{1-\sigma}}^u F_{M,m}^{+,a}(u,v^\prime) \Big) \phi^{+,m}_M \label{duhamel:eq-low-high-p3}.
\end{align}
We first estimate \eqref{duhamel:eq-low-high-p1}. To this end, we let $\widetilde{\chi}$ be a fattened version of $\chi$. Using Lemma \ref{prep:lemma-single-integral} and Lemma \ref{duhamel:lem-F}, we have that 
\begin{align*}
\Big\|  \chi^+ \chi^- \I \big[ \Tr \big( P_{\leq M^{1-\sigma}}^u F_{M,m}^{+,a} \, \phi^{+,m}_M \big) \big]\Big|_{x=u}^v \Big\|_{\Cprod{r}{r}} 
\lesssim& \big\| \I \big[ \widetilde{\chi} \Tr \big( P_{\leq M^{1-\sigma}}^u F_{M,m}^{+,a} \, \phi^{+,m}_M \big) \big] \big\|_{\C_x^r} \\
\lesssim& \big\| \Tr \big( P_{\leq M^{1-\sigma}}^u F_{M,m}^{+,a} \, \phi^{+,m}_M \big) \big\|_{\C_x^{r-1}} \\
\lesssim& M^{-\eta} \theta^3. 
\end{align*}
We now turn to \eqref{duhamel:eq-low-high-p2}. Using Proposition \ref{prep:prop-duhamel}, Proposition \ref{prep:prop-bilinear}.\ref{prep:item-nonres}, and Lemma \ref{duhamel:lem-F}, we have that
\begin{align*}
\Big\| \chi^+ \chi^- \Duh\big[ \partial_u \big( P_{\leq M^{1-\sigma}}^u F_{M,m}^{+,a}\big) \, \phi^{+,m}_M \big] \Big\|_{\Cprod{r}{r}} 
\lesssim& \big\| \partial_u \big( P_{\leq M^{1-\sigma}}^u F_{M,m}^{+,a}\big) \, \phi^{+,m}_M \big\|_{\Cprod{r-1}{r-1}} \\
\lesssim& \big\| \partial_u \big( P_{\leq M^{1-\sigma}}^u F_{M,m}^{+,a}\big)\big\|_{\Cprod{\eta}{r-1}} \big\| \phi^{+,m}_M \big\|_{\C_u^{r-1}} \\
\lesssim& M^{1+\eta-s} M^{r-1-s} \big\|  P_{\leq M^{1-\sigma}}^u F_{M,m}^{+,a} \big\|_{\Cprod{s}{r-1}} \big\| \phi^{+,m}_M \big\|_{\C_u^{s}} \\
\lesssim& M^{2r-3s+3\eta} \theta^3.
\end{align*}
Since $3s-2r\gg \eta$, this is acceptable. It remains to estimate \eqref{duhamel:eq-low-high-p3}. Using Proposition \ref{prep:prop-bilinear}.\ref{prep:item-low-high}, we have that 
\begin{align*}
&\Big\| P_{> M^{1-\sigma}}^u \Big( \chi^+ \chi^- \int_u^v \dv^\prime P_{\leq M^{1-\sigma}}^u F_{M,m}^{+,a}(u,v^\prime) \Big) \phi^{+,m}_M \Big\|_{\Cprod{r}{r}} \\
\lesssim& \, \Big\| \chi^+ \chi^- \int_u^v \dv^\prime P_{\leq M^{1-\sigma}}^u F_{M,m}^{+,a}(u,v^\prime)  \Big\|_{\Cprod{r}{r}} \big\| \phi^{+,m}_{M} \big\|_{\C_u^{\sigma r}} \\
\lesssim& \, M^{\sigma r -s} \theta \Big\| \chi^+ \chi^- \int_u^v \dv^\prime P_{\leq M^{1-\sigma}}^u F_{M,m}^{+,a}(u,v^\prime)  \Big\|_{\Cprod{r}{r}}. 
\end{align*}
Using the trace and integral estimates (Lemma \ref{prep:lemma-trace} and Lemma \ref{prep:lemma-partial-integral}) as well as Lemma \ref{duhamel:lem-F}, we have that 
\begin{align*}
M^{\sigma r -s} \theta \Big\| \chi^+ \chi^- \int_u^v \dv^\prime P_{\leq M^{1-\sigma}}^u F_{M,m}^{+,a}(u,v^\prime)  \Big\|_{\Cprod{r}{r}} 
&\lesssim  M^{\sigma r -s} \theta \big\|  P_{\leq M^{1-\sigma}}^u F_{M,m}^{+,a} \big\|_{\Cprod{r}{r-1}} \\
&\lesssim   M^{\sigma r -s} M^{(1-\sigma)(r-s)} M^{r-s} \theta^3. 
\end{align*}
Since $3s-2r\gg \sigma$, this is acceptable.
\end{proof}

\subsection{\protect{\texorpdfstring{The high$\times$high-term}{The high x high-term}}}

We now analyze the contribution of $\G_{M,N,mn}^a \partial_u \phi^{+,m}_M \partial_v \phi^{-,n}_N$ to the Duhamel estimate. The following lemma constitutes the main estimate of this subsection. 

\begin{lemma}[Duhamel integral approximation of the high$\times$high-term]\label{duhamel:lem-high-high}
Let Hypothesis \ref{hypothesis:smallness} be satisfied, let $1\leq a\leq \dimA$, and let $M,N$ be frequency-scales satisfying $M\sim_\delta N$. Then, it holds that 
\begin{equation}
\big\|\chi^+ \chi^- \Duh\big[ G_{M,N,mn}^a \partial_u \phi^{+,m}_M \, \partial_v \phi^{-,n}_N \big]
- G_{\chi,M,N,mn}^a \phi^{+,m}_M \,  \phi^{-,n}_N \big\|_{\Cprod{r}{r}} \lesssim (MN)^{-\eta} \theta^3. 
\end{equation}
\end{lemma}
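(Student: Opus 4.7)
The plan is to carry out an integration-by-parts argument on the Duhamel integral, extracting the main product $G \phi^+_M \phi^-_N$ and comparing it to $G_\chi \phi^+_M \phi^-_N$ via a commutator. Concretely, I would apply Lemma~\ref{duhamel:lem-structured-I} with $\zeta = G \partial_v \phi^{-,n}_N$ (noting that $\partial_u \zeta = (\partial_u G) \partial_v \phi^{-,n}_N$ since $\phi^{-,n}_N$ depends only on $v$), followed by a further integration by parts in $v'$ on the inner integral $\int_u^v G(u,v')\partial_{v'}\phi^{-,n}_N(v')\,\mathrm{d}v'$. This yields the representation
\begin{equation*}
\Duh\bigl[G\,\partial_u \phi^{+,m}_M\,\partial_v \phi^{-,n}_N\bigr]
= G\,\phi^{+,m}_M\,\phi^{-,n}_N - G(u,u)\,\phi^{+,m}_M(u)\,\phi^{-,n}_N(u) - \phi^{+,m}_M(u)\!\int_u^v\!\partial_{v'} G(u,v')\phi^{-,n}_N(v')\,\mathrm{d}v' - \I\bigl[\Tr(G\,\phi^{+,m}_M\,\partial_v\phi^{-,n}_N)\bigr]\big|_u^v - \Duh\bigl[\partial_u G\,\phi^{+,m}_M\,\partial_v\phi^{-,n}_N\bigr],
\end{equation*}
suppressing the indices $a,i,j$ for clarity.

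Multiplying by $\chi^+\chi^-$, the leading term $\chi^+\chi^- G\,\phi^{+,m}_M\,\phi^{-,n}_N$ is compared to $G_{\chi,M,N,mn}^a \phi^{+,m}_M \phi^{-,n}_N$ via the commutator $[\chi^+\chi^-,P^u_{\leq M^{1-\sigma}}P^v_{\leq N^{1-\sigma}}]$ applied to $\Second^a_{ij}(\phi)A^{+,i}_{M,m}A^{-,j}_{N,n}$. Splitting this as $\chi^+ P^u[\chi^-,P^v] + [\chi^+,P^u]\chi^- P^v$ (using that $\chi^\pm$ commutes with the projection in the \emph{other} variable) and invoking Lemma~\ref{prep:lem-commutator} yields a gain of $M^{\sigma-1}+N^{\sigma-1}$ in $\Cprod{s}{s}$, which combines with the at-worst $M^{r-s}N^{r-s}$ loss from the high-frequency factors $\phi^{+,m}_M\phi^{-,n}_N$ to produce a $(MN)^{-\eta}$-small contribution. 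The diagonal boundary $\chi^+\chi^- G(u,u)\phi^{+,m}_M(u)\phi^{-,n}_N(u)$ depends only on $u$ (up to the smooth factor $\chi^-(v)$), so its $\Cprod{r}{r}$-norm reduces to a $\C^r_u$-bound; by Lemma~\ref{prep:lemma-trace} one has $\Tr G \in \C^s_u$ with norm $\lesssim \theta^2$, and a bilinear Hölder estimate gives $\|\phi^{+,m}_M\phi^{-,n}_N\|_{\C^r_u}\lesssim N^{r-2s}\theta^2$, which is small since $r-2s\approx -\tfrac14$. The trace-integral term $\chi^+\chi^- \I[\Tr(G\phi^{+,m}_M\partial_v\phi^{-,n}_N)]\big|_u^v$ is handled similarly: the trace is a one-variable function whose $\C^{r-1}$-norm benefits from the high-high cancellation encoded in the $\Ds$-norm (Lemma~\ref{ansatz:lemma-data}), and the primitive $\I$ gains one derivative by Lemma~\ref{prep:lemma-single-integral}, yielding again a $N^{r-2s}\theta^4$-type bound.

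The main obstacle lies in the two remaining pieces, $\chi^+\chi^-\,\phi^{+,m}_M(u)\!\int_u^v\!\partial_{v'}G\,\phi^{-,n}_N\,\mathrm{d}v'$ and $\chi^+\chi^-\Duh[\partial_u G\,\phi^{+,m}_M\,\partial_v\phi^{-,n}_N]$, both of which carry a favorable derivative on $G$ but are not small under direct estimation. To handle them I would iterate the integration-by-parts procedure, transferring the remaining derivative on $\phi^\pm$ onto $G$ as well, producing boundary/trace corrections (which are estimated as above) plus a clean core of the form $\chi^+\chi^-\Duh[\partial_u\partial_v G\cdot \phi^{+,m}_M \phi^{-,n}_N]$. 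Since $\partial_u\partial_v G$ inherits $u$-frequency $\leq M^{1-\sigma}$ and $v$-frequency $\leq N^{1-\sigma}$ from $G$, one may then apply the improved low-high bilinear estimate of Proposition~\ref{prep:prop-bilinear}.\ref{prep:item-low-high} in each variable (with $\partial G$ as the low-frequency factor and $\phi^{\pm}_{M/N}$ as the high-frequency one), combined with Proposition~\ref{prep:prop-duhamel} for the final Duhamel gain. The tedious but conceptually routine point is the bookkeeping of powers of $M,N$ in each IBP step under $M\sim_\delta N$, verifying that the twin low-frequency restrictions together with $3s-2r\gg\sigma$ render the accumulated exponent negative — this is where the choice of parameters in~\eqref{prep:eq-parameter-1} and~\eqref{prep:eq-parameter-sigma} is essential, and where one occasionally invokes the non-resonant improvement Proposition~\ref{prep:prop-bilinear}.\ref{prep:item-nonres} to close a borderline case.
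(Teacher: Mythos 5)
Your representation step reproduces the paper's (Lemma~\ref{duhamel:lem-structured-II} is essentially your Lemma~\ref{duhamel:lem-structured-I} applied with $\zeta = G\,\partial_v\phi^{-,n}_N$ followed by the extra integration by parts in $v$), and your treatment of the commutator, diagonal boundary, and trace-integral terms is broadly correct. The gap is in your assertion that the two pieces carrying a favorable derivative on $G$, namely $\chi^+\chi^-\,\phi^{+,m}_M(u)\,\I_v[\partial_v G\,\phi^{-,n}_N]\big|_{u}^v$ and $\chi^+\chi^-\Duh[\partial_u G\,\phi^{+,m}_M\,\partial_v\phi^{-,n}_N]$, are ``not small under direct estimation.'' They are. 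The mechanism you seem to be missing is that once a derivative lands on $G$, both factors are frequency-localized, so one can pass from $\Cprod{r-1}{r-1}$ (or $\Cprod{r}{r}$) directly to $L^\infty_{u,v}$, paying only $M^{r-1}N^{r-1}$ (respectively $\max(M,N)^{2r}$), and then multiply through $L^\infty$ bounds. For instance,
\begin{equation*}
\bigl\|\Duh[\partial_u G\,\phi^{+,m}_M\,\partial_v\phi^{-,n}_N]\bigr\|_{\Cprod{r}{r}}
\lesssim (MN)^{r-1}\,\|\partial_u G\|_{L^\infty}\,\|\phi^{+,m}_M\|_{L^\infty}\,\|\partial_v\phi^{-,n}_N\|_{L^\infty}
\lesssim M^{r-2s}N^{r-s}\theta^3,
\end{equation*}
and the exponent $2r-3s\approx -1/4$ beats the $O(\delta)$ correction from $M\sim_\delta N$ with ample room for the $(MN)^{-\eta}$ conclusion. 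The $\I_v$ piece works the same way, after gaining a full $N^{-1}$ from the integral because the integrand is localized at $v$-frequencies $\lesssim N$. So no iteration of the integration by parts is needed.

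Beyond being an unnecessary detour, the detour you propose has a weakness at the very step you cite to close it. If you iterate to $\Duh[\partial_u\partial_v G\cdot\phi^{+,m}_M\phi^{-,n}_N]$ and try to close with the low-high improvement of Proposition~\ref{prep:prop-bilinear}.\ref{prep:item-low-high}, the natural regularity assignment for the derivative-bearing factor is $\alpha_1=s-1<0$ and for the high-frequency factor $\beta_1=s>0$, and the hypothesis of that item requires $\alpha_1+\beta_1>0$, i.e.\ $2s-1>0$, which fails for $s<1/2$. So that estimate does not apply as stated; you would again be pushed back to the frequency-localized $L^\infty$ estimate, which you could have used at the previous step. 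The extra integration by parts also spawns additional boundary/trace terms that you only wave at, so the purported simplification is illusory. The correct takeaway: in this lemma, after one integration by parts, the frequency-support conditions \eqref{ansatz:eq-condition-pm} on $G$ together with the parameter hierarchy $3s-2r\gg\sigma\geq\delta\gg\eta$ are strong enough that crude $L^\infty$ estimation of every term closes immediately.
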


Here, $G_{\chi,M,N,mn}^a$ is as in \eqref{ansatz:eq-G-chi}. Before we proceed with the proof of Lemma \ref{duhamel:lem-high-high}, we prove the following algebraic lemma.

\begin{lemma}\label{duhamel:lem-structured-II}
Let $M,N$ be frequency-scales satisfying $M\sim_\delta N$. Then, it holds that 
\begin{align}
&\chi^+ \chi^- \Duh\big[ \G_{M,N,mn}^a \partial_u \phi^{+,m}_M \, \partial_v \phi^{-,n}_N \big]
- G_{\chi,M,N,mn}^a \phi^{+,m}_M \,  \phi^{-,n}_N  \notag\\
=& (\chi^+ \chi^- G_{M,N,mn}^a- G_{\chi,M,N,mn}^a) \phi^{+,m}_M \,  \phi^{-,n}_N \label{duhamel:eq-hh-G-comm} \\
-& \chi^+ \chi^- \bigg[ \Tr_u \Big( \G_{M,N,mn}^a \phi^{+,m}_M \,  \phi^{-,n}_N  \Big) \label{duhamel:eq-hh-1}\\
+& \I_v \Big[ \partial_v G_{M,N,mn}^{a} \phi^{-,n}_N \Big](u,x) \Big|_{x=u}^v \, \phi^{+,m}_M(u)\label{duhamel:eq-hh-2} \\
+& \I \Big[ \Tr \Big( \G_{M,N,mn}^a \phi^{+,m}_M \partial_v \phi^{-,n}_N \Big) \Big](x) \Big|_{x=u}^v \label{duhamel:eq-hh-3}\\
+& \Duh \Big[ \partial_u \G_{M,N,mn}^a \phi^{+,m}_M \,  \partial_v \phi^{-,n}_N \Big] \bigg]. \label{duhamel:eq-hh-4}
\end{align}
\end{lemma}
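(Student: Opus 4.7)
The statement is a purely algebraic identity and my plan is to prove it by applying the fundamental theorem of calculus twice, once in each null variable, in the spirit of Lemma \ref{duhamel:lem-structured-I}. No regularity estimates enter: only Proposition \ref{prep:prop-duhamel} (for the representation of $\Duh$ as an iterated integral) and the definitions of $\I$, $\I_v$, $\Tr$, and $\Tr_u$ are used.

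First, I would write, using the second representation in \eqref{prep:eq-duhamel-1},
\begin{equation*}
 \Duh\big[G \,\partial_u \phi^{+,m}_M \,\partial_v \phi^{-,n}_N\big](u,v) = -\int_u^v \mathrm{d}v' \int_u^{v'} \mathrm{d}u' \, G(u',v') \, \partial_{u'}\phi^{+,m}_M(u') \, \partial_{v'}\phi^{-,n}_N(v'),
\end{equation*}
where I abbreviate $G = G^a_{M,N,mn}$. Then I would integrate by parts in the inner $u'$-integral. The boundary term at $u'=v'$ produces $G(v',v')\phi^{+,m}_M(v')\partial_{v'}\phi^{-,n}_N(v')$, which after the outer $v'$-integration is exactly $\I\big[\Tr(G\,\phi^{+,m}_M\,\partial_v \phi^{-,n}_N)\big](x)\big|_{x=u}^v$. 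The boundary term at $u'=u$ produces $-G(u,v')\phi^{+,m}_M(u)\partial_{v'}\phi^{-,n}_N(v')$, which pulls $\phi^{+,m}_M(u)$ outside the $v'$-integral. The remaining ``interior'' term, after outer integration in $v'$, is precisely $-\Duh[\partial_u G\, \phi^{+,m}_M\, \partial_v \phi^{-,n}_N]$.

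Second, I would treat the surviving boundary term $\phi^{+,m}_M(u)\int_u^v \mathrm{d}v' \, G(u,v') \partial_{v'}\phi^{-,n}_N(v')$ by integrating by parts in $v'$. This yields the main term $G(u,v)\phi^{+,m}_M(u)\phi^{-,n}_N(v)$ at $v'=v$, the term $-G(u,u)\phi^{+,m}_M(u)\phi^{-,n}_N(u) = -\Tr_u(G\,\phi^{+,m}_M\,\phi^{-,n}_N)(u,v)$ at $v'=u$, and the interior term $-\I_v[\partial_v G \,\phi^{-,n}_N](u,x)\big|_{x=u}^v \cdot \phi^{+,m}_M(u)$.

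Collecting all four contributions and multiplying through by $\chi^+\chi^-$ gives
\begin{align*}
 \chi^+\chi^- \Duh\big[G\,\partial_u \phi^{+,m}_M\,\partial_v \phi^{-,n}_N\big]
 &= \chi^+\chi^- G \,\phi^{+,m}_M\,\phi^{-,n}_N
 - \chi^+\chi^- \Tr_u(G\,\phi^{+,m}_M\,\phi^{-,n}_N) \\
 &\quad - \chi^+\chi^- \I_v[\partial_v G\,\phi^{-,n}_N](u,x)\Big|_{x=u}^v \phi^{+,m}_M(u) \\
 &\quad - \chi^+\chi^- \I\big[\Tr(G\,\phi^{+,m}_M\,\partial_v \phi^{-,n}_N)\big](x)\Big|_{x=u}^v \\
 &\quad - \chi^+\chi^- \Duh\big[\partial_u G\,\phi^{+,m}_M\,\partial_v \phi^{-,n}_N\big].
\end{align*}
Subtracting $G_{\chi,M,N,mn}^a \,\phi^{+,m}_M \,\phi^{-,n}_N$ from both sides and rewriting the first term on the right as $(\chi^+\chi^- G - G_\chi)\phi^{+,m}_M \phi^{-,n}_N + G_\chi\phi^{+,m}_M\phi^{-,n}_N$ then absorbs the cancellation and yields exactly the decomposition \eqref{duhamel:eq-hh-G-comm}--\eqref{duhamel:eq-hh-4}.

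There is no genuine obstacle: the entire argument is bookkeeping of boundary terms from two integrations by parts, with the only delicate point being the consistent use of sign conventions and the correct interpretation of the evaluation notation $\I_v[\cdots](u,x)\big|_{x=u}^v$. I would pay particular attention to matching the definitions in the statement of the lemma, especially that $\Tr_u f(u,v) = f(u,u)$ (not $f(v,v)$) so that the $v'=u$ boundary term from the second integration by parts is indeed $\Tr_u$ of the product, while the single-variable $\Tr$ (from the first integration by parts, after evaluating at $u'=v'$ and integrating in $v'$) is the standard diagonal trace.
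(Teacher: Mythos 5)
Your proof is correct and takes essentially the same route as the paper: two successive integrations by parts, first in $u'$ and then in $v'$, to peel off the $\Tr$/$\Tr_u$ boundary terms, the $\I$/$\I_v$ endpoint terms, and the residual $\Duh[\partial_u G\,\cdots]$ term. The only difference is cosmetic — the paper applies the already-established Lemma~\ref{duhamel:lem-structured-I} (with $\zeta = G^a_{M,N,mn}\partial_v\phi^{-,n}_N$) for the first integration by parts and then performs only the $v'$-integration by parts explicitly, whereas you redo the $u'$-step from scratch.
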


\begin{proof}[Proof of Lemma \ref{duhamel:lem-structured-II}]
 After applying Lemma \ref{duhamel:lem-structured-I} with $\zeta=\G_{M,N,mn}^a \partial_v \phi^{-,n}_N$, it only remains to show that 
 \begin{align*}
&\Big( \int_u^v \dv^\prime \G_{M,N,mn}^a(u,v^\prime) \partial_v \phi^{-,n}_N(v^\prime) \Big) \phi^{+,m}_M(u) \\
=&  \G_{M,N,mn}^a(u,v) \phi^{+,m}_M(u)\,  \phi^{-,n}_N(v) 
-\Tr_u \Big( \G_{M,N,mn}^a \phi^{+,m}_M \,  \phi^{-,n}_N  \Big)(u,v) \\ 
-& \I_v \Big[ \partial_v G_{M,N,mn}^{a} \phi^{-,n}_N \Big](u,x) \Big|_{x=u}^v \, \phi^{+,m}_M(u).
 \end{align*}
 This follows directly from an integration by parts. 
\end{proof}

\begin{proof}[Proof of Lemma \ref{duhamel:lem-high-high}]
 Throughout the proof, we let $\widetilde{\chi}, \widetilde{\chi}^+$, and $\widetilde{\chi}^-$ be fattened versions of $\chi$, $\chi^+$, and $\chi^-$, respectively. Using Lemma \ref{duhamel:lem-structured-II}, it remains to estimate 
 \eqref{duhamel:eq-hh-G-comm}, \eqref{duhamel:eq-hh-1}, \eqref{duhamel:eq-hh-2}, \eqref{duhamel:eq-hh-3}, and \eqref{duhamel:eq-hh-4} in $\Cprod{r}{r}$. \newline

 \emph{Estimate of \eqref{duhamel:eq-hh-G-comm}:}  From the definition of $\G^a_{\chi,M,N,mn}$, it follows that 
 \begin{align*}
    &(\chi^+ \chi^- G_{M,N,mn}^a- G_{\chi,M,N,mn}^a) \phi^{+,m}_M \,  \phi^{-,n}_N \\
    &= [ \chi^+ \chi^-, P_{\leq M^{1-\sigma}}^u P_{\leq N^{1-\sigma}}^v ] \Big(  \Second^a_{ij}(\phi) A^{+,i}_{M,m} A^{+,j}_{N,n} \Big)  \phi^{+,m}_M \,  \phi^{-,n}_N. 
 \end{align*}
 Using $M\sim_\delta N$, this term can easily be controlled using Lemma \ref{prep:lem-commutator}. \newline

 \emph{Estimate of \eqref{duhamel:eq-hh-1}:} We have that 
 \begin{align*}
  \Big\| \chi^+ \chi^- \Tr_u \Big( \G_{M,N,mn}^a \phi^{+,m}_M \,  \phi^{-,n}_N  \Big) \Big\|_{\Cprod{r}{r}} 
   &\lesssim \max(M,N)^r \big\| \G_{M,N,mn}^a \phi^{+,m}_M \,  \phi^{-,n}_N \big\|_{L^\infty_{u,v}} \\
  &\lesssim \max(M,N)^r M^{-s} N^{-s}  \theta^3.
 \end{align*}
 Since $M\sim_\delta N$ and $r-2s\approx -1/4$, this is acceptable. \newline
 
 \emph{Estimate of \eqref{duhamel:eq-hh-2}:} Using the gain of a derivative through the integral, we have that  
 \begin{align*}
  &\Big\|  \chi^+ \chi^- \I_v \Big[ \partial_v G_{M,N,mn}^{a} \phi^{-,n}_N \Big](u,x) \Big|_{x=u}^v \, \phi^{+,m}_M(u) \Big\|_{\Cprod{r}{r}} \\
  &\lesssim \max(M,N)^{2r} \Big\| \I_v \Big[ \widetilde{\chi}^+ \widetilde{\chi}^- \partial_v G_{M,N,mn}^{a} \phi^{-,n}_N \Big](u,x) \Big|_{x=u}^v \, \phi^{+,m}_M(u) \Big\|_{L^\infty_{u,v}}  \\
  &\lesssim \max(M,N)^{2r} \big\| \I_v \big[ \widetilde{\chi}^+ \widetilde{\chi}^- \partial_v G_{M,N,mn}^{a} \phi^{-,n}_N \big](u,v) \big\|_{L^\infty_{u,v}} \big\| \phi^{+,m}_M(u) \big\|_{L^\infty_u} \\
  &\lesssim \max(M,N)^{2r} N^{-1} N^{1-s} N^{-s} M^{-s} \theta^3 \\
  &= \max(M,N)^{2r} M^{-s} N^{-2s}  \theta^3. 
  \end{align*}
  Since $3s-2r\gg \delta$, this is acceptable. \newline
  
  \emph{Estimate of \eqref{duhamel:eq-hh-3}:} Using Lemma \ref{prep:lemma-single-integral}, we have that 
 \begin{align*}
  \Big\| \chi^+ \chi^- \I \Big[ \Tr \Big( \G_{M,N,mn}^a \phi^{+,m}_M \partial_v \phi^{-,n}_N \Big) \Big](x) \Big|_{x=u}^v \Big\|_{\Cprod{r}{r}} 
  &\lesssim \Big\| \I \Big[ \widetilde{\chi} \Tr \Big( \G_{M,N,mn}^a \phi^{+,m}_M \partial_v \phi^{-,n}_N \Big) \Big](x) \Big\|_{\C_x^r} \\
  &\lesssim \big\| \Tr \big( \G_{M,N,mn}^a \phi^{+,m}_M \partial_v \phi^{-,n}_N\big)  \big\|_{\C_x^{r-1}}\\
  &\lesssim \big\|   \Tr \G_{M,N,mn}^a \big\|_{\C_x^s} \| \phi^{+,m}_M(x) \partial_v \phi^{-,n}_N(x)  \big\|_{\C_x^{r-1}} \\
  &\lesssim M^{-s} N^{r-s}  \theta^3. 
  \end{align*}
  Since $M\sim_\delta N$ and $r-2s\approx-1/4$, this term is acceptable. \newline
  
  \emph{Estimate of \eqref{duhamel:eq-hh-4}:} Using Proposition \ref{prep:prop-duhamel} and frequency-support considerations, it holds that 
  \begin{align*}
  \Big\| \chi^+ \chi^- \Duh \Big[ \partial_u \G_{M,N,mn}^a \phi^{+,m}_M \,  \partial_v \phi^{-,n}_N \Big] \Big\|_{\Cprod{r}{r}}
  &\lesssim \big\| \partial_u \G_{M,N,mn}^a \phi^{+,m}_M \,  \partial_v \phi^{-,n}_N \big\|_{\Cprod{r-1}{r-1}} \\
  &\lesssim (MN)^{r-1} \big\|  \partial_u \G_{M,N,mn}^a \phi^{+,m}_M \, \partial_v \phi^{-,n}_N \big\|_{L^\infty_{u,v}} \\
  &\lesssim (MN)^{r-1} M^{1-s} M^{-s} N^{1-s}  \theta^3 \\
  &\lesssim M^{r-2s} N^{r-s}  \theta^3. 
  \end{align*}
  Since $M\sim_\delta N$ and $3s-2r\gg \delta$, this term is acceptable.
\end{proof}

\subsection{Proof of Proposition \ref{duhamel:prop-approx}} \nopagebreak[4]
\begin{proof}[Proof of Proposition \ref{duhamel:prop-approx}]
 Using Theorem \ref{nonlinear:thm}, we have that 
\begin{align}
&- \chi^+ \chi^- \Duh \big[\Second_{ij}^a(\phi) \, \partial_u \phi^i \, \partial_v \phi^j \big] \notag  \allowdisplaybreaks[3] \\ 
=&  \chi^+ \chi^- \sum_{M\sim_\delta N} \Duh \big[ \G_{M,N,mn}^a \partial_u \phi^{+,m}_M \partial_v \phi^{-,n}_N \big]  \allowdisplaybreaks[3] \label{duhamel:eq-approx-p1} \\
-& \chi^+ \chi^- \sum_M \Duh \big[  F^{+,a}_{M,m} \parallsigu \partial_u \phi^{+,m}_M \big]
-  \chi^+ \chi^- \sum_N\Duh \big[ F_{N,n}^{-,a} \parallsigv  \partial_v \phi^{-,n}_N  \big]  \allowdisplaybreaks[3] \label{duhamel:eq-approx-p2} \\
+& \chi^+ \chi^- \Duh \big[ \ENL^a\big], \label{duhamel:eq-approx-p3}
\end{align}
where the error term $\ENL$ satisfies $\|\ENL \|_{\Cprod{r-1}{r-1}} \lesssim \theta^2$. The contributions \eqref{duhamel:eq-approx-p1}, \eqref{duhamel:eq-approx-p2}, and \eqref{duhamel:eq-approx-p3} can all be adressed using previous lemmas. \newline

\emph{Contribution of \eqref{duhamel:eq-approx-p1}:} This term is addressed in Lemma \ref{duhamel:lem-high-high}. It yields the first non-perturbative term \eqref{duhamel:eq-main1} and contributes to the remainder $\RINT$. \newline

\emph{Contribution of \eqref{duhamel:eq-approx-p2}:} Using the symmetry in the $u$ and $v$-variables, both summands in \eqref{duhamel:eq-approx-p2} can be treated using  Lemma \ref{duhamel:lem-low-high}. It yields the two non-perturbative terms \eqref{duhamel:eq-main2} and \eqref{duhamel:eq-main3} and contributes to the remainder $\RINT$. \newline

\emph{Contribution of \eqref{duhamel:eq-approx-p3}:} Using Proposition \ref{prep:prop-duhamel}, this term only contributes to the remainder $\RINT$. 
\end{proof}

%%%%%%%%%%%%%%%%%%%%% Low-frequency modulations %%%%%%%%%%%%%%%%%%%%%

\section{Modulation equations} \label{section:modulation}

In the previous section, we decomposed the Duhamel integral of the nonlinearity into the three structured components \eqref{duhamel:eq-main1}, \eqref{duhamel:eq-main2}, and \eqref{duhamel:eq-main3} and the nonlinear remainder \eqref{duhamel:eq-R}. As described in Section \ref{section:lwp} below, the nonlinear remainder \eqref{duhamel:eq-R} will be absorbed into the $\psi$-portion of our Ansatz. The three structured components \eqref{duhamel:eq-main1}, \eqref{duhamel:eq-main2}, and \eqref{duhamel:eq-main3}, which cannot be treated perturbatively, are eliminated through the modulation equations. The modulation equations, which were previously stated in Definition \ref{ansatz:def-modulation-eqs}, are given by 
\begin{align}
A^{+,a}_{M,m}(u,v) &= 
\theta \delta^a_m 
- P^{u}_{\leq M^{1-\sigma}} \chi^+ \chi^- \int_u^v \dv^\prime P_{\leq M^{1-\sigma}}^u F_{M,m}^{+,a}(u,v^\prime) \label{modulation:eq-p},\\
A^{-,a}_{N,n}(u,v) &= 
\theta \delta^a_n 
+ P^{v}_{\leq N^{1-\sigma}}  \chi^+ \chi^- \int_u^v \du^\prime P_{\leq N^{1-\sigma}}^v F_{N,n}^{-,a}(u^\prime,v) \label{modulation:eq-m}, \\
B^{b}_{M,N,mn}(u,v) 
&=  1\{ M\sim_\delta N \} \G^{b}_{\chi,M,N,mn} \label{modulation:eq-pm}.
\end{align}
Here, $F^+$, $F^-$, and $\G$ are as in Definition \ref{ansatz:def-F}. We also recall that $\theta \delta^a_m$ and $\theta \delta^a_n$ in \eqref{modulation:eq-p} and \eqref{modulation:eq-m} are a result of the linear evolution. 

The goal of this section is to prove the local well-posedness of the modulation equation, i.e., Proposition \ref{ansatz:prop-modulation}. As described in the introduction (Section \ref{section:intro-main}), the modulation equations cannot be treated using classical ODE methods. Instead, we rely on the para-controlled approach of Gubinelli, Imkeller, and Perkowski \cite{GIP15}. To this end, we now derive para-controlled versions of the modulation equations.

\subsection{Para-controlled modulation equations}

We make the para-controlled Ansatz
\begin{align}
A_{M,m}^{+,a} &= \sum_{N\leq M^{1-\delta}} X_{M,N,mn}^{+,a} \parallv \phi^{-,n}_N + Y^{+,a}_{M,m}, \label{modulation:eq-ansatz-p}\\
A_{N,n}^{-,a} &= \sum_{M\leq N^{1-\delta}} X_{N,M,nm}^{-,a} \parallu \phi^{+,m}_M + Y^{-,a}_{N,n}. \label{modulation:eq-ansatz-m}
\end{align}
To guide the reader, we mention that our analysis below yields uniform bounds for
\begin{equation*}
X_{M,N}^+, X_{N,M}^- \in \Cprod{s}{s}, \quad Y_M^+\in \Cprod{s}{r}, \quad \text{and} \quad  Y_{N}^- \in \Cprod{r}{s}.
\end{equation*}

Motivated by the frequency-support conditions for $A^+,A^-$, and $B$ (Conditions \ref{condition:frequency}), we impose the following frequency-support conditions on the para-controlled modulations.

\begin{condition}[Frequency-support conditions for $X^\pm$ and $Y^\pm$]
For all $1\leq i,m,n\leq \dimA$ and all frequency scales $M$ and $N$, we impose that
\begin{align}
P^u_{\gg M^{1-\sigma}} X^{+,i}_{M,N,mn} &=0, \label{modulation:eq-freq-cond-X+} \\
P^u_{\gg M^{1-\sigma}} Y^{+,i}_{M,m} &=0, \label{modulation:eq-freq-cond-Y+} \\
P^v_{\gg N^{1-\sigma}} X^{-,i}_{N,M,nm} &=0, \label{modulation:eq-freq-cond-X-} \\
P^v_{\gg N^{1-\sigma}} Y^{-,i}_{N,n} &=0. \label{modulation:eq-freq-cond-Y-}
\end{align}
\end{condition}

We now state the para-controlled modulation equations for $X_{M,N}^+$, $X_{N,M}^-$, $Y_M^+$, and $Y_N^-$, which will be derived momentarily (see Proposition \ref{modulation:prop-para}).

\begin{definition}[Para-controlled modulation equations]\label{modulation:def-para}
For all $1\leq a,m,n \leq \dimA$ and frequency scales $M$ and $N$, the para-controlled modulation equations are given by 
\begin{align}
X_{M,N,mn}^{+,a} &= - P_{\leq M^{1-\sigma}}^u \Big( \chi^+ \chi^- \mathcal{X}^{+,a}_{M,N,mn} \Big) \label{modulation:eq-X} 
\end{align}
and 
\begin{align}
Y_{M,m}^{+,a} &= \theta \delta^{a}_m - P_{\leq M^{1-\sigma}}^u \Com_{\chi,\parall,\I}^v(\mathcal{X}_{M,N,mn}^{+,a}, \phi^{-,n}_N ) \label{modulation:eq-Y1} \\
&- P_{\leq M^{1-\sigma}}^u \Big( \chi^+ \chi^- 
\Big( \I_v \Yc^{+,a}_{M,m}(u,v)- \I_v \Yc^{+,a}_{M,m}(u,u)\Big)  \Big) \label{modulation:eq-Y2}.
\end{align}
Here, $\Com_{\chi,\parall,\I}^v$ is as in Definition \ref{prep:def-integral-commutator}. Furthermore, $\mathcal{X}^{+,a}_{M,N,mn}$ is given by 
\begin{align}
\mathcal{X}^{+,a}_{M,N,mn} &= P_{\leq M^{1-\sigma}}^u \Big[ 
\Second_{ij}^a(\phi) A_{M,m}^{+,i} A_{N,n}^{-,j} \label{modulation:eq-Xc1} \\
&+ \sum_{\substack{L\colon \\ N^{\frac{1}{1-\delta}} \leq L \ll M}} \hspace{-1ex}
\SecondC{a}{ij}(\phi) A_{M,m}^{+,i} \phi_L^{+,\ell} X^{+,j}_{L,N,\ell n} \label{modulation:eq-Xc2} \\
&+ \sum_{\substack{K\colon \\ K\sim_\delta N \\ K\leq M^{1-\delta}}} \SecondC{a}{ij}(\phi)
A_{M,m}^{+,i} B_{K,N,kn}^j \phi^{+,k}_K \Big]. \label{modulation:eq-Xc3}
\end{align}
Finally, the driving force $\Yc^{+,a}_{M,m}$ is given by 
\begin{equation}
 \Yc^{+,a}_{M,m}= 
 \Yc^{(1),+,a}_{M,m}
 +\Yc^{(2),+,a}_{M,m}
 +\Yc^{(3),+,a}_{M,m}
 +\Yc^{(4),+,a}_{M,m},
\end{equation}
where the four summands are given by
\begin{align}
\Yc^{(1),+,a}_{M,m}&
= P_{\leq M^{1-\sigma}}^u \bigg[ 
\sum_{N\leq M^{1-\delta}} \Big( \SecondC{a}{ij}(\phi) A_{M,m}^{+,i} A_{N,n}^{-,j} \Big) \paragtrsimv \partial_v \phi^{-,n}_N \label{modulation:eq-yc-1} \allowdisplaybreaks[4]\\
&+ \sum_{L\ll M} \sum_{N \leq L^{1-\delta}} X^{+,j}_{L,N,\ell n} \phi^{+,\ell}_L \bigg(  \Big( \SecondC{a}{ij}(\phi) A_{M,m}^{+,i} \Big) \parasimv \partial_v \phi_N^{-,n} \bigg) \label{modulation:eq-yc-2} \allowdisplaybreaks[4] \\
&+ \sum_{\substack{K,N\colon \\ K,N\leq M^{1-\delta} \\ K \sim_\delta N}} \phi^{+,k}_K \Big( \SecondC{a}{ij}(\phi) A_{M,m}^{+,i} B_{K,N,kn}^j \Big) \paragtrsimv \partial_v \phi^{-,n}_N  \label{modulation:eq-yc-3} \allowdisplaybreaks[4] \\
&+ \sum_{\substack{N \colon \\ N\sim_\delta M}} \Big( \SecondC{a}{ij}(\phi) A^{+,i}_{M,m} A^{-,j}_{N,n} \Big) \parasimv \partial_v \phi^{-,n}_N   \label{modulation:eq-yc-4} \bigg], \allowdisplaybreaks[4] \\
\Yc^{(2),+,a}_{M,m}&= P_{\leq M^{1-\sigma}}^u \bigg[  \sum_{L \gtrsim M} \SecondC{a}{ij}(\phi) A_{M,m}^{+,i} \phi^{+,\ell}_L \partial_v A^{+,j}_{L,\ell}   \label{modulation:eq-yc-5} \allowdisplaybreaks[4] \\
&+\sum_{\substack{K,N\colon \\ \max(K,N)>M^{1-\delta}, \\ K\sim_\delta N }} 
\SecondC{a}{ij}(\phi) A^{+,i}_{M,m} \partial_v \Big( B^j_{K,N,kn} \phi^{+,k}_K \phi^{-,n}_N\Big) \bigg], \label{modulation:eq-yc-missed}\\
\Yc^{(3),+,a}_{M,m}&=P_{\leq M^{1-\sigma}}^u \bigg[ \sum_{L \ll M} \sum_{N \leq L^{1-\delta}} \phi^{+,\ell}_L 
\Com_{\parall}^v\Big( \SecondC{a}{ij}(\phi) A^{+,i}_{M,m}, X^{+,j}_{L,N,\ell n}, \partial_v \phi^{-,n}_N \Big) \bigg] \label{modulation:eq-yc-6} \allowdisplaybreaks[4], \\
\Yc^{(4),+,a}_{M,m}&= P_{\leq M^{1-\sigma}}^u \bigg[ \sum_{N\leq M^{1-\delta}} \SecondC{a}{ij}(\phi) A_{M,m}^{+,i} \partial_v A_{N,n}^{-,j} \phi^{-,n}_N  \label{modulation:eq-yc-7} \allowdisplaybreaks[4] \\
&+\sum_{L\ll M} \sum_{N \leq L^{1-\delta}} \SecondC{a}{ij}(\phi) A_{M,m}^{+,i} \phi_{L}^{+,\ell} \Big( \partial_v X^{+,j}_{L,N,\ell n} \parallv \phi_N^{-,n}\Big)  \label{modulation:eq-yc-8} \allowdisplaybreaks[4] \\
&+\sum_{L\ll M} \SecondC{a}{ij}(\phi) A^{+,i}_{M,m} \phi^{+,\ell}_L \partial_v Y^{+,j}_{L,\ell} 
 \label{modulation:eq-yc-9} \allowdisplaybreaks[4] \\
&+\sum_{\substack{K,N \colon \\ K,N \leq M^{1-\delta} \\ K \sim_\delta N}} \SecondC{a}{ij}(\phi) A^{+,i}_{M,m} \partial_v \big( B^j_{K,N,kn}\big) \phi^{+,k}_K \phi^{-,n}_N 
 \label{modulation:eq-yc-10} \allowdisplaybreaks[4] \\
&+ A_{M,m}^{+,i} \SecondC{a}{ij}(\phi) \partial_v \psi^j  \label{modulation:eq-yc-11} \allowdisplaybreaks[4] \\
&+ \sum_{\substack{N \colon \\ N\sim_\delta M}} \SecondC{a}{ij}(\phi) B^i_{M,N,mn} 
\big( \phi^{-,n}_N \paradownv \partial_v \psi^j \big) \bigg]. 
 \label{modulation:eq-yc-12}
\end{align}
The para-controlled modulation equations for $X^{-,a}_{N,M,nm}$ and $Y^{-,a}_{N,n}$ are similar to \eqref{modulation:eq-X}-\eqref{modulation:eq-yc-12} but with reversed roles of the $u$ and $v$-variables. 
\end{definition}

In the definition of $\Yc^{+,a}_{M,m}$, we have collected similar terms in four different groups. The individual components $\Yc^{(j)}$ contain the following terms:
\begin{enumerate}
    \item The terms in $\Yc^{(1)}$ all contain $\phi^{-,n}_N$ but involve a second factor at a comparable or higher $v$-frequency.
    \item The terms in  $\Yc^{(2)}$ contains at least two factors at $u$-frequencies $\gtrsim M^{1-\sigma}$. 
    \item The term in $\Yc^{(3)}$ has a commutator structure.
    \item The terms in $\Yc^{(4)}$ are ``easy". 
\end{enumerate}

Of course, the motivation behind \eqref{modulation:eq-Xc1}-\eqref{modulation:eq-yc-12} is not clear from the Definition, but can rather be seen from (the proof of) the following Proposition.

\begin{proposition}[Para-controlled modulation equations]\label{modulation:prop-para}
If $X^+$, $X^-$, $Y^+$, and $Y^-$ solve the system of para-controlled modulation equations (as in Definition \ref{modulation:def-para}), then $A^+$ and $A^-$ solve the modulation equations \eqref{modulation:eq-p} and \eqref{modulation:eq-m}. 
\end{proposition}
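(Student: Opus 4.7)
The proof is essentially an algebraic identification: substitute the para-controlled Ansatz \eqref{modulation:eq-ansatz-p}--\eqref{modulation:eq-ansatz-m} into the definition of $F^{+,a}_{M,m}$ (Definition \ref{ansatz:def-F}), and match the resulting decomposition of the right-hand side of \eqref{modulation:eq-p} with the decomposition of $A^{+,a}_{M,m}$ provided by \eqref{modulation:eq-ansatz-p}. By the symmetry $u \leftrightarrow v$ it suffices to treat $A^{+,a}_{M,m}$; the argument for $A^{-,a}_{N,n}$ is identical. The plan is to show that every term produced by expanding $F^{+,a}_{M,m}$ falls into exactly one of three categories: (i) it contributes a $(\text{something}) \parallv \partial_v \phi^{-,n}_N$ piece which, after the $v$-integration, becomes a $\parallv \phi^{-,n}_N$ paracontrolled piece plus a commutator; (ii) it is a resonant or high-low partner of such a piece which goes into $\Yc^{(1)}$--$\Yc^{(3)}$; or (iii) it is an ``easy'' term with a favorable derivative or outside the allowed frequency window, which goes into $\Yc^{(2)}$ or $\Yc^{(4)}$.

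The key identity that converts the integral in \eqref{modulation:eq-p} into the para-controlled Ansatz \eqref{modulation:eq-ansatz-p} is the definition of the integral commutator in Definition~\ref{prep:def-integral-commutator}, which yields
\begin{equation*}
\chi^+ \chi^- \int_u^v \bigl( f \parallv \partial_v g \bigr)(u,v^\prime)\, \mathrm{d}v^\prime = \chi^+ \chi^- \bigl( f \parallv g \bigr) + \Com^v_{\chi,\parall,\I}(f,g).
\end{equation*}
Applied with $f = -\mathcal{X}^{+,a}_{M,N,mn}$ and $g = \phi^{-,n}_N$, this immediately produces the $X^{+,a}_{M,N,mn} \parallv \phi^{-,n}_N$ term of \eqref{modulation:eq-ansatz-p} together with the commutator contribution appearing in line \eqref{modulation:eq-Y1} of the definition of $Y^{+,a}_{M,m}$. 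The linear term $\theta \delta^a_m$ in \eqref{modulation:eq-Y1} of course comes from the corresponding term in \eqref{modulation:eq-p}.

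The bulk of the work is the case-by-case identification of the contributions to $\mathcal{X}^{+,a}_{M,N,mn}$ and to the four pieces $\Yc^{(1)}, \ldots, \Yc^{(4)}$. For line \eqref{nonlinear:eq-Fp-1} of $F^+$, the product rule separates $\partial_v(A^{-,j}_{N,n} \phi^{-,n}_N)$ into $A^{-,j}_{N,n} \partial_v \phi^{-,n}_N$ and $\partial_v A^{-,j}_{N,n}\, \phi^{-,n}_N$; the latter is a favorable derivative and enters \eqref{modulation:eq-yc-7}, while the former splits further by a low-high versus high-high paraproduct decomposition in the $v$-variable into the contribution \eqref{modulation:eq-Xc1} of $\mathcal{X}^+$ and the high-high piece \eqref{modulation:eq-yc-1}. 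For line \eqref{nonlinear:eq-Fp-2} with $\Type \zeta = \tp$ and $L \ll M$, one substitutes the para-controlled Ansatz for $A^+_L$, uses $\partial_v \phi^{+,\ell}_L = 0$, and applies the commutator identity from Definition~\ref{prep:def-commutator-ODE}, namely
\begin{equation*}
f \bigl( g \parallv h \bigr) = (fg) \parallv h + g \bigl( f \parasimv h \bigr) + \Com^v_\parall(f,g,h),
\end{equation*}
with $f = \SecondC{a}{ij}(\phi) A^{+,i}_{M,m}$, $g = X^{+,j}_{L,N,\ell n}$, $h = \partial_v \phi^{-,n}_N$. This produces exactly the contribution \eqref{modulation:eq-Xc2} to $\mathcal{X}^+$ (from $(fg) \parallv h$), the resonant contribution \eqref{modulation:eq-yc-2} to $\Yc^{(1)}$ (from $g(f \parasimv h)$), and the commutator contribution \eqref{modulation:eq-yc-6} to $\Yc^{(3)}$; the favorable-derivative pieces $\partial_v X^+$ and $\partial_v Y^+$ of $\partial_v A^+_L$ give \eqref{modulation:eq-yc-8} and \eqref{modulation:eq-yc-9}. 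For $\Type\zeta = \tp$ with $L \gtrsim M$ (where the nested $A^+_L$ is not expanded because the constraint $L \ll M$ in \eqref{modulation:eq-Xc2} is violated), the term feeds directly into \eqref{modulation:eq-yc-5}. For $\Type\zeta = \tpm$ with $K, N \leq M^{1-\delta}$ one splits $\partial_v(B \phi^+ \phi^-)$ by the product rule to obtain \eqref{modulation:eq-yc-10} from the $\partial_v B$ piece and, after a low-high versus high-high paraproduct split of the main piece in $v$, the contribution \eqref{modulation:eq-Xc3} to $\mathcal{X}^+$ together with \eqref{modulation:eq-yc-3}; the remaining case $\max(K,N) > M^{1-\delta}$ supplies \eqref{modulation:eq-yc-missed}. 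Finally $\Type\zeta = \ts$ directly gives \eqref{modulation:eq-yc-11}, and lines \eqref{nonlinear:eq-Fp-3}--\eqref{nonlinear:eq-Fp-4} of $F^+$ feed directly into \eqref{modulation:eq-yc-4} and \eqref{modulation:eq-yc-12} respectively.

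The main obstacle is purely bookkeeping: tracking all the frequency-scale restrictions (in particular the constraint $L \geq N^{1/(1-\delta)}$ in \eqref{modulation:eq-Xc2}, which is equivalent to $N \leq L^{1-\delta}$, i.e., the index range in the $A^+_L$ Ansatz), verifying that the outer $P^u_{\leq M^{1-\sigma}}$ projections match on both sides of the identity, and confirming that the commutator $\Com^v_\parall$ produces exactly the term recorded in $\Yc^{(3)}$. Once one has collected all the pieces and matched them against the definitions of $X^{+,a}_{M,N,mn}$ in \eqref{modulation:eq-X} and $Y^{+,a}_{M,m}$ in \eqref{modulation:eq-Y1}--\eqref{modulation:eq-Y2}, the identity \eqref{modulation:eq-ansatz-p} becomes tautological and the proof of \eqref{modulation:eq-p} is complete. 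The equation \eqref{modulation:eq-m} follows by the same argument after exchanging the roles of $u$ and $v$, and \eqref{modulation:eq-pm} is already contained verbatim in the definition of $B$.
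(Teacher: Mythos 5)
Your proposal is correct and follows essentially the same route as the paper's own proof: fix $M$, rewrite the $v$-integral via the integral commutator $\Com^v_{\chi,\parall,\I}$ to peel off the $X^{+}_{M,N}\parallv\phi^{-}_N$ piece, and then match each of the four summands of $F^{+,a}_{M,m}$ (splitting by $L\ll M$ versus $L\gtrsim M$, inserting the Ansatz for $A^+_L$, applying $\Com^v_{\parall}$, and using the product rule on $\partial_v(B\phi^+\phi^-)$) to the corresponding lines \eqref{modulation:eq-Xc1}--\eqref{modulation:eq-Xc3} and \eqref{modulation:eq-yc-1}--\eqref{modulation:eq-yc-12}. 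The one cosmetic point is that in your application of $\Com^v_\parall(f,g,h)$ the $\phi^{+,\ell}_L$ prefactor should be carried along explicitly (it commutes with all $v$-paraproducts since it depends only on $u$, which is also why $\partial_v\phi^{+,\ell}_L=0$), but that is exactly what the paper does and you clearly have the right bookkeeping.
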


\begin{proof}
We only prove the result for $A^+$, since the argument for $A^-$ is similar. To this end, we fix a frequency-scale $M$. We now assign different parts of the right-hand side in \eqref{modulation:eq-p} to either $X^+_{M,N}$, where $N\leq M^{1-\delta}$, or $Y^+_M$. First, we recall from Definition \ref{ansatz:def-F} that $F^{+}_M$ consists of the four summands \eqref{nonlinear:eq-Fp-1}, \eqref{nonlinear:eq-Fp-2}, \eqref{nonlinear:eq-Fp-3}, and \eqref{nonlinear:eq-Fp-4}, which are treated separately. \newline

\emph{Contribution of \eqref{nonlinear:eq-Fp-1}:} We decompose
\begin{align}
&P_{\leq M^{1-\sigma}}^u \sum_{N\leq M^{1-\delta}} A_{M,m}^{+,i} \SecondC{a}{ij}(\phi) \partial_v \big( A^{-,j}_{N,n} \phi^{-,n}_N \big)  \notag \\
=& P_{\leq M^{1-\sigma}}^u \sum_{N \leq M^{1-\delta}}  \Big( \SecondC{a}{ij}(\phi) A^{+,i}_{M,m} A^{-,j}_{N,n} \Big) \parallv \partial_v \phi^{-,n}_N \label{modulation:eq-para-1-p1} \\
+& P_{\leq M^{1-\sigma}}^u \sum_{N\leq M^{1-\delta}}  \Big(\SecondC{a}{ij}(\phi) A^{+,i}_{M,m} A^{-,j}_{N,n} \Big) \paragtrsimv \partial_v \phi^{-,n}_N \label{modulation:eq-para-1-p2} \\
+& P_{\leq M^{1-\sigma}}^u \sum_{N\leq M^{1-\delta}} \SecondC{a}{ij}(\phi) A^{+,i}_{M,m} \partial_v A^{-,j}_{N,n} \phi^{-,n}_N \label{modulation:eq-para-1-p3}.
\end{align}
The contribution of \eqref{modulation:eq-para-1-p1} is included in the commutator in \eqref{modulation:eq-Y1} and \eqref{modulation:eq-Xc1}. The contributions of \eqref{modulation:eq-para-1-p2} and \eqref{modulation:eq-para-1-p3} are contained in \eqref{modulation:eq-yc-1} and \eqref{modulation:eq-yc-7}, respectively. \newline

\emph{Contribution of \eqref{nonlinear:eq-Fp-2}:} We distinguish between different types of $\zeta$. \\

\emph{Contribution of \eqref{nonlinear:eq-Fp-2}, $\Type \zeta=\tp$:} We first decompose
\begin{align}
\sum_L \SecondC{a}{ij}(\phi) A^{+,i}_{M,m} \partial_v \big( A^{+,j}_{L,\ell} \phi^{+,\ell}_L\big) 
&= \sum_{L\ll M}\SecondC{a}{ij}(\phi) A^{+,i}_{M,m} \partial_v  A^{+,j}_{L,\ell} \, \phi^{+,\ell}_L
\label{modulation:eq-para-2-p1} \\
&+  \sum_{L\gtrsim M}\SecondC{a}{ij}(\phi) A^{+,i}_{M,m} \partial_v  A^{+,j}_{L,\ell} \, \phi^{+,\ell}_L.
\label{modulation:eq-para-2-p2}
\end{align}
The contribution of \eqref{modulation:eq-para-2-p2} is included in \eqref{modulation:eq-yc-5}. Thus, we now continue decomposing \eqref{modulation:eq-para-2-p1}. Inserting the para-controlled Ansatz for $A_{L,\ell}^{+,j}$ from \eqref{modulation:eq-ansatz-p}, we have that 
\begin{align}
&\sum_{L\colon L\ll M} \SecondC{a}{ij}(\phi) A_{M,m}^{+,i} \phi^{+,\ell}_L \partial_v A^{+,j}_{L,\ell} \notag \\
=& \sum_{\substack{L,N \colon \\ N^{1/(1-\delta)} \leq   L\ll M}} 
\SecondC{a}{ij}(\phi) A_{M,m}^{+,i} \phi^{+,\ell}_L \partial_v \big( X^{+,j}_{L,N,\ell n} \parallv \phi^{-,n}_N \big) 
+ \sum_{L\colon L \ll M}
\SecondC{a}{ij}(\phi) A_{M,m}^{+,i} \phi^{+,\ell}_L \partial_v Y_{L,\ell}^{+,j}\label{modulation:eq-para-2-q1}.
\end{align}
The second summand in \eqref{modulation:eq-para-2-q1} corresponds to \eqref{modulation:eq-yc-9} and it remains to treat the first summand in \eqref{modulation:eq-para-2-q1}. Using the product rule, it holds that
\begin{align}
 &\sum_{\substack{L,N \colon \\ N^{1/(1-\delta)} \leq   L\ll M}} 
\SecondC{a}{ij}(\phi) A_{M,m}^{+,i} \phi^{+,\ell}_L \partial_v \big( X^{+,j}_{L,N,\ell n} \parallv \phi^{-,n}_N \big) \notag \\
=&  \hspace{-2ex} \sum_{\substack{L,N \colon \\ N^{1/(1-\delta)} \leq   L\ll M}}  \hspace{-2ex}
\SecondC{a}{ij}(\phi) A_{M,m}^{+,i} \phi^{+,\ell}_L  \big( X^{+,j}_{L,N,\ell n} \parallv \partial_v \phi^{-,n}_N \big) \label{modulation:eq-para-2-q2} \\
+& \hspace{-2ex}
\sum_{\substack{L,N \colon \\ N^{1/(1-\delta)} \leq   L\ll M}} \hspace{-2ex} 
\SecondC{a}{ij}(\phi) A_{M,m}^{+,i} \phi^{+,\ell}_L  \big( \partial_v X^{+,j}_{L,N,\ell n} \parallv  \phi^{-,n}_N \big).\label{modulation:eq-para-2-q3}
\end{align}
The second summand \eqref{modulation:eq-para-2-q3} corresponds to \eqref{modulation:eq-yc-8}. Therefore, it remains to treat the first summand \eqref{modulation:eq-para-2-q2}. Using Definition \ref{prep:def-commutator-ODE}, it holds that
\begin{align}
&\hspace{-2ex} \sum_{\substack{L,N \colon \\ N^{1/(1-\delta)} \leq   L\ll M}}  \hspace{-2ex}
\SecondC{a}{ij}(\phi) A_{M,m}^{+,i} \phi^{+,\ell}_L  \big( X^{+,j}_{L,N,\ell n} \parallv \partial_v \phi^{-,n}_N \big) \notag \allowdisplaybreaks[3]\\
=& \sum_{N \colon N\leq M^{1-\delta}} \sum_{\substack{L\colon N^{1/(1-\delta)} \leq L \ll M}} 
\big( \SecondC{a}{ij}(\phi) A_{M,m}^{+,i} \phi^{+,\ell}_L X^{+,j}_{L,N,\ell n} \big) \parallv \partial_v \phi^{-,n}_N \label{modulation:eq-para-2-p3} \allowdisplaybreaks[3] \\
+& \sum_{\substack{L,N \colon \\ N^{1/(1-\delta)} \leq   L\ll M}}  
X_{L,N,\ell n}^{+,j} \phi^{+,\ell}_L \bigg( \big( \SecondC{a}{ij}(\phi) A_{M,m}^{+,i}  \big) \parasimv \partial_v \phi^{-,n}_N \bigg) \label{modulation:eq-para-2-p4} \allowdisplaybreaks[3]\\
+& \sum_{\substack{L,N \colon \\ N^{1/(1-\delta)} \leq   L\ll M}} 
\phi^{+,\ell}_L \Com_{\parall}^v\Big(  \SecondC{a}{ij}(\phi) A_{M,m}^{+,i}, X^{+,j}_{L,N,\ell n}, \partial_v \phi^{-,n}_N\Big) \label{modulation:eq-para-2-p5}. 
\end{align}
The first summand  \eqref{modulation:eq-para-2-p3} is contributes to the commutator in  \eqref{modulation:eq-Y1} and \eqref{modulation:eq-Xc2}. The remaining summands \eqref{modulation:eq-para-2-p4} and \eqref{modulation:eq-para-2-p5} are responsible for \eqref{modulation:eq-yc-2} and \eqref{modulation:eq-yc-6}, respectively. \newline

\emph{Contribution of \eqref{nonlinear:eq-Fp-2}, $\Type \zeta=\tpm$:} 
The total contribution (without the $P_{\leq M^{1-\sigma}}^u$ operator) is given by
\begin{align}
&\sum_{\substack{K,N\colon \\ K\sim_\delta N}} 
\SecondC{a}{ij}(\phi) A^{+,i}_{M,m} \partial_v \big( B^j_{K,N,kn} \phi^{+,k}_K \phi^{-,n}_N \big)  \notag  \allowdisplaybreaks[3] \\
=& \sum_{\substack{K,N\colon \\ K,N \leq M^{1-\delta} \\ K\sim_\delta N}} 
\SecondC{a}{ij}(\phi) A^{+,i}_{M,m} \partial_v \big( B^j_{K,N,kn} \phi^{+,k}_K \phi^{-,n}_N \big) \label{nonlinear:eq-para-missed-1} \\
+&\sum_{\substack{K,N\colon \\ \max(K,N) > M^{1-\delta} \\ K\sim_\delta N}} 
\SecondC{a}{ij}(\phi) A^{+,i}_{M,m} \partial_v \big( B^j_{K,N,kn} \phi^{+,k}_K \phi^{-,n}_N \big). \label{nonlinear:eq-para-missed-2} 
\end{align}
The second summand \eqref{nonlinear:eq-para-missed-2}  is responsible for \eqref{modulation:eq-yc-missed}. 
Using the product rule, we decompose \eqref{nonlinear:eq-para-missed-1} further as
\begin{equation}\label{modulation:eq-para-q1}
\begin{aligned}
&\sum_{\substack{K,N\colon \\ K,N\leq M^{1-\delta}\\ K\sim_\delta N}} 
\SecondC{a}{ij}(\phi) A_{M,m}^{+,i} \partial_v \big( B_{K,N,kn}^j \phi^{+,k}_K \phi^{-,n}_N \big) \\
=& \sum_{\substack{K,N\colon \\ K,N\leq M^{1-\delta}\\ K\sim_\delta N}} 
\SecondC{a}{ij}(\phi) A_{M,m}^{+,i}   B_{K,N,kn}^j \phi^{+,k}_K \partial_v\phi^{-,n}_N 
+ \sum_{\substack{K,N\colon \\ K,N\leq M^{1-\delta}\\ K\sim_\delta N}} 
\SecondC{a}{ij}(\phi) A_{M,m}^{+,i}   \partial_vB_{K,N,kn}^j \phi^{+,k}_K \phi^{-,n}_N.
\end{aligned}
\end{equation}
The second summand in \eqref{modulation:eq-para-q1} is responsible for \eqref{modulation:eq-yc-10}. The first summand in \eqref{modulation:eq-para-q1} is decomposed further by writing
\begin{align*}
&\sum_{\substack{K,N\colon \\ K,N\leq M^{1-\delta}\\ K\sim_\delta N}} 
\SecondC{a}{ij}(\phi) A_{M,m}^{+,i}   B_{K,N,kn}^j \phi^{+,k}_K \partial_v\phi^{-,n}_N  \\
&= \hspace{-2ex} \sum_{\substack{K,N\colon \\ K,N\leq M^{1-\delta}\\ K\sim_\delta N}}  \hspace{-2ex} \Big(
\SecondC{a}{ij}(\phi) A_{M,m}^{+,i}   B_{K,N,kn}^j \phi^{+,k}_K  \Big) \parallv \partial_v\phi^{-,n}_N 
+ \hspace{-2ex}\sum_{\substack{K,N\colon \\ K,N\leq M^{1-\delta}\\ K\sim_\delta N}} 
\hspace{-2ex} \Big( \SecondC{a}{ij}(\phi) A_{M,m}^{+,i}   B_{K,N,kn}^j \phi^{+,k}_K \Big) \paragtrsimv \partial_v\phi^{-,n}_N .
\end{align*}
The $\parallv$-term contributes to the commutator in \eqref{modulation:eq-Y1} and \eqref{modulation:eq-Xc3}. The $\paragtrsimv$-term is included in included in \eqref{modulation:eq-yc-3}. \newline

\emph{Contribution of \eqref{nonlinear:eq-Fp-2}, $\Type \zeta=\ts$:} In this case, $\zeta=\psi$. The contribution is given by \eqref{key:eq-direction-11}. \newline

\emph{Cotribution of \eqref{nonlinear:eq-Fp-3}:} This term coincides with \eqref{modulation:eq-yc-4}. \newline

\emph{Contribution of \eqref{nonlinear:eq-Fp-4}:} This term coincides with \eqref{modulation:eq-yc-12}. 
\end{proof}

\subsection{Estimates for the modulation equations}\label{section:modulation-estimates}

In this subsection, we will derive the main estimates needed for the modulation equations. Similar as in Sections \ref{section:bilinear}-\ref{section:duhamel}, we capture the bounds used in our analysis using a single hypothesis (Hypothesis \ref{hypothesis:modulation}). This hypothesis will then form the basis of the contraction mapping argument in the proof of Proposition \ref{ansatz:prop-modulation}. 

\begin{hypothesis}[Modulation hypothesis]\label{hypothesis:modulation}
We assume that 
\begin{equation*}
\| (\phi^+,\phi^-) \|_{\Ds}\leq \theta.
\end{equation*}
Furthermore, we assume that the modulations satisfy
\begin{align*}
\max\bigg(  \sup_M \| A^+_M \|_{\Mod_M^+}, \,  \sup_N \| A_N^- \|_{\Mod_N^-},  \sup_{M\sim_\delta N} \| B_{M,N}\|_{\Cprod{s}{s}} \bigg) \leq 5\theta.
\end{align*}
Finally, we assume that the para-controlled modulations satisfy
\begin{align*}
\max\bigg(  
\sup_{\substack{M,N \colon \\ N \leq M^{1-\delta}}} \| X^+_{M,N} \|_{\Cprod{s}{s}},
\sup_{\substack{M,N \colon \\ M \leq N^{1-\delta}}} \| X_{N,M}^- \|_{\Cprod{s}{s}}, \,
\sup_{M} \| Y_M^+ \|_{\Cprod{s}{r}}, \,
\sup_{N} \| Y_N^- \|_{\Cprod{r}{s}} \bigg)
\leq 2 \theta. 
\end{align*}
\end{hypothesis}

\begin{lemma}[Resonant-estimate]\label{modulation:lem-resonant}
Let Hypothesis \ref{hypothesis:modulation} be satisfied, let $1\leq i,m,n \leq \dimA$, and let $M$ and $N$ be frequency scales.  Then, it holds that 
\begin{align}
\big\| A_{M,m}^{+,i} \parasimv \partial_v \phi^{-,n}_N \big\|_{\Cprod{s}{r-1}} &\lesssim N^{-\eta} \theta^2, \label{modulation:eq-resonant-1} \\
\big\| \SecondC{a}{ij}(\phi) \parasimv \partial_v \phi^{-,n}_N \big\|_{\Cprod{s}{r-1}} &\lesssim N^{-\eta} \theta. \label{modulation:eq-resonant-2}
\end{align}
\end{lemma}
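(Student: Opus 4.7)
The plan is to prove \eqref{modulation:eq-resonant-1} by splitting into two cases depending on whether $N > M^{1-\delta}$ or $N\leq M^{1-\delta}$. In Case A ($N > M^{1-\delta}$), the $v$-frequency $\sim N$ portion of $A^{+,i}_{M,m}$ lies in the high-$v$-frequency piece $P_{\gg M^{1-\delta}}^v A^+_M$ of $\Mod_M^+$, so the $\Cprod{s}{r}$-bound and $s>0$ summability in $u$ give $\|\widetilde{P}_N^v A^{+,i}_{M,m}\|_{L^\infty_{u,v}}\lesssim N^{-r}\theta$. Combined with $\|\partial_v\phi^{-,n}_N\|_{L^\infty_v}\lesssim N^{1-s}\theta$, the observation that the output has $u$-frequency $\lesssim M^{1-\sigma}$ and $v$-frequency $\lesssim N$, and the fact that $\partial_v\phi^-_N$ has no $u$-dependence (so applying $P_{K_1}^u$ is free and yields a $K_1^{-s}$ gain cancelling the $K_1^s$-weight in $\Cprod{s}{r-1}$), one obtains
\[
\|A^{+,i}_{M,m}\parasimv \partial_v\phi^{-,n}_N\|_{\Cprod{s}{r-1}}\lesssim N^{1-s-r}\theta^2\lesssim N^{-\eta}\theta^2,
\]
since $r+s-1\gg \eta$ by the parameter conditions \eqref{prep:eq-parameter-1}, \eqref{prep:eq-parameter-2}.

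In Case B ($N\leq M^{1-\delta}$), the naive estimate through $\|P_{\lesssim M^{1-\delta}}^v A^+_M\|_{\Cprod{s}{s}}$ produces only $N^{1-2s}\theta^2$, which is insufficient since $1-2s$ is only assumed to be much smaller than $\eta$. Here I would invoke the para-controlled Ansatz \eqref{modulation:eq-ansatz-p},
\[
A^{+,i}_{M,m}=\sum_{N'\leq M^{1-\delta}} X^{+,i}_{M,N',mn'}\parallv \phi^{-,n'}_{N'}+Y^{+,i}_{M,m}.
\]
The $Y^+_M$ piece is handled exactly as in Case A, since $Y^+_M\in\Cprod{s}{r}$ by Hypothesis \ref{hypothesis:modulation}. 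For the $X^+$ piece, I would apply the commutator decomposition of Definition \ref{prep:def-commutator-ODE},
\[
(X^+_{M,N'}\parallv \phi^-_{N'})\parasimv \partial_v\phi^-_N = \Com_{\parall,\parasim}^v(X^+_{M,N'},\phi^-_{N'},\partial_v\phi^-_N)+X^+_{M,N'}(\phi^-_{N'}\parasimv\partial_v\phi^-_N),
\]
bound the commutator via Lemma \ref{prep:lem-commutator-ODE} (gain $N^{-s+2\eta}\theta^3$), and observe that the remaining product vanishes unless $N'\sim N$. For such $N'\sim N$ the central ingredient is the high$\times$high$\to$low probabilistic bound encoded in the $\Ds$-norm (Definition \ref{ansatz:definition-data}), which yields $\|\phi^{-,n'}_{N'}\partial_v\phi^{-,n}_N\|_{\C_v^{s-1}}\lesssim N^{-s}\theta^2$; multiplying by $X^+_{M,N'}\in\Cprod{s}{s}$ via the bilinear estimate (Proposition \ref{prep:prop-bilinear}) and summing over the $O(1)$-many surviving $N'$ produces a contribution of size $N^{r-2s}\theta^3\lesssim N^{-\eta}\theta^3$.

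The second estimate \eqref{modulation:eq-resonant-2} follows by the same strategy after first applying Bony's para-linearization in $v$ (Lemma \ref{prep:lemma-bony}) to write $\widetilde{P}_N^v\SecondC{a}{ij}(\phi)=(\partial_\ell \SecondC{a}{ij})(\phi)\widetilde{P}_N^v\phi^\ell+\mathcal{E}_N$ with $\|\mathcal{E}_N\|_{\Cprod{s}{s}}\lesssim N^{-s}$; the error $\mathcal{E}_N$ provides a direct $N^{-s}$ gain that is absorbed through the multiplication estimate (Corollary \ref{prep:corollary-multiplication}). For the main term I would insert the Ansatz \eqref{ansatz:eq-phi} for $\phi^\ell$: the $A^+\phi^+$ and $B\phi^+\phi^-$ contributions have the same $\Mod$-type $v$-frequency structure as $A^+_M$ and so reduce to \eqref{modulation:eq-resonant-1}, while the $A^-\phi^-$ and $\psi$ contributions produce inner high$\times$high$\to$low resonances controlled again by the $\Ds$-norm or, in the $\psi$-case, by Lemma \ref{key:lemma-unfortunate}. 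The main obstacle throughout is Case B of \eqref{modulation:eq-resonant-1}, where the para-controlled Ansatz is genuinely needed: without it one would be forced to make sense of the ill-defined resonance $\phi^-\parasimv\partial_v\phi^-$ using only the $\C_v^s$-regularity of $\phi^-$, which is precisely the scenario the probabilistic $\Ds$-bound was designed to circumvent.
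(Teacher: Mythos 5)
Your proposal is essentially the paper's argument: both proofs run the para-controlled Ansatz \eqref{modulation:eq-ansatz-p} through the commutator decomposition of Definition \ref{prep:def-commutator-ODE} (bounded by Lemma \ref{prep:lem-commutator-ODE}), then use the $\Ds$-norm to control the surviving $\phi^-_{K}\parasimv\partial_v\phi^-_N$ resonance at $K\sim N$, and for \eqref{modulation:eq-resonant-2} both reduce via Bony's para-linearization to $P_K^v\phi^\ell\,\partial_v\phi^-_N$ and insert the Ansatz for $\phi$. The explicit case split $N\gtrless M^{1-\delta}$ is a presentational addition the paper avoids: since the para-controlled piece of $A^+_M$ has $v$-frequency $\lesssim M^{1-\delta}$, it drops out of $\parasimv\partial_v\phi^-_N$ automatically when $N\gg M^{1-\delta}$, and the bound $Y^+_M\in\Cprod{s}{r}$ from Hypothesis \ref{hypothesis:modulation} covers what your Case A extracts from the $\Mod_M^+$-norm. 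Likewise, for \eqref{modulation:eq-resonant-2} the paper groups the $\tm,\tpm,\ts$ contributions uniformly through Lemma \ref{bilinear:lemma-product} with $\HH=\operatorname{Id}$ after swapping $u\leftrightarrow v$, whereas you fold $\tpm$ into the $\tp$-analysis and treat $\tm,\ts$ separately — an equivalent regrouping of the same estimates.
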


\begin{proof}
We prove \eqref{modulation:eq-resonant-1} and \eqref{modulation:eq-resonant-2} separately. The proof of \eqref{modulation:eq-resonant-1} will be based on the para-controlled Ansatz \eqref{modulation:eq-ansatz-p}, which was not used in the PDE-analysis. Once the first estimate \eqref{modulation:eq-resonant-1} has been established, we prove the second estimate \eqref{modulation:eq-resonant-2} by using \eqref{modulation:eq-resonant-1} and our previous product estimate (Lemma \ref{bilinear:lemma-product}).  \newline

\emph{Proof of \eqref{modulation:eq-resonant-1}:} Recalling the para-controlled Ansatz \eqref{modulation:eq-ansatz-p}, we have that 
\begin{equation*}
A_{M,m}^{+,i}= \sum_{K\leq M^{1-\delta}} X_{M,K,mk}^{+,i} \parallv \phi^{-,k}_K + Y^{+,i}_{M,m}. 
\end{equation*}
We first estimate the contribution of the para-controlled terms $ X_{M,K,mk}^{+,i} \parallv \phi^{-,k}_K$. To this end, we write
\begin{equation}\label{modulation:eq-resonant-1-p1}
\begin{aligned}
&\big( X_{M,K,mk}^{+,i} \parallv \phi^{-,k}_K \big) \parasimv \partial_v \phi^{-,n}_N  \\
=& 1\{ K \sim N \} \Big( X_{M,K,mk}^{+,i} \big( \phi_K^{-,k} \parasimv \partial_v \phi_N^{-,n} \big) + \Com_{\parall,\parasim}^v \big( X_{M,K,mk}^{+,i}, \phi^{-,k}_K , \partial_v \phi^{-,n}_N \big) \Big), 
\end{aligned}
\end{equation}
where the commutator is as in Definition \ref{prep:def-commutator-ODE}. Using the multiplication estimate (Corollary \ref{prep:corollary-multiplication}) and Lemma \ref{ansatz:lemma-data}, the first summand in \eqref{modulation:eq-resonant-1-p1} is bounded by 
\begin{align*}
\big\| X_{M,K,mk}^{+,i} \big( \phi_K^{-,k} \parasimv \partial_v \phi_N^{-,n} \big) \big\|_{\Cprod{s}{r-1}} 
&\lesssim \big\|X_{M,K,mk}^{+,i} \big\|_{\Cprod{s}{s}} \big\| \phi^{-,k}_K \parasimv \partial_v \phi^{-,n}_N \big\|_{\C_v^{r-1}} \\
&\lesssim N^{r-2s} \theta^3. 
\end{align*}
Since $r-2s\approx -1/4$, this contribution is acceptable. Using Lemma \ref{prep:lem-commutator-ODE}, we have that 
\begin{align*}
\big\| \Com_{\parall,\parasim}^v \big( X_{M,K,mk}^{+,i}, \phi^{-,k}_K , \partial_v \phi^{-,n}_N \big) \big\|_{\Cprod{s}{r-1}} 
&\lesssim \big\| X^{+,i}_{M,K,mk}\big\|_{\Cprod{s}{s}} 
\big\| \phi^{-,k}_K \big\|_{\Cprod{s}{s}} \big\| \partial_v \phi^{-,n}_N \big\|_{\Cprod{s}{-1+2\eta}} \\
&\lesssim N^{-s+2\eta} \theta^3.
\end{align*}
Since $-s+2\eta \approx -1/2$, this term is acceptable. We now estimate the contribution of $Y^{+,i}_{M,m}$. Using the bilinear estimate (Proposition \ref{prep:prop-bilinear}.\ref{prep:item-res}), we have that 
\begin{align*}
\big\| Y_{M,m}^{+,i} \parasimv \partial_v \phi^{-,n}_N \big\|_{\Cprod{s}{r-1}} 
&\lesssim \big\|  Y_{M,m}^{+,i}\big\|_{\Cprod{s}{r}} 
\big\| \partial_v \phi^{-,n}_N \big\|_{\C_v^{-r^\prime}} \\
&\lesssim N^{1-s-r^\prime} \theta^2.
\end{align*}
Since $1-s-r^\prime\approx -1/4$, this is acceptable. \\

\emph{Proof of \eqref{modulation:eq-resonant-2}:}
Using Lemma \ref{prep:lem-para-localized} and the composition estimate (Lemma \ref{prep:lemma-bony}), we have that 
\begin{align*}
&\big\| \SecondC{a}{ij}(\phi) \parasimv \partial_v \phi^{-,n}_N - \widetilde{P}_N^v \big( \SecondC{a}{ij}(\phi) \big) \partial_v \phi^{-,n}_N \big\|_{\Cprod{s}{r-1}} \\
\lesssim& N^{r-1-s-(s-1)} \big\| \SecondC{a}{ij}(\phi) \big\|_{\Cprod{s}{s}} \big\| \partial_v \phi^{-,n}_N \big\|_{\C_v^{s-1}} \\
\lesssim& N^{r-2s} \theta.
\end{align*}
Since $r-2s\approx -1/4$, this is acceptable. 
As a result, it remains to prove 
\begin{equation*}
\big\| P_K^v \big( \SecondC{a}{ij}(\phi) \big) \partial_v \phi^{-,n}_N \big\|_{\Cprod{s}{r-1}} \lesssim N^{-\eta} \theta^2
\end{equation*}
for all $K\sim N$. Using Bony's para-linearization (Lemma \ref{prep:lemma-bony}) and the bilinear estimate (Proposition \ref{prep:prop-bilinear}), it further suffices to prove that 
\begin{equation*}
\big\| P_K^v \phi^k \partial_v \phi^{-,n}_N \big\|_{\Cprod{s}{r-1}} \lesssim N^{-\eta} \theta^2
\end{equation*}
for all $K\sim N$. By inserting the Ansatz for $\phi$, it then remains to prove the two estimates
\begin{align}
\big\| \sum_M P_K^v\big( A_{M,m}^{+,k} \phi^{+,m}_M \big) \, \partial_v \phi^{-,n}_N \big\|_{\Cprod{s}{r-1}} &\lesssim N^{-\eta} \theta^3 \label{modulation:eq-resonant-2-p1},\\
 \sum_{\substack{\Type \zeta= \\  \tm, \tpm, \ts}} \big\| P_K^v \zeta \, \partial_v \phi^{-,n}_N \big\|_{\Cprod{s}{r-1}} &\lesssim N^{-\eta} \theta^2 \label{modulation:eq-resonant-2-p2}.
\end{align}
We start with the first estimate \eqref{modulation:eq-resonant-2-p1}. Using Corollary \ref{prep:cor-bilinear}, we have that 
\begin{align*}
\big\| \sum_M P_K^v \big( A_{M,m}^{+,k} \phi^{+,m}_M \big) \partial_v \phi^{-,n}_N \big\|_{\Cprod{s}{r-1}}
&\lesssim \sup_M \big\| \phi^{+,m}_M \big\|_{\Cprod{s}{s}} \, 
\sup_M  \big\| P_K^v A_{M,m}^{+,k} \partial_v \phi^{-,n}_N \big\|_{\Cprod{s}{r-1}}. 
\end{align*}
The desired estimate then follows from \eqref{modulation:eq-resonant-1}. The second estimate \eqref{modulation:eq-resonant-2-p2} follows from the symmetry in the $u$ and $v$-variables and Proposition \ref{bilinear:lemma-product}, where $\HH_{M,m}^{+,i}=\delta^i_m$. 

\end{proof}

\begin{lemma}[Estimate of $\Yc^{(2)}$]\label{modulation:lem-yc2}
Let Hypothesis \ref{hypothesis:modulation} be satisfied, let $1\leq a,m\leq \dimA$, and let $M$ be a frequency scale. Then, it holds that 
\begin{equation*}
\| \Yc^{(2),a,+}_{M,m} \|_{\Cprod{s}{r-1}} \lesssim M^{-\eta} \theta^3.
\end{equation*}
\end{lemma}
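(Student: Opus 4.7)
The plan is to split $\Yc^{(2),+,a}_{M,m}$ into its two defining sums \eqref{modulation:eq-yc-5} and \eqref{modulation:eq-yc-missed} and estimate each separately. In both cases, the dominant $u$-frequency of a single summand substantially exceeds $M^{1-\sigma}$, and this mismatch with the outer projection $P^u_{\leq M^{1-\sigma}}$ is the central mechanism producing the required $M^{-\eta}$-decay: it forces the composition factor $\Second^\diamond(\phi)$ to contribute high $u$-frequencies, and the resulting frequency localization yields a decay that renders the dyadic sum convergent.

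For the first sum \eqref{modulation:eq-yc-5} I would fix $L \gtrsim M$ and observe that, by Condition \ref{condition:frequency}, the factors $A^{+,i}_{M,m}$ and $\partial_v A^{+,j}_{L,\ell}$ are supported at $u$-frequencies $\lesssim M^{1-\sigma}$ and $\lesssim L^{1-\sigma}$ respectively, whereas $\phi^{+,\ell}_L$ lives at $u$-frequency $\sim L$. An elementary Fourier-support computation then shows that inside the outer $P^u_{\leq M^{1-\sigma}}$-projection one may replace $\Second^\diamond(\phi)$ by its fattened Littlewood--Paley localization $\widetilde{P}^u_L\Second^\diamond(\phi)$ without loss. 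Using Lemma \ref{prep:lemma-bony} together with $\|\phi\|_{\Cprod{s}{s}} \lesssim \theta$ to bound $\|\Second^\diamond(\phi)\|_{\Cprod{s}{s}}$ by $O(1)$, this localization supplies $\|\widetilde{P}^u_L\Second^\diamond(\phi)\|_{L^\infty_u \C^s_v} \lesssim L^{-s}$. Grouping factors into $f := \widetilde{P}^u_L \Second^\diamond(\phi) \cdot \phi^{+,\ell}_L$ (a high--high interaction in $u$ at scale $\sim L$) and $g := A^{+,i}_{M,m} \partial_v A^{+,j}_{L,\ell}$, a direct computation gives $\|f\|_{\Cprod{s}{s}} \lesssim L^{-s}\theta$, while Corollary \ref{prep:corollary-multiplication} together with Corollary \ref{ansatz:cor-delta-Apm} yields $\|g\|_{\Cprod{s}{r-1}} \lesssim L^{(1-\delta)(r-s)}\theta^2$. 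The bilinear estimate of Proposition \ref{prep:prop-bilinear} then bounds each summand by $L^{r-2s-\delta(r-s)}\theta^3$, an exponent strictly less than $-\eta$ by the parameter relations \eqref{prep:eq-parameter-1}--\eqref{prep:eq-parameter-2}, so summing over $L \gtrsim M$ produces the required bound $M^{-\eta}\theta^3$.

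For the second sum \eqref{modulation:eq-yc-missed} I would first apply the Leibniz rule together with the $v$-independence of $\phi^{+,k}_K$ to split $\partial_v(B^j_{K,N,kn} \phi^{+,k}_K \phi^{-,n}_N)$ into a favorable-derivative piece $\partial_v B^j_{K,N,kn} \cdot \phi^{+,k}_K \phi^{-,n}_N$ and a main piece $B^j_{K,N,kn} \phi^{+,k}_K \cdot \partial_v \phi^{-,n}_N$. The favorable-derivative piece is controlled by Lemma \ref{key:lemma-favorable} (in its $\tpm$ form, measured in $\Cprod{r}{r-1}\hookrightarrow\Cprod{s}{r-1}$) combined with Corollary \ref{prep:corollary-multiplication}, giving a per-summand bound $\max(K,N)^{2r-3s}\theta^4$; since $2r-3s < -\eta$ by the parameter hierarchy and $\max(K,N) > M^{1-\delta}$, summing over the $\sim_\delta$-constrained $(K,N)$ pairs produces at most $M^{(1-\delta)(2r-3s)}(\log M)\theta^4 \lesssim M^{-\eta}\theta^3$. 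For the main piece, an elementary bookkeeping using $\sigma = 100\delta$ shows that the sub-case $K \leq M^{1-\sigma}$ is \emph{empty} under the constraints $K \sim_\delta N$ and $\max(K,N) > M^{1-\delta}$, since $(1-\delta)^2 > 1-\sigma$. On the remaining range $K > M^{1-\sigma}$, the dominant $u$-frequency of the summand is $\sim K$, so the same $\Second^\diamond(\phi)$-localization argument as before yields a gain $K^{-s}$; combining with $\|\partial_v \phi^{-,n}_N\|_{\Cprod{s}{r-1}} \lesssim N^{r-s}\theta$ (using that $\phi^{-}_N$ is $u$-independent) and Corollary \ref{prep:corollary-multiplication}, each summand is bounded by $K^{-s} N^{r-s}\theta^4$, and summing over $K > M^{1-\sigma}$ with $N \sim_\delta K$ again delivers $M^{-\eta}\theta^3$.

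The main technical obstacle will be the rigorous Fourier-support justification for replacing $\Second^\diamond(\phi)$ by its fattened Littlewood--Paley localization at scale $\sim L$ (respectively $\sim K$) inside the outer $P^u_{\leq M^{1-\sigma}}$-projection. This step combines disjoint-support considerations in the $u$-variable with the frequency-support conditions of Condition \ref{condition:frequency}, in the spirit of the arguments in the proofs of Propositions \ref{bilinear:prop-paraless} and \ref{bilinear:lemma-product}. A secondary bookkeeping challenge is verifying that the various exponents remain strictly below $-\eta$ uniformly after summation, which ultimately reduces to the parameter hierarchy $0 < \frac{1}{2}-s \ll \eta \ll \delta \ll \frac{3}{4}-r$ from \eqref{prep:eq-parameter-1}--\eqref{prep:eq-parameter-2} together with the relation $\sigma = 100\delta$.
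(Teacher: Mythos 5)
Your proof is correct and uses essentially the same mechanism as the paper's argument: exploiting the mismatch between the outer projection $P^u_{\leq M^{1-\sigma}}$ and the high $u$-frequency carried by $\phi^+_L$ (respectively $\phi^+_K$), which forces the composition factor $\SecondC{a}{ij}(\phi)$ to contribute $u$-frequencies of size $\sim L$ (respectively $\sim K$) and thereby supplies the decisive decay. The packaging differs slightly. For \eqref{modulation:eq-yc-5} the paper pulls out $M^{(1-\sigma)s}$ from the outer projection and inserts $P^u_{\gtrsim L}$ on the bulk factor $\SecondC{a}{ij}(\phi) A^+_M \partial_v A^+_L$, obtaining $M^{(1-\sigma)s}L^{-2s}$, whereas you instead replace $\SecondC{a}{ij}(\phi)$ by its fattened localization $\widetilde{P}^u_L\SecondC{a}{ij}(\phi)$ and group $\widetilde{P}^u_L\SecondC{a}{ij}(\phi)\cdot\phi^+_L$ into a factor with $\Cprod{s}{s}$-norm $\lesssim L^{-s}\theta$; these are equivalent accountings of the same gain, and both yield $L^{r-2s-\delta(r-s)}\theta^3$ per dyadic scale after invoking Corollary \ref{ansatz:cor-delta-Apm}. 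For \eqref{modulation:eq-yc-missed}, the paper does not perform a Leibniz split: it keeps $\partial_v(B^j_{K,N,kn}\phi^-_N)$ together, bounds $\|P^u_{\gtrsim K}(\SecondC{a}{ij}(\phi)A^+_M\partial_v(B_{K,N}\phi^-_N))\|_{L^\infty_u\C_v^{r-1}}\lesssim K^{-s}N^{r-s}\theta^3$ and pairs it with $\|\phi^+_K\|_{L^\infty_u}\lesssim K^{-s}\theta$ and the extracted $M^{(1-\sigma)s}$. Your Leibniz split into the favorable-derivative piece (controlled via Lemma \ref{key:lemma-favorable}) and the main piece (controlled by the same $\SecondC{a}{ij}$-localization) is an extra step, but it is sound and also yields acceptable bounds. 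Your observation that $K\leq M^{1-\sigma}$ is empty because $K>M^{(1-\delta)^2}$ and $(1-\delta)^2>1-\sigma$ (equivalently $\sigma>2\delta-\delta^2$) is exactly the implicit justification underlying the paper's frequency-support reasoning. One small notational nit: in the step $\|A^+_M\partial_v A^+_L\|_{\Cprod{s}{r-1}}\lesssim L^{(1-\delta)(r-s)}\theta^2$ you cite Corollary \ref{prep:corollary-multiplication}, which is stated for $\Cprod{r-1}{r-1}$; the correct reference for a $\Cprod{s}{r-1}$ estimate with one factor in $\Cprod{s}{s}$ is Proposition \ref{prep:prop-bilinear} directly, under the conditions $2s>0$ and $s+r-1>0$.
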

\begin{proof} We estimate the two terms \eqref{modulation:eq-yc-5} and \eqref{modulation:eq-yc-missed} separately. For \eqref{modulation:eq-yc-5}, we have that 
\begin{align*}
 & \Big\| P^u_{\leq M^{1-\sigma}} 
\Big( \sum_{L\colon L\gtrsim M} \SecondC{a}{ij}(\phi) A_{M,m}^{+,i} \partial_v A_{L,\ell}^{+,j} \phi^{+,\ell}_L  \Big) \Big\|_{\Cprod{s}{r-1}} \\
\lesssim& M^{(1-\sigma) s} \sum_{L\colon L \gtrsim M} 
\Big\| P_{\gtrsim L}^u \Big( \SecondC{a}{ij}(\phi) A^{+,i}_{M,m} \partial_v A_{L,\ell}^{+,j} \Big) \phi^{+,\ell}_L \Big\|_{L_u^\infty \C_v^{r-1}} \\
\lesssim& M^{(1-\sigma)s} \sum_{L\colon L \gtrsim M} L^{-2s}
\big\| \SecondC{a}{ij}(\phi) \big\|_{\Cprod{s}{s}} 
\big\| A^{+,i}_{M,m} \big\|_{\Cprod{s}{s}}  
\big\| \partial_v A_{L,\ell}^{+,j}\big\|_{\Cprod{s}{r-1}} 
\big\| \phi^{+,\ell}_L \big\|_{\C_u^s} \\
\lesssim& M^{(1-\sigma)s } \sum_{L\colon L \gtrsim M} L^{r-3s} \theta^3 \\
\lesssim& M^{r-2s}\theta^3. 
\end{align*}
Since $r-2s\approx -1/4$, this term is acceptable. We now estimate the second term in $\Yc^{(2)}$, which is given by \eqref{modulation:eq-yc-missed}. To this end, we first note that $\max(K,N)>M^{1-\delta}$ and $K\sim_\delta N$ imply that $K,N\geq M^{1-2\delta}$. Then, we estimate
\begin{align*}
&\bigg\| P_{\leq M^{1-\sigma}}^u \bigg[ \sum_{\substack{K,N\colon \\ \max(K,N)>M^{1-\delta}, \\ K\sim_\delta N }} 
\SecondC{a}{ij}(\phi) A^{+,i}_{M,m} \partial_v \Big( B^j_{K,N,kn} \phi^{+,k}_K \phi^{-,n}_N\Big) \bigg] \bigg\|_{\Cprod{s}{r-1}} \\
&\lesssim  M^{(1-\sigma)s} 
\sum_{\substack{K,N\colon \\ K,N\geq M^{1-2\delta}, \\ K\sim_\delta N }} 
\Big\| P_{\gtrsim K}^u \Big( \SecondC{a}{ij}(\phi) A^{+,i}_{M,m} \partial_v \Big( B^j_{K,N,kn} \phi^{-,n}_N\Big) \Big) \Big\|_{L^\infty_u \C_v^{r-1}} \| \phi^{+,k}_K\|_{L^\infty_u} \\
&\lesssim M^{(1-\sigma)s} \sum_{\substack{K,N\colon \\ K,N\geq M^{1-2\delta}, \\ K\sim_\delta N }} K^{-2s} N^{r-s} \theta^4. 
\end{align*}
The dyadic sum can be estimated by
\begin{align*}
    M^{(1-\sigma)s} \sum_{\substack{K,N\colon \\ K,N\geq M^{1-2\delta}, \\ K\sim_\delta N }} K^{-2s} N^{r-s} 
    &\lesssim M^{(1-\sigma) s} \sum_{\substack{N\colon \\ N \geq M^{1-2\delta}}}
    N^{r-s-(1-\delta)2s} \\
    &\lesssim M^{(1-\sigma)s-(1-2\delta)(r-s-(1-\delta)2s)}.
\end{align*}
Since $r-2s \approx -1/4$, this is acceptable. 
\end{proof}

\subsection{Proof of Proposition \ref{ansatz:prop-modulation}}

Equipped with the estimates from Section \ref{section:modulation-estimates}, we can now prove the local well-posedness of the modulation equations.

\begin{proof}[Proof of Proposition \ref{ansatz:prop-modulation}]
We use a contraction mapping argument. For expository purposes, we split the proof into three steps. In the first step, we set up the solution space $\Sc$, which captures the norms in Hypothesis \ref{hypothesis:modulation}, and the mapping $\Gamma$, which encodes the modulation equations. In the second step, we show that $\Gamma$ maps a $\theta$-ball in $\Sc$ to itself. In the third step, we briefly discuss the contraction property and the continuous dependence on the data $(\phi_0(0),\phi^+,\phi^-,\psi)$, but omit the details. \\

\emph{Step 1: Setup}. We first define the norm
\begin{align*}
& \hspace{-7ex}\big\| (B,X^+,X^-,Y^+,Y^-) \big\|_{\Sc} \\
=\max\bigg(&
\, 5 \sup_{M\sim_\delta N} \| B_{M,N}\|_{\Cprod{s}{s}}, 
2 \hspace{-1ex} \sup_{\substack{M,N \colon \\ N \leq M^{1-\delta}}} 
\hspace{-1ex}  \| X^+_{M,N} \|_{\Cprod{s}{s}}, 
 2  \hspace{-1ex} \sup_{\substack{M,N \colon \\ M \leq N^{1-\delta}}} 
 \hspace{-1ex} \| X_{N,M}^- \|_{\Cprod{s}{s}}, \\
&2 \sup_{M} \| Y_M^+ \|_{\Cprod{s}{r}}, \,
2 \sup_{N} \| Y_N^- \|_{\Cprod{r}{s}} \bigg).
\end{align*}
The absolute constants in $\| \cdot \|_{\Sc}$ have been chosen to match Hypothesis \ref{hypothesis:modulation}. We define the corresponding solution space $\Sc$ as the set of functions that have finite $\Sc$-norm and satisfy the frequency-support conditions. More precisely, we define
\begin{align*}
\Sc := \Big \{ &(B,X^+,X^-,Y^+,Y^-)\colon \| (B,X^+,X^-,Y^+,Y^-) \|_{\Sc} <\infty \text{ and} \\
&\,  \eqref{ansatz:eq-condition-pm}, \, 
\eqref{modulation:eq-freq-cond-X+}, \,
\eqref{modulation:eq-freq-cond-Y+}, \,
\eqref{modulation:eq-freq-cond-X-}, \,
\text{and }
\eqref{modulation:eq-freq-cond-Y-}
\text{ hold}  \Big\}.
\end{align*}
Furthermore, we define the $\theta$-ball $\Sc_\theta$ by 
\begin{equation*}
\Sc_\theta := \Big \{ (B,X^+,X^-,Y^+,Y^-) \in \Sc\colon \| (B,X^+,X^-,Y^+,Y^-) \|_{\Sc} \leq \theta \Big\}. 
\end{equation*}
For any element $(B,X^+,X^-,Y^+,Y^-)\in \Sc_\theta$, we define the corresponding modulations $A^+=A^+[X^+,Y^+]$ and $A^-=A^-[X^-,Y^-]$ through the para-controlled Ansatz in \eqref{modulation:eq-ansatz-p} and \eqref{modulation:eq-ansatz-m}. Using the bilinear estimate for families (Corollary \ref{prep:cor-bilinear}), one easily obtains that
\begin{align*}
\sup_M \| A_M^+ \|_{\Mod_M^+} &\leq C \theta \sup_{M,N} \| X_{M,N}^+ \|_{\Cprod{s}{s}} +  2 \sup_{M} \| Y_M^+ \|_{\Cprod{s}{r}}, \\
\sup_N \| A_N^- \|_{\Mod_N^-} &\leq C \theta \sup_{M,N} \| X_{N,M}^- \|_{\Cprod{s}{s}} +  2 \sup_{N} \| Y_N^- \|_{\Cprod{r}{s}}. 
\end{align*}
In particular, the condition $(B,X^+,X^-,Y^+,Y^-)\in \Sc_\theta$ implies that $A^+$ and $A^-$ satisfy the conditions in Hypothesis \ref{hypothesis:modulation}. 
We now define a map $\Gamma=\Gamma[B,X^+,X^-,Y^+,Y^-]$  which encodes the para-controlled modulation equations. We define
\begin{align*}
(\Gamma_{B})^{b}_{M,N,mn} 
&:= \text{ RHS of } \eqref{ansatz:eq-modulation-pm}, \\
(\Gamma_X)^{+,a}_{M,N,mn} 
&:= \text{ RHS of } \eqref{modulation:eq-X}, \\
(\Gamma_Y)^{+,a}_{M,m} 
&:= \text{ RHS of } \eqref{modulation:eq-Y1} + 
\eqref{modulation:eq-Y2}. 
\end{align*}
The components $(\Gamma_X)^-$ and $(\Gamma_Y)^-$ are similar as $(\Gamma_X)^+$ and $(\Gamma_Y)^+$ but with reversed roles of the $u$ and $v$-variables. \\

\emph{Step 2: Self-mapping property.} 

In this step, we prove that for any $(B,X^+,X^-,Y^+,Y^-)\in \Sc_\theta$, it holds that 
\begin{equation*}
\Gamma[B,X^+,X^-,Y^+,Y^-] \in \Sc_\theta.
\end{equation*}
The frequency-support conditions follow directly from the definition of $\Gamma$. As a result, it remains to prove that 
\begin{equation}\label{modulation:eq-lwp-1}
\| \Gamma[B,X^+,X^-,Y^+,Y^-] \|_{\Sc}\leq \theta. 
\end{equation}
By symmetry in the $u$ and $v$-variables, it suffices to prove the required bounds in \eqref{modulation:eq-lwp-1} for the $\Gamma_B$, $(\Gamma_X)^+$, and $(\Gamma_Y)^+$-components. \\

\emph{Step 2.i: Self-mapping bound for the $\Gamma_B$-component.} Using the bilinear estimate (Proposition \ref{prep:prop-bilinear}), the composition estimate (Lemma \ref{prep:lemma-bony}), and Definition \ref{ansatz:def-F}, we have that
\begin{align*}
\| (\Gamma_B)_{M,N} \|_{\Cprod{s}{s}} \lesssim
\| \Second^\diamond(\phi) \|_{\Cprod{s}{s}} \| A^+_M \|_{\Cprod{s}{s}} \| A^-_N \|_{\Cprod{s}{s}} \lesssim \theta^2.
\end{align*}
Since $\theta>0$ is sufficiently small, we obtain that 
$\| (\Gamma_B)_{M,N} \|_{\Cprod{s}{s}}\leq 5\theta$, which is the desired estimate. \\

\emph{Step 2.ii: Self-mapping bound for the $(\Gamma_X)^+$-component.} It suffices to estimate $\Xc^{+,a}_{M,N,mn}$ in $\Cprod{s}{s}$. Using the bilinear estimate (Proposition \ref{prep:prop-bilinear}), the bilinear estimate for families (Corollary \ref{prep:cor-bilinear}), and the composition estimate (Lemma \ref{prep:lemma-bony}), we have that 
\begin{align*}
\| (\Gamma_X)_{M,m}^{+,a} \|_{\Cprod{s}{s}}  \leq& \| \Xc^{+,a}_{M,N,mn} \|_{\Cprod{s}{s}} \\
\leq & \| \eqref{modulation:eq-Xc1} \|_{\Cprod{s}{s}} 
+ \| \eqref{modulation:eq-Xc2} \|_{\Cprod{s}{s}} 
+\| \eqref{modulation:eq-Xc3} \|_{\Cprod{s}{s}} \\
\lesssim& \| \Second^\diamond(\phi) \|_{\Cprod{s}{s}} \| A^+_M \|_{\Cprod{s}{s}}
\| A^-_N \|_{\Cprod{s}{s}} \\
+&  \sup_{L} \| \Second^\diamond(\phi) \|_{\Cprod{s}{s}}  \| A^+_M \|_{\Cprod{s}{s}} \| \phi^+_L \|_{\C_u^s} \| X^+_{L,N}\|_{\Cprod{s}{s}} \\
+& \sup_K \| \Second^\diamond(\phi) \|_{\Cprod{s}{s}} \| A^+_M \|_{\Cprod{s}{s}} 
\| B_{K,N} \|_{\Cprod{s}{s}} \| \phi^+_K \|_{\C_u^s} \\
\lesssim& \theta^2. 
\end{align*}
Since $\theta>0$ is sufficiently small, this implies the desired estimate. \\

\emph{Step 2.iii: Self-mapping bound for the $(\Gamma_Y)^+$-component.} In this step, we prove that 
\begin{equation*}
\| (\Gamma_Y)^+ - \theta \operatorname{Id} \|_{\Cprod{s}{r}} \lesssim \theta^2,
\end{equation*}
which implies the desired estimate. Using the definition of $(\Gamma_Y)^+$, it suffices to prove that 
\begin{align}
\big\| \sum_{N\leq M^{1-\delta}} 
\Com_{\chi,\parall,\I}^v(\Xc_{M,N,mn}^{+,a}, \phi^{-,n}_N ) \big\|_{\Cprod{s}{r}} &\lesssim \theta^2, \label{modulation:eq-lwp-2} \\
\| \Yc^{+,a}_{M,m} \|_{\Cprod{s}{r-1}} &\lesssim \theta^3. \label{modulation:eq-lwp-3}
\end{align}
In Step 2.ii, we have already estimated $\Xc^{+}_{M,N}$ in $\Cprod{s}{s}$. Then, the estimate \eqref{modulation:eq-lwp-2} directly follows from Lemma \ref{prep:lem-commutator-ODE}. To prove the estimate \eqref{modulation:eq-lwp-3}, we further distinguish between the four components $\Yc^{(1)}$, $\Yc^{(2)}$, $\Yc^{(3)}$, and $\Yc^{(4)}$. \\

\emph{Estimate of $\Yc^{(1)}$:} Using Lemma \ref{prep:lem-commutator-ODE} for $\parasimv$, the bilinear estimate for families (Corollary \ref{prep:cor-bilinear}), and the multiplication estimate (Corollary \ref{prep:eq-multiplication}), it remains to control 
\begin{alignat}{3}
A_{M,m}^{+,i} &\parasimv \partial_v \phi^{-,n}_N,& \quad
\SecondC{a}{ij}(\phi)  &\parasimv \partial_v \phi^{-,n}_N, \label{modulation:eq-yc-e1} \\
A_{N,n}^{-,j} &\parasimv \partial_v \phi^{-,n}_N,& \quad \text{and} \quad
B_{K,N,kn}^j &\parasimv  \partial_v \phi^{-,n}_N \label{modulation:eq-yc-e2}
\end{alignat}
in $\Cprod{s}{r-1}$. The two terms in \eqref{modulation:eq-yc-e1} are controlled by Lemma \ref{modulation:lem-resonant}. The two terms in \eqref{modulation:eq-yc-e2} vanish due to the frequency-support conditions on $A^{-,j}_{N,n}$ and $B^j_{K,N,kn}$. 

\emph{Estimate of $\Yc^{(2)}$:} This term is the content of Lemma \ref{modulation:lem-yc2}.

\emph{Estimate of $\Yc^{(3)}$:} This term can be bounded directly through Lemma \ref{prep:lem-commutator-ODE}. 

\emph{Estimate of $\Yc^{(4)}$:} The remaining ``easy" terms can be bounded using the bilinear estimate (Proposition \ref{prep:prop-bilinear}) and Lemma \ref{key:lemma-unfortunate}. As a result, we omit the details. \\

This completes the proof of the second estimate \eqref{modulation:eq-lwp-3} and, therefore, the proof of the self-mapping estimate \eqref{modulation:eq-lwp-1}. \\

\emph{Step 3: The contraction estimate and continuous dependence on the data.} 
In Step 2, we proved that $\Gamma$ maps $\Sc_\theta$ back into itself. In order to utilize the contraction mapping theorem, it remains to prove that $\Gamma$ is a contraction on $\Sc_\theta$, which would follow from the estimate
\begin{equation}\label{modulation:eq-lwp-contraction}
\begin{aligned}
&\| \Gamma[B,X^+,X^-,Y^+,Y^-] - \Gamma[B^\prime,(X^\prime)^+,(X^\prime)^-,(Y^\prime)^+,(Y^\prime)^-] \|_{\Sc} \\
\lesssim& \theta \| (B,X^+,X^-,Y^+,Y^-) - (B^\prime,(X^\prime)^+,(X^\prime)^-,(Y^\prime)^+,(Y^\prime)^-) \|_{\Sc}.
\end{aligned}
\end{equation}
Compared to our previous estimate \eqref{modulation:eq-lwp-1}, the only new difficulties in proving \eqref{modulation:eq-lwp-contraction} are notational, since $\Gamma$ contains a large number of terms. The individual estimates, however, are the exact same estimates as used in the proof of \eqref{modulation:eq-lwp-1}. As a result, we omit the (extremely tedious but standard) details. \\
As is common for contraction mapping arguments, our estimates also yield the Lipschitz-continuous dependence on the data, which is given by $(\phi_0(0),\phi^+,\phi^-,\psi) \in \M \times  \Ds \times \Cprod{r}{r}$. This can be shown by further generalizing the contraction estimates \eqref{modulation:eq-lwp-contraction}. After reflecting the dependence on $(\phi_0(0),\phi^+,\phi^-,\psi)$ in our notation, the desired estimate reads
\begin{align}
&\| \Gamma[\phi_0(0),\phi^+,\phi^-,\psi,B,X^+,X^-,Y^+,Y^-] - \Gamma[\phi_0^\prime(0),(\phi^\prime)^+,(\phi^\prime)^-,\psi^\prime,B^\prime,(X^\prime)^-,(Y^\prime)^+,(Y^\prime)^-] \|_{\Sc} \notag  \\
\lesssim& \theta \Big( \| \phi_0(0) - \phi_0^\prime(0)\|_{\R^\dimA}+ \| (\phi^+,\phi^-); ((\phi^\prime)^+,(\phi^\prime)^-) \|_{\Ds} 
+ \| \psi - \psi^\prime \|_{\Cprod{r}{r}} \label{modulation:eq-lwp-cts} \\
&\hspace{3ex}+\| (B,X^+,X^-,Y^+,Y^-) - (B^\prime,(X^\prime)^+,(X^\prime)^-,(Y^\prime)^+,(Y^\prime)^-) \|_{\Sc} \Big). \notag
\end{align}
As for the contraction estimate \eqref{modulation:eq-lwp-contraction}, its generalization \eqref{modulation:eq-lwp-cts} can be proven using the estimates leading to \eqref{modulation:eq-lwp-1}. We therefore omit the (extremely tedious but standard) details.
\end{proof}

%%%%%%%%%%%%%%%%%%%%%%%% Local well-posedness %%%%%%%%%%%%%%%%%%%%
\section{Local well-posedness}\label{section:lwp}

In the final section of the article, we present the proof of Proposition \ref{ansatz:prop-forced-wm} and Theorem \ref{intro:thm-rigorous}. 

\begin{proof}[Proof of Proposition \ref{ansatz:prop-forced-wm}]
Throughout the proof, we assume that $\|(\phi^+,\phi^-)\|_{\Ds}\leq \theta$ and restrict our discussion to $\psi\in \Cprod{r}{r}$ satisfying $\| \psi \|_{\Cprod{r}{r}}\leq \theta$. We now split the argument into two steps. In the first step, we simplify the forced wave maps equation \eqref{ansatz:eq-forced-wm}. In the second step, we solve the simplified version using a contraction mapping argument. \\

\emph{Step 1: The simplified forced wave maps equation.}
Due to Proposition \ref{ansatz:prop-modulation}, there exist modulations $A^+[\psi],A^-[\psi]$, and $B[\psi]$ satisfying the modulation equations from Definition \ref{ansatz:def-modulation-eqs}. We recall from \eqref{ansatz:eq-phi} that 
\begin{equation}\label{lwp:eq-forced-p1}
\begin{aligned}
\phi^k[\psi](u,v) 
&= \sum_M A^{+,k}_{M,m}[\psi](u,v) \phi^{+,m}_M(u) 
+ \sum_N A^{-,k}_{N,n}[\psi](u,v) \phi^{-,n}_N(v) \\
&+ \sum_{\substack{M,N\colon \\ M\sim_\delta N}} B^{k}_{M,N,mn}[\psi](u,v) \phi^{+,m}_M(u) \phi^{-,n}_N(v) 
+ \psi^k(u,v). 
\end{aligned}
\end{equation}
Using the modulation equations and Proposition \ref{duhamel:prop-approx}, we have that \begin{equation}\label{lwp:eq-forced-p2}
\begin{aligned}
&\theta \big( \phi^{+,k} +  \phi^{-,k} \big) -  \chi^+ \chi^- \Duh\Big[ \Second^{k}_{ij}(\phi[\psi]) \partial_u \phi^i[\psi] \partial_v \phi^j[\psi]\Big] \\
=& \sum_{\substack{M,N\colon \\ M\sim_\delta N}} B^{k}_{M,N,mn}[\psi](u,v) \phi^{+,m}_M(u) \phi^{-,n}_N(v)  
+\sum_M A^{+,k}_{M,m}[\psi](u,v) \phi^{+,m}_M(u) \\
+& \sum_N A^{-,k}_{N,n}[\psi](u,v) \phi^{-,n}_N(v) 
+ \Rc^k\big[\phi_0(0),\phi^\pm,A^\pm[\psi],B[\psi],\psi\big].
\end{aligned}
\end{equation}
Here, the remainder $\Rc=\Rc\big[\phi_0(0),\phi^\pm,A^\pm,B,\psi\big]$ is as in the statement of Proposition \ref{duhamel:prop-approx}. By combining \eqref{lwp:eq-forced-p1} and \eqref{lwp:eq-forced-p2}, all terms but the remainder cancel, which yields
\begin{align*}
&\theta \big( \phi^{+,k} +  \phi^{-,k} \big) - \chi^+ \chi^- \Duh\Big[ \Second^k_{ij}(\phi[\psi]) \partial_u \phi^i[\psi] \partial_v \phi^j[\psi]\Big] - (\phi^k[\psi]-\psi^k) \\
=& \Rc^k\big[\phi_0(0),\phi^\pm,A^\pm[\psi],B[\psi],\psi\big]. 
\end{align*}
As a result, the forced wave maps equation \eqref{ansatz:eq-forced-wm} reads
\begin{equation}\label{lwp:eq-forced-p3}
\psi^k=\Rc^k\big[\phi_0(0),\phi^\pm,A^\pm[\psi],B[\psi],\psi\big]. 
\end{equation}
We call \eqref{lwp:eq-forced-p3} the simplified forced wave maps equation, which we now solve using a contraction mapping argument.\\

\emph{Step 2: Solving the simplified forced wave maps equation.}
We define the solution set 
\begin{equation*}
\mathcal{S}_\theta=\{ \psi \in \Cprod{r}{r}\colon \| \psi \|_{\Cprod{r}{r}} \leq \theta\},
\end{equation*}
which is equipped with the $\Cprod{r}{r}$-norm. To simplify the notation, we write
\begin{equation}
\Gamma[\phi_0(0),\phi^\pm,\psi] := \Rc[\phi_0(0),\phi^\pm,A^\pm[\psi],B[\psi],\psi]. 
\end{equation}
We now let $\psi,\widetilde{\psi}\in \mathcal{S}_\theta$ and assume that $(\phi^+,\phi^-),(\widetilde{\phi}^+,\widetilde{\phi}^-)\in \Ds$ satisfy
\begin{equation*}
\| (\phi^+,\phi^-)\|_{\Ds}\leq \theta \qquad \text{and} \qquad  \| (\widetilde{\phi}^+,\widetilde{\phi}^-)\|_{\Ds}\leq \theta.
\end{equation*}
In order to prove that $\Gamma$ is a contraction on $\mathcal{S}_\theta$  and that the resulting fixed-point depends continuously on the data, it suffices to prove the two estimates
\begin{equation}\label{lwp:eq-forced-p4} 
\| \Gamma[\phi_0(0),\phi^\pm,\psi]\|_{\Cprod{r}{r}} \leq \theta
\end{equation}
and
\begin{equation}\label{lwp:eq-forced-p5} 
\begin{aligned}
&\| \Gamma[\phi_0(0),\phi^\pm,\psi]-\Gamma[\widetilde{\phi}_0(0),\widetilde{\phi}^\pm,\widetilde{\psi}]\|_{\Cprod{r}{r}} \\
\lesssim& \theta \big( \| \phi_0(0) - \widetilde{\phi}_0(0)\|_{\R^\dimA}+\| (\phi^+,\phi^-);(\widetilde{\phi}^+,\widetilde{\phi}^-) \|_{\Ds} + \| \psi - \widetilde{\psi} \|_{\Cprod{r}{r}} \big). 
\end{aligned}
\end{equation}
The first inequality \eqref{lwp:eq-forced-p4} follows directly from Proposition \ref{ansatz:prop-modulation} and Proposition \ref{duhamel:prop-approx}. Indeed, we obtain from Proposition \ref{ansatz:prop-modulation} that $A^+[\psi]$, $A^-[\psi]$, and $B[\psi]$ satisfy Hypothesis \ref{hypothesis:smallness}. Then, it follows from Proposition \ref{duhamel:prop-approx} that 
\begin{equation*}
    \| \Gamma[\phi_0(0),\phi^\pm,\psi]\|_{\Cprod{r}{r}}= \| \Rc[\phi_0(0),\phi^\pm, A^\pm[\psi], B[\psi], \psi] \|_{\Cprod{r}{r}} \lesssim \theta^2 \leq \theta. 
\end{equation*}
Compared to the first inequality \eqref{lwp:eq-forced-p4}, the only new difficulties in proving the second inequality \eqref{lwp:eq-forced-p5} are notational. The reason is that, once we insert the expression for $\Rc$ implicit from \eqref{duhamel:eq-R}, the left-hand side of \eqref{lwp:eq-forced-p5} contains numerous terms. The individual estimates, however, are the exact same estimates as those leading to the proof of \eqref{lwp:eq-forced-p4}. As a result, we omit the (extremely tedious but standard) details.

\end{proof}

We now prove the local well-posedness of the wave maps equation \eqref{intro:eq-WM} for small data in $\Ds$. As we show below, the main theorem then follows from a scaling argument. 

\begin{proposition}[Local well-posedness for small data]\label{lwp:prop-small-data}
Let $(\phi_0^\varepsilon,\phi_1^\varepsilon)_\varepsilon \subseteq C^\infty(\R \rightarrow T\M)$ and assume that the following three conditions are satisfied:
\begin{enumerate}[label={(\roman*)},leftmargin=10mm]
\item The initial positions $\phi^\varepsilon_0(0)$ converges in $\M$.
\item The shifted linear waves $\phi^{\diamond,+,\varepsilon}$ and $\phi^{\diamond,-,\varepsilon}$ converge in $\Ds$.
\item The shifted linear waves $\phi^{\diamond,+,\varepsilon}$ and $\phi^{\diamond,-,\varepsilon}$ are small in $\Ds$, i.e.,
\begin{equation*}
\sup_{\varepsilon>0} \| (\phi^{\diamond,+,\varepsilon},\phi^{\diamond,-,\varepsilon})\|_{\Ds} \leq \theta.
\end{equation*}
\end{enumerate}
Then, the smooth global solutions $\phi^\varepsilon$ of \eqref{intro:eq-WM} with initial data $(\phi_0^\varepsilon,\phi_1^\varepsilon)$ converge in
\begin{equation}\label{lwp:eq-space}
(C_t^0 C_x^s \medcap C_t^1 C_x^{s-1})([-1,1]\times [-1,1] \rightarrow \M).
\end{equation}
\end{proposition}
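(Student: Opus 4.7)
The plan is to construct a candidate solution via Propositions~\ref{ansatz:prop-forced-wm} and~\ref{ansatz:prop-modulation}, identify it with the true smooth solution on $[-2,2]^2$ by a uniqueness argument, and then transfer convergence from the data to the solutions through the Lipschitz-continuous dependence of the Ansatz.

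First I would reduce to the shifted wave maps equation. Setting $B_0^\varepsilon := \phi_0^\varepsilon(0)$, the shifted initial data $(\phi_0^{\diamond,\varepsilon}, \phi_1^{\diamond,\varepsilon})$ produces shifted linear waves $\phi^{\diamond,+,\varepsilon}, \phi^{\diamond,-,\varepsilon}$ which, by assumption, converge in $\Ds$ and satisfy $\|(\phi^{\diamond,+,\varepsilon}, \phi^{\diamond,-,\varepsilon})\|_{\Ds} \leq \theta$. Proposition~\ref{ansatz:prop-forced-wm} then provides a unique nonlinear remainder $\psi^\varepsilon \in \Cprod{r}{r}$ with $\|\psi^\varepsilon\|_{\Cprod{r}{r}} \leq \theta$, and Proposition~\ref{ansatz:prop-modulation} yields associated modulations $A^{\pm,\varepsilon} := A^\pm[\psi^\varepsilon]$ and $B^\varepsilon := B[\psi^\varepsilon]$. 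Assembled through the Ansatz~\eqref{ansatz:eq-phi}, these components define $\widetilde{\phi}^{\diamond,\varepsilon} := \phi[\psi^\varepsilon]$; by the cancellations exploited at the start of the proof of Proposition~\ref{ansatz:prop-forced-wm}, this $\widetilde{\phi}^{\diamond,\varepsilon}$ satisfies the localized Duhamel integral equation for the shifted wave maps equation with data $(\phi_0^{\diamond,\varepsilon}, \phi_1^{\diamond,\varepsilon})$.

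The core step is to identify $\widetilde{\phi}^{\diamond,\varepsilon}$ with the smooth shifted solution $\phi^{\diamond,\varepsilon} = \phi^\varepsilon - B_0^\varepsilon$ on $\{(u,v) : |u|, |v| \leq 2\}$. Lemma~\ref{prep:lem-deterministic-uniqueness} is the natural tool, but its hypotheses require both solutions to lie locally in $\Cprod{r}{r}$: the smooth solution does so trivially, whereas $\widetilde{\phi}^{\diamond,\varepsilon}$ only obviously lies in the mixed-regularity classes $\Cprod{s}{r}$ and $\Cprod{r}{s}$, since $A^{\pm,\varepsilon}_M \in \Mod^\pm_M$. I expect this to be the main obstacle. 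To overcome it, I would exploit the additional smoothness of the data: for smooth $\phi^{\diamond,\pm,\varepsilon}$ one has $\phi^{\diamond,\pm,\varepsilon} \in \C_x^{r^\sharp}$ for any $r^\sharp > r$, and the contraction schemes underlying Propositions~\ref{ansatz:prop-modulation} and~\ref{ansatz:prop-forced-wm} can be re-run inside a (larger but finite) ball of the corresponding $r^\sharp$-analogues of the solution spaces. Crucially, the $\Ds$-smallness assumption still tames the rough high$\times$high$\to$low interactions, so the multilinear estimates from Sections~\ref{section:prototypical}--\ref{section:modulation} close with essentially the same structure. Uniqueness of the fixed point at the original regularity forces the two constructions to coincide, upgrading $\widetilde{\phi}^{\diamond,\varepsilon}$ to $\Cprod{r^\sharp}{r^\sharp}$ locally, at which point Lemma~\ref{prep:lem-deterministic-uniqueness} yields $\widetilde{\phi}^{\diamond,\varepsilon}(u,v) = \phi^{\diamond,\varepsilon}(u,v)$ for $|u|, |v| \leq 2$.

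The convergence claim then follows by chaining the Lipschitz-continuous dependence statements of the two propositions. The hypotheses give convergence of $(\phi_0^\varepsilon(0), \phi^{\diamond,+,\varepsilon}, \phi^{\diamond,-,\varepsilon})$ in $\M \times \Ds$, which implies convergence of $(\psi^\varepsilon, A^{\pm,\varepsilon}, B^\varepsilon)$ in $\Cprod{r}{r} \times \Mod$. Applying Lemma~\ref{ansatz:lem-ctcx} to differences of two such Ansatz assemblies shows that $\widetilde{\phi}^{\diamond,\varepsilon}$ is Cauchy in $(\C_t^0 \C_x^s \cap \C_t^1 \C_x^{s-1})(\R \times \R)$. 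Translating back by the converging $B_0^\varepsilon \in \M \subset \R^\dimA$ and invoking the identification from the preceding paragraph on $[-1,1]^2 \subset [-2,2]^2$, we conclude the desired convergence of $\phi^\varepsilon$ in the space~\eqref{lwp:eq-space}.
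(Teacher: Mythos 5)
Your overall structure — assemble a candidate via Propositions~\ref{ansatz:prop-modulation} and~\ref{ansatz:prop-forced-wm}, identify it with the smooth solution via Lemma~\ref{prep:lem-deterministic-uniqueness}, and transfer convergence through Lipschitz dependence and Lemma~\ref{ansatz:lem-ctcx} — is exactly the paper's route, and you have correctly located the only nontrivial step: verifying that the assembled $\widetilde{\phi}^{\diamond,\varepsilon}$ lies locally in $\Cprod{r}{r}$ so that Lemma~\ref{prep:lem-deterministic-uniqueness} applies. The gap is in your proposed resolution of that step. The paper's resolution is much shorter and different in kind: for each \emph{fixed} $\varepsilon>0$ the shifted linear waves $\phi^{\diamond,\pm,\varepsilon}$ are smooth, so their Littlewood--Paley pieces $\phi^{\diamond,\pm,\varepsilon}_M$ decay faster than any polynomial in $M$. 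Combined with the uniform bounds on $A^{\pm,\varepsilon}_M$, $B^\varepsilon_{M,N}$, and $\psi^\varepsilon$ already furnished by Hypothesis~\ref{hypothesis:smallness}, this rapid frequency decay makes each Ansatz term, e.g.\ $A^{+,\varepsilon}_{M,m}\phi^{+,\varepsilon,m}_M$, summable in $\Cprod{r}{r}$ even though the individual modulation norms $\Mod_M^+$, $\Mod_N^-$ only control mixed $(s,r)$-regularity. Your concern — valid uniformly in $\varepsilon$ where $\Cprod{r}{r}$ genuinely fails — is thus absorbed by the qualitative smoothness available at fixed $\varepsilon$, with no need to touch the contraction scheme.

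Your proposed workaround of re-running the contraction at a higher regularity $r^\sharp$ does not substitute for this and would not close. First, the parameter hierarchy~\eqref{prep:eq-parameter-1} locks $r<3/4$ and ties $\delta,\eta$ to $3/4-r$; raising $r$ breaks several multilinear estimates (e.g.\ in Lemma~\ref{key:lemma-unfortunate}, Proposition~\ref{nonlinear:prop-main-sim}, and the modulation analysis) rather than merely inflating constants. Second, the contraction mappings in Propositions~\ref{ansatz:prop-modulation} and~\ref{ansatz:prop-forced-wm} require $\theta$-smallness of \emph{all} norms in Hypothesis~\ref{hypothesis:smallness}: the $\Ds$-norm is small by assumption, but the $r^\sharp$-based norms of the smooth linear waves are only finite, not small, so a ``larger but finite ball'' defeats the contraction constant. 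Third, and most fundamentally, even a successful $r^\sharp$-level contraction would return modulations in mixed-regularity classes analogous to $\Mod_M^+$, with an $s$-type exponent in one slot; the assembled solution would again lie only in a space like $\Cprod{s^\sharp}{r^\sharp}$ — the same obstacle, one rung higher. The fix must come from the fixed-$\varepsilon$ smoothness of the data, not from shifting the regularity parameters.
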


\begin{proof}[Proof of Proposition \ref{lwp:prop-small-data}:]
We define the shifted wave map $\phi^{\diamond,\varepsilon}\colon \R_{u,v}^{1+1} \rightarrow \R^{\dimA}$ by 
\begin{equation*}
    \phi^{\diamond,\varepsilon}(u,v)= \phi^\varepsilon(u,v)-\phi_0^{\varepsilon}(0).
\end{equation*}
Since $\phi^\varepsilon$ is a global smooth solution of \eqref{intro:eq-WM}, the shifted wave map is a solution of
\begin{equation}\label{lwp:eq-shifted}
\phi^{\diamond,\varepsilon,k}= \phi^{\diamond,+,\varepsilon,k} + \phi^{\diamond,-,\varepsilon,k} - \Duh[ \SecondC{k}{ij}(\phi^{\diamond,\varepsilon}) \partial_u \phi^{\diamond,\varepsilon,i} \partial_v \phi^{\diamond,\varepsilon,j} ]. 
\end{equation}
In addition to \eqref{lwp:eq-shifted}, we consider the localized Duhamel integral formulation
\begin{equation}\label{lwp:eq-shifted-localized}
\widetilde{\phi}^{\diamond,\varepsilon,k}= \phi^{\diamond,+,\varepsilon,k} + \phi^{\diamond,-,\varepsilon,k} - \chi^+ \chi^- \Duh[ \SecondC{k}{ij}(\widetilde{\phi}^{\diamond,\varepsilon}) \partial_u \widetilde{\phi}^{\diamond,\varepsilon,i} \partial_v \widetilde{\phi}^{\diamond,\varepsilon,j} ]. 
\end{equation}
Using the assumptions, Proposition \ref{ansatz:prop-modulation}, and Proposition \ref{ansatz:prop-forced-wm}, we obtain the decomposition
\begin{equation}\label{lwp:eq-structure}
\begin{aligned}
\widetilde{\phi}^{\diamond,\varepsilon,k} 
&= \sum_M A^{+,\varepsilon,k}_{M,m} \phi^{\diamond,+,\varepsilon,m}_M 
+ \sum_N A^{-,\varepsilon,k}_{N,n} \phi^{\diamond,-,\varepsilon,n}_N
+\sum_{M\sim_\delta N} B^{\varepsilon}_{M,N,mn}  \phi^{\diamond,+,\varepsilon,m}_M
 \phi^{\diamond,-,\varepsilon,n}_N + \psi^{\varepsilon,k}, 
\end{aligned}
\end{equation}
where the modulations and nonlinear remainder satisfy the following properties: 
\begin{enumerate}[label={(\roman*)},leftmargin=12mm]
    \item The modulations $A^{+,\varepsilon}$, $A^{-,\varepsilon}$, and $B^\varepsilon$ satisfy Condition \ref{condition:frequency}. 
    \item The modulations $A^{+,\varepsilon}$, $A^{-,\varepsilon}$, and $B^\varepsilon$ converge with respect to $\| \cdot\|_{\Mod}$ from Definition \ref{ansatz:def-modulation-norms}. 
    \item The nonlinear remainder $\psi^\varepsilon$ converges in $\Cprod{r}{r}$. 
    \item The modulations and nonlinear remainder are small, i.e.,
    \begin{equation*}
        \sup_{\varepsilon>0} \Big( 
        \frac{1}{2} \sup_M \|A^{+,\varepsilon}_M \|_{\Mod_M^+} 
        + \frac{1}{2} \sup_N \| A^{-,\varepsilon}_N \|_{\Mod_N^-}
        +\sup_{M\sim_\delta N} \| B^{\varepsilon}_{M,N}\|_{\Cprod{s}{s}} 
      +    \| \psi^{\varepsilon} \|_{\Cprod{r}{r}}  \Big)\leq \theta. 
    \end{equation*}
\end{enumerate}
For every fixed $\varepsilon>0$, it follows from the smoothness of the shifted linear waves, the decomposition \eqref{lwp:eq-structure}, and the properties of $A^{+,\varepsilon}, A^{-,\varepsilon}, B^{\varepsilon}$, and $\psi^{\varepsilon}$ that $\phi^{\diamond,\varepsilon,k}\in \Cprod{r}{r}$. From the deterministic theory (Lemma \ref{prep:lem-deterministic-uniqueness}), it follows that 
\begin{equation}
    \widetilde{\phi}^{\diamond,\varepsilon}(u,v)= \phi^{\diamond,\varepsilon}(u,v) \qquad \text{for all } u,v \in [-2,2]. 
\end{equation}
In order to prove the proposition, it therefore suffices to prove the convergence of $\widetilde{\phi}^{\diamond,\varepsilon}$ in \eqref{lwp:eq-space}. However, this follows directly from Lemma  \ref{ansatz:lem-ctcx}. 
\end{proof}

Equipped with Proposition \ref{lwp:prop-small-data}, we are now ready to prove the main theorem. 

\begin{proof}[Proof of Theorem \ref{intro:thm-rigorous}] 
The proof consists of two steps: In the first step, we perform a series of changes of variables which lead to small initial data. In the second step, we apply Proposition \ref{lwp:prop-small-data} and conclude the theorem. \\

\emph{Step 1: Change of variables.} 
We first recall that $\phi^{\varepsilon}$ solves the wave maps equation with initial data $(B^{\varepsilon},V^\varepsilon)$, i.e., 
\begin{equation}\label{lwp:eq-wm}
\begin{cases}
\partial_\mu \partial^\mu \phi^{\varepsilon,k} 
= - \Second^{k}_{ij}(\phi^{\varepsilon})
\partial_\mu \phi^{\varepsilon} \partial^\mu \phi^{\varepsilon} 
 ~,\\
\phi^{\varepsilon}(0,x) = B^{\varepsilon}(x), \quad \partial_t \phi^{\varepsilon}(0,x) = V^\varepsilon(x).
\end{cases}
\end{equation}
We note that since $B^{\varepsilon}\colon \R \rightarrow \M \subseteq \R^\dimA$ and $V^\varepsilon \colon \R \rightarrow \R^\dimA$ are smooth and satisfy $V^\varepsilon(x)\in T_{B^\varepsilon(x)}\M$, \eqref{lwp:eq-wm} has a unique global smooth solution. We now perform a re-scaling, spatial translation, and localization.\\

\emph{Step 1.1: Re-scaling and spatial translation.}
Let $x_0 \in \R$ be arbitrary. We define the re-scaled and translated solution 
$\phi^{\varepsilon}_{\tau,x_0}$, Brownian path $B^{\varepsilon}_{\tau,x_0}$, and velocity $V^\varepsilon_{\tau,x_0}$ by 
\begin{equation*}
\phi^{\varepsilon}_{\tau,x_0}(t,x):= \phi^{\varepsilon}(\tau t, \tau x+x_0), \quad
B^{\varepsilon}_{\tau,x_0}(x) = B^{\varepsilon}(\tau x+x_0), \quad
\text{and} \quad V^\varepsilon_{\tau,x_0}(x)=\tau V^\varepsilon(\tau x+x_0).
\end{equation*}
Due to the scale and translation-invariance of the wave maps equation, $\phi^{\varepsilon}_{\tau,x_0}$ solves 
\begin{equation}\label{lwp:eq-wm-scaled}
\begin{cases}
\partial_\mu \partial^\mu \phi^{\varepsilon}_{\tau,x_0}
= - \Second^{k}_{ij}(\phi^{\varepsilon}_{\tau,x_0})
\partial_\mu \phi^{\varepsilon}_{\tau,x_0} \partial^\mu \phi^{\varepsilon}_{\tau,x_0} 
 ~,\\
\phi^{\varepsilon}_{\tau,x_0}(0,x) = B^{\varepsilon}_{\tau,x_0}(x), \quad \partial_t \phi^{\varepsilon}_{\tau,x_0}(0,x) = V^\varepsilon_{\tau,x_0}(x).
\end{cases}
\end{equation}

\emph{Step 1.2: Localization.}
We let $B^{\varepsilon}_{\tau,x_0,\loc}$ and $V^\varepsilon_{\tau,x_0,\loc}$ be the localized versions $B^{\varepsilon}_{\tau,x_0}$ and $V^\varepsilon_{\tau,x_0}$, which were defined in \eqref{prep:equ-Bepstaux0loc_def} and \eqref{prep:equ-Vepstaux0loc_def}. We then define $\phi^\varepsilon_{\tau,x_0,\loc}$ as the solution to the wave maps equation with initial data 
$(B^{\varepsilon}_{\tau,x_0,\loc},V^\varepsilon_{\tau,x_0,\loc})$. Due to finite speed of propagation, it holds that 
\begin{equation}\label{lwp:eq-finite-speed}
\phi^\varepsilon_{\tau,x_0,\loc}(t,x)=\phi^\varepsilon_{\tau,x_0}(t,x) \qquad \text{for all } (t,x)\in [-1,1]^2. 
\end{equation}

\emph{Step 1.3: Shifts.} Similar as in \eqref{ansatz:eq-shifted-linear}, we define the shifted linear waves by 
\begin{equation*}
\phi^{\diamond,\pm,\varepsilon}_{\tau,x_0,\loc}(x):= \frac{1}{2\theta} 
\Big( B^\varepsilon_{\tau,x_0,\loc}(x)-B^\varepsilon_{\tau,x_0,\loc}(0) \mp
\int_0^x \dy \, V^{\varepsilon}_{\tau,x_0,\loc}(y) \Big). 
\end{equation*}

By Proposition~\ref{prep:prop-brownian-path-approximation}, Proposition~\ref{prep:prop-velocity-approximation}, and Proposition \ref{prep:lem-mixed_hhtolow}, there exists an event $\mathcal{E}(\tau,x_0)$ satisfying
\begin{equation}\label{lwp:eq-high-prob-1}
\mathbb{P}\big( \Omega \backslash\mathcal{E}(\tau,x_0) \big) \leq C \exp(-c \tau^{-c})
\end{equation}
for some absolute constant $c>0$ and a constant $C=C(\M)$ depending on the embedding $\M \hookrightarrow \bbR^\dimA$,
and such that, on the event $\mathcal{E}(\tau,x_0)$, the following properties hold:
\begin{enumerate}[label={(\roman*)},leftmargin=12mm]
\item The initial position $B^\varepsilon_{\tau,x_0,\loc}(0)$ converges in $\M$.
\item The shifted linear waves $\phi^{\diamond,+,\varepsilon}_{\tau,x_0,\loc}(x)$ and $\phi^{\diamond,-,\varepsilon}_{\tau,x_0,\loc}(x)$ converge in $\Ds$.
\item The shifted linear waves $\phi^{\diamond,+,\varepsilon}_{\tau,x_0,\loc}(x)$ and $\phi^{\diamond,-,\varepsilon}_{\tau,x_0,\loc}(x)$ are small in $\Ds$, i.e., 
\begin{equation*}
\sup_{\varepsilon>0} \big\|( \phi^{\diamond,+,\varepsilon}_{\tau,x_0,\loc},
\phi^{\diamond,-,\varepsilon}_{\tau,x_0,\loc} )\big\|_{\Ds} \leq \theta. 
\end{equation*}
\end{enumerate}

\emph{Step 2: Conclusion.} We proceed on the event $\mathcal{E}(\tau,x_0)$. Using the properties in Step 1.3 and Proposition \ref{lwp:prop-small-data}, it follows that \begin{equation*}
 \phi^\varepsilon_{\tau,x_0,\loc} \quad \text{converges in} \quad
\big( C_t^0 C_x^s \medcap C_t^1 C_x^{s-1} \big)([-1,1]^2 \rightarrow \M). 
\end{equation*}
Due to finite speed of propagation \eqref{lwp:eq-finite-speed}, we obtain that  
\begin{equation*}
 \phi^\varepsilon_{\tau,x_0} \quad \text{converges in} \quad
\big( C_t^0 C_x^s \medcap C_t^1 C_x^{s-1} \big)([-1,1]^2 \rightarrow \M). 
\end{equation*}
By undoing the scaling and spatial translation, it follows that 
\begin{equation}\label{lwp:eq-convergence-local}
 \phi^\varepsilon \quad \text{converges in} \quad
\big( C_t^0 C_x^s \medcap C_t^1 C_x^{s-1} \big)([-\tau,\tau] \times [ x_0 - \tau, x_0 + \tau] \rightarrow \M). 
\end{equation}
We now define the event $\mathcal{E}(\tau,R)$ in the statement of the theorem by 
\begin{equation*}
    \mathcal{E}(\tau,R):= \bigcap_{n=-\lceil 4R/\tau \rceil}^{\lceil 4R/\tau \rceil} \mathcal{E}(\tau, \tau n),
\end{equation*}
where $\lceil \, \cdot \, \rceil$ is the ceiling function. The estimate on the probability of $\mathcal{E}(\tau,R)$ follows from \eqref{lwp:eq-high-prob-1} and a union bound. The convergence statement on $[-\tau,\tau]\times [-R,R]$ follows directly from \eqref{lwp:eq-convergence-local} and a partition of unity in space.
\end{proof}

%%%%%%%%%%%%%%%%%%%%%%%%%%% Ill-posedness %%%%%%%%%%%%%%%%%%%%%%
\begin{appendix}

\section{Deterministic ill-posedness} 

We now prove the mild ill-posedness statement in Theorem \ref{intro:thm-deterministic}, which concerns the unboundedness of the first Picard iterate.

\begin{proof}[Proof of Theorem \ref{intro:thm-deterministic}.\ref{intro:item-determininistic-ill}]
We only treat the case $\mathbb{S}^2$, since the argument easily generalizes to  $\mathbb{S}^{D-1}$ with $D\geq 3$. We split the argument into two steps. In the first step, we present several reductions which simplify the first Picard iterate. In the second step, we construct an explicit sequence of functions for which the first Picard iterate diverges. \\

\emph{Step 1: Reductions.} We let $\phi_0,\phi_1\in C^\infty_b(\R\rightarrow \R^3)$ satisfy 
\begin{equation}\label{appendix:eq-constraint}
\| \phi_0(x) \|_2^2=1  \quad\text{and}\quad \langle \phi_0(x), \phi_1(x) \rangle=0 \qquad \forall x \in \R. 
\end{equation}
Here, $\| \cdot \|_2$ refers to the Euclidean norm on $\R^3$. Then, the right and left-moving linear waves are given by 
\begin{equation*}
\phi^\pm(x) = \frac{1}{2} \Big( \phi_0(x) \mp \int_0^x \dy \, \phi_1(y) \Big). 
\end{equation*}
We recall that the second fundamental form of the sphere $\mathbb{S}^2$ is given by
\begin{equation*}
\Second^k_{ij}(\phi)= \delta_{ij} \phi^k.
\end{equation*}
As a result, the first Picard iterate of the wave maps equation \eqref{intro:eq-WM} is given by 
\begin{equation*}
\Pic(t,x)= - \int_{x-t}^{x+t}\dv^\prime \int_{x-t}^{v^\prime} \du^\prime \, \big( \phi^+(u^\prime)+\phi^{-}(v^\prime)\big) \langle (\partial_u \phi^+)(u^\prime),(\partial_v \phi^-)(v^\prime) \rangle.
\end{equation*}
We now fix any time $t>0$. Furthermore, we let $\chi$ be our previous nonnegative, smooth cut-off function, which satisfies $\operatorname{supp}(\chi) \subseteq (-3,3)$, and let $0<\epsilon \leq t/100$. For any $r\in \R$, it then holds that
\begin{equation*}
\Big\| \int_{-\infty}^{\infty} \chi(x/\epsilon) \Pic(t,x) \, \mathrm{d}x\Big\|_2 
\lesssim_{\chi,\epsilon} \big\| \Pic(t,\cdot) \big\|_{\C^{r}}. 
\end{equation*}
In order to prove the unboundedness of the first Picard iterate on $\C^r\times \C^{r-1}$ for any $r\leq 1/2$, it therefore suffices to prove that 
\begin{equation}\label{appendix:eq-unbd-1}
\sup_{  \| \phi_0 \|_{\C^{1/2}}\leq 1 }
\sup_{ \| \phi_1 \|_{\C^{-1/2}} \leq 1 }
 \Big\| \int_{-\infty}^{\infty} \chi(x/\epsilon) \Pic(t,x) \, \mathrm{d}x\Big\|_2 = \infty.
\end{equation}
We now choose $\phi_0(x)=e_3 \in\R^3$ and choose $\phi_1(x)=\psi^\prime(x)$ for $\psi(x) \in C^\infty_c((-1,1)\rightarrow \R^3)$. In particular, it holds that $\| \psi\|_{\C^{1/2}} \lesssim \| \phi_1 \|_{\C^{-1/2}}$. Due to the geometric constraint in \eqref{appendix:eq-constraint}, $\psi$ has to satisfy $\langle e_3 , \psi \rangle=0$. Using our assumptions, the first Picard iterate takes the form
\begin{equation*}
\Pic(t,x)= - \frac{1}{8} \int_{x-t}^{x+t} \dv^\prime \int_{x-t}^{v^\prime} \du^\prime \,  
(2e_3 - \psi(u^\prime) + \psi(v^\prime) ) \langle \psi^\prime(u^\prime), \psi^\prime(v^\prime) \rangle. 
\end{equation*}
In order to further simplify $\mathcal{P}(t,x)$, we assume that $\operatorname{supp}(\psi) \subseteq (-2\epsilon,2\epsilon)$. In particular, it holds that $\psi(x+t)=\psi(x-t)=0$ for all $x\in (-2\epsilon,2\epsilon)$. Then, a direct computation yields that 
\begin{align*}
\int_{x-t}^{x+t} \dv^\prime \int_{x-t}^{v^\prime}  \du^\prime \, 
2e_3 \, \langle \psi^\prime(u^\prime), \psi^\prime(v^\prime) \rangle &= 0, \\
-\int_{x-t}^{x+t} \dv^\prime \int_{x-t}^{v^\prime}  \du^\prime \, 
 \psi(u^\prime)  \langle \psi^\prime(u^\prime), \psi^\prime(v^\prime) \rangle
 &= -\frac{1}{2} \int_{x-t}^{x+t} \dy \, \psi^\prime(y) \| \psi(y) \|_2^2, \\
 \int_{x-t}^{x+t} \dv^\prime \int_{x-t}^{v^\prime}  \du^\prime \, 
 \psi(v^\prime)  \langle \psi^\prime(u^\prime), \psi^\prime(v^\prime) \rangle 
 &= -\frac{1}{2} \int_{x-t}^{x+t} \dy \, \psi^\prime(y) \| \psi(y) \|_2^2. 
\end{align*}
As a result, we obtain that 
\begin{equation*}
    \Pic(t,x)= \frac{1}{8} \int_{x-t}^{x+t} \dy \, \psi^\prime(y) \| \psi(y) \|_2^2. 
\end{equation*}
We now write $\psi(y)=\psi^1(y) e_1 +\psi^2(y) e_2$, which satisfies the constraint $\langle \psi,e_3 \rangle=0$. Then, the first coordinate of $\Pic(t,x)$ is given by 
\begin{align*}
\langle \Pic(t,x) , e_1 \rangle &= \frac{1}{8} \int_{x-t}^{x+t} \dy \, (\psi^1)^\prime(y) \big( (\psi^1)^2(y) + (\psi^2)^2(y) \big) \\
&= \frac{1}{24} (\psi^1)^3(y) \Big|_{y=x-t}^{x+t} + \frac{1}{8} \int_{x-t}^{x+t} \dy \, (\psi^1)^\prime(y)  (\psi^2)^2(y) \\
&= \frac{1}{8} \int_{x-t}^{x+t} \dy \, (\psi^1)^\prime(y)  (\psi^2)^2(y).
\end{align*}
Thus, it suffices to prove that 
\begin{equation}\label{appendix:eq-unbd-2}
\sup_{\substack{\psi^1,\psi^2 \colon \\ \operatorname{supp}(\psi^j)\subseteq (-2\epsilon,2\epsilon) \\ \| \psi^j \|_{\C^{1/2}}\leq 1}} \Big | \int_{-\infty}^\infty \mathrm{d}x \chi(x/\epsilon) \int_{x-t}^{x+t} \dy \, (\psi^1)^\prime(y)  (\psi^2)^2(y) \Big|=\infty.
\end{equation}

\emph{Step 2: Proof of \eqref{appendix:eq-unbd-2}.}
The main idea in the proof of \eqref{appendix:eq-unbd-2} is to create a severe high$\times$high$\times$low-interaction in the integrand. In order to cover the endpoint $\C^{1/2}$, however, we need to be careful and work at multiple scales. 

We let $\kappa,\kappa_0 \in \mathbb{N}$ be arbitrary and define the set of frequencies 
\begin{equation*}
F_{\kappa,\kappa_0} = \{ 2^{10k} \colon \kappa_0 \leq k \leq \kappa \}. 
\end{equation*}
Then, we define
\begin{align}
\psi^1(y) &= \chi(y/\epsilon) \sum_{n\in F_{\kappa,\kappa_0}} n^{-1/2} \sin(ny),\label{appendix:eq-psi-1} \\
\psi^2(y) &= \chi(y/\epsilon) \Big( \sin(y) + \sum_{n\in F_{\kappa,\kappa_0}} n^{-1/2} \sin((n-1)y) \Big).\label{appendix:eq-psi-2} 
\end{align}
Since the frequencies in $F_{\kappa,\kappa_0}$ are well-separated, it holds that 
\begin{equation*}
\| \psi^1 \|_{\C^{1/2}} , \| \psi^2 \|_{\C^{1/2}} \lesssim_\epsilon 1,
\end{equation*}
where the implicit constant is uniform in $\kappa$ and $\kappa_0$. We now claim that 
\begin{equation}\label{appendix:eq-claim}
\Big| \int_{x-t}^{x+t} \dy \, (\psi^1)^\prime(y)  (\psi^2)^2(y) + \, \frac{1}{2} (\kappa-\kappa_0) \int_{x-t}^{x+t} \dy \, \chi(y/\epsilon)^3 (1-\cos(2y)) \Big| \lesssim_\epsilon 2^{-5\kappa_0} (\kappa-\kappa_0) + 1. 
\end{equation}
Before proving \eqref{appendix:eq-claim}, we first show that \eqref{appendix:eq-claim} implies \eqref{appendix:eq-unbd-2}. By integrating \eqref{appendix:eq-claim} against $\chi(x/\epsilon)$, using that $1-\cos(2y)\geq 0$, and using that $0<\epsilon \leq t/10$, we obtain 
\begin{align*}
&\, \bigg| \int_{-\infty}^\infty \dx \, \chi(x/\epsilon) \int_{x-t}^{x+t} \dy \, (\psi^1)^\prime(y)  (\psi^2)^2(y) \bigg| \\
\geq&\, \frac{1}{2} (\kappa-\kappa_0)
\bigg| \int_{-\infty}^\infty \dx \,  \chi(x/\epsilon)  \int_{x-t}^{x+t} \dy \, \chi(y/\epsilon)^3 (1-\cos(2y)) \bigg| 
- C_\epsilon \big( 2^{-5\kappa_0} (\kappa-\kappa_0) + 1 \big) \\ 
\geq& \, c_\epsilon (\kappa-\kappa_0) -  C_\epsilon \big( 2^{-5\kappa_0} (\kappa-\kappa_0) + 1 \big),
\end{align*}
where $C_\epsilon>0$ and $c_\epsilon>0$ are sufficiently large and small constants, respectively. We now obtain the desired conclusion \eqref{appendix:eq-unbd-2} by first choosing a parameter $\kappa_0=\kappa_0(\epsilon)$ such that $C_\epsilon 2^{-5\kappa_0}$ is smaller than $ c_\epsilon$ and then letting $\kappa\rightarrow \infty$. 
Thus, it now only remains to prove the claim \eqref{appendix:eq-claim}.
By inserting \eqref{appendix:eq-psi-1} and \eqref{appendix:eq-psi-2} into the integrand, we obtain that 
\begin{align}
&\int_{x-t}^{x+t} \dy \,  (\psi^2)^2(y) (\psi^1)^\prime(y)  \notag \\
=& 2 \sum_{m,n\in F_{\kappa,\kappa_0}}  m^{-1/2}  n^{1/2} \int_{x-t}^{x+t} \dy \, \chi(y/\epsilon)^3 \sin(y) \sin((m-1)y) \cos(ny)  \label{appendix:eq-term-1} \\
+& \sum_{n \in F_{\kappa,\kappa_0}} n^{1/2} \int_{x-t}^{x+t} \dy \, \chi(y/\epsilon)^3  \sin^2(y) \cos(ny) \label{appendix:eq-term-2}\\
 +&\sum_{\ell,m,n\in F_{\kappa,\kappa_0}}  \ell^{-1/2}m^{-1/2} n^{1/2}
 \int_{x-t}^{x+t} \dy \, \chi(y/\epsilon)^3 \sin((\ell-1)y) \sin((m-1)y) \cos(ny) \label{appendix:eq-term-3} \\
 +& \mathcal{O}_\epsilon(1), \notag
\end{align}
where we have already estimated terms in which the derivative hits the cut-off $\chi(y/\epsilon)$. We start by analyzing the main term \eqref{appendix:eq-term-1}. First, we treat the contribution of the diagonal case $m=n$. Using trigonometric identities, we have that 
\begin{equation*}
\sin(y) \sin((n-1) y) \cos(ny) = \frac{1}{4} \Big( -1 + \cos(2y) + \cos((2n-2) y) - \cos(2ny) \Big). 
\end{equation*}
Using integration by parts, this yields 
\begin{align*}
&2 \sum_{\substack{m,n\in F_{\kappa,\kappa_0}\colon\\ m=n }}  m^{-1/2}  n^{1/2} \int_{x-t}^{x+t} \dy \, \chi(y/\epsilon)^3 \sin(y) \sin((m-1)y) \cos(ny) \\
=& -\frac{1}{2} \sum_{n\in F_{\kappa,\kappa_0}} \int_{x-t}^{x+t} \dy \, \chi(y/\epsilon)^3 (1-\cos(2y)) + \mathcal{O}_\epsilon\Big( \sum_{n\in F_{\kappa,\kappa_0}} n^{-1} \Big) \\
=& -\frac{1}{2} (\kappa-\kappa_0) \int_{x-t}^{x+t} \dy \, \chi(y/\epsilon)^3 (1-\cos(2y)) + \mathcal{O}_\epsilon\big( 1\big),
\end{align*}
which is the main term in \eqref{appendix:eq-claim}.
In the non-diagonal case $m\neq n$, we use that the frequencies in $F_{\kappa,\kappa_0}$ are well-separated. Together with integration by parts, this yields
\begin{align*}
&\Big| \sum_{\substack{m,n\in F_{\kappa,\kappa_0}\\ \colon m \neq n }}  m^{-1/2}  n^{1/2} \int_{x-t}^{x+t} \dy \, \chi(y/\epsilon)^3 \sin(y) \sin((m-1)y) \cos(ny) \Big| \\
&\lesssim_\epsilon \sum_{m,n\in F_{\kappa,\kappa_0}}  m^{-1/2}  n^{1/2} \max(m,n)^{-1}
\lesssim 1.
\end{align*}
We now estimate the first error term \eqref{appendix:eq-term-2}. Using integration by parts, it follows that  
\begin{equation*}
\Big| \sum_{n \in F_{\kappa,\kappa_0}} n^{1/2} \int_{x-t}^{x+t} \dy \, \chi(y/\epsilon)^3  \sin^2(y) \cos(ny) \Big|\lesssim_\epsilon \sum_{n \in F_{\kappa,\kappa_0}} n^{-\frac{1}{2}} \lesssim 1.
\end{equation*}
It remains to estimate the third error term \eqref{appendix:eq-term-3}. To this end, we distinguish two cases.  In the case $\max(\ell,m,n)>\med(\ell,m,n)$, we use that the frequencies in $F_{\kappa,\kappa_0}$ are well-separated, which implies that
\begin{align*}
&\Big| \sum_{\substack{\ell,m,n\in F_{\kappa,\kappa_0}\colon \\
\max(\ell,m,n)>\med(\ell,m,n)}} 
\ell^{-1/2}m^{-1/2} n^{1/2}
 \int_{x-t}^{x+t} \dy \, \chi(y/\epsilon)^3 \sin((\ell-1)y) \sin((m-1)y) \cos(ny)\Big| \\
 \lesssim_\epsilon&  \sum_{\ell,m,n\in F_{\kappa,\kappa_0}}  
\ell^{-1/2}m^{-1/2} n^{1/2} \max(\ell,m,n)^{-1}  
\lesssim 1.
\end{align*}
In the case $\max(\ell,m,n)=\med(\ell,m,n)$, we only use that $\sin(\cdot)$ and $\cos(\cdot)$ are bounded,  which implies that
\begin{align*}
&\Big| \sum_{\substack{\ell,m,n\in F_{\kappa,\kappa_0}\colon \\
\max(\ell,m,n)=\med(\ell,m,n)}} 
\ell^{-1/2}m^{-1/2} n^{1/2}
 \int_{x-t}^{x+t} \dy \, \chi(y/\epsilon)^3 \sin((\ell-1)y) \sin((m-1)y) \cos(ny)\Big| \\
 \lesssim_\epsilon& \sum_{\substack{\ell,m,n\in F_{\kappa,\kappa_0}\colon \\
\max(\ell,m,n)=\med(\ell,m,n)}}\ell^{-1/2}m^{-1/2} n^{1/2}
\lesssim \sum_{\ell,m\in F_{\kappa,\kappa_0}} \ell^{-1/2} \lesssim 2^{-5\kappa_0} (\kappa-\kappa_0).
 \end{align*}
 This completes the proof of \eqref{appendix:eq-claim}. 
\end{proof}
\end{appendix}

\bibliography{BLS_library}
\bibliographystyle{myalpha}

\end{document}